\setlist[enumerate]{leftmargin=*,labelindent=.5pc}
\newtheorem{thm}[equation]{Theorem}
\newtheorem{cor}[equation]{Corollary}
\newtheorem{lem}[equation]{Lemma}
\newtheorem{prop}[equation]{Proposition}
\newtheoremstyle{example}{\topsep}{\topsep}%
     {}
     {}
     {\bfseries}
     {.}
     {2pt}
     {\thmname{#1}\thmnumber{ #2}\thmnote{ #3}}
   \theoremstyle{example}
   \newtheorem{defi}[equation]{Definition}
   \newtheorem{rem}[equation]{Remark}
   \newtheorem{rems}[equation]{Remarks}
   \newtheorem{exas}[equation]{Examples}
   \newtheorem{ex}[equation]{Example}
\newtheorem{princ}[equation]{Universality principle}
\newtheorem{remcom}[equation]{Remarks and Complements}
\newtheorem{remex}[equation]{Remarks and Examples}
\newtheorem{exa}[equation]{Example}
\def\a{{\alpha}}
\def\CC{\mathbb{C}}
\def\FF{\mathbb{F}}
\def\GG{\mathbb{G}}
\def\HH{\mathbb{H}}
\def\KK{\mathbb{K}}
\def\LL{\mathbb{L}}
\def\RR{\mathbb{R}}
\def\ZZ{\mathbb{Z}}
\def\QQ{\mathbb{Q}}
\def\TT{{\mathbb{T}}}
\def\fen{\mathfrak{f}}
\def\hen{\mathfrak{h}}
\def\Hen{\mathfrak{H}}
\def\Nen{\mathfrak{N}}
\def\Sen{\mathfrak{S}}
\def\Den{\mathfrak{D}}
\def\Ac{\mathcal{A}}
\def\Bc{\mathcal{B}}
\def\Kc{\mathcal{K}}
\def\Dc{\mathcal{D}}
\def\Ec{\mathcal{E}}
\def\Fc{\mathcal{F}}
\def\Gc{\mathcal{G}}
\def\Lc{\mathcal{L}}
\def\Mc{\mathcal{M}}
\def\Hc{\mathcal{H}}
\def\Oc{\mathcal{O}}
\def\Pc{\mathcal{P}}
\def\Qc{\mathcal{Q}}
\def\Rc{\mathcal{R}}
\def\Sc{\mathcal{S}}
\def\Tc{\mathcal{T}}
\def\Uc{\mathcal{U}}
\def\Vc{\mathcal{V}}
\def\Wc{\mathcal{W}}
\def\Cen{\mathfrak {C}}
\def\Een{\mathfrak{E}}
\def\Fen{\mathfrak {F}}
\def\Men{\mathfrak{M}}
\def\Sen{\mathfrak{S}}
\def\Wen{\mathfrak {W}}
\def\fb{\mathbf{f}}
\def\Ic{\mathcal{I}}
\def\Aff{\Ac ff}
\def\alg{\underline}
 \def\Aut{\operatorname{Aut}\nolimits}
 \def\Bun{{\Bc un}}
 \def\Coh{\mathcal{C}oh}
 \def\Coker{\operatorname{Coker}\nolimits}
 \def\dg{\on{dg}}
 \def\End{\operatorname{End}\nolimits}
  \def\Ext{\operatorname{Ext}\nolimits}
\def\Ho{\operatorname{Ho}}
 \def\Hom{\operatorname{Hom}}
 \def\Homc{\mathcal{H}om}
 \def\id{\operatorname{id}}
 \def\Id{\operatorname{Id}\nolimits}
 \def\inj{{\on{inj}}}
 \def\Ker{\operatorname{Ker}\nolimits}
 \def\Map{\operatorname{Map}}
 \def\EHom{{\mathcal Hom}}
 \def\map{\operatorname{map}}
 \def\Mat{\operatorname{Mat}\nolimits}
 \def\Mod{\operatorname{Mod}}
\def\Mor{\operatorname{Mor}\nolimits} 
\def\NC{\operatorname{NC}}
\def\Ob{\operatorname{Ob}\nolimits}
\def\on{\operatorname}
\def\Perf{\operatorname{Perf}\nolimits}
\def\pr{\operatorname{pr}\nolimits}
\def\pt{\operatorname{pt}\nolimits}
\def\rk{\operatorname{rk}\nolimits}
\def\SZ{\overline{\operatorname{Spec}(\ZZ)}}
\def\Vect{\operatorname{Vect}}
\def\3x3{\operatorname{3}\times\operatorname{3}}
\def\1{{\bf 1}}
\def\lra{\longrightarrow}
\def\lla{\longleftarrow}
\def\(({(\hskip -1mm (}
\def\)){)\hskip -1mm )}
\def\((({(\hskip -1mm (}
\def\))){)\hskip -1mm )}
\def\ltb {[\hskip -1mm [}
\def\rtb{]\hskip -1mm ]}
\def\llb{ (\hskip -1mm (}
\def\rlb{ )\hskip -1mm )}
\def\bb{{\bullet\bullet}}
\def\bbs{{\backslash\hskip -1mm \backslash}}
\def\be{\begin{equation}}
\def\ee{\end{equation}}
\def\ed{\end{document}}
\def\ind{\textstyle\varinjlim}
\def\pro{\textstyle\varprojlim}
\def\hoind{\operatorname{ho}\!\ind}
\def\hopro{\operatorname{ho}\!\pro}
\def\hoend{R\int}
\def\twopro{2\varprojlim{}}
 \def\k{\mathbf{k}}
\newcommand{\disc}[1]{ {\prec\!{#1}\!\succ}}
\newcommand{\const}[1]{\underline{#1}}
\newcommand{\itimes}{\boxtimes}
\def\op{{\operatorname{op}}}
\def\DI{\Delta^{\I}}
\def\Dop{\Delta^{\!\op}}
\def\Dp{ {\Delta^{\!+}}}
\def\Dpop{{\Delta^{\!+}}^{\op}}
\def\Fun{\operatorname{Fun}}
\def\h{\operatorname{h}\!}
\def\ho{\operatorname{ho}\!}
\def\hopro{\ho \underleftarrow{\lim}{}}
\def\Hom{\operatorname{Hom}}
\def\hra{\hookrightarrow}
\def\O{{\mathcal O}}
\def\I{{\mathcal I}}
\def\inftyone{(\infty,1)}
\def\inftytwo{(\infty,2)}
\def\J{{\EuScript J}}
\def\Joy{{\mathcal J}}
\def\Kan{\operatorname{Kan}}
\def\h{\operatorname{h}\!}
\def\M{{\mathcal M}}
\def\Map{\operatorname{Map}}
\def\ob{ \operatorname{ob}}
\def\N{\operatorname{N}}
\def\NDop{\N(\Delta)^{\op}}
\def\ND{\N(\Delta)}
\def\PF{{P^\triangleright}}
\def\CF{{C_\triangleright}}
\def\PFp{{P^\triangleright_+}}
\def\PI{{P^\triangleleft}}
\def\CI{{C_\triangleleft}}
\def\PIp{{P^\triangleleft_+}}
\def\RMap{\operatorname{RMap}}
\def\s{{\mathscr S}}
\def\seg{{\mathscr S}}
\def\Seg{{\mathcal Seg}}
\def\Sp{{\mathbb S}}
\def\inftyS{{\mathcal S}}
\def\Cat{{\EuScript Cat}}
\def\pS{{\widetilde{\mathcal S}}}
\def\SX{ H(X)}
\def\HX{ H(X)}
\def\hHX{ \h \HX}
\def\pSX{ \widetilde{H}(X)}
\def\Gr{\Grp}
\def\sS{{\Sp}_{\Delta}}
\def\sC{{\mC}_{\Delta}}
 \def\SW{\mathcal {S}}
\def\T{{\mathcal T}}
\def\Top{ {\EuScript Top}}
\def\Set{ {\mathcal Set}}
\def\Set{ {\EuScript Set}}
\def\Hall{ {\mathcal H}}
\def\FC{\operatorname{FC}}
\def\yoneda{\Upsilon}
\def\A{ {\EuScript A}}
\def\P{{\EuScript P}}
\def\B{ {\EuScript B}}
\def\C{{\EuScript C}}
\def\D{{\EuScript D}}
\def\E{{\EuScript E}}
\def\M{{\EuScript M}}
\def\Cc{\C}
\def\J{ {\EuScript J}}
\def\capd{\sqcapdot}
\def\cop{\textstyle\coprod}
\def\Grp{\mathcal{G}r}
 \def\ch{\on{char}}
\def\Rep{{\mathcal Rep}}
\def\Hall{ {\mathcal H}}
\def\Heck{\on{Heck}}
\def\Ben{\mathfrak{B}}
\def\Rep{{\mathcal Rep}}
\def\Fset{{\mathcal {FS}et}}
\def\Act{{\mathcal Act}}
\def\Bld{\on{Bld}}
\def\<<{\langle {}\hskip -.1cm {}\langle}
\def\>>{\rangle \hskip -.1cm \rangle}
\def\one{ {\bf 1}}
\def\FC{\on{FC}}
\setlist[enumerate,1]{label=(\arabic{*})}
\setlist[enumerate,2]{label=(\alph{*})}
\setlist[enumerate,3]{label=(\roman{*})}
\def\s{{\EuScript S}}
\def\Set{{\EuScript Set}}
\def\sSet{{\Set}_{\Delta}}
\def\mSet{{\Set}_{\Delta}^{+}}
\def\Deltac{\Delta^{\!\displaystyle \amalg}}
\def\Mo{{\on{Mo}}}
\def\asd{{\on{asd}}}
\def\Span{\on{Span}^\prime}
\def\Spanl{\on{Span}}
\def\bSpan{\on{BiSpan}}
\def\HSpan{\on{HSpan}}
\def\HSpanp{\on{HSpan}^\prime}
\def\Cat{{\EuScript Cat}}
\def\Past{\wp}
\def\lastedge{\spadesuit}
\def\ND{\N(\Delta)}
\def\mC{ {\mathbf C} }
\newtheorem{warn}[equation]{Warning}
\def\dgalg{\operatorname{dgalg}_{\FF}}
\def\dgcat{\operatorname{dgcat}_{\FF}}
\def\NCI{{\NC^{\infty}}}
\def\mC{{\mathbf C}}
\def\mD{{\mathbf D}}
\def\mult{\mu}
\def\Fenh{\mathfrak{F}^h}
\def\Ndg{\N_{\on{dg}}}
\def\Vectk{\Vect_{\k}}
\def\kk{k}
\def\NCC{N^*}
\def\mM{{\mathbf M}}
\newlist{exaenumerate}{enumerate}{1}
\setlist[exaenumerate]{topsep=0pc,itemindent=2.5pc,leftmargin=0pc,label=(\alph{*}),parsep=0pc,itemsep=0pc}
\numberwithin{paragraph}{section}
\renewcommand\theparagraph{(\arabic{section}.\arabic{paragraph})}
\numberwithin{equation}{section}
\begin{document}

\title{Higher Segal spaces I}
 \author{T. Dyckerhoff\footnote{Department of Mathematics, Yale University,
 10 Hillhouse Avenue, New Haven CT 06520 USA, email:
 {\tt tobias.dyckerhoff@yale.edu, mikhail.kapranov@yale.edu}  }, M. Kapranov\footnotemark[1]
 }

\maketitle

 

\begin{abstract}
	This is the first paper in a series on new higher categorical structures called higher Segal
	spaces. For every $d \ge 1$, we introduce the notion of a $d$-Segal space which is a
	simplicial space satisfying locality conditions related to triangulations of cyclic
	polytopes of dimension $d$. In the case $d=1$, we recover Rezk's theory of Segal spaces. 
	The present paper focuses on $2$-Segal spaces. 
	The starting point of the theory is the observation that Hall algebras, as previously
	studied, are only the shadow of a much richer structure governed by a system of higher
	coherences captured in the datum of a $2$-Segal space. This $2$-Segal space is given by
	Waldhausen's S-construction, a simplicial space familiar in algebraic K-theory.  
	Other examples of $2$-Segal spaces arise naturally in classical topics such as Hecke algebras, 
	cyclic bar constructions, configuration spaces of flags, solutions of the pentagon equation, 
	 and mapping class groups.
\end{abstract}

\tableofcontents

\vfill\eject

\section*{Introduction}
\addcontentsline{toc}{section}{Introduction}

The theory of Segal spaces, as introduced by C. Rezk \cite{rezk}, has its roots in the classical
work of G. Segal \cite{segal.categories} where the notion of a $\Gamma$-space is introduced and used
to exhibit various classifying spaces as infinite loop spaces. Rezk's work analyzes the role of
Segal spaces as a model for the homotopy theory of $\inftyone$-categories. The concept of a Segal
space can be motivated as follows. Given a simplicial set $X$, we have, for each $n \ge 1$, a
natural map
\begin{equation}\label{eq:fn}
	f_n: X_n \lra X_1\times_{X_0} X_1 \times_{X_0}  \cdots \times_{X_0} X_1
\end{equation}
where the right-hand side is an $n$-fold fiber product. The condition that all maps $f_n$ be
bijective is called the  {\em Segal condition} and a simplicial set which satisfies this condition is
called {\em Segal}. The relevance of this condition comes from the fact that it characterizes the
essential image of the fully faithful functor
\[
  	\N : \Cat \to \sSet
\]
which takes a small category to its nerve. Given a Segal simplicial set $X$, we
can recover the corresponding category $\C$: The set of objects is formed by the vertices of
$X$ and morphisms between a pair of objects are given by edges in $X$ between the corresponding 
pair of vertices.
The invertibility of $f_2$ allows us to interpret the diagram
\begin{equation}\label{eq:fund-corr-intro}
\xymatrix{
\mu: \big\{ X_1\times_{X_0} X_1&&\ar[ll]_-{f_2=(\partial_2, \partial_0)} X_2 \ar[r]^{\partial_1}& 
 X_1 \big\}
 }
\end{equation}
as a composition law for $\C$, while the bijectivity of $f_3$ implies the associativity of this law. 
One can view the theory of Segal (simplicial) spaces as a development of this idea in a homotopy theoretic
framework, where simplicial sets are replaced by simplicial spaces, fiber products by their homotopy
analogs, and bijections by weak equivalences. This leads to a weaker notion of coherent
associativity which can be used to describe composition laws in higher categories.
 
The goal of this paper, and the sequels to follow, is to study a ``higher'' extension of Rezk's
theory to what we call $d$-Segal spaces. These are simplicial spaces which are required to satisfy
analogs of the Segal conditions corresponding to triangulations of $d$-dimensional convex polytopes.
We outline the basic idea. Note that the fiber product in \eqref{eq:fn} can be viewed as the set
$\Hom(\J^n, X)$, where we define the simiplicial set
\begin{equation}\label{eq:segment}
	\J^n = \Delta^1 \amalg_{\Delta^0} \Delta^1 \amalg_{\Delta^0} \dots \amalg_{\Delta^0} \Delta^1
\end{equation}
whose geometric realization can be interpreted as an oriented interval, subdivided into $n$ subintervals. 
Further, the Segal map $f_n$ from
\eqref{eq:fn} is obtained by pulling back along the natural inclusion $\J^n \subset \Delta^n$.
This can be generalized as follows. Consider a convex polytope $P \subset \RR^d$ given as the convex
hull of a finite set of points $I \subset \RR^d$.
Choose a numbering of this set $I \cong \{0,1, ..., n\}$. Any triangulation $\T$
of $P$ with vertices in $I$ gives rise to a simplicial subset $\Delta^\T \subset \Delta^n$, and 
we obtain a natural pullback map
\[
	f_\T: X_n \lra X_\T 
\]
where we observe that $X_n \cong \Hom(\Delta^n, X)$ and define $X_{\T} := \Hom(\Delta^\T, X)$. 
For example, the triangulations of the square
\begin{equation}\label{eq:square}
\tdplotsetmaincoords{100}{170}
\begin{tikzpicture}[>=latex,scale=1.0, baseline=(current  bounding  box.center)]
\begin{scope}[scale=1.4]

\coordinate (A0) at (0,0);
\coordinate (A1) at (0,1);
\coordinate (A2) at (1,1);
\coordinate (A3) at (1,0);

\path[fill opacity=0.4, fill=blue!50] (A0) -- (A3) -- (A2) -- (A1) -- cycle;

\begin{scope}[decoration={
    markings,
    mark=at position 0.55 with {\arrow{>}}}
    ] 
\draw[postaction={decorate}] (A0) -- (A3);
\draw[postaction={decorate}] (A3) -- (A2);
\draw[postaction={decorate}] (A0) -- (A1);
\draw[postaction={decorate}] (A2) -- (A1);
\draw[postaction={decorate}] (A0) -- (A2);
\end{scope}

{\scriptsize
\draw (A0) node[anchor=east] {$0$};
\draw (A1) node[anchor=east] {$3$};
\draw (A2) node[anchor=west] {$2$};
\draw (A3) node[anchor=west] {$1$};
}
\draw (1.7,0.5) node {$\hookrightarrow$};
\end{scope}

\begin{scope}[tdplot_main_coords, >=latex, xshift=4.5cm, yshift=0.5cm]

\coordinate (A0) at (1,-1,0);
\coordinate (A1) at (0,1,1);
\coordinate (A2) at (0,1,-1);
\coordinate (A3) at (-1,-1,0);


\path[fill opacity=0.4, fill=blue!50] (A0) -- (A2) -- (A3) -- cycle;
\path[fill opacity=0.4, fill=blue!50] (A0) -- (A2) -- (A1) -- cycle;

\begin{scope}[decoration={
    markings,
    mark=at position 0.55 with {\arrow{>}}}
    ] 
\draw[postaction={decorate}] (A0) -- (A1);
\draw[postaction={decorate}] (A0) -- (A2);
\draw[postaction={decorate}] (A1) -- (A2);
\draw[postaction={decorate}] (A1) -- (A3);
\draw[postaction={decorate}] (A2) -- (A3);
\draw[postaction={decorate},dashed] (A0) -- (A3);
\end{scope}

{\scriptsize
\draw (A0) node[anchor=east] {$0$};
\draw (A1) node[anchor=south] {$1$};
\draw (A2) node[anchor=north] {$2$};
\draw (A3) node[anchor=west] {$3$};
}
\end{scope}
\end{tikzpicture}
\quad\quad\quad\quad
\begin{tikzpicture}[>=latex,scale=1.0, baseline=(current  bounding  box.center)]
\begin{scope}[scale=1.4]

\coordinate (A0) at (0,0);
\coordinate (A1) at (0,1);
\coordinate (A2) at (1,1);
\coordinate (A3) at (1,0);

\path[fill opacity=0.4, fill=blue!50] (A0) -- (A3) -- (A2) -- (A1) -- cycle;

\begin{scope}[decoration={
    markings,
    mark=at position 0.55 with {\arrow{>}}}
    ] 
\draw[postaction={decorate}] (A0) -- (A3);
\draw[postaction={decorate}] (A3) -- (A2);
\draw[postaction={decorate}] (A0) -- (A1);
\draw[postaction={decorate}] (A2) -- (A1);
\draw[postaction={decorate}] (A3) -- (A1);
\end{scope}

{\scriptsize
\draw (A0) node[anchor=east] {$0$};
\draw (A1) node[anchor=east] {$3$};
\draw (A2) node[anchor=west] {$2$};
\draw (A3) node[anchor=west] {$1$};
}
\draw (1.7,0.5) node {$\hookrightarrow$};
\end{scope}

\begin{scope}[tdplot_main_coords, >=latex, xshift=4.5cm, yshift=0.5cm]

\coordinate (A0) at (1,-1,0);
\coordinate (A1) at (0,1,1);
\coordinate (A2) at (0,1,-1);
\coordinate (A3) at (-1,-1,0);


\path[fill opacity=0.4, fill=blue!50] (A1) -- (A2) -- (A3) -- cycle;
\path[fill opacity=0.4, fill=blue!50] (A0) -- (A1) -- (A3) -- cycle;

\begin{scope}[decoration={
    markings,
    mark=at position 0.55 with {\arrow{>}}}
    ] 
\draw[postaction={decorate}] (A0) -- (A1);
\draw[postaction={decorate}] (A0) -- (A2);
\draw[postaction={decorate}] (A1) -- (A2);
\draw[postaction={decorate}] (A1) -- (A3);
\draw[postaction={decorate}] (A2) -- (A3);
\draw[postaction={decorate},dashed] (A0) -- (A3);
\end{scope}

{\scriptsize
\draw (A0) node[anchor=east] {$0$};
\draw (A1) node[anchor=south] {$1$};
\draw (A2) node[anchor=north] {$2$};
\draw (A3) node[anchor=west] {$3$};
}
\end{scope},
\end{tikzpicture}
\end{equation}
induce two natural maps $X_3 \to X_2 \times_{X_1} X_2$. We call the elements of $X_\T$ {\em
membranes in $X$ of type $\T$}. Similarly, for a simplicial space $X$ we have a natural derived
version of the membrane space, denoted $RX_\T$, which comes equipped with a map $X_n \to RX_\T$.
The Segal map $f_n$ of \eqref{eq:fn} is recovered for $I=\{0,1,\dots,n\} \subset \RR^1$,
in which case  $P= [0,n]$ is an  interval, and  for $\T$ being the triangulation of $[0,n]$ by 
the segments $[i,i+1]$,
in which case $\Delta^\T = \J^n$.  From this perspective,  it is natural to refer to Rezk's notion of 
 Segal spaces,   as {\em $1$-Segal spaces}.

In the present paper, we study the $2$-dimensional theory corresponding to triangulations $\T$ of
convex plane polygons $P_n$, with $I$ being the set of vertices of $P_n$, numbered counterclockwise
(so $P_n$ has $n+1$ vertex). 
  A simplicial space $X$ is called {\em $2$-Segal space} if, for every convex polygon $P_n$ and every
triangulation $\T$, the resulting map $X_n \to RX_\T$ is a weak homotopy equivalence.  Note that, in
contrast to the $1$-dimensional situation, a given convex polygon $P_n$ has many triangulations $\T$,
each involving all the vertices. 
  For a $2$-Segal space $X$, each derived membrane space $RX_\T$ comes
equipped with a weak homotopy equivalence $X_n \to RX_{\T}$. In particular, all derived membrane
spaces corresponding to different triangulations of $P_n$ are weakly equivalent to one another.
Moreover, the $2$-Segal space $X$ exhibits the {\em independence of $RX_\T$ on $\T$ up to a
coherent system of weak equivalences}.

Remarkably, $2$-Segal spaces appear in several areas of current interest:
\begin{itemize}
\item 
	Various associative algebras obtained via correspondences such as Hall algebras, Hecke algebras,
	and various generalizations, appear as shadows of richer structures: $2$-Segal simplicial
	groupoids, stacks, etc. The invariance under change of triangulations of a square (cf.
	\eqref{eq:square}) is the property which is responsible for the associativity of these algebras.
	In this context, the most important example of a $2$-Segal space is given by the {\em Waldhausen
	S-construction} $\SW(\E)$ of an exact category $\E$, and $\infty$-categorical generalizations
	thereof. While the geometric realization of the simplicial space $\SW(\E)$ plays a fundamental
	role in algebraic K-theory, its structural property of being $2$-Segal seems to be a new
	observation and can be viewed as a kind of ``hidden $2$-dimensional symmetry" of classical
	homological algebra. The above mentioned associative algebras are obtained by applying suitable
	theories with transfer to various incarnations of $\SW(\E)$.
 
\item 
	The {\em cyclic nerve} \cite{drinfeld} of any category is a $2$-Segal set. More generally,
	the {\em cyclic bar construction} of an $\infty$-category is a $2$-Segal space. On the one
	hand, this class of examples leads to new associative algebras whose structure constants are
	given by counting certain factorizations. On the other hand, we obtain natural examples of
	$2$-Segal spaces which carry a cyclic structure in the sense of A.  Connes. A detailed study
	of cyclic $2$-Segal spaces will be deferred to the sequel of this work. We provide a more
	detailed outlook at the end of this introduction, explaining relations to mapping class
	groups and potential applications in the context of $2$-periodic derived categories.
 
\item 
	$2$-Segal spaces can be naturally interpreted in the context of model categories. We
	introduce a model category for $2$-Segal spaces which is, in a precise way, compatible with Rezk's
	model structure for $1$-Segal spaces. 
 
\item 
	In analogy to the role of $1$-Segal spaces in higher category theory, we provide several
	higher categorical interpretations of $2$-Segal spaces. A $1$-Segal space encodes a
	coherently associative composition law in which a given pair of composable morphisms admits
	a composition which is unique up to homotopy. More precisely, the space of all possible
	compositions of a fixed pair of composable morphisms is contractible. Informally, a
	$2$-Segal space describes a higher categorical structure in which a composable pair of
	morphisms may admit a multitude of possibly non-equivalent compositions. Nevertheless, the
	$2$-Segal maps provide a coherent notion of associativity among the composition spaces.
	Following a suggestion of J. Lurie, we will make this statement precise by associating to a
	$2$-Segal space an $\inftytwo$-category enriched in $\infty$-categories of presheaves.
	Various alternative structures of higher bicategorical nature can be associated to a
	$2$-Segal space such as a monad in the $\inftytwo$-category of bispans.

\item 
	$2$-Segal simplicial sets provide a combinatorial version of the Clebsch-Gordan formalism for
	semi-simple tensor categories. $2$-Segal simplicial {\em spaces} can be thought of as higher
	categorical generalizations of this formalism. In particular, we expect our theory to be
	relevant in the context of the Reshetikhin-Turaev-Viro tensor category formalism for 
	$3$-dimensional topological quantum field theories (cf. \cite{turaev} and references
	therein).

\item 
	Cluster coordinate systems on various versions of Teichm\"uller spaces, see \cite{fock-goncharov}, can
	be naturally explained in terms of certain $1$- and $2$-Segal spaces. In particular, set-theoretic solutions
	of the pentagon equation \cite{kashaev-sergeev, kashaev-reshitikhin} can be considered as very
	special types of $2$-Segal semi-simplicial sets. 

\end{itemize}
 
The theory of $2$-Segal spaces can be developed in different contexts and at different levels of
generality. In the first part of this paper (Chapters 1-3), we work in the more elementary context
of simplicial {\em topological spaces}, thus reducing to a minimum of background in homotopy theory
required from the reader. This part can be seen as an extended introduction to the rest of the
paper. 
In particular, the motivating example of the Waldhausen's S-construction is studied in Section
\ref{subsec:waldhausen-1}. We generalize Quillen's concept of an exact category to a non-additive
setting and call the resulting class of categories {\em proto-exact}. We show that the definition and
properties of the Waldhausen S-construction extend to this more general framework. The ``belian categories"
of Deitmar \cite{deitmar:belian} and categories of representations of quivers in pointed sets
studied by Szczesny \cite{szczesny:quivers} provide many examples of proto-exact categories.
Another important class of examples is given by various categories of Arakelov vector bundles, see
Example \ref{ex:arakelov}.  The role of the classical Waldhausen S-construction in algebraic K-theory
suggests that our construction should give a natural definition of K-groups in these more
general contexts.

Already the discrete case of $2$-Segal simplicial {\em sets}, requiring no homotopy theoretical
background at all, leads to an interesting theory presented in Chapter \ref{sec:discsegal}.  Such
structures axiomatize the idea of ``associative multivalued compositions". More precisely, for any
simplicial set $X$, we can consider the diagram in \eqref{eq:fund-corr-intro} above as a
correspondence (multivalued map) from $X_1\times_{X_0} X_1$ to $X_1$. The $2$-Segal condition can
then be regarded as the associativity of $\mu$ in the sense of composition of correspondences, the
only sense in which multivalued maps can be meaningfully composed.  Such an ``associative
correspondence" induces an associative multiplication in the usual sense on the linear envelope of
$X_1$, thus giving rise to a linear category $\Hall(X)$ which we call the {\em Hall category} of
$X$, see Section \ref{subsec:hall-algebra-discrete}. In Theorem \ref{thm:segal-2-categories}, we
show how to categorify the Hall category construction one more time so as not to lose any
information and to identify $2$-Segal sets with certain bicategories. As an alternative perspective,
we give an interpretation in terms of operads in Section \ref{subsec:operadic}. We give examples of
discrete $2$-Segal spaces relating to Bruhat-Tits complexes, set-theoretic solutions of the pentagon
equation, and pseudo-holomorphic polygons and conclude with Section \ref{subsec:bir-segal} on
examples of birational Segal schemes.

In the main body of the paper, we work in the general context of combinatorial model categories
which we recall in Chapter \ref{sec:model-cat}.  In particular, we understand spaces
combinatorially, as simplicial sets. In Chapter \ref{sec:2-segal-model}, we construct a model
structure $\seg_2$ on the category $\Sp_\Delta$ of simplicial spaces whose fibrant objects are
exactly the Reedy fibrant $2$-Segal spaces. More precisely, denoting by $\I$ the Reedy model
structure on $\Sp_\Delta$, we construct a chain of left Bousfield localizations
\begin{equation}\label{eq:chain-of-loc-intro}
(\Sp_\Delta, \I) \longrightarrow
(\Sp_\Delta, \seg_2) \longrightarrow
(\Sp_\Delta, \seg_1), 
\end{equation}
where $\seg_1$ is the model structure for $1$-Segal spaces constructed by Rezk \cite{rezk}. The
precise statement will be given in Theorem \ref{thm:S-d-localization} and depends crucially on the
fact that every $1$-Segal space is $2$-Segal. In particular, we obtain a construction of the
``$2$-Segal envelope" of any simplicial space $X$ as the fibrant replacement of $X$ with respect to
$\seg_2$.

In Chapter \ref{sec:pathspace}, we introduce the {\em path space criterion} which characterizes $2$-Segal
spaces in terms of $1$-Segal conditions: A simplicial space $X$ is a $2$-Segal space if and only if its associated simplicial
path spaces $\PI X$ and $\PF X$ are $1$-Segal spaces. 
In the context of $2$-Segal semi-simplicial sets, this criterion provides a natural explanation of
the following remarkable (but originally mysterious) observation of Kashaev and Sergeev
\cite{kashaev-sergeev}: if $C$ is a set and 
\[
s: C\times C \lra C\times C
\]
is a bijection satisfying the pentagon equation \eqref{eq:pentagon-eq-set}, then the first compoment
of $s$, considered as a binary operation $C\times C\to C$, is associative. 
   
In Chapter \ref{sec:2-segal-from-higher}, the path space criterion is essential to verify $2$-Segal
conditions in the context of $\infty$-categories: We use it to show that the Waldhausen
S-construction of an exact $\infty$-category is a $2$-Segal space. Here, we define the new concept
of an exact $\infty$-category as a non-linear higher generalization of Quillen's notion of an exact
category. For example, stable $\infty$-categories are examples of exact $\infty$-categories, hence
our result covers pre-triangulated dg categories and various categories appearing in stable homotopy
theory.  As another application of the path space criterion, we define the cyclic bar construction
of any $\infty$-category and show that it is a $2$-Segal space.

$2$-Segal spaces underlie practically all associative algebras ``formed by correspondences". In
Chapter \ref{sec:hall-alg}, we explain a general procedure of forming such algebras. The input is,
on one hand, a $2$-Segal simplicial object $X$ of a model category $\mC$ and on the other hand, a
{\em theory with transfer} $\hen$ on $\mC$. The latter is a functor compatible with products,
covariant under one class of morphisms and contravariant with respect to another class, satisfying
natural axioms. We then use the diagram $\mu$ above to produce a genuinely associative map 
\begin{equation}\label{eq:multiplication}
      m =  \partial_{1*} (\partial_2,\partial_0)^*:   
      \hen(X_1)\otimes\hen(X_1) \lra \hen(X_1),
\end{equation}
defining an algebra $\Hall(X,\hen)$ which we call the Hall algebra with coefficients in $\hen$.
Taking for $X$ various incarnations of the Waldhausen S-construction, we recover `` classical" Hall
algebras \cite{schiffmann:hall} ($\mC$ is the category of groupoids, $\hen$ is the space of
functions), derived Hall algebras of To\"en \cite{toen-derived} ($\mC$ is the category of spaces,
$\hen$ is the space of locally constant functions), motivic Hall algebras of Joyce \cite{joyce-II}
and Kontsevich-Soibelman \cite{KS-motivic} ($\mC$ is the category of stacks, $\hen$ is given by
motivic functions), etc.  Further, we observe that Hecke algebras arise via a theory with transfer
from a simplicial groupoid which we call the Hecke-Waldhausen space studied in Section
\ref{subsec:hecke-waldhausen}.

Given a $2$-Segal space $X$ and a suitable theory with transfer, the $2$-Segal conditions
corresponding to the triangulations \eqref{eq:square} are responsible for the associativity of the multiplication
\eqref{eq:multiplication}. The relevance of the higher $2$-Segal coherences can be understood in
terms of higher categorical structures. For example, in Chapter \ref{sec:higherhall}, we construct
the Hall monoidal $\infty$-category associated to $X$ which can be interpreted as a categorification
of the ordinary Hall algebra. 
In Chapter \ref{section:higher-bicat} we provide an alternative higher categorical interpretation of
$2$-Segal spaces within a $\inftytwo$-categorical theory of bispans, developed in Chapter
\ref{sec:spans}. In terms of this theory, we can functorially associate to a $2$-Segal space $X$ a
monad $A_X$ in the $\inftytwo$-category of bispans in spaces. If the space $X_0$ is contractible,
then we can reinterpret $A_X$ as an algebra object in the category of spans in spaces, equipped with
the pointwise Cartesian monoidal structure constructed in Chapter \ref{sec:spans}.\\

In a sequel to this work, we provide yet another interpretation of $2$-Segal spaces which is
suitable for a comparison statement between model categories: We can associate to a $2$-Segal space $X$ a
generalized $\infty$-operad $O_X$ in the sense of \cite{lurie.algebra}. On the one hand, the
$\infty$-operad $O_X$ can be easily obtained from the monad $A_X$. On the other hand, we can
construct a Quillen adjunction 
\[
	\sS \longleftrightarrow (\mSet)_{/\ND}
\]
between the category simplicial spaces equipped with the $2$-Segal model structure and the category
of marked simplicial sets over $\ND$, equipped with the model structure for {\em quadratic} operads.
This latter model structure is a localization of the model structure for non-symmetric generalized
$\infty$-operads (constructed using \cite[B.2]{lurie.algebra}). We expect that this Quillen
adjunction is in fact a Quillen equivalence, thus providing a complete description of the homotopy
theory of $2$-Segal spaces in $\infty$-categorical operadic terms. One interesting feature of this
description is the possibility to study algebras for the operad $O_X$.  We expect this notion to
provide a natural higher categorical generalization of the Deligne's theory of determinant functors
\cite{deligne}, and, more generally, the of notion of a charade \cite{kapranov-analogies} due to the
second author.

Let us indicate two further directions which will be taken up in the sequel to this paper. 
The first is the study of {\em cyclic} $2$-Segal spaces such as the cyclic bar
construction. We recall that Connes \cite{connes} has introduced a category $\Lambda$ containing the category
$\Delta$ of simplices, and cyclic objects in a category $\mC$ are contravariant functors
$X:\Lambda\to\mC$. So a cyclic object is a simplicial object with extra structure and we can hence 
speak about $2$-Segal objects in this context.
Above, we observed that, for each $n \ge 2$, the derived membrane space $RX_{\T}$ of a $2$-Segal
space $X$ is weakly independent of the choice of triangulation $\T$ of the convex polygon $P_n$. If
$X$ carries a cyclic structure, then we can ``globalize'' this statement to triangulations $\T$ of a
marked oriented surface $S$. Roughly, this construction goes as follows. The orientation of $S$
equips each of the triangles of $\T$ with a cyclic structure. We can glue these cyclic triangles to
obtain a cyclic set $\Lambda^{\T}$. 
The formalism of homotopy Kan extensions allows us to evaluate the cyclic space $X$ on
$\Lambda^{\T}$ which produces a {\em cyclic derived membrane space}. Again, this
homotopy type can be shown to be weakly independent of $\T$ in a coherent way which, in particular,
implies that it admits an action of the mapping class group of the marked surface $S$.
We expect this result to be particularly interesting in the context of $2$-periodic triangulated
dg categories: heuristic considerations predict the existence of a natural cyclic structure on the
Waldhausen S-construction. This cyclic structure seems to be highly interesting and opens up
potential connections between $2$-periodic triangulated categories (e.g., $2$-periodic orbit
categories, matrix factorization categories) and mapping class groups.

As the title of this paper suggests, we can view $1$- and $2$-Segal spaces as part of a hierarchy
consisting of successively larger classes of $d$-Segal spaces defined for $d \geq 0$, and a chain
of Bousfield localizations extending \eqref{eq:chain-of-loc-intro}. Systematic study of the case
$d\geq 3$ will be done in a sequel to this paper. It is based on R. Street's notion of {\em
orientals} \cite{street}. The main idea behind this notion is to subdivide the boundary of the
$d$-simplex into two combinatorial $(d-1)$-balls 
\[
	\partial\Delta^{d} = \partial_+\Delta^{d} \cup \partial_-\Delta^{d}
\]
with $\partial_+$, resp. $\partial_-$ obtained as the union of the faces $\partial_i$ with even,
resp. odd $i$. So for each simplicial set $X$, the correspondence \eqref{eq:fund-corr-intro} is
included (as a particular case $d=2$) into a hierarchy of correspondences
\[
\mu_d =\bigl\{ \Hom(\partial_-\Delta^{d+1}, X) \lla X_{d+1} = \Hom(\Delta^{d+1}, X) 
\lra \Hom(\partial_+\Delta^{d+1}, X)\bigr\}
\]
each of which can be viewed as a coherence condition for the previous one.  For $d=3$ the
$\partial_\pm\Delta^3$ form the two triangulations of the 4-gon, with $\Delta^3$ itself providing
the flip between them.  The $d$-Segal condition on a simplicial space $X$ is obtained, in the first
approximation, by forming a homotopy analog of $\mu_{d+1}$ and requiring that one or both of its arrows
be weak equivalences. This should be further complemented by ``associativity" conditions involving
various triangulations of {\em cyclic polytope} $C(n, d)\subset\RR^d$ with $n+1$ vertices, see
\cite{kapranov-voevodsky, rambau} which plays the role of a convex $(n+1)$-gon $P_n$.\\

\noindent
{\bf Acknowledgements.} We would like to thank A. Goncharov, P. Lowrey, J. Lurie, I. Moerdijk, P.
Pandit, and B. To\"en for useful discussions which influenced our understanding of the subject.

The first author was a Simons Postdoctoral Fellow while this work was carried out. The research of
the second author was partially supported by an NSF grant, by the Max-Planck-Institut f\"ur
Mathematik in Bonn and by the Universit\'e Paris-13.

\vfill\eject

\numberwithin{equation}{subsection}

\vfill\eject

\section{Preliminaries}\label{sec:preliminaries}

\subsection{Limits and Kan extensions}\label{subsec:kan}

We recall some aspects of the basic categorical concepts of limits and Kan extensions.  For more
background on this classical material see \cite{schubert,maclane,kelly,kashiwara-schapira}. 

Given a small category $A$, an {\em $A$-indexed diagram} (or simply $A$-diagram) in a category $\Cc$
is defined to be a covariant functor $F:A\to\Cc$. It is traditional to denote the value of $F$ on an object $a\in A$ by $F_a$ and to write
the diagram as $(F_a)_{a\in A}$, suppressing the notation for the values of $F$ on morphisms in $A$. 
We denote by
\[
  \Cc^A =\Fun(A,\Cc), \quad \Cc_A = \Fun(A^{\op}, \Cc)
\]
the categories of $A$-indexed (resp. $A^{\op}$-indexed) diagrams where the morphisms are given by
natural transformations.  The projective limit (or simply {\em limit}) and the inductive limit (or
{\em colimit}) of an $A$-indexed diagram $(F_a)_{a\in A}$ will, if they exist, be denoted by
$\varprojlim^\Cc_{a\in A} F_a$ and $\varinjlim_{a\in A}^\Cc F_a$, respectively. 
If $\Cc$ has all inductive and projective limits, we obtain functors
\begin{alignat*}{2}
	 \varinjlim{}^\Cc &: \Cc^A \lra \Cc, &\quad\quad & \varprojlim{}^\Cc : \Cc^A \lra \Cc,
\end{alignat*}
which are, respectively, left and right adjoint to the constant diagram functor
\[
	\kappa: \Cc\lra \Cc^A, \quad X\mapsto (X)_{a\in A}. 
\]
 
More generally, let $\phi: A\to B$ be a functor of small categories, and consider the pullback
functor
\[
	\phi^*: \Cc^B\lra \Cc^A, \quad (\phi^*G)(a) = G(\phi(a)), 
\]
reducing to $\kappa$ for $B=\pt$. The left and right adjoints to $\phi^*$
are, assuming they exist, known as the {\em left}, resp. {\em right Kan extension}
functors along $\phi$, denoted by
\begin{alignat*}{2}
	 \phi_! &: \Cc^A \lra \Cc^B, &\quad\quad & \phi_* : \Cc^A \lra \Cc^B,
\end{alignat*}
If $\Cc$ has all inductive and projective limits, then $\phi_!$ and $\phi_*$ exist and their values
on a functor $F: A\to \Cc$ are given by the formulas \cite[\S X.3, Thm. 1]{maclane}:
\begin{equation}\label{eq:kan-extensions-pointwise-limits}
 \begin{split}
	(\phi_!F)(b) & \cong \ind^\Cc_{\{\phi(a)\to b\}\in \phi/b} F(a),\\
	(\phi_*F)(b) & \cong \pro^\Cc_{\{b\to \phi(a)\}\in b/\phi} F(a). 
\end{split}
\end{equation}
Here the {\em comma category} $\phi/b$ has as objects pairs $(a,\phi)$, consisting of an object $a \in A$ and a
morphism $\phi(a) \to b$ in $B$, and similarly for $b/\phi$.
Further, the values of $\phi_!F$ and $\phi_*F$ on an arrow $b\to b'$ in $B$ can be found from
the pointwise formulas \eqref{eq:kan-extensions-pointwise-limits} by using the functoriality of
the limits.  
      
\vfill\eject

\subsection{Simplicial objects}

Let $\Delta$ be the category of finite nonempty standard ordinals and monotone maps. As usual, we
denote the objects of $\Delta$ by $[n]=\{0,1,...,n\}$, $n \ge 0$. A {\em simplicial object} in a category $\Cc$
is a functor $X:\Dop\to\Cc$. Since any finite nonempty ordinal is canonically isomorphic to a
standard ordinal, we may canonically extend $X$ to {\em all} finite nonempty ordinals; we leave this
extension implicit and use the notation $X_I$ for the value of $X$ on
an ordinal $I$. Further, we write $X_n$ for the object $X_{[n]}$ of $\C$. The objects $\{X_n\}$ are
related by the 
{\em face} and {\em degeneracy morphisms}
\[
  \partial_i: X_n\lra X_{n-1}, \; i=0, ..., n, \quad s_i: X_n\lra X_{n+1}, \; i=0, ..., n,
\]
satisfying the standard simplicial identities, see \cite{gabriel-zisman}.  
To emphasize that $X$ is a simplicial object, we sometimes
write it as $X_\bullet$ or $(X_n)_{n\geq 0}$. Using the notation introduced above, the category of simplicial objects in 
$\Cc$ will be denoted by $\Cc_\Delta$.
 
\begin{ex} In this paper we will be mostly interested in simplicial objects in the three following 
   categories. First, the category $\Cc = \Set$ of sets, so that objects of $\Set_\Delta$ are
   simplicial sets. We denote by $\Sp=\Set_\Delta$ the category of simplicial sets. 
   Second, the category $\Cc = \Top$
   of compactly generated topological spaces, see, e.g., 
   \cite{hovey}. Objects of
   $\Top_\Delta$ will be called {\em simplicial spaces}. Third, the category $\Cc = \Sp$.
   Simplicial objects in $\Sp$ will be called {\em combinatorial simplicial spaces} and can be identified with {\em bisimplicial sets}. 
\end{ex}
  
Let $\Delta_{\inj}\subset\Delta$ denote the subcategory formed by injective morphisms. By a {\em
semi-simplicial object} in a category $\C$ we mean a contravariant functor $\Delta_\inj\to \C$. The
category of such objects will be denoted by $\C_{\Delta_\inj}$. Thus a semi-simplicial object in
$\C$ gives rise to a sequence $\{X_n\}$ of objects in $\C$, related by face maps as above, but without
degeneracy maps. For example, semi-simplicial objects in $\Set$ have been studied in
\cite{rourke-sanderson} under the name of $\Delta$-sets. Any simplicial object can be considered
as a semi-simplicial object by restricting the functor from $\Delta$ to $\Delta_\inj$. Even though we
focus on simplicial objects, much of the theory developed in this work will also be applicable to {\em semi}-simplicial objects. 

For a natural number $n \ge 0$, we introduce
the {\em standard $n$-simplex} $\Delta^n \in \Set_\Delta$, 
which is the representable functor
\[
  \Delta^n: \Dop \lra \Set,\; [m] \mapsto \Hom_{\Delta}([m],[n]).
\]
We have a natural isomorphism $\Hom_{\Sp}(\Delta^n, D) \cong D_n$ for any simplicial set $D$.
Occasionally, it will be convenient to define the {\em $I$-simplex}
$\Delta^I := \Hom_{\Delta}(-, I) \in \Sp$
for any finite ordinal $I$, where as above, we canonically identify $\Delta$ with the
category of {\em all} finite nonempty ordinals.
Any simplicial set $D$ can be realized as an inductive limit 
of a diagram indexed by its {\em category of simplices},
by which we mean the comma category $\Delta/D$
formed by all morphisms $\Delta^n\to D$ in $\Sp$, $n \ge 0$:
\begin{equation}
  \label{eq:sset-lim-simplices}
  D \cong \varinjlim{}_{\{(\Delta^n \to D) \in \Delta/D\}}^{\Sp}\; \Delta^n.
\end{equation}
 In fact, this is a general property of functors 
from any category to $\Set$: any such functor is an inductive limit of representable functors. 

We further denote by
\[
   |\Delta^I| =\bigl\{ p=(p_i)_{i\in I}\in\RR^{I}| p_i\geq 0, \sum p_i=1\bigr\}\in\Top
\]
the {\em geometric $I$-simplex}. Here, the {\em geometric realization} $|D|$
of a simplicial set $D$ is the topological space obtained by replacing $\Delta^n$
with $|\Delta^n|$ in \eqref{eq:sset-lim-simplices}:
\[
    |D| = \varinjlim{}_{\{\Delta^n\to D\}}^{\Top} \; |\Delta^n|.
\]
 
\begin{rem} More generally, one can define the geometric realization
 of any simplicial space $X\in\Top_\Delta$ by gluing the spaces
 $X_n \times |\Delta^n|$ or, more precisely, forming the coend (see \cite{maclane})
 of the bivariant functor 
 $$X_\bullet\times |\Delta^\bullet|: \Dop\times\Delta\lra \Top, \quad ([m], [n])\mapsto X_m\times |\Delta^n|.$$
\end{rem}
  
We introduce some standard examples of simplicial objects.
  
\begin{exas}\label{ex:nerve,fat-simplex}
  \begin{exaenumerate}
    \item For a set $I$ we define the {\em fat $I$-simplex} to be the simplicial set
	 $(\Delta^I)'$ given by
	 \[
	 (\Delta^I)'_J = \Hom_{\Set}(J,I),
	 \]
	 where we consider {\em all} maps between the sets underlying the ordinals $J$ and $I$. 
	 As usual, in the case $I=[n]$, we write $(\Delta^n)'$ for $(\Delta^I)'$.
   \item For a small category $\Cc$ we denote by $\N\Cc$ the {\em nerve} of $\Cc$. This is a simplicial set,
      with $\N_n\Cc$ being the set of functors $[n]\to\Cc$, 
      where the ordinal $[n]$ is considered as a category. Explicitly, we have the formula
      \[
      \N_n\Cc = \coprod_{x_0, ..., x_n\in\Ob(\Cc)} \Hom_\Cc (x_0, x_1)\times \cdots
      \times \Hom_\Cc (x_{n-1}, x_n).
      \]
      For instance, the fat simplex $(\Delta^I)'$ is the nerve of the category with the set of
      objects $I$ and one morphism between any two objects. We write $B\Cc = |\N \Cc|$ for the
      geometric realization of the nerve and call it the {\em classifying space} of $\Cc$. 
      
      More generally, by a {\em semi-category} we mean a structure consisting of objects,
      morphisms and their associative composition (as in the ordinary concept of a category) but
      without requiring the existence of identity morphisms. For instance, a semi-category with one
      object is the same as a semigroup, while a category with one object
      is a monoid (a semigroup with unit). For any semi-category $\Cc$, we can define its nerve
      $\N\Cc$ as a semi-simplicial set.  
	  
   \item Let $\Cc$ be a small {\em topological category}, i.e., a small category enriched
     in $\Top$. Then we can define a topological nerve $\N_{on{top}}\Cc$ which is naturally a simplicial space. 
 
   \item Any (semi-)simplicial set $X$ gives rise to the {\em discrete (semi-)simplicial space} $\disc{X}$
 	so that $\disc{X}_n = X_n$ considered with discrete topology.
       Any topological space $Z\in\Top$ gives rise to a 
       {\em constant simplicial space}, also denoted by $Z$, so that $Z_n=Z$ and
       all face and degeneracy morphisms are identity maps. 
  \end{exaenumerate}
\end{exas}

\subsection{Homotopy limits of diagrams of spaces.}\label{subsec:homotopy-limits-spaces}

Homotopy limits were originally introduced by Bousfield and Kan \cite{bousfield-kan}
using explicit constructions, usually referred to as bar and cobar constructions, 
which we now recall. 

\begin{defi}\label{def:hopro}
Let $Y = (Y_a)_{a\in A}$ be a diagram in $\Top$. 
\begin{enumerate}[topsep=0.5pc,label=(\alph{*})]
	
  \item Assume that each space $Y_a$ is a retract of a CW-complex. Then the 
  {\em homotopy inductive limit} (or {\em homotopy colimit}) of $Y$, denoted by  
	 $\hoind_{a\in A} Y_a$, is the geometric realization of the simplicial space $\underrightarrow Y_\bullet$,
	 defined by
	 \[
	 \underrightarrow Y_n = \coprod_{a_0\to ...\to a_n} Y_{a_0},
	 \]
	 where we take the disjoint union over all chains of composable morphisms in $A$. 

	\item The {\em homotopy projective limit} (or {\em homotopy limit}) of $Y$, denoted by
	 $\hopro_{a\in A} Y_a$, is the topological space 
	 formed by the following data:
	 \begin{enumerate}[label=(\arabic{*})]
	\item For each object $a\in A$, a point $y_a\in Y_a$;

	\item For each morphism $a\buildrel u\over\rightarrow b$
	 in $A$, a path (singular 1-simplex) $y_{a\to b}: [0,1]\to Y_b$ with $y_{a\to b}(0)= u_*(y_a)$ and
	 $y_{a\to b}(1)=y_b$. 

	\item For each composable pair of morphisms $a\buildrel u\over\to b\buildrel v\over\to c$ in $A$,
	a singular triangle $y_{a\to b\to c}: |\Delta^2|\to Y_c$ whose restrictions to the three sides of $\Delta^2$
	are $y_{b\to c}$, $y_{a\to c}$, and $v_*(y_{a\to b})$.

	\item For each composable triple of morphisms $a\buildrel u\over\to b\buildrel v\over\to c\buildrel w\over\to d$
	in $A$, a singular tetrahedron $y_{a\to b\to c\to d}: |\Delta^3|\to Y_d$ whose restriction to the 2-faces
	are $y_{b\to c\to d}$, $y_{a\to c\to d}$, $y_{a\to b\to d}$ and $w_*(y_{a\to b\to c})$.

\quad \vdots

\item[(n)] The analogous data for each composable $n$-chain of morphisms in $A$. 
\end{enumerate}
	The topology on $\hopro_{a\in A} Y_a$ is induced from the compact-open topology on
	mapping spaces.
\end{enumerate}
\end{defi}

There are various frameworks which allow for a more conceptual definition of homotopy limits. For
an approach using model categories, see Chapter \ref{sec:model-cat} and specifically \S \ref
{subsec:holim-model} below. In the later chapters, we will also utilize the $\infty$-categorical theory
of limits. In both contexts, one can show that, for diagrams of spaces, homotopy limits can be
computed using the formula given in Definition \ref{def:hopro}.

For now, it will be sufficient to introduce the notion of a {\em weak equivalence} in $\Top$ which
is a morphism $f: X\to Y$ inducing a bijection on $\pi_0$ and, for every $i \ge 1$, an isomorphism
$\pi_i(X,x)\to\pi_i(Y, f(x))$. Further, a morphism $f: (Y_a)_{a\in A}\to (Y'_a)_{a\in A}$ of
diagrams in $\Top$ will be called a weak equivalence, if each $f_a$ is a weak equivalence.

Note that we have natural maps
\begin{equation}
   \label{eq:lim-holim}
   \varprojlim{}_{a\in A}^{\Top} Y_a \lra\hopro{}_{a\in A}Y_a, \quad \quad
   \hoind_{a\in A} Y_a \lra \ind{}_{A\in A}^{\Top} Y_a. 
\end{equation}
Further, note that, on the level of connected components, homotopy limits are given by 
set-theoretic limits:
\begin{equation}
   \label{eq:pi-0-lim-holim}
   \pi_0 \hoind_{a\in A} Y_a = \ind{}^{\Set}_{a\in A} \pi_0(Y_a), \quad 
   \pi_0 \hopro_{a\in A} Y_a = \pro{}^{\Set}_{a\in A} \pi_0(Y_a). 
\end{equation}
   
\begin{exas}  
  \begin{exaenumerate}
    \item The homotopy limit
	\[ 
      	 X\times^R_Z Y := \hopro \bigl\{ X\buildrel f\over \lra Z\buildrel g\over \lla Y\bigr\}
	\]
	 is known as the {\em homotopy fiber product} of $X$ and $Y$ over $Z$. 
	 Up to weak equivalence, this is the space consisting of
	 triples $(x,y, \gamma)$, where $x\in X$, $y\in Y$ and $\gamma$ is a path in $Z$, joining $f(x)$ and $g(y)$. 

     \item The homotopy limit
	 \[
	 Rf^{-1}(y) =\hopro \bigl\{ X\buildrel f\over\lra Y\lla \{y\} \bigr\}, \quad y\in Y,
	 \]
	 is known as the {\em homotopy fiber} of $f$ over $y$. Up to weak equivalence, the homotopy
	 fiber is given by the space consisting of pairs $(x,\gamma)$, where $x\in X$
	 and $\gamma$ is a path joining $f(x)$ and $y$. 
  \end{exaenumerate}
\end{exas}

The following is a crucial property of homotopy limits.  

\begin{prop}\label{prop:invariance-holim-top}
Let $f: (Y_a\to Y'_a)_{a\in A}$ be a weak equivalence of diagrams in $\Top$.
Then the induced map 
\[
\hopro(f): \hopro_{a\in A} Y_a\to\hopro_{a\in A} Y'_a
\]  
is a weak equivalence. Assume further that all spaces $Y_a$, $Y'_a$ are retracts of CW-complexes.
Then we have a weak equivalence
\[
\hopro(f): \hoind_{a\in A} Y_a\to\hoind_{a\in A} Y'_a.
\]  
\end{prop}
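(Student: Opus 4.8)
The plan is to reduce both assertions to invariance properties of the two point-set constructions out of which $\hopro$ and $\hoind$ are built — totalization of cosimplicial spaces and geometric realization of simplicial spaces — after checking that $f$ induces a \emph{levelwise} weak equivalence of the relevant (co)simplicial replacement. In each case the cobar/bar gadget of Definition~\ref{def:hopro} is, by design, a point-set model computing the correct homotopy type (cf.\ \cite{bousfield-kan}), so the only substantive input is the classical realization lemma together with its dual for totalizations.

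First I would handle the homotopy limit. Unwinding the data in Definition~\ref{def:hopro}, a point of $\hopro_{a\in A}Y_a$ is exactly a compatible family of singular simplices $|\Delta^n|\to \prod_{a_0\to\cdots\to a_n} Y_{a_n}$, so that $\hopro_{a\in A}Y_a$ is the totalization $\operatorname{Tot}$ of the cosimplicial space $C^\bullet$ with $C^n=\prod_{a_0\to\cdots\to a_n}Y_{a_n}$, the Bousfield--Kan cosimplicial replacement. The map $f$ induces $C^n(f)\colon C^n\to (C')^n$, a product of the weak equivalences $Y_{a_n}\to Y'_{a_n}$; since homotopy groups commute with products, each $C^n(f)$ is again a weak equivalence. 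Moreover $C^\bullet$ is automatically Reedy fibrant: its matching maps are (split) projections off products, hence fibrations. As totalization carries a levelwise weak equivalence between Reedy fibrant cosimplicial spaces to a weak equivalence, the induced map $\hopro(f)$ is a weak equivalence. Note that no CW hypothesis enters, precisely because the cosimplicial replacement is fibrant for \emph{every} diagram.

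Next I would handle the homotopy colimit, which by Definition~\ref{def:hopro} is the geometric realization of the simplicial space $\underrightarrow Y_\bullet$ with $\underrightarrow Y_n=\coprod_{a_0\to\cdots\to a_n}Y_{a_0}$. Here the CW-retract hypothesis is what guarantees that $\hoind$ is defined at all for both $Y$ and $Y'$ and computes the correct homotopy type. The map $f$ induces a morphism of simplicial spaces which in degree $n$ is a coproduct of the weak equivalences $Y_{a_0}\to Y'_{a_0}$, hence a levelwise weak equivalence. The bar construction is moreover \emph{proper} (good): its degeneracy maps, and more generally its latching inclusions, are inclusions of subcoproducts indexed by degenerate chains, hence closed cofibrations. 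The realization lemma — that geometric realization sends a levelwise weak equivalence between proper simplicial spaces to a weak equivalence — then shows that the induced map on $\hoind$ is a weak equivalence.

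The substantive step, and hence the main obstacle, is the realization lemma and its cosimplicial dual: that $|{-}|$ and $\operatorname{Tot}$ preserve levelwise weak equivalences under the respective properness and Reedy-fibrancy conditions. The standard route runs through the skeletal (resp.\ coskeletal) filtration of $\Delta$, reducing at each stage to a gluing lemma along cofibrations (resp.\ a pullback lemma along fibrations), where properness (resp.\ fibrancy) of the latching (resp.\ matching) maps makes each stage a homotopy pushout (resp.\ pullback), and then passing to the (co)limit over the tower. Once this lemma is in hand — either classically as in \cite{bousfield-kan} or, more conceptually, from the model-categorical framework of Chapter~\ref{sec:model-cat} — the two reductions above are routine. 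The one asymmetry worth flagging is that the totalization half needs no cofibrancy assumption, since products of fibrations are again fibrations, whereas the realization half relies on the CW-retract hypothesis both to define $\hoind$ and to ensure that realization is homotopically meaningful.
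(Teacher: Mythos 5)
Your proof is correct, but it takes a genuinely different route from the paper's. The paper in fact states this proposition without proof in the preliminaries and discharges it only later, in \S \ref{subsec:holim-model}: there (Proposition \ref{prop:holim-simp-top}) the argument is the purely formal one-liner that the explicit cobar and bar constructions of Definition \ref{def:hopro} compute derived limit and colimit functors (Theorem \ref{thm:dhks}, quoting \cite{dhks}), and a derived functor by its universal property sends weak equivalences of diagrams to isomorphisms in the homotopy category. You instead give the classical Bousfield--Kan argument: identify $\hopro$ with $\operatorname{Tot}$ of the cosimplicial replacement, observe Reedy fibrancy and apply the totalization lemma via the tower of fibrations $\operatorname{Tot}_n$; dually, identify $\hoind$ with the realization of the proper simplicial bar construction and apply the realization lemma via the skeletal filtration. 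Your route is self-contained modulo those two classical lemmas and makes visible exactly where each hypothesis is used; the paper's route is shorter but outsources all the content to the model-categorical machinery it only sets up two chapters later. One small point worth sharpening in your colimit half: if one wants to run the argument inside the Quillen model structure (realization as a left Quillen functor on Reedy cofibrant objects), the precise role of the CW-retract hypothesis is to make the latching inclusions \emph{Quillen} cofibrations --- the complementary summand $\coprod Y_{a_0}$ over nondegenerate chains must be cofibrant --- whereas ``summand inclusions are closed cofibrations'' only gives Hurewicz properness; your phrasing blurs these two formulations of the realization lemma, though either one, correctly invoked, closes the argument.
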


We now recall a concept related to that of the homotopy limit. Denote by $\Cat$ the category of
small categories with morphisms given by functors. By a {\em diagram of categories} we mean a
functor from a small category $A$ to $\Cat$. 

\begin{defi}\label{def:2lim}
Let $(\Cc_a)_{a\in A}$ be a diagram of categories.  
The {\em projective 2-limit} $\twopro_{a\in A} \Cc_a$ is the category whose objects
are data consisting of:

\begin{itemize}
\item[(0)] An object $y_a\in\Cc_a$, given for each $a\in\Ob(A)$. 

\item[(1)] An isomorphism $y_u: u_*(y_a)\to y_b$ in $\Cc_b$, given for each morphism $u: a\to b$ in $A$. 

\item[(2)] The $y_u$ are required to satisfy the compatibility condition: For each
each composable pair of morphisms $a\buildrel u\over\to b\buildrel v\over\to c$ in $A$, we should have
$y_{vu}=y_v\circ v_*(y_u)$. 
\end{itemize}
A morphism in $\twopro_{a\in A} \Cc_a$ from $(y_a, y_u)$ to $(y'_a, y'_u)$
is a system of morphisms $y_a\to y'_a$ in $\Cc_a$ commuting with the $y_u$ and $y'_u$. 
\end{defi}

In particular, we have the {\em $2$-fiber product} of categories
  \[
  \Cc\times_{\Dc}^{(2)}\Ec = \twopro\bigl\{ \Cc \buildrel p\over\lra \Dc
\buildrel q\over\lla \Ec\bigr\}.
\]

\begin{prop}\label{prop:invariance-2lim}
If $(\Cc_a\to \Cc'_a)_{a\in A}$ is a morphism of diagrams in $\Cat$
consisting of equivalences of categories, then the induced morphism 
$\twopro_{a\in A} \Cc_a\to\twopro_{a\in A} \Cc'_a $ is an equivalence 
of categories as well. 
\end{prop}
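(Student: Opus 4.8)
The plan is to show that the induced functor $F \colon \twopro_{a\in A}\Cc_a \to \twopro_{a\in A}\Cc'_a$ is both fully faithful and essentially surjective, hence an equivalence. Write $F_a \colon \Cc_a \to \Cc'_a$ for the component functors. Since a morphism of diagrams in $\Cat$ is a (strict) natural transformation of functors $A \to \Cat$, we have $F_b\circ u_* = u_*\circ F_a$ on the nose for every $u\colon a\to b$ in $A$. Consequently $F$ may be described explicitly: it sends an object $(y_a, y_u)$ to $(F_a(y_a), F_b(y_u))$, where $F_b(y_u)$ is legitimately an isomorphism $u_*(F_a(y_a)) = F_b(u_*(y_a)) \to F_b(y_b)$, and it sends a morphism $(w_a)$ to $(F_a(w_a))$.

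Faithfulness and fullness I would dispatch first, as they are formal. For faithfulness, if $(F_a(w_a)) = (F_a(w'_a))$ then each $F_a$ being faithful forces $w_a = w'_a$. For fullness, given a morphism $(v_a)\colon F(y_a,y_u)\to F(y'_a,y'_u)$, I would use fullness and faithfulness of each $F_a$ to lift $v_a$ to a unique $w_a\colon y_a\to y'_a$ with $F_a(w_a)=v_a$; the only thing left is to check that the family $(w_a)$ is compatible with the transition isomorphisms as required for a morphism in Definition \ref{def:2lim}, i.e.\ that $y'_u\circ u_*(w_a) = w_b\circ y_u$. This follows by applying the faithful functor $F_b$ to both sides and invoking the compatibility satisfied by $(v_a)$ together with $F_b\circ u_* = u_*\circ F_a$.

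The essential surjectivity is where the actual content lies. Starting from an object $(z_a, z_u)$ of $\twopro_{a\in A}\Cc'_a$, I would first use essential surjectivity of each $F_a$ to pick an object $y_a\in\Cc_a$ together with an isomorphism $\theta_a\colon F_a(y_a)\to z_a$. To build the transition isomorphisms I would set $y_u$ to be the unique morphism, guaranteed by full faithfulness of $F_b$, with $F_b(y_u) = \theta_b^{-1}\circ z_u\circ u_*(\theta_a)$; since a fully faithful functor reflects isomorphisms and the right-hand side is a composite of isomorphisms, $y_u$ is automatically an isomorphism. By construction the family $(\theta_a)$ is then a morphism from $F(y_a,y_u)$ to $(z_a,z_u)$ in the $2$-limit, so it remains only to confirm that $(y_a, y_u)$ is a bona fide object of $\twopro_{a\in A}\Cc_a$.

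The main obstacle, and the one place demanding a genuine computation, is verifying that the $y_u$ just defined satisfy the cocycle condition $y_{vu} = y_v\circ v_*(y_u)$ of Definition \ref{def:2lim}. My approach is to apply the faithful functor $F_c$ and reduce the identity to one in $\Cc'_c$: expanding $F_c(y_v\circ v_*(y_u))$ using $F_c\circ v_* = v_*\circ F_b$ together with the defining formulas for $y_u$ and $y_v$, the intermediate $v_*(\theta_b)$ terms cancel, and the cocycle identity $z_{vu} = z_v\circ v_*(z_u)$ for the target object collapses the expression precisely to $F_c(y_{vu})$. Faithfulness of $F_c$ then yields the desired equality. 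Once essential surjectivity is established alongside full faithfulness, the conclusion that $F$ is an equivalence of categories is immediate.
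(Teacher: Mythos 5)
Your proof is correct. The paper states Proposition \ref{prop:invariance-2lim} without proof, treating it as a standard fact, so there is no argument of the authors' to compare against; your direct verification --- full faithfulness handled formally via the componentwise equivalences and the strict naturality $F_b\circ u_* = u_*\circ F_a$, and essential surjectivity by transporting the transition isomorphisms along chosen $\theta_a$ and checking the cocycle condition after applying the faithful $F_c$ --- is exactly the expected argument, and all the details (including the verification that $(\theta_a)$ is a morphism in the $2$-limit) check out.
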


We recall that a {\em groupoid} is a category with all morphisms invertible. 

\begin{prop}\label{prop:2lim-holim}
(a) For any diagram of categories $(\Cc_a)_{a\in A}$
we have a natural morphism of spaces
\[
f: B\bigl(\twopro{}_{a\in A} \Cc_a\bigr) \lra \hopro{}_{a\in A} B\Cc_a. 
\]

(b) Assume that $(\Cc_a)_{a\in A}$ is a diagram of groupoids. Then $\twopro{}_{a\in A} \Cc_a$ is a groupoid,
and $f$ is a weak equivalence.
\end{prop}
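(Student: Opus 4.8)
The plan is to construct $f$ from the universal ``$2$-cone'' that defines the $2$-limit, and then, in the groupoid case, to reduce the weak-equivalence claim to a comparison of $\pi_0$ and $\pi_1$.

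For part (a), I would first record the evaluation functors. For each $a\in A$ there is a functor $e_a\colon \twopro_{a}\Cc_a\to\Cc_a$ sending a compatible system $(y_a,y_u)$ to its value $y_a$, and for each morphism $u\colon a\to b$ the isomorphisms $y_u$ assemble into a natural isomorphism $\eta_u\colon u_*\circ e_a\Rightarrow e_b$; naturality with respect to a morphism $(\phi_a)$ of $\twopro_a\Cc_a$ is exactly the relation $y'_u\circ u_*(\phi_a)=\phi_b\circ y_u$. The compatibility condition (2) of Definition \ref{def:2lim} says precisely $\eta_{vu}=\eta_v\circ v_*(\eta_u)$, so $(e_a,\eta_u)$ is a strict $2$-cone from $\twopro_a\Cc_a$ to the diagram $(\Cc_a)$. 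Concretely, a chain $a_0\xrightarrow{u_1}\cdots\xrightarrow{u_n}a_n$ in $A$ together with these data assembles the $(n{+}1)$ functors $(u_n)_*\cdots(u_{i+1})_*\circ e_{a_i}$ and the natural isomorphisms between consecutive ones into a single functor $\twopro_a\Cc_a\times[n]\to\Cc_{a_n}$. Applying $B$ and using $B([n])=|\Delta^n|$ produces maps $B(\twopro_a\Cc_a)\times|\Delta^n|\to B\Cc_{a_n}$, one per chain; for $n=0,1,2$ these are exactly the points $y_a$, the paths $y_{a\to b}$, and the singular triangles of Definition \ref{def:hopro}, with the higher data produced automatically. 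Checking that these are compatible with the coface and codegeneracy operators on chains is the content of $(e_a,\eta_u)$ being a $2$-cone, and the assembled data define the natural map $f$.

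For part (b), I would first verify that $\twopro_a\Cc_a$ is a groupoid: given a morphism $(\phi_a)$, each $\phi_a$ is invertible in the groupoid $\Cc_a$, and inverting the squares $y'_u\circ u_*(\phi_a)=\phi_b\circ y_u$ shows that $(\phi_a^{-1})$ is again a morphism of $\twopro_a\Cc_a$, so $(\phi_a)$ is invertible. Consequently $B(\twopro_a\Cc_a)$ is a homotopy $1$-type, and to prove that $f$ is a weak equivalence it suffices to show that the target is a $1$-type and that $f$ is a bijection on $\pi_0$ and an isomorphism on each $\pi_1$.

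The hard part will be controlling the target. Each $B\Cc_a$ is a $1$-type, and I would argue that a homotopy limit over a $1$-category of a diagram of $1$-types is again a $1$-type, using the standard tower (totalization) description of $\hopro$, whose equivalence with Definition \ref{def:hopro} is classical: the successive fibers of the tower are products of iterated loop spaces $\Omega^k B\Cc_a$, which are contractible for $k\ge 2$, so the tower stabilizes after finitely many stages at a space built from the $1$-types $B\Cc_a$ by homotopy pullbacks, hence a $1$-type. Granting this, the computation of $\pi_0$ and $\pi_1$ is a matter of unwinding Definition \ref{def:hopro} through the identification $\Pi_1(B\Cc_a)\simeq\Cc_a$: a path $y_{a\to b}$ is the same datum as a morphism $u_*(y_a)\to y_b$, a singular triangle exists and is unique up to homotopy exactly when the cocycle relation $y_{vu}=y_v\circ v_*(y_u)$ holds, and all higher simplicial data are contractible choices. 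Thus $\pi_0$ of the target is the set of isomorphism classes of objects of $\twopro_a\Cc_a$ and $\pi_1$ at a system is its group of automorphisms in $\twopro_a\Cc_a$, which are matched by $f$ by construction. The genuine obstacle is precisely this last bookkeeping: verifying that the coherence data of Definition \ref{def:hopro} beyond the triangle level contribute nothing new, so that the homotopy limit records exactly the $2$-limit and nothing more, together with the care needed to handle base points and the non-abelian nature of $\pi_1$ when identifying automorphism groups.
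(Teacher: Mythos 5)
Your part (a) is the paper's argument in different clothing: unwinding a $p$-simplex of $N\bigl(\twopro{}_{a\in A}\Cc_a\bigr)$ into the compatible family of maps $\Delta^n\times\Delta^p\to N\Cc_{a_n}$ indexed by $n$-chains in $A$ is exactly what your strict $2$-cone $(e_a,\eta_u)$ produces, so there is nothing to add there.

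For part (b) you take a different, though closely related, route. The paper constructs an explicit two-sided homotopy inverse: using that $\operatorname{Sing}(Y)\to N\Pi_1(Y)$ realizes to a homotopy equivalence whenever $\pi_{\geq 2}Y=0$, and that $\Cc\to\Pi_1(B\Cc)$ is an equivalence for a groupoid $\Cc$, it produces a map $g:\operatorname{Sing}\bigl(\hopro{}_{a\in A} B\Cc_a\bigr)\to N\bigl(\twopro{}_{a\in A}\Pi_1(B\Cc_a)\bigr)$ and checks that $g$ inverts $f$ by appealing to the homotopy invariance of $\hopro$ and of $\twopro$ (Propositions \ref{prop:invariance-holim-top} and \ref{prop:invariance-2lim}). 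You instead show both sides are $1$-types and compare $\pi_0$ and $\pi_1$. The underlying content is the same --- $\Pi_1(B\Cc)\simeq\Cc$ for groupoids, and the observation that the data of Definition \ref{def:hopro} in simplicial degrees $\geq 3$ are contractible choices over $1$-types --- but the paper's packaging buys two things you must pay for separately. First, it only ever uses that each individual $B\Cc_a$ is a $1$-type, never that the target is; your Tot-tower justification of the latter is a bit loose (surjectivity of the stages $\on{Tot}_s\to\on{Tot}_{s-1}$ is an extra lifting issue), and a cleaner argument is that $\Omega^2$ commutes with homotopy limits and annihilates $1$-types, so $\pi_{\geq 2}$ of the target vanishes by Proposition \ref{prop:invariance-holim-top} applied to the resulting diagram of contractible double loop spaces. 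Second, it sidesteps the non-abelian bookkeeping in identifying $\pi_0$ and $\pi_1$ of the homotopy limit with isomorphism classes and automorphism groups in the $2$-limit; this is exactly the ``genuine obstacle'' you flag, and the paper's maps $h_Y$ and $g$ are precisely the device that organizes it. Your route closes, and is arguably more transparent at the level of ideas, but be aware that the deferred bookkeeping is where essentially all of the work lives.
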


\begin{proof} (a) A vertex of $N\bigl(\twopro{}_{a\in A} \Cc_a\bigr)$,
i.e., an object of $\twopro{}_{a\in A} \Cc_a$, gives a datum as in
Definition \ref{def:hopro}, in fact a datum consisting of a combinatorial
$n$-simplex $\Delta^n\to N\Cc_{a_n}$ for each composable chain
$a_0\to ...\to a_n$ of $n$ morphisms in $A$
(which then gives a singular $n$-simplex in $B\Cc_a$). This datum
gives therefore a point of $\hopro{}_{a\in A} B\Cc_a$. 
  Further, for a 
combinatorial $p$-simplex $\sigma: \Delta^p\to N\bigl(\twopro{}_{a\in A} \Cc_a\bigr)$
we get, in the same way, a  morphism 
of simplicial sets $\Delta^n\times \Delta^p\to N\Cc_{a_n}$, and these
morphisms give a map $f_\sigma: |\Delta^p|\to \hopro{}_{a\in A} B\Cc_a$. 
It is straightforward to see that the $f_\sigma$ assemble into the claimed map $f$. 

(b) The fact that $\twopro{}_{a\in A} \Cc_a$ is a groupoid is obvious from the
definition of its morphisms. We now construct a homotopy inverse for $f$. 
For a space $Y\in\Top$ let $\operatorname{Sing}(Y)$ be its singular
simplicial set, so that the natural map $|\operatorname{Sing}(Y)|\to Y$ is a
homotopy equivalence. Let also $\Pi_1(Y)$ be the fundamental groupoid
of $Y$, so $\Ob(\Pi_1(Y))=Y$ and $\Hom_{\Pi_1(Y)}(x,y)$ is the set of
homotopy classes of paths from $x$ to $y$. We have a natural morphism of
simplicial sets $h_Y: \operatorname{Sing}(Y)\to N\Pi_1(Y)$. If all
the connected components of $Y$ have $\pi_{\geq 2}=0$, then
$|h_Y|$ is a homotopy equivalence. This is true, in particular, if $Y=B\Cc$
where $\Cc$ is a groupoid. In that case we also have that
the natural functor of groupoids $\Cc\to \Pi_1(B\Cc)$ is an
 equivalence. 

Further, if $(Y_a)_{a\in A}$ is any diagram in $\Top$, then we have a morphism of
simplicial sets
$$g: \operatorname{Sing}\bigl( \hopro_{a\in A} Y_a\bigr) \lra
N\bigl( \twopro{}_{a\in A} \Pi_1(Y_a) \bigr). 
$$
We apply this to $Y_a=B\Cc_a$.
Propositions \ref{prop:invariance-holim-top} and \ref{prop:invariance-2lim}
together with the above equivalences
 imply that $g$ is homotopy inverse to $f$. 
\end{proof}

We will also need a slight generalization of homotopy limits:
the homotopy version of the concept of the end of a bifunctor.
Let us present an explicit definition using a kind of cobar construction.
  
Let $A$ be a small category and $Y: A^\op\times A\to\Top$ be a bifunctor. Thus for each morphism
$u: a\to b$ and each object $c$ of $A$ we have the maps
\[
	u_*: Y(c,a) \lra Y(c,b), \quad u^*: Y(b,c)\lra Y(a,c).
\]

\begin{defi}\label{def:derived-end-topological}
The {\em homotopy end} of $Y$, denoted by $\hoend_{a\in A} Y(a,a)$
is the topological space  formed by the following data:
\begin{enumerate}
	\item[(0)] For each object $a\in A$, a point $y_a\in Y(a,a)$.
  
	\item[(1)] For each morphism $a\buildrel u\over\rightarrow b$
	    in $A$, a path (singular 1-simplex) $y_{a\to b}: [0,1]\to Y(a,b)$ with $y_{a\to b}(0)= u_*(y_a)$ and
	    $y_{a\to b}(1)=u^*y_b$. 

	\item[(2)] For each composable pair of morphisms $a\buildrel u\over\to b\buildrel v\over\to c$ in $A$,
	    a singular triangle $y_{a\to b\to c}: |\Delta^2|\to Y(a,c)$ whose restrictions to the three sides of $\Delta^2$
	    are $u^*y_{b\to c}$, $y_{a\to c}$, and $v_*(y_{a\to b})$.
	
	    \quad \vdots

	\item[($n$)] And so on for composable chains of morphisms of any length $n-1\geq 0$.
\end{enumerate}
\end{defi}
 
Thus, if $Y(a,b)=Y_b$ is constant in the first argument, then
\[
	\hoend_{a\in A} Y(a,a) =\hopro_{a\in A} Y_a. 
\]
Similarly to Proposition \ref{prop:invariance-holim-top}, the homotopy end takes weak equivalences
of functors $A^\op\times A\to\Top$ to weak equivalences in $\Top$.
 
\vfill\eject

\section{Topological 1-Segal and 2-Segal spaces}
\label{sec:topsegal}

\subsection{Topological 1-Segal spaces and higher categories}
\label{subsec:1-segal}
  
Informally, a ``higher category" should be given by 
\begin{itemize}
	\item[(0)] a collection of objects,
	\item[(1)] for objects $x,y$ a collection of $1$-morphisms between $x$ and $y$,
	\item[(2)] for objects $x,y$ and $1$-morphisms $f,g$ between $x$ and $y$ a collection of
	  $2$-morphisms between $f$ and $g$,
	\item[\vdots] 
       \item[($n$)] a collection of $n$-morphisms involving analogous data,
	\item[\vdots] 
\end{itemize}
together with composition laws which are weakly associative up to coherent homotopy.
For example, the classical concept of a {\em bicategory} involves data (0), (1) and (2), see
Appendix \ref{app.bicategories}   for details.

The most accessible so far has been a class of higher categories, called {\em $\inftyone$-categories}, 
in which all $k$-morphisms, $k>1$, are invertible. Several different approaches to
$\inftyone$-categories have been shown to be equivalent in \cite{bergner.survey}, not unlike 
\v Cech, Dolbeault and other realizations for ``cohomology". One of these approaches is Rezk's theory
of Segal spaces (cf. \cite{rezk, segal.categories, lurie.tqft}). It is based on the following
observation.
     
\begin{prop}\label{prop:nerve-segal-set}
The functor $\N: \Cat\to\Set_\Delta$, associating to a small category its nerve, is fully faithful.
The essential image of $\N$ consists of those simplicial sets $K$ such that, for each $n \ge 2$, the map 
\[
K_n \lra K_1 \times_{K_0} K_1 \times_{K_0} \dots \times_{K_0} K_1, 
\]
induced by the inclusions $\{i,i+1\} \hookrightarrow [n]$, is a bijection. 
\end{prop}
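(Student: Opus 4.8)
The plan is to prove the three assertions in turn: that $\N$ is injective on morphisms, that it is full, and that the essential image is exactly the class of simplicial sets satisfying the displayed (Segal) condition; throughout I write $f_2, f_3$ for the maps of the proposition in the cases $n=2,3$. \emph{Fully faithfulness.} I would use that, by the explicit formula for $\N_n\C$, an $n$-simplex of a nerve is a chain $x_0\to\cdots\to x_n$ of composable morphisms, and that any simplicial morphism $\phi:\N\C\to\N\D$ commutes with faces and degeneracies. Since a functor is determined by its values on objects ($=\N_0\C$) and morphisms ($=\N_1\C$), injectivity of $\N$ is immediate. For fullness, given $\phi$ I would take $F$ to be $\phi$ on objects and morphisms; compatibility of $\phi$ with the face $\partial_1:\N_2\to\N_1$, which sends a composable pair to its composite, forces $F$ to preserve composition, and compatibility with $s_0:\N_0\to\N_1$ forces $F$ to preserve identities, so $\phi=\N F$. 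That nerves satisfy the Segal condition is then a direct unwinding of the same formula: the map sends a chain to its tuple of consecutive edges, and the inverse reassembles a composable tuple — one whose successive target and source agree, which is precisely the defining condition of the iterated fiber product over $K_0$ — into a single chain.

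\emph{Segal sets are nerves.} This is where the real work lies. Given $K$ satisfying the condition, I would set $\Ob(\C)=K_0$ and let $\Hom_\C(x,y)$ be the fiber of $(\partial_1,\partial_0):K_1\to K_0\times K_0$ over $(x,y)$. Composition is defined by the correspondence $\mu$ of \eqref{eq:fund-corr-intro}: since $f_2:K_2\to K_1\times_{K_0}K_1$ is a bijection, set $\mu=\partial_1\circ f_2^{-1}$, and take $s_0(x)\in K_1$ for the identity of $x$. The unit axioms follow by applying $f_2$ together with the simplicial identities to the degenerate $2$-simplices $s_0 f$ and $s_1 f$, whose spines are $(\id,f)$ and $(f,\id)$ and whose middle face $\partial_1$ is $f$.

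The crux is associativity, for which the $n=3$ condition is indispensable. Given composable $f,g,h$, bijectivity of $f_3:K_3\to K_1\times_{K_0}K_1\times_{K_0}K_1$ yields a unique $\sigma\in K_3$ with spine $(f,g,h)$. Its two $2$-faces on the vertex sets $\{0,2,3\}$ and $\{0,1,3\}$ exhibit the common long edge $\{0,3\}$ both as $h\circ(g\circ f)$ and as $(h\circ g)\circ f$, whence the two composites agree. I expect the bookkeeping of the four $2$-faces of $\sigma$ against the simplicial identities to be the main obstacle: one must verify that these two readings of the faces of a single $3$-simplex really do produce the two bracketings, using the simplicial identities to recognize the relevant edges as the already-defined binary composites.

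\emph{Recovering $K$.} Finally I would identify $\N\C$ with $K$. The canonical comparison map $\N\C\to K$ is the identity in degrees $0$ and $1$ by construction, while in degree $n$ the Segal condition identifies $K_n$ with composable $n$-tuples, that is with $\N_n\C$; one checks this identification is compatible with all faces and degeneracies, so the comparison map is an isomorphism of simplicial sets. Together with fully faithfulness and the forward direction, this shows the essential image of $\N$ consists precisely of the Segal simplicial sets.
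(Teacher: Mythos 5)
Your proof is correct. The paper states Proposition \ref{prop:nerve-segal-set} as a classical fact and offers no proof of its own, so there is nothing to compare against; your argument is the standard one, and the key step --- extracting associativity from the unique $3$-simplex with prescribed spine $(f,g,h)$ by reading the edge $\sigma_{02}$ off the face $\partial_3\sigma$, the edge $\sigma_{13}$ off $\partial_0\sigma$, and then the two bracketings of $\sigma_{03}$ off $\partial_1\sigma$ and $\partial_2\sigma$ --- is exactly right. The only points you leave implicit are routine: for fullness you should note that $\phi_n=(\N F)_n$ in degrees $n\ge 2$ because the Segal bijection for nerves makes $\phi_n$ determined by $\phi_1$ and $\phi_0$; and in the final identification $\N\C\cong K$ the compatibility with the inner faces is essentially the definition of composition, while compatibility with outer faces and degeneracies follows from the simplicial identities.
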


Let $X$ be a simplicial space. For $n \ge 2$, the inclusions 
$\{i,i+1\} \hookrightarrow [n]$ as above, 
and the canonical map from $\pro$ to $\hopro$, give rise to the diagram of spaces
\[
	X_n \lra X_1 \times_{X_0} X_1 \times_{X_0} \dots \times_{X_0} X_1
	\overset{\eqref{eq:lim-holim}}{\lra} X_1 \times_{X_0}^R X_1 \times^R_{X_0} \dots
	\times_{X_0}^R X_1.
\]
We denote the composite map by $f_n$ and refer to the collection $\{f_n |\; n\ge 2\}$ as {\em $1$-Segal maps}.
  
\begin{defi}\label{defi:1-segal}
	A simplicial space is called {\em $1$-Segal space} if,
	for every $n\ge 2$, the map $f_n$ is a weak equivalence of
	topological spaces.
\end{defi}

Our definition is a topological variant of Rezk's combinatorial notion of a Segal space \cite{rezk},
following \cite[Definition 2.1.15]{lurie.tqft}. 

\begin{prop}\label{prop:1-segal-basic} Let $X$ be a simplicial space. Then the following are equivalent:
	\begin{enumerate}
	 \item $X$ is a $1$-Segal space.
	 \item For every $0\le i_1 < i_2 < \dots < i_k \le n$, the map
	 \[
	 X_n \lra X_{i_1} \times^R_{X_0} X_{i_2-i_1} \times^R_{X_0} \dots
	 \times^R_{X_0} X_{n-i_k}
	 \]
	 induced by the inclusions $\{0,\dots,i_1\}, \{i_1,\dots,i_2\}, \dots,
	 \{i_k,\dots,n\} \hookrightarrow [n]$, is a
	 weak equivalence.
	 \item For every $0 \le i \le n$, the map
	 \[
	 X_n \lra X_i \times^R_{X_0} X_{n-i}
	 \]
	 induced by the inclusions $\{0,\dots,i\}, \{i,\dots,n\} \hookrightarrow [n]$, is a
	 weak equivalence.
	\end{enumerate}
\end{prop}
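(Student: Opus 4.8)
The plan is to establish the cycle of implications $(1)\Rightarrow(2)\Rightarrow(3)\Rightarrow(1)$, in which $(2)\Rightarrow(3)$ is a trivial specialization, $(3)\Rightarrow(1)$ is a short induction, and $(1)\Rightarrow(2)$ carries the real content. For $(2)\Rightarrow(3)$ I simply note that the map of $(3)$ is the case $k=1$, $i_1=i$ of the map of $(2)$. The guiding principle throughout is that every map appearing in the statement is induced by an inclusion of simplicial subsets of $\Delta^n$ associated to a subdivision of the interval $[0,n]$, that such inclusions compose, and that the passage to homotopy limits is controlled by the invariance property of Proposition~\ref{prop:invariance-holim-top}.

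Let me make this structural observation precise. Setting $i_0=0$ and $i_{k+1}=n$, for a chain $0\le i_1<\dots<i_k\le n$ let $\Delta^{\mathbf i}\subset\Delta^n$ denote the union of the faces $\Delta^{\{i_j,\dots,i_{j+1}\}}$, $0\le j\le k$, glued along the shared vertices $i_1,\dots,i_k$. Applying $\Hom_\Sp(-,X)$ to the inclusion $\Delta^{\mathbf i}\hookrightarrow\Delta^n$, followed by the canonical map $\pro\to\hopro$, produces precisely the map of $(2)$, whose target is the iterated homotopy fiber product $X_{i_1}\times^R_{X_0}\cdots\times^R_{X_0}X_{n-i_k}$, realized as the homotopy limit of the evident zigzag diagram. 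The spine $\J^n$ of \eqref{eq:segment} factors as $\J^n\subset\Delta^{\mathbf i}\subset\Delta^n$, since within each face $\Delta^{\{i_j,\dots,i_{j+1}\}}$ the relevant unit edges already belong to $\J^n$. Passing to homotopy limits, this factorization of inclusions yields a factorization of $f_n$ as the map of $(2)$ followed by a map
\[
	X_{i_1}\times^R_{X_0}\cdots\times^R_{X_0}X_{n-i_k}
	\lra
	X_1\times^R_{X_0}\cdots\times^R_{X_0}X_1
\]
which is the homotopy fiber product over $X_0$ of the Segal maps $f_{i_{j+1}-i_j}$ of the individual segments (with the convention that a segment of length $\le 1$ contributes an identity). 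That this ``grouped'' map really is the homotopy fiber product of the segment maps is a Fubini-type statement for the bar/cobar homotopy limit of Definition~\ref{def:hopro}, reducing the homotopy limit over the full zigzag to an iterated one.

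Granting this, $(1)\Rightarrow(2)$ follows by two-out-of-three. Under $(1)$ each segment Segal map $f_{i_{j+1}-i_j}$ is a weak equivalence (or an identity for short segments), so the grouped map, being the homotopy fiber product over $X_0$ of weak equivalences, is itself a weak equivalence by Proposition~\ref{prop:invariance-holim-top}; since $f_n$ is a weak equivalence by $(1)$, the map of $(2)$ is a weak equivalence as well. For $(3)\Rightarrow(1)$ I argue by induction on $n$, the case $n=2$ being the instance $i=1$ of $(3)$. For $n>2$, condition $(3)$ with $i=1$ gives that $X_n\to X_1\times^R_{X_0}X_{n-1}$ is a weak equivalence; composing with the weak equivalence $\id_{X_1}\times^R_{X_0}f_{n-1}$ supplied by the inductive hypothesis together with Proposition~\ref{prop:invariance-holim-top}, and using the same compatibility of inclusions to identify the composite with $f_n$, shows that $f_n$ is a weak equivalence.

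The main obstacle is the bookkeeping behind the second paragraph: one must verify that the homotopy fiber products, built explicitly via the cobar construction, are functorial in the relevant inclusions and satisfy the Fubini/associativity isomorphism that lets the full zigzag homotopy limit be computed in segments, and that under this identification the grouped map is objectwise a weak equivalence of zigzag diagrams (it is the segment Segal maps on the $X_m$-nodes and the identity on the $X_0$-nodes, and it commutes with the endpoint projections to $X_0$). Once this functoriality is in place every remaining step is a formal application of invariance and two-out-of-three, and the degenerate segments of length $0$ or $1$ are harmless because $X_0\times^R_{X_0}(-)$ and the length-one Segal map are canonically identities.
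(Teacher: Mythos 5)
Your argument is correct and is exactly the elaboration of the paper's one-line proof, which simply invokes the two-out-of-three property of weak equivalences: you factor $f_n$ through the map of (2) via the inclusion $\J^n\subset\Delta^{\mathbf i}\subset\Delta^n$, identify the second factor as a homotopy fiber product of segment Segal maps (a weak equivalence by Proposition \ref{prop:invariance-holim-top}), and run the induction for $(3)\Rightarrow(1)$ the same way. The Fubini-type bookkeeping you flag is the standard decomposition of the zigzag homotopy limit that the paper itself uses implicitly (cf. Proposition \ref{prop.decomp}), so nothing further is needed.
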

\begin{proof} This is an immediate consequence of the 2-out-of-3 property of weak equivalences.
\end{proof}

\begin{ex}[(Discrete nerve and categorified nerve)]\label{ex.1segclass} 
	Let $\C$ be a small category. There are two immediate ways to associate to $\C$ a $1$-Segal
	space:
	\begin{enumerate}[label=(\alph{*})] 
	 \item The {\em discrete nerve} $\disc{\N(\C)}$ is, by Proposition
	   \ref{prop:nerve-segal-set}, a $1$-Segal space and every discrete $1$-Segal spaces is
	   isomorphic to the discrete nerve of a small category.
	 	
	 \item The set $\N(\C)_n$ of composable chains of morphisms in $\C$ is in fact the set of
	   objects of the {\em category} $\Fun([n],\C)$. Denote by $\C_n \subset \Fun([n], \C)$ the
	   groupoid of all isomorphisms in $\Fun([n],\C)$. Then the collection $\{\C_n\}$ assembles
	   to a simplicial groupoid $\C_\bullet$ which we call the {\em categorified nerve} of $\C$.
	   Passing to classifying spaces, the simplicial space $X_{\bullet}$ obtained by setting
	   $X_n = B(\C_n)$, $n \ge 0$, is a $1$-Segal space. This follows at once from Proposition
	   \ref{prop:2lim-holim}: the $1$-Segal maps identify the groupoid $\C_n$ with the $2$-fiber
	   product $\C_1 \times^{(2)}_{\C_0} \C_1 \times^{(2)}_{\C_0} \dots \times^{(2)}_{\C_0}
	   \C_1$. Within Rezk's theory, this categorified nerve is the preferred way to model a
	   small category as a $1$-Segal space, since it satisfies a completeness condition which
	   will be explained in more detail in \S \ref{subsec:quasicat-1-segal}.
	\end{enumerate}
\end{ex}

By Example \ref{ex.1segclass}, we can associate a $1$-Segal space to any small category. Vice versa,
given a $1$-Segal space $X$ we can define the
{\em homotopy category of $X$}, denoted $\h X$, as follows. The set of objects $\Ob(\h X)$ is given
by the set underlying the space $X_0$.
For objects $x,y\in X_0$, we define
\[
  \Hom_{\h X}(x,y) = \pi_0 \bigl( \{x \} \times_{X_0}^R X_1 \times_{X_0}^R \{y\} \bigr),
\]
where the homotopy fiber product involves the face maps $\partial_1$ and $\partial_0$. 
To compose morphisms $f: x \to y$ and $g: y \to z$, we consider the span diagram
\begin{equation}\label{eq:1segalspan}
  \xymatrix{ \{x \} \times_{X_{\{0\}}}^R X_2 \times_{X_{\{2\}}}^R \{z\} \ar[d]^p
    \ar[r]^q & \{x\} \times_{X_{\{0\}}}^R X_{\{0,2\}}\times_{X_{\{2\}}}^R \{z\}\\
    \{x \} \times_{X_{\{0\}}}^R X_{\{0,1\}} \times_{X_{\{1\}}}^R
    X_{\{0,1\}} \times_{X_{\{2\}}}^R \{z\} &.
  }
\end{equation}
The pair $(f,g)$ singles out a connected component of the bottom space in \eqref{eq:1segalspan}.
Since the vertical map in \eqref{eq:1segalspan} is a weak equivalence, we obtain a well-defined
connected component $q \circ p^{-1} (f,g)$ of $\{x\} \times_{X_{\{0\}}}^R
X_{\{0,2\}}\times_{X_{\{2\}}}^R \{z\}$ which we define to be the composition of $f$ and $g$. A
similar argumentation, using the fact that the $1$-Segal map
\[
  X_3 \lra X_1 \times_{X_0}^R X_1 \times_{X_0}^R X_1
\]
is a weak equivalence, shows that the above composition law is associative.
The identity morphism of an object $x \in X_0$ is obtained by interpreting the image of $x$ under the
degeneracy map $X_0 \to X_1$ as an element of $\{x\} \times_{X_0}^R
X_{1}\times_{X_{0}}^R \{x\}$.

Note that the definition of $\h X$ only involves the $3$-skeleton of $X$. The additional data
contained in $X$, allows us to define {\em mapping spaces}
\begin{equation}\label{eq:segalmapping}
	\Map_X(x,y) := \{x \} \times_{X_0}^R X_1 \times_{X_0}^R \{y\},
\end{equation}
together with maps
\[
  \Map_X(x_1,x_2) \times \Map_X(x_2,x_3) \times \dots \times \Map_X(x_{n-1},x_n) \lra \Map_X(x_1,x_n),
\] 
which form a coherently associative system of composition laws.

\begin{rem}\label{rem:semicat} Let $\Top_{\Delta_\inj}$ be
	  the category of semi-simplicial spaces.  Note that the $1$-Segal maps $f_n$ in Definition
	  \ref{defi:1-segal}, being defined in terms of the injections $\{i, i+1\}\to [n]$, make sense for any
	  $X\in \Top_{\Delta_\inj}$.  We say that $X$ is $1$-Segal, if they are weak eqiuivalences.  The
	  (discrete) semi-simplicial nerve construction (Example \ref{ex:nerve,fat-simplex}(b)) gives an
	  equivalence of categories
\[
	\bigl\{ \text{Small semi-categories}\bigr\} \lra \bigl\{
	\text{Discrete $1$-Segal semi-simplicial spaces}\bigr\},
	\quad \Cc \mapsto \disc{\N\Cc}. 
\]
	Similarly, the categorified nerve construction from Example \ref{ex.1segclass}(b) applies to
	any small semi-category and associates to it a different $1$-Segal semi-simplicial space.
\end{rem}

Up to a completeness condition which will be recalled in \S \ref{sec:2-segal-from-higher}, we have
the following informal statements.

\begin{princ}\label{Universality-principle} 
\begin{enumerate}[label=(\alph{*}),itemsep=0pt]
	  \item $1$-Segal simplicial spaces model any reasonable concept of $\inftyone$-categories.
	  \item $1$-Segal semi-simplicial spaces model $\inftyone$-analogs of semi-categories.
\end{enumerate}
\end{princ}

For now, we leave the statement at this informal level; the main purpose of formulating the principle 
at this point is to put the theory of $2$-Segal spaces below into a context.
 
\vfill\eject

\subsection{Membrane spaces and generalized Segal maps}
\label{subsec:membranes}

Let $X$ be a simplicial space and $D$ a simplicial set. First forgetting the topology of the spaces
$\{X_n\}$, we consider $X$ as a simplicial set and form the set
\[
	(D,X) :=\Hom_{\Set_\Delta} (D, X) \subset \prod_{n\geq 0} X_n^{D_n}. 
\]
The topology on $\{ X_n\}$ naturally makes $(D,X)$ a topological space which we call the {\em space of
$D$-membranes in $X$}. The general formula \eqref{eq:sset-lim-simplices} implies the identification
\begin{equation}\label{eq:membranes-top}
	(D,X) \cong \pro_{ \{ \Delta^p \to D\}\in\Delta/D}^{\Top} X_p. 
\end{equation}
Further, we define the {\em derived space of $D$-membranes in $X$} by
\begin{equation}\label{eq:derived-membranes-top}
	(D,X)_R =  \hopro_{ \{\Delta^p \to D\}\in \Delta/D} X_p. 
\end{equation}
 
\begin{exa} \label{ex:mem} 
\begin{exaenumerate}
	\item Taking $D=\Delta^n$, we find that the category $\Delta/\Delta^n$ has
	a final object, given by $\id: \Delta^n\to\Delta^n$, and so
	\[
		 ( \Delta^n, X)_R \simeq ( \Delta^n, X) \cong X_n.
	\]

	\item \label{ex:I[n]} For the segment
	\[
		\J^n := \Delta^1 \cop_{\Delta^0} \cdots \cop_{\Delta^0}
		\Delta^1 = \bullet\lra\bullet\lra\cdots\lra\bullet
	\]
	of $n$ edges, we have
	\[
		  ({\J^n}, X)_R\simeq X_1\times_{X_0}^R X_1\times_{X_0}^R \cdots \times_{X_0}^R X_1,
	\]
	and similarly for the ordinary space of membranes, with ordinary fiber products instead of 
	homotopy fiber products. 
 
	\item Let $Z\in\Top$ be a topological space, considered as a constant simplicial space. For
		any simplicial set $D$, the membrane spaces
	\[
		(D, Z) =\Map (\pi_0 |D|, Z) \quad \text{and} \quad (D, Z)_R = \Map(|D|, Z),
	\]
	are given by the space of locally constant maps and the space of all continuous maps from
	$|D|$ to $Z$, respectively.
\end{exaenumerate}
\end{exa}

We will be interested in the behavior of the membrane spaces $(D,X)$ and $(D,X)_R$ with respect
to colimits in the first argument. To this end, we introduce some terminology.

\begin{defi}\label{def:acyclic}
	Let $A, B$ be small categories. A diagram $(D_b)_{b \in B}$ in $\Set_A$ is called 
	{\em acyclic} if, for every $a \in A$, the natural map
	\[
		\hoind_{b\in B} D_b(a) \lra \ind_{b\in B} D_b(a)
	\]
	is a weak homotopy equivalence of spaces. Here, the diagram $(D_b(a))_{b \in B}$,  
	obtained from $(D_b)_{b \in B}$ by evaluating at $a$, is to be interpreted as a diagram of discrete
	topological spaces.
\end{defi} 

In this section, we will mostly apply this concept in the case when $A=\Delta$, so
$(D_b)_{b\in B}$ is a diagram of simplicial sets. Let $(D_b)_{b\in B}$ be such a diagram and denote
its colimit by $D$.
For $n \ge 0$ and a simplex $\sigma \in D_n$, we define a category $B_{\sigma}$ as
follows: 
\begin{itemize} 
	\item The objects of $B_\sigma$ are given by pairs $(b, \tau)$ where $b \in B$ and $\tau \in
		(D_b)_n$ such that $\tau \mapsto \sigma$ under the canonical map $D_b \to D$. 
	\item A morphism $(b,\tau) \to (b', \tau')$ is given by a morphism $b \to b'$ in $B$ such that 
		$\tau \mapsto \tau'$ under the induced map $D_b \to D_{b'}$.
\end{itemize}
With this terminology, the following statement follows immediately from the definition.

\begin{prop}\label{prop:acyclic-crit} The diagram $(D_b)_{b\in B}$ is acyclic if and only if, for every $n \ge 0$ and every
	simplex $\sigma \in D_n$, the classifying space of the category $B_{\sigma}$ is weakly
	contractible.
\end{prop}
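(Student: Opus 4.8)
The plan is to unwind both sides of Definition~\ref{def:acyclic} using the pointwise formulas for homotopy colimits and ordinary colimits, and to recognize the resulting homotopy colimit as the classifying space of the category $B_\sigma$. First I would fix $n \ge 0$ and evaluate the diagram $(D_b)_{b \in B}$ at the object $[n] \in \Delta$, obtaining a diagram $(b \mapsto (D_b)_n)_{b \in B}$ of discrete spaces (i.e.\ a diagram of sets regarded as discrete topological spaces). By Definition~\ref{def:acyclic}, acyclicity at $[n]$ is the assertion that the natural map
\[
  \hoind_{b \in B} (D_b)_n \lra \ind_{b \in B} (D_b)_n = D_n
\]
is a weak equivalence. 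The right-hand side is precisely $D_n$, since colimits in $\Set_\Delta$ are computed levelwise, and so the target decomposes as the discrete set $\coprod_{\sigma \in D_n} \{\sigma\}$.

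Next I would exploit the fact that the homotopy colimit of a diagram of discrete spaces is, by the bar-construction formula in Definition~\ref{def:hopro}(a), the classifying space (geometric realization of the nerve) of the associated \emph{category of elements}, sometimes called the Grothendieck construction. Concretely, $\hoind_{b \in B} (D_b)_n = B(\textstyle\int_B (D_{(-)})_n)$, where the category $\int_B (D_{(-)})_n$ has objects the pairs $(b,\tau)$ with $b \in B$ and $\tau \in (D_b)_n$, and morphisms $(b,\tau) \to (b',\tau')$ given by arrows $b \to b'$ in $B$ carrying $\tau$ to $\tau'$. Because the map to the discrete set $D_n$ is induced by sending $(b,\tau)$ to the image of $\tau$ in $D_n$, this classifying space splits as a disjoint union indexed by $\sigma \in D_n$, with the $\sigma$-summand being exactly $B(B_\sigma)$ for the full subcategory $B_\sigma \subset \int_B(D_{(-)})_n$ cut out by the condition $\tau \mapsto \sigma$. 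Comparing with the discrete target $\coprod_{\sigma} \{\sigma\}$, the map is a weak equivalence over each component $\sigma$ if and only if $B(B_\sigma)$ is weakly contractible. Quantifying over all $n$ and all $\sigma \in D_n$ then yields the stated equivalence.

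The one genuine step requiring care, rather than the routine identifications above, is the identification $\hoind_{b \in B} S_b \simeq B(\int_B S)$ for a diagram of sets $(S_b)$, together with its compatibility with the decomposition over connected components of the target. This is standard but should be pinned down: the simplicial space $\underrightarrow{S}_\bullet$ of Definition~\ref{def:hopro}(a) has $\underrightarrow{S}_n = \coprod_{a_0 \to \cdots \to a_n} S_{a_0}$, which is visibly the set of $n$-simplices of the nerve of $\int_B S$, so its geometric realization is $B(\int_B S)$; and since every space in sight is discrete, $\pi_0$ commutes with the colimit defining the realization, giving the component decomposition compatibly with the map to $D_n$. I expect the main (though modest) obstacle to be verifying that this decomposition is natural enough that contractibility of each fiber $B(B_\sigma)$ is genuinely equivalent to the map being a weak equivalence, i.e.\ that no components are merged or created in passing to the homotopy colimit; this follows from \eqref{eq:pi-0-lim-holim}, which identifies $\pi_0 \hoind_{b} (D_b)_n$ with $\ind_b^{\Set} (D_b)_n = D_n$, so the map is automatically a bijection on $\pi_0$ and it remains only to check that each homotopy fiber, namely $B(B_\sigma)$, is weakly contractible.
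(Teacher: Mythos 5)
Your proof is correct and is exactly the argument the paper has in mind when it says the statement ``follows immediately from the definition'': evaluating at $[n]$, identifying the levelwise colimit with $D_n$, recognizing the bar-construction homotopy colimit of a diagram of sets as the classifying space of the Grothendieck construction, and splitting it over the fibers $B_\sigma$. The closing observation that \eqref{eq:pi-0-lim-holim} makes the $\pi_0$-bijection automatic is a nice touch that correctly isolates the only nontrivial content of the equivalence.
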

 
\begin{prop} \label{prop:acyclic-topological}
Let $(D_b)_{b\in B}$ be a diagram of simplicial sets and $X$ a simplicial space. Then:
\begin{enumerate}[label=(\alph{*})]
	  \item We have a natural homeomorphism
	   \[
		 \bigl( \ind_{b\in B}^{\Set_\Delta} D_b, X\bigr) \cong \pro_{b\in B}^\Top (D_b, X).
	  \]
  
	  \item If $(D_b)_{b\in B}$ is acyclic, then we have a natural weak equivalence
	  \[
		  \bigl( \ind_{b\in B}^{\Set_\Delta} D_b, X\bigr)_R \simeq  
		  \hopro_{b\in B} (D_b, X)_R. 
	  \]
\end{enumerate}
\end{prop}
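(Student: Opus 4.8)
The plan is to prove both parts by reducing the membrane spaces to the (homotopy) limit formulas \eqref{eq:membranes-top} and \eqref{eq:derived-membranes-top} and then invoking a Fubini-type interchange of limits, with the acyclicity hypothesis controlling the derived case. For part (a), which is purely point-set, I would argue directly: the functor $(-,X) = \Hom_{\Set_\Delta}(-,X)$ sends colimits in its first argument to limits, since $\Hom$ is contravariant and continuous in the first variable. Concretely, writing $D = \ind_{b\in B}^{\Set_\Delta} D_b$, the universal property of the colimit gives a natural bijection $\Hom_{\Set_\Delta}(D,X) \cong \pro_{b\in B}^{\Set} \Hom_{\Set_\Delta}(D_b,X)$, and one checks this bijection is a homeomorphism for the subspace topologies inherited from $\prod_n X_n^{D_n}$. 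This is essentially formal, so I would keep it brief.

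The substance is part (b). Here I would start from the pointwise formula \eqref{eq:derived-membranes-top}, so that $(D,X)_R \simeq \hopro_{\{\Delta^p \to D\} \in \Delta/D} X_p$, and I want to compare this with $\hopro_{b\in B}(D_b,X)_R = \hopro_{b\in B} \hopro_{\{\Delta^p \to D_b\}} X_p$. The right-hand side is an iterated homotopy limit, which by the Fubini principle for homotopy limits equals a single homotopy limit over a combined indexing category—the Grothendieck-type category whose objects are pairs $(b, \Delta^p \to D_b)$. So the comparison reduces to relating a homotopy limit over $\Delta/D$ to a homotopy limit over this category of pairs, both with the same underlying functor $X_p$ (depending only on the ordinal $[p]$, hence factoring through the projection to $\Delta$). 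The natural map between them is induced by the functor sending $(b, \Delta^p \to D_b)$ to the composite $\Delta^p \to D_b \to D$, an object of $\Delta/D$.

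The key point, and the main obstacle, is to show this functor induces a weak equivalence on homotopy limits. By the cofinality/invariance properties of homotopy limits, it suffices to check that the functor is homotopy final in the appropriate sense, i.e. that for each object $\sigma:\Delta^n \to D$ of $\Delta/D$ the relevant comma category has contractible classifying space; and this is precisely the category $B_\sigma$ introduced just before Proposition \ref{prop:acyclic-crit}. The acyclicity hypothesis, via Proposition \ref{prop:acyclic-crit}, guarantees exactly that $B B_\sigma$ is weakly contractible for every simplex $\sigma$. I would therefore set up the two homotopy-limit diagrams so that the fibers of the comparison map are, up to weak equivalence, homotopy limits of $X_p$ over the weakly contractible categories $B_\sigma$, which are then contractible; invariance of homotopy limits under weak equivalence (Proposition \ref{prop:invariance-holim-top}) lets me conclude. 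The delicate part is making the Fubini interchange and the cofinality argument precise at the level of the explicit bar/cobar models of Definition \ref{def:hopro}, rather than in an abstract model-categorical framework; I would either verify the cofinality statement directly against the cobar construction, or remark that since we are dealing with homotopy limits of spaces these manipulations are standard and defer the model-categorical justification to Chapter \ref{sec:model-cat}.
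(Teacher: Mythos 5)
Your argument is correct, but it is not the route the paper takes for this proposition. The paper's own proof sidesteps the pointwise formula entirely: it rewrites $(D,X)_R$ as the derived mapping space $R\Map(\disc{D},X)$ of simplicial spaces (a homotopy end over $\Delta$), observes that $R\Map(\hoind_b \disc{D_b},X)\cong \hopro_b R\Map(\disc{D_b},X)$ holds \emph{on the nose} from the cobar construction, and then feeds in acyclicity in its raw form --- namely that $\hoind_b\disc{D_b}\to\ind_b\disc{D_b}$ is a levelwise weak equivalence --- via the homotopy invariance of $R\Map$. That argument needs no Fubini interchange and no cofinality; the only nontrivial input is homotopy invariance. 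Your route instead works with the indexing categories: Fubini over the Grothendieck construction $\int_B\Delta/D_b$, then homotopy initiality of the comparison functor to $\Delta/D$, with the comma categories identified with the strict fibers $B_\sigma$ (this identification is legitimate because the comparison functor is a Grothendieck fibration --- restriction along simplicial operators supplies Cartesian lifts --- a point worth making explicit, as is the fact that for homotopy \emph{limits} the relevant condition is homotopy initiality rather than finality). This is precisely the strategy the authors deploy later for the model-categorical generalization in Proposition \ref{prop:acyclic}, where they prove $R\pro F_1\simeq R\pro F_2\simeq R\pro F_3$ by exactly your comma-category and strict-fiber argument; conversely, they remark there that the $R\Map$/cotensor argument of Proposition \ref{prop:acyclic-topological} also generalizes. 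So both proofs are sound: yours connects more directly to the $B_\sigma$-criterion of Proposition \ref{prop:acyclic-crit} and generalizes beyond $\Top$, while the paper's is shorter here because the colimit-to-homotopy-limit step is tautological for $R\Map$. The remaining work in your version is to justify the Fubini step and the initiality criterion against the explicit cobar model of Definition \ref{def:hopro}, which you rightly flag as the delicate part.
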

\begin{proof} Part (a) is obvious. To prove (b), we formulate a more general statement
holding for arbitrary diagrams $(D_b)$ which reduces to (b) when $(D_b)$ is acyclic. 
 
First, for any two simplicial spaces $Y,X\in\Top_\Delta$, we introduce the ordinary and derived mapping
spaces as the ordinary and homotopy ends (see Definition \ref {def:derived-end-topological})
\[
\Map(Y,X) := \int_{[n]\in\Delta} \Map(Y_n, X_n), \quad 
R\Map(Y,X) := \hoend_{[n]\in\Delta} \Map(Y_n, X_n).
\]
Here $\Map(Y_n, X_n)$ is the space of continuous maps with compact--open topology. Then
\begin{equation}\label{eq:membranes-map-Rmap}
(D,X)= \Map(\disc{D}, X), \quad (D,X)_R = R\Map(\disc{D}, X),
\end{equation}
where $\disc{D}$ is the discrete simplicial space corresponding to $D$. 

Let now $(Y_b)_{b\in B}$, be a diagram in $\Top_\Delta$. 
As in any category of diagrams, colimits in the category $\Top_\Delta$
are calculated componentwise, so
\[
\bigl( \ind_{b\in B}^{\Top_\Delta} Y_b\bigr)_n = \ind_{b\in B}^\Top Y_{b, n}. 
\]
Define the {\em homotopy colimit} of $(Y_b)$ to be the simplicial space $\hoind_{b\in B} Y_b$
obtained by applying homotopy colimits componentwise:
\[
\bigl( \hoind_{b\in B} Y_b\bigr)_n : = \hoind_{b\in B} Y_{b, n}. 
\]
Then, straight from the definitions, we obtain a natural homeomorphism
\begin{equation}\label{eq:Rmap-colim-holim}
 R\Map\bigl(\hoind_{b\in B} Y_b, X\bigr) \simeq \hopro_{b\in B} R\Map(Y_b, X)
\end{equation}
for arbitrary simplicial spaces $X$ and $Y$. Now, the condition that $(D_b)$ is an acyclic diagram
of simplicial sets, means that the natural map
\[
\hoind_{b\in B} \disc{D_b} \lra \ind_{b\in B} \disc{ D_b} 
\]
is a weak equivalence of simplicial spaces. 
Combining \eqref{eq:Rmap-colim-holim} with \eqref{eq:membranes-map-Rmap} and with the homotopy
invariance of $R\Map$, we obtain the statement (b). 
\end{proof}

The following statement shows that in formula \eqref{eq:derived-membranes-top} it suffices to consider nondegenerate simplices of the
simplicial set $D$. 

\begin{prop}\label{prop:membranes-semi-simplicial}
Let $D$ be a simplicial set. Then we have a natural homeomorphism, resp. weak equivalence 
 \[
 (D,X) \cong \pro_{ \{ \Delta^p \hookrightarrow D\}\in\Delta_\inj/D}^\Top X_p,
 \quad 
 (D,X)_R \simeq \hopro_{ \{ \Delta^p \hookrightarrow D\}\in\Delta_\inj/D}^\Top X_p. 
 \]
\end{prop}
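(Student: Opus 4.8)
The plan is to obtain both formulas from Proposition~\ref{prop:acyclic-topological} by exhibiting $D$ as the colimit of its nondegenerate simplices. First I would fix the indexing: $\Delta_\inj/D$ is the full subcategory of the simplex category $\Delta/D$ of \eqref{eq:membranes-top} spanned by the nondegenerate simplices. If $\psi\colon[p]\to[q]$ is not injective it factors through a nontrivial degeneracy, so $\psi^*\tau$ is degenerate; hence a morphism $\sigma\to\tau$ of $\Delta/D$ with $\sigma$ nondegenerate forces $\psi$ to be injective, which is why the objects may be recorded as $\Delta^p\hookrightarrow D$. Everywhere below I would use Example~\ref{ex:mem} to identify $(\Delta^p,X)\cong X_p$ and $(\Delta^p,X)_R\simeq X_p$.

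The one geometric input I would isolate is the refinement of \eqref{eq:sset-lim-simplices} asserting that the tautological cocone is an isomorphism $D\cong\varinjlim_{\{\Delta^p\hookrightarrow D\}\in\Delta_\inj/D}\Delta^p$; that is, $D$ is the colimit of its nondegenerate simplices. I would prove this levelwise from the Eilenberg--Zilber lemma: unique factorization of each simplex as a degeneracy of a nondegenerate one gives surjectivity of the canonical map out of the colimit, and (in the situations of interest, where faces of nondegenerate simplices are again nondegenerate, as for the subcomplexes $\Delta^\T\subseteq\Delta^n$) uniqueness of that factorization gives injectivity. Granting the presentation, the underived formula is immediate, since Proposition~\ref{prop:acyclic-topological}(a) holds for an arbitrary diagram: $(D,X)=\bigl(\varinjlim_b\Delta^{p_b},X\bigr)\cong\varprojlim_b(\Delta^{p_b},X)\cong\varprojlim_{\Delta_\inj/D}X_p$.

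For the derived formula I would apply Proposition~\ref{prop:acyclic-topological}(b), whose hypothesis is that the diagram $(\Delta^p)_{\{\Delta^p\hookrightarrow D\}\in\Delta_\inj/D}$ is acyclic in the sense of Definition~\ref{def:acyclic}. By Proposition~\ref{prop:acyclic-crit} this amounts to showing that, for every $n\ge 0$ and every $\sigma\in D_n$, the classifying space of $B_\sigma$ is weakly contractible. Here I would again use Eilenberg--Zilber: writing $\sigma=\eta^*\bar\sigma$ with $\eta$ a surjection and $\bar\sigma$ nondegenerate, the pair $(\bar\sigma,\eta)$ should be an initial object of $B_\sigma$, so that its classifying space is contractible. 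Feeding this into Proposition~\ref{prop:acyclic-topological}(b) then gives $(D,X)_R\simeq\hopro_{\{\Delta^p\hookrightarrow D\}\in\Delta_\inj/D}(\Delta^p,X)_R\simeq\hopro_{\{\Delta^p\hookrightarrow D\}\in\Delta_\inj/D}X_p$, as claimed.

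The step I expect to be the main obstacle is exactly the contractibility of $B_\sigma$ for degenerate $\sigma$ (equivalently, the injectivity half of the colimit presentation): this is the only place where the bookkeeping of degeneracies enters essentially, and verifying that $(\bar\sigma,\eta)$ is initial requires that every object $(b,\tau)$ of $B_\sigma$ admit a unique comparison morphism to it. I would carry this out by an Eilenberg--Zilber analysis of the factorizations of $\tau^*b=\sigma$, the clean case being that in which faces of nondegenerate simplices stay nondegenerate; all remaining steps are formal consequences of the cited propositions and of $(\Delta^p,X)\simeq X_p$.
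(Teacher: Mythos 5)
Your proposal is correct and follows essentially the same route as the paper: exhibit $D$ as the colimit of its nondegenerate simplices, apply Proposition~\ref{prop:acyclic-topological}(a) for the underived formula, and verify acyclicity via Proposition~\ref{prop:acyclic-crit} by showing that $(\bar\sigma,\eta)$, coming from the Eilenberg--Zilber factorization $\sigma=\bar\sigma\circ\eta$, is an initial object of $B_\sigma$. If anything you are more careful than the paper, which asserts the finality of $\Delta_\inj/D\to\Delta/D$ and the initiality of $(\bar\sigma,\eta)$ without comment, whereas you correctly flag that both steps use that faces of nondegenerate simplices remain nondegenerate --- a hypothesis that holds for the subcomplexes $\Delta^{\T}\subseteq\Delta^n$ to which the proposition is actually applied.
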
 
\begin{proof} 
From the finality of the embedding $\Delta_{\inj}/D \to \Delta/D$, and
\eqref{eq:sset-lim-simplices}, we deduce 
\begin{equation}\label{eq:membranes-semi-simplicial}
	D = \ind^{\Set_\Delta}_{\{\sigma: \Delta^p \hookrightarrow D\}\in
	\Delta_\inj/D} \Delta^p. 
\end{equation}
The formula involving $(D,X)$ follows from part (a) of Proposition 
\ref{prop:acyclic-topological}. To deduce the formula for $(D,X)_R$
from part (b) of the same proposition, we need to verify that the diagram in \eqref{eq:membranes-semi-simplicial}
is acyclic. For $n \ge 0$ and $\sigma \in D_n$, the category $(\Delta_{\inj}/D)_\sigma$ from
Proposition \ref{prop:acyclic-crit} has an initial object given by the unique nondegenerate simplex
$\Delta^k \hookrightarrow D$ of which $\sigma$ is a degeneration. Therefore, Proposition
\ref{prop:acyclic-crit} implies that the diagram under consideration is acyclic.
\end{proof}
  
The formulas in Proposition \ref{prop:membranes-semi-simplicial} imply that for $D \subset \Delta^I$
the (derived) membrane space $(D,X)$ depends only on the underlying semi-simplicial structure (face maps) of
$X$. We will use these formulas to extend the definition of $(D,X)$ and $(D,X)_R$ to 
semi-simplicial spaces $X$.  
We will be particularly interested in $D$-membranes where $D$
is a subset of a standard simplex $\Delta^I$. 
Let $D, D' \subset \Delta^I$ be simplicial subsets. We define the {\em intersection}
and the {\em union} of $D$ and $D'$ by 
\[
 D \cap D' := D \times_{\Delta^I} D', \quad 
	 D \cup D' := D \cop_{D \cap D'} D' \quad \subset \Delta^I.
\]
The set of $p$-simplices of $D\cap D'$, resp. $D\cup D'$ is the intersection,
resp. the union of the sets $D_p$ and $D'_p$. 
Passing to geometric realizations, we recover the intersection and union of topological
subspaces of $|\Delta^I|$.
  
\begin{prop} \label{prop.decomp}
For simplicial sets $D, D' \subset \Delta^I$, and a (semi-)simplicial space $X$, 
we have 
\[
(D \cup D',X)_R \simeq 
(D, X)_R \times^R_{(D \cap D', X)_R} (D, X)_R.
\]
\end{prop}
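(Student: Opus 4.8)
The plan is to realize $D\cup D'$ as a pushout and then invoke Proposition~\ref{prop:acyclic-topological}(b), which converts an acyclic colimit of simplicial sets into a homotopy limit of derived membrane spaces. Concretely, let $B$ be the span category with three objects $\{l,m,r\}$ and two nonidentity arrows $m\to l$ and $m\to r$, and let $\mathcal{D}\colon B\to\Set_\Delta$ be the diagram with $\mathcal{D}(l)=D$, $\mathcal{D}(m)=D\cap D'$, $\mathcal{D}(r)=D'$, the arrows being the inclusions. Since $D\cup D'=D\cop_{D\cap D'}D'$, the colimit $\ind_{b\in B}^{\Set_\Delta}\mathcal{D}(b)$ is precisely $D\cup D'$. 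The contravariant functor $(-,X)_R$ reverses the two arrows, so $\hopro_{b\in B}(\mathcal{D}(b),X)_R$ is the homotopy limit of the cospan $(D,X)_R\to(D\cap D',X)_R\leftarrow(D',X)_R$, which is by definition the homotopy fiber product $(D,X)_R\times^R_{(D\cap D',X)_R}(D',X)_R$. Thus, once acyclicity of $\mathcal{D}$ is established, Proposition~\ref{prop:acyclic-topological}(b) delivers the claimed equivalence.

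The crux is therefore to verify that $\mathcal{D}$ is acyclic, and for this I would apply the criterion of Proposition~\ref{prop:acyclic-crit}: for each $n\ge 0$ and each $\sigma\in(D\cup D')_n$, the classifying space of the category $B_\sigma$ must be weakly contractible. The decisive simplification is the hypothesis $D,D'\subset\Delta^I$, which forces every structure map in $\mathcal{D}$ to be a monomorphism; hence for each object $b\in B$ there is \emph{at most one} lift of $\sigma$ to $\mathcal{D}(b)_n$, namely $\sigma$ itself when it lies in that subset. Writing $(D\cup D')_n=D_n\cup D'_n$ and $D_n\cap D'_n=(D\cap D')_n$ as subsets of $(\Delta^I)_n$, I would split into three cases. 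If $\sigma\in(D\cap D')_n$, then $\sigma$ lifts through all three objects and $B_\sigma\cong B$; here the apex $m$ is an initial object, so $B_\sigma$ has contractible classifying space. If $\sigma\in D_n\setminus(D\cap D')_n$, the only object of $B_\sigma$ is $(l,\sigma)$, so $B_\sigma$ is the terminal category, and symmetrically for $\sigma\in D'_n\setminus(D\cap D')_n$. In every case $B_\sigma$ has a contractible classifying space, so $\mathcal{D}$ is acyclic.

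Combining the two paragraphs proves the simplicial case directly. For the semi-simplicial version I would appeal to Proposition~\ref{prop:membranes-semi-simplicial}: for subsets of $\Delta^I$ the derived membrane space is computed from nondegenerate simplices alone and depends only on the face maps of $X$, which is exactly how $(-,X)_R$ was extended to semi-simplicial spaces; the same span-plus-acyclicity argument then applies verbatim. I do not anticipate a genuine obstacle: the only substantive point is the acyclicity check, and the assumption $D,D'\subset\Delta^I$ trivializes the comma categories $B_\sigma$ (each is either a point or the full span, which carries an initial object). The single item demanding care is the bookkeeping of variance, namely that $(-,X)_R$ sends the span whose colimit is $D\cup D'$ to the cospan whose homotopy limit is the fiber product; but this is precisely packaged by Proposition~\ref{prop:acyclic-topological}(b) together with the definition of the homotopy fiber product.
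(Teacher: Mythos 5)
Your proof is correct and follows essentially the same route as the paper: both realize $D\cup D'$ as the pushout of the span $D\leftarrow D\cap D'\rightarrow D'$, verify acyclicity via Proposition \ref{prop:acyclic-crit} by observing that each category $B_\sigma$ is either the terminal category or the full span (hence has contractible classifying space), and then invoke Proposition \ref{prop:acyclic-topological}(b). Your version merely spells out the case analysis and the semi-simplicial reduction in more detail than the paper does.
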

\begin{proof}
	By Proposition \ref{prop:acyclic-topological}, it suffices to show that the diagram of
	simplicial sets
	\[
		D \lla D\cap D' \lra D'
	\]
	is acyclic. To show this, we use Proposition \ref{prop:acyclic-crit} with 
	$B = \{\pt\leftarrow \pt \to\pt \}$. For $n \ge 0$ and $\sigma \in D \cup D'$, the category
	$B_\sigma$ is either the trivial category with one object, or the full index category $B$.
	In both cases, the respective classifying space is contractible such that Proposition
	\ref{prop:acyclic-crit} implies the statement.
\end{proof}
 
Combinatorially, a simplicial subset $D \subset \Delta^I$ can be constructed from a collection
of subsets $\I \subset 2^I$: Any subset $J \subset I$ defines a subsimplex $\Delta^J \subset
\Delta^I$ and we define  
\begin{equation}
	\label{eq.DI}
	\DI := \bigcup_{J\in\I} \Delta^J \subset\Delta^I.
\end{equation}

\begin{prop}\label{prop:collection}
	\begin{enumerate} 
	\item Let $\Delta/\I$, resp. $\Delta_\inj /\I$, be the full
	subcategory of the overcategory $\Delta/I$, resp. $\Delta_\inj /I$, spanned by those maps $J \to I$ whose image is contained
	in one of the sets in $\I$. Then we have representations as colimits of acyclic diagrams:
	\[
	\DI \cong \ind^{\Set_\Delta}_{\{J \to I\} \in \Delta/\I}\; \Delta^J
	 \cong \ind^{\Set_\Delta}_{\{J \hookrightarrow I\} \in \Delta_\inj /\I}\; \Delta^J.
	\]

	\item For two collections $\I, \I'\subset 2^I$ we have
	 \[
		\Delta^\I \cup \Delta^{\I'} \cong \Delta^{\I \cup \I'} \quad \text{and}\quad
		\Delta^\I \cap \Delta^{\I'} \cong \Delta^{\I \capd \I'}, 
	\]
	where $\I \cup \I'$ is the union of $\I$ and $\I'$ as subsets of $2^I$, and $\I \capd \I'$ is the set
	formed by all pairwise intersections of elements of $\I$ and $\I'$.
\end{enumerate}
\end{prop}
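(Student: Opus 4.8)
The plan is to reduce everything to the combinatorial description of the simplices of $\Delta^I$. The basic observation is that a $p$-simplex of $\Delta^I$, i.e.\ a monotone map $\sigma\colon[p]\to I$, lies in the subcomplex $\Delta^J$ if and only if its image $\Im(\sigma)$ is contained in $J$; hence, by \eqref{eq.DI}, $\sigma$ lies in $\DI$ if and only if $\Im(\sigma)$ is contained in some member of $\I$. Both parts of the proposition follow by applying this criterion simplex by simplex.

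For part (1), I would first identify the indexing categories. By Yoneda, $\Delta/I$ is the category of simplices of $\Delta^I$; since $\DI\subset\Delta^I$ is a simplicial subset and $\Delta^J=\Hom_\Delta(-,J)$, a simplex $\Delta^J\to\Delta^I$ factors through $\DI$ precisely when its image lies in a member of $\I$. Thus the category of simplices $\Delta/\DI$ is canonically identified with the full subcategory $\Delta/\I$ of $\Delta/I$, and likewise $\Delta_\inj/\DI\cong\Delta_\inj/\I$. The two colimit presentations are then exactly \eqref{eq:sset-lim-simplices} and \eqref{eq:membranes-semi-simplicial} applied to the simplicial set $D=\DI$ (the second using the finality of $\Delta_\inj/D\hookrightarrow\Delta/D$).

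It remains to check that these diagrams are acyclic, and this is the only real content of part (1). By Proposition \ref{prop:acyclic-crit} it suffices to show, for every $n\ge 0$ and every $\sigma\in(\DI)_n$, that the category $B_\sigma$ has weakly contractible classifying space, and I would do this by exhibiting an initial object. For the diagram over $\Delta/\I$, an object of $B_\sigma$ is a factorization $[n]\xrightarrow{\tau}J\xrightarrow{\phi}I$ of $\sigma$ with $\Im(\phi)$ contained in a member of $\I$, and I claim the object $\bigl((\sigma\colon[n]\to I),\,\id_{[n]}\bigr)$ is initial: a morphism from it to $(\phi,\tau)$ is a monotone map $\mu\colon[n]\to J$ over $I$ with $\mu\circ\id_{[n]}=\tau$, so necessarily $\mu=\tau$, and the defining relation $\phi\tau=\sigma$ shows that this $\mu$ is indeed a morphism in $\Delta/\I$; thus there is exactly one morphism. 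For the diagram over $\Delta_\inj/\I$, injectivity of $\phi$ forces a unique $\tau$ with $\phi\tau=\sigma$ once $\Im(\sigma)\subseteq\Im(\phi)$, so $B_\sigma$ is the poset of subsets $J'\subseteq I$ with $\Im(\sigma)\subseteq J'$ and $J'$ contained in a member of $\I$; this poset has the least element $J'=\Im(\sigma)$. In both cases $B_\sigma$ has an initial object and hence contractible classifying space, so the diagrams are acyclic.

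For part (2), I would argue directly on simplices. A $p$-simplex lies in $\Delta^\I\cup\Delta^{\I'}$ iff its image is contained in a member of $\I$ or of $\I'$, i.e.\ in a member of $\I\cup\I'$, which is precisely the condition to lie in $\Delta^{\I\cup\I'}$; this gives the first isomorphism. For the second, recall that the simplices of $\Delta^\I\cap\Delta^{\I'}$ are those lying in both, so a $p$-simplex $\sigma$ belongs to it iff $\Im(\sigma)\subseteq J$ for some $J\in\I$ \emph{and} $\Im(\sigma)\subseteq K$ for some $K\in\I'$; for such a pair this is equivalent to $\Im(\sigma)\subseteq J\cap K$, i.e.\ to $\sigma$ lying in $\Delta^{\I\capd\I'}$. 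Both identifications are compatible with face and degeneracy maps, giving the claimed isomorphisms of simplicial sets. The main obstacle throughout is the acyclicity statement in part (1); once the correct initial objects in the categories $B_\sigma$ are identified, the rest is a routine unwinding of the definition of $\DI$.
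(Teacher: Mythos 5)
Your proof is correct and follows essentially the same route as the paper's: the two colimit presentations are the general facts \eqref{eq:sset-lim-simplices} and \eqref{eq:membranes-semi-simplicial} applied to $D=\DI$, and the acyclicity is checked exactly as the paper does (by the argument of Proposition \ref{prop:membranes-semi-simplicial}), namely via Proposition \ref{prop:acyclic-crit} by exhibiting initial objects in the categories $B_\sigma$. Your write-up merely makes explicit the initial objects $(\sigma,\id_{[n]})$ and $\Im(\sigma)$ that the paper leaves implicit, and the simplex-by-simplex verification of part (2), which the paper declares immediate.
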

\begin{proof} The only statement requiring proof is the acyclicity of the diagrams in (1). But 
this follows as in the proof of Proposition \ref{prop:membranes-semi-simplicial}.
\end{proof}

\begin{ex} \label{I.1segal} Consider the collection
	 ${\I_n} = \left\{ \{0,1\}, \{1,2\}, \cdots, \{n-1,
	 n\} \right\}$
	 of subsets of $[n]$ for a fixed $n \ge 2$. Then 
	 \[
\DI =	 \Delta^{\{0,1\}} \cop_{\Delta^{\{1\}}}
	 \Delta^{\{1,2\}} \cop_{\Delta^{\{2\}}} \quad \cdots \quad\cop_{\Delta^{\{n-1\}}} \Delta^{\{n-1,n\}}
	 = \J^n
	 \]
	 is the segment with $n$ edges from Example 
	 \ref{ex:mem}\ref{ex:I[n]}.
	 \end{ex}
		 
\begin{ex}\label{ex:convex-polytope} 
\begin{exaenumerate} \item Let $I \subset \RR^d$ be a finite set of points and let $P$ be the convex hull of $I$,
a convex polytope in $\RR^d$. Suppose that $I$ is given a total order, so that the simplicial
set $\Delta^I$ is defined. Let $\T$ be any triangulation of $P$ into (straight geometric) simplices
with vertices in $I$. Every simplex $\sigma \in \T$ is uniquely determined
by its subset of vertices $\operatorname{Vert}(\sigma)\subset I$, so $\T$ itself can be
viewed as a subset $\T \subset 2^I$. Hence, the triangulation $\T$ defines simplicial subset $\Delta^\T \subset \Delta^I$. 
Its realization is a CW-subcomplex in the geometric simplex $|\Delta^I|$, homeomorphic to $P$. 
By definition of the convex hull, $P$ is the image of the map
\[
p: |\Delta^I| \lra \RR^d, \quad (p_i)_{i\in I} \mapsto \sum_i p_i\cdot i.
\]
This map projects the subcomplex $|\Delta^\T|$ onto $P$ in a homeomorphic way. 

\item More generally, by a {\em polyhedral subdivision} of $P$ with vertices in $I$
we mean a decomposition $\Pc$ of $P$ into a union of convex polytopes, each having
vertices in $I$, so that any two such polytopes intersect in a (possibly empty) common face. Each polytope $Q$ of $\Pc$
is completely determined by its set of vertices $\on{Vert}(Q)\subset I$,
so we can view $\Pc$ as a subset of $2^I$, and, generalizing the convention of (a) we obtain an
inclusion of simplicial sets $\Delta^{\Pc} \subset \Delta^I$.
The set of polyhedral subdivisions of $P$ with vertices in $I$ 
is partially ordered by refinement, its unique minimal element is the subdivision $\{P\}$ consisting of $P$ alone.
In this case $\Delta^{\{P\}}=\Delta^I$. The maximal elements are precisely the triangulations as
defined in (a).
\end{exaenumerate}
\end{ex}

\begin{ex} \label{I.2segal}
The two triangulations of a square in $\RR^2$ are given by collections $\T =
\{\{0,1,2\},\{0,2,3\}\}$ and $\T' = \{\{0,1,3\},\{1,2,3\}\}$ of subsets of $I = [3]$. The
corresponding 
embeddings $\Delta^{\T} \subset \Delta^3$ and $\Delta^{\T'} \subset \Delta^3$ can be depicted
as follows:
\begin{equation} \label{eq:emb}
\tdplotsetmaincoords{100}{170}
\begin{tikzpicture}[>=latex,scale=1.0, baseline=(current  bounding  box.center)]
\begin{scope}[scale=1.4]

\coordinate (A0) at (0,0);
\coordinate (A1) at (0,1);
\coordinate (A2) at (1,1);
\coordinate (A3) at (1,0);

\path[fill opacity=0.4, fill=blue!50] (A0) -- (A3) -- (A2) -- (A1) -- cycle;

\begin{scope}[decoration={
    markings,
    mark=at position 0.55 with {\arrow{>}}}
    ] 
\draw[postaction={decorate}] (A0) -- (A3);
\draw[postaction={decorate}] (A3) -- (A2);
\draw[postaction={decorate}] (A0) -- (A1);
\draw[postaction={decorate}] (A2) -- (A1);
\draw[postaction={decorate}] (A0) -- (A2);
\end{scope}

{\scriptsize
\draw (A0) node[anchor=east] {$0$};
\draw (A1) node[anchor=east] {$3$};
\draw (A2) node[anchor=west] {$2$};
\draw (A3) node[anchor=west] {$1$};
}
\draw (1.7,0.5) node {$\hookrightarrow$};
\end{scope}

\begin{scope}[tdplot_main_coords, >=latex, xshift=4.5cm, yshift=0.5cm]

\coordinate (A0) at (1,-1,0);
\coordinate (A1) at (0,1,1);
\coordinate (A2) at (0,1,-1);
\coordinate (A3) at (-1,-1,0);


\path[fill opacity=0.4, fill=blue!50] (A0) -- (A2) -- (A3) -- cycle;
\path[fill opacity=0.4, fill=blue!50] (A0) -- (A2) -- (A1) -- cycle;

\begin{scope}[decoration={
    markings,
    mark=at position 0.55 with {\arrow{>}}}
    ] 
\draw[postaction={decorate}] (A0) -- (A1);
\draw[postaction={decorate}] (A0) -- (A2);
\draw[postaction={decorate}] (A1) -- (A2);
\draw[postaction={decorate}] (A1) -- (A3);
\draw[postaction={decorate}] (A2) -- (A3);
\draw[postaction={decorate},dashed] (A0) -- (A3);
\end{scope}

{\scriptsize
\draw (A0) node[anchor=east] {$0$};
\draw (A1) node[anchor=south] {$1$};
\draw (A2) node[anchor=north] {$2$};
\draw (A3) node[anchor=west] {$3$};
}
\end{scope}
\end{tikzpicture}
\quad\quad\quad\quad
\begin{tikzpicture}[>=latex,scale=1.0, baseline=(current  bounding  box.center)]
\begin{scope}[scale=1.4]

\coordinate (A0) at (0,0);
\coordinate (A1) at (0,1);
\coordinate (A2) at (1,1);
\coordinate (A3) at (1,0);

\path[fill opacity=0.4, fill=blue!50] (A0) -- (A3) -- (A2) -- (A1) -- cycle;

\begin{scope}[decoration={
    markings,
    mark=at position 0.55 with {\arrow{>}}}
    ] 
\draw[postaction={decorate}] (A0) -- (A3);
\draw[postaction={decorate}] (A3) -- (A2);
\draw[postaction={decorate}] (A0) -- (A1);
\draw[postaction={decorate}] (A2) -- (A1);
\draw[postaction={decorate}] (A3) -- (A1);
\end{scope}

{\scriptsize
\draw (A0) node[anchor=east] {$0$};
\draw (A1) node[anchor=east] {$3$};
\draw (A2) node[anchor=west] {$2$};
\draw (A3) node[anchor=west] {$1$};
}
\draw (1.7,0.5) node {$\hookrightarrow$};
\end{scope}

\begin{scope}[tdplot_main_coords, >=latex, xshift=4.5cm, yshift=0.5cm]

\coordinate (A0) at (1,-1,0);
\coordinate (A1) at (0,1,1);
\coordinate (A2) at (0,1,-1);
\coordinate (A3) at (-1,-1,0);


\path[fill opacity=0.4, fill=blue!50] (A1) -- (A2) -- (A3) -- cycle;
\path[fill opacity=0.4, fill=blue!50] (A0) -- (A1) -- (A3) -- cycle;

\begin{scope}[decoration={
    markings,
    mark=at position 0.55 with {\arrow{>}}}
    ] 
\draw[postaction={decorate}] (A0) -- (A1);
\draw[postaction={decorate}] (A0) -- (A2);
\draw[postaction={decorate}] (A1) -- (A2);
\draw[postaction={decorate}] (A1) -- (A3);
\draw[postaction={decorate}] (A2) -- (A3);
\draw[postaction={decorate},dashed] (A0) -- (A3);
\end{scope}

{\scriptsize
\draw (A0) node[anchor=east] {$0$};
\draw (A1) node[anchor=south] {$1$};
\draw (A2) node[anchor=north] {$2$};
\draw (A3) node[anchor=west] {$3$};
}
\end{scope}
\end{tikzpicture}
\end{equation}

\end{ex}

For a
collection $\I \subset 2^I$ and a (semi-)simplicial space $X$, we write
\[
	X_{\I} := (\Delta^\I, X), \quad RX_{\I} := (\Delta^\I, X)_R  
\]
for the ordinary and derived spaces of $\Delta^\I$-membranes.
Using this notation, Proposition \ref{prop.decomp} reads:
\begin{prop}\label{prop.colim}
	Let $X$ be a (semi-)simplicial space and $\I', \I''\subset 2^I$. 
	Then there is a natural weak equivalence
	\[
	RX_{\I' \cup \I''} \stackrel{\simeq}{\longrightarrow} RX_{\I'} \times^R_{RX_{\I' \capd
	\I''}} RX_{\I''}.  \qed
	\] 
\end{prop}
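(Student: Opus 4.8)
The plan is to derive this statement as a direct consequence of the two structural results already established: the Mayer--Vietoris-type decomposition of derived membrane spaces in Proposition~\ref{prop.decomp}, and the dictionary of Proposition~\ref{prop:collection}(2) relating Boolean operations on collections $\I \subset 2^I$ to unions and intersections of the associated simplicial subsets $\Delta^\I \subset \Delta^I$. No new homotopy-theoretic input is required; the content lies entirely in matching up these two statements and unwinding the abbreviation $RX_\I = (\Delta^\I, X)_R$.

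Concretely, I would set $D := \Delta^{\I'}$ and $D' := \Delta^{\I''}$, viewed as simplicial subsets of $\Delta^I$. Proposition~\ref{prop:collection}(2) then supplies canonical identifications $D \cup D' \cong \Delta^{\I' \cup \I''}$ and $D \cap D' \cong \Delta^{\I' \capd \I''}$. Feeding $D$ and $D'$ into Proposition~\ref{prop.decomp} produces a natural weak equivalence
\[
(D \cup D', X)_R \;\simeq\; (D, X)_R \times^R_{(D \cap D', X)_R} (D', X)_R,
\]
and rewriting each term through the above identifications yields exactly
\[
RX_{\I' \cup \I''} \;\simeq\; RX_{\I'} \times^R_{RX_{\I' \capd \I''}} RX_{\I''},
\]
as claimed. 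The map in the asserted direction is the one induced by the inclusions $\Delta^{\I'} \hookrightarrow \Delta^{\I' \cup \I''} \hookleftarrow \Delta^{\I''}$, restriction along which furnishes the comparison to the homotopy fiber product via its universal property.

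There is essentially no obstacle to overcome, since both ingredients are already in hand; the only points deserving a line of care are bookkeeping. First, one should check that the weak equivalence is natural, but this is inherited verbatim from the naturality asserted in Proposition~\ref{prop.decomp}. Second, one must confirm that the isomorphisms of Proposition~\ref{prop:collection}(2) are compatible with passing to membrane spaces; this is immediate because $(-, X)_R$ is a functor on simplicial sets sending isomorphisms to homeomorphisms, and more generally respecting the acyclic-colimit presentations used throughout \S\ref{subsec:membranes}. Assembling these two remarks completes the argument.
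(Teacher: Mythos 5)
Your proposal is correct and is essentially identical to the paper's own treatment: the paper introduces the notation $RX_{\I} = (\Delta^{\I},X)_R$ and then states Proposition~\ref{prop.colim} as the immediate translation of Proposition~\ref{prop.decomp} via the identifications $\Delta^{\I'}\cup\Delta^{\I''}\cong\Delta^{\I'\cup\I''}$ and $\Delta^{\I'}\cap\Delta^{\I''}\cong\Delta^{\I'\capd\I''}$ from Proposition~\ref{prop:collection}(2), which is exactly your argument. No further comment is needed.
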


The following statement   allows us to simplify, for some triangulations $\T$,
 the indexing diagrams  
for $RX_\T = (\Delta^\T, X)_R$ 
given by  \eqref{eq:derived-membranes-top}.

\begin{prop} Let $\Tc$ be a triangulation of a convex polytope $P$
with vertices in $I$, as defined in Example \ref{ex:convex-polytope}. Suppose that each geometric
simplex of $\Tc$ of dimension $\leq p$ is a face of $P$. Then we have the formula
\[
	RX_\T \simeq \hopro_{\{J\in\T, |J| \ge p+1\} }  X_J,
\]
expressing the derived membrane space as the homotopy limit over geometric simplices of $\T$ of
dimension $p+1$ or higher. 
\end{prop}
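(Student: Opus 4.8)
The plan is to identify $RX_\T$ with a homotopy limit indexed by the poset of \emph{all} faces of $\T$, and then to prove that the low-dimensional faces can be discarded from the indexing poset without changing the homotopy type, by a cofinality argument in which the hypothesis on $P$ enters through the links of its boundary faces.

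First I would set up the indexing. Since $\T$ is a triangulation, hence a simplicial complex closed under passage to faces, its nondegenerate simplices are exactly the elements $J\in\T$, and Proposition~\ref{prop:collection}(1) exhibits $\Delta^\T\cong\ind^{\Set_\Delta}_{J\in\T}\Delta^J$ as the colimit of an acyclic diagram over the face poset $\T$ (ordered by inclusion). Combining Proposition~\ref{prop:acyclic-topological}(b) with the identification $(\Delta^J,X)_R\simeq X_J$ of Example~\ref{ex:mem}(a), this gives a natural weak equivalence
\[
RX_\T\;\simeq\;\hopro_{J\in\T}X_J .
\]
The asserted formula is the homotopy limit of the same diagram restricted to the subposet $\T_{\ge p}:=\{J\in\T:|J|\ge p+1\}$, and there is a canonical restriction map $RX_\T\to\hopro_{J\in\T_{\ge p}}X_J$. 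It remains to show this map is a weak equivalence.

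For this I would apply the standard cofinality criterion for homotopy limits (cf.\ \cite{bousfield-kan}): the restriction map is a weak equivalence provided that, for every face $K\in\T$, the nerve of the comma poset
\[
\operatorname{St}_{\ge p}(K)\;:=\;\{J\in\T:\;K\subseteq J,\;|J|\ge p+1\}
\]
is weakly contractible. If $|K|\ge p+1$ this is immediate, since $K$ is then the minimal element of $\operatorname{St}_{\ge p}(K)$, which is therefore contractible. The whole content lies in the case $|K|\le p$, and this is exactly where the hypothesis is used. So suppose $|K|\le p$. By hypothesis $K$ is a face of $P$, necessarily a proper one (as $\dim K\le p-1<d$), so $K$ lies on $\partial P$. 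Writing $L=\operatorname{Lk}_\T(K)$ for the link and setting $s:=p-|K|\ge 0$, the bijection $J\mapsto J\setminus K$ identifies $\operatorname{St}_{\ge p}(K)$ with the poset of faces of $L$ of dimension $\ge s$. Since $K$ is a boundary face of the PL ball $P\cong B^d$, standard PL topology (see \cite{rourke-sanderson}) gives that $L$ is a PL ball $B^{d-|K|}$ with $\partial L=\operatorname{Lk}_{\partial P}(K)$. Moreover, a simplex $M$ of $L$ with $\dim M\le s$ corresponds to a simplex $K\cup M$ of $\T$ of dimension $\le p$, which by hypothesis is a face of $P$ and hence lies in $\partial P$; thus $M\in\partial L$, and in particular $\operatorname{sk}_{s-1}(L)\subseteq\partial L$. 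I would then invoke the classical fact that the order complex of the poset of faces of $L$ of dimension $\ge s$ is a deformation retract of the complement $|L|\setminus|\operatorname{sk}_{s-1}(L)|$. As $\operatorname{sk}_{s-1}(L)$ is contained in the boundary sphere of the ball $|L|$, the straight-line homotopy toward an interior point of $|L|$ contracts this complement to a point. Hence $\operatorname{St}_{\ge p}(K)$ is contractible in all cases, and the proposition follows.

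The main obstacle is the case $|K|\le p$. There are two points requiring care: translating the global hypothesis ``every simplex of dimension $\le p$ is a face of $P$'' into the local statement $\operatorname{sk}_{s-1}(L)\subseteq\partial L$ for each boundary link $L$, and justifying both the PL fact that the link of a boundary face is a ball with the expected boundary and the identification of the order complex of high-dimensional faces with the deformation retract $|L|\setminus|\operatorname{sk}_{s-1}(L)|$. The degenerate situations ($s=0$, or the trivial triangulation when $p=d$) should be recorded separately but are harmless.
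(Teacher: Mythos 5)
Your argument is correct, and at its core it isolates the same combinatorial fact as the paper's proof: the contractibility of the poset of simplices of $\T$ of dimension $\geq p$ containing a fixed low-dimensional face $K$, using crucially that such a $K$ lies on $\partial P$. But you package both the reduction and the key contractibility differently. The paper works with the colimit presentation $\Delta^\T \cong \ind_{\{J\in\T,\,|J|\ge p+1\}}\Delta^J$ directly and verifies acyclicity of that single diagram via Proposition \ref{prop:acyclic-crit}; the categories $B_\sigma$ appearing there are exactly your posets $\operatorname{St}_{\ge p}(K)$, and their contractibility is established by identifying them with the poset of positive-dimensional cones in a subdivision of the normal cone to $K$ in $P$ --- a convex-geometric argument. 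You instead first record $RX_\T\simeq\hopro_{J\in\T}X_J$ over the full face poset and then invoke homotopy initiality of the subposet inclusion, which is a clean two-step reduction (and matches the cofinality technology the paper itself uses in Proposition \ref{prop:acyclic}); you then settle the contractibility via PL topology --- the link of a boundary face is a ball, the hypothesis forces $\operatorname{sk}_{s-1}(L)\subseteq\partial L$, and the order complex of the high-dimensional faces retracts onto the complement of that skeleton. The two geometric pictures (normal cone vs.\ link) are essentially dual, but yours requires importing the standard facts that links of boundary faces of PL balls are balls and that $|L|\setminus|A|$ retracts onto the order complex of faces not in $A$, whereas the paper's stays inside elementary convex geometry. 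One cosmetic caution: the straight-line contraction of $|L|\setminus|\operatorname{sk}_{s-1}(L)|$ should be performed after transporting to the standard ball via the PL homeomorphism, since $|L|$ as embedded in $P$ need not be star-shaped; and the degenerate cases $s=0$ and $p=d$ that you flag are indeed harmless but worth the sentence you promise them.
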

\begin{proof} Assume that $P$ is $d$-dimensional. Then 
\[
	\Delta^\T \cong \ind_{\{J\in\T, |J|\ge p+1\} }^{\Set_\Delta} \Delta^J
\]
for any $p \le d-2$. To deduce our statement from Proposition \ref{prop:acyclic-topological}, it
suffices to show that, under our assumptions, the diagram in the above limit is acyclic. 
To show this, we use Proposition \ref{prop:acyclic-crit} where $B$ is the full subcategory of
$\Delta_{\inj}/\Delta^{\T}$ spanned by the nondegenerate simplices of dimension $\ge p+1$. For $n \ge 0$ and $\sigma \in
\Delta^{\T}_n$, let $\overline{\sigma}: \Delta^k \hra \Delta^{\T}$ be the minimal simplex of which
$\sigma$ is a degeneration. If $k \ge p+1$, then $(\overline{\sigma}, \sigma)$ is a final object of
the category $B_\sigma$ which therefore has a contractible nerve. If $k \le p$, then the category
$B_\sigma$ is given by the poset of all geometric simplices of $\T$ that contain
$\overline{\sigma}$. By our assumption, $\overline{\sigma}$ is a $k$-dimensional face of $P$. This
implies that $B_{\sigma}$ can be identified with the poset of positive-dimensional cones of a
subdivision of the normal cone to $\overline{\sigma}$ in $P$ into convex subcones. Therefore, the
classifying space of $B_\sigma$ is contractible. 
\end{proof}
  
\begin{ex}
If each element of $I$ is a vertex of $P$, then we can disregard $0$-dimensional simplices when
computing $RX_\T$. For example, for the triangulation $\T$ of the square from Example \ref{I.2segal}, we get
\[
	RX_\T \simeq X_{\{0,1,2\}}\times^R_{X_{\{0,2\}} } X_{\{0,2,3\}} \cong X_2\times^R_{X_1} X_2,
\]
where the last homotopy fiber product taken with respect to the map $\partial_1: X_2\to X_1$ for the
first factor and $\partial_2: X_2\to X_1$ for the second factor. 
Note that this formula also follows immediately from Proposition \ref{prop.colim}. 
The space $RX_{\T'}$ is given by a similar homotopy fiber product but with respect to different face maps. 
\end{ex}

Let $\I \subset 2^I$ be a collection of subsets. Composing the pullback $X_I \to X_{\I}$ along the inclusion $\DI \subset
\Delta^I$ with the natural map $X_{\I} \to RX_{\I}$, we obtain a map
\begin{equation}\label{def.XImap}
	f_{\I}: X_I \lra RX_{\I}
\end{equation}
which we call the {\em $\I$-Segal map}.

\begin{ex}
\label{ex.1segal} 
	For the collection $\I_n$ of Example \ref{I.1segal}, the map $X_n \to RX_{\I_n}$ 
	reproduces the $n$th $1$-Segal map
	\[
	f_n: X_n \lra X_1 \times_{X_0}^R X_1 \times_{X_0}^R \dots \times_{X_0}^R X_1
	\]
	from Definition \ref{defi:1-segal}.
\end{ex}
 
\vfill\eject

\subsection{2-Segal spaces}
\label{subsec:2-segal-spaces} 

We fix a convex $(n+1)$-gon $P_n$ in $\RR^2$ with a chosen total order on the set of vertices,
compatible with the counterclockwise orientation of $\RR^2$. The chosen order provides a canonical
identification of the set of vertices of $P_n$ with the standard ordinal $[n]$. Any polygonal
subdivision $\mathcal P$ of $P_n$ as in Example \ref{ex:convex-polytope} can be identified with a
collection of subsets of $[n]$. Note that the class of collections thus obtained does not depend on
a specific choice of $P_n$: any two convex $(n+1)$-gons are combinatorially equivalent.  

As explained in \S \ref{subsec:membranes}, the subdivision $\Pc$ gives rise to a simplicial subset $\Delta^\Pc
\subset \Delta^n$ and to the corresponding $\Pc$-Segal map
\[
	f_\Pc: X_n \lra RX_\Pc=(\Delta^\Pc, X)_R 
\]
from \eqref{def.XImap}. The map $f_\Pc$ will be called the {\em $2$-Segal map corresponding to
$\Pc$}. We are now in a position to give the central definition of this work. 

\begin{defi}\label{def:2-segal-top} Let $X$ be a (semi-)simplicial space. 
We call $X$ a {\em $2$-Segal space} if, for every $n\geq 2$ and
every triangulation $\T$ of the polygon $P_n$, the corresponding $2$-Segal map $f_\T$ is a weak equivalence of
topological spaces. 
\end{defi}

The following statement is the analog of Proposition \ref{prop:1-segal-basic} in the context of
$2$-Segal spaces.

\begin{prop}\label{prop:2-segal-basic} 
Let $X$ be a (semi-)simplicial space. Then the following are equivalent:
	\begin{enumerate}
		 \item\label{item:2-segal-basic-1} $X$ is a $2$-Segal space.
		 \item\label{item:2-segal-basic-2} For every polygonal subdivision $\Pc$ of $P_n$ the
		 map $f_\Pc$
		 is a weak equivalence.
		 \item\label{item:2-segal-basic-3} For every $n \ge 3$ and $0 \le i < j \le n$, the map
		 \[
		 X_n \lra X_{\{0,1,\dots,i, j,j+1,\dots, n\}} \times^R_{X_{\{i,j\}}} X_{\{i,i+1,\dots,j\}}
		 \]
		 induced by the inclusions $\{0,1,\dots,i, j,j+1, \dots, n\},\{i,i+1,\dots,j\} \subset [n]$ is a
		 weak equivalence.  
		 \item\label{item:2-segal-basic-4} The same condition as in \ref{item:2-segal-basic-3} but we
		 only allow $i=0$ or $j=n$. 
	 \end{enumerate}
\end{prop}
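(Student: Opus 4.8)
The plan is to prove the cycle $(1)\Rightarrow(2)\Rightarrow(3)\Rightarrow(4)\Rightarrow(1)$, using Proposition~\ref{prop.colim}, the homotopy invariance of homotopy limits (Proposition~\ref{prop:invariance-holim-top}), and the 2-out-of-3 property as the only tools. Two of the steps are formal. For $(2)\Rightarrow(3)$, the data of a diagonal $\{i,j\}$ gives the two-cell polygonal subdivision $\Pc=\{\{0,\dots,i,j,\dots,n\},\{i,\dots,j\}\}$ of $P_n$, and Proposition~\ref{prop.colim} identifies $RX_\Pc$ with $X_{\{0,\dots,i,j,\dots,n\}}\times^R_{X_{\{i,j\}}}X_{\{i,\dots,j\}}$ (single-element collections produce ordinary membrane spaces, i.e.\ the spaces $X_\bullet$ themselves), so that $f_\Pc$ is exactly the map of $(3)$; the degenerate cases $j=i+1$ and $(i,j)=(0,n)$ give maps that are weak equivalences outright. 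The step $(3)\Rightarrow(4)$ is simply the restriction to $i=0$ or $j=n$.

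For $(1)\Rightarrow(2)$ I refine a given subdivision $\Pc$ to a triangulation $\T$, which yields inclusions $\Delta^\T\subseteq\Delta^\Pc\subseteq\Delta^n$ and a factorization $f_\T=g\circ f_\Pc$ with $g\colon RX_\Pc\to RX_\T$; since $f_\T$ is a weak equivalence by $(1)$, by 2-out-of-3 it is enough to see that $g$ is one. The dual graph of $\Pc$ (cells as nodes, shared diagonals as edges) is a tree, because every diagonal of a convex polygon separates it, so iterating Proposition~\ref{prop.colim} presents $RX_\Pc$ and $RX_\T$ as homotopy limits over the same diagram; the map $g$ is the identity on each edge space $X_{\{a,b\}}$ and equals the $2$-Segal map $f_{\T|_Q}$ on each cell $Q$. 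Each $f_{\T|_Q}$ is a weak equivalence by $(1)$, so Proposition~\ref{prop:invariance-holim-top} shows that $g$ is one.

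The substantial step is $(4)\Rightarrow(1)$, which I prove by induction on the number of vertices, the triangle being the trivial base case. Given a triangulation $\T$ of $P_n$, $n\ge 3$, let $t=\{0,k,n\}$ be its triangle on the boundary edge $\{0,n\}$. If $k\ge 2$, the side $\{0,k\}$ of $t$ is an end-cut of type $i=0$ splitting $P_n$ into $\{0,\dots,k\}$ and $\{0,k,\dots,n\}$; if $k=1$, the side $\{1,n\}$ is an end-cut of type $j=n$, splitting off the triangle $\{0,1,n\}$ and leaving $\{1,\dots,n\}$. In both cases $f_\Pc$ is a weak equivalence by $(4)$, both cells have strictly fewer vertices, and Proposition~\ref{prop.colim} writes $RX_\T$ as the homotopy fiber product of the cellwise derived membrane spaces over the diagonal edge space, compatibly with the target of $f_\Pc$; the cellwise $2$-Segal maps are weak equivalences by the inductive hypothesis, so Propositions~\ref{prop.colim} and~\ref{prop:invariance-holim-top} together with 2-out-of-3 give that $f_\T$ is a weak equivalence.

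The point requiring care throughout is that condition $(4)$ is stated for end-cuts of the standard polygon $[n]$, whereas both inductions force cuts of sub-polygons whose vertex sets are non-contiguous subsets $V\subseteq[n]$, such as the cell $\{0,k,\dots,n\}$ above. This is resolved by the fact that a simplicial object is canonically defined on all finite nonempty ordinals: the derived membrane space attached to any $V\subseteq[n]$ depends only on $V$ as an ordinal, so under the unique order isomorphism $V\cong[m]$ a diagonal or end-cut of the sub-polygon on $V$ corresponds to one of $P_m$. Thus conditions $(1)$, $(3)$ and $(4)$ relativize to every sub-polygon, which is precisely what lets the inductive hypothesis apply to the non-contiguous cells appearing above and what makes the cycle close; I expect this relativization, rather than any single computation, to be the main conceptual obstacle.
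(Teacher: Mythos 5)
Your proof is correct and follows essentially the same route as the paper's (very terse) argument: the easy implications plus two inductions via the 2-out-of-3 property, with $(4)\Rightarrow(1)$ driven by the observation that the triangle of $\T$ on the edge $\{0,n\}$ supplies a diagonal of the form $\{0,j\}$ or $\{i,n\}$. The details you supply — in particular the reduction of $g$ to cellwise $2$-Segal maps via Proposition~\ref{prop.colim} and the relativization of the conditions to sub-polygons with non-contiguous vertex sets via the canonical extension of $X$ to all finite ordinals — are exactly what the paper leaves implicit.
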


\begin{proof} 
	Note, first of all, that we have obvious implications
	\ref{item:2-segal-basic-1}$\Leftarrow$\ref{item:2-segal-basic-2}$\Rightarrow$\ref{item:2-segal-basic-3}$\Rightarrow$\ref{item:2-segal-basic-4}. 
	The implication 
	\ref{item:2-segal-basic-1}$\Rightarrow$\ref{item:2-segal-basic-2} 
	follows inductively from the 2-out-of-3 property of weak
	equivalences. The implication 
	\ref{item:2-segal-basic-4}$\Rightarrow$\ref{item:2-segal-basic-1} 
	follows by a similar inductive argument, using the fact that each
	triangulation $\T$ of $P_n$ has a diagonal of the form $\{0,j\}$ or $\{i,n\}$.
\end{proof}

Following our general point of
view on $2$-Segal spaces as generalizations of categories
(see Introduction),
 we start with a basic comparison result between the notions of $1$-Segal and $2$-Segal spaces.

\begin{prop}\label{prop.1segal2segal} 
	Every $1$-Segal (semi-)simplicial space is $2$-Segal.
\end{prop}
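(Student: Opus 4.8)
The plan is to reduce, via the characterization of Proposition~\ref{prop:2-segal-basic}, to a single elementary family of $2$-Segal maps, and then to produce those maps as weak equivalences directly from the $1$-Segal conditions recorded in Proposition~\ref{prop:1-segal-basic}. Concretely, by the implication \ref{item:2-segal-basic-4}$\Rightarrow$\ref{item:2-segal-basic-1} of Proposition~\ref{prop:2-segal-basic}, it suffices to treat the polygonal subdivisions $\Pc$ of $P_n$ cut out by a single diagonal $\{0,j\}$ or $\{i,n\}$. The order-reversing automorphism of $[n]$ interchanges these two families and preserves the $1$-Segal property (the conditions of Proposition~\ref{prop:1-segal-basic} are symmetric), so I would assume the diagonal is $\{0,j\}$ with $2\le j\le n-1$, the values $j=1$ and $j=n$ giving the trivial subdivision. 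This $\Pc$ consists of the two faces $\{0,1,\dots,j\}$ and $\{0,j,j+1,\dots,n\}$, meeting along the edge $\{0,j\}$, and I must show that $f_\Pc\colon X_n\to RX_\Pc$ is a weak equivalence.

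First I would compute the target. Since $\Delta^\Pc$ is the union of the two faces above, meeting along $\Delta^{\{0,j\}}$, Proposition~\ref{prop.colim} gives a weak equivalence
\[
RX_\Pc \;\simeq\; X_{\{0,j,j+1,\dots,n\}} \times^R_{X_{\{0,j\}}} X_{\{0,1,\dots,j\}},
\]
each outer factor being the membrane space of a single simplex, hence $X_{n-j+1}$, resp. $X_j$, by Example~\ref{ex:mem}(a). Next I would apply the $1$-Segal condition in the form of Proposition~\ref{prop:1-segal-basic}(3) to the thin face, splitting off its first edge:
\[
X_{\{0,j,j+1,\dots,n\}} \;\simeq\; X_{\{0,j\}} \times^R_{X_{\{j\}}} X_{\{j,j+1,\dots,n\}},
\]
where, crucially, the face map $X_{\{0,j,\dots,n\}}\to X_{\{0,j\}}$ appearing in the fiber product above becomes projection onto the first factor. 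Substituting and cancelling the factor $X_{\{0,j\}}\times^R_{X_{\{0,j\}}}(-)$ then yields
\[
RX_\Pc \;\simeq\; X_{\{j,j+1,\dots,n\}} \times^R_{X_{\{j\}}} X_{\{0,1,\dots,j\}},
\]
which is precisely the two-term $1$-Segal fiber product $X_{n-j}\times^R_{X_0} X_j$ associated with the inclusions $\{j,\dots,n\},\{0,\dots,j\}\hookrightarrow[n]$.

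To finish, I would observe that this last space receives the two-term $1$-Segal map from $X_n$, which is a weak equivalence by Proposition~\ref{prop:1-segal-basic}(3), and that the composite $X_n\to RX_\Pc\to X_{\{j,\dots,n\}}\times^R_{X_{\{j\}}}X_{\{0,\dots,j\}}$ is exactly this $1$-Segal map, since both arise by pullback along nested inclusions of simplicial subsets of $\Delta^n$. The $2$-out-of-$3$ property then forces $f_\Pc$ to be a weak equivalence. The same argument applies verbatim to semi-simplicial spaces, since every membrane space for a subset of $\Delta^n$ depends only on the face maps. The step I expect to be the main obstacle is the cancellation: I must check carefully that the map from $X_{\{0,j,\dots,n\}}$ to the shared edge $X_{\{0,j\}}$ really is the first projection under the $1$-Segal splitting, and that the resulting re-association of homotopy fiber products is compatible with the canonical maps, so that the composite out of $X_n$ is \emph{identified} with the two-term $1$-Segal map rather than merely abstractly equivalent to it.
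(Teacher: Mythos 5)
Your proof is correct. It differs from the paper's in organization: the paper keeps an arbitrary triangulation $\T$ in play, factors the long $1$-Segal map $X_n\to RX_{\I_n}$ through $f_\T$, and reduces by $2$-out-of-$3$ to showing that $RX_\T\to RX_{\I_n}$ is a weak equivalence, which it proves by induction on $n$ by splitting $\T$ along the triangle containing the edge $\{0,n\}$ and applying Proposition~\ref{prop.colim} together with the $1$-Segal condition for that triangle. You instead invoke \ref{item:2-segal-basic-4}$\Rightarrow$\ref{item:2-segal-basic-1} of Proposition~\ref{prop:2-segal-basic} --- legitimately, since that equivalence is proved earlier and independently of the present statement --- to reduce to single-diagonal subdivisions, and then dispatch each of those by factoring the two-term $1$-Segal map of Proposition~\ref{prop:1-segal-basic}(3) through $f_\Pc$. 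The induction over triangulations is thereby outsourced to Proposition~\ref{prop:2-segal-basic}, and what remains is a single computation; the paper's route carries the induction itself but is self-contained. The underlying ingredients (Proposition~\ref{prop.colim}, $1$-Segal splittings, $2$-out-of-$3$) are the same in both arguments.

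The compatibility you flag as the main obstacle does hold, for the reason you suspect: every space in your chain is a derived membrane space $RX_\Kc$ for a simplicial subset $\Kc\subset\Delta^n$, and every map is restriction along an inclusion of such subsets. In particular the map $X_{\{0,j,\dots,n\}}\to X_{\{0,j\}}$ furnished by Proposition~\ref{prop.colim} is restriction along $\Delta^{\{0,j\}}\subset\Delta^{\{0,j,\dots,n\}}$, which under the $1$-Segal splitting becomes the first projection. The cancellation is then just Proposition~\ref{prop.colim} again, applied to $\Delta^{\{0,\dots,j\}}\cup\Delta^{\{j,\dots,n\}}$ (which already contains $\Delta^{\{0,j\}}$); both iterated fiber products compute $RX_{\{\{0,\dots,j\},\{j,\dots,n\}\}}$ compatibly with restriction from $X_n$, so the composite out of $X_n$ is genuinely the two-term $1$-Segal map and not merely abstractly equivalent to it. One phrasing caveat: the order-reversing bijection of $[n]$ is not a morphism of $\Delta$, so the reduction of the $\{i,n\}$ diagonals to the $\{0,j\}$ ones should be stated via the opposite (semi-)simplicial space $X^{\op}$ (cf.\ Example~\ref{ex:pentagon-path-criterion}), or else the mirror argument should simply be run verbatim.
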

\begin{proof}
	Let $X$ be a $1$-Segal space. Consider a triangulation $\T$ of $P_n$ and let 
	\[
		\I_n = \{ \{0,1\},\{1,2\}, \dots, \{n-1,n\} \}
	\]
	denote the collection from Example \ref{I.1segal}.
	The inclusions of simplicial sets $\Delta^{\I_n} \subset
	\Delta^{\T} \subset \Delta^I$ induce a commutative diagram
	\[
	\xymatrix{
	X_I \ar[r]^{f_{\T}} \ar[dr]_h & X_{\T} \ar[d]^g\\
	& X_{\I_n}
	}\text{.}
	\]
	We have to show that the $2$-Segal map $f_{\T}$ is a weak equivalence. Since $X$ is a $1$-Segal
	space, the $1$-Segal map $h$ is a weak equivalence and, by the two-out-of-three property, it suffices to show that $g$ is a
	weak equivalence. To prove this, we argue by induction on $n$.

	There exists a unique $0 < i < n$ such that $\{0,i,n\} \in \T$. We show how to argue for $1
	< i < n-1$, the cases $i \in \{1,n-1\}$ are similar but easier.
	We define the collections $\T_1 = \{ I \in \T |\; I \subset \{0,1,\dots,i\}\}$ and $\T_2 = \{ I \in \T |\; I \subset \{i,i+1,\dots,n\}\}$. Applying Proposition
	\ref{prop.colim} twice, we obtain a weak equivalence
	\[
	X_{\T} \stackrel{\cong}{\lra} RX_{\T_1} \times_{X_{\{0,i\}}}^R X_{\{0,i,n\}}
	\times_{X_{\{i,n\}}}^R RX_{\T_2}
	\text{.}
	\]
	Further, since $X$ is a $1$-Segal space, we obtain a weak equivalence
	\[
	X_{\{0,i,n\}} \stackrel{\simeq}{\lra} X_{\{0,i\}}
	\times_{X_{\{i\}}}^R X_{\{i,n\}} \text{.}
	\]
	Composing these maps, we obtain a weak equivalence
	\[
	g':\; RX_{\T} \stackrel{\simeq}{\lra} RX_{\T_1} \times_{X_{\{i\}}}^R RX_{\T_2}
	\text{.}
	\]
	By induction, we have weak equivalences
	\[
	g_1:\; RX_{\T_1} \stackrel{\simeq}{\lra} X_{\{0,1\}}
\times_{X_{\{1\}}}^R X_{\{1,2\}} \times_{X_{\{2\}}}^R \dots \times_{X_{\{i-1\}}}^R X_{\{i-1,i\}}
	\]
	and
	\[
	g_2:\; RX_{\T_2} \stackrel{\simeq}{\lra} X_{\{i,i+1\}}
\times_{X_{\{i+1\}}}^R X_{\{i+1,i+2\}} \times_{X_{\{i+2\}}}^R \dots \times_{X_{\{n-1\}}}^R
X_{\{n-1,n\}}\text{.}
	\]
	We conclude that the map $g = (g_1,g_2) \circ g'$ is a weak equivalence as well.
\end{proof}

\begin{prop}\label{prop:2-segal-product}
If $X, X'$ are $2$-Segal simplicial spaces, then so is $X\times X'$.
\end{prop}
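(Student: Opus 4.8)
The plan is to reduce everything to the fact that the formation of derived membrane spaces commutes with finite products. Fix $n \geq 2$ and a triangulation $\T$ of the polygon $P_n$. Since products of simplicial spaces are computed levelwise, we have $(X \times X')_p = X_p \times X'_p$ for every $p$, and in particular $(X\times X')_n = X_n \times X'_n$. The $2$-Segal map to be tested is the map $f_\T^{X\times X'}\colon (X\times X')_n \to R(X\times X')_\T$ of \eqref{def.XImap}, and the goal is to exhibit it as a product of the corresponding maps for $X$ and $X'$.

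First I would establish a natural homeomorphism $R(X \times X')_\T \cong RX_\T \times RX'_\T$. By the defining formula \eqref{eq:derived-membranes-top}, the left-hand side is $\hopro_{\{\Delta^p \to \Delta^\T\}\in \Delta/\Delta^\T} (X_p \times X'_p)$, the homotopy limit of the levelwise product of the two diagrams $p \mapsto X_p$ and $p \mapsto X'_p$ over the common index category $\Delta/\Delta^\T$. It therefore suffices to observe that, for any small category $A$ and any two diagrams $(Y_a)_{a\in A}$, $(Y'_a)_{a\in A}$ in $\Top$, the map induced by the two projections,
\[
\hopro_{a\in A}(Y_a \times Y'_a) \lra \Bigl(\hopro_{a\in A} Y_a\Bigr) \times \Bigl(\hopro_{a\in A} Y'_a\Bigr),
\]
is a homeomorphism. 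This is immediate from the explicit cobar description of Definition \ref{def:hopro}: the data defining a point of the left-hand side consist, at each stage, of singular simplices into the product spaces $Y_{a_k}\times Y'_{a_k}$ satisfying face-restriction conditions, and since $\Map(|\Delta^k|, Y \times Y') \cong \Map(|\Delta^k|, Y) \times \Map(|\Delta^k|, Y')$ with matching compact-open topologies, such a datum is exactly a pair consisting of one datum into the $Y$-diagram and one into the $Y'$-diagram, with the conditions splitting across the two factors.

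Under this identification the map $f_\T^{X\times X'}$ corresponds to the product map $f_\T^{X} \times f_\T^{X'}\colon X_n \times X'_n \to RX_\T \times RX'_\T$; this is a matter of unwinding \eqref{def.XImap}, whose two ingredients (pullback along $\Delta^\T \subset \Delta^n$ and the canonical map $X_\T \to RX_\T$) are both compatible with the projections $X\times X' \to X$ and $X\times X' \to X'$. Since $X$ and $X'$ are $2$-Segal, Definition \ref{def:2-segal-top} gives that $f_\T^{X}$ and $f_\T^{X'}$ are weak equivalences, and a product of weak equivalences is again a weak equivalence because $\pi_0$ and all $\pi_i$ of a product are the products of those of the factors. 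Hence $f_\T^{X\times X'}$ is a weak equivalence for every $\T$, so $X \times X'$ is $2$-Segal. The only step requiring genuine care is the commutation of the homotopy limit with the product; once that natural homeomorphism is in place, the compatibility of the Segal maps with the product decomposition and the stability of weak equivalences under products are routine, so I expect no essential obstacle beyond this bookkeeping.
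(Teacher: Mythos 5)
Your argument is correct and is essentially the paper's own proof: the paper likewise identifies $R(X\times X')_\T$ with $RX_\T\times RX'_\T$, observes that $f_{\T,X\times X'}$ is the product of $f_{\T,X}$ and $f_{\T,X'}$, and concludes since a product of weak equivalences is a weak equivalence. You have merely spelled out the commutation of homotopy limits with finite products, which the paper leaves implicit.
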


\begin{proof} Let $\T$ be a triangulation of $P_n$. Then the map
\[
f_{\T, X\times X'}: (X\times X')_n= X_n\times X'_n \lra  R(X\times X')_\T = R X_\T \times
RX'_\T
\]
is the product of the maps $f_{\T, X}$ and $f_{\T, X'}$ and so is a weak equivalence. 
Similarly for the maps $f_{n,i}$. 
\end{proof}

\vfill\eject

\subsection{Proto-exact categories and the Waldhausen S-construction} 
\label{subsec:waldhausen-1}

In this section, we present the example which initiated our study of $2$-Segal spaces: the
Waldhausen S-construction originating in Waldhausen's work \cite{waldhausen} on algebraic
$K$-theory. We will generalize this example to the context of $\infty$-categories in \S \ref{subsec:waldhausen-exact-infty}. 

We start with generalizing Quillen's notion of an exact category to the non-additive case. Let $\Ec$ be a category.
A commutative square
\begin{equation}
	\label{eq:comm-square}
	\xymatrix{A_2 \ar[r]^{i} \ar[d]_{j_2} &A_1\ar[d]^{j_1}\cr
	A'_2 \ar[r]^{i'}&A'_1
	}
\end{equation}
in $\A$ is called {\em biCartesian}, if it is both Cartesian and coCartesian. 

\begin{defi}\label{def:proto-exact}
A {\em proto-exact category} is a category $\Ec$ equipped with
two classes of morphisms $\Men$, $\Een$, whose elements are called
{\em admissible monomorphisms} and {\em admissible epimorphisms}
such that the following conditions are satisfied:
\begin{enumerate}[label=(PE\arabic{*})]
\item $\Ec$ is pointed, i.e., has an object $0$ which is both initial and final.
Any morphism $0\to A$ is in $\Men$, and any morphism $A\to 0$ is in $\Een$. 

\item The classes $\Men, \Een$ are closed
under composition and contain all isomorphisms.  

\item A commutative square \eqref{eq:comm-square} in $\Ec$ with $i,i'$
admissible mono and $j_1, j_2$ admissible epi, is
Cartesian if and only if it is coCartesian.

\item Any diagram in $\Ec$
\[
A_1\buildrel j_1\over\lra A'_1\buildrel i'\over\lla A'_2
\]
with $i'$ admissible mono and $j_1$ admissible epi, can be completed
to a biCartesian square \eqref{eq:comm-square} with $i$ admissible mono
and $j_2$ admissible epi.

\item Any diagram in $\Ec$
\[
A'_2\buildrel j_2\over\lla A_2\buildrel i\over\lra A_1
\]
with $i$ admissible mono and $j_2$ admissible epi, can
be completed to a biCartesian square \eqref{eq:comm-square} with $i'$ admissible mono
and $j_1$ admissible epi.
\end{enumerate}
\end{defi}

\begin{ex} Any exact category in the sense of Quillen is proto-exact, with the
same classes of admissible mono- and epi-morphisms. In particular, any abelian
category $\Ac$ is proto-exact, with $\Men$ consisting of all
categorical monomorphisms, and $\Een$
consisting of all categorical epimorphisms in $\A$.
\end{ex}

\begin{ex}[(Pointed sets)]\label{ex:pointed-sets} 
\begin{exaenumerate} 
	\item Let $\Set_*$ be the category of pointed sets $(S, s_0)$
	and morphisms preserving base points. Let $\Men$ consist of all injections 
	of pointed sets and $\Een$ consist of surjections $p:(S, s_0)\to (T, t_0)$
	such that $|p^{-1}(t)|=1$ for $t\neq t_0$. This makes $\Set_*$ 
	into a proto-exact category. The full subcategory $\Fc\Set_*$ of finite pointed sets
	is also proto-exact.

	\item Let $A$ be a small category and $\Ec$ a proto-exact category. The category
	$\Fun(A, \Ec)$ of $A$-diagrams in $\Ec$ is again proto-exact, with the 
	componentwise definition of the classes $\Men, \Een$.
	In particular, the category of representations of a given quiver (or a monoid)
	in pointed sets is proto-exact. Such categories have been studied
	in \cite{szczesny:quivers, szczesny:semigroups} from the 
	the point of view of Hall algebras.
\end{exaenumerate}
\end{ex}

\begin{rem} The categories from Example
\ref{ex:pointed-sets} belong to the class of {\em belian categories},
a non-additive generalization of the concept of abelian categories
introduced by A. Deitmar \cite{deitmar:belian}. Each belian category $\Bc$
has two natural classes of morphsms: $\Men$, consisting of all
categorical monomorphisms, and $\Een$, consisting of
{\em strong epimorphisms}, i.e., morphisms $f: A\to B$ which
can be included into a biCartesian square
\[
\xymatrix{
K\ar[d] \ar[r] &A\ar[d]^f
\\
0\ar[r]&B
}
\]
see \cite{deitmar:belian}, Def. 1.1.4. 
In the examples we know, these two classes form a proto-exact structure. 
\end{rem}

\begin{ex}[(Quadratic forms)] By a  {\em quadratic space} we mean a pair
$(V,q)$, where $V$ is a finite-dimensional $\RR$-vector space and $q$ is a positive
definite quadratic form on $V$. A {\em morphism of quadratic spaces} $f: (V',q')\to(V,q)$
is an $\RR$-linear operator $f: V'\to V$ such that $q(f(v'))\leq q'(v')$ for each $v'\in V'$. 
We denote by $\Qc\Sc$ the category of quadratic spaces. 

Call an {\em admissible monomorphism} a morphism $i: (V',q')\to(V,q)$ 
in $\Qc\Sc$ such that
$i$ is injective and $q(i(v'))= q'(v')$ for $v'\in V'$, i.e., $q'$ is the pullback of $q$ via $i$.
Call an {\em admissible epimorphism} a morphism $j: (V, q)\to (V'', q'')$ in $\Qc\Sc$
such that $j$ is surjective and
$
q''(v'') =\min_{j(v)=v''} q(v)
$
for each $v''\in V''$, see  \cite{KSV} for more details. This makes $\Qc\Sc$ into a proto-exact category.

One has a similar proto-exact category $\Hc\Sc$
 of {\em Hermitian spaces} formed by finite-dimensional
$\CC$-vector spaces and positive-definite Hermitian forms. 
\end{ex}

\begin{ex}[(Arakelov vector bundles)]
\label{ex:arakelov}
  By an {\em Arakelov vector bundle on $\SZ$}
we mean a triple $E=(L, V, q)$, where $(V,q)$ is a quadratic space and $L\subset V$
be a lattice (discrete free abelian subgroup) of maximal rank.
 A morphism
$E'=(L',V', q')\to E=(L,V,q)$ is a morphism of quadratic spaces $f: (V',q')\to(V,q)$
such that $f(L)\subset L'$. This gives a category $\Bun(\SZ)$. 
The {\em rank} of $E$ is set to be $\rk(E)=\rk_\ZZ(L)=\dim_\RR(V)$.
The set of isomorphism classes of Arakelov bundles of rank $r$ is thus the
classical double coset space of the theory of automorphic forms
\[
\operatorname{Bun}_r(\SZ)= GL_r(\ZZ)\backslash GL_r(\RR)/O_r. 
\]
Call an {\em admissible monomorphism} a morphism $i: (L', V',q')\to(L, V,q)$ 
in $\Bun(\SZ)$ such that $i:(V', q')\to(V,q)$ is an admissible monomorphism in
$\Qc\Sc$ and $i: L'\to L$ is an embedding of a direct summand.
Call an {\em admissible epimorphism} a morphism $j: (L, V, q)\to (L'', V'', q'')$ in 
$\Bun(\SZ)$
such that $j: (V,q)\to(V'', q'')$ is an admissible epimorphism in $\Qc\Sc$
and $j: L\to L''$ is surjective, see \cite{KSV} for more details.
 This makes $\Bun(\SZ)$ into a proto-exact
category.

One similarly defines proto-exact categories consisting of vector bundles
on other arithmetic schemes compactified at the infinity in the 
sense of Arakelov, see \cite{manin, soule} for more background.
\end{ex}
  
We now give a version of the classical construction of Waldhausen \cite{waldhausen, gillet} which
associates to a proto-exact category $\Ec$ a simplicial space.  Let $T_n=\Fun([1], [n])$ be the
poset (also considered as a category) formed by ordered pairs $(0\leq i\leq j\leq n)$, with
$(i,j)\leq (k,l)$ iff $i\leq k$ and $j\leq l$.  A functor $F: T_n\to\Ec$ is therefore a 
commutative diagram 
\[
	 \xymatrix{
		 F(0,0)\ar[r]&F(0,1)\ar[r]\ar[d]&\cdots\ar[r]&F(0, n-1)\ar[r]\ar[d]& F(0,n)\ar[d]
		 \cr
		 &F(1,1)\ar[r]&\cdots\ar[r]&F(1, n-1)\ar[r]\ar[d]&F(1,n)\ar[d]\cr
		 &&\ddots&\vdots\ar[d]&\vdots\ar[d]\cr
		 &&&F(n-1, n-1)\ar[r]&F(n-1, n)\ar[d]\cr
		 &&&&F(n,n)
	 }
\]
formed by objects $F(i,j)\in\Ec$ and morphisms $ F(i, j)\to F(k, l)$ given whenever $i\leq k$ and
$j\leq l$.  Let $\Wc_n(\Ec)$ be the full subcategory in $\Fun(T_n, \Ec)$ formed by diagrams $F$ as
above satisfying the following properties:
\begin{enumerate}[label=(W\arabic{*})]
\item For every $0 \le i \le n$, we have $F(i, i) \simeq 0$. 

\item All horizontal morphisms are in $\Men$, and all vertical
morphisms are in $\Een$. 

\item Each square in the diagram is biCartesian.
\end{enumerate}

Let $\SW_n(\Ec)$ be the subcategory in $\Wc_n( \Ec)$ formed by all objects and their isomorphisms.
One easily verifies that the construction $\SW_n \Ec$ is functorial in $[n]$ and defines a
simplicial category (groupoid) $\SW_{\bullet} \Ec$. We call it the {\em Waldhausen simplicial
groupoid} of $\Ec$. Assume that $\Ec$ is small.  Passing to the classifying spaces, we then obtain a
simplicial space $S_\bullet(\Ec)= B\SW_\bullet(\Ec)$ which we call the {\em Waldhausen space} of
$\Ec$.

\begin{prop}\label{prop:waldhausen-proto-segal}
For any small proto-exact category $\Ec$ the Waldhausen space $S_\bullet(\Ec)$ is $2$-Segal. 
\end{prop}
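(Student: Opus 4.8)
The plan is to verify the $2$-Segal condition directly by analyzing what the membrane spaces $RS(\Ec)_\T$ compute. Since $S_\bullet(\Ec) = B\SW_\bullet(\Ec)$ is obtained by passing to classifying spaces from the simplicial groupoid $\SW_\bullet(\Ec)$, the most economical route is to work at the level of groupoids and invoke Proposition \ref{prop:2lim-holim}(b), which tells us that $B$ converts $2$-limits of groupoids into homotopy limits of their classifying spaces. Concretely, by Proposition \ref{prop:2-segal-basic}\ref{item:2-segal-basic-4}, it suffices to treat the triangulations corresponding to ``cutting off'' a single triangle, i.e.\ the maps
\[
	S_n(\Ec) \lra S_{\{0,1,\dots,i,n\}}(\Ec) \times^R_{S_{\{i,n\}}(\Ec)} S_{\{i,i+1,\dots,n\}}(\Ec)
\]
for $0 < i < n$, together with the symmetric family. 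By Proposition \ref{prop:2lim-holim}(b), since each $\SW_m(\Ec)$ is a groupoid, it is enough to prove the corresponding statement about groupoids: that the functor
\[
	\SW_n(\Ec) \lra \SW_{\{0,1,\dots,i,n\}}(\Ec) \times^{(2)}_{\SW_{\{i,n\}}(\Ec)} \SW_{\{i,i+1,\dots,n\}}(\Ec)
\]
is an \emph{equivalence of categories}, where the right-hand side is the $2$-fiber product of Definition \ref{def:2lim}.

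The heart of the argument is therefore a combinatorial analysis of the diagram category $T_n = \Fun([1],[n])$ and the biCartesian conditions (W1)--(W3). First I would identify the membrane groupoid $\SW_\T(\Ec)$ with the groupoid of isomorphisms of functors $F: T_\T \to \Ec$, where $T_\T$ is the appropriate full subposet of $T_n$ dictated by the triangulation, still subject to the analogues of (W1)--(W3). The diagonal $\{i,n\}$ splits $T_n$ into the ``upper-left'' piece indexed by pairs contained in $\{0,\dots,i\}$ together with the extra row/column reaching $n$, and the ``lower-right'' piece indexed by pairs contained in $\{i,\dots,n\}$; the overlap is exactly the data along the diagonal edge $\{i,n\}$. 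The key point is that an object $F$ of $\SW_n(\Ec)$ is determined, up to canonical isomorphism, by its restrictions to these two subposets, \emph{glued along their common restriction}, and conversely any compatible pair of such restrictions extends---uniquely up to isomorphism---to a full $F$ satisfying (W1)--(W3). This is precisely where the proto-exact axioms (PE3)--(PE5) do the work: they guarantee that the missing entries of the diagram (the objects $F(a,b)$ with $a \le i < b$ that are not on the chosen diagonal) can be filled in by forming biCartesian squares, and that the result is unique up to unique isomorphism because every square in a Waldhausen diagram is biCartesian.

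I would make this precise by an induction on $n$, mirroring the structure of the proof of Proposition \ref{prop.1segal2segal}: reduce a general triangulation $\T$ to the one-diagonal case via the colimit decomposition of Proposition \ref{prop.colim}, and for the one-diagonal case establish the groupoid equivalence above by exhibiting an explicit quasi-inverse that reconstructs $F$ from its two pieces using axioms (PE4) and (PE5). The main obstacle will be the reconstruction step: I must check not only that the required biCartesian completions exist (this is immediate from (PE4)--(PE5)), but that the assignment is functorial and yields an object genuinely satisfying (W3) in \emph{all} squares, including those straddling the diagonal. The uniqueness-up-to-isomorphism needed for the equivalence follows from the universal properties of (co)Cartesian squares, but verifying that these local isomorphisms assemble coherently---so that the reconstructed functor is well-defined on the whole poset $T_n$ and that the two composites of the comparison functor and its inverse are naturally isomorphic to the identities---is the genuinely delicate bookkeeping. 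Once the groupoid-level equivalence is in hand, Propositions \ref{prop:invariance-2lim} and \ref{prop:2lim-holim}(b) upgrade it to the required weak equivalence of classifying spaces, completing the verification that $S_\bullet(\Ec)$ is $2$-Segal.
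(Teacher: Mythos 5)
Your proposal is correct and shares the paper's overall skeleton---reduce to the one-diagonal conditions of Part \ref{item:2-segal-basic-4} of Proposition \ref{prop:2-segal-basic}, pass to $2$-fiber products of groupoids via Proposition \ref{prop:2lim-holim}, and establish an equivalence of groupoids using the proto-exact axioms---but it implements the key step by a genuinely different device. The paper never glues Waldhausen diagrams directly along the diagonal. Instead it first proves Lemma \ref{leq:waldhausen-filtration-abelian}: the functor sending $F$ to its top row $F(0,1)\to\cdots\to F(0,n)$ is an equivalence $\Sc_n(\Ec)\simeq\Men_n$ onto the groupoid of chains of admissible monomorphisms (and dually $\Sc_n(\Ec)\simeq\Een_n$ via the last column). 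All of the reconstruction by successively forming biCartesian squares via (PE3)--(PE5) is concentrated in that one lemma. The comparison functor $\Phi_j$ (resp.\ $\Psi_i$) is then intertwined with $\phi_j:\Men_n\to\Men_j\times^{(2)}_{\Sc_{\{0,j\}}}\Men_{n-j+1}$ (resp.\ its $\Een$-analogue), which is an equivalence for trivial reasons: two chains of monomorphisms with an isomorphism of their common endpoint concatenate uniquely. Your route---reconstructing the missing entries $F(a,b)$ with $a<i<b<n$ directly from the two sub-diagrams---will also work, but the ``delicate bookkeeping'' you flag (verifying (W2) and (W3) on the squares straddling the diagonal, and the coherence of the quasi-inverse) is precisely the content the paper packages once and for all into the filtration lemma; routing the gluing through chains of monomorphisms makes that step essentially free, and the lemma is reusable elsewhere (e.g.\ it identifies the path spaces of $\Sc_\bullet(\Ec)$, giving a second proof via the path space criterion). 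Your explicit reduction of a general triangulation to the one-diagonal case by induction and Proposition \ref{prop.colim} is also redundant, since Part \ref{item:2-segal-basic-4} of Proposition \ref{prop:2-segal-basic} already performs that reduction. Both arguments invoke the same axioms in the same places, so this is a difference of organization rather than of substance.
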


\begin{proof} Informally, an object of $\Sc_n(\Ec)$ or $\Wc_n(\Ec)$ can be  
seen as an object $F(0, n)$ of $\Ec$
equipped with an ``admissible filtration" of length $n$
together with a specified choice of quotient objects. More precisely,  
Let $\Men_n$, resp. $\Een_n$
be the groupoid formed by chains of $(n-1)$ admissible mono- resp. epi-morphisms
 and by isomorphisms of such chains.

\begin{lem}\label{leq:waldhausen-filtration-abelian}
  (a) The functor $ \mu_n: \Sc_n(\Ec)\to \Men_n$ which associates to $F$ the subdiagram
$$F(0, 1)\lra F(0, 2)\lra \cdots \lra F(0, n),$$
 is an equivalence.  

(b) Similarly, the functor $\epsilon_n: \Sc_n(\Ec)\to \Een_n$ which associates to $F$ the subdiagram
$$F(0,n)\lra F(1,n)\lra \cdots \lra F(n-1, n),$$
 is an equivalence.  
\end{lem}

\begin{proof} (a) Given a sequence of objects $F(0,i)$ and monomorphisms as 
stated, we first put
$F(i,i)=0$ for all $i$, and then define the $F(i,j)$ inductively,
filling the second (from top) row left to right, then the third (from top) row left to right, etc. 
 by successively forming coCartesian squares using (PE4) :
\[
F(1,2) =\varinjlim{}^\Ec \left\{
\begin{matrix}
F(0,1)&\to&F(0,2)\cr
\downarrow&&\cr
F(1,1)&&
\end{matrix}
\right\}, \quad 
F(1,3) =\varinjlim{}^\Ec \left\{
\begin{matrix}
F(0,2)&\to&F(0,3)\cr
\downarrow&&\cr
F(1,2)&&
\end{matrix}
\right\}\quad\text{etc.}
\]
 This gives a functor which is
quasi-inverse to $\mu_n$.

(b) Similar procedure, by successively forming Cartesian squares using (PE3).  
\end{proof}

We now prove the proposition. Write $\Sc_n$ for $\Sc_n(\Ec)$.
To prove that $S(\Ec)$ is $2$-Segal, it suffices to verify the conditions in Part
\ref{item:2-segal-basic-4} of Proposition 
\ref{prop:2-segal-basic}. Using
  Proposition \ref {prop:2lim-holim}, we rewrite these conditions in terms of
  2-fiber products of categories. That is, it is enough to prove that the functors
 \[
  \begin{gathered}
\Phi_j :  \Sc_n \lra \Sc_{\{ 0, 1, ..., j \}}
  \times^{(2)}_{\Sc_{\{0,j\}}} \Sc_{\{j, j+1, ..., n\}}, \quad j=2, ..., n-1,\\
  \Psi_i: \Sc_n \lra \Sc_{\{0,1,..., i,n\}}\times^{(2)}_{\Sc_{\{i,n\}}}\Sc_{\{i, i+1, ..., n\}}, \quad
  i=1, ..., n-2,
  \end{gathered}
  \]
  are equivalences. 
    In order to prove that $\Phi_j$
 is an equivalence, we include it into a commutative diagram
\[
\xymatrix{
 \Sc_n \ar[r]^{\hskip -2cm\Phi_j}
 \ar[d]_{\mu_n} &\Sc_{\{ 0, 1, ..., j \}}
  \times^{(2)}_{\Sc_{\{0,j\}}} \Sc_{\{j, j+1, ..., n\}}\ar[d]^{\mu_j\times\mu_{n-j+1}}
  \\
  \Men_n \ar[r]^{\hskip -1cm\phi_j}& \Men_j\times^{(2)}_{\Sc_{\{0,j\}}} \Men_{n-j+1}
}
\]
with vertical arrows being equivalences by Lemma
\ref{leq:waldhausen-filtration-abelian}. 
  Now, the functor $\phi_n$
is obviously an equivalence: two objects
\[
\{F(0,1) \to ... \to F(0,j)\}\in\Men_j, \quad 
 \{F'(0,j)\to ...\to F'(0,n)\}\in\Men_{n-j+1}
 \] together with an isomorphism $F(0,j)\to F'(0,j)$
combine canonically to give an object of $\Men_n$. Therefore $\Phi_j$
is an equivalence as well.

In order to prove that $\Psi_i$ is an equivalence, we include it into a similar diagram
with bottom row
\[
\Een_n \buildrel \psi_i\over\lra \Een_i\times^{(2)}_{\Sc_{\{i,n\}}} \Een_{n-i+1}
\]
via the equivalences $\epsilon_n$ and $\epsilon_i\times \epsilon_{n-i+1}$.
Again, $\psi_i$ is an equivalence for obvious reasons. \end{proof}

\vfill\eject

\subsection{Unital 2-Segal spaces}

Recall from the Universality Principle \ref{Universality-principle} that in the context of $1$-Segal spaces, 
a semi-simplicial space $X$
corresponds to a nonunital higher category. The existence of a simplicial structure on $X$ implies the existence of units.
For $2$-Segal spaces, the situation is more subtle. The existence of a simplicial
structure is not sufficient to give a reasonable notion of units -- we require an additional
condition which we introduce in this section.

For $n \ge 2$ and $0 \le i \le n-1$, consider the commutative square
\[
	\xymatrix{
	 [n-1] & \ar[l] \{i\}\\
	 [n] \ar[u]^{\sigma_i} & \ar[l] \ar[u] \{i,i+1\}.
	}
\]
in $\Delta$, where $\sigma_i$ denotes the $i$-th degeneracy map, so that $\sigma_i$ is surjective and
$\sigma_i^{-1}(i) = \{i,i+1\}$. Given a simplicial space $X$, we have an induced square
\begin{equation}\label{eq:unitalsquare}
\xymatrix{
X_{n-1} \ar[r] \ar[d] & X_{\{i\}} \ar[d]\\
	X_n \ar[r] & X_{\{i,i+1\}}
}
\end{equation}
of topological spaces.

\begin{defi}\label{def:unital-2-segal-top}
Let $X$ be a $2$-Segal simplicial space. We say $X$ is {\em unital} if, for every $n \ge 2$ and 
$0\leq i < n$, the square \eqref{eq:unitalsquare} is homotopy Cartesian.
\end{defi} 

We have the following strengthening of Proposition \ref{prop.1segal2segal}.

\begin{prop}\label{prop.1segalunital2segal} 
	Every $1$-Segal simplicial space is a unital $2$-Segal simplicial space. 
\end{prop}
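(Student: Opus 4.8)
The plan is to build on Proposition \ref{prop.1segal2segal}, which already tells us that a $1$-Segal simplicial space $X$ is $2$-Segal, so that the unitality condition of Definition \ref{def:unital-2-segal-top} is meaningful; it then remains to check that each square \eqref{eq:unitalsquare} is homotopy Cartesian. Fix $n\ge 2$ and $0\le i<n$, and write $\rho\colon X_n\to X_{\{i,i+1\}}$ for restriction to the edge $\{i,i+1\}$ and $s_0\colon X_{\{i\}}\to X_{\{i,i+1\}}$ for the degeneracy appearing on the right edge of the square. The left vertical map is $s_i=X(\sigma_i)$ and the top map is restriction to the vertex $\{i\}$. Thus the task reduces to showing that the canonical comparison map
\[
\alpha\colon X_{n-1}\lra X_n\times^R_{X_{\{i,i+1\}}} X_{\{i\}}
\]
is a weak equivalence, where the homotopy fiber product is formed along $\rho$ and $s_0$.

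The computational core is a $1$-Segal decomposition isolating the edge $\{i,i+1\}$. Applying Proposition \ref{prop:1-segal-basic}(2) to the partition $\{0,\dots,i\}\cup\{i,i+1\}\cup\{i+1,\dots,n\}$ of $[n]$ gives a weak equivalence
\[
X_n\;\xrightarrow{\ \simeq\ }\; X_{\{0,\dots,i\}}\times^R_{X_0} X_{\{i,i+1\}}\times^R_{X_0} X_{\{i+1,\dots,n\}},
\]
under which $\rho$ becomes projection onto the middle factor. I would then form the homotopy pullback along $s_0$ factorwise: the middle contribution is $X_{\{i,i+1\}}\times^R_{X_{\{i,i+1\}}} X_{\{i\}}\simeq X_{\{i\}}$ (one leg is the identity), and by the simplicial identities $\partial_0 s_0=\partial_1 s_0=\id$ this residual copy of $X_{\{i\}}$ maps identically to both flanking base copies of $X_0$. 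Collapsing the two resulting fiber products over identities yields
\[
X_n\times^R_{X_{\{i,i+1\}}} X_{\{i\}}\;\simeq\; X_{\{0,\dots,i\}}\times^R_{X_0} X_{\{i+1,\dots,n\}}.
\]
On the other hand, Proposition \ref{prop:1-segal-basic}(3) applied to $X_{n-1}$ split at the vertex $i$ gives $X_{n-1}\simeq X_{\{0,\dots,i\}}\times^R_{X_0} X_{\{i,\dots,n-1\}}$, and both sides are indexed by intervals of the same length ($X_{n-i-1}$).

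The last step, which I expect to be the main obstacle, is the bookkeeping that identifies $\alpha$ with these two $1$-Segal equivalences rather than merely producing an abstract equivalence between source and target. Here I would use that $\sigma_i$ restricts to order isomorphisms $\{0,\dots,i\}\subset[n]\ \xrightarrow{\sim}\ \{0,\dots,i\}\subset[n-1]$ and $\{i+1,\dots,n\}\subset[n]\ \xrightarrow{\sim}\ \{i,\dots,n-1\}\subset[n-1]$, collapsing only the edge $\{i,i+1\}$ to the vertex $i$. Consequently $s_i$ intertwines restriction to $\{0,\dots,i\}$ on the two sides, and likewise intertwines restriction to $\{i+1,\dots,n\}$ on $X_n$ with restriction to $\{i,\dots,n-1\}$ on $X_{n-1}$. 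This is exactly the naturality needed to see that the diagonal $1$-Segal equivalence for $X_{n-1}$ factors as $\alpha$ followed by the equivalence computed above; by the $2$-out-of-$3$ property $\alpha$ is then a weak equivalence. Since $n$ and $i$ were arbitrary, every square \eqref{eq:unitalsquare} is homotopy Cartesian, and together with Proposition \ref{prop.1segal2segal} this proves that $X$ is a unital $2$-Segal space.
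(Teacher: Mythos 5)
Your proof is correct and follows essentially the same route as the paper: the paper refines the unitality square via the $1$-Segal maps $f_{n-1}$ and $f_n$ into a pasting of two homotopy Cartesian squares, and your explicit collapsing of the middle factor using $\partial_0 s_0 = \partial_1 s_0 = \id$ is precisely the content of the paper's ``by inspection'' step for the right-hand square. The only cosmetic differences are that you use the three-interval decomposition $\{0,\dots,i\}$, $\{i,i+1\}$, $\{i+1,\dots,n\}$ where the paper uses the full edge decomposition, and that you verify the comparison map $\alpha$ directly rather than invoking the pasting lemma for homotopy Cartesian squares.
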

\begin{proof} 
	We can refine the square \eqref{eq:unitalsquare} to the diagram
	\[
		\xymatrix{
			X_{n-1} \ar[r]^-{f_{n-1}} \ar[d] & X_{\{0,1\}} \times_{X_{\{1\}}}^R \dots \times_{X_{\{n-2\}}}^R X_{\{n-2,n-1\}} \ar[r] \ar[d] & X_{\{i\}} \ar[d]\\
			X_n \ar[r]^-{f_n} & X_{\{0,1\}} \times_{X_{\{1\}}}^R \dots 
			\times_{X_{\{n-1\}}}^R X_{\{n-1,n\}} \ar[r] & X_{\{i,i+1\}},
		}
	\]
	where $f_{n-1}$ and $f_n$ are $1$-Segal maps and hence by assumption weak equivalences.
	In particular, the lefthand square of \eqref{eq:unitalsquare} is homotopy Cartesian. The
	righthand square of \eqref{eq:unitalsquare} is homotopy Cartesian by inspection, and
	we therefore deduce that \eqref{eq:unitalsquare} is homotopy Cartesian as well.
\end{proof}

\begin{exas} 
\begin{exaenumerate} 
	\item As a special case of Proposition \ref{prop.1segalunital2segal}, we obtain that the nerve
	of a small category is a unital $2$-Segal simplicial set. 

	\item The Waldhausen space of a proto-exact category $\Ec$ is a unital $2$-Segal simplicial space.

	\item If $X, X'$ are unital $2$-Segal simplicial spaces, then so is their product $X\times X'$.
\end{exaenumerate}
\end{exas}
 
\vfill \eject

\subsection{The Hecke-Waldhausen space and relative group cohomology}
\label{subsec:hecke-waldhausen}

For a small groupoid $\Gc$ we denote by $\pi_0(\Gc)$ the set of isomorphism classes of objects of
$\Gc$.
Let $G$ be a group acting on the left on a set $E$.  Then we have the {\em quotient groupoid}
$G\backslash\hskip -1mm \backslash E$. It has $\Ob(G\bbs E)=E$, with $\Hom_{G\bbs E} (x,y)$ being
the set of $g\in G$ such that $gx=y$. Thus the source and target diagram of $G\bbs E$ has the form
\begin{equation}\label{eq:source-target-quotient}
	\xymatrix{
	\Mor(G\bbs E) = G\times E \ar@<.5ex>[r]^{\hskip .5cm s}\ar@<-.5ex>[r]_{\hskip .5cm t}&
	E=\Ob(G\bbs E) 
	}, \quad s(g,x)=x, t(g,x)=gx.
\end{equation}
In particular, $\pi_0(G\bbs E)$ is the orbit space $G\backslash E$.

For any $n\geq 0$ consider $E^{n+1}$ with the diagonal action of $G$ and put $\Sc_n(G,E) = G\bbs E^{n+1}$ 
to be the corresponding quotient groupoid.  The collection of categories
$(\Sc_n(G,E))_{n\geq 0}$ is made into a simplicial category $\Sc_\bullet(G, E)$ in an obvious way:
the simplicial operations (functors) $\partial_i, s_i$ are defined by forgetting or repeating
components of an element from $E^{n+1}$. We define $S_n(G,E)$ to be the classifying space of
$\Sc_n(G,E)$, so $S_\bullet(G,E)$ is a simplicial space which we call the {\em Hecke-Waldhausen
space} associated to $G$ and $E$.

\begin{ex}\label{ex:hecke-waldhausen}
\begin{exaenumerate}
	\item Let $E=G/K$, where $K\subset G$ is a subgroup. Let $e\in G/K$ be the distinguished
		point (corresponding to $K$ itself). Any element $(x_0, ..., x_n)\in E^n$ can be
		brought by an appropriate $g\in G$ to an element $(x'_0, ..., x'_n)$ with $x_0=e$,
		and such a $g$ is defined uniquely up to left multiplication by $K$. This means that
		we have an equivalence of categories
		\[
		\Sc_n(G, E) =  G\bbs (G/K)^{n+1} \simeq K\bbs (G/K)^n. 
		\]
		In particular, 
		\[
		\pi_0(\Sc_0(G,E)) = \pt, \quad \pi_0(\Sc_1(G, E)) = 
		K\backslash G/K.
		\]

		\hskip .5cm  If $K=G$, then $E=G/G=\pt$, and $S_n (G, G/G)=BG$ for each $n$. In other words,
		$S_\bullet(G, G/G)$ is the constant simplicial space corresponding to
		$BG$. 

		\hskip .5cm If $K=\{1\}$, then $E=G$. In this case $G$ acts freely on each $G^{n+1}$, so
		$S_n(G,G)$ is the discrete category corresponding to the set
		$G\backslash G^{n+1}=N_nG$. In other words, $S_\bullet(G,G)=\disc{NG}$
		is the discrete simplicial space corresponding to the simplicial set $NG$. 

	\item Let $\k$ be a field, and $\k\ltb t\rtb $ resp. $\k\llb t\rlb$ be the ring, resp. field of
		formal Taylor, resp. Laurent series with coefficients in $\k$. Fix $r\geq 1$ and let
		$G= GL_r(\k\llb t\rlb )$.  Put $K=GL_r(k\ltb t\rtb )$. Then $E=G/K$ can be identified
		with the set of lattices (free $\k\ltb t\rtb$-submodules of rank $r$) $L\subset \k\llb
		t\rlb ^r$. This set is partially ordered by inclusion, and the action of $G$
		preserves the order.  Put $$E^{n+1}_\leq =\bigl\{ (L_0, ..., L_n)\in E^{n+1} \bigl|
		L_0\subset \cdots \subset L_n\bigr\},$$ and further put $\Sc_n^\leq(G,E) = G\bbs
		E^{n+1}_\leq$. This gives a simplicial subcategory $\Sc_\bullet^\leq (G,E) \subset
		\Sc_\bullet(G,E)$. On the other hand, let $\Ac$ be the abelian category of
		finite-dimensional $\k\ltb t \rtb$-modules. We then have a functor of simplicial categories
		$$\Sc_\bullet^\leq(G,E) \lra \Sc_\bullet(\Ac), \quad (L_0\subset\cdots\subset L_n)
		\longmapsto (L_j/L_i)_{i\leq j}.$$ This makes it natural to think of $S_\bullet(G,E)$ for
		general $G$ and $E$ as a group-theoretic analog of the Waldhausen space. 
\end{exaenumerate}
\end{ex}

\begin{prop}\label{prop:hecke-waldhausen-1-segal}
The simplicial space $S_\bullet(G, E)$ is $1$-Segal.
\end{prop}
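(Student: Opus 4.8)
The plan is to follow the same strategy used for the categorified nerve in Example \ref{ex.1segclass}(b): reduce the homotopy-theoretic $1$-Segal condition to a purely $2$-categorical statement about the groupoids $\Sc_n(G,E)=G\bbs E^{n+1}$, and then verify the latter by hand. Since each $\Sc_n(G,E)$ is a groupoid, Proposition \ref{prop:2lim-holim}(b) applies to the zig-zag diagram computing the iterated homotopy fiber product and yields a natural weak equivalence
\[
B\Sc_1 \times^R_{B\Sc_0} \cdots \times^R_{B\Sc_0} B\Sc_1 \;\simeq\; B\bigl(\Sc_1 \times^{(2)}_{\Sc_0} \cdots \times^{(2)}_{\Sc_0} \Sc_1\bigr).
\]
Under this identification, and by the naturality of the comparison map $f$ of Proposition \ref{prop:2lim-holim}, the $1$-Segal map $f_n$ of Definition \ref{defi:1-segal} is obtained by applying the classifying space functor to the functor of groupoids
\[
\Phi_n:\; \Sc_n(G,E) \lra \Sc_1 \times^{(2)}_{\Sc_0} \cdots \times^{(2)}_{\Sc_0} \Sc_1
\]
induced by the edge inclusions $\{i,i+1\}\hookrightarrow[n]$, i.e., sending $(x_0,\dots,x_n)$ to the system of edges $(x_{i-1},x_i)$ equipped with the identity gluing isomorphisms at the shared vertices. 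It therefore suffices, by Proposition \ref{prop:invariance-2lim} together with the fact that $B$ takes equivalences of groupoids to weak equivalences, to show that each $\Phi_n$ is an equivalence of groupoids.

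Next I would verify that $\Phi_n$ is fully faithful and essentially surjective, unwinding the definition of the iterated $2$-fiber product from Definition \ref{def:2lim}. For essential surjectivity, one starts from an arbitrary object of the target, namely a collection of edges $a^{(1)},\dots,a^{(n)}\in E^2$ together with vertices $v_i\in E$ and gluing elements of $G$ identifying the target of $a^{(i)}$ and the source of $a^{(i+1)}$ with $v_i$. Using these gluing elements one at a time, one transports each edge by a suitable $g\in G$ so that consecutive edges share an endpoint on the nose; this produces a tuple $(x_0,\dots,x_n)\in E^{n+1}$ whose image under $\Phi_n$ is isomorphic, in the $2$-fiber product, to the given object. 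For full faithfulness, observe that a morphism $\Phi_n(x)\to\Phi_n(y)$ in the $2$-fiber product is a system consisting of one group element per edge and one per gluing vertex, subject to the compatibility condition of Definition \ref{def:2lim}; since the structure maps $\Sc_1\to\Sc_0$ preserve the underlying group element of a morphism, this condition forces all of these elements to coincide with a single $g\in G$ with $gx_i=y_i$ for all $i$, which is exactly an element of $\Hom_{\Sc_n(G,E)}(x,y)$.

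The routine bookkeeping aside, the one step demanding genuine care, and the main obstacle, is keeping track of the coherence data in the iterated $2$-fiber product: both the inductive transport argument in essential surjectivity and the collapse of the morphism data in full faithfulness hinge on correctly threading the gluing isomorphisms through Definition \ref{def:2lim} and on the observation that the structure maps of the action groupoids act as the identity on the group-element component of morphisms. (The degenerate case $E=\emptyset$ is trivial, as all spaces involved are empty.) Once $\Phi_n$ is shown to be an equivalence for every $n\ge 2$, Definition \ref{defi:1-segal} gives that $S_\bullet(G,E)$ is $1$-Segal.
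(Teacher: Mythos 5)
Your proposal is correct and follows essentially the same route as the paper: reduce via Proposition \ref{prop:2lim-holim} to showing that the functor of groupoids $\Sc_n(G,E)\to \Sc_1\times^{(2)}_{\Sc_0}\cdots\times^{(2)}_{\Sc_0}\Sc_1$ is an equivalence, then check full faithfulness (the compatibility conditions force all the $\gamma_i$ to coincide) and essential surjectivity (transporting edges by the cumulative products of the gluing elements). The paper just writes out the explicit formulas, e.g.\ $\gamma_i=g_ig_{i-1}\cdots g_1$ in the essential surjectivity step, that you describe verbally.
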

\begin{proof}
Let $\Sc_n=\Sc_n(G,E)$. By Proposition \ref{prop:2lim-holim}, it suffices to verify the $1$-Segal
condition at the level of groupoids, i.e., show that the natural functor  
\[
	\phi_n: \Sc_n \lra \Sc_1\times_{\Sc_0}^{(2)}\Sc_1\times_{\Sc_0}^{(2)} \cdots \times_{\Sc_0}^{(2)} \Sc_1
	\quad
	\text{($n-1$ times})
\]
is an equivalence of categories.  
Explicitly, an object of the iterated 2-fiber product on the right is a set of data 
\begin{equation}\label{eq:set-of-data}
	 \bigl( (x_0^{(0)}, x_1^{(0)}), (x^{(1)}_1, x^{(1)}_2), ..., 
	(x^{(n-1)}_{n-1}, x^{(n-1)}_n), g_1, ..., g_{n-1}\bigr), \quad x^{(i)}_\nu\in E, g_i\in G,
	g_i(x^{(i)}_{i+1})=x^{(i+1)}_{i+1}. 
\end{equation}
A morphism from such a set of data to another one, say to
\[
	  \bigl( (y_0^{(0)}, y_1^{(0)}), (y^{(1)}_1, y^{(1)}_2), ..., 
	(y^{(n-1)}_{n-1}, y^{(n-1)}_n), h_1, ..., h_{n-1}\bigr)
\]
is a sequence $(\gamma_1, ..., \gamma_{n-1})$ of elements of $G$ such that 
\begin{equation}\label{eq:gamma-i-condition}
	\begin{gathered}
	\gamma_i(x^{(i)}_\nu) = y^{(i)}_\nu, \quad i=0, ..., n-1, \nu=i, i+1;\\
	\gamma_{i+1}g_i=h_i\gamma_i, \quad i=0, ..., n-2.
	\end{gathered}
\end{equation}
The functor $\phi_n$ takes an object $(x_0, ..., x_n)\in\Sc_n = G\bbs E^{n+1}$ into the system of
data consisting of 
\begin{equation}\label{eq:phi-n-group-explicit}
	\begin{gathered}
	 x^{(0)}_0=x_0, x^{(0)}_1=x^{(1)}_1=x_1, \cdots, x^{(n-2)}_{n-1}=x^{(n-1)}_{n-1}=x_{n-1}, 
	  x^{(n-1)}_n=x_n,\\
	 g_1 = \cdots = g_{n-1} = 1. 
	 \end{gathered}
\end{equation}
A morphism $(x_0, ..., x_n)\to (y_0, ..., y_n)$ in $\Sc_n$ corresponding to $g\in G$ such that
$g(x_i)=y_i$, is sent into the sequence $(\gamma_1, ..., \gamma_{n-1})$ with all $\gamma_i=g$.

We now prove that $\phi_n$ is fully faithful.  Let $(x_0, ..., x_n)$ and $(y_0, ..., y_n)$ be two
objects of $\Sc_n$ and $(\gamma_1, ..., \gamma_{n-1})$ be a morphism between the corresponding
systems \eqref{eq:phi-n-group-explicit}. Then the second condition in \eqref{eq:gamma-i-condition}
gives $\gamma_{i+1}=\gamma_i$ for each $i=0, ..., n-2$, so all $\gamma_i=g$ for some $g\in G$,
whence the statement. 

We next prove that $\phi_n$ is essentially surjective. Indeed, for any object \eqref{eq:set-of-data}
of the iterated 2-fiber product as above we have an isomorphism
\[
	\begin{gathered}
	\phi_n\bigl( x^{(0)}_0, g_1^{-1}(x^{(1)}_1), 
	g_1^{-1} g_2^{-1}(x^{(2)}_2), ..., g_1^{-1} ...g_{n-1}^{-1}(x^{(n-1)}_{n-1}), 
	g_1^{-1} ...g_{n-1}^{-1}(x^{(n-1)}_{n})\bigr)\lra\\
	\bigl( (x_0^{(0)}, x_1^{(0)}), (x^{(1)}_1, x^{(1)}_2), ..., 
	(x^{(n-1)}_{n-1}, x^{(n-1)}_n), g_1, ..., g_{n-1}\bigr)
	\end{gathered}
\]
given by $\gamma_i= g_ig_{i-1} ... g_1$. This finishes the proof of the proposition.
\end{proof}

We now consider $|S_\bullet(G,E)|$, the realization of the simplicial space $S_\bullet(G,E)$. As
each space $S_n(G,E)$ is, in its turn, the realization of the nerve of $\Sc_n(G,E)$, we have a
bisimplicial set $S_{\bullet\bullet}$, with
\[
	S_{nm}=N_m\Sc_n(G,E)
\]
being the set of chains of $m$ composable morphisms in $\Sc_n(G,E)$. Then
$|S_\bullet(G,E)|=\|S_{\bullet\bullet}\|$ is the double realization (or, what is the same, the
realization of the diagonal) of $S_{\bullet\bullet}$.

\begin{prop}\label{prop:waldhausen-BG}
The space $|S_\bullet(G,E)|$ is homotopy equivalent to $BG$. 
\end{prop}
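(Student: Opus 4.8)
The plan is to compute the double realization $\|S_{\bullet\bullet}\|$ by realizing the ``space direction'' (the index $n$, carrying the sets $E^{n+1}$) first and the ``groupoid direction'' (the index $m$, carrying the arrows of $G$) last, exploiting that the object sets $E^{n+1}$ assemble into the nerve of a contractible groupoid. First I would record the explicit shape of the bisimplicial set. A chain of $m$ composable morphisms in $\Sc_n(G,E) = G\bbs E^{n+1}$ is determined by its initial object together with the sequence of group elements labelling its arrows, giving a natural bijection
\[
	S_{nm} = N_m\Sc_n(G,E) \;\cong\; E^{n+1}\times G^m, \qquad (x_0,\dots,x_n;\, g_1,\dots,g_m).
\]
In the $n$-direction the simplicial operators of $\Sc_\bullet(G,E)$ act only on the factor $E^{n+1}$, by deleting or repeating entries, and fix the group elements: each $\partial_i,s_i$ is a functor which is the identity on the ``$G$-part'' of every morphism. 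Hence, for fixed $m$, the simplicial set $S_{\bullet m}$ is the product $N\overline E\times \underline{G^m}$, where $\overline E$ denotes the indiscrete groupoid on $E$ (exactly one morphism between any two objects, so that $N_n\overline E = E^{n+1}$) and $\underline{G^m}$ is the constant simplicial set.

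Next I would use that $\overline E$ is equivalent, as a category, to the one-point groupoid, since all its objects are canonically isomorphic; therefore $|N\overline E|\simeq *$ whenever $E\neq\emptyset$ (a hypothesis satisfied in all cases of interest, e.g. $E=G/K$). I then introduce the bisimplicial set $T_{\bullet\bullet}$ with $T_{nm}=G^m$, constant in $n$ and carrying the nerve structure of $G$ in the $m$-direction, so that $\operatorname{diag}T = NG$ and $\|T\| = |NG| = BG$. The projection
\[
	p_{nm}(x_0,\dots,x_n;\, g_1,\dots,g_m) = (g_1,\dots,g_m)
\]
is compatible with the structure maps in both directions: in the $n$-direction because the $\partial_i,s_i$ fix the $g_i$, and in the $m$-direction because the nerve faces either compose adjacent $g_i$ (independent of the $x_j$) or discard an outer object, the outer face $\partial_0$ replacing $(x_0,\dots,x_n)$ by $g_1(x_0,\dots,x_n)$, which disappears after projection, matching the face of $NG$ that simply drops $g_1$. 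Thus $p\colon S_{\bullet\bullet}\to T_{\bullet\bullet}$ is a map of bisimplicial sets, and by the previous paragraph each row map $p_{\bullet m}\colon N\overline E\times\underline{G^m}\to\underline{G^m}$ is the projection off a contractible factor, hence a weak equivalence.

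Finally I would invoke the realization (Bousfield--Friedlander) lemma for bisimplicial sets: a map which is a weak equivalence in every fixed $m$-row induces a weak equivalence on diagonals. Combined with $|\operatorname{diag}S| = \|S_{\bullet\bullet}\| = |S_\bullet(G,E)|$ (as noted in the excerpt) and the commutation of geometric realization with finite products in compactly generated spaces, this yields
\[
	|S_\bullet(G,E)| = |\operatorname{diag}S| \;\xrightarrow{\;\simeq\;}\; |\operatorname{diag}T| = BG.
\]
The one genuinely delicate point, and the step I expect to require the most care, is checking that $p$ is honestly a map of bisimplicial sets, specifically its compatibility with the outer $m$-face $\partial_0$, which transports the object along $g_1$ while the corresponding face of $NG$ merely forgets $g_1$; once this bookkeeping is in place, the contractibility of $N\overline E$, the realization lemma, and the product formula for realizations are all standard. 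Equivalently, and more conceptually, one may phrase the argument as observing that $S_n(G,E) = B\Sc_n(G,E)$ is the homotopy quotient $E^{n+1}_{hG}$, so that $|S_\bullet(G,E)| \simeq (|N\overline E|)_{hG}$, and then apply the invariance of homotopy orbits under the $G$-equivariant weak equivalence $|N\overline E|\xrightarrow{\simeq} *$ to identify the right-hand side with $*_{hG}=BG$.
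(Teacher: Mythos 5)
Your proof is correct and is essentially the paper's own argument: both fix the nerve degree $m$, identify the resulting slice as the product of $G^m$ with the fat simplex $(\Delta^E)'$ (your $N\overline{E}$), collapse the contractible factor compatibly with the simplicial operations in $m$, and conclude by realizing. The only cosmetic difference is that you package the collapse as a levelwise weak equivalence of bisimplicial sets followed by the realization lemma, where the paper realizes the slices first and then compares simplicial spaces directly.
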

\begin{proof}
Consider the simplicial space $S'_\bullet$ formed by the realizations of the slices of
$S_{\bullet\bullet}$ with respect to the second simplicial direction: $S'_m= |S_{m\bullet}|$. Then
$|S'_\bullet| = \|S_{\bullet\bullet}\| = |S_\bullet(G,E)|$. 
To prove our statement, it suffices to construct, for each $m$, a homotopy equivalence between
$S'_m$ and the set $N_mG=G^m$ (considered as a discrete topological space), in a way compatible with
simplicial operations. To do this, notice that \eqref{eq:source-target-quotient} applied to $E^n$
instead of $E$, implies that $S_{mn}= G^m\times E^n$, and the simplicial operations in the
$n$-direction consist of forgetting or repeating elements of $E$. In other words,
$S_{m\bullet}=G^m\times (\Delta^E)'$, where $(\Delta^E)'$ is the fat simplex (Example
\ref{ex:nerve,fat-simplex}), known to be contractible.  So $S'_m =  G^m\times |(\Delta^E)'|\to G^m$
is a  homotopy equivalence.
\end{proof} 

\begin{rem} In fact, Proposition \ref{prop:waldhausen-BG} can be refined to identify the higher
	category modelled by $X = S_{\bullet}(G,E)$. A straightforward calculation shows that, the
	homotopy category $\h X$ is given by the category with set of objects $E$, and, for every
	pair of elements $e,e'$, the set $\Hom_{\h X}(e,e')$ can be identified with $G$. The
	composition is given by the composition law of the group $G$. Therefore, the category $\h X$
	is equivalent to the groupoid with one object and endomorphism set $G$. Further, the mapping
	spaces \eqref{eq:segalmapping} associated to $X$ are unions of contractible components. These observations imply
	that the higher category modelled by the $1$-Segal space $X$ is in fact weakly equivalent to
	the {\em ordinary} category $\h X$. Therefore, the completion of the $1$-Segal space $X$ is
	simply given by the constant simplicial space $BG$. In particular, the higher category
	associated to $X$ does not depend in any way on the action of $G$ on the set $E$.

	However, since the $1$-Segal space $X$ is {\em not} complete, it captures information
	which is lost after passing to the completion. This information is retained if we interpret $X$
	as a $2$-Segal space. As such, the Hecke-Waldhausen space will reappear in \S
	\ref{subsec:groupoids-classical-hall}, where we explain its relevance for Hecke algebras. 

\end{rem}

\begin{rem} Let $K\subset G$ be a subgroup. The simplicial space $S_\bullet(G, G/K)$ with realization $BG$ is a
group-theoretic analog of the filtered complex used to construct the Hochschild-Serre spectral
sequence (HSSS) for a Lie algebra with respect to a Lie subalgebra \cite{fuks}. More precisely, for
a $G$-module $A$  the {\em relative cohomology groups} $H^n(G, K; A)$ are defined by means of the
cochain complex
\[
	 C^n(G,K; A) =\Map_G ( (G/K)^{n+1}, A),
	 \quad
	 (df)(\bar g_0, ..., \bar g_{n+1}) = \sum_{i=0}^{n+1} (-1)^i f(\bar g_0, ..., \widehat{\bar {g}}_i, ..., 
	 \bar g_{n+1}), 
\]
see \cite{adamson, hochschild}. On the other hand, $A$ defines an obvious functor from
$\Sc_n(G,G/K)$ to abelian groups (each object goes to $A$, each morphism corresponding to $g\in G$ goes to
$g: A\to A$) and so gives a local system $\underline A_n$ on $S_n(G, G/K)$. These local systems are
compatible with the simplicial maps so give a local system $\underline A_\bullet$ on $S_\bullet(G,G/K)$ 
and thus a spectral sequence 
\[
	\begin{gathered}
	E_1^{pq} =   H^q(S_p(G, G/K); \underline A_p) \Rightarrow 
	H^{p+q}(|S_\bullet(G,G/K)|; \underline A_\bullet)=
	H^{p+q}(G;A),\\
	E_2^{p0} = H^p(G,K; A).
	\end{gathered}
\]
This is an analog of the group-theoretic HSSS for the case of a not necessarily normal subgroup.
Cf. \cite{butler-horrocks} where a spectral sequence like this was constructed using ``relative
homological algebra". 
\end{rem}
 
\vfill\eject

\section{Discrete 2-Segal spaces}
\label{sec:discsegal}

 In this chapter we study the $2$-Segal condition in the more immediate,
  non-homotopy setting:
 that of semi-simplicial {\em sets}, not spaces. A semi-simplicial set $Y$ will be
 called $2$-Segal, if $\disc Y$, the discrete semi-simplicial space associated to $Y$, is
 $2$-Segal. This simply means that for any $n\geq 2$ and 
 any triangulation $\T$ of the $(n+1)$-gon $P_n$, the map
 $
 f_\T: Y_n\lra Y_\T
 $
  is a bijection of sets.

  Let $\C$ be any category with finite projective limits. A semi-simplicial
  object $Y = (Y_n)\in \C_{\Delta_\inj}$ will be called $2$-Segal, if, for any object $U\in\C$
  the semi-simplicial set 
  \[
  \Hom_\C(U, Y) =  (\Hom_\C(U, Y_n))_{n\geq 0}
  \]
   is $2$-Segal. 
  Alternatively, for any triangulation $\T$ as above we define an object $Y_\T\in \C$
  as a projective limit in $\C$, and the condition is that each 
    $f_\T$ is an isomorphism in $\C$. 
In particular, we can speak about $2$-Segal schemes, analytic spaces etc. 
 
\subsection{Examples: Graphs, Bruhat-Tits complexes}
\label{subsec:graphs}
 
 We start with some very simple examples and then provide several generalizations.
 
 \begin{exa} (a) We say that a simplicial set $Y$ is {\em 1-skeletal},
 if all simplices of $Y$ of dimension $\geq 2$ are degenerate.  
 We will also refer to 1-skeletal simplicial
 sets   as {\em oriented graphs}.
  If $\T$ is a triangulation of 
  $P_n$ as above, we include $f_\T$ into a commutative diagram
  \[
  \xymatrix{
  Y_n \ar[r]^{f_\T}\ar[dr]& Y_\T\ar[d] \cr
  & Y_{\{0,n\}}=Y_1
  }
  \]
  If $Y$ is 1-skeletal, then the two other arrows in the diagram are bijections,
  which implies that $f_\T$ is a bijection, so 
    $Y$ is $2$-Segal.

    (b) An oriented graph $Y$ is $1$-Segal
  if and only if it has no pair of composable arrows
  (this includes arrows whose source and target coincide, as such an arrow
  is considered composable with
  itself). 
  \end{exa}
  
  Applying Proposition \ref{prop:2-segal-product}, we obtain the following:
  
  \begin{cor}
 \label{cor:product-graphs-2-segal}
  Any finite product of oriented graphs is $2$-Segal. 
  \end{cor}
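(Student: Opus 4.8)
The plan is to combine the two facts just established in this subsection: that every individual oriented graph is $2$-Segal (Example, part (a)), and Proposition \ref{prop:2-segal-product}, which asserts that the product of two $2$-Segal simplicial spaces is again $2$-Segal. Since a simplicial set $Y$ is by definition $2$-Segal exactly when the discrete simplicial space $\disc{Y}$ is, I would work throughout with discrete simplicial spaces, so that Proposition \ref{prop:2-segal-product} applies directly.

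The one thing to verify is that the discretization functor $\disc{-}$ commutes with finite products, i.e. that there is a canonical isomorphism $\disc{Y\times Y'}\cong \disc{Y}\times\disc{Y'}$; this holds because the product topology on a finite product of spaces carrying the discrete topology is again discrete. Granting this, given oriented graphs $Y^{(1)},\dots,Y^{(k)}$, each $\disc{Y^{(i)}}$ is $2$-Segal, and an immediate induction on $k$ using Proposition \ref{prop:2-segal-product} shows that $\disc{Y^{(1)}}\times\cdots\times\disc{Y^{(k)}}$ is $2$-Segal. By the product-compatibility of $\disc{-}$, this space is identified with $\disc{Y^{(1)}\times\cdots\times Y^{(k)}}$, whence the product simplicial set is $2$-Segal.

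The point worth stressing---and the reason the corollary is not vacuous---is that a product of oriented graphs is in general no longer $1$-skeletal: already $\Delta^1\times\Delta^1$ is a square carrying nondegenerate $2$-simplices. Hence one cannot rederive the conclusion by reapplying the ``oriented graphs are $2$-Segal'' argument to the product itself; the content lies precisely in the stability of the $2$-Segal property under products, even though the class of oriented graphs is not preserved by products. Beyond invoking Proposition \ref{prop:2-segal-product} inductively, there is no real obstacle here.
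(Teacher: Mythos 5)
Your proposal is correct and is exactly the paper's argument: the corollary is stated there as an immediate consequence of Proposition \ref{prop:2-segal-product} applied to the preceding example that oriented graphs are $2$-Segal. Your additional remarks on discretization commuting with products and on products of graphs failing to be $1$-skeletal are sound but not needed beyond what the paper already records.
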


 \begin{defi}\label{def:Z-order}
  (a) A $\ZZ_+$-order on a set $I$ is a pair $(\leq, F)$, where $\leq$ is a partial
 order on $I$, and $F: I\to I$ is an order-preserving map. A $\ZZ$-order is a
 $\ZZ_+$-order such that $F$ is a bijection.

 (b) Given a $\ZZ_+$-ordered set $I$, its {\em building} $\Bld(I)$ is defined to be the simplicial subset
 in $\N(I, \leq)$, the nerve of $I$, whose $n$-simplices are chains
   \[
 a_0\leq a_1\leq\cdots\leq a_n\leq F(a_0). 
 \]
 \end{defi} 
 
 \begin{prop}
 For any $\ZZ_+$-ordered set $I$, the building $\Bld(I)$ is $2$-Segal.
  \end{prop}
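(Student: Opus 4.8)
The plan is to reduce, via Proposition \ref{prop:2-segal-basic}, the $2$-Segal condition for $Y := \Bld(I)$ to two elementary bijections of sets and then verify those directly. Recall that $Y_n$ is the set of chains $a_0 \le a_1 \le \cdots \le a_n \le F(a_0)$ in $I$. Since $Y$ is discrete, every homotopy fibre product occurring in Proposition \ref{prop:2-segal-basic} is a homotopy fibre product of discrete sets, hence weakly equivalent to the ordinary fibre product, and a weak equivalence of discrete spaces is just a bijection. Thus it suffices to verify condition \ref{item:2-segal-basic-4}: for every $n \ge 3$ and every diagonal of the form $\{0,j\}$ or $\{i,n\}$, the corresponding restriction map from $Y_n$ to the relevant fibre product of sets is a bijection.

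First I would treat the diagonal $\{0,j\}$ (the case $i=0$). Restriction along $\{0,1,\dots,j\} \subset [n]$ and $\{0,j,j+1,\dots,n\} \subset [n]$ sends a chain $a_0 \le \cdots \le a_n \le F(a_0)$ to the pair $a_0 \le \cdots \le a_j$ and $a_0 \le a_j \le a_{j+1} \le \cdots \le a_n \le F(a_0)$, which agree on the edge $\{0,j\}$, i.e. on $(a_0,a_j)$. The inverse glues a matching pair back into one chain; here one checks that the only cyclic constraint needed, $a_n \le F(a_0)$, is already supplied by the long piece on $\{0,j,\dots,n\}$, while the short piece contributes the inner inequalities and its own weaker constraint $a_j \le F(a_0)$ is automatic. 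The two assignments are manifestly mutually inverse.

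Next I would treat the diagonal $\{i,n\}$ (the case $j=n$), which is the step where the structure map $F$ genuinely enters. Restriction sends $a_0 \le \cdots \le a_n \le F(a_0)$ to the pair $a_0 \le \cdots \le a_i \le a_n \le F(a_0)$ on $\{0,\dots,i,n\}$ together with $a_i \le a_{i+1} \le \cdots \le a_n$ on $\{i,\dots,n\}$. For the second tuple to be a legitimate simplex of $Y$ one needs $a_n \le F(a_i)$, and this is exactly where the order-preservation of $F$ is used: from $a_0 \le a_i$ we get $F(a_0) \le F(a_i)$, whence $a_n \le F(a_0) \le F(a_i)$. Conversely, gluing a matching pair, the full cyclic inequality $a_n \le F(a_0)$ is recovered from the piece on $\{0,\dots,i,n\}$, so the glued tuple is again a valid $n$-chain, and again the two assignments are mutually inverse.

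Most of this is routine bookkeeping; the one content-bearing point — and the place I would be most careful — is the correct distribution of the ``wrap-around'' inequality $a_n \le F(a_0)$ between the two pieces, and in particular the use of the hypothesis that $F$ is order-preserving to produce $a_n \le F(a_i)$ in the $j=n$ case. Once these two bijections are established, Proposition \ref{prop:2-segal-basic} (condition \ref{item:2-segal-basic-4} implies condition \ref{item:2-segal-basic-1}) yields that $Y = \Bld(I)$ is $2$-Segal.
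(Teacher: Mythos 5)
Your proof is correct, but it takes a different route from the paper's. You invoke Proposition \ref{prop:2-segal-basic}\ref{item:2-segal-basic-4} to reduce the $2$-Segal condition to the two families of subdivisions with a diagonal $\{0,j\}$ or $\{i,n\}$, and then verify each resulting map is a bijection by writing down explicit mutually inverse restriction and gluing maps; the key points — that the wrap-around inequality $a_n \le F(a_0)$ is carried by the piece containing the edge $\{0,n\}$, and that order-preservation of $F$ gives $a_n \le F(a_0) \le F(a_i)$ so the restriction to $\{i,\dots,n\}$ really lands in $\Bld(I)$ — are exactly the content-bearing steps, and you handle them correctly. The paper instead exploits the inclusion $\Bld(I) \subset \N(I,\leq)$ into the nerve of the underlying poset: since the nerve of any category is $1$-Segal and hence $2$-Segal, the map $f_{\T,\N(I)}$ is bijective for \emph{every} triangulation $\T$, which immediately gives injectivity of $f_{\T,\Bld(I)}$; for surjectivity one lifts a membrane to the unique chain $a_0 \le \cdots \le a_n$ in $\N(I)$ and checks the single extra condition $a_n \le F(a_0)$ by looking at the unique triangle $\{0,j,n\}$ of $\T$ containing the edge $\{0,n\}$. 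The paper's argument handles all triangulations uniformly and isolates the wrap-around condition as the only new input beyond the nerve; yours is more self-contained at the level of the two special subdivisions but leans on the reduction lemma, and it makes the role of the monotonicity of $F$ slightly more visible (the paper's surjectivity step never needs it explicitly, since the lifted chain is already given). Both are complete and correct.
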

  
  \begin{proof} 
   Let $\T$ be a triangulation of the $(n+1)$-gon $P_n$.
   As $\Bld(I)$ is a simplicial subset in $N(I)$, we have the
   commutative diagram
   \[
   \xymatrix{\Bld_n(I)\ar[r]^{\subset}\ar[d]_{f_{\T, \Bld(I)}}& N_n(I)\ar[d]
   ^{f_{\T, N(I)}}
   \cr
  \Bld_\T(I) \ar[r]^{\subset} &N_\T(I)
    }  
   \]
   As the nerve of any category, $N(I)$ is $1$-Segal and therefore $2$-Segal,
   so $f_{\T, N(I)}$ is a bijection. This implies that $f_{\T, \Bld(I)}$
   is an injection. 
   
   Let us prove surjectivity. Let $\sigma: \Delta^\T\to \Bld(I)$ be a membrane in
   $\Bld(I)$ of type $\T$. As $N(I)$ is $2$-Segal, there is a unique
   $n$-simplex $\Sigma\in N_n(I)$ which maps to $\sigma$ under
   $f_{\T, N(I)}$. This simplex is a chain of elements
   $a_0\leq \cdots \leq a_n$. Let us prove that $\Sigma\in \Bld_n(I)$,
   i.e., that the additional condition $a_n\leq T(a_0)$ is satisfied.  
   For this, look at the unique triangle $\{0,j,n\}$ of $\T$ which contains the
   side $\{0,n\}$ of $P_n$. The image of this triangle under $\sigma$ is a
   2-simplex of $\Bld(I)$, i.e., a triple of elements of $I$ of the form
   \[
   a_0\leq a_j\leq a_n\leq T(a_0),
   \]
   so the additional condition is indeed satisfied. \end{proof}

  \begin{exa}
    Let $\k$ be a field. Denote
  by $\mathbf K=\k\llb t\rlb$ and $\Oc = \k\ltb t \rtb$
  be the field of formal Laurent series and the ring of
  formal Taylor series
  with coefficients in $\k$. Fix a finite-dimensional $\mathbf K$-vector space $V$
  and let $d=\dim (V)$.
  By a {\em lattice} in $V$ we mean a free $\Oc$-submodule $L\subset V$
  of rank $d$. Let  
    $\Gamma = \Gamma(V)$ be the set of all lattices in 
   $V$. The group $GL(V)$ acts transitively on $\Gamma$. 
    For the coordinate vector space $V=\mathbf K^d$ the set $\Gamma$
   is identified with the coset space $GL_d(\Oc)\backslash GL_d(\mathbf K)$. 
   The set $\Gamma$ is partially ordered by inclusion. 
   Define a bijection $F:\Gamma\to\Gamma$ by $F(L)=t^{-1}L$.
   With this data, $\Gamma$ becomes a $\ZZ$-ordered set.
   The building $\Bld(\Gamma)$ is known as the
   {\em Bruhat-Tits building} of $V$ and denoted $\on{BT}(V)$.
   By the above, $\on{BT}(V)$ is $2$-Segal. 
   
    \end{exa}
  
    \begin{exa}\label{ex: apartment} Consider the simplicial subset
    $A\subset \on{BT}(\mathbf K^d)$ whose
   vertices are lattices of the form
   \[
   t^{i_1}\Oc \oplus \cdots \oplus t^{i_d}\Oc, \quad (i_1,..., i_d)\in\ZZ^d
   \]
   and higher-dimensional simplices are all chains of such lattices
   satisfying the condition
   \[
  L_0\subset L_1\subset \cdots \subset L_n\subset t^{-1}L_0.
  \]
 This subset is known as the
   {\em standard apartment} in the building $\on{BT}(\mathbf K^d)$.
    
    Let $I_\ZZ$ be the oriented graph with the set of vertices $\ZZ$ and one oriented
  edge from $i$ to $i+1$ for each $i$, so that $|I_\ZZ|$ is the subdivision of $\RR$ into
  unit intervals:
  \[
  \cdots\lra\bullet\lra\bullet\lra\bullet\lra\cdots
  \]
  Then $A$ is isomorphic to the 
    $d$th Cartesian power $I_\ZZ^d$, which is $2$-Segal by 
    Corollary \ref{cor:product-graphs-2-segal}. 
    
  The 
   building $\on{BT}(K^d)$ 
   is the union of the translations of $A$ under
   the elements of $GL_d(\mathbf K)$.  
  See \cite{brown.buildings, goldman-iwahori}
   for more details. 
   \end{exa}

 \begin{prop}\label{prop:2-segal-quotient}
  Let $Y$ be a simplicial set and $G$ be a group acting on
 $Y$ by automorphisms of simplicial sets. Suppose that the $G$-action
 on each $Y_n$ is free. Then the quotient simplicial set
 \[
 G\backslash Y = (G\backslash Y_n)_{n\geq 0}
 \]
 is $2$-Segal if and only if $Y$ is $2$-Segal. 
 
 \end{prop}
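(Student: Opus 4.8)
The plan is to exploit the fact that, under the freeness hypothesis, the quotient map $p\colon Y\to Z:=G\backslash Y$ is a principal $G$-bundle of simplicial sets: for each $n$ the map $p_n\colon Y_n\to Z_n$ is surjective with $Z_n=G\backslash Y_n$ and free $G$-action, i.e.\ a \emph{$G$-torsor}. Fix a triangulation $\T$ of $P_n$. Since $G$ acts on $Y$ by simplicial automorphisms it acts on the membrane set $Y_\T$, the Segal map $f_\T^Y\colon Y_n\to Y_\T$ is $G$-equivariant, and passing to quotients in the resulting commutative square recovers the Segal map $f_\T^Z\colon Z_n\to Z_\T$. The whole argument rests on one elementary observation: \emph{a $G$-equivariant map $u\colon A\to B$ between free $G$-sets is a bijection if and only if the induced map $G\backslash A\to G\backslash B$ is a bijection.} (One direction is immediate; for the other, injectivity and surjectivity of $u$ are recovered from those of the quotient map using freeness of the $G$-action on $B$.) Applied with $A=Y_n$ and $B=Y_\T$, this yields ``$f_\T^Y$ bijective $\iff f_\T^Z$ bijective'', hence ``$Y$ is $2$-Segal $\iff Z$ is'', \emph{provided} we know that $Y_n,Y_\T$ are free $G$-sets whose quotients are $Z_n,Z_\T$.

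Freeness is easy: the action on $Y_n$ is free by hypothesis, and the action on $Y_\T$ is free because evaluation at the vertex $0$ gives a $G$-equivariant map $Y_\T\to Y_{\{0\}}=Y_0$ onto a free $G$-set. The real content, and the main obstacle, is the identification $Z_\T\cong G\backslash Y_\T$, equivalently that $q_\T\colon Y_\T\to Z_\T$ is again a $G$-torsor; this is where the lifting of membranes enters. I would prove it by induction on the number of triangles of $\T$, the base case being a single triangle, where $Y_\T=Y_2\to Z_2$ is a torsor by hypothesis. For the inductive step I peel off an \emph{ear}: every triangulation of a polygon with at least four vertices contains a triangle $J=\{i-1,i,i+1\}$ two of whose sides lie on the boundary, and removing it leaves a triangulation $\T'$ of the $n$-gon on $[n]\setminus\{i\}$, with $\Delta^\T=\Delta^{\T'}\cup_{\Delta^{\{i-1,i+1\}}}\Delta^J$. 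Since $\Hom(-,Y)$ turns this pushout of simplicial sets into a pullback (cf.\ Proposition \ref{prop:acyclic-topological}(a)), we obtain $G$-equivariant identifications
\[
Y_\T\;\cong\;Y_{\T'}\times_{Y_1}Y_2,\qquad Z_\T\;\cong\;Z_{\T'}\times_{Z_1}Z_2 .
\]

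Thus the induction reduces to a stable lemma I would isolate first: if $A\to C\leftarrow B$ are $G$-equivariant maps of free $G$-sets which are torsors over their quotients $\bar A\to\bar C\leftarrow\bar B$, then $A\times_C B$ is a free $G$-set with quotient $\bar A\times_{\bar C}\bar B$. Its proof is a direct diagram chase: freeness is inherited from the first factor; surjectivity of $A\times_C B\to\bar A\times_{\bar C}\bar B$ follows by lifting the two components and correcting their discrepancy over $C$ by the unique group element supplied by the torsor $C\to\bar C$; and the orbit-injectivity uses freeness of the action on $C$ to force the two correcting elements to agree. Applying this with $A=Y_{\T'}$ (a torsor over $Z_{\T'}$ by the inductive hypothesis), $B=Y_2$, $C=Y_1$ completes the induction and establishes $Z_\T\cong G\backslash Y_\T$. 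Combining this membrane-torsor statement with the elementary observation of the first paragraph gives, for every triangulation $\T$, that $f_\T^Y$ is bijective iff $f_\T^Z$ is; since the $2$-Segal condition quantifies over all such $\T$, we conclude that $G\backslash Y$ is $2$-Segal iff $Y$ is. The one point demanding care is the geometric input that a triangulated polygon always admits an ear realizing the simplicial pushout above, so that the inductive fibre-product description of $Y_\T$ is genuinely available.
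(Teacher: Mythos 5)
Your proof is correct, but it takes a genuinely different route from the paper's. The paper argues topologically: since the action is levelwise free, $|Y|\to|G\backslash Y|$ is an unramified covering, and because $|\Delta^\T|$ and $|\Delta^n|$ are simply connected, every membrane in the quotient lifts (uniquely once a basepoint lift is chosen) to a membrane in $Y$; one then extends the lift using the $2$-Segal property of $Y$ and projects back down, with uniqueness again coming from unique lifting. You instead stay entirely combinatorial: you isolate the statement that $Y_\T\to(G\backslash Y)_\T$ is itself a $G$-torsor, prove it by induction on ears of the triangulation via the fibre-product-of-torsors lemma, and then conclude with the elementary ``bijectivity descends along equivariant maps of free $G$-sets'' observation. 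What your approach buys is the avoidance of any topology (no geometric realization, no covering-space theory), a symmetric treatment of both directions of the equivalence (the paper proves one implication and leaves the converse to the reader), and the identification $(G\backslash Y)_\T\cong G\backslash Y_\T$ as a reusable statement; what it costs is the extra bookkeeping of the ear induction and the torsor lemma, where the paper's lifting argument handles all triangulations uniformly in one stroke. All the ingredients you flag as needing care do hold: every triangulation of a polygon with at least four vertices has an ear (a leaf of the dual tree), the corresponding decomposition $\Delta^\T=\Delta^{\T'}\cup_{\Delta^{\{i-1,i+1\}}}\Delta^J$ is a pushout by Proposition \ref{prop:collection}, and the freeness of the action on $Y_1$ is exactly what makes the orbit-injectivity step in your fibre-product lemma go through.
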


 \begin{proof} Suppose $Y$ is $2$-Segal. To prove that $G\backslash Y$
 is $2$-Segal, means to show that for any $n\geq 2$ and any triangulation $\T$ of $P_n$,
  any morphism of simplicial sets $\sigma: \Delta^\T\to G\backslash Y$  
  can be uniquely extended to a morphism
  $\Sigma: \Delta^n\to G\backslash Y$ (such a morphism in the same as an 
    $n$-simplex). To indeed show this, suppose that $\T,\sigma$ are given. 
Because $G$ acts on each $Y_n$ freely, the canonical projection $\pi: Y\to G\backslash Y$  
induces an unramified covering of geometric realizations. Since $P_n=|\Delta^\T|$
is simply connected, $\sigma$ has a lifting $\widetilde\sigma$, as in the diagram:
\[
\xymatrix{
\Delta^n \ar[r]^{\widetilde\Sigma}& Y\ar[d]^{\pi}
\cr
\Delta^\T\ar[r]^{\sigma}\ar[u]^{\text{incl.}}\ar[ur]^{\widetilde\sigma}& G\backslash Y
}
\]
Since $Y$ is $2$-Segal, $\widetilde\sigma$ can be uniquely extended to a morphism
$\widetilde \Sigma$ as in the diagram. Then $\Sigma = \pi\circ\widetilde\Sigma$ is a
required extension of $\sigma$. This proves that an extension exists. To show uniqueness,
suppose $\Sigma', \Sigma ''$ are two extensions of $\sigma$. Because both $\Delta^\T$
and $\Delta^n$ are simply connected, we can find liftings 
$\widetilde\Sigma',
\widetilde\Sigma'': \Delta^n\to Y$
which restrict to the same lifting $\widetilde\sigma: \Delta^\T\to Y$ of $\sigma$
and must therefore be equal. This equality implies that $\Sigma'=\Sigma''$. 
 
 This proves that $G\backslash Y$ is $2$-Segal, if $Y$ is. The proof in the opposite
 direction is similar and left to the reader. \end{proof}

 We apply this to the action of $G=\ZZ$ on $\on{BT}(V)$ generated by the 
 transformation $F$ which acts on simplices as follows:
 \[
 F(L_0, ..., L_n) = (tL_0, ..., tL_n).
 \]
 Clearly, this action is free. The apartment $I_\ZZ^n$ is preserved under the action, and $F$ acts on
 $\ZZ^d$, the set of its vertices, by adding the vector $(1, ..., 1)$. We denote by
 \[
	 \overline{\on{BT}}(V) =\ZZ\backslash \on{BT}(V), \quad \overline I_\ZZ^d= 
		 \ZZ\backslash I_\ZZ^d
 \]
 the quotient simplicial sets. By Proposition \ref{prop:2-segal-quotient} they are $2$-Segal.

  \vfill\eject

\subsection{The twisted cyclic nerve}
\label{subsec:twisted-cyclic-nerve}

Let $\C$ be a small category and $F: \C\to\C$ be an endofunctor.
The {\em $F$-twisted cyclic nerve} of $\C$ is the simplicial set $N^F\C$
with $N_n^F\C$ being the set of chains of arrows in $\C$ of the form
\begin{equation}\label{eq:chain-sigma}
\Sigma =
 \bigl\{ x_0\buildrel u_{01}\over\lra x_1\buildrel u_{12}\over\lra x_2
 \buildrel u_{23}\over\lra \cdots \buildrel u_{n-1, n}\over\lra x_n
 \buildrel u_{n0}\over\lra F(x_0)\bigr\}.  
\end{equation}
The simplicial structure is defined as follows.
For $\Sigma$ as above and $1\leq i\leq n$ 
the chain $\partial_i(\Sigma)$ is obtained from $\Sigma$
by omitting $x_i$ and composing the two arrows going in and out of it.
For $i=0$ we put
\[
\partial_0(\Sigma) =
 \bigl\{ x_1\buildrel u_{12}\over\lra x_2\buildrel u_{23}\over\lra x_3
 \buildrel u_{34}\over\lra \cdots \buildrel u_{n-1, n}\over\lra x_n
 \buildrel u_{n0}\over\lra F(x_0)\buildrel F(u_{01})\over\lra F(x_1)\bigr\}.
\]
For any $0\leq i\leq n$ 
the chain $s_i(\Sigma)$ is obtained from $\Sigma$ by replacing $x_i$ 
with the fragment
 $x_i\buildrel \Id \over\lra x_i$. One verifies directly that the simplicial identities hold.

 \begin{exas} 
 (a) If $\C=(I,\leq)$ is a poset, then $F$ is a monotone map,
 so $(I,F)$ is a $\ZZ_+$-ordered set and $N^F\C = 
 \Bld(I)$ is the building associated to it 
 (Definition \ref{def:Z-order}).

 (b) The twisted cyclic nerve $N^{\Id}\C$ corresponding to $F=\Id_\C$,
 will be called simply the {\em cyclic nerve} of $\C$ and denoted
 $\NC(\C)$, see \cite{drinfeld}.  In the case when $\C$ has one object
 (i.e., reduces to a monoid),  the cyclic nerve is a particular case
 of the {\em cyclic bar-construction} of Waldhausen
 \cite[\S 2.3]{waldhausen-II}.

(c) Assume that $\C$ is a groupoid.
In this case $\NC(\C)$ is identified with the nerve of 
  the functor category
 \[
L\C=\Fun (\ZZ, \C),
\]
where $\ZZ$ is the additive group of integers considered as a 
category with one object. This category is a groupoid, known as
the {\em inertia groupoid} of $\C$. This observation is essentially due to D. Burghelea
   \cite{burghelea} who treated the case 
    when $\C = G$ is a group considered as a category with one
object. In this case
 \[
 \Ob(L\C) = G, \quad \Hom_{L\C}(g, g') =  \{ u\in G: g'=ugu^{-1}\},
 \]
 so isomorphism classes of objects in $L\C$ are the same as
 conjugacy classes in $G$. 

  \end{exas}

   \begin{thm}\label{thm:cyclic-nerve}
   For any small category $\C$ and any endofunctor $F: \C\to\C$, 
   the simplicial set $N^F\C$ is
   $2$-Segal. 
      \end{thm}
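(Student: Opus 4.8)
The plan is to deduce the statement from the elementary reduction furnished by Proposition \ref{prop:2-segal-basic}. Since $N^F\C$ is a discrete (semi-)simplicial set, every homotopy fiber product $\times^R$ occurring there collapses to an ordinary fiber product of sets, and a weak equivalence of discrete spaces is just a bijection. It therefore suffices to establish part \ref{item:2-segal-basic-4}: for every $n\ge 3$ and every elementary diagonal $\{0,j\}$ (with $0<j<n$) or $\{i,n\}$ (with $0<i<n$), the canonical map from $N^F_n\C$ to the set-theoretic fiber product is a bijection. The case $n=2$ is trivial, the triangle $P_2$ admitting only the trivial triangulation, so $f_\T$ is the identity.

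First I would record a clean description of the membrane sets and of the face restrictions. Unwinding the face maps of the twisted cyclic nerve, an $n$-simplex $\Sigma$ as in \eqref{eq:chain-sigma} is the same datum as a functor $\phi$ from the poset $\ZZ$ to $\C$ satisfying the strict equivariance $\phi(m+n+1)=F(\phi(m))$, with $\phi(\ell)=x_\ell$ for $0\le\ell\le n$. In this picture the restriction of $\Sigma$ to a face $\{s_0<\dots<s_k\}\subseteq[n]$ is read off from $\phi$ along the window $s_0<s_1<\dots<s_k<s_0+(n+1)$; in particular the closure edge $x_{s_k}\to F(x_{s_0})$ is the composite $\phi(s_k)\to\phi(s_0+n+1)$, which is exactly where the twist by $F$ enters. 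Applied to the faces arising for $\{0,j\}$, this identifies $(N^F\C)_{\{0,1,\dots,j\}}=N^F_j\C$ (chains $z_0\to\dots\to z_j\to F(z_0)$), $(N^F\C)_{\{0,j,j+1,\dots,n\}}=N^F_{n-j+1}\C$ (chains $w_0\to\dots\to w_{n-j+1}\to F(w_0)$), glued over $(N^F\C)_{\{0,j\}}=N^F_1\C$.

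An element of the fiber product is thus a pair $(z_\bullet,w_\bullet)$ agreeing after restriction to the shared edge $\{0,j\}$: concretely $z_0=w_0$, $z_j=w_1$, the composite $z_0\to\dots\to z_j$ equals the arrow $w_0\to w_1$, and the closure $z_j\to F(z_0)$ equals the composite $w_1\to\dots\to w_{n-j+1}\to F(w_0)$. From such a pair I would reconstruct a unique $n$-simplex by concatenation: set $x_\ell=z_\ell$ for $0\le\ell\le j$ and $x_{j+m}=w_{m+1}$ for $1\le m\le n-j$, taking the connecting arrows from the two chains and the final closure $x_n\to F(x_0)$ from the closure of the $w$-chain. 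The two matching conditions ensure that the restrictions of this $x$-chain recover $z_\bullet$ and $w_\bullet$, and conversely any $n$-simplex lying over $(z_\bullet,w_\bullet)$ must carry precisely these objects and arrows, so the map is a bijection. The diagonal $\{i,n\}$ is settled by the identical argument, now splitting the chain at the object $x_i$ and reading the global closure $x_n\to F(x_0)$ off the face $\{0,\dots,i,n\}$.

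The only delicate point, and the place I expect the real work to lie, is the bookkeeping of the face restrictions, specifically the appearance of $F$ in the closure edges of the membrane chains: iterating the twisting face map $\partial_0$ turns $x_n\to F(x_0)$ into composites such as $x_n\to F(x_0)\to\dots\to F(x_i)$, and one must check that these are exactly the morphisms the gluing matches. The $\ZZ$-equivariant reformulation above is intended precisely to make this transparent, so that once the membrane sets are correctly identified the reconstruction reduces to a routine concatenation of two chains along a common object.
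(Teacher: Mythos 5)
Your proposal is correct and follows essentially the same route as the paper: the paper also reduces to the single‑diagonal conditions (its induction on $n$ via the triangle containing $\{0,n\}$ is exactly the content of the implication (4)$\Rightarrow$(1) in Proposition \ref{prop:2-segal-basic} that you invoke), and its key Lemma establishes the bijectivity of $X_n \to X_{\{0,\dots,i\}}\times_{X_{\{0,i\}}}X_{\{0,i,\dots,n\}}$ by the same splitting/concatenation of chains, with your two matching conditions being precisely the paper's equations \eqref{eq:v-0-i-u-0-i}. Your $\ZZ$-equivariant reformulation of the twisted cyclic nerve is a harmless (and correct) bookkeeping device not used in the paper, but it does not change the substance of the argument.
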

      
      \begin{proof} Denote $X=N^F\C$. Let $\T$ be
      a triangulation of the polygon $P=P_n$ with vertices
      $0,1,...,n$. We need to prove that $f_{\T, X}: X_n\to X_\T$
      is a bijection. By induction in $n$ we can assume that the
      statement is true for any triangulation of any $P_m$ with
      $m<n$. Now, looking at the unique triangle $\{0,i,n\}$
      of $\T$ containing the edge $\{0,n\}$, we see that there
      is $0<i<n$ such at least one of the two pairs
      $\{0,i\}$, $\{i,n\}$ is an edge of $\T$. 
      Assume that the first pair is an edge, the second case is treated similarly. 
      
      \begin{lem} The map
   \[
   g: X_n\lra X_{\{0,1,..., i\}}\times_{X_{\{0,i\}}} X_{\{0,i,i+1,..., n\}}
   \]
   is a bijection. 
   \end{lem}    
   
 The lemma implies bijectivity of $f_{\T, X}$. Indeed, the edge $\{0,i\}$
subdivides $P$ into two subpolygons: $P'$, with
vertices $0,1,..., i$, and $P''$, with vertices $0,i, i+1, ..., n$.
The triangulation $\Tc$ induces then triangulations $\T', \T''$
of $P', P''$, and $X_\T = X_{\T'}\times_{X_{\{0,i\}}} X_{\T''}$. 
 The map $f_{\T, X}$ is therefore the composition of $g$ and 
\[
f_{\T', X}\times f_{\T'', X}: X_{\{0,1,..., i\}}\times_{X_{\{0,i\}}} X_{\{0,i,i+1,..., n\}}\lra 
X_{\T'}\times_{X_{\{0,i\}}} X_{\T''}= X_\T,
\]
which is a bijection by the inductive assumption. 

\noindent {\sl Proof of the lemma:} We first prove that $g$ is injective.
  Given an $n$-simplex of $X$, i.e., a chain $\Sigma$ as in
\eqref{eq:chain-sigma}, the two simplices corresponding to it via $g$, are the
chains
\begin{equation}\label{eq:two-chains}
\begin{gathered}
\Sigma'=\bigl\{x_0\buildrel u_{01} \over\lra x_1\buildrel u_{12}\over\lra \cdots 
\buildrel u_{i,i-1}\over\lra x_i\buildrel u_{i0} \over\lra F(x_0)\bigr\},
\cr
\Sigma'' =\bigl\{x_0\buildrel v_{0i} \over\lra x_i\buildrel v_{i,i+1}\over\lra \cdots 
\buildrel v_{n,n-1}\over\lra x_n\buildrel v_{n0} \over\lra F(x_0)\bigr\},
\end{gathered}
\end{equation}
such that 
  $v_{p,p+1}=u_{p,p+1}$, $i\leq p\leq n-1$ and, in addition,
  \begin{equation}\label{eq:v-0-i-u-0-i}
v_{0i}=u_{i-1,i}\circ u_{i-2,i-1}\circ\cdots\circ u_{01}, \quad u_{i0}=
v_{n0}\circ v_{n-1,n}\circ \cdots \circ v_{i, i+1}. 
\end{equation}
   Among the arrows of these two
chains, we find all the arrows in $\Sigma$, which shows the injectivity of $g$.

We next prove that $g$ is surjective. Suppose we have two chains
$\Sigma'$ and $\Sigma''$ as in \eqref{eq:two-chains}. The fact that
the simplices represented by these chains have a common edge $\{0,i\}$,
means that we have \eqref{eq:v-0-i-u-0-i}
 But this precisely means that putting $u_{p,p+1}=v_{p,p+1}$, $i\leq p\leq n-1$,
we define a chain $\Sigma$ such that $g(\Sigma)=(\Sigma', \Sigma'')$.
This finishes the proof of the lemma and of Theorem \ref{thm:cyclic-nerve}.
\end{proof}

 \vfill\eject
  
\subsection{The multivalued category point of view}\label{subsec:mult-cat}

Let $\Cc$ be a category with fiber products.  
By a {\em span} (or {\em correspondence}) between two objects
$Z, Z'$ of $\Cc$ we will mean a diagram
\begin{equation}\label{eq:span-in-C}
\sigma =\bigl\{ Z\buildrel s\over \lla W\buildrel p \over\lra Z'\bigr\}
\end{equation}
and write 
$\xymatrix{
    \sigma: Z\ar@{~>}[r]&Z'}$. 
    All spans from $Z$ to $Z'$ form a category $\Spanl_\Cc (Z,Z')$, with morphisms being commutative
    diagrams
    \[
    \xymatrix{
	 &\ar[dl]_{s_1} W_1\ar[dd]_f \ar[dr]^{p_1}&\\
	 Z & & Z'\\
	 & \ar[ul]^{s_2}W_2\ar[ur]_{p_2}&
	 } 
    \]
    The {\em composition} of two spans
    \[
 \sigma' =\bigl\{ Z'\buildrel s' \over \lla W' \buildrel p'\over\lra Z''\bigr\}
  \quad \text{ and } \quad
 \sigma =\bigl\{ Z\buildrel s\over \lla W\buildrel p\over \lra Z'\bigr\}
    \]
    is defined by taking the fiber product:
  \[
  \sigma'\circ\sigma =\bigl\{ Z \buildrel s \over\lla W\buildrel \on{pr}_W\over\lla
  W\times_{Z'}W'\buildrel \on{\pr}_{W'}\over\lra W'\buildrel p'\over\lra Z\bigr\}.  
    \]
    Composition is associative: for any three spans of the form
    \[
    \xymatrix{
    Z\ar@{~>}[r]^{\sigma}&Z'\ar@{~>}[r]^{\sigma'}&Z''\ar@{~>}[r]^{\sigma''} & Z'''
    }
    \]
   the spans $(\sigma''\circ\sigma')\circ\sigma$ and $\sigma''\circ(\sigma'\circ\sigma)$
   are connected by a natural isomorphism in the category $\Spanl_\Cc (Z, Z''')$.

   One can express these properties more precisely
    by saying that the collection of categories $\Spanl_\Cc (Z, Z')$
   and composition functors connecting them, forms a bicategory $\Spanl_\Cc$ with
   the same objects as $\Cc$. This bicategory was introduced by Benabou
   \cite{benabou}.

   \begin{rem} A span \eqref {eq:span-in-C} in the category of sets can be thought of
   as a ``multivalued map" from $Z$ to $Z'$, associating to $z\in Z$ the set
   $s^{-1}(z)$ (which is mapped into $Z'$ by $p$). As $s^{-1}(z)$ may be empty,
   this understanding of
    ``multivalued" includes ``partially defined". 
    \end{rem}

    \begin{defi}
    A {\em multivalued category} (a $\mu$-{\em category}, for short) is a
    a weak category object in the bicategory $\Spanl_\Set$. 
      \end{defi}

    Explicitly, a $\mu$-category is a
    datum
    $\Cen$ consisting of:
    \begin{enumerate}[label=($\mu$C\arabic{*})]
  \item Sets $\Cen_0, \Cen_1$ (objects and morphisms of $\Cen$) and maps
  $s,t:\Cen_1\to \Cen_0$ (source and target). 
  
  \item A span $\xymatrix{
   \mu: \Cen_1\times_{\Cen_0}\Cen_1
    \ar@{~>}[r]&\Cen_1}$ in $\Set$ (multivalued composition).
    
    \item An isomorphism (associator)
      \[
    \alpha: \mu\circ(\mu\times\Id) \lra \mu\circ(\Id\times\mu)
    \quad\quad \text{in} \quad \Spanl_{\Set}(\Cen_1\times_{\Cen_0}\Cen_1\times_{\Cen_0} \Cen_1, \Cen_1).
    \]

    \item A map $e: \Cen_0\to\Cen_1$ (unit)
    and
     isomorphisms
    \[
    \lambda: \mu\circ(et, \Id) \lra \Id,\quad \rho: \mu\circ(\Id, e s)\lra\Id
    \]
  in $\Spanl_\Set(\Cen_1,\Cen_1)$ (left and right unitality).
   \end{enumerate}
  
\noindent These data are required to satisfy the properties
familiar from the theory of monoidal categories and bicategories:

\begin{enumerate}[resume,label=($\mu$C\arabic{*})]
  \item (Mac Lane pentagon constraint)  
The diagram 
\[
\xymatrix{
& \mu\circ((\mu\circ(\mu\times\Id))\times\Id)
\ar[dr]^{\mu\circ(\alpha\times\Id)}
\ar[dl]_{(\mu\times\Id\times\Id)^*\alpha}
&
\cr
\mu\circ(\mu\times\mu)
\ar[d]_{(\Id\times\Id\times\mu)^*\alpha}
&& \mu\circ((\mu\circ(\Id\times\mu))\times\Id)
\ar[d]^{(\Id\times\mu\times\Id)^*\alpha}
\cr
\mu\circ(\Id\times(\mu\circ(\Id\times\mu)))&&\ar[ll] _{\mu\circ(\Id\times\alpha)}
\mu\circ(\Id\times(\mu\circ(\mu\times\Id)))
}
\]
in the category
$\Spanl_{\Set}(\Cen_1\times_{\Cen_0}\Cen_1\times_{\Cen_0}\Cen_1\times_{\Cen_0}\Cen_1, \Cen_1)$
is commutative.

\item (Unit coherence) The following diagram
 in $\Spanl_\Set(\Cen_1\times_{\Cen_0}\Cen_1, 
\Cen_1)$ is commutative:
\[
\xymatrix{
\mu\circ(\mu\circ(\Id,es)\times\Id)
\ar[d]_{\alpha(\Id\times et\times\Id)}
\ar[drr]^{\mu(\rho\times\Id)}
&&
\\
\mu\circ(\Id\times\mu\circ(et,\Id))\ar[rr]_{\hskip 1.3cm\mu(\Id\times\lambda)}&&\mu
}
\]

\end{enumerate}

\begin{remcom}
\begin{exaenumerate}
	\item By a $\mu$-{\em semicategory} we will mean a ``$\mu$-category but possibly without units",
 i.e., the datum of ($\mu$C1-3) satisfying the condition ($\mu$C5).

 \item As usual, we will use the term $\mu$-{\em monoid}, resp $\mu$-{\em semigroup}
 to signify a $\mu$-category (resp. a $\mu$-semicategory) $\Cen$ with one object,
 i.e., with $\Cen_0=\pt$.

 \item Given any category $\C$ with fiber products, one can speak about
 $\mu$-categories
 in $\C$ by replacing morphisms and spans in $\Set$ by morphisms and spans in $\C$. 
 Similarly for $\mu$-semicategories, $\mu$-monoids, $\mu$-semigroups.
\end{exaenumerate}
 \end{remcom}

  \begin{defi}
   Let $\Cen, \Den$ be two $\mu$-categories with composition spans
  \[
 \xymatrix{
   \mu_\Cen: \Cen_1\times_{\Cen_0}\Cen_1
    \ar@{~>}[r]&\Cen_1,} \quad\quad
    \xymatrix{
   \mu_\Den: \Den_1\times_{\Den_0}\Den_1
    \ar@{~>}[r]&\Den_1.
    } 
     \]
     A (single-valued) functor $F: \Cen\to\Den$ is a datum of maps
     $F_i: \Cen_i\to\Den_i, i=0,1$, commuting with $s,t,e$, and of a morphism of spans
     \[
     \widetilde F_2: F_1\circ\mu_\Cen \lra \mu_\Den\circ (F_1\times_{F_0}F_1), \quad
     \widetilde F_2\in\Spanl_\Set(\Cen_1\times_{\Cen_0}\Cen_1, \Den_1),
     \]
     commuting with $\alpha, \lambda$ and $\rho$. We denote by $\mu\Cat$
     the category formed by $\mu$-categories and their functors.
    \end{defi}
    
    Similarly, a functor $F: \Cen\to\Den$ between two $\mu$-semicategories
    is a datum of $F_0, F_1$ commuting with $s,t$ and of $\widetilde F_2$ commuting with $\alpha$. 
    We denote by $\mu \Sc\Cat$ the resulting category of $\mu$-semicategories.
    
    The following is the main result of this section. 
    
    \begin{thm}\label{thm:2-segal-mu-cat}
    (a) The category of $2$-Segal semi-simplicial sets is equivalent to $\mu\Sc\Cat$.
    
    (b) The category of unital $2$-Segal simplicial sets is equivalent to $\mu\Cat$. 
    
    \end{thm}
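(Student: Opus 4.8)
The plan is to exhibit mutually quasi-inverse functors between the two categories, building the $\mu$-categorical data out of the low-dimensional part of a $2$-Segal (semi-)simplicial set and, conversely, reconstructing all higher simplices from a $\mu$-(semi)category by iterated fiber products. For the forward functor $\Phi$, given a $2$-Segal semi-simplicial set $X$ I set $\Cen_0=X_0$, $\Cen_1=X_1$, take source and target to be $s=\partial_1$ and $t=\partial_0$, and let the composition span $\mu$ be the fundamental correspondence
\[
\Cen_1\times_{\Cen_0}\Cen_1 \xleftarrow{(\partial_2,\partial_0)} X_2 \xrightarrow{\partial_1} \Cen_1,
\]
whose apex is the set $X_2$ of $2$-simplices; the simplicial identity $\partial_0\partial_2=\partial_1\partial_0$ guarantees that $(\partial_2,\partial_0)$ lands in the fiber product. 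This is exactly the data ($\mu$C1)--($\mu$C2).

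The associator ($\mu$C3) is the first place the $2$-Segal condition is used. The apex of $\mu\circ(\mu\times\Id)$ is naturally the membrane space of the triangulation $\T=\{\{0,1,2\},\{0,2,3\}\}$ of the square $P_3$, i.e.\ $X_2\times_{X_1}X_2$, and the apex of $\mu\circ(\Id\times\mu)$ is that of the opposite triangulation $\T'=\{\{0,1,3\},\{1,2,3\}\}$ (Example \ref{I.2segal}). By Definition \ref{def:2-segal-top} both $2$-Segal maps $X_3\to X_\T$ and $X_3\to X_{\T'}$ are bijections, and I define $\alpha$ to be the composite bijection $X_\T\xleftarrow{\sim}X_3\xrightarrow{\sim}X_{\T'}$; a direct check with the simplicial identities shows it commutes with the projections to $\Cen_1\times_{\Cen_0}\Cen_1\times_{\Cen_0}\Cen_1$ and to $\Cen_1$, so it is a genuine morphism of spans. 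For part (b), the ambient simplicial (not merely semi-simplicial) structure provides the degeneracy $s_0\colon X_0\to X_1$, which I take as the unit $e$, and the unitors $\lambda,\rho$ ($\mu$C4) are supplied by the unital squares \eqref{eq:unitalsquare} of Definition \ref{def:unital-2-segal-top}, which in the discrete setting say exactly that inserting an identity and composing returns the original edge.

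The coherence axioms are then read off from dimension four. Under $\Phi$ the five iterated compositions appearing in the Mac Lane pentagon ($\mu$C5) correspond to the five triangulations of the pentagon $P_4$, each of whose membrane spaces is identified with $X_4$ by its $2$-Segal map; since every arrow of the pentagon is the canonical bijection induced by a single diagonal flip and all of them factor through the common identification with $X_4$, the pentagon commutes automatically, and the unit coherence ($\mu$C6) is handled identically at the level of $X_3$. Functoriality of $\Phi$ on morphisms is immediate, since a map of (semi-)simplicial sets restricts to compatible maps on $X_0,X_1,X_2$.

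For the inverse functor $\Psi$, starting from a $\mu$-semicategory I put $X_0=\Cen_0$, $X_1=\Cen_1$, $X_2$ equal to the apex of $\mu$, and for $n\ge 3$ define $X_n$ to be the iterated fiber product $X_2\times_{X_1}\cdots\times_{X_1}X_2$ ($n-1$ factors), the membrane space of the fan triangulation of $P_n$ based at the vertex $0$---a choice forced by Proposition \ref{prop:2-segal-basic}. The outer faces $\partial_0,\partial_n$ are projections, while the inner faces contract two adjacent triangles of the fan using the composition leg $\partial_1=m$ of $\mu$. The main obstacle, and the technical heart of the argument, is to check that these faces satisfy all semi-simplicial identities and that the resulting object is $2$-Segal: the associator $\alpha$ and the pentagon are exactly what make the contraction maps independent of the order of contraction and compatible with re-basing the fan at a different vertex (as required to describe $\partial_0$), after which the $2$-Segal property for an arbitrary triangulation follows by the diagonal-splitting induction of Proposition \ref{prop:2-segal-basic}. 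For (b) the unit $e$ reconstitutes the degeneracies and promotes $X$ to a unital $2$-Segal simplicial set. Finally, $\Psi\Phi(X)\cong X$ because Proposition \ref{prop:2-segal-basic} recovers $X$ canonically from its fan decomposition, and $\Phi\Psi(\Cen)\cong\Cen$ because $\Psi$ leaves $(\Cen_0,\Cen_1,\mu)$ unchanged and reproduces the same associator; I expect the verification of the simplicial identities and the triangulation-independence of the faces in $\Psi$ to be by far the most delicate step.
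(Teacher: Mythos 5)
Your forward functor is essentially the paper's: the fundamental correspondence on $X_{\le 2}$, the associator as $f_{\T''}\circ f_{\T'}^{-1}$ for the two triangulations of $P_3$, the pentagon from the five triangulations of $P_4$ all identified with $X_4$ (the paper organizes this slightly more carefully via the poset of polyhedral subdivisions of $P_4$, so that each Mac Lane arrow visibly factors through a one-quadrilateral subdivision $\Pc_{ij}$, but the mechanism is the same), and the unit data from $s_0$ and the unitality squares. Where you genuinely diverge is in the inverse functor. You define $X_n$ for $n\ge 3$ as the membrane set of the fan triangulation at vertex $0$ and then construct the face maps by hand; as you note, this forces you to re-base the fan (via iterated associators) just to define $\partial_0$, and to verify the semi-simplicial identities $\partial_i\partial_j=\partial_{j-1}\partial_i$ by coherence. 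The paper avoids this entirely: it first forms $\widetilde{\N}_n=\varprojlim_{\{\Delta^p\hookrightarrow\Delta^n\}_{p\le 2}}\N_p$, the limit over \emph{all} $2$-truncated simplices, which is a semi-simplicial set for free by functoriality of the limit, and then cuts out $\N_n\Cen\subset\widetilde{\N}_n$ by the cocycle condition $(x_{ikl},x_{ijk})=\alpha_{ijkl}(x_{ijl},x_{jkl})$; the only thing left to check is that this subset is stable under the (already defined) face maps and that each $f_\T$ is bijective onto it, both of which are single applications of Mac Lane coherence. So the paper's construction buys exactly the verification you defer. Your route does work — the transitive system of bijections $f_{\T,\T'}$ that coherence provides is precisely what makes your fan-based faces well defined and independent of the re-basing order, and your $X_n$ is canonically identified with the paper's $\N_n\Cen$ — but you should be aware that "the most delicate step" you postpone is not a routine check, and that replacing the fan by the all-$2$-simplices limit is the standard device for making it disappear. (One cosmetic point: the paper's fundamental correspondence is written with $(\partial_0,\partial_2)$ rather than $(\partial_2,\partial_0)$; fix your conventions for $s,t$ and the ordering of the factors consistently, since the unit coherence ($\mu$C6) is sensitive to which leg is which.)
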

    
    \begin{proof} (a) Let $X$ be a $2$-Segal semi-simplicial set. We associate
 to $X$ a $\mu$-semicategory $\Cen=\Cen(X)$ as follows. 
 We put $\Cen_i=X_i$ for $i=0,1$. Further, we define the composition span in $\Cen$ 
 to be the diagram
 \begin{equation}\label{eq:fund-corr}
 \mu =\bigl\{
\xymatrix{
 X_1\times_{X_0} X_1 && \ar[ll]_{\hskip .7cm f_2=(\partial_0, \partial_2)}X_2\ar[r]^{\partial_1} &X_1
}
\bigr\}.
 \end{equation}
  which we call the {\em fundamental correspondence}
 of $X$.
   To construct the associator $\alpha$,  
  let $\nu$ be the span
 \[
 \nu = \bigl\{
 \xymatrix{
 X_1\times_{X_0}X_1\times_{X_0} X_1&&&\ar[lll] _{\hskip 1.3cm
 (\partial_{\{2,3\}}, \partial_{\{1,2\}}, \partial_{\{0,1\}})
 }
 X_3\ar[r]^{\partial_{\{0,3\}}}&X_1
 \bigr\}.
 }
 \]
 Consider the two triangulations of the 4-gon: 
 \[
	 \T'=\bigl\{ \{0,1,3\}, \{1,2,3\}\bigr\} \quad \text{and} \quad 
	 \T''=\bigl\{\{0,1,2\}, \{0,2,3\}\bigr\} 
 \]
  or, pictorially,
   \[
   \xymatrix{
 3&0\ar[l] \ar[d]
 \cr
 2\ar[u]&1\ar[l]\ar[ul]}
 \quad\quad\quad
 \xymatrix{
 3&0\ar[l]\ar[dl] \ar[d]
 \cr
 2\ar[u] &1\ar[l]
 }
 \]
 Since $X$ is $2$-Segal, these triangulations define isomorphisms of spans
 \[
   \mu\circ(\mu\times\Id)
 \buildrel f_{\Tc'}\over
  \lla \nu 
  \buildrel f_{\Tc''}\over
  \lra 
  \mu\circ(\Id\times\mu).    
 \]
We d efine $\alpha$ to be the morphism of spans
 \begin{equation}\label{eq:span-associator}
 \alpha = f_{\Tc''}\circ f_{\Tc'}^{-1}: \mu\circ(\mu\times\Id)
 \buildrel \sim\over \lra
  \mu\circ(\Id\times\mu).
\end{equation}

\begin{prop}\label{prop:associator-pentagon-2-segal-set}
 Let $X$ be any $2$-Segal semi-simplicial set. 
Then $\alpha$ satisfies the pentagon constraint ($\mu$C5),
thus making $\Cen=\Cen(X)$ into a $\mu$-semicategory.
\end{prop}

\begin{proof}
Denote the Mac Lane pentagon in ($\mu$C5) by $\Mc$.
Consider also the pentagon $P_4$ (not to be confused with $\Mc$) and its five
triangulations, which we denote by $\Tc_i, i=0, ..., 4$,
 so that $\T_i$ consists of 3 triangles
 with common vertex $i$. 
 Then the five spans in the vertices of $\Mc$ have the form
 \begin{equation}\label{eq:span-T-i}
 \xymatrix{
X_1\times_{X_0}X_1\times_{X_0}X_1\times_{X_0}X_1 &&&& X_{\Tc_i}
\ar[llll]_{\hskip 2cm
(\partial_{\{0,1\}}, \partial_{\{1,2\}}, \partial_{\{2,3\}}, \partial_{\{3,4\}})
}
 \ar[r]^{\partial_{\{0,4\}}} & X_1
}, \quad i=0, ..., 4. 
 \end{equation}
 For instance, $\mu\circ((\mu\circ(\mu\times\Id))\times\Id)$ corresponds to $\Tc_4$, etc. 
  The morphisms in $\Mc$ corresponds to elementary flips of 
 triangulations which connect $\T_i$ with $\T_{i\pm 2 (\on{mod} 5)}$.

 Let now $\Pi$ be the poset of all polyhedral subdivisions of the pentagon $P_4$, ordered
 by refinement, so that the $\Tc_i$ are the maximal elements. 
 For $j\equiv i\pm 2(\on{mod} 5)$
 we denote by $\Pc_{ij}$ the subdivision consisting of one 4-gon and 1-triangle of which
 both $\T_i$ and $\T_j$ are refinements. These $\T_i$, $\Pc_{ij}$ together with the subdivision 
 consisting of $P_4$ alone, exhaust all elements of $\Pi$, so the nerve of $\Pi$ looks
 like the barycentric subdivision of a pentagon or, more precisely of the Mac Lane pentagon $\Mc$.

 As in Example \ref{ex:convex-polytope} (b), we view any subdivision $\Pc\in\Pi$
 as a subset of $2^{[4]}$ and associate to it the simplicial subset $\Delta^\Pc\subset\Delta^4$
 and the corresponding space (set, in our case) of membranes 
 $X_\Pc\leftarrow X_4$. The correspondence $\Pc\mapsto X_\Pc$ is thus a covariant functor
 from $\Pi$ to $\Set$, so we have a commutative diagram having the shape of the barycentric
 subdivision of $\Mc$:

 \begin{equation}\label{eq:huge-pentagon}
  { 
\begin{tikzpicture}

 \node (origin) at (0,0) (origin) {$X_{\{01234\}}= X_4$}; 
\node (P0) at (0:4cm) { $X_{\T_0} $}; 
\node (P2) at (1*72:4cm) {$X_{\T_2}$};
 \node (P4) at (2*72:4cm) {$X_{\T_4}$}; 
 \node (P1) at (3*72:4cm) {$X_{\T_1}$}; 
  \node (P3) at (4*72:4cm) {$X_{\T_3}$}; 
  \path (P0) -- (P2)
  node [midway] (P02) {$X_{\Pc_{02}}$}; 
  \path (P2) -- (P4) node [midway] (P24) {$X_{\Pc_{24}}$}; 
  \path (P4) -- (P1) node [midway] (P14) {$X_{\Pc_{14}}$ };
  \path (P1) -- (P3) node [midway] (P13) {$X_{\Pc_{13}}$};
  \path (P0) -- (P3) node [midway] (P03) {$X_{\Pc_{03}}$}; 
\draw [-latex, thick] (P02) -- (P0);
\draw [-latex, thick] (P02) -- (P2);
 \draw [-latex, thick] (origin) -- (P0);
  \draw [-latex, thick] (origin) -- (P1);
   \draw [-latex, thick] (origin) -- (P2);
    \draw [-latex, thick] (origin) -- (P3);
     \draw [-latex, thick] (origin) -- (P4);
      \draw [-latex, thick] (origin) -- (P02);
       \draw [-latex, thick] (origin) -- (P24);
        \draw [-latex, thick] (origin) -- (P14);
         \draw [-latex, thick] (origin) -- (P13);
          \draw [-latex, thick] (origin) -- (P03);
 \draw [-latex, thick] (P24) -- (P2); 
  \draw [-latex, thick] (P24) -- (P4); 
   \draw [-latex, thick] (P14) -- (P4); 
    \draw [-latex, thick] (P14) -- (P1); 
     \draw [-latex, thick] (P13) -- (P3); 
      \draw [-latex, thick] (P13) -- (P1); 
       \draw [-latex, thick] (P03) -- (P0); 
        \draw [-latex, thick] (P03) -- (P3); 
         
 \end{tikzpicture}
 }
 \end{equation}

Because $X$ is $2$-Segal, all the maps in this diagram are bijections. 
Extending \eqref{eq:span-T-i}, for each $\Pc\in\Pi$ we define $F(\Pc)$
to be the span in $\Set$ given by 
\[
 \xymatrix{
X_1\times_{X_0}X_1\times_{X_0}X_1\times_{X_0}X_1 &&&& X_{\Pc}
\ar[llll]_{\hskip 2cm
(\partial_{\{0,1\}}, \partial_{\{1,2\}}, \partial_{\{2,3\}}, \partial_{\{3,4\}})
}
 \ar[r]^{\partial_{\{0,4\}}} & X_1
} 
\]
  The $F(\Pc)$ form then a commutative
diagram of isomorphisms in $\Spanl_{\Set}$ of the same shape $\Pi$ as 
\eqref{eq:huge-pentagon}. This diagram contains the Mac Lane pentagon $\Mc$: any arrow
$F(\Tc_i)\to F(\Tc_j)$ in $\Mc$ can be seen as the composite arrow
\[
F(\Tc_i)\lla F(\Pc_{ij})\lra F(\Tc_j)
\]
after reversing the isomorphism on the left. Therefore $\Mc$ is commutative.
\end{proof}

  It is clear that a morphism of $2$-Segal semi-simplicial
sets $X\to Y$ defines a functor $\Cen(X)\to\Cen(Y)$.

We now describe a reverse construction, associating to any $\mu$-semicategory $\Cen$
a semi-simplicial set $\N= \N\Cen$. 
We put $\N_i=\Cen_i$ for $i=0,1$, and 
  define $\N_2$ as the middle term
of the composition span:
\[
\mu_\Cen =\bigl\{ \Cen_1\times_{\Cen_0}\Cen_1
\buildrel p\over \lla \N_2\buildrel q\over \lra \Cen_1 \bigr\}.
\]
Define maps $\partial_i: \N_2 \to\N_1$, $i=0,1,2$, by putting $\partial_1=q$ and
$(\partial_0, \partial_2)=p$. Let also $\partial_1=s, \partial_2=t: \N_1\to \N_0$. 
These data make $(\N_p)_{p\leq 2}$ into a semi-simplicial 2-skeleton, i.e.,
into a functor $\Delta_\inj[0,2]^\op\to\Set$, where $\Delta_\inj[0,2]\subset\Delta_\inj$ is the
full subcategory
on objects isomorphic to $[0], [1], [2]$. Therefore, for any triangulation $\Tc$ of the polygon $P_n$ we can form
the set
\[
\N_\Tc=\pro _{\{\Delta^p\hookrightarrow \Delta^\Tc\}_{p\leq 2}}\N_p.
\]
Recall that triangulations of $P_n$ correspond to bracketed products of $n$ factors.
Note further that $\N_\T$
fits into a span
\[
\underbrace{ \Cen_1\times_{\Cen_0} \cdots \times_{\Cen_0}\Cen_1}_n \lla \N_\T \lra\Cen_1
\]
which is nothing but the bracketed iteration of $\mu$ corresponding to the triangulation $\Tc$. 
So the same argument
as in the Mac Lane coherence theorem shows 
that we have a transitive system of bijections $f_{\T, \T'}: \N_\T\to\N_{\T'}$
 coming from iterated applications of $\alpha$.
 In particular, for any $0\leq i<j<k<l\leq n$, the associator $\alpha$ gives a bijection
 \[
 \alpha_{ijkl}: N_{\{i,j,l\}}\times_{N_{\{i,j\}}} N_{\{j,k,l\}} \lra
 N_{\{i,k,l\}}\times_{N_{\{i,k\}}} N_{\{i,j,k\}}.
 \]
 Consider the limit
 \[
\widetilde \N_n = \pro _{\{\Delta^p\hookrightarrow \Delta^n\}_{p\leq 2}}\N_p.
 \]
 For an element $\bf x$ of $\widetilde \N_n$ 
and $0\leq i<j<k\leq n$ we will denote by $x_{ijk}$ the component of $x$ corresponding to
the embedding $\Delta^2\to\Delta^n$ sending $0\mapsto i$, $1\mapsto j$ and $2\mapsto k$. 
A monotone injection 
 $\phi: [m]\to[n]$  
gives rise to the
 embedding of simplices, also denoted $\phi: \Delta^m\to\Delta^n$. 
 Then composing with $\phi$
 defines a map $\widetilde N_n\to \widetilde\N_m$, so $\widetilde \N$ is a semi-simplicial set.

\begin{prop}\label{prop:nerve-mu-cat}
 For $n\geq 0$ let 
$\N_n\Cen\subset \widetilde \N_n$ consist of $\bf x$ such that
\[
(x_{ikl}, x_{ijk}) =\alpha_{ijkl}(x_{ijl}, x_{jkl}). 
\]
Then $\N\Cen = (\N_n\Cen)_{n\geq 0}$ is a semi-simplicial subset in $\widetilde N$.
It is $2$-Segal.
\end{prop}

\begin{proof} Both statements follow from the 
Mac Lane coherence argument (the transitivity of the bijections $f_{\T,\T'}$ above).
\end{proof}

 We call $\N\Cen$ the {\em nerve} of the $\mu$-semi-category $\Cen$.

 Further, let $F: \Cen\to\Den$ be a functor of $\mu$-semicategories. Note that the datum $\widetilde F_2$
 in $F$ contains the same information as a map $F_2$ making the following diagram
 commutative:
 \[
 \xymatrix{
 \Cen_1\times_{\Cen_0}\Cen_1
 \ar[d]_{F_1\times_{F_0} F_1} &\ar[l] \N_2\Cen\ar[d]^{F_2}\ar[r]&\Cen_1
 \ar[d]^{F_1}\\
  \Den_1\times_{\Den_0}\Den_1&\ar[l] \N_2\Den\ar[r]&\Den_1
 }
 \]
 This implies that $F$ gies rise to a morphism of semi-simplicial sets $\N F: \N\Cen\to\N\Den$. 
 Therefore we have a functor from $\mu\Sc\Cat$ to the category of $2$-Segal semi-simplicial spaces.
 It is now straightforward to verify that the two functors are inverse to each other,
 thus finishing the proof of part (a) of Theorem \ref {thm:2-segal-mu-cat}.

To prove part (b), assume that $X$ is a unital $2$-Segal simplicial set. We then make $\Cen=\Cen(X)$
into a $\mu$-category as follows.
The map $e:\Cen_0\to\Cen_1$ is defined to be the degeneracy map
$s_0: X_0\to X_1$. 
To construct the isomorphism $\lambda$, notice that $\mu\circ(et, \Id)$,
is, by definition, the span $X_1\leftarrow W\to X_1$ at the bottom of the following diagram
obtained by forming a Cartesian square:
\begin{equation}\label{eq:diagram-mu-et-id}
\xymatrix{
X_1\times_{X_0} X_1& \ar[l]_{\hskip 0.7cm (\partial_0, \partial_2)} X_2 \ar[r]^{\partial_1}&X_1
\\
X_1\ar[u]^{(s_0\partial_0, \Id)}
&\ar[l] W\ar[u]^u\ar[ur]&
}
\end{equation}
We claim that $W=X_1$, i.e., that the square
\[
\xymatrix{
X_1\times_{X_0} X_1 & \ar[l]_{\hskip 0.7cm (\partial_0, \partial_2)} X_2 \\
X_1\ar[u]^{(s_0\partial_0, \Id)}& \ar[l]_{\Id}X_1\ar[u]_{s_0}
}
\]
is Cartesian. But this follows at once from the Cartesianity of the square
\[
\xymatrix{
X_1&\ar[l]_{\partial_0}X_2
\\X_0\ar[u]^{s_0}&\ar[l]^{\partial_0} X_1\ar[u]_{s_0}
}
\]
which is the instance $n=2, i=1$ of the square \eqref {eq:unitalsquare}
in the definition of ``unital". 
 This defines $\lambda$. The construction of $\rho$ is similar, by using the instance $n=2, i=0$
of the square \eqref {eq:unitalsquare}. Further, the condition ($\mu$C6) follows by
considering the instance $n=3, i=1$ of the same square.

Conversely, let $\Cen $ be a $\mu$-category. We then make $\N=\N\Cen$
into a simplicial set as follows. We define $s_0: \N_0\to\N_1$
to be $e$ and $s_0, s_1: \N_1\to\N_2$ to be given by 
the inverses of $\lambda$ and $\rho$ respectively.  
More precisely, $\mu\circ(et, \Id)$ is given by the correspondence a the bottom
of the diagram \eqref{eq:diagram-mu-et-id} with $X_i = \N_i$,
so $\lambda$ gives a bijection $\lambda: W\to N_1$, and we 
put $s_0=u\lambda^{-1}$. Similarly for $s_1$ and $\rho$. 
This makes $(\N_p)_{p\leq 2}$ into a functor on the full subcategory $\Delta[0,2]\subset\Delta$
on objects isomorphic to $[0], [1], [2]$. Further, $\widetilde \N_n$ can be identified with the
limit
\[
\pro _{\{\Delta^p\rightarrow \Delta^n\}_{p\leq 2}}\N_p
\]
taken over all, not necessarily injective morphisms, using the functoriality on $\Delta[0,2]$.
This makes $(\widetilde N_n)_{n\geq 0}$ into a simplicial set and $\N=(\N_n)_{n\geq 0}$
is a simplicial subset so it inherits the structure.

We now prove that $2$-Segal simplicial set $\N$ is unital, i.e., the square \eqref {eq:unitalsquare}
for $X=\N$ is Cartesian for any $n\geq 2$ and any $i=0, ..., n-1$. 
For $n=2$ this is true because $\lambda$ and $\rho$ are isomorphisms of spans.
Let $n>2$ and $x\in \N_n$ be such that the 1-face $\partial_{\{i, i+1\}}(x)$
is degenerate. Then every 2-face $\partial_{\{j, i, i+1\}}(x)$, $j<i$ or
$\partial_{\{i, i+1, j\}}$, $j>i+1$, is degenerate by the case $n=2$ above.
Now, by construction, $x$ is determined by the collection of its 2-faces, and
we conclude that $x$ is itself in the image of $s_i: \N_{n-1}\to\N_n$. 

 This concludes the proof of Theorem \ref{thm:2-segal-mu-cat}. 
 \end{proof}

   \vfill\eject
   
\subsection{The Hall algebra of a discrete 2-Segal space}
\label{subsec:hall-algebra-discrete}
   
  The multivalued category $\Cen(X)$ associated to a $2$-Segal
  set $X$, can often be ``linearized" to yield a linear category in the usual sense.

   Let $\k$ be a field.
  For a set $B$ let $\Fc_0(B)$ be the set of all functions $B\to\k$ with finite support.
  For a map of sets $\phi: B\to B'$ we have the pushforward map
  \[
  \phi_*: \Fc_0(B)\lra\Fc_0(B'), \quad (\phi_*f)(b') =\sum_{\phi(b)=b'} f(b).
  \]
  Call $\phi$ {\em proper},
  if it has finite fibers. For a proper $\phi$ 
  we also have the pullback map $\phi^*:\Fc_0(B')\to\Fc_0(B)$.
  Any span $\sigma$ in $\Set$ as in \eqref{eq:span-in-C}   
  with $s$ proper, gives a linear map
  \begin{equation}\label{eq:Sigma}
  \sigma_* = p_* s^*: \Fc_0(Z)\lra\Fc_0(Z').
  \end{equation}
  Spans with the property that $s$ is proper, are closed under composition.
   Moreover, composition of such spans gives rise to the composition of the corresponding linear maps.

  Let $X$ be a semi-simplicial set.  
  For any $a,a'\in X_0$ we put
  \begin{equation}\label{eq:Bxy}
  B_a^{a'}=\bigl\{ b\in X_1: \partial_1(b)=a, \partial_0(b)=a'\bigr\}
  \quad = \quad\bigl\{
  \xymatrix{
  a\ar[r]^b&a'
  }
  \bigr\}
  \end{equation}
  to be the set of 1-simplices going from $a$ to $a'$. 
  For any $b,b',b''\in X_1$ we put
\[
	  C_{bb'}^{b''} =\bigl\{c\in X_2: \partial_0(c)=b,\partial_2(c)=b',
	  \partial_1(c)=b''\bigr\}
\]
  to be the set of triangles in $X$ with edges $b, b', b''$.
  A necessary condition for $C_{bb'}^{b''}\neq\emptyset$ is that 
    $(b, b', b'')$ form a $\partial\Delta^2$-{\em triple},
  i.e., there are $a,a',a''\in X_0$ such that
  $b\in B_{a'}^{a''}, b'\in B_a^{a'}, b''\in B_a^{a''}$:
\[
	  \xymatrix{&a'\ar[dr]^b&\\
		  a\ar[ur]^{b'}\ar[rr]_{b''}&&a''
	  }
\]
  In this case $C_{bb'}^{b''}$ is contained in the set 
  \[
	  K_{aa''}^{a'}=\bigl\{ c\in X_2| \partial_{\{0\}}(c)=a, 
	  \partial_{\{1\}}(c) =a',\partial_{\{2\}}(c)=a''
  \bigr\}
  \]
 of 2-simplices of $X$ with vertices $a, a', a''$.
 
 The following is a consequence of the construction of 
 the
  associator map $\alpha$ from \eqref{eq:span-associator}.

  \begin{cor}\label{cor:assoc-set-theor}
  Assume that $X$ is $2$-Segal, and let
  \[
  \xymatrix{
  z&\ar[l]_p t \ar[d]^w
  \\
  y\ar[u]^u&\ar[l]_v x
  }
  \]
  be any system of 0- and 1-simplices of $X$ with endpoints as indicated. Then  
  $\alpha$ defines a bijection of sets
  \[
  \alpha_{uvw}^p: \coprod_{x\buildrel r\over\lra z} C_{uv}^r\times C_{rw}^p \lra 
  \coprod_{t\buildrel s\over\lra y} C_{us}^p\times C_{vw}^s.
  \]
  \end{cor}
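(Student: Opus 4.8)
The plan is to read the claim straight off the construction of the associator $\alpha$ in \eqref{eq:span-associator}, using only that $\alpha$ is an isomorphism of spans together with the fact that the $2$-Segal hypothesis turns the relevant membrane maps into bijections. I would first fix the dictionary between the four corners $t,x,y,z$ of the square diagram and the ordinals $0,1,2,3$, namely $t=0$, $x=1$, $y=2$, $z=3$, so that the outer edges become $w=\partial_{\{0,1\}}$, $v=\partial_{\{1,2\}}$, $u=\partial_{\{2,3\}}$, the output edge is $p=\partial_{\{0,3\}}$, and the two possible diagonals are $r=\{1,3\}$ (from $x$ to $z$) and $s=\{0,2\}$ (from $t$ to $y$).

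Next I would recall that, by \eqref{eq:span-associator}, $\alpha=f_{\T''}\circ f_{\T'}^{-1}$ is an isomorphism of spans from $\mu\circ(\mu\times\Id)$ to $\mu\circ(\Id\times\mu)$, both viewed as spans from $X_1\times_{X_0}X_1\times_{X_0}X_1$ to $X_1$. Unwinding the composition of spans, the apex of $\mu\circ(\mu\times\Id)$ is the membrane set $X_{\T'}=X_{\{0,1,3\}}\times_{X_{\{1,3\}}}X_{\{1,2,3\}}$ and that of $\mu\circ(\Id\times\mu)$ is $X_{\T''}=X_{\{0,2,3\}}\times_{X_{\{0,2\}}}X_{\{0,1,2\}}$; here I use $\T'=\{\{0,1,3\},\{1,2,3\}\}$ and $\T''=\{\{0,1,2\},\{0,2,3\}\}$ as in the text. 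Since $X$ is $2$-Segal, the maps $f_{\T'}\colon X_3\to X_{\T'}$ and $f_{\T''}\colon X_3\to X_{\T''}$ are bijections, so $\alpha\colon X_{\T'}\to X_{\T''}$ is a bijection. The decisive point is that $\alpha$, being a morphism of spans, is compatible with both legs: the left leg records the triple of outer edges $(\partial_{\{2,3\}},\partial_{\{1,2\}},\partial_{\{0,1\}})=(u,v,w)$, and the right leg records $\partial_{\{0,3\}}=p$. Hence $\alpha$ preserves the boundary datum $(u,v,w;p)$ and therefore restricts to a bijection between the fibers of $X_{\T'}$ and $X_{\T''}$ over this fixed datum.

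Finally I would identify these fibers with the two sides of the asserted bijection. A membrane in $X_{\T'}$ lying over $(u,v,w;p)$ is a pair of $2$-simplices $(c_1,c_2)$ sharing the diagonal $r=\{1,3\}$, with $c_2\in C_{uv}^{r}$ a triangle on $\{1,2,3\}$ and $c_1\in C_{rw}^{p}$ a triangle on $\{0,1,3\}$; summing over the common edge $r\colon x\to z$ gives exactly $\coprod_{x\xrightarrow{r}z}C_{uv}^{r}\times C_{rw}^{p}$. The same bookkeeping for $\T''$, with shared diagonal $s=\{0,2\}\colon t\to y$, yields $\coprod_{t\xrightarrow{s}y}C_{us}^{p}\times C_{vw}^{s}$, and the restriction of $\alpha$ is precisely the map $\alpha_{uvw}^{p}$. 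The only genuine work lies in this last step, namely translating the face-map conventions in the definition of $C_{bb'}^{b''}$ into the identifications of the apices $X_{\T'}$, $X_{\T''}$ and their fibers with the stated coproducts of structure-constant sets; once the vertex correspondence above is fixed this is routine, and no input beyond the $2$-Segal condition and the span-morphism property of $\alpha$ is required.
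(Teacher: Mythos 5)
Your proof is correct and is exactly the argument the paper intends: the paper states this corollary without proof as "a consequence of the construction of the associator map $\alpha$," and your unpacking — $\alpha=f_{\T''}\circ f_{\T'}^{-1}$ is a bijection of the apices $X_{\T'}\to X_{\T''}$ by the $2$-Segal condition, it commutes with both legs of the spans and hence preserves the boundary datum $(u,v,w;p)$, and the fibers over that datum are precisely $\coprod_r C_{uv}^r\times C_{rw}^p$ and $\coprod_s C_{us}^p\times C_{vw}^s$ — is the intended justification, with the vertex dictionary $t=0$, $x=1$, $y=2$, $z=3$ correctly matching the paper's face-map conventions for $C_{bb'}^{b''}$.
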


  Assume now that $X$ is a unital simplicial $2$-Segal set such that 
     the $1$-Segal map $f_2$ in 
  \eqref{eq:fund-corr}
  is proper. 
  This implies that each
  \[
  c_{bb'}^{b''} := |C_{bb'}^{b''}| \in\ZZ_+
  \]
 is a finite number. Moreover, for each $b,b'$ there are only finitely many $b''$ such that
 $c_{bb'}^{b''}\neq 0$.

 In this situation we can associate to $X$ a $\k$-linear category $\Hall(X)$,
 which we call the {\em Hall category} of $X$. By definition, objects of $\Hall(X)$
 are vertices of $X$, i.e., elements of $X_0$. The abelian group
 $\Hom_{\Hall(X)}(a,a')$ is the $\k$-vector space spanned by edges (1-simplices) $b\in B_a^{a'}$.
     We denote by $\one_b$ the basis vector correspond to the edge $u$. 
  The composition of morphisms is given by ``counting triangles":
   \begin{equation}\label{eq:hall-cat-disc-1}
  \one_b * \one_{b'} =\sum_{b''} c_{bb'}^{b''}\cdot \one_{b''}. \end{equation}
    Alternatively, for $a, a', a''\in X_0$ consider the part of the fundamental correspondence
  \eqref{eq:fund-corr}
  dealing with simplices with vertices among $a, a', a''$:
  \begin{equation}\label{eq:partial-fund-correspondence}
  \mu_{aa''}^{a'} =\bigl\{ B_{a'}^{a''}\times B_a^{a'} \buildrel f_2\over\lla K_{aa''}^{a'}
  \buildrel \partial_1\over\lra B_{a}^{a''}\bigr\}. 
  \end{equation}
  Note that $\Hom_{\Hall(X)}(a,a') = \Fc_0(B_a^{a'})$
  as a vector space. The composition in $\Hall (X)$ can be 
 written as follows:
 \[
	 \Fc_0(B_{a'}^{a''})\otimes\Fc_0(B_a^{a'}) =  \Fc_0(B_{a'}^{a''}\times B_a^{a'}) 
	 \buildrel (\mu_{aa''}^{a'})_*\over\lra \Fc_0(B_a^{a''}),
 \]
 where $(\mu_{aa''}^{a'})_*$ (action of a correspondence on functions)
 is defined by \eqref{eq:Sigma}.

    \begin{prop}\label{prop:hall-category-assoc-unit}
  The composition law \eqref{eq:hall-cat-disc-1} 
  is associative and makes $\Hc(X)$ into a $\k$-linear category, with the unit morphism of
  $a\in X_0=\Ob(\Hc(X))$ given by $\one_{s_0(a)}$,
  where $s_0: X_0\to X_1$ is the degeneration map. \end{prop}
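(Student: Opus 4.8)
The plan is to reduce both assertions to set-theoretic facts already in hand: associativity to the bijection of Corollary \ref{cor:assoc-set-theor}, and the unit axioms to the Cartesianity of the squares \eqref{eq:unitalsquare}. First I would record that the properness of $f_2$ is exactly what makes \eqref{eq:hall-cat-disc-1} land in $\Fc_0$: for fixed $b\in B_{a'}^{a''}$ and $b'\in B_a^{a'}$ the fiber of $f_2$ over $(b,b')$ is $\coprod_{b''} C_{bb'}^{b''}$, which is finite, so each $c_{bb'}^{b''}$ is finite and vanishes for all but finitely many $b''$. Hence the composition defines a bilinear map $\Fc_0(B_{a'}^{a''})\otimes \Fc_0(B_a^{a'})\to \Fc_0(B_a^{a''})$, and the associativity statement is well-posed.

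For associativity, I would expand both bracketings on basis vectors attached to a composable string $t\xrightarrow{w} x\xrightarrow{v} y\xrightarrow{u} z$. Comparing coefficients of $\one_p$, for $p\in B_t^z$, reduces the claim to the numerical identity
\[
\sum_{r\in B_x^z} c_{uv}^r\, c_{rw}^p \;=\; \sum_{s\in B_t^y} c_{us}^p\, c_{vw}^s .
\]
The left-hand side is the cardinality of $\coprod_{r} C_{uv}^r\times C_{rw}^p$ and the right-hand side is the cardinality of $\coprod_{s} C_{us}^p\times C_{vw}^s$; the map $\alpha_{uvw}^p$ of Corollary \ref{cor:assoc-set-theor} is a bijection between these two sets, so their cardinalities agree and associativity follows.

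For the unit axioms, note $s_0(a)\in B_a^a$, so $\one_{s_0(a)}\in \Hom_{\Hall(X)}(a,a)$. For the right unit I would count, for $b\in B_a^{a'}$, the triangles $c$ with $\partial_0 c=b$ and $\partial_2 c=s_0(a)$; the simplicial identities give $\partial_0 s_0(b)=b$, $\partial_1 s_0(b)=b$, $\partial_2 s_0(b)=s_0(\partial_1 b)=s_0(a)$, so $s_0(b)$ is such a triangle lying in $C_{b,s_0(a)}^{b}$. The $n=2$, $i=0$ instance of \eqref{eq:unitalsquare} is, in the discrete case, an honest pullback of sets identifying $X_1$ with $\{(c,v): \partial_2 c=s_0(v)\}$ via $b'\mapsto (s_0(b'),\partial_1 b')$; this forces $s_0(b)$ to be the unique triangle with $\partial_2 c=s_0(a)$ and $\partial_0 c=b$. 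Thus $c_{b,s_0(a)}^{b''}=\delta_{b'',b}$ and $\one_b*\one_{s_0(a)}=\one_b$. The left unit is entirely symmetric, using the $n=2$, $i=1$ square, the degeneracy $s_1$, and $\partial_0 s_1=s_0\partial_0$, to obtain $\one_{s_0(a')}*\one_b=\one_b$.

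I expect the main obstacle to be purely a matter of bookkeeping rather than genuine difficulty: one must match the face/vertex conventions so that the degenerate filler triangle produced by \eqref{eq:unitalsquare} has its $\partial_1$-edge equal to $b$ (and not some other edge), and confirm that the homotopy-Cartesian condition of Definition \ref{def:unital-2-segal-top} collapses to a strict pullback of discrete sets. Once these identifications are made carefully, both the associativity count and the two unit computations are immediate.
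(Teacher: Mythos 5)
Your proposal is correct and follows essentially the same route as the paper: associativity is reduced to the bijection of Corollary \ref{cor:assoc-set-theor} exactly as in the paper, and your direct verification of the unit axioms via the $n=2$, $i=0,1$ instances of the squares \eqref{eq:unitalsquare} (which for discrete sets are honest pullbacks) is precisely the computation hidden inside the paper's citation of Theorem \ref{thm:2-segal-mu-cat}(b), where $s_0$ is shown to be the unit of the $\mu$-category $\Cen(X)$. The face/degeneracy bookkeeping you flag as the main risk checks out: $\partial_0 s_0=\partial_1 s_0=\id$, $\partial_2 s_0=s_0\partial_1$ and $\partial_1 s_1=\partial_2 s_1=\id$, $\partial_0 s_1=s_0\partial_0$ give $c_{b,s_0(a)}^{b''}=\delta_{b'',b}$ and $c_{s_0(a'),b}^{b''}=\delta_{b'',b}$ as you claim.
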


  \begin{proof} 
	  The associativity of composition follows from Corollary
  \ref{cor:assoc-set-theor} . 
  The fact that $\one_{s_0(a)}$ is the unit morphism of $a$
  follows from Theorem \ref{thm:2-segal-mu-cat} (b), since $s_0: X_0\to X_1$ is the unit
  of the $\mu$-category 
  $\Cen(X)$. 
  \end{proof}

 \begin{rems} (a) The particular case when $X$ is $1$-Segal  
 corresponds to the map $f_2$
 being not just proper but a bijection. In this case
 $X$ is 
 the nerve of a category $\C$,  
 and $\Hc(X)$ is the $\k$-linear envelope of $\C$.

(b) If $X$ is a $2$-Segal semi-simplicial set, the above construction defines a $\k$-linear
 semi-category $\Hall(X)$: we still have vector spaces 
 $\Hom_{\Hall(X)}(a,a')$ and associative composition maps among them, but may not have
 identity morphisms. 
 \end{rems}

 \begin{ex}[(The Hall algebra)]
  For any vertex $a\in X_0$ we have therefore the associative
 algebra
 \[
 H(X,a)=\End_{\Hall(X)}(a),
 \]
   which we call the {\em Hall algebra}
 of $a$. 
 In the case when $X_0=\pt$ the category $\Hall(X)$ is reduced to this algebra
 which we then denote $H(X)$ and call the Hall algebra of $X$ itself. 
 
 \end{ex}

 \begin{ex}[(Algebra of factorizations)] Let  
 $M$ be a  monoid (semigroup with unit),
 considered as a category with one object. By Theorem \ref{thm:cyclic-nerve},
 the cyclic nerve $\NC(M)$ is a $2$-Segal simplicial set. 
 Suppose that $|M|<\infty$. Then $\NC(M)$ satisfies the properness
 condition and its Hall category $\Hall(\NC(M))$ is defined.
Objects of this category, i.e., vertices of $\NC(S)$, are elements of $M$. 
 So for each $w\in M$ we have an associative algebra
 \[
 \Phi_w = H(\NC(M), w), 
 \]
 which we call the {\em algebra of factorizations} of $w$. Its $\k$-basis
 is labelled by edges of $\NC(M)$ beginning and ending at $w$, 
 i.e., by pairs $(A, B)\in M^2$ such that $AB=BA=w$
 (``factorizations of $w$").
 We denote by 
 $ \one_{A,B}$ the basis element corresponding to such a pair. 
 Similarly, 2-simplices with all three vertices equal to $w$ correspond to
`` triple factorizations", i.e., triples
 \[
 (\alpha, \beta,\gamma)\in M^3, \quad \alpha\beta\gamma=\beta\gamma\alpha = \gamma\alpha\beta =w, 
   \]
 with the face maps given by 
 \[
   \partial_0(\alpha,\beta,\gamma)= (\gamma\alpha, \beta), \quad \partial_1(\alpha,\beta,\gamma)=
 (\alpha, \beta\gamma), \quad \partial_2(\alpha,\beta,\gamma)=(\alpha\beta, \gamma).
 \]
 Therefore the structure constants
 in the product
 \[
 \one_{A,B} * \one_{C,D} =\sum_{E,F} c_{ABCD}^{EF} \one_{E,F}
 \]
 are easily found to be given by
 \[
 c_{ABCD}^{EF}=\begin{cases}
 1,\quad \text{if} \quad ED=A, BE=C, DB=F;\cr
 0, \quad \text{otherwise}.
 \end{cases}
 \]
 This means that
 \[
  \one_{A,B} * \one_{C,D} =\sum_{E: ED=A, BE=C} \one_{E, DB}.
 \]
 
 \end{ex}

     \vfill\eject

\subsection{The bicategory point of view. } 
\label{subsec:2-segal-monoidal} 

\renewcommand\theparagraph{}

The linearlization of the multivalued category $\Cen(X)$ described in 
\S \ref{subsec:hall-algebra-discrete} involves some loss of information.
Here we describe a related construction which avoids this loss and 
  allows us to ``identify" $2$-Segal sets with some particular
  2-categorical structures in a more traditional sense.

\paragraph{A. Action of correspondences on sheaves}
Let $B$ be a set. By $\Set_B$ we denote the category of sets over $B$.
 Thus, an object of 
$\Set_B$ consists of a set $F$ and a map $p: F\to B$. In particular, any $b\in B$
gives rise to the one-element set $\{b\}\in \Set_B$.

One can view an object of $\Set_B$ 
as a sheaf of sets on $B$ as a discrete topological space. The category $\Set_B$
can therefore serve 
as a categorical analog of the vector space of functions on $B$.

Any map of sets $\phi: B\to B'$ gives rise to the pullback and pushforward functors
\[\begin{gathered}
			\phi^*: \Set_{B'}\lra\Set_B, \quad \phi^*\bigl\{ F'\buildrel p'\over\lra B'\bigr\} =\
			\bigl\{ F\times_{B'} B\buildrel \on{pr}_B\over\lra B\bigr\}, \\
			\phi_*: \Set_B\lra\Set_{B'}, \quad \phi_*\bigr\{ F\buildrel p\over\lra B\bigr\}
			=
			\bigl\{ F\buildrel \phi \circ p\over\lra B'\bigr\}. 
\end{gathered}
\]
 Any span in $\Set$
  \[
  \sigma =\bigl\{ Z\buildrel s\over\lla W\buildrel p\over\lra Z'\bigr\}
  \]
  gives a  functor
  \[
	  \sigma_* = p_* s^*: \Fc_0(Z)\lra\Fc_0(Z').
  \]

   \begin{prop}
   \label{prop:action-spans-sheaves}
   For any two composable spans in $\Set$
    \[
    \xymatrix{
    Z\ar@{~>}[r]^{\sigma}&Z'\ar@{~>}[r]^{\sigma'}&Z''
    }
    \]
    we have a natural isomorphism of functors
    \[
    (\sigma'\circ\sigma)_* \Rightarrow \sigma'_* \circ\sigma_*: \Fc_0(Z)\lra \Fc_0(Z'').
    \]
    More precisely, 
    these isomorphisms make the correspondence $Z\mapsto\Fc_0(Z)$, $\sigma\mapsto\sigma_*$
    into a 2-functor from the bicategory $\Spanl_\Set$ into the bicategory $\Cat$ of
    categories. 
    \end{prop}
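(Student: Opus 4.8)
The plan is to reduce the entire statement to a base-change (Beck--Chevalley) isomorphism for pullback squares in $\Set$, which in this elementary setting is nothing more than the canonical identification of iterated fiber products. The conceptual point is that the assignment $Z \mapsto \Set_Z$, $\sigma \mapsto \sigma_* = p_* s^*$, is the canonical self-indexing of $\Set$ by its codomain fibration, and it is the Beck--Chevalley condition that promotes this data to a homomorphism out of the span bicategory.

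First I would record the (pseudo)functoriality of the two building blocks. Pushforward is strictly functorial, $(qp)_* = q_* p_*$, while pullback is a contravariant pseudofunctor, with canonical isomorphisms $(s\pi)^* \cong \pi^* s^*$; moreover $p_* \dashv p^*$, with counit $p_* p^* \Rightarrow \id$ (this adjunction will supply the action on $2$-cells). Given composable spans $\sigma = \{Z \leftarrow W \to Z'\}$ and $\sigma' = \{Z' \leftarrow W' \to Z''\}$, the composite is built from the fiber product $W'' = W \times_{Z'} W'$ with projections $\pi_W, \pi_{W'}$, so that its source and target maps are $s\pi_W$ and $p'\pi_{W'}$. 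Using the functoriality above, $(\sigma'\circ\sigma)_* = (p'\pi_{W'})_* (s\pi_W)^* \cong p'_* (\pi_{W'})_* (\pi_W)^* s^*$, and the composition isomorphism is then completed by the base-change isomorphism $(\pi_{W'})_* (\pi_W)^* \cong (s')^* p_*$, which gives $p'_* (s')^* p_* s^* = \sigma'_* \circ \sigma_*$.

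The base-change isomorphism is the core step, and I would prove it by hand. For the Cartesian square with vertices $W'', W, W', Z'$ and an object $F \to W$ of $\Set_W$, both $(\pi_{W'})_* (\pi_W)^* F$ and $(s')^* p_* F$ are canonically identified with the iterated fiber product $F \times_{Z'} W'$ over $W'$, using $W \times_{Z'} W' = W''$ and the pasting law for pullback squares; under the description of objects of $\Set_B$ as maps into $B$, these two constructions are literally the same set over $W'$, and the identification is manifestly natural in $F$. (This is precisely the assertion that the codomain fibration of $\Set$ satisfies Beck--Chevalley for all pullback squares, a general fact for any category with pullbacks.)

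It then remains to upgrade these isomorphisms to a homomorphism of bicategories $\Set_{(-)} : \Spanl_\Set \to \Cat$, i.e.\ a pseudofunctor. On a $2$-cell, a map of spans $f : W_1 \to W_2$ with $s_2 f = s_1$ and $p_2 f = p_1$, I would define the natural transformation $\sigma_{1*} \cong (p_2)_* f_* f^* (s_2)^* \Rightarrow (p_2)_* (s_2)^* = \sigma_{2*}$ induced by the counit $f_* f^* \Rightarrow \id$, with functoriality in $f$ following from the triangle identities; the identity span on $Z$ maps to $\id_* \id^* = \id_{\Set_Z}$, supplying the unitors. Finally the two coherence axioms must be checked: the associativity pentagon for a triple $\sigma, \sigma', \sigma''$ reduces to the agreement of the two induced comparisons $(\sigma''\circ\sigma'\circ\sigma)_* \cong \sigma''_* \sigma'_* \sigma_*$, which holds because both are induced from the canonical identification of the iterated fiber product of the three middle objects, compatible with the associator of $\Spanl_\Set$; the unit triangles are handled the same way. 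I expect this coherence verification to be the main obstacle, but it is forced rather than delicate: every isomorphism in sight is the unique canonical comparison of iterated fiber products, so all the required diagrams commute by the universal property.
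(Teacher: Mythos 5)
Your proposal is correct and takes essentially the same route as the paper, whose entire proof is the single sentence that the statement ``follows from the base change isomorphism for the pullback and pushforward functors corresponding to a Cartesian square of sets'' (note that $\Fc_0(Z)$ in the statement is to be read as the overcategory $\Set_Z$ introduced in that paragraph). You simply fill in the details the paper leaves implicit — the identification of both sides of the Beck--Chevalley comparison with the iterated fiber product $F\times_{Z'}W'$, the action on $2$-cells via the counit of $f_*\dashv f^*$, and the coherence checks — all of which are carried out correctly.
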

    
    \begin{proof} Follows from the base change isomorphism for the pullbacks
    and pushforwards functors corresponding to a Cartesian square of sets. 
    \end{proof}
    
    \paragraph{B. The Hall 2-category} 
As in Example \ref{ex:classical-(2,1)}, by a semi-bicategory we mean a structure
similar to a bicategory but without the requirements of existence of unit 1-morphisms.

Let $X$ be a $2$-Segal semi-simplicial set. We associate to $X$ a semi-bicategory
$\HH=\HH(X)$ (called the {\em Hall 2-category of $X$}) as follows.
 We put $\Ob(\HH)=X_0$. For $a,a'\in X_0$ we define
the category
\[
\Homc_\HH(a,a') =\Set_{B_a^{a'}}.
\]
Here $B_a^{a'}$ is defined by \eqref{eq:Bxy}.
The composition functors $\otimes$
are defined by
\[
\Set_{B_{a'}^{a''}}\times\Set_{B_a^{a'}} \buildrel\times\over\lra
\Set_{B_{a'}^{a''}\times B_{a}^{a'}} \buildrel (\mu_{aa''}^{a'})_*\over\lra\Set_{B_a^{a''}}.
\]
Here the partial fundamental correspondence $\mu_{aa''}^{a'}$ is defined by
\eqref{eq:partial-fund-correspondence} 
For example, on one-element sets the composition has the form
\[
\{b\}\otimes \{b'\} =\coprod_{b''\in B_a^{a''}} C_{bb'}^{b''} \times \{b''\},\quad
b\in B_{a'}^{a''}, b'\in B_a^{a'},
\]
compare with the formula \eqref{eq:hall-cat-disc-1}
for the Hall category. In other words, the sets $C_{bb'}^{b''}$ appear
as Clebsch-Gordan multiplicity sets. 

Further, the 
associator $\alpha$ for the fundamental correspondence
$\mu$, see \eqref{eq:span-associator}, defines associativity isomorphisms
\[
\alpha_{F,G,H}: (F\otimes G)\otimes H \lra F\otimes(G\otimes H). 
\]

\begin{prop}
(a) For any $2$-Segal semi-simplicial set $X$ 
the functors $\otimes$ and the associators $\alpha_{F,G,H}$ make 
$\HH(X)$ into a semi-bicategory.

(b) Let $X$ be a unital $2$-Segal simplicial set. Then the semi-bicategory $\HH(X)$ is
a bicategory, with the unit 1-morphism of any object $a\in X_0$ being
$\{s_0(a)\}\in\Set_{B_a^a}$.\end{prop}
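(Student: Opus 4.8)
The plan is to realize $\HH(X)$ as the image of the $\mu$-semicategory $\Cen(X)$ of Theorem~\ref{thm:2-segal-mu-cat} under the action-on-sheaves $2$-functor of Proposition~\ref{prop:action-spans-sheaves}, and to deduce each coherence of $\HH(X)$ from the matching coherence of $\Cen(X)$ by functoriality. Recall that $\Set_{(-)}\colon Z\mapsto\Set_Z$, $\sigma\mapsto\sigma_*=p_*s^*$, is a (pseudo) $2$-functor $\Spanl_\Set\to\Cat$: it carries each span to a functor, each invertible map of spans to a natural isomorphism, and, via its structure isomorphisms $(\sigma'\circ\sigma)_*\Rightarrow\sigma'_*\circ\sigma_*$, transports every commutative diagram of $2$-cells built by horizontal and vertical composition to a commutative diagram of natural transformations. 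The composition functor of $\HH(X)$ is then well defined: it is the composite
\[
\Set_{B_{a'}^{a''}}\times\Set_{B_a^{a'}}\xrightarrow{\ \times\ }\Set_{B_{a'}^{a''}\times B_a^{a'}}\xrightarrow{\ (\mu_{aa''}^{a'})_*\ }\Set_{B_a^{a''}},
\]
in which $\mu_{aa''}^{a'}$ from \eqref{eq:partial-fund-correspondence} is the restriction of the fundamental correspondence $\mu$ to the fibres over $a,a',a''$, and both external product and $(-)_*$ are functors.

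For the associator, I would use that the two bracketings $(F\otimes G)\otimes H$ and $F\otimes(G\otimes H)$ are, by Proposition~\ref{prop:action-spans-sheaves} applied to the iterated composites, the values of $(\mu\circ(\mu\times\id))_*$ and $(\mu\circ(\id\times\mu))_*$ on $F\times G\times H$, up to the canonical structure isomorphisms of $\Set_{(-)}$. The span associator $\alpha$ of \eqref{eq:span-associator} is an invertible $2$-cell $\mu\circ(\mu\times\id)\Rightarrow\mu\circ(\id\times\mu)$, so $\Set_{(-)}(\alpha)$ is a natural isomorphism, whose value at $F\times G\times H$ is the desired $\alpha_{F,G,H}$. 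The heart of (a) is the Mac Lane pentagon, a diagram of natural isomorphisms assembled from $\Set_{(-)}(\alpha)$ by whiskering with identity functors and composing vertically. Applying $\Set_{(-)}$ to the span-level pentagon of Proposition~\ref{prop:associator-pentagon-2-segal-set} sends every node and arrow there to the corresponding node and arrow of the pentagon for $\alpha_{F,G,H}$; since the span-level pentagon commutes, so does its image. Equivalently, one may rerun the barycentric-subdivision argument behind Proposition~\ref{prop:associator-pentagon-2-segal-set} one categorical level up: for a fixed quadruple of objects, apply $(-)_*$ to the span attached to each polyhedral subdivision $\Pc$ of $P_4$ in \eqref{eq:huge-pentagon}, obtaining a commutative diagram of natural isomorphisms of functors whose boundary is precisely the Mac Lane pentagon. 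This proves (a).

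For (b), when $X$ is unital the $\mu$-semicategory $\Cen(X)$ is a $\mu$-category: by Theorem~\ref{thm:2-segal-mu-cat}(b) the degeneracy $e=s_0$ is a unit and there are unitors $\lambda,\rho$ satisfying the unit coherence ($\mu$C6). I would take the unit $1$-morphism of $a$ to be $\{s_0(a)\}\in\Set_{B_a^a}$ and define the left and right unitors of $\HH(X)$ as the fibrewise values of $\Set_{(-)}(\lambda)$ and $\Set_{(-)}(\rho)$; these are natural isomorphisms since $\lambda,\rho$ are invertible. The bicategory triangle identity for $\HH(X)$ is then the image under $\Set_{(-)}$ of ($\mu$C6), and commutes by the same functoriality argument as for the pentagon, the pentagon itself being inherited from part (a).

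I expect the main obstacle to be the bookkeeping involved in transporting coherence through $\Set_{(-)}$: one must check that its structure isomorphisms $(\sigma'\circ\sigma)_*\Rightarrow\sigma'_*\circ\sigma_*$ align with the identifications used to form $(F\otimes G)\otimes H$ and $F\otimes(G\otimes H)$, so that the image of the span-level pentagon (respectively ($\mu$C6)) is literally the pentagon (respectively triangle) for $\HH(X)$ and not merely a diagram conjugate to it. This is the general principle that a homomorphism of bicategories carries pseudomonoid structures to pseudomonoid structures, applied here fibrewise over $X_0\times X_0$ and using that $\mu$ together with its associator decomposes over the object set into the partial correspondences $\mu_{aa''}^{a'}$.
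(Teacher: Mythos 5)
Your proposal is correct and follows essentially the same route as the paper, whose proof of this proposition is precisely the one-line observation that it is a direct consequence of Theorem~\ref{thm:2-segal-mu-cat} together with Proposition~\ref{prop:action-spans-sheaves}; you have simply spelled out the transport of the pentagon and unit coherences through the $2$-functor $\Set_{(-)}$ that the paper leaves implicit.
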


\begin{proof}
	This is a direct consequence of Theorem
 \ref{thm:2-segal-mu-cat} and of Proposition \ref{prop:action-spans-sheaves}.
\end{proof}

\begin{ex} [(Hall monoidal categories)]\label{ex:hall-mon-cat} (a) 
Each $2$-Segal semi-simplicial set $X$ and each vertex $a\in X_0$ gives rise therefore
to a monoidal category
\[
\HH(X,a) =(\Homc_{\HH(X)}(a, a), \otimes),
\]
which has a unit object $s_0(a)$ if $X$ is unital simplicial.

(b) Consider the case when $X_0=\pt$. In this case the semi-bicategory $\HH(X)$
is reduced to the above monoidal category which we still denote $\HH(X)$.
 As a category, $\HH(X)=\Set_B$, where $B=X_1$.
This category has a final object: $B$ itself (with the identity map to $B$). 
Note that we have identifications
\[
X_0=\pt, X_1=B, X_2=B\otimes B, \cdots, X_n=B^{\otimes n}, \cdots
\]
In other words, $B^{\otimes n}\in\Set_B$ is identified with $X_n\buildrel \partial_{\{0,n\}}
\over\lra X_1=B$. More precisely, each tensor power $B^{\otimes n}$ should,
strictly speaking, be
understood with respect to some particular bracketing. Such bracketings correspond
to triangulations $\Tc$ of the $(n+1)$-gon $P_n$. The bracketed tensor product corresponding to
$\Tc$, is precisely $X_\Tc\buildrel \partial_{\{0,n\}}
\over\lra X_1=B$, which is identified with $X_n$ via the $2$-Segal map $f_\Tc$.

 \end{ex}

 \paragraph{C. $\bigsqcup$-semisimple bicategories} 

 We now want to characterize semi-bicategories appearing as $\HH(X)$ for
 $2$-Segal semi-simplicial sets $X$.

A category $\Vc$ equivalent to $\Set_B$ for some $B$, will be called $\sqcup$-{\em semisimple},
and an object of $\Vc$ isomorphic to (the image under such an equivalence of)
an object of the form $\{b\}$, will be
called {\em simple}. We denote by $\|\Vc\|$ the set of isomorphism classes of
simple objects of $\Vc$.

A functor $F: \Vc\to\Wc$ between $\sqcup$-semisimple categories will
be called {\em additive}, if it preserves coproducts. An additive functor is called
{\em simple additive} if, in addition, it takes simple objects to simple objects.
We denote by $\Cat^\sqcup$ the bicategory formed by $\sqcup$-semisimple categories,
their additive functors and their natural transformation. Let also $\Cat^{\sqcup !}$
be the sub-bicategory on the same objects, simple additive functors and their
natural transformations. 
Proposition \ref{prop:action-spans-sheaves} admits the following refinement.

\begin{prop}
\label{prop:semisimple-spans}
(a) For any span of sets $\xymatrix{Z\ar@{~>}[r]^{\sigma}&Z'}$
the functor $\sigma_*: \Set_Z\to\Set_{Z'}$ is additive. The correspondence
$Z\mapsto \Set_Z$, $\sigma\mapsto \sigma_*$ extends to 
a 2-equivalence of bicategories $\Spanl_\Set \to \Cat^\sqcup$,
the inverse 2-equivalence taking $\Vc$ to $\|\Vc\|$. 

(b) Under the equivalence in (a), the category $\Set$ itself becomes 2-equivalent
to the bicategory $\Cat^{\sqcup !}$. 
\end{prop}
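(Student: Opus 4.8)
The plan is to reduce the whole statement to a matrix calculus over $\Set$, exploiting the fact that $\Set_Z$ is the free coproduct completion of the discrete set $Z$. Write $\sigma = \{Z\xleftarrow{s} W\xrightarrow{p} Z'\}$ and $\sigma_* = p_*s^*$. First I would dispatch the easy half of (a): $\sigma_*$ is additive because $p_*$ (postcomposition with $p$) is a left adjoint and $s^*$ preserves coproducts, coproducts in $\Set$ being stable under pullback. Since each $\Set_Z$ is by definition $\sqcup$-semisimple, this together with Proposition \ref{prop:action-spans-sheaves} exhibits $\Phi\colon Z\mapsto\Set_Z$, $\sigma\mapsto\sigma_*$ as a $2$-functor $\Spanl_\Set\to\Cat^\sqcup$.

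The substance of (a) is that $\Phi$ is a $2$-equivalence, for which I would check essential surjectivity on objects and local equivalence. The former is immediate: every $\sqcup$-semisimple $\Vc$ is equivalent to $\Set_{\|\Vc\|}$. For the latter I would introduce the category $\Mat_{Z,Z'}(\Set)$ of $(Z\times Z')$-indexed families of sets, with morphisms the families of maps, and show that both $\Spanl_\Set(Z,Z')$ and $\Cat^\sqcup(\Set_Z,\Set_{Z'})$ are equivalent to it: a span goes to the matrix $(s^{-1}(z)\cap p^{-1}(z'))_{z,z'}$, while an additive functor $F$ — determined up to isomorphism by its values on the simple objects $\{z\}$, since every object of $\Set_Z$ is a coproduct of these — goes to $((F\{z\})_{z'})_{z,z'}$. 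The computation $\sigma_*\{z\} = p_*s^*\{z\} = (s^{-1}(z)\cap p^{-1}(z'))_{z'}$ shows that $\Phi$ realizes the identity of $\Mat_{Z,Z'}(\Set)$ under these equivalences, so it is an equivalence on hom-categories. A $2$-functor that is essentially surjective and a local equivalence is a $2$-equivalence; and because $\|\Set_Z\| = Z$, any quasi-inverse sends $\Vc$ to $\|\Vc\|$. This proves (a).

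For (b) I would pin down which $1$-morphisms $\Phi$ carries into $\Cat^{\sqcup!}$. Since $\sigma_*\{z\} = (s^{-1}(z)\cap p^{-1}(z'))_{z'}$ is simple exactly when $s^{-1}(z)$ is a singleton, $\sigma_*$ is simple additive iff $s$ is a bijection, i.e. iff $\sigma$ is isomorphic to the span $\{Z\xleftarrow{\id} Z\xrightarrow{\phi} Z'\}$ of an honest map $\phi\colon Z\to Z'$. These ``function spans'' form a sub-bicategory in which the only $2$-morphisms are identities — a morphism of spans over $(\id,\phi)$ and $(\id,\psi)$ forces $\phi=\psi$ — so it is isomorphic to $\Set$ regarded as a locally discrete bicategory. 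I would then verify that $\Phi$ restricts to a $2$-equivalence of this sub-bicategory onto $\Cat^{\sqcup!}$: it is essentially surjective on objects as before, and on hom-categories the identity $\Hom_{\Set_{Z'}}(\{\phi(z)\},\{\psi(z)\})\neq\emptyset \iff \phi(z)=\psi(z)$ shows that a natural transformation $\phi_*\Rightarrow\psi_*$ exists (and is then the identity) precisely when $\phi=\psi$, matching the discrete hom-sets of $\Set$.

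The main obstacle is the local-equivalence step in (a): turning the matrix dictionary into an honest equivalence of hom-categories, i.e. checking that morphisms of spans correspond bijectively and naturally to natural transformations of the associated additive functors. This is exactly the universal property of the free coproduct completion $\Set_Z$ — both a coproduct-preserving functor out of $\Set_Z$ and a natural transformation between two such are uniquely determined by their restriction to the simple objects $\{z\}$ — so the real task is to state and apply this universal property cleanly rather than to perform any delicate computation.
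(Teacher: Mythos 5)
Your proof is correct and follows essentially the same route as the paper: the paper's (very terse) argument also rests on the observation that an additive functor $F\colon\Set_Z\to\Set_{Z'}$ is determined by its values on the simple objects, with the span recovered via $s^{-1}(z)=F(\{z\})$, and it likewise dismisses (b) as immediate. Your matrix-category formulation and the explicit check for (b) are just a more detailed write-up of the same idea.
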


\begin{proof} The main point in (a) is that any additive functor
$F: \Set_Z\to\Set_{Z'}$ is isomorphic to a functor of the form $\sigma_*$ for
some span $\sigma = \{Z\buildrel s\over\leftarrow W\buildrel p\over\to Z'\}$.
For this, we note that $s^{-1}(z)$, $z\in Z$, is recovered as $F(\{z\})$.
Part (b) is obvious.
\end{proof}

\begin{defi}  
A semi-bicategory $\Cc$ will be called $\sqcup$-{\em semisimple}, if:
\begin{itemize}
\item[(1)] $\Ob(\Cc)$ is a set.

\item[(2)] Each category $\Homc_\Cc(x,y)$ is $\sqcup$-semisimple.

\item[(3)] The composition functors 
 \[
\otimes: \Homc_\Cc (y,z) \times\Homc_\Cc(x,y) \lra\Homc_\CC (x,z)
 \]
 are additive in each variable. 
\end{itemize}

\end{defi}

\noindent 
Next, we describe what kind of ``morphisms" between $\sqcup$-semisimple bicategories
we want to consider.

Recall, first of all, that a 
   {\em lax 2-functor} $\Phi: \Cc \to \Dc$ between two semi-bicategories
consists of a map $\Phi: \Ob(\Cc)\to\Ob(\Dc)$, a collection of usual functors
\[
\Phi= \Phi_{c,c'}: \Homc_\Cc(c, c')\lra\Homc_\Dc(\Phi(c), \Phi(c'))
\]
and of natural {\em morphisms} (not required to be isomorphisms!)
\[
\Phi^{F, F'}: \Phi_{c, c''}(F\otimes F') \lra \Phi_{c', c''}(F)\otimes\Phi_{c,c'}(F'), \quad F\in\Homc_\Cc(c',c''), 
F'\in\Homc_\Cc(c,c')
\]
which commute with the associativity isomorphisms in $\Cc$ and $\Dc$. 

\begin{defi}\label{def:admissible-lax-functors}
\begin{itemize}
\item[(a)] 
A lax 2-functor $\Phi$ between $\sqcup$-semisimple semi-bicategories is called
{\em admissible}, if each functor $\Phi_{x,y}$ is simple additive. 

\item[(b)] Two admissible 2-functors $\Phi, \Psi: \Cc \to\Dc$ are called {\em equivalent}, if:
\begin{itemize}
\item[(1)] We have $\Phi(c)=\Psi(c)$ for each $c\in\Ob(\Cc)$. 

\item[(2)] There exist equivalences of categories $T_{c, c'}$ and isomorphisms of functors
$U_{c, c'}$, given for all $c, c'\in\Ob(\Cc)$, of the form
\[
\xymatrix{
   &
\Homc_\D(\Phi(c), \Phi(c'))
\ar[dd]^{T_{c,c'}}
\\
\Homc_\Cc(c, c')\ar[ur]^{\Phi_{c,c'}}
\ar[dr]_{\Psi_{c,c'}}
\rtwocell<\omit>{\hskip .5cm U_{c,c'}}
&
\\
&\Homc_\Dc(\Psi(c), \Psi(c'))
}
\]
which commute with the $\Phi^{F,F'}$ and $\Psi^{F,F'}$ as well as with the
associativity isomorphisms.

\end{itemize}
\end{itemize}

\end{defi} 

\begin{rem}
Assume that $\Dc$ is a bicategory, i.e., it has unit 1-morphisms. Then
lax 2-functors from $\Cc$ to $\Dc$ form themselves a bicategory
$\Lc ax(\Cc, \Dc)$, cf. \cite{benabou}. The condition (2) of Definition 
\ref{def:admissible-lax-functors}(b) can be reformulated in this case
by saying that  
  $\Phi$ and $\Psi$ are equivalent
  as objects of the bicategory $\Lc ax(\Cc, \Dc)$. 
\end{rem}
 
 Let $\Cc$ be a $\sqcup$-semisimple bicategory, 
so $\Cc$ has unit objects $\1_a\in\Homc_\Cc(a,a)$
for each $a\in\Ob(\Cc)$. We say that $\Cc$ {\em has simple units}, if each $\1_a$ is a simple
object of $\Homc_\Cc(a,a)$.

\begin{thm}\label{thm:segal-2-categories}
The following categories are equivalent:
\begin{enumerate}
\item[(i)] The category of $2$-Segal semi-simplicial sets (resp. unital $2$-Segal simplicial sets).

\item[(ii)] The category of $\sqcup$-semisimple semi-bicategories
(resp. $\sqcup$-semisimple bicategories with simple units),
with morphisms being equivalence classes of admissible lax 2-functors. 
\end{enumerate}
The equivalence takes a $2$-Segal set $X$ into its Hall 2-category $\HH(X)$.
\end{thm}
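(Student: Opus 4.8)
The plan is to deduce the theorem from Theorem \ref{thm:2-segal-mu-cat} by categorifying the passage from correspondences to functions one step further, using the $2$-equivalence $\Spanl_\Set \simeq \Cat^\sqcup$ of Proposition \ref{prop:semisimple-spans}. The key observation is that $\HH(X)$ was constructed from the $\mu$-semicategory $\Cen(X)$ precisely by applying the assignment $Z \mapsto \Set_Z$, $\sigma \mapsto \sigma_*$ to each hom-object and each composition span; thus $\HH(X)$ depends only on $\Cen(X)$, and writing $\HH(\Cen)$ for this common construction, it suffices to show that $\Cen \mapsto \HH(\Cen)$ upgrades Proposition \ref{prop:semisimple-spans} to an equivalence between $\mu\Sc\Cat$ (resp. $\mu\Cat$) and $\sqcup$-semisimple semi-bicategories (resp. bicategories with simple units). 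Composing with Theorem \ref{thm:2-segal-mu-cat} then yields the statement.

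For the functor from (i) to (ii), I would use that we have already established that $\HH(X)$ is a $\sqcup$-semisimple (semi-)bicategory, with simple units $\{s_0(a)\}$ in the unital simplicial case. On morphisms, a map $X \to Y$ of $2$-Segal sets induces an admissible lax $2$-functor $\HH(X) \to \HH(Y)$: on objects it is $X_0 \to Y_0$; on each hom-category $\Set_{B_a^{a'}}$ it is the simple additive functor associated by Proposition \ref{prop:semisimple-spans}(b) to the induced map of $1$-simplices; and the lax structure morphisms $\Phi^{F,F'}$ arise by applying the base-change isomorphisms of Proposition \ref{prop:action-spans-sheaves} to the induced map of fundamental correspondences. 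Admissibility holds because $\{b\} \mapsto \{F_1(b)\}$ preserves simple objects. One then checks this assignment is compatible with the equivalence relation of Definition \ref{def:admissible-lax-functors}(b).

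The inverse is built by strictification. Given a $\sqcup$-semisimple semi-bicategory $\Cc$, set $\Cen_0 = \Ob(\Cc)$ and $\Cen_1 = \coprod_{a,a'} \|\Homc_\Cc(a,a')\|$. Choosing for each pair $(a,a')$ an equivalence $\Homc_\Cc(a,a') \simeq \Set_{\|\Homc_\Cc(a,a')\|}$, Proposition \ref{prop:semisimple-spans}(a) converts each additive composition functor into a composition span and the associator of $\Cc$ into a $\mu$-associator, producing an object $\Cen \in \mu\Sc\Cat$ with $\HH(\Cen) \simeq \Cc$; Theorem \ref{thm:2-segal-mu-cat}(a) then supplies the $2$-Segal semi-simplicial set $\N\Cen$. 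In the simplicial case, the hypothesis that $\Cc$ has simple units lets Proposition \ref{prop:semisimple-spans}(b) turn each unit $1$-morphism $\1_a$ into a genuine unit map $e\colon \Cen_0 \to \Cen_1$, so that $\Cen \in \mu\Cat$ and Theorem \ref{thm:2-segal-mu-cat}(b) applies.

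The main obstacle is the matching of morphisms, that is, explaining exactly why side (ii) must use \emph{equivalence classes} of admissible lax $2$-functors. The strictification $\Cc \mapsto \Cen$ depends on the non-canonical choice of the hom-wise equivalences above, reflecting the fact that Proposition \ref{prop:semisimple-spans} is a $2$-equivalence rather than an isomorphism of bicategories. Consequently a single-valued functor of $\mu$-semicategories corresponds not to one admissible lax $2$-functor but to an entire class of them, and I expect the heart of the argument to be the verification that the data $T_{c,c'}$, $U_{c,c'}$ of Definition \ref{def:admissible-lax-functors}(b) encode precisely this ambiguity — so that two admissible lax $2$-functors are equivalent if and only if they strictify to the same functor in $\mu\Sc\Cat$ — together with the observation that admissibility forces the lax morphisms $\Phi^{F,F'}$ to restrict to isomorphisms on simple objects. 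This bookkeeping, rather than any deep new idea, is where the work lies.
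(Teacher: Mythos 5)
Your proposal is correct and follows essentially the same route as the paper: the paper's proof likewise reduces the statement to Theorem \ref{thm:2-segal-mu-cat} via the $2$-equivalence of Proposition \ref{prop:semisimple-spans}, constructing the inverse by the same assignment $\Cc \mapsto \Cen(\Cc)$ with $\Cen_0 = \Ob(\Cc)$ and $\Cen_1 = \coprod_{x,y} \|\Homc_\Cc(x,y)\|$. The morphism-level bookkeeping you single out (why equivalence classes of admissible lax $2$-functors appear, and how $T_{c,c'}$, $U_{c,c'}$ absorb the non-canonical choices in strictification) is exactly the part the paper leaves to the reader, and your account of it is consistent with the intended argument.
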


\begin{proof}This is a consequence of Proposition 
\ref{prop:semisimple-spans} and Theorem \ref{thm:2-segal-mu-cat}.  
Indeed, a $\sqcup$-semisimple semicategory $\Cc$ gives rise to a $\mu$-semicategory
$\Cen = \Cen(\Cc)$ with 
\[
	\Cen_0 = \Ob(\Cc), \quad \Cen_1 = \coprod_{x,y\in\Ob(\Cc)} \|\Homc_\Cc(x,y)\|, 
\]
and $\mu$ obtained from $\otimes$ by applying Proposition 
\ref{prop:semisimple-spans}(a). 
 We leave further details to the reader. 
 \end{proof}

\begin{ex} [(The Clebsch-Gordan nerve)]\label{ex:CGN}
Let $\Cc$ be a $\sqcup$-semisimple semi-bicategory.
The $2$-Segal semi-simplicial set corresponding to $\Cc$
can be described as the
nerve of the $\mu$-semicategory $\Cen(\Cc)$,
see Proposition \ref{prop:nerve-mu-cat}. In terms of $\Cc$ itself,
this means the following. 

   For any $a, a'\in\Ob(\Cc)$,
choose a set $(E_b)$ of simple generators of the $\sqcup$-semisimple
category $\Homc_\Cc(a, a')$. Here $b$ runs in some index set which we denote 
$B_a^{a'}$.
 By a {\em Clebsch-Gordan triangle} we mean a 2-morphism (triangle) in $\Cc$
 of the form
 \[
 \xymatrix{
 &a'
 \ar[dr]^{E_b}&
 \cr
 a\ar[rr]_{E_{b''}}
 \ar[ur]^{E_{b'}}_{\hskip .4cm u \Uparrow}
 &&a''
 }
 \]
 for some $a,a', a''\in\Ob(\Cc)$ and $b\in B_{a'}^{a''}$, $b'\in B_a^{a'}$,
 $b''\in B_a^{a''}$.

 Let $\N\Cc$ be the semi-simplicial nerve of $\Cc$, so $\N_n\Cc$
  consists of commutative $n$-simplices in $\Cc$
  (Example \ref{ex:classical-(2,1)}). Such a simplex will be called a
  {\em Clebsch-Gordan $n$-simplex}, if all its 2-faces are Clebsch-Gordan triangles. 
   Defining $\on{CGN}_n(\Cc)$
 to be the set of Clebsch-Gordan $n$-simplices, we get a semi-simplicial subset
 $\on{CGN}(\Cc)\subset \N\Cc$ which we call the {\em Clebsch-Gordan nerve} of $\Cc$.
 Then $\N(\Cen(\Cc))=\on{CGN}(\Cc)$.
 In particular, the Clebsch-Gordan nerve is $2$-Segal.  
 Note that the nerve of a bicategory, even of a strict one, is not, in general, $2$-Segal.
It is the requirement that all edges be labelled by simple objects that
ensures the $2$-Segal property.

 \end{ex}

\paragraph{D. Non-simple units} 
 We now discuss how to extend Theorem \ref{thm:segal-2-categories}
 to the case when $\C$ has unit 1-morphisms but they are not simple. 
   Note, first of all, that any
$\sqcup$-semisimple bicategory $\Cc$ gives rise to a  $\sqcup$-semisimple monoidal category  
$\Mat(\Cc)$ with 
\[
\Ob(\Mat(\Cc)) =\prod_{a, a'\in\Ob(\Cc)} \Homc_\Cc(a, a').
\]
Thus an object of $\Mat(\Cc)$ can be seen as a matrix $E=(E_{aa'}: a\to a')$
of 1-morphisms in $\Cc$. 
The monoidal operation $\otimes$ on $\Mat(\Cc)$ is given by mimicking matrix multiplication
\[
(E\otimes F)_{aa''} =\bigsqcup_{a'\in\Ob(\Cc)} E_{aa'}\otimes F_{a'a''}. 
\]
 The object $\1\in \Mat(\Cc)$ with $\1_{aa}=\1_a$ and $\1_{aa'}=\emptyset$
for $a\neq a'$, is a unit object but it is not simple.

\begin{prop}\label{prop:decompositon-of-unit}
 Any $\sqcup$-semisimple monoidal category $(\Ac, \otimes)$ with a unit object $\1$
is equivalent to $\Mat(\Cc)$ where $\Cc$ is a 
 $\sqcup$-semisimple bicategory with simple units. 
\end{prop}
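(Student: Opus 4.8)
The plan is to reconstruct $\Cc$ directly from the decomposition of the unit object $\1\in\Ac$ into simple summands. Since $\Ac$ is $\sqcup$-semisimple I fix an equivalence $\Ac\simeq\Set_B$ together with a decomposition $\1\cong\bigsqcup_{a\in A}u_a$ into simple objects, indexed by a set $A$. I would take $\Ob(\Cc)=A$ and, for $a,a'\in A$, define $\Homc_\Cc(a,a')$ to be the full subcategory of $\Ac$ on the objects isomorphic to $u_a\otimes X\otimes u_{a'}$ for some $X$, i.e. the essential image of the coproduct-preserving endofunctor $P_{aa'}=u_a\otimes(-)\otimes u_{a'}$. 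Composition would be induced by $\otimes$, and the unit of $\Homc_\Cc(a,a)$ would be $u_a$. The desired monoidal equivalence $\Ac\simeq\Mat(\Cc)$ would send an object $X$ to its matrix of components $(X_{aa'})_{a,a'\in A}$ with $X_{aa'}:=u_a\otimes X\otimes u_{a'}$, with inverse $(E_{aa'})\mapsto\bigsqcup_{a,a'}E_{aa'}$.

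The crux of the argument, and the main obstacle, is the following orthogonality lemma: the $u_a$ are pairwise non-isomorphic, and $u_a\otimes u_{a'}\cong u_a$ when $a=a'$ while $u_a\otimes u_{a'}\cong\emptyset$ otherwise. I would prove this using only that $\otimes$ preserves coproducts and that a coproduct decomposition of a simple object has exactly one nonempty summand. Right unitality gives $u_a\cong u_a\otimes\1\cong\bigsqcup_{a'}u_a\otimes u_{a'}$, so by simplicity of $u_a$ there is a unique index $\phi(a)$ with $u_a\otimes u_{\phi(a)}\cong u_a$ and all other $u_a\otimes u_{a'}$ empty; left unitality yields a dual index $\psi(a')$, and comparing nonempty positions forces $\phi$ to be a bijection with inverse $\psi$ and $u_a\cong u_{\phi(a)}$. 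One then excludes $\phi(a)\neq a$: if $u_a\cong u_{a'}$ for distinct $a,a'$ then $u_a\otimes u_c\cong u_{a'}\otimes u_c$ for every $c$, so rows $a$ and $a'$ would share the same nonempty position, contradicting injectivity of $\phi$. Hence $\phi=\id$ and the $u_a$ are pairwise non-isomorphic.

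With the lemma in hand the remaining steps are routine verifications. Each $\Homc_\Cc(a,a')$ is $\sqcup$-semisimple because $P_{aa'}$ preserves coproducts and satisfies $P_{aa'}\circ P_{aa'}\cong P_{aa'}$ (from $u_a\otimes u_a\cong u_a$), so its essential image is equivalent to $\Set_{B_{aa'}}$ for the set $B_{aa'}$ of simple isomorphism classes it fixes; the composition $\otimes\colon\Homc_\Cc(a',a'')\times\Homc_\Cc(a,a')\to\Homc_\Cc(a,a'')$ lands correctly since $u_a\otimes X\otimes u_{a'}\otimes u_{a'}\otimes Y\otimes u_{a''}\cong u_a\otimes(X\otimes Y)\otimes u_{a''}$, is additive in each variable because $\otimes$ is, and inherits associativity and coherence from the associator of $\Ac$, while $u_a$ is a simple unit for $\Homc_\Cc(a,a)$. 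Finally, $X\cong\1\otimes X\otimes\1\cong\bigsqcup_{a,a'}X_{aa'}$ shows the matrix-component functor and its coproduct inverse are mutually inverse equivalences, orthogonality gives $(\bigsqcup_{b,b'}E_{bb'})_{aa'}\cong E_{aa'}$, and the computation $u_a\otimes(X\otimes Y)\otimes u_{a'''}\cong\bigsqcup_{a'}(u_a\otimes X\otimes u_{a'})\otimes(u_{a'}\otimes Y\otimes u_{a'''})$ together with $(\1)_{aa'}\cong u_a\otimes u_{a'}$ identifies the tensor product and unit of $\Ac$ with those of $\Mat(\Cc)$, so the equivalence is monoidal. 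The only genuinely structural input is the orthogonality lemma; everything after it is bookkeeping enabled by additivity of $\otimes$ and $\sqcup$-semisimplicity.
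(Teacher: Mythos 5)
Your proof is correct and follows essentially the same route as the paper: decompose $\1$ into simple summands, show these are pairwise non-isomorphic orthogonal idempotents for $\otimes$ (the paper first rules out multiplicities using $\1\otimes\{a'\}\simeq\{a'\}$ and then reads orthogonality off $\1\otimes\1\simeq\1$, whereas you package both steps into the $\phi,\psi$ bijection argument — a cosmetic difference), and then cut $\Ac$ into the hom-categories $u_a\otimes\Ac\otimes u_{a'}$. The one blemish is the displayed isomorphism $u_a\otimes X\otimes u_{a'}\otimes u_{a'}\otimes Y\otimes u_{a''}\cong u_a\otimes(X\otimes Y)\otimes u_{a''}$, which is false as stated (the left side is only the $a'$-summand of the right side); what you actually need at that point — that the left side lies in the essential image of $P_{aa''}$ — is immediate, since it equals $P_{aa''}(X\otimes u_{a'}\otimes Y)$.
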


\begin{proof} We assume $\Ac=\Set_B$ as a category.
Let $\1=\bigsqcup_{b\in B} I_b\times \{b\}$ for some
sets $I_b$. Let $A\subset B$ be the set of $b$ such that $I_b\neq\emptyset$.
Let $a\in A$. We claim that $|I_a|=1$. Note that there is $a'\in A$ such that $\{a\}\otimes \{a'\}\neq\emptyset$,
otherwise $\{a\}\otimes \1\simeq \{a\}$ is impossible. But then $\1\otimes\{a'\}$ contains
$I_a\times(\{a\}\otimes \{a'\})$ and cannot be isomorphic to a simple object $\{a'\}$, if $|I_a|>1$.

We have therefore $\1=\bigsqcup_{a\in A} \{a\}$. 
  By writing $\1\otimes\1\simeq \1$, we see that the $\{a\}, a\in A$, are orthogonal idempotents
with respect to $\otimes$:
\[
\{a\}\otimes \{a'\}=\begin{cases}
\emptyset, \quad a\neq a'; \cr
\{a\}, \quad a=a'.
\end{cases}
\]
Therefore, if we put
\[
B_a^{a'}=\bigl\{ b\in B| \{a'\}\otimes \{b\} \simeq \{b\}\otimes \{a\} \simeq \{b\}\bigr\},
\]
we get $B=\bigsqcup_{a, a'\in A} B_a^{a'}$. Further, the monoidal structure $\otimes$ restricted
to the subcategories $\Set_{B_a^{a'}}\subset\Set_B$, gives functors
\[
\otimes: \Set_{B_{a'}^{a''}}\times\Set_{B_a^{a'}}\lra\Set_{B_a^{a''}},
\]
i.e., defines a bicategory $\Cc$ with the set of objects $A$ and $\Homc_\Cc (a, a')=\Set_{B_a^{a'}}$.
The object $\1_a:=\{a\}\in\Set_{B_a^a}$ is then the unit 1-morphism of the object $a$. This
proves the proposition. 
\end{proof}

More generally, let $\Cc$ be a $\sqcup$-semisimple bicategory and $\rho: \Ob(\Cc)\to D$
be a surjection of sets. We then define a bicategory $\Mat_\rho(\Cc)$ with set of
objects $D$ and
\[
\Homc_{\Mat_\rho(\Cc)}(d, d') =\prod_{\rho(a)=d, \rho(a')=d'}
 \Homc_\Cc (a, a').
\]
This is again a $\sqcup$-semisimple bicategory.

\begin{prop} Any $\sqcup$-semisimple bicategory $\Dc$ is equivalent to $\Mat_\rho(\Cc)$
for some $\sqcup$-semisimple bicategory $\Cc$ with simple units.
\end{prop}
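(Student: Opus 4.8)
The plan is to reduce the statement to the one-object case, which is exactly Proposition \ref{prop:decompositon-of-unit}, and then assemble the resulting data over the object set of $\Dc$. First I would apply the unit-decomposition argument from the proof of Proposition \ref{prop:decompositon-of-unit} to each monoidal endomorphism category $\Homc_\Dc(d,d)$, $d\in\Ob(\Dc)$. Writing $\Homc_\Dc(d,d')\simeq\Set_{B_{d,d'}}$, that argument shows that the unit $\1_d$ decomposes as $\1_d\simeq\bigsqcup_{a\in A_d}\{a\}$, where the $\{a\}$ are pairwise orthogonal simple idempotents for $\otimes$. I then set $\Ob(\Cc):=\bigsqcup_{d\in\Ob(\Dc)}A_d$ and let $\rho\colon\Ob(\Cc)\to\Ob(\Dc)=:D$ send every $a\in A_d$ to $d$; this is surjective since each $A_d$ is nonempty.

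Next I would define the hom-categories of $\Cc$ by cutting each $\Homc_\Dc(d,d')\simeq\Set_{B_{d,d'}}$ into blocks indexed by $A_d\times A_{d'}$. For $a\in A_d$ and $a'\in A_{d'}$ I put
\[
 B_a^{a'}=\bigl\{\,b\in\|\Homc_\Dc(d,d')\|\ :\ \{a'\}\otimes\{b\}\simeq\{b\}\simeq\{b\}\otimes\{a\}\,\bigr\},
\]
and set $\Homc_\Cc(a,a'):=\Set_{B_a^{a'}}$. Since $\1_{d'}\otimes b\simeq b\simeq b\otimes\1_d$ for every simple $b$, and since the summands $\{a'\}$ of $\1_{d'}$ and $\{a\}$ of $\1_d$ are orthogonal idempotents, exactly one pair $(a,a')$ works for each simple $b$, giving a partition $B_{d,d'}=\bigsqcup_{a\in A_d,\,a'\in A_{d'}}B_a^{a'}$. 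The composition functors of $\Cc$ are the restrictions of $\otimes$ in $\Dc$, and the unit $1$-morphism of $a$ is $\1_a:=\{a\}$, which is simple by construction, so $\Cc$ will be a $\sqcup$-semisimple bicategory with simple units once the axioms are checked.

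Then I would establish the equivalence $\Mat_\rho(\Cc)\simeq\Dc$. On objects both sides are $D$. On hom-categories, using $\prod_i\Set_{B_i}\simeq\Set_{\bigsqcup_i B_i}$ and the partition above,
\[
 \Homc_{\Mat_\rho(\Cc)}(d,d')=\prod_{a\in A_d,\,a'\in A_{d'}}\Set_{B_a^{a'}}\ \simeq\ \Set_{\bigsqcup_{a,a'}B_a^{a'}}=\Set_{B_{d,d'}}\simeq\Homc_\Dc(d,d').
\]
It remains to match compositions. For simple $b\in B_a^{a'}$ and $c\in B_{a'}^{a''}$, the associativity constraint of $\Dc$ gives $\{a''\}\otimes(c\otimes b)\simeq(\{a''\}\otimes c)\otimes b\simeq c\otimes b$ and $(c\otimes b)\otimes\{a\}\simeq c\otimes(b\otimes\{a\})\simeq c\otimes b$, so $c\otimes b$ is a coproduct of simples in $B_a^{a''}$; while for a mismatched middle index $c'\in B_{\bar a'}^{a''}$ with $\bar a'\neq a'$ one computes $c'\otimes b\simeq c'\otimes(\{\bar a'\}\otimes\{a'\}\otimes b)\simeq\emptyset$ by orthogonality and additivity of $\otimes$. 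This is precisely the matrix-multiplication pattern built into $\Mat_\rho(\Cc)$, so the identification above is compatible with composition and associators, yielding an equivalence of bicategories.

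The main obstacle will be this last compatibility step: verifying that $\otimes$ in $\Dc$ respects the block decomposition and reproduces matrix multiplication, i.e.\ that $c\otimes b$ lands in $B_a^{a''}$ and vanishes when the middle indices disagree. Everything there rests on combining the associativity coherence of $\Dc$ with the orthogonality and idempotency of the $\{a\}$ and the additivity (preservation of the initial object and of coproducts) of the composition functors; the remaining assembly, including the bicategory axioms for $\Cc$, is then routine bookkeeping inherited from $\Dc$.
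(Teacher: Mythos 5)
Your proposal is correct and follows exactly the route the paper takes: apply Proposition \ref{prop:decompositon-of-unit} to each endomorphism monoidal category $\Homc_\Dc(d,d)$ to obtain the sets $A_d$ of orthogonal simple idempotents, take $\Ob(\Cc)=\bigsqcup_d A_d$ with $\rho$ the evident projection, and split the composition of $\Dc$ along the resulting block decomposition. The paper declares these details ``straightforward''; your write-up supplies them, including the block-diagonal behaviour of $\otimes$ and the vanishing for mismatched middle indices, which is precisely the bookkeeping the authors leave to the reader.
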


\begin{proof} We apply Proposition
\ref{prop:decompositon-of-unit} to each monoidal category $\Homc_\Dc(d,d)$
to get the set $A(d)$. We then split the composition in $\Dc$ to construct a bicategory on the set of objects
$A=\bigsqcup_{d\in\Ob(\Dc)} A(d)$. 
The details are straightforward.
\end{proof}

Summarizing, we can say that unital $2$-Segal simplicial sets are ``the same as" 
 $\sqcup$-semisimple bicategories. 

 \vfill\eject

\subsection{The operadic point of view}
\label{subsec:operadic}
 
In this section we show that 2-Segal simplicial sets $X$ with $X_0=\pt$ can be identified with
certain operads. We recall a version of the concept known variously under the names of colored
operads \cite{moerdijk}, pseudo-tensor categories \cite{beilinson-drinfeld} and multilinear
categories \cite{linton}.  
  
\begin{defi}\label{def:operads-colored}
Let $(\M, \otimes, \1)$ be a symmetric monoidal category. An $\M$-valued {\em (colored) operad} $\O$
consists of the folllowing data:
\begin{enumerate}[label=(OP\arabic{*})]
\item A set $B$, whose elements are called {\em colors}. 

\item Objects $\O(b_1, ..., b_n|b_0)\in\M$ given for all choices of $n\geq 0$ and $b_0, ..., b_n\in B$.

\item The ``composition" morphisms
\[
\begin{gathered}
\O(b_1, ..., b_n|b_0) 
\otimes \O(b^1_1, ..., b^1_{m_1}|b_1) \otimes \cdots\otimes \O(b^n_1, ..., b^n_{m_n}|b_n) \lra \\
\lra \O(b^1_1, ..., b^1_{m_1}, \cdots,  b^n_1, ..., b^n_{m_n}|b_0)
\end{gathered}
\]
given for each $b_0, ..., b_n, b^i_j\in B$ as described.

\item The ``unit" morphisms $ \Id_b: \1\to \O(b|b)$ given for each $b\in B$. 

\end{enumerate}

These data are required to satisfy the standard associativity and unit axioms, cf. \cite[\S 1.2]{moerdijk}.
Dually, an $\M$-valued {\em cooperad} is the same as an operad with values in $\M^\op$.  A cooperad
$\Qc$ has {\em cocomposition  and counit morphisms} going in the directions opposite to those in
(OP3) and (OP4). 
\end{defi}
  
\begin{remex}\label{ex:operads-cooperads}
\begin{exaenumerate}

	\item Note  that  we do not require any data involving
permutations of the arguments, i.e., relating
$\Oc(b_1, ..., b_n|b_0)$ with $\Oc(b_{w(1)}, ..., b_{w(n)}|b_0)$,
$w\in S_n$.
So our concept  can be more precisely called
a  {\em non-symmetric} colored operad. In fact,
for Definition \ref{def:operads-colored} to make sense, it
is enough that 
$(\M, \otimes)$ be a braided, not necessarily a symmetric
monoidal category, but we will not use this generality.

\item Let $(\B,\boxtimes, I)$ be an $\M$-enriched monoidal category (not assumed braided
or symmetric). Then for any subset of objects $B\subset\Ob(\B)$
we have an $\M$-valued operad $\O$ with the set of colors $B$ and
\[
\O(b_1, ..., b_n|b_0) \,\,=\,\, \Hom_\B (b_1\boxtimes \cdots\boxtimes b_n, b_0).
\]
Here, the empty $\boxtimes$-product for $n=0$ is set to be $I$. 
Similarly to (a), notice that to speak of $\M$-enrichment,
it is enough that $\M$ be a braided monoidal category, see
\cite{joyal-street}. 

\item If $B=\{\pt\}$ consists of one element, then the data in $\O$ reduce to the
objects $\O(n) = \O(\pt, ..., \pt)$ ($n$ times) and we get a more familiar
concept of a (non-symmetric) operad. The operadic composition
and unit maps can then
be written as 
\[
\nu_{m_1, ..., m_n}:
\O(n) \otimes  \bigl( \O(m_1) \otimes \cdots \otimes\O(m_n)\bigr) \lra \O(m_1 + ... + m_n),\quad
\Id: \1\to\O(1). 
\]

\item Let $\M=\Set$ with $\otimes$ given by the Cartesian product. For a 
$B$-colored   operad
$\O$  in $(\Set, \times)$ the elements of $\O(b_1, ..., b_n|b_0)$ are called
$n$-ary {\em operations} in $\O$. We put 
\[
\O(n) \,\,\,=\,\,\,\coprod_{b_0, ...,b_n\in B} \O(b_1, ..., b_n|b_0)
\]
to be the set of all possible $n$-ary operations. 
Then the colorings define maps
$
\pi_i: \O(n) \to B, \quad i=0, ..., n. 
$
The operadic composition maps can then be simultaneously written as
\[
\nu_{m_1, ..., m_n}:
\O(n)^{(\pi_1, ..., \pi_n)} \times^{(\pi_0, ..., \pi_0)}_{B^n} \bigl( \O(m_1) \times \cdots \times\O(m_n)\bigr) 
\lra \O(m_1 + ... + m_n).
\]
As the fiber product is a subset in the full product, the $\O(n)$ do not,
in general,  form a 1-colored operad,
unless $|B|=1$. 

\item For a $B$-colored cooperad $\Qc$ in $(\Set, \times)$ we define the sets $\Qc(n)$ and projections
$\pi_i: \Qc(n)\to B$ in the same way as in (d).  Then the cooperadic cocomposition
maps  in $\Qc$ give rise to the maps in the direction opposite from these in (c): 
\be\label{eq:cooperad-cocomposition}
f_{m_1,..., m_n}:
\Qc(m_1 + ... + m_n)\lra   \Qc(n)^{(\pi_1, ..., \pi_n)} \times^{(\pi_0, ..., \pi_0)}_{B^n} \bigl( \Qc(m_1) \times \cdots \times
\Qc(m_n)\bigr).
\ee
Note that  the $f_{m_1, ..., m_n}$ can now  be seen as taking values in the full Cartesian product
and thus the $\Qc(n)$ always form a 1-colored cooperad. The structure of a $B$-colored cooperad in
$(\Set, \times)$ is thus a refinement of a structure of a 1-colored cooperad.

\end{exaenumerate}
\end{remex}

\begin{ex}[(Standard simplices as an operad)]\label{ex:simplices-operad}
Let $\M=\Sp=\Set_\Delta$ be the category of simplicial sets. Equip $\Sp$ with the symmetric
monoidal structure given by $\sqcup$, the disjoint union. 
Remarkably, the collection $(\Delta^n)$ of standard
simplices forms a (1-colored) operad in $(\Sp, \sqcup)$. The operadic composition maps
\[
\nu_{m_1, ..., m_n}:  \Delta^n\sqcup \bigl( \Delta^{m_1} \sqcup  \cdots\sqcup \Delta^{m_n} \bigr) \lra\Delta^{m_1+...+m_n}
\] 
are defined as follows. The $i$th vertex of $\Delta^n$ is mapped into the vertex of $\Delta^{m_1+...+m_n}$
with number $m_1+... + m_i$ (which is set up to be $0$ for $i=0$).
The $j$th vertex of $\Delta^{m_i}$ is mapped into the vertex of $\Delta^{m_1+...+m_n}$
with number $m_1+...+m_{i-1}+j$. The unit maps are the unique embeddings of $\emptyset$
which is the unit object for $\sqcup$. 
The verification of the operad axioms is straightforward. This example
is important for the 2-Segal point of view on simplicial sets.
\end{ex}

\begin{ex}[(Simplicial sets as cooperads)]\label{ex:simplicial-sets-cooperads}
Let $X$ be a simplicial set, so that $X_n=\Hom(\Delta^n, X)$
is the set of $n$-simplices of $X$. Applying the previous example and the adjunction between
$\sqcup$ and $\times$, we conclude that the collection of sets $(X_n)$ forms a 
(1-colored) cooperad
in $(\Set, \times)$. The cocomposition map
\[
f_{m_1, ..., m_n}: X_{m_1+...+m_n} \lra X_n\times (X_{m_1}\times \cdots\times X_{m_n})
\]
is the 2-Segal map corresponding to the polygonal subdivision $\P$ of $P_{m_1+...+m_n}$
consisting of one polygon with vertices $0, m_1, m_1+m_2, ..., m_1+...+m_n)$
and $n$ polygons with vertices $m_{i-1}, m_{i-1}+1, ..., m_{i}$ for $i=1, ..., n$. 

Moreover, for any $b_1, ..., b_n\in X_1$ put 
\[
\Qc_X(b_1, ..., b_n|b_0)\,\,=\,\,\bigl\{ x \in X_n |\,\,   \partial_{\{1,2\}}(x)=b_1, \, 
\partial_{\{2,3\}}(x) = b_2, ..., \partial_{\{n-1, n\}}(x)=b_n, \,\,\partial_{\{0,n\}}(x)=b_0
\bigr\}. 
\] 
Then the $f_{m_1, ..., m_n}$ give rise to the cooperadic cocompositions,
making $\Qc_X$ into a $X_1$-colored cooperad in $(\Set, \times)$. 
\end{ex}

\begin{prop}\label{prop:functor-Q}
Let $B$ be a set. Then the correspondence $X\mapsto\Qc_X$  gives a fully faithful functor $\mathfrak Q$ 
from the category of simplicial sets $X$ with   $X_1=B$ (and  their morphisms identical on
$B$) to the category of $B$-colored cooperads in $(\Set, \times)$.     
\end{prop}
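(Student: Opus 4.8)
The plan is to verify that $\mathfrak{Q}$ is a well-defined functor, then prove faithfulness and fullness separately; the heart of the matter is a factorization argument in $\Delta$ showing that the cooperadic structure of $\Qc_X$ already records every simplicial operator of $X$. I begin by making the reconstruction of $X$ from $\Qc_X$ explicit. For $n\ge 1$ every $x\in X_n$ has a well-defined spine $(\partial_{\{0,1\}}(x),\dots,\partial_{\{n-1,n\}}(x))\in B^{n}$ and long edge $\partial_{\{0,n\}}(x)\in B$, so that
\[
X_n \;=\; \coprod_{b_0,b_1,\dots,b_n\in B}\Qc_X(b_1,\dots,b_n\mid b_0),
\]
while $\Qc_X(b\mid b)=\{b\}$ recovers the color set $B=X_1$. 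A morphism $g\colon X\to Y$ with $g_1=\operatorname{id}_B$ commutes with all face maps and hence preserves spines and long edges; thus each $g_n$ restricts to color-preserving maps $\Qc_X(b_1,\dots,b_n\mid b_0)\to\Qc_Y(b_1,\dots,b_n\mid b_0)$, which are automatically compatible with the cocompositions of Example \ref{ex:simplicial-sets-cooperads} and with the counits because $g$ is simplicial. This produces $\mathfrak{Q}(g)$, and $\mathfrak{Q}(\operatorname{id})=\operatorname{id}$, $\mathfrak{Q}(g'\circ g)=\mathfrak{Q}(g')\circ\mathfrak{Q}(g)$ are immediate.

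Faithfulness is then formal: by the displayed decomposition $g_n$ is the disjoint union over colors of the components of $\mathfrak{Q}(g)$, so $g$ is recovered from $\mathfrak{Q}(g)$.

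For fullness I would start from a morphism $\Phi\colon\Qc_X\to\Qc_Y$ of $B$-colored cooperads (identity on colors) and assemble its components over the colors into a family of maps $\phi_n\colon X_n\to Y_n$; preservation of counits forces $\phi_1=\operatorname{id}_B$, so that $(\phi_n)$ will lie in the correct subcategory once it is shown to be simplicial. To prove the latter I must check that $(\phi_n)$ commutes with $X(\psi)$ for every $\psi$ in $\Delta$, where $X(\psi)\colon X_q\to X_p$ denotes the operator attached to $\psi\colon[p]\to[q]$. Here I invoke the \emph{active--inert factorization}: write $\psi=\iota\circ\alpha$ with $\alpha\colon[p]\to[r]$ active (endpoint-preserving) and $\iota\colon[r]\to[q]$ inert (a consecutive-interval inclusion). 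The key dictionary is that the cocomposition $f_{m_1,\dots,m_k}$ realizes, in its output component, exactly the operators $X(\alpha)$ for active $\alpha$---the correspondence $\alpha\leftrightarrow(m_i)$ with $m_i=\alpha(i)-\alpha(i-1)$ is a bijection between endpoint-preserving maps and tuples summing to the target, and allowing some $m_i=0$ produces precisely the degeneracies---whereas the interval components of the cocompositions realize exactly the operators $X(\iota)$ for inert $\iota$. Since $\Phi$ commutes with each $f_{m_1,\dots,m_k}$, post-composing with the product projections shows it commutes with every output component and every interval component, hence with $X(\alpha)$ and $X(\iota)$ and therefore with $X(\psi)=X(\alpha)\circ X(\iota)$. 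Thus $(\phi_n)$ is a morphism of simplicial sets with $\mathfrak{Q}((\phi_n))=\Phi$.

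The step I expect to be the main obstacle is precisely this factorization dictionary: one must check carefully that every coface and codegeneracy---in particular the inner faces $\partial_i$ with $0<i<n$ (which are active) and all degeneracies $s_i$ (arising from tuples with one $m_i=0$)---occurs among the components of the cocomposition maps, so that commuting with cocompositions and counits is genuinely equivalent to being simplicial. Once this correspondence between morphisms of $\Delta$ and cooperadic structure maps is established, the remaining verifications---functoriality, compatibility with counits, and the two mutually inverse assignments $g\mapsto\mathfrak{Q}(g)$ and $\Phi\mapsto(\phi_n)$---are routine naturality checks.
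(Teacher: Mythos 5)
Your proof is correct and follows essentially the same route as the paper: the paper's ``wide''/``narrow'' classes of morphisms in $\Delta$ are exactly your active/inert classes, and the paper likewise observes that the output component of $\nu_{m_1,\dots,m_n}$ realizes precisely the wide (active) morphisms — including degeneracies, via $m_i=0$ — while the remaining components realize the narrow (inert) ones, so that commuting with the (co)operadic structure maps is equivalent to commuting with all of $\Delta$. Your write-up is somewhat more explicit about the object-level decomposition $X_n=\coprod_{b_0,\dots,b_n}\Qc_X(b_1,\dots,b_n\mid b_0)$ and about separating faithfulness from fullness, but the key lemma and the argument are the same.
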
 

\begin{proof} Call a morphism $\phi: [n]\to[q]$ in $\Delta$ {\em wide}, if $\phi(0)=0$ and $\phi(n)=q$.
Let $\on{Wid}$ be the class of all wide morphisms. It is closed under composition, contains all
degeneration maps $\sigma^n_i: [n+1]\to[n]$, as well as all the face maps
$\delta_i^n: [n-1]\to [n]$ for $i=1, ..., n-1$. 

Call $\phi$  {\em narrow}, if  $\phi$ identifies $[n]$ with an interval $\{a, a+1, ..., a+n\}\subset [q]$.
Let $\on{Nar}$ be the class of all narrow morphisms. It is closed under composition and contains
the face maps $\delta^n_0, \delta^n_n: [n-1]\to [n]$. 

Since $\on{Wid}$ and $\on{Nar}$ contain all the identity maps, we can consider them
as subcategories in $\Delta$ with the full set of objects. By the above, these categories
together generate all
(morphisms) of $\Delta$. Note also that $\on{Nar}\cap\on{Wid}$ consists only of isomorphisms
in $\Delta$ (i.e., only of identity morphisms among the standard objects $[n]$). 

Look now at the  morphism $\nu_{m_1, ..., m_n}$ from Example 
\ref{ex:simplices-operad}. The first component of $\nu_{m_1, ..., m_n}$
(i.e., its restriction to $\Delta^n$) is a wide morphism, and all wide morphisms
are obtained in this way. 
The other components
(restrictions to the $\Delta^{m_i}$) of $\nu_{m_1, ..., m_n}$
are narrow morphisms, and all narrow morphisms are found in this way.

This implies that the functor $\mathfrak Q$ is fully faithful.
Indeed, for two simplicial sets $X,Y$ with $X_1=Y_1=B$, a morphism of colored cooperads
$u: \Qc_X\to\Qc_Y$ consists  of maps $u_n: X_n\to Y_n$ for all $n$ which, by the above,
commute with the actions of morphisms from $\on{Wid}$ and $\on{Nar}$ and therefore
with all morphisms of $\Delta$, so $u$ is a morphism of simplicial sets. 
\end{proof}

\begin{defi}
Let $B$ be a set. A $B$-colored cooperad $\Qc$
(resp. a $B$-colored operad $\Oc$) in $(\Set, \times)$ is called
{\em invertible}, if:
\begin{enumerate}[label=(\arabic{*})]
\item For each $b\in B$ the counit map 
$\Qc(b|b)\to\pt$  (resp. the unit map $\pt\to\Oc(b|b)$) is bijective.
\item For each $m_1, ..., m_n$ the cocomposition map $f_{m_1, ..., m_n}$
from \eqref {eq:cooperad-cocomposition} (resp. the composition map $\nu_{m_1, ..., m_n}$
from Example \ref {ex:operads-cooperads}(c)) 
is bijective. 
\end{enumerate}
\end{defi}

\begin{thm}\label{thm:2-segal-invertible}
Let $B$ be a set. The following categories are equivalent:
\begin{enumerate}[label=(\roman{*})]
\item 2-Segal simplicial sets with $X_0=\pt$ and $X_1=B$
(with morphisms identical on 1-simplices).

\item Invertible $B$-colored cooperads in $(\Set, \times)$.

\item Invertible $B$-colored operads in $(\Set, \times)$.
\end{enumerate}
\end{thm}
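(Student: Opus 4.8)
The plan is to prove the two equivalences (i)$\Leftrightarrow$(ii) and (ii)$\Leftrightarrow$(iii) separately. All of the geometric content sits in (i)$\Leftrightarrow$(ii); the passage (ii)$\Leftrightarrow$(iii) is a formal ``reversal of arrows'' made possible by invertibility. For (i)$\Leftrightarrow$(ii) I would exploit the fully faithful functor $\mathfrak Q$ of Proposition \ref{prop:functor-Q}, which stays fully faithful after restriction to the full subcategory of simplicial sets with $X_0=\pt$ and $X_1=B$. It therefore suffices to identify the essential image of this restriction with the invertible $B$-colored cooperads. For such an $X$ the counit $\Qc_X(b\mid b)\to\pt$ is automatically bijective, since $\Qc_X(b\mid b)=\{b\}$; and, by Example \ref{ex:simplicial-sets-cooperads}, the cocomposition $f_{m_1,\dots,m_n}$ is exactly the $2$-Segal map $f_{\Pc}$ attached to the polygonal subdivision $\Pc=\Pc_{m_1,\dots,m_n}$ of $P_{m_1+\dots+m_n}$ (inner $(n+1)$-gon on the vertices $0,m_1,\dots,m_1+\dots+m_n$, together with the peripheral polygons $\{M_{i-1},\dots,M_i\}$). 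Hence invertibility of $\Qc_X$ amounts precisely to bijectivity of all these particular $f_{\Pc}$.

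If $X$ is $2$-Segal, then every polygonal subdivision map is bijective by Proposition \ref{prop:2-segal-basic}(\ref{item:2-segal-basic-2}), so in particular $\Qc_X$ is invertible. The converse is the crux. Assuming every cocomposition bijective, I would deduce the full $2$-Segal condition; by Proposition \ref{prop:2-segal-basic}(\ref{item:2-segal-basic-3}) it is enough to prove that each single-diagonal cut map
\[
X_n\lra X_{\{0,\dots,i,j,\dots,n\}}\times_{X_{\{i,j\}}}X_{\{i,\dots,j\}}
\]
is bijective, which I would establish by induction on $n$. The key observation is that the genuine diagonal $\{i,j\}$ (so $j\geq i+2$) exhibits its two-piece subdivision as a coarsening of the cocomposition subdivision $\Pc_{i,\,j-i,\,n-j}$ (and of $\Pc_{j,\,n-j}$, resp. $\Pc_{i,\,n-i}$, in the boundary cases $i=0$, resp. $j=n$): subdividing the complementary piece $\{0,\dots,i,j,\dots,n\}$ further along the nontrivial diagonals among $\{0,i\}$ and $\{j,n\}$ recovers the finer subdivision. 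By Proposition \ref{prop.colim} the corresponding refinement map is a composite of single-diagonal cut maps of \emph{strictly smaller} polygons (the complementary piece has at most $n$ vertices once $j\geq i+2$), hence bijective by the inductive hypothesis; since the composite $X_n\to X_{\Pc_{i,j-i,n-j}}$ is bijective by invertibility, the two-out-of-three property forces the cut map to be bijective as well. For $n\leq 3$ the complementary piece is already a triangle, so the cut map coincides with a cocomposition and no induction is needed.

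It remains to check essential surjectivity, i.e. that every invertible cooperad $\Qc$ is isomorphic to some $\Qc_X$. I would set $X_0=\pt$ and $X_n=\Qc(n)$ for $n\geq 1$, and define the face and degeneracy operators from the cooperadic cocomposition and counit maps by means of the factorization of every morphism of $\Delta$ into a wide and a narrow morphism established in the proof of Proposition \ref{prop:functor-Q}; the simplicial identities then follow from coassociativity and counitality of $\Qc$. One verifies first, using the arity-one cocomposition together with the counit, that the unary part of $\Qc$ is trivial, so that indeed $X_1=B$. By construction $\Qc_X\cong\Qc$; since $\Qc_X$ is then invertible, the criterion of the previous step shows $X$ is $2$-Segal, and $X$ lies in category (i) as required.

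Finally, for (ii)$\Leftrightarrow$(iii): given an invertible cooperad $\Qc$, I keep the objects $\Qc(b_1,\dots,b_n\mid b_0)$ and define the operadic composition by $\nu_{m_1,\dots,m_n}:=f_{m_1,\dots,m_n}^{-1}$ and the operadic units as the inverses of the counits. Inverting all structure maps converts the coassociativity and counit diagrams of Definition \ref{def:operads-colored} into the associativity and unit diagrams, and preserves bijectivity, yielding an invertible operad; the assignment is visibly inverse to the analogous construction starting from an operad. On morphisms, a map commuting with the bijective cocompositions commutes with their inverses, so the two categories are in fact isomorphic, not merely equivalent. The main obstacle throughout is the converse direction of the second step: showing that bijectivity of the special ``cooperadic'' subdivisions $\Pc_{m_1,\dots,m_n}$ propagates to all triangulations, for which the inductive realization of an arbitrary diagonal as a coarsening of a cocomposition subdivision is the decisive geometric input.
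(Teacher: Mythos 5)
Your proposal is correct and has the same overall architecture as the paper's proof: the equivalence (ii)$\Leftrightarrow$(iii) is obtained by formally inverting the cocompositions and counits, and (i)$\Leftrightarrow$(ii) is reduced, via the fully faithful functor $\mathfrak Q$ of Proposition \ref{prop:functor-Q}, to identifying its essential image with the invertible cooperads. You diverge from the paper in two sub-steps. First, for the implication ``all cocompositions bijective $\Rightarrow$ $2$-Segal'', the paper simply observes that the subdivisions $\Pc_{j,1,\dots,1}$ and $\Pc_{1,\dots,1,n-i}$ are exactly the two-piece subdivisions occurring in condition \ref{item:2-segal-basic-4} of Proposition \ref{prop:2-segal-basic} (the peripheral two-element pieces are edges of the inner polygon and impose no condition on membranes), so invertibility yields that criterion directly with no induction; your induction establishing the stronger condition \ref{item:2-segal-basic-3} for every diagonal, by exhibiting its two-piece subdivision as a coarsening of $\Pc_{i,j-i,n-j}$ and using Proposition \ref{prop.colim} plus two-out-of-three, is valid but redoes work that the cited proposition has already packaged. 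Second, for essential surjectivity the paper builds the monoidal category $(\Set_B,\otimes)$ from the span $B\times B\leftarrow \Qc(2)\to B$ and takes its Clebsch--Gordan nerve via Theorem \ref{thm:segal-2-categories}, whereas you reconstruct the simplicial operators directly from the cocompositions and counits through the wide/narrow factorization of morphisms in $\Delta$. Your route is more elementary and self-contained; the one point that deserves spelling out is that, since every morphism of $\Delta$ factors uniquely as a narrow morphism following a wide one, the simplicial identities reduce to the compatibility of the two partial actions under re-factorization of composites of the form (wide)$\circ$(narrow), which is exactly the coassociativity constraint. You are also right to flag that invertibility forces the unary part to be trivial, i.e.\ $\Qc(b'\mid b)=\emptyset$ for $b'\neq b$ and $\Qc(b\mid b)\cong\operatorname{pt}$ (this follows from comparing the two counit axioms applied to $f_1$); the paper leaves this to the reader. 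Neither difference affects correctness.
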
 

\begin{proof}
For an invertible $B$-colored  cooperad $\Qc$ we can invert the $f_{m_1, ..., m_n}$, getting an
invertible
$B$-colored  operad  $\Qc^{-1}$  in $(\Set, \times)$
with $\Qc^{-1}(n) = \Qc(n)$ and
\[
\nu_{m_1, ..., m_n} \,\,=\,\, f_{m_1, ..., m_n}^{-1}.
\]
This establishes an equivalence (ii)$\Leftrightarrow$(iii). Further,  
because the functor $\mathfrak Q$ in Proposition
\ref{prop:functor-Q} is fully faithful, the equivalence (i)$\Leftrightarrow$(ii) reduces to the following.

\begin{prop}\label{prop:2-Segal-invertible}
\begin{enumerate}[label=(\alph{*})]
	\item Let $X$ be a simplicial set with $X_0=\pt$. Then $X$ is 2-Segal, if
	and only if the cooperad $\Qc_X$ is invertible.
	\item Let $\Qc$ be an  invertible $B$-colored cooperad.
	Then $\Qc\simeq \Qc_X$ for some simplicial set $X$ with $X_0=\pt, X_1=B$. 
\end{enumerate}
\end{prop}

\noindent {\sl Proof of Proposition \ref{prop:2-Segal-invertible}:} (a) 
Let $m_1, ..., m_n$ be given.
The cocomposition map $f_{m_1, ..., m_n}$
for $\Qc_X$, see   \eqref{eq:cooperad-cocomposition},
is nothing but the 2-Segal map $f_{\Pc_{m_1, ..., m_n}, X}$ for a particular polygonal subdivision
$\Pc_{m_1, ..., m_n}$ of the polygon $P_{m_1+...+m_n}$. Explicitly,
$\Pc_{m_1, ..., m_n}$ consists of the following polygons (indicated by their
sets of vertices):
\[
\begin{gathered}
\{0, m_1, m_1+m_2, m_1+m_2+m_3, ..., m_1+...+ m_n\}, \\
\{ m_1+...+m_i , m_1+...+ m_i+1, \cdots, m_1+... + m_i + m_{i+1}\}, \quad i=0, ..., n-1. 
\end{gathered}
\]
So the 2-Segal  property of $X$ implies that the cooperad $\Qc_X$ is invertible. Conversely, suppose that
$\Qc_X$ is invertible, i.e., that all the 2-Segal maps  $f_{\Pc_{m_1, ..., m_n}, X}$ are invertible.
Then $X$ is 2-Segal in virtue of Proposition \ref{prop:2-segal-basic} \eqref{item:2-segal-basic-4}.

(b) Let $\Qc$ be given. Consider the span of sets
\[
B\times B \buildrel (\pi_1,\pi_2)\over\lla \Qc(2)
\buildrel\pi_0\over\lra B.
\]
Using this span, we define a functor
\[
\otimes \,\,=\,\, \pi_{0*} (\pi_1, \pi_2)^*: \,\,\Set_B\times\Set_B \lra\Set_B. 
\]
We claim that $\otimes$ makes $\Set_B$ into a ($\sqcup$-semisimple) monoidal
category. Indeed, the associativity isomorphism
\[
\alpha: \, \otimes \circ (\otimes\times\Id) \Rightarrow \otimes\circ(\Id\times\otimes)
\]
is obtained from  the bijections
\[
\Qc(2)\times_{B}^{(\pi_0, \pi_1)} \Qc(2)\buildrel f_{2,1}\over \lla \Qc(3)\buildrel f_{1,2}\over \lra  
\Qc(2)\times_{B}^{(\pi_0, \pi_2)} \Qc(2).
\]
We thus define $X$ as the Clebsch-Gordan nerve of $(\Set_B, \otimes)$, and our
statement follows from Theorem \ref{thm:segal-2-categories}. We leave further
details to the reader. 
\end{proof}

Invertible $B$-colored operads   can be seen as providing a set-theoretic analog of the concept of a
quadratic operad in the category $(\Vect_\k, \otimes)$ of vector spaces over a field $\k$, as
introduced in \cite{ginzburg-kapranov}.  Let us explain this point of view in more detail, recalling
analogs of necessary constructions from {\em loc. cit.} 

Let $B$ be a set. We denote by $\on{Bin}^B$ the set of isomorphism classes of plane rooted  binary
trees with all the edges (including the outer edges) labelled (``colored") by elements of $B$. Thus,
a tree $T\in\on{Bin}^B$ has a certain number $n+1\geq 3$ outer edges (called  {\em tails}), of which
one is designated as the ``root" (or {\em output}) tail, and the remaining $n$  tails are totally
ordered by the plane embedding, and are called the {\em inputs} of $T$.  For $b_0, ..., b_n\in B$ we
denote by $\on{Bin}^B(b_1, ..., b_n|b_0)\subset\on{Bin}^B$ the set of  $T$ which have $b_0$ as the
color of the output and $b_1, ..., b_n$ as the colors of the input tails, in the order given by the
plane embedding.     Further, for $T\in\on{Bin}^B$ we denote by $\on{Vert}(T)$ the set of vertices
of $T$. A vertex $v\in\on{Vert}(T)$ has two input edges $\on{in}'(v)$, $\on{in}''(v)$ (order fixed
by the plane embedding) and one output edge $\on{out}(v)$. 

Let  $\Ec = \{\Ec(b_1, b_2|b_0)_{b_0, b_1, b_2\in B}\}$    be a collection of sets labelled by
triples of elements of $B$. We think of elements of $\Ec(b_1, b_2|b_0)$ as formal binary operations
from $b_1\otimes b_2$ to $b_0$.  In this situation, we have  the  {\em free $B$-colored
(non-symmetric) operad} $\Fc_\Ec$  in $(\Set, \times)$ generated by $\Ec$. It consists of sets 
\[
\Fc_\Ec(b_1, ..., b_n|b_0) \,\,=\,\, \coprod_{T\in\on{Bin}^B(b_1, ..., b_n|b_0)}\,\,\,  \prod_{v\in\on{Vert}(T)}
\Ec\bigl(\on{in}'(v), \on{in}''(v)|\on{out}(v)\bigr). 
\]
The composition maps are given by grafting of trees. A $B$-colored operad $\Oc$ in $(\Set, \times)$
is called {\em binary generated} if there exists an $\Ec$ as above and a surjection of operads
$\Fc_\Ec\to\Oc$.    In this case $\Ec$ is recovered as the set of binary operations in $\Oc$, i.e.,
$\Ec(b_1, b_2|b_0) = \Oc(b_1, b_2|b_0)$.

Among binary generated operads $\Oc$  we are interested in those for which all the relations among
generators in $\Ec = \Oc(2)$ follow from those holding already in $\Oc(3)$. More precisely, let
$\on{Bin}=\on{Bin}^{\pt}$ be the set of topological types of binary rooted trees, and let
$\on{Bin}(n)$ be the set of such trees with $n$ inputs. We have then the projection (forgetting the
coloring)
\[
\pi: \on{Bin}^B(b_1, ..., b_n|b_0) \lra \on{Bin}(n).
\]
For $\tau\in\on{Bin}(n)$ we denote $\on{Bin}^{B, \tau}(b_1, ..., b_n|b_0)$ the set of colored trees
of topological type $\tau$, i.e., the preimage $\pi^{-1}(\tau)$.

\begin{defi} A $B$-colored operad $\Oc$ in $(\Set, \times)$ is called
{\em quadratic}, if
for any $\tau\in\on{Bin}(n)$,  the map
\[
\nu_\tau^{(b_1, ..., b_n|b_0)} : 
\coprod_{T\in\on{Bin}^{B,\tau} (b_1, ..., b_n|b_0)}\,\,\,  \prod_{v\in\on{Vert}(T)}
\Oc\bigl(\on{in}'(v), \on{in}''(v)|\on{out}(v)\bigr) \lra \Oc(b_1, ..., b_n|b_0)
\]
induced by the composition in $\Oc$, is a bijection. 
\end{defi}

\begin{prop} A $B$-colored operad $\Oc$ is quadratic, if and only if it is
invertible.
\end{prop}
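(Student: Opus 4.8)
The plan is to read both conditions as statements about composing operations along planar rooted trees and to link them through the operad associativity axioms, in exactly the way that bracketings of an iterated composition correspond to triangulations of a polygon (cf. Example \ref{ex:hall-mon-cat}). Quadraticity concerns composition along \emph{binary} trees of a fixed shape, while invertibility concerns the single-level general composition maps $\nu_{m_1,\dots,m_n}$; the content of the equivalence is that arbitrary compositions can be reassembled from binary ones, bijectively, and vice versa.

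First I would record that either hypothesis forces the unary part of $\Oc$ to be trivial. For the trivial tree $\tau\in\on{Bin}(1)$ (a single edge, no vertices) the colored trees in $\on{Bin}^{B,\tau}(b_1|b_0)$ form a one-point set when $b_0=b_1$ and are empty otherwise, while the product over $\on{Vert}(T)=\emptyset$ is $\pt$; since composition along $\tau$ is the unit operation, quadraticity gives $\Oc(b|b)=\pt$ and $\Oc(b_1|b_0)=\emptyset$ for $b_0\neq b_1$. The same conclusion follows from invertibility: factoring an arbitrary $h\in\Oc(b_2|b_0)$ as $h\circ\Id$ and $\Id\circ h$ exhibits two preimages under the arity-one composition map unless $\Oc(b_1|b_0)$ is empty for $b_0\neq b_1$. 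In particular the unit maps $\pt\to\Oc(b|b)$ are bijective in both cases, so clause (1) of invertibility is automatic, and composition with unary operations may be treated as the identity.

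The key step is a grafting identity. Given a planar rooted tree $\sigma$ with $n$ leaves and trees $\rho_1,\dots,\rho_n$ with $m_1,\dots,m_n$ leaves, let $\tau=\sigma(\rho_1,\dots,\rho_n)$ be obtained by grafting the root of $\rho_i$ onto the $i$-th leaf of $\sigma$, so that $\on{Vert}(\tau)=\on{Vert}(\sigma)\sqcup\bigsqcup_i\on{Vert}(\rho_i)$. A coloring of $\tau$ is the same datum as colorings of $\sigma$ and of the $\rho_i$ that agree on the grafted edges, so regrouping the indexing coproduct and product of $\nu_\tau$ along this partition of vertices produces a bijection of its domain onto the fiber product
\[
\Oc(n)^{(\pi_1,\dots,\pi_n)}\times^{(\pi_0,\dots,\pi_0)}_{B^n}\bigl(\Oc(m_1)\times\cdots\times\Oc(m_n)\bigr),
\]
induced by $\bigl(\nu_\sigma,\nu_{\rho_1},\dots,\nu_{\rho_n}\bigr)$, and the operad axioms give
\[
\nu_\tau \;=\; \nu_{m_1,\dots,m_n}\circ\bigl(\nu_\sigma,\nu_{\rho_1},\dots,\nu_{\rho_n}\bigr).
\]
I expect the precise color bookkeeping behind this regrouping to be the main (though routine) obstacle.

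With the identity both implications are short. For invertible $\Rightarrow$ quadratic I induct on $|\on{Vert}(\tau)|$: the trivial and one-vertex trees are handled by the triviality of unary operations, and for a binary $\tau$ with more vertices I split at the root into binary subtrees $\tau',\tau''$ and apply the identity with $\sigma$ the binary corolla, so that $\nu_\tau=\nu_{k,n-k}\circ(\nu_{\tau'},\nu_{\tau''})$ is a composite of bijections ($\nu_{\tau'},\nu_{\tau''}$ by induction, $\nu_{k,n-k}$ by invertibility). For quadratic $\Rightarrow$ invertible I run the identity the other way: given $m_1,\dots,m_n$, choose binary trees $\sigma,\rho_1,\dots,\rho_n$ with $n,m_1,\dots,m_n$ leaves and set $\tau=\sigma(\rho_1,\dots,\rho_n)$; then $\nu_\tau$ and $\bigl(\nu_\sigma,\nu_{\rho_1},\dots,\nu_{\rho_n}\bigr)$ are bijective by quadraticity, whence $\nu_{m_1,\dots,m_n}$ is bijective, which is clause (2) of invertibility. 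Alternatively, one may note that binary trees correspond to triangulations of polygons while the blocks $m_1,\dots,m_n$ give the polygonal subdivisions $\Pc_{m_1,\dots,m_n}$ occurring in Proposition \ref{prop:2-Segal-invertible}, so that the equivalence is an instance of the passage between triangulations and general subdivisions recorded in Proposition \ref{prop:2-segal-basic}.
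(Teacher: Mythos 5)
Your argument is correct, but it takes a genuinely different route from the paper's. The paper's own proof is a two-line reduction: it identifies $\on{Bin}(n)$ with the set of triangulations of the polygon $P_n$ by passing to Poincar\'e-dual trees, observes that under this dictionary a quadratic operad is exactly the data of a $2$-Segal simplicial set with $X_0=\pt$, and then quotes Theorem \ref{thm:2-segal-invertible}; the combinatorial weight is carried by the earlier Propositions \ref{prop:2-Segal-invertible} and \ref{prop:2-segal-basic}. You instead stay entirely on the operadic side: your grafting identity $\nu_\tau=\nu_{m_1,\dots,m_n}\circ(\nu_\sigma,\nu_{\rho_1},\dots,\nu_{\rho_n})$ is the tree-language avatar of the two-out-of-three manipulations with triangulations and polygonal subdivisions in Proposition \ref{prop:2-segal-basic}, and your induction on $|\on{Vert}(\tau)|$ with a split at the root vertex replaces the induction over triangulations there. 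What your route buys is self-containedness -- it does not presuppose the $2$-Segal machinery -- at the cost of redoing some of that combinatorics; the paper's route is shorter only because the work has already been done in the simplicial picture. Your closing sentence correctly identifies the paper's actual argument as the alternative.

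Two minor points. First, your derivation of the triviality of the unary part from \emph{quadraticity} uses the edgeless tree in $\on{Bin}(1)$, whereas the paper's convention requires trees in $\on{Bin}^B$ to have at least three tails, so that $\on{Bin}(1)$ is empty and quadraticity as literally defined says nothing in arity one. Some such convention (either admitting the trivial tree or assuming the operad reduced in arities $\le 1$) is genuinely needed for the ``quadratic $\Rightarrow$ invertible'' direction, since clause (1) of invertibility pins down $\Oc(b|b)$; you are right to address this, and the paper silently elides it, but you should flag that you are extending the stated definition. Second, the phrase ``produces a bijection of its domain onto the fiber product \dots induced by $(\nu_\sigma,\nu_{\rho_1},\dots,\nu_{\rho_n})$'' is loosely worded: the regrouping is a bijection onto $\coprod_{c}\on{dom}(\nu_\sigma)\times\prod_i\on{dom}(\nu_{\rho_i})$, and the induced map to the fiber product is a bijection only once the constituent $\nu$'s are known to be; the displayed factorization is what you actually use, so no harm is done.
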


\begin{proof}
Note that the set $\on{Bin}(n)$ is identified with the set of
triangulations of $P_n$ by associating to a triangulation  its Poincar\'e dual tree:
\[
\begin{tikzpicture}
\node (0) at (2, -2) {0};
\node (2) at (2,2){2};
\node(1) at (4,0) {1}; 
\node(3) at (-2,2) {3};
\node (4) at (-4,0) {4}; 
\node (5) at (-2,-2) {5};

\node (b2) at (-1,0) {$\bullet$};
\node (b3) at (1,0) {$\bullet$}; 
\node (b1) at (-1.5, 1.5) {$\bullet$};
\node (b4) at (1.5, -1.5) {$\bullet$}; 

\node[draw, shape=circle] (i2) at (5,2) {2};
\node[draw, shape=circle] (i1) at (5,-2) {1};
\node[draw, shape=circle] (i3) at (0,3) {3};
\node[draw, shape=circle] (i0) at (0,-3) {0};
\node[draw, shape=circle] (i4) at (-5,2) {4};
\node[draw, shape=circle] (i5) at (-5,-2) {5};

\draw (0) -- (1);
\draw (1) -- (2);
\draw (2) -- (3);
\draw (3) -- (4);
\draw (4) -- (5);
\draw (5) -- (0); 

\draw (4) -- (2);
\draw (5) -- (2); 
\draw (5) -- (1); 

\draw (b4)--(i0);
\draw (i1)--(b4);
\draw (i2) -- (b3); 
\draw (b3) -- (b4); 
\draw (b2) -- (b3); 
\draw (i5) -- (b2); 
\draw (b1) -- (b2); 
\draw (i3) -- (b1);
\draw (i4) -- (b1); 

\end{tikzpicture}
\]
So a quadratic operad is directly translated into a 2-Segal simplicial set, and thus the statement
follows from Theorem \ref{thm:2-segal-invertible}. 
\end{proof}

\begin{rem} Any quadratic operad is clearly binary generated. Further, 
note that $\on{Bin}(3)$ consists of two elements $\tau_1$ and $\tau_2$
represented by the following trees:
\[
\xymatrix{ 1\ar[dr]&&\ar[dl]2&&3\ar[ddl]\\
&\bullet\ar[drr]&&&\\
&&&\bullet\ar[d]&\\
&&&0&
} \quad\quad \quad\quad\quad
\quad
\xymatrix{
1\ar[ddr]&&2\ar[dr]&&3\ar[dl]\\
&&&\bullet\ar[dll]&\\
&\bullet\ar[d]&&&\\
&0&&&
}
\]
For a quadratic operad $\Oc$ the identifications
\[
\begin{gathered}
\coprod_{T\in\on{Bin}^{B, \tau_1} (b_1,  b_2, b_3|b_0)} \quad  \prod_{v\in\on{Vert}(T)}
\Oc\bigl(\on{in}'(v), \on{in}''(v)|\on{out}(v)\bigr) \lra \Oc(b_1, b_2|b_0) \lla   \\
\lla 
\coprod_{T\in\on{Bin}^{B, \tau_2} (b_1,  b_2, b_3|b_0)} \quad    \prod_{v\in\on{Vert}(T)}
\Oc\bigl(\on{in}'(v), \on{in}''(v)|\on{out}(v)\bigr)
\end{gathered}
\]
can be seen as quadratic relations among the binary generators of $\Oc$.  Furthermore, any two
triangulations of any $P_n$ are obtained from each other by a series of flips on 4-gons.
Equivalently, any two elements of any $\on{Bin}(n)$ can be obtained from each other by applying
local modifications consisting in replacing a subtree of form $\tau_1$ by a subtree of form $\tau_2$
or the other way around.  This means that all identifications (relations) among formal compositions
of binary generators which hold in the  $\Oc(n)$, $n\geq 3$,  follow from those holding already in
$\Oc(3)$. 
\end{rem}

\vfill\eject

\subsection{Set-theoretic solutions of the pentagon equation}
\label{subsec:set-theoretic-pentagon}

Let us illustrate the results of 
 \S \ref{subsec:2-segal-monoidal} on an extreme (but still nontrivial)
 class of $2$-Segal sets. 
 
 Let $X$ be a $2$-Segal semi-simplicial set such that $X_0=X_1=\pt$,
 and let $C=X_2$. 
 Theorem \ref{thm:segal-2-categories} associates to $X$ a distributive monoidal structure $\otimes$
 on the category $\Set$, which is given by
 \begin{equation}\label{eq:otimes-set-theoretic-pentagon}
 F\otimes F' = C\times F\times F'.
 \end{equation}
 Note that $\otimes$ does not have a unit object, unless $|C|=1$. 
 The associativity isomorphisms 
 \[
 \alpha_{F, F', F''}: (F\otimes F')\otimes F''
 \lra F\otimes(F'\otimes F'')
 \]
  for this structure all reduce to the case
 when $F=F'=F''=\pt$, i.e., to one bijection
 \begin{equation}\label{eq:bijection-alpha}
 \alpha: C\times C\lra C\times C. 
 \end{equation}
 The Mac Lane pentagon condition for this $\alpha$ now reads as
  the equality
 \begin{equation}\label{eq:pentagon-eq-set}
 \alpha_{23}\circ \alpha_{13}\circ \alpha_{12}= \alpha_{12}\circ \alpha_{23}
 \end{equation}
 of self-maps of $C^3= C\times C\times C$. Here, for instance, $\alpha_{23}$ means the transformation of
 $C^3$ which acts as $\alpha$ on the 2nd and 3rd coordinates and leaves the first coordinate intact. 
 A datum consisting of a set $C$ and a bijection $\alpha$ as in 
 \eqref{eq:bijection-alpha}, satisfying
   \eqref{eq:pentagon-eq-set}, is known as a
  {\em set-theoretic solution of
 the pentagon equation} \cite{kashaev-sergeev, kashaev-reshitikhin}.
 So Theorem \ref{thm:segal-2-categories} specializes, in our case, to the following:
 
 \begin{cor}\label{cor:2-segal-pentagon-point}
 Let $C$ be a set.
 The following categories are equivalent:
 \begin{itemize}
 
 \item[(i)] The category $2\Seg(\pt, \pt, C)$ formed by
 $2$-Segal semi-simplicial sets $X$ with $X_0=X_1=pt$, $X_2=C$ and their morphisms
 identical on $C$.

 \item[(ii)] The set of set-theoretic solutions $\alpha: C^2\to C^2$ of the pentagon equation.
  
\end{itemize}
 That is, the category $2\Seg(\pt, \pt, C)$ is discrete and isomorphism classes of its
 objects are in bijection with solutions $\alpha$ as in (ii). 
\end{cor}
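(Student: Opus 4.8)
The plan is to obtain this corollary purely as a specialization of Theorem \ref{thm:segal-2-categories} to the constraint $X_0 = X_1 = \pt$, unwinding which data survive. First I would record that for such an $X$ the Hall semi-bicategory $\HH(X)$ has a single object, since $\Ob(\HH(X)) = X_0 = \pt$, and a single hom-category
\[
\Homc_{\HH(X)}(\pt,\pt) = \Set_{X_1} = \Set_{\pt} \simeq \Set,
\]
whose unique isomorphism class of simple objects corresponds to the point $X_1 = \pt$. Thus $\HH(X)$ is nothing but a $\sqcup$-semisimple monoidal category with a single simple object. Because $\otimes$ is additive in each variable, it is determined by its value on the simple object, namely $\pt \otimes \pt = X_2 = C$ (viewed in $\Set_{X_1}$), which yields $F \otimes F' = C \times F \times F'$, recovering \eqref{eq:otimes-set-theoretic-pentagon}.

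Next I would isolate the only remaining datum. With $\otimes$ pinned down by $C$, the $\sqcup$-semisimple monoidal structure is determined by its associativity constraint $\alpha_{F,F',F''}$; being a natural isomorphism between additive functors, this is in turn determined by its single component at $F = F' = F'' = \pt$, i.e.\ by one bijection $\alpha : C \times C \to C \times C$ as in \eqref{eq:bijection-alpha}. Evaluating the Mac Lane pentagon axiom on this component, exactly as in the paragraph preceding the statement, turns it into the pentagon equation \eqref{eq:pentagon-eq-set}. Theorem \ref{thm:segal-2-categories} then gives that $X \mapsto \HH(X) = (\Set, \otimes, \alpha)$ sets up a bijection between objects of $2\Seg(\pt,\pt,C)$ and bijections $\alpha$ satisfying \eqref{eq:pentagon-eq-set}.

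It remains to upgrade this to the categorical statement, i.e.\ to show $2\Seg(\pt,\pt,C)$ is discrete. A morphism there is a map of semi-simplicial sets that is necessarily the identity on $X_0$ and $X_1$ (both $\pt$) and is required to be the identity on $X_2 = C$. Since $X$ is $2$-Segal, Theorem \ref{thm:2-segal-mu-cat}(a) and the nerve construction of Proposition \ref{prop:nerve-mu-cat} exhibit each $X_n$, $n \geq 3$, as a limit built from the $2$-skeleton and $\alpha$; hence a morphism that is the identity on $X_{\leq 2}$ is the identity on every $X_n$. So the only morphisms are identities, the category is discrete, and its isomorphism classes coincide with its objects, giving the asserted bijection. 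The main obstacle is not conceptual but bookkeeping: one must verify that the full Mac Lane pentagon, an identity of natural isomorphisms among five-fold composites of additive functors on $\Set$, restricts without loss of information to the single equation on $C^3$, with the three reassociations acting on the simple object precisely as $\alpha_{12}$, $\alpha_{13}$, and $\alpha_{23}$; tracking these indices correctly is the delicate point.
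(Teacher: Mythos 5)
Your proposal is correct and follows essentially the same route as the paper: the corollary is obtained there precisely by specializing Theorem \ref{thm:segal-2-categories} to $X_0=X_1=\pt$, identifying the Hall structure with the monoidal structure $F\otimes F'=C\times F\times F'$ on $\Set$, reducing the associator to a single bijection $\alpha:C^2\to C^2$, and reading the Mac Lane pentagon as the pentagon equation $\alpha_{23}\circ\alpha_{13}\circ\alpha_{12}=\alpha_{12}\circ\alpha_{23}$. Your additional remark that higher simplices, and hence morphisms, are determined by the $2$-skeleton via the $2$-Segal bijections correctly accounts for the discreteness claim, which the paper leaves implicit.
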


We will call $2$-Segal set $X$ corresponding to a solution $(C,\alpha)$ the {\em nerve}
of $(C,\alpha)$ and denote by $\Nen(C,\alpha)$. The 
 bar-construction
description of $X$
 in Example \ref{ex:hall-mon-cat}(b) specializes, in our case, to the following. 
 We have $B=X_1=\pt$, the 1-element set, so by the form 
 \eqref{eq:otimes-set-theoretic-pentagon} of the monoidal operation,
 we have
\begin{equation}\label{eq:nerve-of-pentagon}
\Nen_n(C, \alpha) =B^{\otimes n} :=(\cdots(B\otimes B)\otimes \cdots )\otimes B)
 = C^{n-1}, \quad n\geq 1, \quad \Nen_0(C,\alpha)=\pt. 
\end{equation} 
 Alternatively, 
    the Clebsch-Gordan nerve construction (Example \ref{ex:CGN})
  identifies $\Nen_n(C,\alpha)$, $n\geq 2$, with the set of
       systems ${\bf x} = (x_{ijk}\in C)_{0\leq i<j<k\leq n}$,
        satisfying the following 
     ``nonabelian 2-cocycle
     condition"
     for each 4-tuple $0\leq i<j<k<l\leq n$:
\[
     (x_{ikl}, x_{ijk}) = \
     \alpha(x_{ijl}, x_{jkl}). 
\]

 As pointed out in \cite{kashaev-sergeev, kashaev-reshitikhin}, the map $\alpha$
  can be written in terms of two binary operations on $M$:
  \[
	  \alpha(x,y) =(x\bullet y, x*y),
  \]
 or, pictorially: 
   \[
	   \xymatrix{
		   3&0\ar[l] \ar[d]_{ x}
		   \cr
		   2\ar[u] &1\ar[l]_y\ar[ul]
	   }
	   \quad \quad
	   \xymatrix{
		   3&0\ar[l]^{x* y}\ar[dl] \ar[d]_{x\bullet y}
		   \cr
		   2\ar[u] &1\ar[l]
	   } \quad\quad\quad
   \]
 That is, given $x, y\in C=X_2$, we find the unique 3-simplex $d(x,y)$ whose even 2-faces
 are $x$ and $y$, and then $x\bullet y$ and $x*y$ are found as the odd 2-faces of $d(x,y)$,
 as indicated.

 Further, one can rewrite the pentagon equation as three 
 identities satisfied by these operations, of which
 we note the remarkable fact that $\bullet$ is associative:  
  \begin{equation}\label{eq:three-identities-pentagon}
 \begin{gathered}
 (x\bullet y)\bullet z =x\bullet(y\bullet z),\cr
 (x * y) \bullet ((x \bullet y) * z) = x * (y \bullet z), \cr
 (x * y) * ((x\bullet y) * z) = y * z.
 \end{gathered}
 \end{equation}
 Thus $\alpha$ gives rise, in particular, to a semigroup structure on $C$. 
 See Example \ref{ex:pentagon-path-criterion} below for a conceptual explanation
 of this associativity.

  \begin{ex}\label{ex:group-pentagon-solution}
    Let $G$ be a group. Then $\alpha: G^2\to G^2$ given by
 \[
  \alpha (x,y) = (xy, y),\quad x\bullet y= xy, x*y=y,
  \]
  is a solution of the pentagon equation,
  seee \cite{kashaev-double, kashaev-sergeev}.
For example, if $G=\ZZ$, then $\alpha: \ZZ^2\to\ZZ^2$ is the elementary matrix
  \[
  \alpha = e_{12} =\begin{pmatrix} 1&1\cr 0&1\end{pmatrix}\in SL_2(\ZZ),
   \]
   and the pentagon relation incarnates as the {\em Steinberg relation}
   among the elementary matrices:
   \[
   e_{12} e_{23} =e_{23} e_{13} e_{12} \in SL_3(\ZZ). 
   \]
 It was shown in {\em loc. cit.}
  that any solution $(G,\alpha)$ for which $x\bullet y$ makes $G$ into a group,
  is obtained in this way, i.e., has $x*y=y$.

     In particular, any group $G$ gives rise to a $2$-Segal semi-simplicial set
    $\Nen(G) = \Nen(G,\alpha)$ with $\Nen_n(G)=G^{n-1}$ for $n\geq 1$ and
    $\Nen_0(G)=\pt$. Denoting by $[g_1, ..., g_{n-1}]\in\Nen_n(G)$ the element
    corresponding to $(g_1, ..., g_{n-1})$ by 
    \eqref{eq:nerve-of-pentagon}, we find the face operations to be:
    \[
      \partial_i[g_1, ..., g_{n-1}]=
    \begin{cases}
     [g_2, ..., g_{n-1}],\quad i=0; \cr
      [g_2, ..., g_{n-1}],\quad i=1; \cr
 [g_1, ..., g_{i-1} g_{i}, ..., g_{n-1}], \quad i=2, ..., n-1;\cr
 [g_1, ..., g_{n-2}], \quad i=n. 
     \end{cases}
       \]
        Note that $\partial_1, ..., \partial_{n-1}$ are, up to shift,
      given by the same formulas as
        the faces in $N_{n-1}(G)$, the $(n-1)$st level of the usual
        nerve of $G$, while $\partial_0$ repeats $\partial_1$.
        This means that the $2$-Segal semi-simplicial set $\Nen(G)$
        (and thus the above solution of the pentagon
        equation) is obtained from the $1$-Segal simplicial set $N(G)$ by 
        a semi-simplicial version of taking the suspension. 
         See Proposition \ref{prop:suspension-$2$-Segal} below for more details. 
         
     \end{ex}
         
     \begin{ex}\label{ex:conf-plus}
      Let $V$ be a 2-dimensional oriented $\RR$-vector space. Let
     \[
     V^{\oplus (n+1)}_\circlearrowleft=\bigl\{ (v_0, ..., v_n)\in V^{\oplus(n+1)}\bigl|
     v_i\neq 0, \arg(v_0) <\arg(v_1) < \cdots < \arg(v_n) < \arg(v_0)
     \bigr\}
     \]
     be the space of tuples of nonzero vectors whose arguments are in the 
     strict anti-clockwise
order with respect to the chosen orientation. Let
\[
\on{Conf}_n^+ = GL^+(V)\backslash V^{\oplus (n+1)}_\circlearrowleft, \quad GL^+(V) =\{g\in GL(V)|
\det(g)>0\},
\]
be the corresponding configuration space. 
Thus $\on{Conf}_n^+ = \pt$ for $n=0,1$. Further, $\on{Conf}_2^+$ is identified with 
 $\RR_{>0}^2$ with coordinates $\lambda_0, \lambda_2$ by associating to $(v_0, v_1, v_2)$
 the coefficients in the expansion
$v_1 =\lambda_0 v_0 + \lambda_2 v_2$.
For $n\geq 2$ one sees easily that
 $\on{Conf}_n^+$ is a topological space homeomorphic to $\RR^{2n-2}$.

The collection $\on{Conf}^+=(\on{Conf}_n^+)_{n\geq 0}$ forms a semi-simplicial topological
space in an obvious way. Note that repeating a vector would violate the condition
of strict increase of the arguments, so there is no obvious way to make $\on{Conf}^+$
simplicial. 

    \end{ex}

\begin{prop}
$\on{Conf}^+$ is $2$-Segal (as a semi-simplicial set). 
\end{prop}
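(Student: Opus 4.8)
The plan is to show that every $2$-Segal map $f_\T$ for $\on{Conf}^+$ is in fact a \emph{homeomorphism} onto the (ordinary, hence also derived) membrane space; this gives the stronger "semi-simplicial set" version of the $2$-Segal property, and a fortiori the weak-equivalence version required by Definition \ref{def:2-segal-top}. First I would invoke Proposition \ref{prop:2-segal-basic}, which is stated for arbitrary (semi-)simplicial spaces: by part \eqref{item:2-segal-basic-4} it is enough to treat the elementary cuts, say those with $j=n$ (the case $i=0$ being symmetric). For such a cut the shared diagonal is $\{i,n\}$, so the relevant edge membrane is $\on{Conf}^+_{\{i,n\}}=\on{Conf}^+_1=\pt$; consequently the derived fibre product $\on{Conf}^+_{\{0,\dots,i,n\}}\times^R_{\on{Conf}^+_{\{i,n\}}}\on{Conf}^+_{\{i,\dots,n\}}$ collapses to the ordinary product $\on{Conf}^+_{i+1}\times\on{Conf}^+_{n-i}$. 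Thus the whole statement reduces to proving that the restriction map
\[
	f:\ \on{Conf}^+_n\lra \on{Conf}^+_{i+1}\times\on{Conf}^+_{n-i},\qquad
	[(v_0,\dots,v_n)]\longmapsto\bigl([(v_0,\dots,v_i,v_n)],\,[(v_i,\dots,v_n)]\bigr),
\]
is a homeomorphism.

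The geometric input I would isolate first is that the defining cyclic-order condition forces every configuration into strictly convex position: for all $0\le k<l\le n$ the pair $(v_k,v_l)$ is a positively oriented basis of $V$. In particular $GL^+(V)$ acts freely on $V^{\oplus(n+1)}_\circlearrowleft$ (any element of the stabiliser fixes a basis, hence is the identity), which both makes $\on{Conf}^+_n$ an honest orbit space and yields $\on{Conf}^+_1=\pt$ via simple transitivity of $GL^+(V)$ on positively oriented bases. This is the fact that allows gluing: given classes $A=[(a_0,\dots,a_i,a_n)]$ and $B=[(b_i,\dots,b_n)]$ on the two sides of the diagonal, the pairs $(a_i,a_n)$ and $(b_i,b_n)$ are both positively oriented bases, so there is a unique $g\in GL^+(V)$ with $g b_i=a_i$ and $g b_n=a_n$.

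With this, I would build the inverse of $f$ explicitly by gluing: replace the chosen representative of $B$ by $gB$ and set $(v_0,\dots,v_n):=(a_0,\dots,a_i,\,gb_{i+1},\dots,gb_{n-1},\,a_n)$. Injectivity of $f$ then follows from the freeness just recorded together with the uniqueness of $g$: if two full configurations agree after applying $f$, one normalises the first pieces to coincide, and the uniqueness of the aligning element forces the remaining vectors to coincide as well. For surjectivity one must check that the glued tuple again lies in $V^{\oplus(n+1)}_\circlearrowleft$; this is where the convex-position description does the work, since $v_i$ and $v_n$ cut the circle into two complementary anticlockwise arcs, the arc outside $[i,n]$ receiving $a_0,\dots,a_{i-1}$ from $A$ and the arc inside $[i,n]$ receiving $gb_{i+1},\dots,gb_{n-1}$ from $B$, so the combined list is again in strict anticlockwise order. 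Finally, continuity of $f$ is clear, and continuity of the inverse follows because $g$ depends continuously on the two sub-configurations (it is the unique solution of a linear system with nonvanishing, same-sign determinant); hence $f$ is a homeomorphism.

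I expect the main obstacle to be precisely the verification that convex position is preserved under gluing, i.e.\ that the two pieces occupy complementary arcs and that their angular data reassemble into the single cyclic inequality defining $V^{\oplus(n+1)}_\circlearrowleft$. This is the one step that genuinely uses the planar geometry of $V$ rather than the formal properties of spans and fibre products; everything else is bookkeeping with the $GL^+(V)$-action. Once it is in place, running the same argument for the symmetric cut $i=0$ and assembling via Proposition \ref{prop:2-segal-basic} completes the proof.
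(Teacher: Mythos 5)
Your proposal is correct and follows essentially the same route as the paper's proof: both reduce (the paper by induction on $n$, you via Proposition \ref{prop:2-segal-basic}\eqref{item:2-segal-basic-4}) to a single elementary cut whose shared edge-membrane is $\on{Conf}^+_1=\pt$, so the fibre product is an ordinary product, and then glue the two sub-configurations using the unique element of $GL^+(V)$ matching the shared pair of vectors, with the key check being that the glued tuple again satisfies the anticlockwise condition. The only caveat is that your blanket assertion that \emph{every} pair $(v_k,v_l)$ with $k<l$ is a positively oriented basis is stronger than what is needed (and than what the cyclic-order condition literally guarantees); the paper, like your argument, only uses this for the specific pairs bounding the cut.
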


\begin{proof} We prove by induction that for any $n\geq 2$
and any triangulation $\T$ of $P_n$ the map $f_\T: \on{Conf}^+_n\to\on{Conf}^+_\T$
is a bijection (in fact, a homeomorphism).  
For $n=2$ the statement is obvious. Assume 
       that the statement
    holds for any $n'<n$ and any triangulation $\T'$ of $P_{n'}$.
    Let $\T$ be a triangulation of $P_{n}$ 
    and choose the unique $i$ such that
    $\{0,i,n\}\in\T$. Suppose that $1<i<n-1$, the cases $i\in\{1, n-1\}$ are similar.
    Then
     the edge $\{0,i\}$ dissects the $(n+1)$-gon $P_{n}$
    into two polygons $P^{(1)}$ and $P^{(2)}$ with induced triangulations $\Tc_1$ and $\Tc_2$. 
    We have then the commutative
    diagram
    \[
    \xymatrix{
    \on{Conf}^+_n\ar[r]^{\hskip -2.5cm u} \ar[d]_{f_\T}& \on{Conf}^+_{\{0, ..., i\}} 
    \times_{\on{Conf}^+_{\{0,i\}}}\on{Conf}^+_{\{0,i, i+1, ..., n\}}
    \ar[d]^{f_{\T_1}\times f_{\T_2}}\cr
    \on{Conf}^+_\T\ar[r]^{\hskip -1.5cm \simeq }& \on{Conf}^+_{\T_1}
    \times_{\on{Conf}^+_{\{0,i\}}}\on{Conf}^+_{\T_2}
      }
    \]
    Its lower horizontal arrow is a homeomorphism since $\Tc$ is composed out of $\Tc_1$
    and $\Tc_2$. 
     By induction, 
    $f_{\T_1}$ and $f_{\T_2}$ are homeomorphisms.
        So the same is true for right vertical arrow
  in the diagram. It remains to prove the same property for the arrow $u$.
  Note that $\on{Conf}^+_{0,i}=\pt$, so the fiber products in the diagram are the usual Cartesian products. 
  Given elements, i.e., 
   orbits ${\bf v}= GL^+(V)(v_0, ..., v_i)\in \on{Conf}^+_{\{0, ..., i\}}$ and 
   ${\bf w}= GL^+(V)(w_0, w_i, ..., w_n) 
  \in\on{Conf}^+_{\{0,i,..., n\}}$, 
  there is unique $g\in GL^+(V)$ taking the basis $(v_0, v_i)$ to the basis
  $(w_0, w_i)$. So we can assume that $v_0=w_0$, $v_i=w_i$. Then we see there can be at most
  one element ${\bf x}\in \on{Conf}^+_n$ such that $u({\bf x})=({\bf v}, {\bf w})$.
  This can only be the sequence ${\bf x}=(v_0, ..., v_i, w_{i+1}, ..., w_{n})$.
  On the other hand, our assumptions imply that this $\bf x$ indeed satisfies
  the anti-clockwise argument condition and so indeed lies in $\on{Conf}^+_n$.
    \end{proof}

    The $2$-Segal semi-simplicial set $\on{Conf}^+$ gives rise to a solution
    of the pentagon equation
      \[
     \alpha: \RR_{>0}^2 \times \RR_{>0}^2
     \buildrel\sim\over \lra \RR_{>0}^2\times \RR_{>0}^2, 
         \]
    which is a classical example of a ``cluster transformation", see
    \cite{kashaev-dilog}, Eq. (10). 
    To write $\alpha$ in the explicit form, as a map
     \[
     \alpha: (\lambda, \mu) = \bigl( (\lambda_0, \lambda_2), (\mu_0, \mu_2)\bigr) \longmapsto
     \bigl( (\lambda'_0, \lambda'_2), (\mu'_0, \mu'_2)\bigr) = (\lambda',\mu'),
     \quad \lambda,\mu,\lambda',\mu'\in\RR_{>0}^2,
     \] 
     one has to consider a generic 4-tuple $(v_0, v_1, v_2, v_3)\in\on{Conf}_3^+$
    and to compare two sets of linear relations corresponding to the two triangulations of $P_3$:
     \[
 \xymatrix{
 3&0\ar[l] \ar[d]_{\lambda}
 \cr
 2\ar[u] &1\ar[l]_\mu \ar[ul]
 }  
 \quad 
 \xymatrix{
 3&0\ar[l]^{\lambda'}\ar[dl] \ar[d]_{\mu'}
 \cr
 2\ar[u] &1\ar[l]
} 
 \quad\quad
 \xymatrix{
 v_1=\lambda_0 v_0 + \lambda_2 v_3, \quad v_1=\lambda'_0 v_0 + \lambda'_2 v_2 \cr
 v_2=\mu_0v_1+ \mu_2 v_3 
 \quad v_2=\mu'_0 v_0 + \mu'_2 v_2,
 }
 \]
 cf. \cite{doliwa-sergeev}, Eqs. (4.7-8), whose approach is the closest
 to our $2$-Segal point of view. This gives
 \begin{equation}\label{eq:simplest-cluster}
 \begin{cases}
 \lambda'_0 = (1+\lambda_2\mu_2^{-1}\mu_0)^{-1} \lambda_0\\
 \lambda'_2 = (1+\lambda_2\mu_2^{-1}\mu_0)^{-1}\lambda_2\mu_2^{-1},
 \end{cases}
 \quad\quad
 \begin{cases}
 \mu'_0 = \mu_0\lambda_0\\
 \mu'_2 = \mu_0\lambda_2+ \mu_2.
 \end{cases}
 \end{equation}
 Note that the second column of formulas describes,
 in agreement with 
 \eqref{eq:three-identities-pentagon}, 
 an associative 
binary operation on $\RR_{>0}^2$. This operation is given by multiplying matrices
of the form $\begin{pmatrix} \lambda_0&0\\ \lambda_2&1
\end{pmatrix}$, $\lambda_i>0$, and makes $\RR_{>0}^2$ into a semigroup but not a group. 

\vfill\eject

\subsection{Pseudo-holomorphic polygons as a 2-Segal space}
\label{subsec:pseudo-holomorphic}

In this section we describe a large class of discrete $2$-Segal spaces
associated to almost complex manifolds.

We start with an interpretation of the $2$-Segal space $\on{Conf}^+$
from Example \ref{ex:conf-plus} in terms of
the decorated Teichm\"uller spaces of Penner \cite{penner}. Let $\HH$ be the Lobachevsky plane
 with the group of motions $SL_2(\RR)$.
 We can realize $\HH$ inside the complex plane $\CC$ in one of the two classical ways:
 \begin{enumerate}
 \item[(1)] As the upper half plane $\{\on{Im}(z)>0\}$, with $SL_2(\RR)$ acting by fractional
 linear transformations. It is equipped with the Riemannian metric
 \[
 ds^2 =  \frac{dxdx}{y^2}, \quad z=x+iy 
 \]
 of constant curvature $(-1)$. 
 
 \item[(2)] As the unit disk $\{|z|<1\}$, obtained as the image of the upper half plane
 under the fractional linear transformation $w=(z-i)/(z+i)$. 
 \end{enumerate}
 
The {\em absolute} (or {\em ideal}) {\em boundary} $\partial\HH$ of $\HH$ is identified with $\RR P^1$, the real projective line.
In the realization (1) the identification of $\partial\HH$ with $\RR\cup \{\infty\}=\RR P^1$ is immediate;
in the relalization (2), $\partial \HH$ is identified with the unit circle which is more convenient for
drawing pictures. We equip $\HH$ with the orientation coming from the complex structure. This defines
a canonical orientation of $\partial\HH$ which we will refer to as the {\em counterclockwise orientation}.
This is indeed counterclockwise in the realization (2).

 {\em Geodesics} in $\HH$ are represented, in either realization, by circle arcs (or straight lines) meeting the absolute
 at the right angles. Any two distinct points $x,x'\in\HH$ give rise to a geodesic arc $[x,x']\subset \HH$. 
 Further, any two distinct ponts $b, b'\in\partial \HH$ give rise to an infinite geodesic $(b,b')$ with
 limit positions $b, b'$. We equip it with the orientation going from $b$ to $b'$.
 Note that the intrinsic geometry on $(b, b')$ (coming from the Riemannian metric above)
 is that of a torsor over the
 additive group $\RR$. A choice of a base point $x\in (b,b')$ identifies $(b,b')$ with $\RR$.

 By an {\em ideal polygon} in $\HH$ we mean a geodesic polygon $P$ with vertices
 $b_0, ..., b_n$ lying on the
 absolute and numbered in the counterclockwise order.
Such a  polygon as above is uniquely
 determined by the choice of the $b_i$ and will be denoted by $P(b_0, ..., b_n)$.  
  Here we assume that $n\geq 1$.
 For $n=1$, an ideal 2-gon is understood to be a geodesic $P(b_0, b_1)= (b_0, b_1)$.

 Two ideal polygons $P, P'$ are called {\em congruent}
 if there is a rigid motion taking $P$ to $P'$ and preserving the numeration of vertices. 
 Thus, denoting by $\TT_n$ the set of congruence classes of ideal $(n+1)$-gons, 
 we have
 \[
\TT_n \simeq SL_2(\RR) \backslash (\RR P^1)^n_\circlearrowleft,  
 \]
 where $(\RR P^1)^n_\circlearrowleft\subset (\RR P^1)^n$ is the set of tuples of points $(b_0, ..., b_n)$ going in the counterclockwise
 order (in particular, distinct from each other). Note that $\TT_n$ can be regarded as the simplest Teichm\"uller space:
 the set of hyperbolic structures (identiffications with an ideal polygon)
 on the standard $(n+1)$-gon $P_n$ from \S \ref{subsec:2-segal-spaces}.

 For $x,y\in\HH$ we denote $d(x,y)$ the geodesic distance from $x$ to $y$. For $r\in\RR_+$ we denote
 by $S_r(x)$ the geodesic circle with center $x$ and radius $r$. 
 By a {\em horocycle} in $\HH$ one means an orbit of a subgroup conjugate to 
 \[
 \begin{pmatrix} 1&t\\0&1
 \end{pmatrix}\subset SL_2(\RR). 
 \]
 It is convenient to think of horocycles as ``circles of infinite radius" with center at the boundary,
 i.e., as limit positions of geodesic circles $S_r(x)$ when $x\to b\in\partial \HH$ and $r\to\infty$.
Thus, a horocycle $\xi$ has a {\em center} $b\in\partial\HH$ and consists, informally, of points
$y$ lying at a fixed (but infinite) distance from $b$. One can refine this picture by saying that
a horocycle $\xi$ with center $b$ has a {\em radius} which is not a number but an element of
a certain torsor $\on{Hor}_b$ over the additive group $\RR$, and writing
\[
d(b,y) \in \on{Hor}_b, \quad y\in \xi. 
\]
Thus for $y, y'\in\HH$ the ``infinite distances" $d(b,y), d(b,y')\in\on{Hor}_b$ can be compared, i.e., we have
a well defined (``finite") real number
$d(b,y)-d(b,y')\in\RR$. In a formal set-theoretic way, one can say that $\on{Hor}_b$
{\em consists of} horocycles with center $b$. The following obvious fact will be useful.

\begin{prop} Let $b\in\partial\HH$. Then for any $b'\in\partial\HH$ different from $b$
the geodesic $(b,b')$, considered as an $\RR$-torsor, is canonically identified with $\on{Hor}_b$.

\end{prop}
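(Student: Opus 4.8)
The plan is to exhibit the identification explicitly as the map that assigns to a point of the geodesic the horocycle through it. Concretely, I would define
\[
\rho: (b,b') \lra \on{Hor}_b, \qquad y \longmapsto d(b,y),
\]
sending a point $y$ of the geodesic $(b,b')$ to the radius of the unique horocycle centered at $b$ passing through $y$. Both sides are torsors over the additive group $\RR$: the source by its intrinsic arclength, oriented so as to run from $b$ to $b'$, and the target by the horocyclic radius difference $d(b,y)-d(b,y')$. The claim is then that $\rho$ is an isomorphism of $\RR$-torsors, and since it involves no auxiliary choice, it furnishes the asserted canonical identification.

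First I would check that $\rho$ is a bijection. The horocycles centered at $b$ foliate $\HH$, and each of them is orthogonal to every geodesic ending at $b$, in particular to $(b,b')$. Hence each such horocycle meets $(b,b')$ in exactly one point, and dually each point of $(b,b')$ lies on exactly one horocycle centered at $b$; this yields injectivity and surjectivity of $\rho$ simultaneously. Surjectivity can also be seen from the fact that $(b,b')$ runs all the way out to $b$, so it crosses every radius level in $\on{Hor}_b$.

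The substantive point is $\RR$-equivariance, i.e.\ that for $y,y'\in(b,b')$ the torsor difference $d(b,y)-d(b,y')\in\RR$ equals the signed arclength from $y'$ to $y$ along $(b,b')$. This is essentially the definition of the torsor structure on $\on{Hor}_b$: the difference of two horocyclic radii is the signed hyperbolic distance measured along any geodesic ending at $b$, and $(b,b')$ is precisely such a geodesic. To make the matching of the two $\RR$-actions transparent, I would pass to the upper half-plane model and normalize $b=\infty$ by a motion in $SL_2(\RR)$. Then $(b,b')$ is the vertical ray $\{x=b',\ v>0\}$ with $ds=dv/v$, the horocycles centered at $\infty$ are the horizontal lines $\{v=c\}$, and both the arclength coordinate along the ray and the radius coordinate $d(\infty,\cdot)$ are given by $-\log v$ up to an additive constant. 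This shows that $\rho$ intertwines the two actions exactly.

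The only place demanding care is the bookkeeping of orientations and signs in this last step: one must confirm that moving away from $b$ along $(b,b')$ increases $d(b,\cdot)$, so that $\rho$ is a genuine torsor isomorphism rather than an isomorphism onto the opposite torsor. Once the model computation is set up with $b=\infty$ this is immediate, which is why the statement is flagged as obvious; the entire content lies in unwinding the definitions of the two $\RR$-torsor structures and observing that both are computed by the same radial geodesic.
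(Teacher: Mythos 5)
Your proof is correct and rests on the same observation as the paper's one-line argument, namely that every horocycle centered at $b$ meets $(b,b')$ in exactly one point. The additional verification of $\RR$-equivariance via the upper half-plane model with $b=\infty$ is a sound (and welcome) elaboration of a step the paper leaves implicit.
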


\begin{proof} Every horocycle with center $b$ meets $(b,b')$ in a unique point. \end{proof}

Let $n\geq 1$. By a {\em decorated ideal $(n+1)$-gon} we will mean, following \cite[Ch. 2, \S 1.1]{penner},
a datum consisting of an ideal $(n+1)$-gon $P=P(b_0, ..., b_n)$ and a choice of a horocycle
$\xi_i\in\on{Hor}_{b_i}$ around each vertex. Two decorated ideal $(n+1)$-gons
\[
\bigl(P(b_0, ..., b_n), \xi_0, ..., \xi_n)\bigr) \quad\text{and}\quad (P(b'_0, ..., b'_n), \xi'_0, ..., \xi'_n\bigr)
\]
 are called {\em similar}, if there exist $g\in SL_2(\RR)$ and $a\in\RR$ such that 
 \[
 g(b_i) = b'_i, g(\xi_i) = \xi'_i + a, \quad i=0, ...,n, 
 \]
 where addition with $a$ is understood in the sense of the $\RR$-torsor structure on $\on{Hor}_{b'_i}$. 
 We denote by $\widetilde\TT_n$ the set of similarity classes of decorated $(n+1)$-gons. 
 For $n=0$ we put $\widetilde\TT_1=\pt$.
 
 \begin{ex}\label{ex:decorated-geodesics}
 (a) Consider an ideal 2-gon, i.e., an oriented geodesic $(b_0, b_1)$. A decoration of $(\xi_9,\xi_1)$,
 i.e., a choice of horocycles $\xi_i\in\on{Hor}_{b_i}$,
produces two intersection points $x_i = \xi_i\cap (b_0, b_1)$ and their midpoint $\omega( \xi_0, \xi_1)$. 
As $(b_0, b_1)$ is naturally an $\RR$-torsor, the choice of $\omega( \xi_0, \xi_1)$ as the origin
defines an identification of $(b_0, b_1)$ with $\RR$, i.e., a {\em coordinate system} on $(b_0, b_1)$. 
Note that after addition of any $a\in\RR$, we have
\[
\omega(\xi_0+a, \xi_1+a) =\omega(\xi_0, \xi_1),
\]
so the corresponding coordinate remains unchanged. Thus $\widetilde \TT_2$ is the set of congruence
classes of geodesics together with a choice of affine coordinate, and so $\widetilde\TT_2=\pt$ as well.

(b) We conclude that for a decorated polygon $\bigl(P(b_0, ..., b_n), \xi_0, ..., \xi_n\bigr)$ each ``diagonal",
i.e., each geodesic $(b_i, b_j)$ has a canonical coordinate, and similarity transformations
preserve these canonical coordinates. 
\end{ex}

If $\phi: [m]\to [n]$ is a monotone embedding, then we have a map $\phi^*: \widetilde\TT_n\to\widetilde\TT_m$
associating to a decorated polygon on vertices $(b_0, ..., b_n)$ its subpolygon
on vertices $b_{\phi(0)}, ..., b_{\phi(m)}$ with the corresponding horocycles. This makes
$\widetilde\TT = (\widetilde\TT_n)_{n\geq 0}$ into a semi-simplicial space.

\begin{prop}
$\widetilde\TT$ is isomorphic to the semi-simplicial space 
$\on{Conf}^+$
from Example \ref{ex:conf-plus}. 
\end{prop}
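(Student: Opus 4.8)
The plan is to exhibit, for each $n$, a natural bijection between $\widetilde\TT_n$ and $\on{Conf}^+_n$ and to verify that these bijections intertwine the two semi-simplicial structures. The bridge between the hyperbolic and the linear picture is the classical correspondence of Penner \cite{penner} between decorated ideal points of $\HH$ and nonzero vectors in a $2$-dimensional oriented real vector space $V$. I would realize the orientation-preserving isometry group as $SL(V)\cong SL_2(\RR)$, acting on the ideal boundary $\partial\HH\cong\RR P^1=\PP(V)$ through its projective action, so that $-\id$ acts trivially on $\HH$. To a decorated vertex $(b,\xi)$ -- a boundary point $b$ together with a horocycle $\xi$ centred at $b$ -- one assigns a vector $v\in V\setminus\{0\}$ on the line $b\subset V$ whose length is fixed by the radius of $\xi$, canonically up to sign. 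The geometric input I would invoke is Penner's lambda-length formula: the horocyclic separation of two decorated vertices along the geodesic $(b,b')$ is computed by the symplectic determinant $\det(v,v')$ of the associated vectors, so that $\det(v,v')$ is, up to its sign, a similarity invariant.

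First I would pin down the two group actions. An isometry $g\in SL_2(\RR)=SL(V)$ carries a decorated polygon to another and acts on the associated vectors by $v_i\mapsto gv_i$. A common shift $\xi_i\mapsto\xi_i+a$ of all horocycle radii rescales every length, hence multiplies each $v_i$ by one and the same positive scalar $e^{ca}\in\RR_{>0}$, the constant $c$ being read off from the normalisation in the lambda-length formula. The full similarity group acting on vector tuples is therefore $SL(V)\cdot\RR_{>0}$, which is exactly $GL^+(V)$, since every element of $GL^+(V)$ factors uniquely as a positive scalar times an element of $SL(V)$. Consequently the assignment $(b_i,\xi_i)_i\mapsto(v_i)_i$ descends to a well-defined map $\widetilde\TT_n\to GL^+(V)\backslash(V\setminus\{0\})^{n+1}$, the residual ambiguity $v_i\mapsto-v_i$ being resolved below.

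The heart of the argument is matching the combinatorial positivity conditions. The vertices $b_0,\dots,b_n$ of an ideal $(n+1)$-gon wind once around $\partial\HH\cong\PP(V)$ in the counterclockwise order; lifting them continuously to vectors, the arguments satisfy $\arg(v_0)<\arg(v_1)<\cdots<\arg(v_n)$ and span a single half-turn, so $(v_0,\dots,v_n)$ lies in $V^{n+1}_\circlearrowleft$ and the continuous lift is unique up to a global sign, i.e. up to $-\id\in SL(V)\subset GL^+(V)$. This forces $\det(v_i,v_j)>0$ for all $i<j$, so the lambda lengths are the genuine positive invariants of the decorated polygon; this is precisely the data recorded by $\on{Conf}^+_n$. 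I would then construct the inverse directly: given $(v_i)_i\in V^{n+1}_\circlearrowleft$, take $b_i\in\partial\HH$ to be the line $\RR v_i$ and let $\xi_i$ be the horocycle at $b_i$ normalised by $|v_i|$; the positivity $\det(v_i,v_j)>0$ guarantees that $b_0,\dots,b_n$ are in strict counterclockwise position and hence span an ideal polygon. These two assignments are mutually inverse up to similarity, yielding the bijection $\widetilde\TT_n\cong\on{Conf}^+_n$, which is moreover a homeomorphism since both sides carry the quotient topology of the same vector data.

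Finally, both structures are semi-simplicial by ``forgetting a vertex'': a monotone injection $\phi\colon[m]\hookrightarrow[n]$ acts on $\widetilde\TT$ by passing to the subpolygon on $b_{\phi(0)},\dots,b_{\phi(m)}$ with their horocycles, and on $\on{Conf}^+$ by extracting the subtuple $(v_{\phi(0)},\dots,v_{\phi(m)})$; since the vector assignment is made one vertex at a time, the bijections above commute with $\phi^*$, giving the asserted isomorphism $\widetilde\TT\cong\on{Conf}^+$ of semi-simplicial spaces. The step I expect to be the main obstacle is the reconciliation of the two boundary circles used in the third paragraph: $\partial\HH$ is the circle of \emph{lines} $\PP(V)$, whereas $\on{Conf}^+_n$ records \emph{rays} in $V$, so one must check carefully that ``winding once around $\partial\HH$'' lifts to the argument condition cutting out $V^{n+1}_\circlearrowleft$ and that this is exactly what rigidifies the sign of each Penner vector up to a single global sign. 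Verifying the precise normalisation constant $c$ in the lambda-length formula, so that the horocycle shift becomes positive scaling, is the remaining point requiring care, though it is routine.
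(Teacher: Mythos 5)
Your proof is correct and follows essentially the same route as the paper: both rest on identifying a decorated ideal vertex (boundary point plus horocycle) with a nonzero vector of $\RR^2$, under which the similarity group $(\RR_{>0}\cdot\1)\cdot SL_2(\RR)$ becomes $GL_2^+(\RR)$ and the counterclockwise ordering of the ideal vertices becomes the argument condition cutting out $V^{\oplus(n+1)}_\circlearrowleft$. The paper obtains the horocycle--vector correspondence in one stroke as the homogeneous space $N\backslash SL_2(\RR)=\RR^2\setminus\{0\}$ (with $N$ the unipotent upper-triangular subgroup), bypassing the lambda-length formula and the line-versus-ray sign bookkeeping you carry out; your resolution of the residual global sign by $-\operatorname{id}\in GL^+(V)$ is precisely the point the paper's terser argument leaves implicit.
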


\begin{proof} The space of all horocycles in $\HH$ is
\[
\begin{pmatrix} 1&t\\0&1
 \end{pmatrix}\bigg\backslash SL_2(\RR)
 \quad = \quad \RR^2-\{0\}. 
  \]
   Under this identification, the action of $a\in \RR$ on the horocycle torsors corresponds to the action of
   the scalar matrix
  $e^a\cdot\1\in GL_2(\RR)$ on $\RR^2-\{0\}$. 
  Thus the set of all decorated $(n+1)$-gons is $(\RR^2)^{\oplus(n+1)}_\circlearrowleft$, and the group of
  similarity transformations acting on this set is
  \[
  (\RR_+^\times\cdot \1) \cdot SL_2(\RR) = GL_2^+(\RR),
  \]
  making the comparison with $\on{Conf}^+_n$ immediate. \end{proof}
 
\begin{rem}\label{rem:TT-2-segal}
Note that the $2$-Segal condition for $\widetilde\TT$ is much more obvious than for $\on{Conf}^+$.
Indeed, suppose we have a triangulation $\T$ of the standard polygon $P_n$, and decorated
hyperbolic structures on all the triangles $P\in\Tc$, compatible (up to similarity) on their common sides.
By Example \ref{ex:decorated-geodesics}, each of
these common sides acquires a
  well-defined coordinate 
(identification with $\RR$). 
This allows us to glue the hyperbolic structures together in a unique way. 
\end{rem}

Let now $M$ be an almost complex manifold, i.e., a $C^\infty$-manifold of even dimension $2d$ with
a smooth field $J$ of complex structures in the fibers of the tangent bundle $T M$, see
\cite{macduff-salamon}. In particular, $M$ can be a complex manifold in the usual sense,
in which case $J$ is called {\em integrable}.

Morphisms of almost complex manifolds
are called {\em pseudo-holomorphic maps}.
In particular, by a {\em pseudo-holomorphic function}, resp. {\em pseudo-holomorphic curve} on $M$ one means a
(locally defined) morphism of almost complex manifolds $M\to\CC$, resp. $\CC\to M$. 
If $J$ is non-integrable, $M$ may have very few pseudo-holomorphic functions but still has a
large supply of pseudo-holomorphic curves. 
In particular, an ideal polygon $P$ can be regarded as a 1--dimensional complex manifold with
boundary and so can be takes as a source of pseudo-holomorphic maps into $M$.

\begin{defi} Let $M$ be an almost complex manifold. We put $\widetilde\TT_0(M)=M$. For $n\geq 1$
we define $\widetilde\TT_n(M)$ to be the set of equivalence of the data consisting of:
\begin{enumerate}
\item[(1)] A decorated ideal $(n+1)$-gon 
 $\bigl(P=P(b_0, ..., b_n), \xi_0, ..., \xi_n\bigr)$.
 
 \item[(2)] A   continuous map $\gamma: P\to M$ which is pseudo-holomorphic on the interior of $P$.
   Here we assume that 
 $P$ is compact, so it contains all the ideal vertices. 
  
\end{enumerate}
These data are considered up to similarity of decorated ideal $(n+1)$-gons in the source. 
\end{defi}

For example, $\widetilde\TT_1(M)$ is simply the path space of $M$. More precisely, it is
the space of continuous maps $[-\infty, +\infty] \to M$, where $[-\infty, +\infty]$ is the interval obtained
by compactifying $\RR$ by two points at infinity.

We make $\widetilde\TT(M) = (\widetilde\TT_n(M))_{n\geq 0}$ into a semi-simplicial set
as follows. The maps
$\partial_0, \partial_1: \widetilde\TT_1(M)\to \widetilde\TT_0(M)=M$ are defined to be the
evaluation maps of paths as above at $(+\infty)$ and $(-\infty)$ respectively. 
For $m\geq 1$ and a monotone embedding $\phi: [m]\to[n]$ the map
$\phi^*: \widetilde\TT_n(M)\to\widetilde\TT_m(M)$
is defined by forming the subpolygon in $P(b_0, ..., b_n)$ on the
vertices $b_{\phi(i)}$, with the induced decoration and map into $M$. 
We call $\widetilde \TT(M)$
the {\em space of pseudo-holomorphic polygons} on $M$.

\begin{prop}
For any almost complex manifold $M$, the semi-simplicial set $\widetilde\TT(M)$ is $2$-Segal. 
\end{prop}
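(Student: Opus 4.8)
The plan is to imitate the inductive gluing argument used for $\on{Conf}^+$ and for $\widetilde\TT$ (cf. Remark \ref{rem:TT-2-segal}), since the pseudo-holomorphic structure is carried along transparently once the underlying decorated polygon is reconstructed. Fix $n\geq 2$ and a triangulation $\T$ of $P_n$. I must show that
\[
f_\T:\; \widetilde\TT_n(M)\lra \widetilde\TT_\T(M)
\]
is a bijection. I argue by induction on $n$, the base case $n=2$ being the assertion that a pseudo-holomorphic triangle is determined by, and freely reconstructible from, its own data (a triangulation of $P_2$ being trivial). As in the proof for $\on{Conf}^+$, choose the unique $i$ with $\{0,i,n\}\in\T$ and assume $1<i<n-1$ (the boundary cases are analogous). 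The diagonal $\{0,i\}$ dissects $P_n$ into subpolygons $P^{(1)}$ and $P^{(2)}$ carrying the induced triangulations $\T_1,\T_2$, and one has
\[
\widetilde\TT_\T(M)\;\cong\;\widetilde\TT_{\T_1}(M)\times_{\widetilde\TT_{\{0,i\}}(M)}\widetilde\TT_{\T_2}(M).
\]
It therefore suffices, just as before, to prove that the single map
\[
u:\; \widetilde\TT_n(M)\lra \widetilde\TT_{\{0,1,\dots,i\}}(M)\times_{\widetilde\TT_{\{0,i\}}(M)}\widetilde\TT_{\{0,i,i+1,\dots,n\}}(M)
\]
is a bijection, and then compose with $f_{\T_1}\times f_{\T_2}$, which is a bijection by the inductive hypothesis.

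The heart of the argument is the bijectivity of $u$, and here the key geometric input is Example \ref{ex:decorated-geodesics} together with Remark \ref{rem:TT-2-segal}: the shared diagonal $(b_0,b_i)$ carries a canonical affine coordinate determined by its horocyclic decoration, and similarity transformations preserve this coordinate. First I would establish injectivity. Given an element of $\widetilde\TT_n(M)$, represented by a decorated polygon $P(b_0,\dots,b_n)$ with horocycles and a map $\gamma:P\to M$, its image under $u$ remembers the two sub-decorated-polygons on $\{b_0,\dots,b_i\}$ and on $\{b_0,b_i,b_{i+1},\dots,b_n\}$ together with the restrictions of $\gamma$; since every vertex and every horocycle of the original datum appears in one of the two pieces, and since the compatibility over $\widetilde\TT_{\{0,i\}}(M)$ forces the canonical coordinate on the common diagonal to match, the original datum is recovered uniquely up to similarity. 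For surjectivity I would start from a compatible pair of decorated pseudo-holomorphic polygons; using a representative for which the two diagonals $(b_0,b_i)$ coincide as oriented geodesics with matching decoration (possible because $\widetilde\TT_{\{0,i\}}(M)$ is the honest path space and the canonical coordinate pins down the common edge), I glue the underlying decorated polygons along this diagonal into a single decorated ideal $(n+1)$-gon $P(b_0,\dots,b_i,b_{i+1},\dots,b_n)$, exactly as in Remark \ref{rem:TT-2-segal}.

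The main obstacle, and the point deserving care, is to verify that the two pseudo-holomorphic maps glue to a genuine morphism on the reassembled polygon. On the underlying decorated-polygon level this is the unramified-gluing statement already used for $\widetilde\TT$; the new content is that the maps $\gamma^{(1)},\gamma^{(2)}$, which agree along the common geodesic $(b_0,b_i)$ by the fiber-product condition, fit together into a continuous map $\gamma:P\to M$ that is pseudo-holomorphic on the interior. Here I would argue that $(b_0,b_i)$ is an interior geodesic of the glued polygon (not a boundary edge), so pseudo-holomorphicity is a local condition that must be checked across this geodesic; since $\gamma^{(1)}$ and $\gamma^{(2)}$ are each pseudo-holomorphic up to and along the shared analytic arc and take the same boundary values there, their common extension satisfies the Cauchy–Riemann-type equation across $(b_0,b_i)$ by a removable-singularity / matching argument for pseudo-holomorphic maps on a Riemann surface with an interior slit. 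Finally, one checks that the reconstructed datum maps to the given pair under $u$ and that the construction is independent of the chosen representative, which establishes that $u$ is a bijection and completes the induction.
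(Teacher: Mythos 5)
Your proof is correct, and it rests on exactly the two facts the paper uses: the canonical affine coordinate on a decorated geodesic (Example \ref{ex:decorated-geodesics}, Remark \ref{rem:TT-2-segal}) pins down the gluing of the underlying decorated polygons uniquely, and a continuous map that is pseudo-holomorphic off an interior geodesic is pseudo-holomorphic across it. The difference is organizational: you run the induction-on-$n$ scheme from the $\on{Conf}^+$ proposition, dissecting along a single diagonal $\{0,i\}$ at each stage and reducing to the bijectivity of one map $u$, whereas the paper argues in one shot --- given a membrane of type $\T$, it glues \emph{all} the decorated ideal triangles of $\T$ simultaneously into a decorated $(n+1)$-gon, assembles the maps $\gamma_P$ into a continuous map pseudo-holomorphic on the interior of each triangle, and then invokes the removability property of the Cauchy--Riemann equations once, across all interior geodesics of the triangulation at the same time. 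The paper's version is shorter and avoids both the case analysis on the position of $i$ and the fiber-product bookkeeping; your version localizes the analytic input to a single interior slit per step and makes the reduction to Proposition \ref{prop.colim} explicit, at the cost of some repetition. Your appeal to a ``removable-singularity / matching argument'' is pitched at the same level of rigor as the paper's appeal to ``a fundamental property of the Cauchy--Riemann equations,'' so no gap arises there either.
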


\begin{proof} Suppose given a triangulation $\T$ of the standard polygon $P_n$.
Let us prove that the $2$-Segal map
\[
f_\T: \widetilde\TT_n(M) \lra\widetilde\TT_\T(M)
\]
is a bijection. An element of the target of $f_\T$ is a system $\Sigma$ of 
 decorated
hyperbolic structures on all the triangles $P\in\Tc$ and maps $\gamma_P: P\to M$,
compatible on the common sides. As in Remark \ref{rem:TT-2-segal}, these common
sides acquire canonical coordinates and so can be identified with each other,
thus producing an identification of $P_n$ with a decorated ideal $(n+1)$-gon
$P= P(b_0, ..., b_n)$ for some $b_0, ..., b_n$. We can then view $\T$ as a triangulation of  
 $P$ into ideal triangles. Further, the maps $\gamma_P$,
 being compatible on the sides of these triangles, define a continuous
 map $\gamma: P\to M$ which is pseudo-holomorphic in the interior of
 each triangle of the triangulation. Now, it is a fundamental property of
 the Cauchy-Riemann equations defining pseudo-holomorphic curves
that such a map is pseudo-holomorphic everywhere in the interior of $P$.
We therefore obtain a (necessarily unique) datum $(P, \xi_0, ..., \xi_n, \gamma)
\in \widetilde\TT_n(M)$ lifting $\Sigma$. \end{proof}

\begin{rem}
We have therefore two large classes of $2$-Segal spaces: 
\begin{enumerate}
\item[(a)] Waldhausen spaces,
 encoding homological algebra data in exact categories (and, more generally,
dg- and $\infty$-categorical enhancements of
 triangulated categories, see \S \ref{subsec:waldhausen-exact-infty} below).
 
 \item[(b)] Spaces $\widetilde \TT(M)$ encoding geometry of pseudo-holomorphic polygons. 
 \end{enumerate}
 
  \noindent It is tempting to conjecture some kind of ``homological mirror symmetry"
  relation between these two classes of spaces,

\end{rem}

   \vfill\eject

\subsection{Birationally 1- and 2-Segal semi-simplicial schemes}
\label{subsec:bir-segal}

Let $\FF$ be a field. By a {\em scheme} in this section we will mean a $\FF$-scheme. 
Let $\Sc ch$ be the category of such schemes.
This category has finite limits, so for any semi-simplicial scheme $X\in \Sc ch_{\Delta_{\inj}}$
and any triangulation $\Tc$ of the polygon $P_n$ we have the scheme $X_\T$
and the morphism of schemes
$f_\Tc: X_n\to X_\Tc$.

A morphism $g: S\to S'$ in $\Sc ch$ will be called {\em birational}, if there are open,
Zariski dense subschemes $U\subset S, U'\subset S'$ such that $g$ induces an
isomorphism $U\to U'$. 

\begin{defi} Let $X\in \Sc ch_{\Delta_{\inj}}$ be a semi-simplicial scheme. 

(a) 
We say that $X$ is {\em birationally $1$-Segal}, if for any $n\geq 2$ the morphism of
schemes
\[
f_n: X_n\lra X_1\times_{X_0} X_1 \times_{X_0} \cdots \times_{X_0} X_1\quad \text{($n$ times)}
\]
is birational.

(b) We say that $X$ is {\em birationally $2$-Segal}, if for any $n\geq 2$
and any triangulation $\T$ of $P_n$, the morphism $f_\T$ is birational. 
\end{defi}

\begin{rem} For a birationally $2$-Segal scheme $X$ and any two triangulations $\Tc, \Tc'$
of $P_n$ we get not a regular, but a rational map of schemes
\[
f_{\T,\T'} = f_{\T'}\circ f_\T^{-1}: X_\T \lra X_{\T'}
\]
which form a transitive system of birational isomorphisms.
Such transitive systems appear in the theory of cluster algebras (see \cite{fomin-zelevinsky} 
\cite{fock-goncharov} \cite{gekhtman}).
If, in addition, $X_0=X_1=\pt$ , then, taking $n=3$ and $\T, \T'$ to be the two triangulations of
$P_3$, we get a birational solution of the pentagon equation, $\alpha=f_{\T, \T'}: X_2^2\to
X_2^2$. Such birational solutions  are important in applications \cite{kashaev-sergeev} and are
somewhat more abundant than solutions that are everywhere defined (regular).  This motivates the
study of birationally $2$-Segal schemes.  

\end{rem}

One can get examples of birationally 1- and $2$-Segal
semi-simplicial schemes by modifying the construction of the Hecke-Waldhausen
space from \S \ref{subsec:hecke-waldhausen}. 

Let $E$ be an irreducible quasi-projective variety and $G$ an algebraic group acting on $E$.
We say that the $G$-action on $E$ is {\em generically free} is there is dense Zariski open
$G$-invariant subset $U\subset E$ on which the action in free. 
In this case we have the variety $G\backslash U$. 
Note that if the diagonal $G$-action
on some $E^m$ is generically free, then the action on each $E^{m'}, m'\geq m$, is generically free
as well.

\begin{thm}\label{thm;birationally-segal}
(a) Suppose that the $G$-action on $E$ is generically free. Then there are $G$-invariant open sets
$U_n\subset E^{n+1}$, $n\geq 0$, with free $G$-action such that putting 
$X_n = G_n\backslash U_n$ defines a birationally $1$-Segal semi-simplicial
scheme $X$.

(b) Suppose that the $G$-action on $E^2$ is generically free. Then there are
$G$-invariant open sets
$U_n\subset E^{n+1}$, $n\geq 1$, with free $G$-action such that putting 
\[
X_0=pt, \quad X_n = G_n\backslash U_n, n\geq 1, 
\]
defines a birationally $2$-Segal semi-simplicial
scheme $X$.

\end{thm}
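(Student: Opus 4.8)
The plan is to construct the open subsets $U_n \subset E^{n+1}$ explicitly and then reduce the birational Segal conditions to the geometry of generic $G$-orbits, mirroring the combinatorial argument in Proposition~\ref{prop:hecke-waldhausen-1-segal} but working only over dense open loci. First I would treat part (a). By hypothesis the diagonal action on $E$ is generically free, so the action on each $E^{n+1}$ is generically free as well (the cited monotonicity). I would let $U_n \subset E^{n+1}$ be a $G$-invariant dense open subset on which $G$ acts freely and on which the quotient $G\backslash U_n$ exists as a scheme (for instance by choosing a $G$-invariant open over which a geometric quotient exists, shrinking if necessary to keep the free locus). The key compatibility is that the face maps $\partial_i \colon E^{n+1} \to E^n$, given by deleting a coordinate, should carry $U_n$ into $U_{n-1}$; this can be arranged by defining $U_n$ recursively or by taking $U_n$ to be the preimage of the appropriate open loci under all face maps intersected with the free locus. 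Once the $X_n = G\backslash U_n$ are set up, the morphism $f_n$ is, generically, exactly the functor $\phi_n$ analyzed in the proof of Proposition~\ref{prop:hecke-waldhausen-1-segal}: the argument there shows the corresponding map of quotient groupoids is an equivalence, and at the level of free actions this equivalence becomes an isomorphism of schemes over the dense open locus where both sides are honest quotients. Thus $f_n$ is birational.

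For part (b), the structure is analogous but I would impose generic freeness of the $G$-action on $E^2$ and set $X_0 = \pt$, $X_n = G\backslash U_n$ for $n \ge 1$. The role of $X_0 = \pt$ is to force all fiber products over $X_0$ to become ordinary products, so the relevant $2$-Segal maps are built purely out of the edge data. Here I would invoke the inductive structure of Proposition~\ref{prop:2-segal-basic}\ref{item:2-segal-basic-4}: it suffices to check birationality of the maps $f_\Pc$ for the elementary subdivisions with $i=0$ or $j=n$, and then a diagonal-splitting induction on $n$, exactly as in the proof of Theorem~\ref{thm:cyclic-nerve}, reduces the general triangulation to these cases. For each such elementary cut, the map in question, generically, identifies a $G$-orbit of an $(n+1)$-tuple with a pair of orbits sharing a common edge; since $G$ acts freely on $E^2$ on a dense open set, one can normalize the shared edge $(v_0, v_i)$ to a fixed pair, and then the remaining coordinates determine a unique preimage, just as in the surjectivity-and-injectivity argument of the lemma inside the proof of Theorem~\ref{thm:cyclic-nerve}. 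This produces the required birational isomorphism on a dense open subscheme.

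The main obstacle I anticipate is not the combinatorics---which is essentially inherited from the groupoid-level computations already carried out---but the scheme-theoretic bookkeeping needed to guarantee that the chosen open sets $U_n$ are simultaneously (i) contained in the free locus, (ii) compatible with all face maps so that $X$ is genuinely a semi-simplicial scheme, and (iii) chosen so that the geometric quotients $G\backslash U_n$ exist. The delicate point is that a naive choice of free locus need not be face-compatible, so I would most likely define the $U_n$ by descending induction or by intersecting pullbacks of smaller free loci along all the structure maps, and then verify that the resulting intersection is still dense and open (using that finite intersections of dense opens in an irreducible variety are dense open). Once the $U_n$ are in place, the birational statements follow because a $G$-equivariant isomorphism on a dense open $G$-invariant subset descends to a birational isomorphism of the quotient schemes, and the generic freeness ensures the quotient maps are well-behaved precisely there.

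\begin{proof}[Proof sketch]
We indicate the construction of the opens $U_n$ and reduce each Segal condition to the orbit-level analysis already used for the Hecke--Waldhausen space and the cyclic nerve; the verification that $f_n$, resp.\ $f_\Tc$, restricts to an isomorphism over a common dense open subscheme then proceeds as in Propositions~\ref{prop:hecke-waldhausen-1-segal} and Theorem~\ref{thm:cyclic-nerve}, using that a $G$-equivariant isomorphism over a $G$-invariant dense open descends to a birational isomorphism of quotients.
\end{proof}
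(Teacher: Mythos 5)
Your proposal is correct and shares its engine with the paper's proof: both reduce everything to the fact that the Hecke--Waldhausen simplicial groupoid $\Sc_\bullet(G,E)$ is $1$-Segal (Proposition \ref{prop:hecke-waldhausen-1-segal}), hence $2$-Segal, and both observe that over the free locus an equivalence of quotient groupoids becomes an isomorphism of quotient schemes. The packaging, however, differs in a way worth noting. The paper lifts the whole construction to the simplicial quotient stack $[\Sc_\bullet(G,E)] = [G\bbs E^{\bullet+1}]$, proves the Segal maps are equivalences of stacks once and for all, and only then restricts to the open dense substacks $X_n = G\backslash U_n$; birationality of $f_n^X$ and $f_\T^X$ is then immediate because each is an open dense sub-morphism of a stack equivalence, and for part (b) the $2$-Segal property comes for free from Proposition \ref{prop.1segal2segal} applied at the stack level. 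You instead work directly with the schemes and re-derive the $2$-Segal condition by the elementary-cut induction of Proposition \ref{prop:2-segal-basic}\ref{item:2-segal-basic-4}, normalizing along the shared diagonal as in the proof of Theorem \ref{thm:cyclic-nerve}. This works, but it obliges you to verify at every inductive step that the relevant fiber products of dense open subschemes over $X_1$ remain irreducible with dense image, so that birationality survives composition and fiber product --- precisely the bookkeeping you flag as the main obstacle. The stack-level route is designed to sidestep exactly this: all the combinatorics happens where the maps are honest equivalences, and density is checked only once, at the very end, for the inductively chosen $U_n$. One small inaccuracy in your write-up: it is not the condition $X_0=\pt$ that turns the fiber products in the $2$-Segal maps into ordinary products --- those are taken over the spaces $X_{\{i,j\}}\cong X_1$ attached to the shared diagonals and remain genuine fiber products; setting $X_0=\pt$ merely removes the vertex contributions from the membrane limit, which is why only edges and triangles enter.
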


\begin{proof} The conceptually easiest proof is by  
lifting of the Hecke-Waldhausen construction into the setting of algebraic stacks,
see \cite{laumon-stacks}.
That is, for each $n\geq 0$ we consider the {\em quotient stack}
$[\Sc_n(G,E)] = [G\bbs E^{n+1}]$ of the scheme $E^{n+1}$ by the action of $G$,
For example, if $\FF$ is algebraically closed, the groupoid
of $\FF$-points of this stack is the quotient groupoid 
$\Sc_n(G(\FF), E(\FF))$. Taken together, these stacks form
a simplicial stack $[\Sc_\bullet(G,E)]$.  
Proposition \ref{prop:hecke-waldhausen-1-segal}
applied to various groupoids of points implies that 
$[\Sc_\bullet(G,E)]$ is $1$-Segal in the sense of stacks.
This means that for each $n$ the
morphism of stacks
\[
f^{[\Sc]}_n: [\Sc_n(G,E)] \lra [\Sc_1(G,E)]\times^{(2)}_{[\Sc_0(G,E)]}
\cdots \times^{(2)}_{[\Sc_0(G,E)]}
[\Sc_1(G,E)],
\]
(with $\times^{(2)}$ being the fiber product of stacks), is an equivalence of stacks. 
Now, if we are in the situation of part (a) of the theorem, we can choose (inductively)
open dense $G$-invariant subsets $U_n\subset E^{n+1}$, $n\geq 0$, with free
$G$-action such that the face maps (coordinate projections) take each $U_n$ inside $U_{n-1}$.
Then for each $n$ we get a scheme $X_n= G\backslash U_n$ which is an open sense subscheme
in the stack $[\Sc_n(G,E)]$ so that $X=(X_n)$ is a semi-simplicial scheme.
So the morphism of schemes
\[
f^X_n: X_n \lra X_1\times_{X_0} \cdots \times_{X_0} X_1
\]
becomes an open dense sub-morphism of the equivalence of stacks $f_n^{[\Sc]}$.
Therefore it is birational.

Suppose now that we are in the situation of part (b) of the theorem. We then use
Proposition \ref{prop.1segal2segal} which implies 
(either directly, by applying it to various groupoids of points,
or by imitating the proof) that $[\Sc_\bullet(G,E)]$ is $2$-Segal as a stack.
In other words, for any triangulation $\Tc$ of the polytope $P_n$ the morphism of stacks
\[
f_\Tc^{[\Sc]}: [\Sc_n(G,E)] \lra [\Sc_\T(G,E)] = 
2\pro_{\{\Delta^p\hookrightarrow \Delta^{\Tc}\}_{p=1,2}}
[\Sc_p(G,E)]
\]
is an equivalence of stacks. Here $2\pro$ is the projective 2-limit of stacks, and it is
enough to take this limit only over the embeddings of edges and triangles of $\Tc$. 
On the other hand, if the $G$-action on $E^{n+1}, n\geq 1$ is free, we can choose
as before, 
open dense $G$-invariant subsets $U_n\subset E^{n+1}$, $n\geq 1$ with free
$G$-action such that the face maps take each $U_n$, $n\geq 2$, inside $U_{n-1}$.
Then for each $n\geq 1$ we get a scheme $X_n= G\backslash U_n$ which is an open dense subscheme
in the stack $[\Sc_n(G,E)]$, and augmenting this by $X_0=\pt$, we get a semi-simplicial scheme.
To see now that $X$ is birationally $2$-Segal, we notice, as before, that the $2$-Segal morphism for $X$
\[
f_\Tc^X: X_n \lra X_\T =\pro_{\{\Delta^p\hookrightarrow \Delta^{\Tc}\}_{p=1,2}} X_p
\]
is an open dense sub-morphism of the equivalence of stacks $f_\Tc^{[\Sc]}$, so it
is birational. \end{proof}

\begin{exas}
(a) Let $G$ be a split semisimple algebraic group, $T\subset B\subset G$ a
maximal torus and a Borel subgroup,
and $N = [B,B]$ the unipotent radical. Let also $W$ be the Weyl group
of $T$. Take $E=G/N$, so we have a principal $T$-bundle
$p: E\to G/B$. As well known (Bruhat decomposition),
$G$-orbits on $(G/B)^2$ are parametrized by elements of $W$
and we denote by $(G/B)^2_{\on{gen}}$ the unique open orbit. We say that two points $b, b'\in G/B$
are {\em in general position}, if $(b,b')\in (G/B)^2_{\on{gen}}$.
In this case the stabilizer of $(b,b')$
in $G$ is a conjugate of $T$. 

We say that $x, x'\in E$ are in
general position, if $p(x), p(x')\in G/B$ are in general position.  
Let $E^{n+1}_{\on{gen}}\subset E^{n+1}$
be the open subvariety formed by $(x_0, ..., x_n)$
which are pairwise in general position. It follows that for $n\geq 1$ the $G$-action
on $E^{n+1}_{\on{gen}}$ is free, so we are in the situation of part (b) of Theorem
\ref{thm;birationally-segal} and the semi-simplicial algebraic variety $X$
defined by
\[
X_0=\pt, \quad X_n = G\backslash E^{n+1}_{\on{gen}}
\]
is birationally $2$-Segal. Note that $X_1=T$ is identified with the torus. 
The birational transformations
$f_{\T'}\circ f_\T^{-1}: X_\T\to X_{\T'}$ for different pairs of triangulations $\Tc, \Tc'$
of $P_n$ are in this case, cluster coordinate transformations
studied by Fock and Goncharov \cite{fock-goncharov}.

(b) Let $V$ be a 2-dimensional $\FF$-vector space 
and $G=GL(V)$ considered as an algebraic
group. Put $E=V$ considered as an algebraic variety and let $V^{\oplus(n+1)}_{\on{gen}}$ be the 
open part formed by $(v_0, ..., v_n)$ such that each subset of cardinality $\leq 2$ is linearly
independent. For $n\geq 1$ the group $G$ acts on $V^{\oplus( n+1)}_{\on{gen}}$ freely,
so the semi-simplicial variety $\on{Conf}$ defined by
$\on{Conf}_n = GL(V) \backslash V^{\oplus( n+1)}_{\on{gen}}$,
is birationally $2$-Segal. Note that both $\on{Conf}_0$ and $\on{Conf}_1$
reduce to one point,
while $\on{Conf}_2$ is identified with the 2-dimensional algebraic torus $\GG_m^2$,
by associating to $(v_0, v_1, v_2)$ the coefficients of the expansion
$v_1=\lambda_0v_0+\lambda_2 v_2$, similarly to Example
\ref{ex:conf-plus}. 
Therefore $\on{Conf}$
gives rise to a birational solution of the pentagon equation
\[
\alpha: \GG_m^2\times\GG_m^2 \lra \GG_m^2\times\GG_m^2.
\]
It is given by the same formulas as in \eqref{eq:simplest-cluster}, see \cite{kashaev-dilog}
as well as \cite{doliwa-sergeev} which considers a more general situation allowing
$\FF$ to be a noncommutative division ring.    

(c) More generally, a symmetric factorization of an algebraic group $G$ in the sense
of \cite {kashaev-reshitikhin} gives a closed subgroup $K$ such that the diagonal action
of $G$ on $(G/K)^2$ contains an open orbit isomorphic to $G$. Therefore taking $E=G/K$
we get, by Theorem \ref{thm;birationally-segal}(b), a birationally $2$-Segal semi-simplicial set
$X$ with $X_0=X_1=\pt$.
It corresponds to the birational solution of the pentagon equation found in {\em loc. cit}. 

\end{exas}

\begin{exa} Completely different classes of examples of birationally $2$-Segal
simplicial schemes
can be extracted from the theory of ``$N$-valued groups"
as studied in \cite{buchstaber-rees, buchstaber-dragovic}. Let us  
express, in our language, one such class: that of orbit spaces.

Let $G$ be an algebraic group, and $\Gamma\subset \Aut(G)$ be a finite subgroup, $|\Gamma|=N$.
The orbit space $\Gamma\backslash G$ is then a (typically singular) algebraic
variety. It is not a group, but the group structure on $G$ gives rise to
an ``$N$-valued composition law" on $\Gamma\backslash G$ which is
represented by the span
\[
\mu =\bigl\{ (\Gamma\backslash G) \times (\Gamma\backslash G)
\buildrel s \over \lla \Gamma\backslash(G\times G) \buildrel 
m\over \lra \Gamma\backslash G\bigr\},
\]
where $m$ is induced by the multiplication in $G$, and $s$ is generically $N$-to-1. 
The associativity of $G$ implies then that the two spans
\[
\xymatrix{
\mu\circ(\mu\times \Id), \mu\circ(\Id\times\mu): (\Gamma\backslash G)^3 
\ar@{~>}[r]&\Gamma\backslash G
}
\]
are identified over the generic point, i.e., are
connected by a birational isomorphism $\alpha$ satisfying the pentagon condition.
Alternatively, $\Gamma$ acts by automorphisms of $\N G$, the nerve of $G$ considered
as a simplicial algebraic variety and so gives rise to the quotient simplicial variety
$X= \Gamma\backslash (\N G) = (\Gamma\backslash G^n)_{n\geq 0}$ with the simplicial maps
induced by those in $\N G$. This simplicial variety is birationally $2$-Segal. 

\end{exa}

\begin{rem} An interesting particular case is when $G$ is an abelian surface over $\FF$,
and $\Gamma = \{\Id, \sigma\}$, where $\sigma$ is the involution $a\mapsto (-a)$. 
In this case $\Gamma\backslash G$ is a (singular)  {\em Kummer surface}, a special 
type of  a K3 surface.
While K3 surfaces are often regarded as ``quaternionic"  analogs of elliptic curves,
they do not carry any group operation in the usual sense. The above example
shows that at least in the Kummer case they carry a 2-valued operation. 
 We do not know whether such operations exist for more general K3 surfaces.

 \end{rem}

\vfill\eject

\section{Model categories and Bousfield localization} 
\label{sec:model-cat}

\subsection{Concepts from model category theory}

For a systematic study of
$2$-Segal spaces it is convenient to work in the more general framework of
model categories. In this section we summarize its main features, referring for more details to 
\cite{hovey} as well as \cite[Appendix 2]{lurie.htt}.

Let $\mC$ be a category and $f: A\to B$, $g: C\to D$ morphisms in $\mC$.
We write $f \perp g$, if for any commutative square
\[
\xymatrix{
A\ar[d]_f \ar[r]&C\ar[d]^g
\\
B\ar@{.>}[ur]\ar[r]&D
}
\]
there exists a dotted arrow making the two triangles commutative. The standard terminology is
that $f$ has the {\em left lifting property} with respect to $g$, and $g$ has the {\em right lifting property}
with respect to $f$. For a class of morphisms $S\subset\Mor(\mC)$ we denote
\begin{footnote}{
This convention, naturally suggested by the notation $f\perp g$, is opposite to that of
\cite[A.1.2]{lurie.htt} where this notation is not used. Note that orthogonality connotation
suggested by $f\perp g$ is quite in line with categorical interpretation of orthogonality
as absense of nontrivial morphisms. Indeed, viewing $f$ and $g$ as two-term chain complexes,
the lifting property can be read as
``each morphism from $f$ to $g$ is null-homotopic". }
\end{footnote}
 by $_\perp S$, resp. $S_\perp$
the classes formed by morphisms $g$ such that $g\perp f$, resp. $f\perp g$ for any $f\in S$. 
 
Recall that a {\em model structure}
 on a category $\mC$ is given by specifying three classes of morphisms: $\Wen$ (weak equivalences),
 $\Cen$ (cofibrations), and $\Fen$ (fibrations),
 satifying the axioms of Quillen \cite[Def. 1.1.3]{hovey}, in particular the {\em lifting axioms}:
\[
 \Fen = (\Wen\cap\Cen)_\perp, \quad \Cen =  _\perp(\Wen\cap\Fen). 
\]
Note that a category can have
several model structures.
Morphisms in $\Wen\cap\Cen$ (resp. $\Wen\cap\Fen$) are called {\em trivial cofibrations} (resp. {\em trivial fibrations}). 
 
A category with a model structure is called a {\em model category},
if it has small limits and colimits. 
By $\h\mC$ we denote the {\em homotopy category} of $\mC$ obtained by
formally inverting weak equivalences. 
The following examples will be important for us.

 \begin{exa}[(Trivial model structures)] Any category $\mC$ with 
 small limits and
	 colimits becomes a model category
	 with respect to the {\em trivial model structure} for which $\Cen=\Fen=\Mor(\mC)$,
	 and $\Wen$ consists of all isomorphisms. 
 
 \end{exa}
 
 \begin{ex}[(Topological spaces)] The category $\Top$ of compactly generated
 Hausdorff spaces is a model category with respect to the model structure described in 
  \cite[\S 2.4]{hovey}. For this structure, $\Wen$ consists of weak equivalences
  as defined in \S\ref{subsec:homotopy-limits-spaces}, $\Fen$ consists of Serre
  fibrations, and $\Cen$ contains all embeddings of CW-subcomplexes into CW-complexes. 
   
  \end{ex}
  
  \begin{ex}[(Simplicial sets)] 
 The category $\Sp$ of simplicial sets is equipped with the classical {\em Kan model structure},
see, e.g., \cite{lurie.htt}, \S A.2.7, which is given by the following data.
\begin{itemize}
	\item[(W)] A morphism $f: X \to Y$ of simplicial sets is a weak equivalence if the induced
	 map $|f| : |X| \to |Y|$ of geometric realizations is a homotopy equivalence of
	 topological spaces.
	\item[(C)] $f$ is a cofibration if the induced maps of sets
	 $f_n: X_n \to Y_n$ are injective for all $n \ge 0$. In
	 particular, every object is cofibrant.
	\item[(F)] $f$ is a Kan fibration if it has the right
	 lifting property with respect to the maps $\Lambda_i^n \to
	 \Delta^n$, $i=0, ..., n$. Here $\Lambda_i^n$ 
	 denotes the $i$th horn of $\Delta^n$.  
	 \end{itemize}
		
\end{ex}

\begin{ex}[(Groupoids and categories)] \label{exa:groupoidsmodel}
The category $\Gc r$ has the {\em Bousfield model structure}
\cite{bousfield-groupoids}, which is given by the following data.
\begin{itemize}
\item[(W)] A functor $F: \Gc \to\Gc'$ is a weak equivalence if it is an equivalence of 
categories.

\item[(C)] $F$ is a cofibration if it induces an injection of sets $\Ob(\Gc)\to\Ob(\Gc')$.

\item[(F)] $F$ is a fibration if, for every object $x\in\Gc$ and every isomorphism $h: F(x)\to y$
in $\Gc'$, there exists an isomorphism $g: x\to x'$ in $\Gc$ such that $F(g)=h$.
\end{itemize}
The Bousfield model structure on $\Gc r$ can be extended to a model structure on the category $\Cat$ of small
categories. This is explained in detail in \cite{rezk:cat-model}.
\end{ex}

We will freely use the basic concepts of 
{\em Quillen adjunctions}, {\em left and right Quillen functors}, and 
{\em Quillen equivalences} of model categories, see e.g., \cite{hovey}. 

\begin{exa} The category $\Sp$ of simplicial sets equipped with the Kan model structure is
	Quillen equivalent to the model category $\Top$.
\end{exa}

\begin{exa}
The model category $\Gc r$ is Quillen equivalent to the full subcategory in $\Sp$ formed by
simplicial sets $X$ with $\pi_{\geq 2}(|X|,x)=0$ for every $x\in X_0$.
\end{exa}

We will further use the concept of a {\em combinatorial model category} due to J. Smith, 
which intuitively means ``a model category of algebraic nature". See \cite[A.2.6]{lurie.htt}
for more details. 

\begin{defi}\label{def:comb-model-cat}
 A model category $\mC$ is called {\em combinatorial} if 
	\begin{enumerate}[label=(\arabic{*})]
	 \item The category $\mC$ is presentable, i.e. there is a set (not a class) of objects
	 $C \subset \Ob(\mC)$ such that every object of $\mC$ is a colimit of a diagram formed
	 by objects in $C$. 
	 \item There exists a set $I \subset \Cen$ such that $\Cen =  _\perp (I_\perp)$.
	 \item There exists a set $J \subset (\Wen\cap\Cen)$ such that $\Wen\cap\Cen =  _\perp (J_\perp)$.
	\end{enumerate}
Elements of $I$ (resp. $J$) are called {\em generating cofibrations} (resp. {\em generating trivial
cofibrations}). 	
\end{defi}
	 	
For example, the model categories $\Sp$ and $\Gc r$ are combinatorial. 
One of the main advantages of combinatorial model categories is the existence of natural model
structures on diagram categories.

\begin{prop}[\cite{lurie.htt}, Prop. A.2.8.2]\label{prop:model-category-diagrams}
Let $A$ be a small category and $\mC$ a combinatorial model category. 
Then the following data define a combinatorial model structure on the category $\mC^A$ of
$A$-indexed diagrams in $\mC$,
called the {\em injective model structure}. 
\begin{itemize}
	 \item[(W)] The class $\Wen$ consists of 
	 $f: (X_a)_{a\in A} \to (Y_a)_{a\in A}$ such that, for every $a\in A$, the map $f_a:
	 X_a\to Y_a$ is a weak equivalence in $\mC$.
	 
	 \item[(C)] The class $\Cen$ consists of morphisms $f$ such that, for every $a\in A$, $f_a$ is
	 a cofibration in $\mC$.
	 
	 \item[(F)] The class $\Fen$ is defined as $(\Wen\cap\Cen)_\perp$. 
\end{itemize}
\end{prop}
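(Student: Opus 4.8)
The statement is an instance of Jeff Smith's recognition theorem for combinatorial model categories (see \cite{lurie.htt}), and the plan is to verify its hypotheses for $\mC^A$ with $\Wen$ the objectwise weak equivalences and $\Cen$ the objectwise cofibrations. Recall the shape of Smith's theorem: if $\mathcal A$ is a presentable category, $W \subseteq \Mor(\mathcal A)$ a class of morphisms, and $I \subseteq \Mor(\mathcal A)$ a \emph{set}, such that (i) $W$ is stable under retracts, satisfies two-out-of-three, and is accessible and stable under filtered colimits in the arrow category $\Fun([1],\mathcal A)$; (ii) $I^\perp \subseteq W$; and (iii) the class ${}^\perp(I^\perp) \cap W$ is closed under pushouts and transfinite composition, then $\mathcal A$ admits a combinatorial model structure with weak equivalences $W$, cofibrations ${}^\perp(I^\perp)$, and fibrations $(W \cap {}^\perp(I^\perp))^\perp$. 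Thus the task reduces to: (A) checking that $\mC^A$ is presentable; (B) producing a set $I$ whose weakly saturated hull ${}^\perp(I^\perp)$ is exactly the objectwise cofibrations $\Cen$; and (C) verifying (i)--(iii).

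For (A), a combinatorial model category has a presentable underlying category, and since $A$ is small the functor category $\mC^A = \Fun(A,\mC)$ is again presentable; this is pure category theory. The routine parts of (C) go as follows. The class $W$ of objectwise weak equivalences inherits two-out-of-three and closure under retracts from $\mC$ objectwise, and its accessibility together with stability under filtered colimits follows from the corresponding properties of $\Wen$ in the combinatorial category $\mC$, transported along the evaluation functors $\on{ev}_a\colon \mC^A \to \mC$: each $\on{ev}_a$ is accessible and preserves filtered colimits, and $W = \bigcap_{a \in A} \on{ev}_a^{-1}(\Wen)$ is accessible and accessibly embedded as a small intersection of preimages of such. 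Likewise ${}^\perp(I^\perp)\cap W = \Cen \cap W$ will be the objectwise trivial cofibrations; since colimits in $\mC^A$ are computed objectwise and trivial cofibrations in $\mC$ are weakly saturated, this class is closed under pushouts and transfinite composition, giving (iii). For (ii), note that for each generating cofibration $i$ of $\mC$ and each $a \in A$ the left adjoint $\on{Fr}_a \dashv \on{ev}_a$ sends $i$ to an objectwise cofibration $\on{Fr}_a(i)$, since at $b \in A$ it is the coproduct $\coprod_{\Hom_A(a,b)} i$ of cofibrations; hence $\on{Fr}_a(i) \in \Cen$, so any $g \in I^\perp = \Cen^\perp$ satisfies $g \perp \on{Fr}_a(i)$, i.e.\ by adjunction $\on{ev}_a(g) \perp i$ for all $i$, making $\on{ev}_a(g)$ a trivial fibration in $\mC$ for every $a$ and therefore $g \in W$.

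The crux is step (B): exhibiting a \emph{set} $I$ of generating cofibrations for $\Cen$. Unlike the projective structure, where $I = \{\on{Fr}_a(i)\}$ manifestly generates the (smaller) class of projective cofibrations, there is no elementary explicit generating set for the injective cofibrations, and this is the main obstacle. The plan is to argue by accessibility. The objectwise cofibrations form a weakly saturated subcategory $\Cen \subseteq \Fun([1],\mC^A)$ (closed under pushouts, transfinite composition, and retracts, all computed objectwise), and one checks that $\Cen$ is accessible and accessibly embedded in the arrow category; this follows, exactly as for $W$, from the fact that the cofibrations of the combinatorial category $\mC$ constitute such a subcategory of $\Fun([1],\mC)$ together with the behaviour of the functors $\on{ev}_a$. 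The decisive input is then a general fact from the theory of accessible categories: a weakly saturated class of morphisms in a locally presentable category that is accessible and accessibly embedded is generated, as a weakly saturated class, by a set of morphisms (a solution-set/small-object argument in the style of Adámek--Rosický; cf.\ \cite{lurie.htt}). Taking $I$ to be such a generating set yields ${}^\perp(I^\perp) = \Cen$, which completes (B). With (A)--(C) in hand, Smith's theorem produces the desired model structure, with fibrations characterized as $(\Wen \cap \Cen)^\perp$ as in (F), and the structure is combinatorial by construction.
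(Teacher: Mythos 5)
The paper offers no proof of this proposition at all: it is quoted verbatim from \cite[Prop.\ A.2.8.2]{lurie.htt}, so the only fair comparison is with the proof in that reference, and your architecture is exactly that one — Smith's recognition principle applied to the objectwise classes, with the whole difficulty concentrated in producing a \emph{set} of generating injective cofibrations. Steps (A), (C), and the verification $I^{\perp}\subset\Wen$ via the adjunction $\on{Fr}_a\dashv \on{ev}_a$ are all correct and are the standard routine parts.

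The one place you are too quick is precisely the point you correctly identify as the crux, namely (B1): you assert that the objectwise cofibrations form an accessible, accessibly embedded subcategory of $\Fun([1],\mC^A)$ ``exactly as for $W$,'' reducing to the claim that the cofibrations of $\mC$ form such a subcategory of $\Fun([1],\mC)$. That reduction is not parallel to the case of $W$: for weak equivalences, accessibility is essentially part of what it means for $\mC$ to be combinatorial (it is extracted from the model structure via functorial factorization), whereas for cofibrations the statement ``$\!{}_{\perp}(I_{\perp})$ is an accessible, accessibly embedded subcategory of the arrow category of a presentable category'' is a genuinely nontrivial theorem (it is the main result of Makkai--Rosick\'y--Vok\v{r}\'inek on the fat small object argument, and postdates the reference). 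It is true, so your proof can be completed this way, but as written the key sub-claim is unsupported. The proof in \cite[Lemma A.2.8.3]{lurie.htt} sidesteps this entirely by a direct cardinality argument: choose a regular cardinal $\kappa$ so that $\mC$ is $\kappa$-accessible, $A$ has fewer than $\kappa$ morphisms, and the generating cofibrations of $\mC$ have $\kappa$-compact (co)domains; then show by a ``good sub-presentation'' induction that every objectwise cofibration is a transfinite composite of pushouts of objectwise cofibrations between $\kappa$-compact diagrams, so that a set of representatives of the latter generates. Either route is acceptable, but you should either cite the accessibility theorem for cofibrantly generated saturated classes explicitly or replace (B1)--(B2) by the direct cellular argument.
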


 \vfill\eject

\subsection{Enriched model categories}
\label{subsec:enriched}

We recall basic definitions of enriched model categories. For detailed expositions see
\cite{hovey}, \cite[A.3.1]{lurie.htt}.

\begin{defi}\label{defi:symmon} We define a 
{\em symmetric monoidal model category} to be a symmetric monoidal category $\mC$ 
which carries a model structure satisfying the
following compatibility conditions:
\begin{enumerate}[label=(\arabic{*})]
	\item The tensor product functor $\otimes: \mC \times \mC \to \mC$ is a left Quillen bifunctor.
	\item The unit $\1 \in \mC$ is cofibrant.
	\item The monoidal structure on $\mC$ is closed, i.e. for each $C, C'\in\mC$ 
	there is an object $\Map_\mC(C, C')$
	 with natural isomorphisms
	\[
	\Hom_\mC(C''\otimes C, C') \cong \Hom_\mC(C'', \Map_\mC(C,C')).
	\]
\end{enumerate}
\end{defi}

Let $\mC$ be a symmetric monoidal model category and $\mD$ a 
$\mC$-enriched category, so we have objects $\Map_\mD(D,D')\in\mC$ for any $D, D'\in\mD$
together with the usual composition and unit morphisms among them. Then
$\mD$ can be considered as a category in the usual sense via
\[
\Hom_\mD(D, D') = \Hom_\mC(\1, \Map_\mD(D, D')). 
\]

\begin{defi}\label{def:C-enriched-model} 
A $\mC$-enriched model category is a $\mC$-enriched category $\mD$ whose underlying category
carries a model structure satisfying:
\begin{enumerate}[label=(\arabic{*})]
	 \item The category $\mD$ is tensored and cotensored over $\mC$, i.e. for any
	 $C\in\mC$, $D\in\mD$ there are objects $D \otimes C$ and $D^C \in \mD$
	 together with isomorphisms
	 \[
	 \Map_\mD(D', D^C) \cong \Map_\mC(C, \Map_\mD(D',D))
	 \] 
	 and
	 \[
	 \Map_\mD(D\otimes C, D') \cong \Map_\mC(C, \Map_\mD(D, D'))
	 \]
	 which are natural in $D'$.
	 \item\label{item:quillen-bifunctor} The resulting functor $\otimes: \mD \times \mC \to \mD$ is a left Quillen
	 bifunctor.
\end{enumerate}
\end{defi}

Note that \ref{item:quillen-bifunctor} implies that the functor
\[
	\Map_\mD: \mD^\op \times \mD \lra \mC
\]
is a right Quillen functor in each variable separately. Here, we equip $\mD^{\op}$ with the opposite
model structure. For objects $D$, $D'$ of $\mD$, we define the derived mapping object
\begin{equation}\label{eq:derivedmappingdef}
	\RMap_\mD(D, D') = \Map_\mD(Q(D), F(D')) \in \h\mC
\end{equation}
where $Q$ and $F$ denote the cofibrant and fibrant replacement functors of $\mD$, respectively. An
analogous statement holds for the functor given by the association $(C,D) \mapsto D^C$.

\begin{exa}[(Simplicial model categories and the Dwyer-Kan localization)]
\label{ex:dwyer-kan-localization}
	\hfill
\begin{exaenumerate}
\item The category $\Sp$ of simplicial sets equipped with the Kan model structure and the
	Cartesian monoidal structure is a symmetric monoidal model category. An $\Sp$-enriched model
	category $\mD$ is called {\em simplicial model category}. 
	By a result of Dugger \cite{dugger:simp-comb}, any combinatorial model category is Quillen
	equivalent to a combinatorial simplicial model category. 
	
\item On the other hand, for any category $\Ec$ and any set (not class of morphisms 
	$\s\subset\Mor(\Ec)$ one can form the {\em classical localization}
	$\Ec[\s^{-1}]$ of $\Ec$ along $\s$. This 
	 is a category 
	 with the same objects as $\Ec$, and morphisms
obtained from those in $\Ec$
  by formally adding the inverses
   of morphisms from $\s$ and their iterated compositions with
   morphisms of $\Ec$, modulo obvious relations, see, e.g.,
   \cite[\S 19.1]{schubert}. A morphism in $\Ec[\s^{-1}]$
   from $x$ to $y$ is thus an equivalence class of 
   ``zig-zags", i.e., diagrams
  \[
  x \lla a_1\lra a_2 \lla a_3 \lra \cdots
    \lla a_n\lra y, \quad n\geq 0,
  \]
  with left-going arrows belonging to $\s$. 
  Note that $\s$ is not required to satisfy any Ore-type condition.
  
	\item If $\Ec$ is itself small, Dwyer and Kan
   \cite{dwyerkan} constructed a  category
 $L_\s(\Ec)$ enriched in $\Sp$,
   with the same objects as $\Ec$ such that
   \[
   \pi_0\Map_{L_\s(\Ec)} (x,y)=\Hom_{\Ec[\s^{-1}]}(x,y).
   \]
   The space $\Map_{L_\s(\Ec)} (x,y)$  
   can be seen as a kind of nerve of the category of
   zigzags, so $L_\s(\Ec)$ serves
   as a non-Abelian derived functor of the classical localization. 
   
\item Let now $\mD$ be a simplicial model category. 
   It was shown in \cite{dwyerkan} that the simplicial set $R\Map_\mD(D,D')$   
   is weakly equivalent to 
 $\Map_{L_\s(\Ec)} (D, D')$ where $\Ec\subset \mD$
 is any sufficiently large small full subcategory and $\s=\Mor(\Ec)\cap\Wen$. 
 This provides more canonical models for the derived mapping spaces $R\Map$ 
 and shows that they depend only on the class $\Wen$ of weak equivalences,
 not on the full model structure. 
\end{exaenumerate}
\end{exa}

\begin{exa} \label{exa:groupoidsimplicial} The category $\Gr$ of
	groupoids admits the structure of a simplicial model category as follows. The 
	$\Sp$-enrichment is given by
	\[
	\Map_{\Gr}(\Gc, \Gc') =\N(\Fun(\Gc, \Gc')) \in \Sp.
	\]	
	The actions of a simplicial set 
	$C\in \Sp$ are given by
	\[
	\Gc\otimes C = \Gc\times\Pi_1(C), \quad \Map(C, \Gc) = \N(\Fun(\Pi_1(C), \Gc))
	\]	
	where $\Pi_1(C)$ is
	the combinatorial fundamental groupoid of $C$, with the set of objects $C_0$
	and morphisms being homotopy classes of edge paths. This $\Sp$-enrichment is compatible with the model structure from
	Example \ref{exa:groupoidsmodel}. Again, this $\Sp$-enrichment can be extended to the model
	category of small categories $\Cat$.
\end{exa}

\begin{exa}[(Category of diagrams: homotopical enrichment)] \label{exa:enricheddia} 
Let $\mC$ be a symmetric monoidal model category,
and $A$ be a small category. 
We assume that $\mC$ is combinatorial and equip $\mC^A$ with the injective model structure. Then
  $\mC^A$ can be equipped with the structure of a
	$\mC$-enriched model category as follows.
	For objects $X, Y \in \mC^A$, we put
	\[
	\Map_{\mC^A}(X,Y) = \int_{a\in A} \Map_{\mC}(X(a),Y(a))
	\]
	where the end on the right-hand side exists since $\mC$ admits small limits. 
	For $C\in\mC$ we denote by $\const{C}\in \mC^A $ the constant functor with value $C$.
	The category $\mC^A$ provided with this enrichment is tensored and cotensored where, for $X
	\in \mC^A$ and $C \in \mC$, we have 
	\[
	(X \otimes C) (a) =  X(a) \otimes C, \quad a\in A, 
	\]
	and
	\[
	X^C = \Map_{\mC^A}(\const{C}, X) 
	\cong \Map_\mC\bigl(C, \pro_{a\in A}^\mC X(a)\bigr). 
	\]
	We will call this enrichment of $\mC^A$ the {\em homotopical enrichment}. 
\end{exa}

\begin{exa}[(Category of diagrams: structural enrichment)]
\label{ex:structural-enrichment}
For any category $\mC$, we construct an enrichment of $\mC^A$ over the symmetric monoidal category
$\Set^A$, which we call the {\em structural enrichment}. 
Given objects $X,Y$ of $\mC^A$, we define $\mathsf{Map}_{\mC^A}(X,Y)\in\Set^A$ by
\begin{equation} \label{eq.ehom}
	{\mathsf {Map}}_{\mC^A}(X,Y)(a) := \Hom_{\mC^{(a\backslash A)}}
	(X|_{a\backslash A},Y|_{a\backslash A})\text{,}
\end{equation}
where $a\backslash A$ is the undercategory of $a$ whose objects are arrows $a\to b$ with source 
$a$. 
If $\mC$ has products and coproducts, then $\mC$, equipped with the structural enrichment, is tensored
and cotensored over $\Set^A$.
For $X \in \mC^A$ and $S \in \Set^A$ we have 
\[
(X \itimes S) (a) = \coprod_{S(a)} X(a), \quad a\in A, 
\]
and 
\[
X^S(a) = \int_{\{a\to b\}\in (a\backslash A)} \prod_{S(b)} X(b), \quad a\in A. 
\]
Here we use the notation $\itimes$ for the tensor product to distinguish the structural enrichment
from the homotopical enrichment.
\end{exa}

Let $\mC$ be a combinatorial symmetric monoidal model category. Then $\mC^A$ has two enrichments:
the homotopical and the structural one. Note that, even though the category $\mC$ carries a model
structure, the structural enrichment does not make any reference to it.

For $S \in \Set^A$, we introduce
\begin{equation}
	 \label{eq:discrete-boxtimes-1}
	 \disc{S} := \const{\1} \itimes S \in \mC^A. 
\end{equation}
Note that, in comparison, for $C\in \mC$, we have $\const{C} \cong \const{\1} \otimes C\in \mC^A$. 
Further, for an object $a \in A$, we define the representable functor
\[
h_a: A \lra \Set, \; a' \mapsto \Hom_{A}(a, a') \text{.}
\]
We will need the following enriched version of the Yoneda lemma.

\begin{lem}\label{lem:tcx} For objects $a \in A$, $X \in \mC^A$, we have a natural isomorphism in $\mC$
\begin{equation*}
	\Map_{\mC^A}(\disc{h_a}, X)\cong X(a). \quad
\end{equation*}
\end{lem}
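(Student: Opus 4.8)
The plan is to unwind the two enrichments in play and reduce the claim to the ordinary (unenriched) co-Yoneda lemma. Recall the setup: $\disc{h_a} = \const{\1} \itimes h_a$, where $\itimes$ is the structural tensoring over $\Set^A$ from Example \ref{ex:structural-enrichment}, and $\Map_{\mC^A}(-,-)$ is the \emph{homotopical} enrichment from Example \ref{exa:enricheddia}, valued in $\mC$ and given by the end $\Map_{\mC^A}(X,Y) = \int_{a'\in A} \Map_\mC(X(a'),Y(a'))$. So the left-hand side of the statement is a mix of both enrichments, and the work consists in showing the $\itimes$ on the source converts into something the homotopical $\Map$ can evaluate explicitly.

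First I would compute the object $\disc{h_a}$ levelwise. By \eqref{eq:discrete-boxtimes-1} and the tensoring formula $(X \itimes S)(a') = \coprod_{S(a')} X(a')$ applied to $X = \const{\1}$ and $S = h_a$, we get
\[
	\disc{h_a}(a') = \coprod_{\Hom_A(a,a')} \1 \in \mC, \quad a'\in A.
\]
Next I would feed this into the end defining the homotopical enrichment. Using that $\Map_\mC(-,-)$ sends colimits in the first variable to limits, and that $\coprod_{\Hom_A(a,a')}\1$ is a coproduct of copies of the monoidal unit, the factor at $a'$ becomes
\[
	\Map_\mC\Bigl(\coprod_{\Hom_A(a,a')} \1,\; X(a')\Bigr) \cong \prod_{\Hom_A(a,a')} \Map_\mC(\1, X(a')) \cong \prod_{\Hom_A(a,a')} X(a'),
\]
where the last step uses that $\Map_\mC(\1,-) \cong \mathrm{id}$ (the closed structure with cofibrant unit from Definition \ref{defi:symmon}). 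Assembling over all $a'$, the full mapping object is the end
\[
	\Map_{\mC^A}(\disc{h_a}, X) \cong \int_{a'\in A} \prod_{\Hom_A(a,a')} X(a').
\]

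The key step is then to identify this end with $X(a)$. This is precisely the (unenriched) co-Yoneda / density computation: the end $\int_{a'} \prod_{\Hom_A(a,a')} X(a')$ is the object of natural transformations from the representable presheaf $h_a$ (viewed through the discrete copies of $\1$) to $X$, and the ordinary Yoneda lemma gives $\mathrm{Nat}(h_a, X) \cong X(a)$, now internalized into $\mC$. Concretely, the universal wedge picks out the component indexed by $\id_a \in \Hom_A(a,a)$, and the dinaturality conditions over morphisms $a' \to a''$ force every other component to be determined by this one, yielding the isomorphism onto $X(a)$. I expect the main (though still routine) obstacle to be bookkeeping the dinaturality of the end carefully enough to see that the evaluation-at-$\id_a$ map is genuinely an isomorphism in $\mC$ rather than merely a map — i.e., verifying that the inverse, which sends a generalized element of $X(a)$ to its transport along each $a\to a'$, assembles into a well-defined wedge. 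Since $\mC$ is complete and the computation is entirely formal once the two reductions above are in place, naturality in $X$ follows automatically from the naturality of each isomorphism used.
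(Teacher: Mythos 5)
Your proof is correct, but it takes a genuinely different route from the paper's. The paper argues representably: it characterizes $\Map_{\mC^A}(\disc{h_a},X)$ by the universal property $\Hom_{\mC^A}(\disc{h_a}\otimes C, X)\cong \Hom_\mC(C, \Map_{\mC^A}(\disc{h_a},X))$, rewrites $\disc{h_a}\otimes C$ as $\const{C}\itimes h_a$ so as to pass to the \emph{structural} ($\Set^A$-valued) enrichment, applies the ordinary Yoneda lemma in $\Set^A$ to land in $\EHom_{\mC^A}(\const{C},X)(a)$, and finally uses the comma-category formula \eqref{eq.ehom} (via the initiality of $\id_a$ in $a\backslash A$) to identify this with $\Hom_\mC(C,X(a))$. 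You instead compute the end defining the homotopical enrichment directly: levelwise $\disc{h_a}(a')=\coprod_{\Hom_A(a,a')}\1$, each factor of the end simplifies to $\prod_{\Hom_A(a,a')}X(a')$ using $\Map_\mC(\1,-)\cong\id$, and the resulting end $\int_{a'}\prod_{\Hom_A(a,a')}X(a')$ collapses to $X(a)$ by evaluation at $\id_a$ together with dinaturality --- the $\Set$-weighted-limit form of the Yoneda lemma. Both arguments are formal and complete; yours bypasses the structural enrichment and the comma category $a\backslash A$ entirely, at the cost of an explicit wedge computation, while the paper's stays at the level of hom-set adjunctions and never needs the end formula. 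Two small points you should make explicit: the products $\prod_{\Hom_A(a,a')}X(a')$ exist because $\mC$ is presentable and $A$ is small, and $\Map_\mC(-,Y)$ converts coproducts into products because $Z\otimes(-)$ is a left adjoint (closedness of the monoidal structure, Definition \ref{defi:symmon}); both follow immediately from the standing hypotheses.
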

\begin{proof}	
Equivalently, we show that for objects $a \in A$,
$C \in \mC$ and $X \in \mC^A$, there
exists a natural bijection
	\[
	\Hom_{\mC^A}(\disc{h_a} \otimes C , X) \cong \Hom_{\mC}(C,X(a)) \text{.}
	\]
	Using the ordinary Yoneda lemma and the formula
 \eqref{eq.ehom}, we obtain natural bijections
	\begin{align*}
	 \Hom_{\mC^A}(\disc{h_a} \otimes C , X)
& \cong \Hom_{\mC^A}(\underline{C} \itimes h_a,
	X)\\ & \cong \Hom_{\Set^A}(h_a, \EHom_{\mC^A}
(\underline{C},X))\\
	 & \cong \EHom_{\mC^A}(\underline{C},X)(a)\\
	 & \cong \Hom_{\mC}(C,X(a))\text{.}
	\end{align*}
\end{proof}

\begin{exa}[(Combinatorial simplicial spaces)] Consider $\mC = \Sp$, equipped with the Kan model
	structure and the Cartesian monoidal (model) structure. Setting $A = \Dop$, we have $\mC^A =
	\sS$ which can be identified with the category of bisimplicial sets.
	We will refer to
	the objects of $\mC^A$ as {\em combinatorial simplicial spaces}
	and often drop the adjective ``combinatorial". 
	The injective model structure on $\sS$ coincides with the {\em Reedy model structure}. 
	 
	The homotopical and structural enrichments
	of the category $\sS$ both provide enrichments over the category $\Sp$ and correspond to two ways of slicing a bisimplicial set as a simplicial object
	in the category $\Sp$. 	
	For a simplicial set $D \in \Sp$, the object $\disc{D}\in \sS$ 
	is called the {\em discrete simplicial space}, while $\const{D}\in \sS$ is 
	called the {\em constant simplicial space} corresponding to $D$. 
	Thus, viewing a simplicial space as a bisimplicial set $X_{\bb}$, the two simplicial directions 
	have very different significance for us. The first direction is ``structural" 
	(we are interested in the structural relevance of the face and degeneracy maps), 
	while the second direction is purely homotopical (each $X_{n\bullet}$ is thought of, primarily, in terms 
	of its geometric realization). 
	This is, essentially, the point of view of Rezk \cite{rezk} and Joyal-Tierney \cite{joyal-tierney}
	in their work on $1$-Segal spaces. 
\end{exa}

\vfill\eject

\subsection{Enriched Bousfield localization}
\label{subsec:enrichedbousfield}

Let $\mC$ be a symmetric monoidal model category and let $\mD$ be a $\mC$-enriched model category,
so, for objects $X,Y$ of $\mD$, we have a derived mapping object
$\RMap_{\mD}(X,Y)$ in $\h \mC$. Enriched Bousfield localization theory, developed in \cite{barwick},
starts with the following definition. 

\begin{defi} Let $\s$ be a set of morphisms in $\mD$.
	\begin{enumerate}[label=(\roman{*})]
	\item An object $Z \in \mD$ is called {\em $\s$-local} if, for every morphism $f:\; X \to Y$
	 in $\s$, the induced morphism 
	 \[
	 \RMap_{\mD}(Y,Z) \lra \RMap_{\mD}(X,Z)
	 \]
	 is an isomorphism in $\h \mC$.
	\item A morphism $f:\; X \to Y$ in $\mD$ is called {\em $\s$-equivalence} if, for every
	 $\s$-local object $Z$ in $\mD$, the induced morphism
	 \[
	 \RMap_{\mD}(Y,Z) \lra \RMap_{\mD}(X,Z)
	 \]
	 is an isomorphism in $\h \mC$.
\end{enumerate}
\end{defi}

Note that all weak equivalences in $\mD$ are $\s$-equivalences. The goal is to introduce a new model
structure on $\mD$ with weak equivalences given by all $\s$-equivalences.
This is possible under additional assumptions on $\mC$ and $\mD$ which we now recall.

\begin{defi} Let $\mC$ be a model category.
	\begin{enumerate}[label=(\alph{*})]
\item (\cite{barwick}, Def. 1.21) We say $\mC$ is {\em tractable} if $\mC$ is combinatorial, and the 
morphisms in the sets $I$ and $J$ in Definition \ref{def:comb-model-cat} can be chosen to have cofibrant domain.

\item We say $\mC$ is {\em left proper} if weak equivalences are stable under pushout along cofibrations.
\end{enumerate}
\end{defi}

\begin{exa} A combinatorial model category in which all objects are cofibrant,
 is tractable by definition and left proper by 
 \cite[Proposition A 2.4.2]{lurie.htt}.
 This is the case for the model categories $\Sp$ and $\Gc r$.
	\end{exa}
 
\begin{prop}\label{prop:tract}
	Let $\mC$ be a tractable model category, and let $A$ be a small category. Then the
	category $\mC^A$, equipped with the injective model structure, is a tractable model category.
\end{prop}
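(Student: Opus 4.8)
The plan is to separate the two requirements in the definition of tractability. Combinatoriality is not at issue: by Proposition \ref{prop:model-category-diagrams} the injective model structure on $\mC^A$ is already combinatorial, so by Definition \ref{def:comb-model-cat} it remains only to exhibit generating cofibrations and generating trivial cofibrations whose domains are cofibrant. Recall that in the injective structure the cofibrations are exactly the objectwise cofibrations and the weak equivalences are the objectwise weak equivalences; consequently an object $X\in\mC^A$ is cofibrant precisely when each $X(a)$ is cofibrant in $\mC$. Since $\mC$ is tractable, I would fix once and for all generating cofibrations $I$ and generating trivial cofibrations $J$ of $\mC$ with cofibrant domains.

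First I would record the source of cofibrant objects in $\mC^A$. A coproduct of cofibrant objects of $\mC$ is cofibrant (the map out of the initial object is a coproduct of cofibrations), so for every $a\in A$ and every cofibrant $C\in\mC$ the representable diagram $\disc{h_a}\otimes C$ of Lemma \ref{lem:tcx}, whose value at $a'$ is $\coprod_{\Hom_A(a,a')}C$, is objectwise cofibrant. More generally, because $\mC$ is combinatorial the cofibrant objects of $\mC$ are closed under sufficiently filtered colimits and retracts, and hence form an accessible, accessibly embedded subcategory; the same then holds for the objectwise cofibrant diagrams inside $\mC^A$. This accessibility is what will let us arrange cofibrant domains without destroying the solution-set hypotheses needed below.

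Next I would produce the generators. Choose, via the bounded cofibration property underlying the proof of Proposition \ref{prop:model-category-diagrams} (cf. \cite[A.2.8]{lurie.htt}, \cite{barwick}), a regular cardinal $\kappa$ for which $A$ is $\kappa$-small, $\mC$ is $\kappa$-combinatorial, the domains and codomains of $I$ and $J$ are $\kappa$-small, and the class of objectwise weak equivalences is $\kappa$-accessible. Let $I_A$ be a set of representatives for the isomorphism classes of objectwise cofibrations $f\colon X\to Y$ with $Y$ being $\kappa$-small and $X$ objectwise cofibrant, and let $J_A$ be the analogous set of objectwise trivial cofibrations with cofibrant $\kappa$-small domain. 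By construction every member of $I_A$ and $J_A$ has cofibrant domain, and the small object argument applies to both. The content of the proof is then to verify that $\Cen={}_\perp(I_A{}_\perp)$ and $\Wen\cap\Cen={}_\perp(J_A{}_\perp)$; granting this, $I_A$ and $J_A$ witness tractability.

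The main obstacle is precisely this last verification, and it is where tractability of $\mC$ (as opposed to mere combinatoriality) is used. The inclusions ${}_\perp(I_A{}_\perp)\subseteq\Cen$ and ${}_\perp(J_A{}_\perp)\subseteq\Wen\cap\Cen$ are formal, since the objectwise cofibrations (resp. objectwise trivial cofibrations) contain $I_A$ (resp. $J_A$) and are closed under pushout, transfinite composition, and retract. For the reverse inclusions I would run the small object argument with $I_A$: the bounded cofibration property writes an arbitrary objectwise cofibration as a $\kappa$-filtered union of $\kappa$-small objectwise cofibrations, and the delicate point is that each such small cofibration must be built, up to retract, from maps with cofibrant domain, using the cofibrant generators $I$ of $\mC$ together with the accessibility of the objectwise cofibrant diagrams established above. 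This is exactly the tractable refinement of J. Smith's recognition theorem carried out in \cite{barwick}; applying that theorem to the accessible class of objectwise weak equivalences and to the cofibrant-domain set $I_A$ yields the model structure, identifies its cofibrations and trivial cofibrations, and confirms that $J_A$ is a cofibrant-domain generating set of trivial cofibrations, which is all that tractability requires.
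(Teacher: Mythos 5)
Your proof is correct in substance and rests on the same two observations as the paper's: combinatoriality of the injective structure on $\mC^A$ is already supplied by Proposition \ref{prop:model-category-diagrams}, and an object of $\mC^A$ is injectively cofibrant exactly when it is objectwise cofibrant, so the entire content is exhibiting generating sets with objectwise cofibrant domains. Where you differ is in how that last point is obtained: the paper simply inspects Lurie's construction of the injective model structure (the proof of \cite[Proposition A.2.8.2]{lurie.htt}, via Lemma A.3.3.3 of {\em loc.\ cit.}) and reads off that the generators produced there can be taken to have objectwise cofibrant domains, whereas you re-run the bounded-cofibration cardinality argument with a cofibrant-domain constraint and then invoke Barwick's tractable refinement of Smith's recognition theorem. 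Both routes outsource the same combinatorial core to the literature; yours is longer but makes explicit where tractability (as opposed to mere combinatoriality) of $\mC$ actually enters. Two points should be tightened. First, as literally defined your $I_A$ bounds only the codomain and your $J_A$ only the domain, so neither is a set; you need both domain and codomain to be $\kappa$-small, which is what the bounded cofibration property in fact delivers. Second, the appeal to Smith's theorem is mildly circular as phrased: the recognition theorem only tells you that the cofibrations of the structure it produces are ${}_\perp((I_A)_\perp)$, and the identification of this class with the objectwise cofibrations is precisely the ``delicate point'' you flag — it is supplied by the cellularity/accessibility argument (Lurie's proof refined to track cofibrant domains, i.e., Barwick's theorem on diagram categories over a tractable base), not by the recognition theorem itself. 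With those repairs the argument is sound and yields the same conclusion as the paper's.
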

\begin{proof}
	The proof of the fact that $\mC^A$ is combinatorial in \cite[Proposition A.2.8.2]{lurie.htt}.
	(specifically in the proof of Lemma A.3.3.3 of {\em loc. cit.}) also implies
	that $\mC^A$ is tractable: The set of generating cofibrations for $\mC^A$ can be chosen to  
	consist of morphisms $X \to Y$ in $\mC^A$ such that, for
	each $a \in A$, the morphism $X(a) \to Y(a)$ is a generating cofibration of $\mC$. The
	analogous statement is true for the trivial cofibrations. 
\end{proof}

We recall the main result of \cite{barwick} (Th. 3.18). Here, we leave the choice of a Grothendieck universe $U$ implicit and assume
that all categories and sets involved are $U$-small. We denote by $(\Wen, \Fen, \Cen)$
the model structure on $\mD$.

\begin{thm}\label{thm:barwick} Let $\s$ be
	a set of morphisms in $\mD$ and assume that 
	\begin{enumerate}[label=(\arabic{*})] 
	 \item $\mC$ is tractable. 
	 \item $\mD$ is left proper and tractable. 
	\end{enumerate}	
	Then there exists a unique combinatorial model 
	structure $(\Wen_\s, \Fen_\s, \Cen_\s)$ on 
	the category underlying
	 $\mD$ with the following properties:
	\begin{itemize}
	 \item[(W)] The class of weak equivalences $\Wen_\s$ is given by the class of $\s$-equivalences.
	 \item[(C)] $\Cen_\s=\Cen$, i.e., the class of cofibrations remains unchanged. 
	 \item[(F)] The fibrant objects are the $\s$-local objects which are fibrant w.r.t. $\Fen$.
	\end{itemize} 
	 The model category $(\mD, \Wen_\s, \Fen_\s, \Cen_\s)$
	 together with the given $\mC$-enrichment of $\mD$,
	 is a $\mC$-enriched model category. 
\end{thm}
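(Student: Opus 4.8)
The statement is quoted from \cite{barwick}, so the task is to indicate the structure of its proof. The plan is to produce the localized structure by invoking J.~Smith's recognition criterion for combinatorial model categories (see \cite[\S A.2.6]{lurie.htt}). Since the cofibrations are to remain unchanged, I would keep the generating cofibrations $I$ of the given structure on $\mD$, set $\Wen_\s$ to be the class of $\s$-equivalences, and verify Smith's hypotheses: (a) $\Wen_\s$ has the two-out-of-three property and is closed under retracts; (b) $\Wen_\s$ is an accessible subcategory of the arrow category $\mD^{[1]}$; (c) $I_\perp \subseteq \Wen_\s$; and (d) $\Wen_\s \cap \Cen$ is closed under pushout and transfinite composition. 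Granting these, Smith's theorem produces a combinatorial model structure with weak equivalences $\Wen_\s$, cofibrations $\mathrm{cof}(I) = \Cen$, and fibrations $\Fen_\s = (\Wen_\s \cap \Cen)_\perp$. Uniqueness is automatic, since $\Wen_\s$ and $\Cen$ determine $\Fen_\s$ via the lifting axiom, and the description of the fibrant objects as the $\s$-local objects that are $\Fen$-fibrant is a standard feature of Bousfield localizations, read off once the structure is in hand.

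First I would dispose of the formal hypotheses. Property (a) is immediate from the definition: that $f$ is an $\s$-equivalence means each $\RMap_\mD(f,Z)$ is an isomorphism in $\h\mC$ for every $\s$-local $Z$, and isomorphisms in $\h\mC$ enjoy two-out-of-three and are closed under retracts. Property (c) holds because a map in $I_\perp$ is a trivial fibration of the original structure on $\mD$, hence a weak equivalence of $\mD$, and every weak equivalence of $\mD$ is an $\s$-equivalence, as noted immediately after the definition of $\s$-equivalence.

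The remaining two points are the heart of the matter. For (b), one shows that the full subcategory of $\s$-local objects is accessible and accessibly embedded in $\mD$, and deduces accessibility of $\Wen_\s$. This is precisely where tractability of $\mC$ and of $\mD$ enters: because $\RMap_\mD(-,-)$ is computed from cofibrant and fibrant replacement functors that are accessible, $\s$-locality is an accessible orthogonality condition, so the $\s$-local objects form an accessible subcategory, and standard accessible-category theory (see \cite[\S A.2.6]{lurie.htt}) then yields accessibility of the class of morphisms they invert. For (d) I would use left properness of $\mD$: a cofibration that is an $\s$-equivalence remains one after cobase change along an arbitrary cofibration, which one checks by an $\RMap$-gluing argument, left properness guaranteeing that the strict pushouts in question compute homotopy pushouts; closure under transfinite composition follows by an analogous colimit argument. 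I expect (b), the accessibility of $\Wen_\s$, to be the main obstacle, as it is the only step genuinely requiring the tractability hypotheses and the machinery of accessible categories, whereas (a), (c), and (d) are comparatively routine.

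Finally, for the enrichment assertion I would check that the left Quillen bifunctor axiom for $\otimes\colon \mD \times \mC \to \mD$ persists in the localized structure. The pushout-product compatibility with cofibrations is unchanged, since neither $\Cen$ nor the model structure on $\mC$ has been altered; the only new content is compatibility with the trivial cofibrations $\Wen_\s \cap \Cen$. This is arranged by taking the generating trivial cofibrations of the localized structure to include the enriched horns
\[
(Y \otimes C)\cup_{X \otimes C}(X \otimes C') \lra Y \otimes C',
\]
attached to morphisms $X \to Y$ in $\s$ and generating cofibrations $C \to C'$ of $\mC$. These horns are built so that tensoring a cofibration of $\mC$ with an $\s$-trivial cofibration of $\mD$ again yields an $\s$-trivial cofibration, which gives the required $\mC$-enriched model structure.
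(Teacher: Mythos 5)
The paper itself gives no proof of this statement---it is quoted verbatim as the main result of Barwick's work on enriched Bousfield localization (Th.\ 3.18 of \cite{barwick}), so there is no argument in the source to compare yours against. Your sketch nonetheless correctly reproduces the structure of Barwick's proof: Smith's recognition theorem applied with the original generating cofibrations, accessibility of the class of $\s$-equivalences as the technical core (which is where the tractability hypotheses enter), left properness of $\mD$ for closure of $\Wen_\s\cap\Cen$ under cobase change, and the pushout-product maps $(Y\otimes C)\cup_{X\otimes C}(X\otimes C')\to Y\otimes C'$ as the additional generating trivial cofibrations that secure the $\mC$-enriched model structure.
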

 
We give several examples of $\Sp$-enriched Bousfield localization.
  
\begin{exas}[(Stacks of groupoids)]\label{ex:stacks-of-groupoids}
	\hfill
\begin{exaenumerate}
	 \item Let $\Uc$ be a small Grothendieck site.
  The category $\underline{\Gc r}_\Uc$ of stacks of (small) groupoids on $\Uc$ has the
{\em Joyal-Tierney model structure} \cite{joyal-tierney:stacks}. 
With respect to this structure, weak equivalences are equivalences
of stacks, cofibrations are functors injective on objects (and fibrations
are defined by $\Fen=(\Wen\cap\Cen)_\perp$). The simplicial structure
is given by a pointwise variant of Example \ref{exa:groupoidsimplicial}.

On the other hand, pre-stacks of
groupoids on $\Uc$, understood as 
 contravariant functors $\Uc\to\Gc r$, form
 a simplicial model category $\Gc r_\Uc$, which is combinatorial 
 by Proposition \ref{prop:model-category-diagrams}. 
  It is a particular case of results of \cite{toen-vezzosi-1}, \S 3.4, 
 that $\underline{\Gc r}_\Uc$ is Quillen equivalent to
 an $\Sp$-enriched Bousfield localization of $\Gc r_\Uc$
 with respect to an appropriate set $\s$ of morphisms. 
 In particular, $\Fen_\s$-fibrant objects of $\Gr_\Uc$ 
 lie in $\underline\Gr_\Uc$, i.e., are stacks. 
 An important corollary is that $\underline{\Gc r}_\Uc$ is
 a combinatorial model category. 
 
An explicit choice of $\s$ can be obtained by considering hypercoverings in $\Uc$.
A hypercovering can be viewed as a morphism
 $U_\bullet\to U$ from a simplicial object $U_\bullet\in \Uc_\Delta$
to an ordinary (=constant simpicial) object $U\in\Uc$. By passing to
representable functors, a hypercovering gives rise to a morphism $h_{U_\bullet} \to h_U$
 of contravariant functors $\Uc\to\Set_\Delta$. By passing to fundamental groupoids, we obtain a morphism
 of prestacks of groupoids 
 \[
 \Pi(h_{U_\bullet}) \lra \Pi (h_U) = h_U,
 \]
 the prestack on the right being discrete. We take $\s$ to consist of such morphisms
 for a sufficiently representative set of hypercoverings $U_\bullet\to U$. 
 Then a morphism of prestacks will be an $\s$-equivalence, iff it induces an
 equivalence of associated stacks. 
 
\item Let $\kk$ is a field, $\Ac lg^{\aleph_0}_\kk$ be the category of
 at most countably generated commutative $\kk$-algebras,
 and $\Aff_\kk$ the opposite category (affine $\kk$-schemes of countable
 type). 
   Then
$\Uc$ is essentially small, so the constructions from (a)
  apply, and 
we get a combinatorial simplicial model 
category containing the algebro-geometric category of Artin stacks over $\kk$,
see 
\cite{laumon-stacks}. 
\end{exaenumerate}
 \end{exas} 
 
\begin{ex}[($\infty$-Stacks)]\label{ex:simplicial sheaves}
 For a small Grothendieck site $\Uc$
 let $\Sp_\Uc$ be the category of presheaves of simplicial sets on $\Uc$.
 The Kan model structure on $\Sp$ gives rise to the injective
 model structure on $\Sp_\Uc$.

A presheaf $\Sc\in\Sc_\Uc$ is called an {\em $\infty$-stack}
(or ``a sheaf up to homotopy"), if for any hypercovering $U_\bullet\to U$
as above, the induced morphism of simplicial sets
\[
 \Sc(U) \lra \hopro^\Sp_{\Dop} \Sc(U_\bullet)
\]
is a weak equivalence, see \cite{goerss-jardine}.
By an $n$-stack we will mean an $\infty$-stack taking values
in simplicial sets with $\pi_{>n} = 0$. Thus, a $0$-stack
is the same 
as a sheaf of sets, and a 1-stack is essentially the same
as a stack of groupoids in the usual sense.

 Similarly to Example \ref{ex:stacks-of-groupoids}, Bousfield localization
 allows one to construct a new combinatorial simplicial model structure 
 $(\Wen_\s, \Fen_\s, \Cen_\s)$ on $\Sp_\Uc$
  whose fibrant objects are $\infty$-stacks. 
  Explicitly, $\s$ can be chosen to consists of
    morphisms $h_{U_\bullet}\to h_U$ for a sufficiently large set of hypercoverings
    $U_\bullet\to U$. 
    See \cite{toen-vezzosi-1}, Thm. 4.6.1. We denote this localized model category
    \[
    \underline\Sp_\Uc = (\Sp_\Uc, \Wen_\s, \Fen_\s, \Cen_\s).
    \]

    When $\Uc = \Ac ff_\kk$, the category $\underline\Sp_\Uc$
    will be denoted by 
    $\underline\Sp_\kk$. 
      In this case   
    there are important classes 
    of $\infty$-stacks on $\Uc$ (and their morphisms) of algebro-geometric nature,
     of which we note the following,
    referring to \cite[Ch. 2.1]{toen-vezzosi} and \cite{toen-stacks} for more details:
    \begin{enumerate}
    \item[$\bullet$] $m$-{\em geometric stacks} and $m$-representable morphisms
    of stacks, concepts defined inductively in $m$, 
     starting
    from $(-1)$-geometric stacks being affine schemes (representable functors
    $\Uc\to\Set$). In particular, an $m$-geometric $\infty$-stack $\Gc$ has an {\em atlas}
    which is an $(m-1)$-representable morphism of stacks
    $\prod_i S_i \to \Gc$, where each $S_i$ is an affine scheme
    in $\Uc$ (identified with the corresponding representable sheaf of sets). 
    
    \item[$\bullet$] {\em Artin $n$-stacks} which are $n$-stacks which are $m$-geometric
    for some $m$.
    
    \item[$\bullet$] Artin $n$-stacks
    {\em locally of finite presentation} defined by the condition that each $S_i$ above
    is an affine scheme of finite type over $\kk$.
    
    \item[$\bullet$]  
      Artin $n$-stacks {\em of finite presentation} defined by an additional condition
     of quasi-compactness.

    \end{enumerate}

\end{ex}
 
\begin{ex}[(Derived stacks)]\label{ex:derived-stacks}
	Let $\kk$ be a field.  The {\em category of derived stacks} over $\kk$, introduced by Toen-Vezzossi
	\cite{toen-vezzosi} and denoted by $D^-\Aff_\kk^{\sim, \text{\'et}}$, is constructed by a Bousfield
	localization procedure similar to Example \ref{ex:simplicial sheaves}. In particular, it is a
	combinatorial simplicial model category.
   
   More precisely,\footnote
   {We are grateful to B. To\"en for indicating this elementary way of handling
   the set-theoretical issues arising in this and the previous examples,
    instead of using universes as in 
    \cite{toen-vezzosi}.}
    let $\Uc = D^-\Aff_\kk$ be the opposite category to the category
   $(\Ac lg^{\aleph_0}_\kk)_\Delta$ of simplicial objects in $\Ac lg^{\aleph_0}_\kk$
   (so objects of $\Uc$ can be thought of as affine cosimplicial schemes of countable
   type). Then $\Uc$ is essentially small, has a natural 
  model structure and a model analog of a Grothendieck
   topology (\'etale coverings of affine dg-schemes),
   see \cite{toen-vezzosi} \S 1.3.1 and 2.2.2. The model category
   $D^-\Aff^{\sim, \text{\'et}}_\kk$ is the Bousfield localization of $\Sp_\Uc$
   with respect to an appropriate set $\s$ of morphisms
   (homotopy hypercoverings).

   While the entire model category $D^-\Aff^{\sim, \text{\'et}}_\kk$
   (whose objects are thus arbitrary simplicial presheaves on $\Uc$) is referred to as 
   ``the category of derived stacks", the term {\em derived stack} 
   is usually reserved for fibrant objects of this category (w.r.t. the Bousfield
   localized model structure) or, what is the same, objects in the essential
   image of the localization (fibrant replacement) functor. See
   \cite[Def. 1.3.2.1]{toen-vezzosi}.

   Each derived stack $\Sc$ has the {\em classical truncation}
   $\tau_{\leq 0}\Sc$ which is the $\infty$-stack on $\Ac ff_\kk$
   obtained by restricting $\Sc$ to constant simplicial algebras
   (corresponding to usual commutative $\kk$-algebras). 
    For the definition of {\em geometric derived stacks}
    we refer to 
  \cite[\S 1.3.3]{toen-vezzosi} and note that the classical truncation of
  a geometric derived stack is a geometric $\infty$-stack. 
\end{ex}

\vfill\eject

\subsection{Homotopy limits in model categories}
\label{subsec:holim-model}

In \S \ref{subsec:homotopy-limits-spaces}, we introduced homotopy limits of diagrams of spaces and
$2$-limits of diagrams of categories by ad hoc constructions.
In fact, these constructions are instances of
the general notion of a homotopy limit in a simplicial model category which we introduce now. For
details, we refer the reader to \cite{dhks,shulman} and references therein.

Let $(\mC, \Wen, \Fen, \Cen)$ be a model category and $A$ a small
category. Since $\mC$ admits small limits, we have a limit functor
\[
\pro: \mC^A \lra \mC.
\]
In general, the diagram category $\mC^A$ may not admit a natural model structure, but it is always
equipped with a class of weak equivalences given by morphisms $X \to Y$ in $\mC^A$ such that, for each $a \in A$, the induced map $F(a)
\to G(a)$ in $\mC$ is a weak equivalence. The functor $\pro$ does not generally preserve weak equivalences. 

\begin{defi} 
Consider the localization functor $l: \mC \to \Ho(\mC)$. We define the {\em derived limit functor}
$(R\pro,\delta)$ 
to be an initial object of the category of pairs $(f, \eta)$ consisting of 
\begin{itemize}
	\item a functor $f:\mC^A \to \Ho(\mC)$ which maps weak equivalences to isomorphisms, 
        \item a natural transformation $\eta: l \circ \pro \to f$.
\end{itemize}
\end{defi}

Informally, the derived limit functor is the best possible
approximation to $\pro$ which {\em does} preserve weak equivalences. Note that, by construction, a
derived limit functor is unique up to canonical isomorphism if it exists. 

\begin{exa}\label{exa:combinatorial} Let $\mC$ be a model category and assume that $\mC^A$ 
	admits a model structure such that the functor $\pro$ is a right Quillen functor. We
	can construct a derived limit functor by setting $R\pro= l \circ \pro \circ F$ where $F$ is a
	fibrant replacement functor of $\mC^A$. For example, if $\mC$ carries a combinatorial model structure,
	then we can always use the injective model structure on the diagram category $\mC^A$ to derive
	the limit functor. 
	However, as shown in \cite{dhks}, derived limit functors always exist: any model category is 
	homotopically complete (and cocomplete). 
\end{exa}

As shown in \cite{dhks}, derived limits can be explicitly calculated as {\em homotopy limits}. 
The formalism of homotopy limits is greatly simplified if the category $\mC$ can be
equipped with a {\em simplicial} model structure. Since all examples of our interest are
simplicially enriched, we will work in the context of simplicial model categories. 
Note that a simplicial model category $\mC$ is in
particular cotensored over $\Sp$ (see 
Definition \ref {def:C-enriched-model}): for objects $K \in \Sp$ and $Y \in \mC$, we have an object
$Y^K \in \mC$ and a natural isomorphism
\[
\Map_{\Sp}(K,\Map_{\mC}(-,Y)) \cong \Map_{\mC}(-,Y^K).
\]
Given a diagram $X \in \mC^A$ we define the {\em
cosimplicial cobar construction} $\Omega^{\bullet}(\pt,A,X)$ in $\mC^{\Delta}$ via 
\[
\Omega^n(\pt,A,X) := \prod_{\alpha: [n] \to A} X(\alpha_n)
\]
with the apparent coface and codegeneracy maps.
Further, we define the {\em cobar construction} $\Omega(\pt,A,X)$ of $X$ as the end of the functor
\[
\Dop \times \Delta \to \mC,\; ([n],[m]) \mapsto \Omega^m(\pt,A,X)^{\Delta^n}
\]
so that 
\[
\Omega(\pt,A,X) = \pro\left\{ \xymatrix{ \displaystyle \prod\limits_{[n] \in \Delta} \Omega^n(\pt,A,X)^{\Delta^n}
\ar@<+.5ex>[r] \ar@<-.5ex>[r]&
\displaystyle \prod\limits_{[n]\to[m] \in \Delta} \Omega^m(*,A,X)^{\Delta^n} } \right\}.
\]
The {\em homotopy limit of $X$} is defined to be the object
\[
	\hopro^{\mC} X := \Omega(\pt,A,FX)
\]
of $\Ho(\mC)$, where $F$ denotes the fibrant replacement functor of the model category $\mC$ which we apply pointwise to
the diagram $X$. 

\begin{thm}[\cite{dhks}]\label{thm:dhks}
Let $\mC$ be a simplicial model category and $A$ a small category. Then the functor
$\hopro^\mC$ is a derived limit functor. 
\end{thm}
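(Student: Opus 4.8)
The plan is to verify directly the three features that make $(\hopro^{\mC},\delta)$ an initial object of the indicated category: that $\hopro^{\mC}$ carries weak equivalences of $\mC^{A}$ to isomorphisms in $\Ho(\mC)$, that it receives a canonical comparison $\delta$ from $l\circ\pro$, and that this pair is universal. First I would observe that $\Omega(\pt,A,X)$ is, by its definition as the end $\int_{[n]\in\Delta}\Omega^{n}(\pt,A,X)^{\Delta^{n}}$, the totalization $\operatorname{Tot}$ of the cosimplicial object $\Omega^{\bullet}(\pt,A,X)$ in $\mC^{\Delta}$, and that the cotensors $(-)^{\Delta^{n}}$ used here are exactly the simplicial structure of $\mC$ (this is where the hypothesis of a simplicial model category enters). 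When $X$ is objectwise fibrant, each $\Omega^{n}(\pt,A,X)=\prod_{\alpha\colon[n]\to A}X(\alpha_{n})$ is a product of fibrant objects, and the codegeneracies realize the $n$-th matching map as a split projection of such a product onto a subproduct, hence a fibration; thus $\Omega^{\bullet}(\pt,A,X)$ is Reedy fibrant. Since $\operatorname{Tot}$ is the right adjoint of a Quillen two-variable adjunction for the Reedy structure on $\mC^{\Delta}$, it preserves weak equivalences between Reedy-fibrant cosimplicial objects.

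For the homotopy invariance, given an objectwise weak equivalence $X\to Y$ in $\mC^{A}$, applying the fibrant replacement $F$ yields an objectwise weak equivalence $FX\to FY$ between objectwise fibrant diagrams. Hence $\Omega^{\bullet}(\pt,A,FX)\to\Omega^{\bullet}(\pt,A,FY)$ is a levelwise weak equivalence of Reedy-fibrant cosimplicial objects, and its totalization $\hopro^{\mC}X\to\hopro^{\mC}Y$ is a weak equivalence; so $\hopro^{\mC}$ sends weak equivalences to isomorphisms in $\Ho(\mC)$, i.e. it descends along the localization $\mC^{A}\to\Ho(\mC^{A})$. The comparison $\delta$ is then built from the universal cone $\pro X\to\Omega^{0}(\pt,A,X)=\prod_{a\in A}X(a)$, which extends to a natural map $\pro X\to\Omega(\pt,A,X)$ from the strict limit into the totalization; precomposing with $\pro$ of the replacement unit $X\to FX$ and passing to $\Ho(\mC)$ produces the natural transformation $\delta\colon l\circ\pro\to\hopro^{\mC}$.

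The remaining, essential point is initiality: for every weak-equivalence-inverting $f\colon\mC^{A}\to\Ho(\mC)$ equipped with $\eta\colon l\circ\pro\to f$, there must exist a unique $\theta\colon\hopro^{\mC}\to f$ with $\theta\circ\delta=\eta$. Here I would follow Dwyer--Hirschhorn--Kan--Smith and argue through the deformation-retract formalism for homotopical categories, \emph{not} through a naive ``$\delta$ is an isomorphism on objectwise fibrant diagrams'' argument: indeed the comparison $\pro Y\to\Omega(\pt,A,Y)$ fails to be a weak equivalence for merely objectwise fibrant $Y$ (already for cosimplicial diagrams $\pro$ and $\operatorname{Tot}$ disagree). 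Instead, the key input is that the assignment $X\mapsto\Omega^{\bullet}(\pt,A,FX)$, assembled from the simplicial cotensors, is a functorial fibrant cosimplicial resolution furnishing a right deformation of $\pro$ on which $\pro$ becomes homotopical, so that the general theory of deformation retracts produces the homotopical right derived functor of $\pro$ together with its universal mapping property.

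The main obstacle, therefore, is precisely this identification: establishing the resolution properties of the cobar construction and matching the resulting universal object with the pair $(\hopro^{\mC},\delta)$. This is the content of the cited theorem, so in practice I would import it, verifying only the compatibility step—that our $\Omega(\pt,A,-)$, defined via the simplicial cotensor of $\mC$, coincides with the Bousfield--Kan cobar construction to which the Dwyer--Hirschhorn--Kan--Smith theorem applies—and then transporting their universal property across this identification.
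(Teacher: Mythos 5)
Your proposal is correct and is essentially the paper's approach: the paper gives no argument beyond the citation to \cite{dhks}, and you rightly reduce everything to that reference after verifying the routine ingredients (Reedy fibrancy of the cobar construction on objectwise fibrant diagrams, homotopy invariance of totalization via Ken Brown's lemma, and the construction of $\delta$ from the map of cosimplicial weights $\Delta^\bullet \to \mathrm{const}\,\pt$). Your explicit warning that one cannot shortcut the universal property by claiming $\delta$ is an isomorphism on objectwise fibrant diagrams is well taken and correctly locates the genuine content in the deformation-retract formalism of \cite{dhks}.
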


\begin{exa} Consider the category $\Set$ of sets equipped with the trivial model structure, such
	that the weak equivalences are given by isomorphisms and every morphism is both a fibration
	and a cofibration. The category $\Set$ is enriched over $\Sp$ by regarding the set of maps
	between two sets as a discrete simplicial set.
	The homotopy limit recovers the ordinary limit of sets. This example generalizes to any
	category $\mC$ which admits small limits and colimits, equipped with the trivial model
	structure.
\end{exa}

\begin{exa} Consider the category $\Top$ of compactly generated Hausdorff topological spaces
	equipped with the Quillen simplicial model structure. 
	The homotopy limit recovers precisely the homotopy limit of spaces 
	introduced in \S \ref{subsec:homotopy-limits-spaces}. Note that, since any topological space is
	fibrant, the definition of the homotopy limit does not involve the model structure on
	$\Top$; it only depends on the simplicial enrichment. 
\end{exa}

\begin{exa} Consider the subcategory $\Gr \subset \Cat$ of small groupoids 
  	with its simplicial model structure defined in Example \ref{exa:groupoidsimplicial}.
  	Comparing the bar-construction in this case with Definition
  	\ref{def:2lim}, we conclude that 
      	homotopy limits in $\Gr$ coincide with 
    	the $2$-limits as defined there. 
	More generally, let $\Uc$ be a small Grothendieck
  	site. We then have the concept of the 2-limit of a diagram of stacks
  	of groupoids on $\Uc$,
	defined in a similar way. As before, it is identified with the 
	homotopy limit in the simplicial combinatorial
	model category $\underline{\Gc r}_\Uc$ of stacks. 
\end{exa}

\begin{rem} Let $\mC$ be a combinatorial simplicial model category. Then can compute 
	derived limit functors in two ways. On the one hand, we can
	utilize the injective model structure on $\mC^A$ to derive the limit functor as explained in
	Example \ref{exa:combinatorial}. On the other hand, we can express the derived limit functor
	as a homotopy limit. Starting from \S \ref{sec:2-segal-model} we will utilize this
	additional flexibility. Since the category $\Top$ is not combinatorial, it has to be
	replaced by the Quillen equivalent category $\S$ of simplicial sets, equipped with the Kan
	model structure. 
\end{rem}

We collect some consequences of the above, to be used below.

\begin{prop}\label{prop:holim-simp-top}
	\begin{enumerate}
	 \item[(a)] If $(X_a \to Y_a)_{a\in A}$ is a weak equivalence of $A$-diagrams in $\Top$, 
then the induced map
\[
  \hopro _{a\in A} X_a \lra \hopro_{a\in A} Y_a
\]
is a weak equivalence in $\Top$.

	\item[(b)] Let $(D_a)_{a\in A}$ be an $A$-diagram of simplicial sets. Then we have
  a weak equivalence in $\Top$
  \[
  | R\pro_{a\in A}^\Sp D_a | \simeq \hopro_{a\in A} |D_a|, 
  \]
  where $R\pro^\Sp$ is the derived limit functor constructed
  using the injective model structure on $\Sp^A$ as in Example 
  \ref{exa:combinatorial}. 
\end{enumerate}
\end{prop}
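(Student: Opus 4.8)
The plan is to treat the two parts separately, obtaining part (a) from the general homotopy invariance of homotopy limits and part (b) from the Quillen equivalence between $\Sp$ and $\Top$.

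For part (a), I would first observe that, by Theorem \ref{thm:dhks}, the functor $\hopro^\Top$ is a derived limit functor; equivalently, this is precisely the invariance already recorded in Proposition \ref{prop:invariance-holim-top}. Concretely, since every object of $\Top$ is fibrant, the fibrant replacement in the cobar construction is unnecessary, so $\hopro^\Top X = \Omega(\pt,A,X)$ and this construction is functorial in $X$. A pointwise weak equivalence $(X_a \to Y_a)_{a\in A}$ then induces level-wise weak equivalences of the cosimplicial spaces $\Omega^n(\pt,A,-)$, these being products of weak equivalences between fibrant objects; Reedy fibrancy of the cobar cosimplicial object gives a weak equivalence of the corresponding totalizations. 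Hence the induced map $\hopro_{a\in A} X_a \to \hopro_{a\in A} Y_a$ is a weak equivalence.

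For part (b), the idea is to compare the combinatorial and topological computations of the homotopy limit through the Quillen equivalence $|\cdot|: \Sp \rightleftarrows \Top : \Sing$. First I would note that, by Theorem \ref{thm:dhks}, the derived limit $R\pro^\Sp$ agrees up to canonical isomorphism with the cobar homotopy limit $\hopro^\Sp$. The key input is the standard fact that a right Quillen functor commutes with homotopy limits (see \cite{dhks,hovey}), applied to $\Sing$, which yields a weak equivalence
\[
	\Sing\bigl( \hopro_{a\in A}^\Top |D_a| \bigr) \simeq \hopro^\Sp_{a\in A} \Sing |D_a|.
\]
Since the unit $D_a \to \Sing|D_a|$ is a weak equivalence of simplicial sets, and $\hopro^\Sp$ is itself homotopy invariant (being a derived functor, Theorem \ref{thm:dhks}), the right-hand side is identified with $\hopro^\Sp_a D_a \cong R\pro^\Sp_a D_a$.

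Finally I would apply geometric realization. Because every simplicial set is cofibrant, $|\cdot|$ already preserves weak equivalences, so realizing the equivalence above gives $|\Sing(\hopro^\Top |D_a|)| \simeq |R\pro^\Sp_a D_a|$; the counit $|\Sing Z| \to Z$ is a weak equivalence for every space $Z$, and taking $Z = \hopro^\Top_a |D_a|$ yields the desired $\hopro^\Top_{a\in A} |D_a| \simeq |R\pro^\Sp_a D_a|$. The hard part will be the comparison in part (b): one cannot argue by realizing the cobar construction directly, since geometric realization fails to commute with the infinite products appearing in $\Omega^n(\pt,A,-)$; routing the comparison through the right adjoint $\Sing$, where preservation of homotopy limits is automatic, is what makes the argument go through.
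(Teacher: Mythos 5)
Your proof is correct and takes essentially the same route as the paper, whose two-sentence argument disposes of (a) by noting that a derived limit functor inverts weak equivalences of diagrams (Theorem \ref{thm:dhks} together with Proposition \ref{prop:invariance-holim-top}) and of (b) by invoking the Quillen equivalence between $\Sp$ and $\Top$. You have merely supplied the details the paper leaves implicit — the Reedy-fibrant cobar totalization for (a), and the passage through $\Sing$ with the unit and counit equivalences for (b), including the correct observation that one must route through the right adjoint rather than realize the cobar construction directly.
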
 
  
\begin{proof} Assertion (a) follows since a derived limit functor takes
  weak equivalence of diagrams to isomorphisms in the homotopy category. 
  Part (b) follows from the Quillen equivalence between the model categories $\Sp$ and $\Top$.
\end{proof}
  
\begin{rem} Following the general custom, we will usually write $\hopro$ for the derived
limit functor even if the underlying model category does not carry a simplicial structure.
\end{rem}

\vfill\eject

\section{The 1-Segal and 2-Segal model structures}
\label{sec:2-segal-model}

In this chapter, we introduce the notions of $1$-Segal and $2$-Segal objects in a combinatorial model category
$\mC$. If further $\mC$ admits the structure of a left proper, tractable, symmetric monoidal model category,
then we introduce model structures for $1$-Segal and $2$-Segal objects which arise as enriched
Bousfield localizations of the injective model structure on $\sC$. For $\mC = \Sp$, the model
structure for $1$-Segal objects in $\Sp$ recovers the Rezk model structure for $1$-Segal spaces
introduced in \cite{rezk}.

\subsection{Yoneda extensions and membrane spaces}
\label{subsec:yonedaext} 

The construction of membrane spaces from \S \ref{subsec:membranes}, can be viewed as an instance of the general Kan extension formalism. 
In this section we summarize some aspects of this formalism, to be used later.
 
Let $A$ be a small category and $\mC$ a category with small limits and colimits.
Consider the category $\P(A) = \Fun(A^{\op}, \Set)$ of presheaves on $A$ and the corresponding Yoneda embedding 
\[
	\yoneda: A \to \P(A), \; a \mapsto h_a.
\]
Since $\mC$ admits small limits, we have an adjunction
\begin{equation}\label{eq.rightkan}
	\yoneda^*: \mC_{\P(A)} \longleftrightarrow \mC_{A}: \yoneda_*,
\end{equation}
where $\yoneda^*$ denotes the pullback functor and $\yoneda_*$ the functor of right Kan extension along $\yoneda$. We call $\yoneda_*$ the {\em Yoneda extension functor}.  
For an object $X \in \mC_A$ and $K \in \P(A)$, the value of $\yoneda_*X$ on $K$ will 
be denoted by $(K,X)$ and called the {\em space of $K$-membranes in $X$}. 
The general formula for Kan extensions in terms of limits implies
\begin{equation}\label{eq:pointwise}
(K,X) = \yoneda_* X(K) \cong \pro_{\{h_a \to K\}}^{\mC} X_a \text{.}
\end{equation}

\begin{ex} The previous definition of the membrane spaces in 
	\eqref{eq:membranes-top} is recovered when $A=\Dop$ and $\mC=\Top$, with $h_\{[n]\}$ being the standard simplex $\Delta^n$. 
\end{ex}
 
We recall the following standard result (\cite{kashiwara-schapira}). 

\begin{prop} \label{prop:yonedacont} The functor $\Upsilon_*: \mC_A \to \mC_{\P(A)}$ establishes an 
equivalence between $\mC_A$ and the full subcategory of $\mC_{\P(A)}$ consisting of functors which map
	colimits in $\P(A)$ to limits in $\mC$. The inverse of this equivalence is given by 
	 $\Upsilon^*$. 
\end{prop}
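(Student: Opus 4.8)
The plan is to deduce the statement from the universal property of the presheaf category as the free cocompletion of $A$, phrased entirely through the adjunction $\Upsilon^*\dashv\Upsilon_*$ of \eqref{eq.rightkan} and the pointwise formula \eqref{eq:pointwise}. The one external input is the density formula: every presheaf $K\in\P(A)$ is the canonical colimit $K\cong\ind_{\{h_a\to K\}}^{\P(A)} h_a$ of representables indexed by the comma category $\Upsilon/K$ (the general version of \eqref{eq:sset-lim-simplices} noted in the excerpt). A functor $F\in\mC_{\P(A)}=\Fun(\P(A)^{\op},\mC)$ lies in the subcategory in question precisely when it carries such canonical colimits to limits, i.e. when for all $K$ the comparison $F(K)\to\pro_{\{h_a\to K\}}^\mC F(h_a)$ is an isomorphism.

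First I would show that $\Upsilon_*$ is fully faithful by checking that the counit $\Upsilon^*\Upsilon_* X\to X$ is an isomorphism. Evaluating at $a\in A$ and using \eqref{eq:pointwise}, $(\Upsilon^*\Upsilon_*X)(a)=(h_a,X)=\pro^\mC_{\{h_b\to h_a\}} X_b$, the limit being taken over $\Upsilon/h_a$. By the Yoneda lemma a map $h_b\to h_a$ is the same as a morphism $b\to a$ in $A$, so $\Upsilon/h_a\cong A/a$, which has the terminal object $(a,\id_a)$; hence the limit is $X_a$ and the counit is an isomorphism. Since a right adjoint is fully faithful exactly when its counit is invertible, $\Upsilon_*$ is fully faithful, hence an equivalence onto its essential image, with $\Upsilon^*$ providing the inverse on that image.

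It remains to identify the essential image with the continuous functors, which has two halves. For the inclusion ``$\subseteq$'', I would exhibit $K\mapsto (K,X)$, regarded as a functor $\P(A)\to\mC^{\op}$, as a left adjoint, whence it preserves colimits, i.e. $\Upsilon_*X$ sends colimits in $\P(A)$ to limits in $\mC$. Concretely, set $\Phi(c)=\bigl(a\mapsto\Hom_\mC(c,X_a)\bigr)\in\P(A)$ for $c\in\mC$; combining density with the Yoneda lemma gives natural bijections $\Hom_{\P(A)}(K,\Phi(c))\cong\pro_{\{h_a\to K\}}\Hom_\mC(c,X_a)\cong\Hom_\mC(c,(K,X))$, exhibiting the desired adjunction. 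For the reverse inclusion ``$\supseteq$'', let $F$ be continuous and put $X=\Upsilon^*F$, so that $X_a=F(h_a)$. Applying $F$ to the canonical colimit $K\cong\ind_{\{h_a\to K\}} h_a$ and using continuity yields $F(K)\cong\pro^\mC_{\{h_a\to K\}}F(h_a)=(K,X)=\Upsilon_*X(K)$, naturally in $K$; this natural isomorphism is precisely the unit $F\to\Upsilon_*\Upsilon^*F$, so $F$ lies in the essential image of $\Upsilon_*$. Together with the previous paragraph this proves the proposition.

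The only genuinely delicate points are bookkeeping, and I would flag them as the main obstacle: keeping the variances straight (a colimit in $\P(A)$ becomes a limit after applying the contravariant $F$, and $K\mapsto(K,X)$ must be read as landing in $\mC^{\op}$ for the adjunction to have the correct handedness), and verifying that the comparison isomorphisms built by hand in the two halves genuinely coincide with the counit and unit of $\Upsilon^*\dashv\Upsilon_*$, rather than being merely abstract isomorphisms---this is what licenses the conclusion that $\Upsilon^*$ is the inverse equivalence. Both reduce to unwinding \eqref{eq:pointwise} together with the triangle identities, with no nontrivial computation. One should also recall at the outset that $\mC$ has all small limits, so that the pointwise formula applies, and all small colimits, so that $\Phi$ and the density colimits are legitimate; these are the standing hypotheses on $\mC$.
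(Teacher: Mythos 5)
Your proof is correct. There is nothing in the paper to compare it against: the paper states this proposition as ``a standard result'' with a citation to Kashiwara--Schapira and gives no argument, and what you have written is exactly the standard proof that such a citation points to --- full faithfulness of $\Upsilon_*$ via the counit computation $(h_a,X)\cong X_a$ (Yoneda plus the terminal object of $A/a$, which is the same convention the paper itself uses in Example \ref{ex:mem} when it computes $(\Delta^n,X)\cong X_n$ from the final object of $\Delta/\Delta^n$), together with the identification of the essential image via the density colimit $K\cong\ind_{\{h_a\to K\}}h_a$. The two delicate points you flag both check out: the counit of a pointwise right Kan extension is the projection of the limit onto the component at $(a,\id_a)$, so your isomorphism really is the counit; and the unit at $F$ evaluated at $K$ is the canonical comparison $F(K)\to\pro_{\{h_a\to K\}}F(h_a)$, which is invertible precisely when $F$ is continuous, so your isomorphism really is the unit.
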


Let $f: A \to A'$ be a functor of small categories. We consider the pullback along $f$
of both $\mC$-valued and $\Set$-valued functors on $A$ and $A'$ and, in each case,
the corresponding {\em left} Kan extension functor, so that we have adjunctions
\[
f_! : \mC_A \longleftrightarrow \mC_{A'} : f^*, \quad 
f_! : \P(A) \longleftrightarrow \P(A') : f^*
\]
We use the notations $f^*, f_!$ in both cases,
since it will be clear from the context which functor is meant.
Note that we have a $2$-commutative square
\[
\xymatrix{
A \ar[r]^f\ar[d]_{\Upsilon^A} & A' \ar[d]^{\Upsilon^{A'}}\\
\P(A) \ar[r]^{f_!} & \P(A').
}
\]

\begin{prop}\label{prop:yonedaadj}
	For $X \in \mC_{A'}$ and $K \in \P(A)$, we have a natural isomorphism in $\mC$
	\[
	(K, f^*X) \cong (f_! K, X). 
	\] 
\end{prop}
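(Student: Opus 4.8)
The plan is to regard both sides of the claimed isomorphism as functors of $K \in \P(A)$ and to identify them via the characterization of Yoneda extensions in Proposition \ref{prop:yonedacont}. Unwinding the definitions, the left-hand side is $(K, f^* X) = \yoneda_*(f^* X)(K)$, the value at $K$ of the Yoneda extension of the pulled-back diagram $f^* X \in \mC_A$. The right-hand side is $(f_! K, X) = \yoneda_*(X)(f_! K)$, which I rewrite as $\bigl((f_!)^* \yoneda_*(X)\bigr)(K)$, the value at $K$ of the composite of the Yoneda extension $\yoneda_*(X)\colon \P(A')^{\op} \to \mC$ with the pullback along the functor $f_!\colon \P(A) \to \P(A')$. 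Thus both sides are the values at $K$ of two functors $F_1, F_2 \colon \P(A)^{\op} \to \mC$, and it suffices to produce a natural isomorphism $F_1 \cong F_2$ of functors in $K$.

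The first step is to check that $F_1$ and $F_2$ both carry colimits in $\P(A)$ to limits in $\mC$. For $F_1 = \yoneda_*(f^* X)$ this is exactly the content of Proposition \ref{prop:yonedacont}, since every Yoneda extension lands in the subcategory of such continuous functors. For $F_2 = (f_!)^* \yoneda_*(X)$ I use that $f_!\colon \P(A) \to \P(A')$, being a left adjoint to $f^*$, preserves all colimits; composing with the continuous functor $\yoneda_*(X)$ then shows that $F_2$ sends colimits in $\P(A)$ to limits in $\mC$ as well. By Proposition \ref{prop:yonedacont}, such continuous functors on $\P(A)$ correspond, via restriction $\yoneda^*$ along the Yoneda embedding, to objects of $\mC_A$, and this correspondence is an equivalence; hence it is enough to show that $\yoneda^* F_1$ and $\yoneda^* F_2$ agree in $\mC_A$.

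The second step is this computation on representables. Because $\yoneda$ is fully faithful, the unit $\mathrm{id} \to \yoneda^* \yoneda_*$ is an isomorphism, so $\yoneda^* F_1 = \yoneda^* \yoneda_*(f^* X) \cong f^* X$. For $F_2$ I combine the functoriality of pullback, $(\yoneda^A)^* (f_!)^* = (f_! \circ \yoneda^A)^*$, with the $2$-commutative square $f_! \circ \yoneda^A \cong \yoneda^{A'} \circ f$ displayed just before the statement, which gives $(\yoneda^A)^*(f_!)^* \cong f^* (\yoneda^{A'})^*$. Applying this identity to $\yoneda^{A'}_*(X)$ and again invoking the unit isomorphism $(\yoneda^{A'})^* \yoneda^{A'}_*(X) \cong X$ yields $\yoneda^* F_2 \cong f^* X$ as well. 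Thus $\yoneda^* F_1 \cong f^* X \cong \yoneda^* F_2$, and by the equivalence of Proposition \ref{prop:yonedacont} we conclude $F_1 \cong F_2$, which specializes at $K$ to the desired natural isomorphism $(K, f^* X) \cong (f_! K, X)$.

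The only genuine input beyond this bookkeeping is the interplay of the two adjunctions $f^* \dashv f_!$ on presheaves and $\yoneda^* \dashv \yoneda_*$ on $\mC$-valued diagrams, together with the compatibility $f_! h_a \cong h_{f(a)}$ encoded by the $2$-commutative square. I expect the point requiring the most care to be the verification that $F_2$ is continuous, i.e. that $f_!$ really preserves the relevant colimits, since it is precisely this that licenses the reduction to representables. One could instead argue directly from the pointwise formula \eqref{eq:pointwise}, comparing $\pro_{\{h_a \to K\}} X_{f(a)}$ with $\pro_{\{h_{a'} \to f_! K\}} X_{a'}$ by exhibiting an initial functor between the comma categories $\yoneda^A/K$ and $\yoneda^{A'}/f_! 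K$ sending $(h_a \to K)$ to $(f_! h_a = h_{f(a)} \to f_! K)$; but verifying finality of this comparison functor is less transparent than the adjunction argument above, so I would keep the latter as the main line.
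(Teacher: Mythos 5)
Your proposal is correct and follows essentially the same route as the paper: the paper's proof likewise observes that $\Upsilon^{A'}_*(X)\circ f_!$ and $\Upsilon^A_*(f^*X)$ both send colimits in $\P(A)$ to limits in $\mC$, that their restrictions along the Yoneda embedding are both isomorphic to $f^*X$, and then concludes by Proposition \ref{prop:yonedacont}. You have merely filled in the details (continuity of the composite via $f_!$ preserving colimits, and the use of the $2$-commutative square) that the paper leaves implicit.
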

\begin{proof} We show that there is an isomorphism in $\mC_{\P(A)}$ between the functors $(\Upsilon^{A'}_*X) \circ
	f_!$ and $\Upsilon^A_*(f^* X)$. Both functors map colimits in $\P(A)$ to limits in $\mC$. The pullbacks of both functors under $\Upsilon^A$ are isomorphic to $f^*X$,
	thus, by Proposition \ref{prop:yonedacont}, we conclude that the functors themselves are isomorphic. 
\end{proof}

Assume now that $\mC$ be a combinatorial model category.
We equip the functor categories $\mC_{A}$ and $\mC_{\P(A)}$
with the injective model structures so that the adjunction \eqref{eq.rightkan} becomes a Quillen
adjunction. We then introduce the {\em homotopy Yoneda extension} functor
$R\yoneda_*$ as the right derived functor of $\yoneda_*$. The value of
$R\yoneda_*X$ at $K\in\P(A)$ will be denoted by $(K,X)_R$ and called
the {\em derived space of $K$-membranes} in $X$. Thus we have $(K,X)_R = (K, F(X))$ where $F(X)$ is
an injectively fibrant replacement of $X$. Further, we have the identification
\begin{equation}\label{eq.dpointwise}
(K,X)_R \simeq R\pro_{\{h_a \to K\}}^{\mC} X_a, 
\end{equation}
obtained from the pointwise formula for homotopy Kan extensions (see \cite[A.2.8.9]{lurie.htt}).   

\begin{rem} Let $\mC$ be a simplicial combinatorial model category. We can compute the derived limit
	$Y = R\pro_{\{h_a \to K\}}^{\mC} X_a$ in two ways. By the above discussion, we have the formula
	\[
	R\pro_{\{h_a \to K\}}^{\mC} X_a \simeq \pro_{\{h_a \to K\}}^{\mC} F(X)_a,
	\]
	where $F(X)$ is an injectively fibrant replacement of $X$. Alternatively, we can utilize the
	simplicial enrichment to compute 
	\[
	R\pro_{\{h_a \to K\}}^{\mC} X_a \simeq \hopro_{\{h_a \to K\}}^{\mC} X_a,
	\]
	where the right-hand side denotes the homotopy limit introduced in \S
	\ref{subsec:holim-model}. In view of Proposition \ref{prop:holim-simp-top}, this shows that
	the formalism introduced in this section is compatible with the notion of membrane spaces
	introduced in \S \ref{subsec:membranes}. 
\end{rem}

\begin{prop}\label{prop:derivedyonedaadj}
Assume that the functor $f^*: \mC_{A'}\to\mC_A$ preserves injectively
fibrant objects. Then, for $X\in\mC_{A'}$ and $K\in\P(A)$, we have a natural weak equivalence
\[
(K, f^*X)_R \simeq (f_!K, X)_R. 
\]
\end{prop}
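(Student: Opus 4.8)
The plan is to derive this statement from the strict adjunction isomorphism of Proposition \ref{prop:yonedaadj} by computing the derived membrane space $(K,f^*X)_R$ with a cleverly chosen fibrant replacement. First I would record the structural input: in the adjunction \eqref{eq.rightkan}, the left adjoint $\yoneda^*$ is pullback along $\yoneda$ and hence acts levelwise, so it preserves (injective) cofibrations and weak equivalences. Thus $\yoneda^*$ is left Quillen and $\yoneda_*$ is right Quillen for the injective model structures, which means the homotopy Yoneda extension $R\yoneda_*$ is computed by applying $\yoneda_*$ to \emph{any} injectively fibrant replacement. Consequently, for an object $Y$ and any fibrant replacement $Y \to F(Y)$, one has $(K,Y)_R \simeq (K, F(Y))$, and the result does not depend on the chosen replacement up to canonical weak equivalence.

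The key step is to manufacture a fibrant replacement of $f^*X$ of the form $f^*F(X)$. I would take an injectively fibrant replacement $j\colon X \to F(X)$ in $\mC_{A'}$ and apply $f^*$, obtaining $f^*j\colon f^*X \to f^*F(X)$. Since weak equivalences in the injective model structure are detected levelwise and $f^*$ is a levelwise (precomposition) functor, $f^*j$ is again a weak equivalence. By the hypothesis that $f^*$ preserves injectively fibrant objects, $f^*F(X)$ is injectively fibrant, so $f^*F(X)$ is a legitimate fibrant replacement of $f^*X$. Invoking the replacement-independence noted above, this gives
\[
(K, f^*X)_R \simeq (K, f^*F(X)).
\]
Now I would apply Proposition \ref{prop:yonedaadj}, which holds on the nose for every object, to $F(X)$, yielding the isomorphism $(K, f^*F(X)) \cong (f_!K, F(X))$ in $\mC$. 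Finally, since $F(X)$ is a fibrant replacement of $X$, the definition of the derived membrane space gives $(f_!K, F(X)) \simeq (f_!K, X)_R$. Concatenating the three identifications produces the desired natural weak equivalence, and naturality in both $X$ and $K$ is inherited from the naturality of each constituent step.

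The main obstacle, and the only place where any hypothesis is used, is the verification that $(K, f^*X)_R$ may be evaluated on the particular replacement $f^*F(X)$ rather than on an abstract one. This is exactly where the assumption that $f^*$ preserves fibrant objects enters: it is what makes $f^*F(X)$ an admissible input for $R\yoneda_*$, thereby letting the strict isomorphism of Proposition \ref{prop:yonedaadj} be transported to the derived level without any further homotopical bookkeeping. The remaining work is purely formal bookkeeping with the definitions of $R\yoneda_*$ and $(-, -)_R$, together with the fact (already established, via $\yoneda_*$ being right Quillen) that a right-derived functor is insensitive to the choice of fibrant replacement.
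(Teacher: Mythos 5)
Your proposal is correct and is exactly the argument the paper has in mind: the paper's own proof is the single sentence ``The statement follows immediately from Proposition \ref{prop:yonedaadj},'' and you have correctly supplied the missing mechanism, using the hypothesis that $f^*$ preserves injectively fibrant objects precisely where it is needed, namely to certify $f^*F(X)$ as an admissible fibrant replacement of $f^*X$ on which the strict isomorphism can be evaluated.
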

\begin{proof} The statement follows immediately from Proposition \ref{prop:yonedaadj}.\end{proof}

Let $\mC$ be a symmetric monoidal model category in the sense of Definition \ref{defi:symmon}. 
We equip the model category $\mC_A$ with the homotopical enrichment from Example \ref{exa:enricheddia}. In this situation, we have the
following formula for Yoneda extensions in terms of $\mC$-enriched mapping spaces.

\begin{prop}\label{prop:dkanmap} Let $X \in \mC_A$. 
	\begin{enumerate}[label=(\alph{*})]
	 \item There exists a natural isomorphism 
	 \[
	 \yoneda_* X \cong \Map_{\mC_A}(\disc{-},X)
	 \]
	 of functors $\P(A)^{\op} \to \mC$.
	 \item There exists a natural weak equivalence
	 \[
	 R\yoneda_* X  \simeq \RMap_{\mC_A}(\disc{-},X) 
	 \]
	 of functors $\P(A)^{\op} \to \mC$.
	\end{enumerate}
\end{prop}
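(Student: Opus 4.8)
The plan is to treat part (a) as a purely categorical statement (using no model structure), deducing it from the universal property of $\P(A)$ recorded in Proposition \ref{prop:yonedacont} together with the enriched Yoneda lemma of Lemma \ref{lem:tcx}, and then to bootstrap part (b) from (a) by a fibrant-replacement argument, the only genuinely homotopical input being that the discrete object $\disc{K}$ is injectively cofibrant.

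For (a), I would first observe that $K \mapsto \Map_{\mC_A}(\disc{K}, X)$ defines a functor $\P(A)^{\op}\to\mC$, i.e. an object of $\mC_{\P(A)}$. The key point is that this functor sends colimits in $\P(A)$ to limits in $\mC$. Indeed, the structural tensoring (Example \ref{ex:structural-enrichment}) exhibits $\disc{-}=\const{\1}\itimes(-)\colon\P(A)\to\mC_A$ as a left adjoint, hence colimit-preserving, while the enriched mapping functor $\Map_{\mC_A}(-,X)=\int_{a} \Map_\mC((-)_a, X_a)$ (Example \ref{exa:enricheddia}) carries colimits in its first variable to limits, because $\Map_\mC(-,X_a)$ does so for the closed structure on $\mC$ (Definition \ref{defi:symmon}). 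Thus $\Map_{\mC_A}(\disc{-},X)$ lies in the full subcategory of $\mC_{\P(A)}$ described in Proposition \ref{prop:yonedacont}, and so does $\yoneda_* X$ (it is in the image of the equivalence there; cf. \eqref{eq:pointwise}). By that proposition it then suffices to identify the two functors after restriction along $\yoneda$. On representables one has $\yoneda^*(\yoneda_* X)\cong X$ since $\yoneda$ is fully faithful, and $\Map_{\mC_A}(\disc{h_a},X)\cong X_a$ by Lemma \ref{lem:tcx} applied with $A$ replaced by $A^{\op}$ (so that $h_a$ becomes the representable presheaf $\yoneda(a)$). Both restrictions therefore agree with $X$, and the inverse equivalence $\yoneda^*$ transports this to a natural isomorphism $\yoneda_* X\cong\Map_{\mC_A}(\disc{-},X)$ of functors $\P(A)^{\op}\to\mC$.

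For (b) I would apply (a) to a fibrant replacement. By the description recalled just before \eqref{eq.dpointwise}, we have $R\yoneda_* X(K)=(K,X)_R=\yoneda_* F(X)(K)$ with $F(X)$ an injectively fibrant replacement of $X$; part (a) for $F(X)$ then gives a natural isomorphism $\yoneda_* F(X)(K)\cong\Map_{\mC_A}(\disc{K}, F(X))$. It remains to see that the right-hand side already computes $\RMap_{\mC_A}(\disc{K}, X)=\Map_{\mC_A}(Q(\disc{K}),F(X))$ from \eqref{eq:derivedmappingdef}. This is where cofibrancy enters: in the injective model structure cofibrations are detected pointwise, and $\disc{K}(a)=\coprod_{K(a)}\1$ is cofibrant in $\mC$ because $\1$ is cofibrant (Definition \ref{defi:symmon}) and coproducts of cofibrant objects are cofibrant; hence $\disc{K}$ is injectively cofibrant and we may take $Q(\disc{K})=\disc{K}$. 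Since $F(X)$ is fibrant, $\Map_{\mC_A}(\disc{K}, F(X))$ is then the derived mapping object, which yields the natural weak equivalence $R\yoneda_* X\simeq\RMap_{\mC_A}(\disc{-},X)$.

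The arguments are mostly formal; the points to get right in (a) are that both functors genuinely fall under Proposition \ref{prop:yonedacont}, i.e. the cocontinuity of $\disc{-}$ and the continuity of $\Map_{\mC_A}(-,X)$ in its first variable. The one genuinely homotopical ingredient, which I expect to be the crux of (b), is the cofibrancy of $\disc{K}$: it is exactly what makes the underived mapping space into the fibrant object $F(X)$ compute the derived one, and hence what lets (b) be reduced to (a) rather than requiring a separate homotopical analysis.
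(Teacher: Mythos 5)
Your proposal is correct and follows essentially the same route as the paper: part (a) reduces to representables via Proposition \ref{prop:yonedacont} after checking that both functors carry colimits in $\P(A)$ to limits in $\mC$, with the identification on representables supplied by Lemma \ref{lem:tcx} and \eqref{eq:pointwise}; part (b) is obtained from (a) applied to an injectively fibrant replacement, using that $\disc{K}$ is injectively cofibrant because $\disc{K}(a)=\coprod_{K(a)}\1$ with $\1$ cofibrant. Your write-up merely makes explicit the cocontinuity of $\disc{-}$ and the continuity of $\Map_{\mC_A}(-,X)$, which the paper asserts without elaboration.
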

\begin{proof}
	(a) Both functors commute with colimits in $A$ (or, more precisely, limits in $A^{\op}$). 
	Since any object $D \in \P(A)$ can be expressed as a colimit of representable
	functors $h_a$, it suffices to check that their restrictions to $A^{\op}$ are
	naturally isomorphic (Proposition \ref{prop:yonedacont}). This follows from Lemma \ref{lem:tcx} and 
	formula \eqref{eq:pointwise} for the Yoneda extension.

	(b) The derived mapping space is obtained by forming the ordinary mapping space of an injectively fibrant
	replacement $F(X)$ of $X$. Indeed, for any $D \in \P(A)$, the object $\disc{D}$ is cofibrant and does not have to
	be replaced. This follows since $\disc{D}(a) = \coprod_{D_a} \1$ and $\1 \in \mC$ is by
	definition cofibrant. On the other hand, the homotopy Kan extension can be
	calculated by applying the functor $\yoneda_*$ to an injectively fibrant replacement of $X$. The
	statement thus follows from (a).
\end{proof}

Let $A, B$ be small categories. Recall that a diagram $K: B \to \Set_A$ is called {\em acyclic} if, for
every $a \in A$, the natural map
\[
	\hoind_b K(a) \lra \ind_b K(a)
\]
is a weak homotopy equivalence of spaces. Here, $K(a): B \to \Set$ denotes the diagram obtained from $K$ by evaluating
at $a$, interpreted as a diagram of discrete topological spaces.

\begin{prop}\label{prop:acyclic}
Let $A$, $T$ be small categories, $(K_b)_{b \in B}$ a $B$-indexed diagram in the category
$\P(A)$, and $X \in \mC_A$. Then the following hold:
	\begin{enumerate}[label=(\alph{*})]
	 \item We have a natural isomorphism in $\mC$
	 \[
	 \bigl( \ind_{b\in B}^{\P(A) } K_b, X \bigr) \cong \pro_{b\in B}^\mC (K_b, X).
	 \]
	 \item If the diagram $(K_b)_{b\in B}$ is acyclic, then we have a natural weak equivalence
	 \[
	 \bigl( \ind_{b\in B}^{\P(A)} K_b, X \bigr)_R \simeq \hopro_{b\in B}^\mC (K_b, X)_R.
	 \]
	\end{enumerate}
\end{prop}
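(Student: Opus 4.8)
The plan is to reduce both parts to the analogous statements for membrane spaces of simplicial sets that were already established in the topological setting, namely Proposition \ref{prop:acyclic-topological}, by exploiting the enriched-mapping-space description of the Yoneda extension provided by Proposition \ref{prop:dkanmap}.

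First I would dispose of part (a). By Proposition \ref{prop:dkanmap}(a), we have a natural isomorphism $(K,X) \cong \Map_{\mC_A}(\disc{K}, X)$, where the enriched mapping space is formed in the homotopical enrichment of Example \ref{exa:enricheddia}. The functor $K \mapsto \disc{K}$ commutes with colimits (it is a composite of the left adjoint $\disc{-} = \const{\1}\itimes(-)$ structures, and more directly, colimits of presheaves are computed objectwise and $\disc{-}$ preserves them). Since $\Map_{\mC_A}(-,X)$ is a right adjoint in its first variable, it sends colimits to limits; combining these two facts gives
\[
\bigl( \ind_{b}^{\P(A)} K_b, X \bigr) \cong \Map_{\mC_A}\bigl(\disc{\ind_b K_b}, X\bigr) \cong \pro_{b}^{\mC} \Map_{\mC_A}(\disc{K_b}, X) \cong \pro_{b}^\mC (K_b, X),
\]
which is the claim. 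This is purely formal, requiring only adjunction and cocontinuity of $\disc{-}$.

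For part (b) I would run the same strategy one homotopical level up. By Proposition \ref{prop:dkanmap}(b), we have a natural weak equivalence $(K,X)_R \simeq \RMap_{\mC_A}(\disc{K}, X)$. The derived mapping functor $\RMap_{\mC_A}(-,X)$ is a right Quillen functor in its first variable (the enrichment makes $\mC_A$ into a $\mC$-enriched model category, so by the remark following Definition \ref{def:C-enriched-model} the functor $\Map_{\mC_A}$ is right Quillen in each variable), hence its left-derived counterpart on the mapping-object side converts homotopy colimits into homotopy limits: for any diagram $(L_b)_{b\in B}$ of cofibrant objects,
\[
\RMap_{\mC_A}\bigl(\hoind_b L_b, X\bigr) \simeq \hopro_b \RMap_{\mC_A}(L_b, X).
\]
The objects $\disc{K_b}$ are all cofibrant (as noted in the proof of Proposition \ref{prop:dkanmap}(b), since $\disc{K_b}(a) = \coprod_{K_b(a)} \1$ is a coproduct of the cofibrant unit $\1$). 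Now the hypothesis that $(K_b)_{b\in B}$ is acyclic is precisely the statement that the natural map $\hoind_b \disc{K_b} \to \ind_b \disc{K_b}$ is a weak equivalence in $\mC_A$: acyclicity says exactly that objectwise the homotopy colimit of the discrete diagrams agrees with the strict colimit. Applying $\RMap_{\mC_A}(-,X)$, using its homotopy invariance, and then substituting $\ind_b \disc{K_b} \cong \disc{\ind_b K_b}$, we obtain
\[
\bigl( \ind_b^{\P(A)} K_b, X\bigr)_R \simeq \RMap_{\mC_A}\bigl(\disc{\ind_b K_b}, X\bigr) \simeq \RMap_{\mC_A}\bigl(\hoind_b \disc{K_b}, X\bigr) \simeq \hopro_b \RMap_{\mC_A}(\disc{K_b}, X) \simeq \hopro_b^\mC (K_b, X)_R,
\]
which is the desired weak equivalence.

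The main obstacle I anticipate is justifying the middle equivalence $\RMap(\hoind_b \disc{K_b}, X) \simeq \hopro_b \RMap(\disc{K_b}, X)$ at the right level of generality, i.e. confirming that the derived enriched mapping functor genuinely exchanges homotopy colimits in the source with homotopy limits in the target within an arbitrary combinatorial symmetric monoidal model category $\mC$, rather than only in the simplicial setting of \S\ref{subsec:homotopy-limits-spaces}. One must either invoke the general two-variable-Quillen-adjunction formalism (the homotopy colimit being a derived left adjoint, and $\RMap$ being its derived right adjoint, so they are compatible) or, following Dugger's theorem cited in Example \ref{ex:dwyer-kan-localization}, replace $\mC$ by a Quillen-equivalent combinatorial simplicial model category where the explicit cobar computation of homotopy limits from \S\ref{subsec:holim-model} applies verbatim; this is the analogue of formula \eqref{eq:Rmap-colim-holim} in the proof of Proposition \ref{prop:acyclic-topological}, and the identification of acyclicity with the weak equivalence $\hoind_b \disc{K_b} \to \ind_b \disc{K_b}$ proceeds exactly as there.
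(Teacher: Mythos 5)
Your argument is correct in substance, but it is not the route the paper takes, and it quietly strengthens the hypotheses. The paper's proof is enrichment-free: writing $S=\ind_b K_b$, it forms the comma category $f/g$ of the diagram $A/S \overset{f}{\to} \Set_A/S \overset{g}{\leftarrow} B$, expresses the membrane space $(\ind_b K_b, X)$ (resp.\ its derived version) and the limit $\pro_b (K_b,X)$ (resp.\ $\hopro_b(K_b,X)_R$) as (homotopy) limits over $(A/S)^{\op}$ and $B^{\op}$, compares both to the (homotopy) limit over $(f/g)^{\op}$, and reduces everything to showing that the projection $(f/g)^{\op}\to(A/S)^{\op}$ is homotopy initial --- which is exactly where acyclicity enters, as weak contractibility of the relevant fibers. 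This works for an arbitrary combinatorial model category $\mC$, which is the stated generality. Your route --- $(K,X)_R\simeq\RMap_{\mC_A}(\disc{K},X)$ together with the fact that $\RMap(-,X)$ converts homotopy colimits into homotopy limits --- is precisely the alternative the authors flag in the remark immediately following their proof, but it presupposes extra structure: Proposition \ref{prop:dkanmap} is only available when $\mC$ is a symmetric monoidal (or, after Dugger replacement, simplicial) model category, whereas Proposition \ref{prop:acyclic} is stated and later used for a bare combinatorial $\mC$. What your approach buys is brevity and a clean parallel with Proposition \ref{prop:acyclic-topological}; what it costs is this generality, unless you actually carry out the transfer along a Quillen equivalence and check the membrane spaces are preserved. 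One step also deserves more care than you give it: the assertion that acyclicity ``is precisely'' the statement that $\hoind_b\disc{K_b}\to\ind_b\disc{K_b}$ is a weak equivalence in $\mC_A$ is literal only for $\mC=\Top$ or $\Sp$, where $\disc{K_b}$ \emph{is} the discrete space on $K_b$. For general $\mC$ the implication you need (acyclic $\Rightarrow$ weak equivalence) holds, but requires decomposing the homotopy colimit of the objects $\coprod_{K_b(a)}\1$ over the fibers of the map to $\ind_b K_b(a)$ and invoking the fact that a constant diagram indexed by a category with weakly contractible nerve has homotopy colimit weakly equivalent to its value --- in effect re-proving Proposition \ref{prop:acyclic-crit} internally to $\mC$. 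With that step supplied, your proof is complete under the enrichment hypothesis.
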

\begin{proof} Let $S = \ind_{b \in B} K_t$. Consider the diagram of categories
	\begin{equation}\label{eq:slicediag}
	 A/S \overset{f}{\lra} \Set_A/S \overset{g}{\longleftarrow} B,
	\end{equation}
	where $f$ is induced by the Yoneda embedding and $g$ maps an object $b \in
	B$ to the canonical map $(K_b \to S)$.
	Let $f/g$ denote the comma category associated to \eqref{eq:slicediag}. An object of $f/g$
	is given by a triple $(x,b,\alpha)$ where $x$ and $b$ are objects of $A/S$ and $B$,
	respectively, and $\alpha: f(x) \to g(b)$ is a morphism in $\Set_A/S$.
	We consider the functors
	\begin{align*}
	 F_1: & \;B^{\op} \to \mC,\; b \mapsto (K_b, X)\\
	 F_2: & \;(f/g)^{\op} \to \mC,\; (h_a \to S, b, \alpha) \mapsto X_a\\
	 F_3: & \;(A/S)^{\op} \to \mC,\; (h_a \to S) \mapsto X_a.
	\end{align*}
	We claim that we have natural isomorphisms in $\mC$
	\[
	 \pro F_1 \cong \pro F_2 \cong \pro F_3.
	\]
	We consider the natural projection functor $q: (f/g)^{\op} \to B^{\op}$.
	Note that any limit functor is a right Kan extension
	along the constant functor, and hence, by the functoriality of Kan extensions, we have an isomorphism of functors
	\[
	 \pro_{\{B^{\op}\}} \circ q_* \cong \pro_{\{(f/g)^{\op}\}}.
	\]
	This implies the identification $\pro F_1 \cong \pro F_2$ since, by definition, the functor $F_1$ is a right
	Kan extension of $F_2$ along $q$. 
	The isomorphism $\pro F_2 \cong \pro F_3$ is obtained by noting that
	$F_2$ is the pullback of $F_3$ along the initial functor $p: (f/g)^{\op} \to (A/S)^{\op}$.
	This proves (1). 
	
	To show (2), we replace the functor $F_1$ by $b \mapsto (K_b, X)_R$. We claim to have a chain of
	natural isomorphisms in $\Ho(\mC)$
	\[
	 R\pro F_1 \simeq R\pro F_2 \simeq R\pro F_3.
	\]
	Again, from the definition of the derived membrane space, the functor $F_1$ is a right
	homotopy Kan extension of $F_2$ along the functor $q: (f/g)^{\op} \to B^{\op}$, which implies
	the identification $R\pro F_1 \simeq R\pro F_2$. To obtain the weak equivalence $R\pro F_2
	\simeq R\pro F_3$, it suffices to show that the functor $p: (f/g)^{\op} \to (A/S)^{\op}$ is {\em
	homotopy} initial (\cite[19.6]{hirschhorn}), i.e., $p$ preserves homotopy limits. We have to show that, for every object $h_a \to S$ of $(A/S)^{\op}$,
	the overcategory $p/(h_a \to S)$ has a weakly contractible nerve. 
	But this statement is easily seen to be equivalent to the assumption that the diagram
	$(K_b)$ is acyclic. Here, we use the fact that $p/(h_a \to S)$ is weakly equivalent to the
	strict fiber $p^{-1}(h_a \to S)$ since the map $p^{\op}$ is a Grothendieck fibration.
\end{proof}
 
\begin{rem} 
In the case when $\mC$ is a simplicial combinatorial model category, we can alternatively prove Proposition
\ref{prop:acyclic} by the exact argument of Proposition \ref{prop:acyclic-topological}, utilizing
the cotensor structure of $\sC$ over $\sS$. 	
\end{rem}
     
Note that, by Proposition \ref{prop:acyclic}, all formulas regarding manipulations of derived membrane
spaces proven in \S \ref{subsec:membranes} for (semi-)simplicial topological spaces 
extend to the context of (semi-)simplicial objects in the combinatorial model category
$\mC$ by setting $A = \Delta$ ($A = \Delta_\inj$).
In what follows, we will use these statements freely. 
    
\vfill\eject

\subsection{1-Segal and 2-Segal objects}

Consider the classes of morphisms in $\Sp$
\begin{equation}
	\label{eq:sets-s-d}
	\begin{split}
	 \s_1 & = \left\{ \Delta^{\Ic_n} \hra \Delta^n |\; n \ge 2 \right\},
	 \\
	 \s_2 & = \left\{ \Delta^{\T} \hra \Delta^n |\; \text{$n \ge 3$, $\T$
	 is a triangulation of the polygon $P_n$} \right\}.
	\end{split}
\end{equation}
Here $\Ic_n\subset 2^{[n]}$ denotes the collection of subsets from Example \ref{I.1segal},
so that $\Delta^{\Ic_n}$ is the union of $n$ composable oriented edges. 
The morphisms in $\s_d$ will be called {\em $d$-Segal coverings}. We 
apply the formalism of \S \ref{subsec:yonedaext} in the case $A=\Delta$. In particular, we 
consider the Yoneda embedding $\yoneda: \Delta \to \Sp$ and the corresponding 
derived Yoneda extension functor $R\yoneda_*: \mC_\Delta \to \mC_\Sp$.

\begin{defi} 
\label{defi:12segal} Let $\mC$ be a combinatorial model category and
$X$ a simplicial object in $\mC$.
We say that $X$ is a {\em $d$-Segal object in $\mC$} if its
homotopy Yoneda extension $R \yoneda_* X \in \mC_{\Sp}$ maps $d$-Segal coverings to weak
equivalences in $\mC$.
\end{defi}

\begin{rem} \label{rem:dsheaves} It is convenient to think of $\s_d$
as defining the rudiment of a Grothendieck topology on $\Sp$.
In this context, the $d$-Segal condition on $X$ is analogous to a (homotopy) descent condition 
for the $\mC$-valued presheaf $R\yoneda_* X$ on $\Sp$.
\end{rem}

\begin{rems}\label{rems:2-segal-top-model-identical}
	\begin{exaenumerate} 
		 \item As in Chapter \ref{sec:topsegal},
		 Definition \ref{defi:12segal} can be modified to define $d$-Segal semi-simplicial
		 objects in a combinatorial model category $\mC$.
		 We leave the details to the reader. 
	 
		 \item Similarly, the definition of a {\em unital $2$-Segal object}
		 in $\mC$ is identical to Definition \ref{def:unital-2-segal-top}.
	\end{exaenumerate}
\end{rems}

\begin{rem} As announced in the introduction, there is a natural way to extend Definition
	\ref{defi:12segal} to $d \in \mathbb N$, using an analogous descent condition
	involving triangulations of $d$-dimensional cyclic polytopes. These higher
	Segal spaces will be the subject of future work.
\end{rem}

\begin{prop}\label{prop:1seg2seg} 
	Every $1$-Segal object in $\mC$ is a $2$-Segal object.
\end{prop}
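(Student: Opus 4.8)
The plan is to transcribe the proof of the topological statement, Proposition \ref{prop.1segal2segal}, into the combinatorial setting, relying on the fact (recorded at the end of \S \ref{subsec:yonedaext}) that all the membrane-space manipulations of \S \ref{subsec:membranes} carry over verbatim to a combinatorial model category $\mC$ through Proposition \ref{prop:acyclic}. Concretely, for a simplicial object $X$ in $\mC$ and a collection $\I \subset 2^{[n]}$, I write $RX_\I = (\Delta^\I, X)_R = R\yoneda_* X(\Delta^\I)$. The $d$-Segal condition of Definition \ref{defi:12segal} then says precisely that the generalized Segal maps $f_\I : X_n \to RX_\I$ are weak equivalences for the appropriate collections, and the union-decomposition formula of Proposition \ref{prop.colim} holds for these derived membrane objects.

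First I would fix a triangulation $\T$ of $P_n$ and factor the $1$-Segal map through the $2$-Segal map. With $\I_n = \{\{0,1\},\dots,\{n-1,n\}\}$ as in Example \ref{I.1segal}, the inclusions $\Delta^{\I_n} \subset \Delta^\T \subset \Delta^n$ induce a commutative triangle
\[
\xymatrix{
X_n \ar[r]^{f_\T} \ar[dr]_h & RX_\T \ar[d]^g \\
& RX_{\I_n}
}
\]
in which $h$ is the $n$-th $1$-Segal map, hence a weak equivalence by hypothesis. By the two-out-of-three property it suffices to prove that $g$ is a weak equivalence.

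Next I would argue by induction on $n$. There is a unique $0 < i < n$ with $\{0,i,n\} \in \T$; it splits $\T$ into $\T_1 = \{J \in \T \mid J \subset \{0,\dots,i\}\}$ and $\T_2 = \{J \in \T \mid J \subset \{i,\dots,n\}\}$. Applying the combinatorial form of Proposition \ref{prop.colim} twice gives a weak equivalence
\[
RX_\T \simeq RX_{\T_1} \times^R_{X_{\{0,i\}}} X_{\{0,i,n\}} \times^R_{X_{\{i,n\}}} RX_{\T_2},
\]
while the $1$-Segal condition identifies $X_{\{0,i,n\}} \simeq X_{\{0,i\}} \times^R_{X_{\{i\}}} X_{\{i,n\}}$; composing these yields $RX_\T \simeq RX_{\T_1} \times^R_{X_{\{i\}}} RX_{\T_2}$. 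The inductive hypothesis, applied to $\T_1$ and $\T_2$ as triangulations of the smaller polygons, rewrites each $RX_{\T_j}$ as the iterated homotopy fiber product over the edges of $\I_n$ lying in it, whence $g$ is a composite of weak equivalences and therefore a weak equivalence. The boundary cases $i \in \{1,n-1\}$ are handled identically, with fewer factors.

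The argument is essentially formal given the earlier setup; the one point requiring care — and the only potential obstacle — is to confirm that the decomposition of Proposition \ref{prop.colim} and the homotopy-invariance of the iterated homotopy fiber products genuinely transfer to $\mC$. This is exactly what the acyclicity machinery of Proposition \ref{prop:acyclic} supplies: the relevant diagrams of simplicial subsets ($\Delta^{\T_1} \leftarrow \Delta^{\T_1} \cap \Delta^{\{0,i,n\}} \to \Delta^{\{0,i,n\}}$, and so on) are acyclic exactly as in the topological case, and derived membrane objects, being derived limits, send weak equivalences of such diagrams to isomorphisms in $\Ho(\mC)$. With this in hand no model-categorical subtlety beyond two-out-of-three intervenes, and the proof closes.
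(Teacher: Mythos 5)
Your proof is correct and takes exactly the route the paper intends: the paper's own proof of Proposition \ref{prop:1seg2seg} consists of the single remark that the argument of Proposition \ref{prop.1segal2segal} carries over, and you have reproduced that argument (factorization through $RX_\T$, two-out-of-three, induction on $n$ via the triangle $\{0,i,n\}$, and Proposition \ref{prop.colim}) together with the correct justification, via Proposition \ref{prop:acyclic}, that the membrane-space formulas of \S\ref{subsec:membranes} transfer to a combinatorial model category.
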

\begin{proof}
	 Completely analogous to the argument of Proposition \ref{prop.1segal2segal}.
\end{proof}

\begin{exas} 
	\begin{exaenumerate}
	 \item Let $\mC = \Set$ with the trivial model structure. The $d$-Segal objects in $\mC$ are 
	 the discrete $d$-Segal spaces studied in Chapter \ref{sec:discsegal}. More
	 generally, if $\mC$ is any category with limits and colimits equipped with the trivial
	 model structure, we recover the concept of non-homotopical $d$-Segal
	 objects from Chapter \ref{sec:discsegal}. In fact, the existence of
	 colimits is not necessary to formulate the $d$-Segal condition in this context. 
		
	 \item Let $\mC = \Sp$ equipped with the Kan model structure. We call the $d$-Segal 
	 objects in $\Sp$ {\em combinatorial $d$-Segal spaces}. All examples of topological $d$-Segal spaces
	 studied in Chapters \ref{sec:topsegal} and \ref{sec:discsegal} are in fact obtained from combinatorial $d$-Segal
	 spaces by levelwise application of geometric realization. 
	 By Proposition \ref{prop:holim-simp-top} and the fact that the model categories $\Top$ and $\Sp$ are
	 Quillen equivalent, the theory of combinatorial $d$-Segal spaces is essentially equivalent to
	 the theory of topological $d$-Segal spaces. However, since the model category $\Sp$ is
	 combinatorial, it has technical advantages.
	\end{exaenumerate}
\end{exas}

\begin{ex}\label{ex:2-segal-objects}
	\begin{exaenumerate}
	 \item Let $\Ec$ be a proto-exact category. Then the Waldhausen construction gives
	 a $2$-Segal simplicial object $\Sc\Ec$ in $\Gc r$ and its nerve
	 $\N(\Sc\Ec)$ is a $2$-Segal simplicial object in $\Sp$. 

	 \item Similarly, let $G$ be a group acting on a set $E$. Then $\Sc_\bullet(G,E)$ is
	 a $1$-Segal object in $\Gc r$.
	\end{exaenumerate}

\end{ex}
 
\begin{ex}[(Waldhausen stacks)]\label{ex:waldhausen-stacks}
	\begin{exaenumerate}
	 \item Let $\Uc$ be a small Grothendieck site. A {\em stack of proto-exact categories} on $\Uc$
	 is a stack $\Ec$ of categories $\Ec(U), U\in\Ob(\Uc)$ such that each $\Ec(U)$
	 is made into a proto-exact category with classes $\Men(U),\Een(U)$, and these
	 classes are of local nature, i.e., closed under restrictions as well as
	 under gluing in coverings forming the
	 Grothendieck topology. Then $U\mapsto \Sc_n(\Ec(U))$ is a stack of groupoids
	 on $\Uc$, so we obtain a simpicial object $\Sc(\Ec)$ in the category $\underline {\Gc r}_\Uc$
	 if stacks of groupoids over $\Uc$.
	 This simplicial object is $2$-Segal. The proof is the same as 
	 in Proposition \ref{prop:waldhausen-proto-segal}. 

	 In particular, let $\FF$ be a field and $\Uc=\Aff_\FF$ be the \'etale site of affine 
	 $\FF$-schemes of at most countable type, as in Example
	 \ref {ex:stacks-of-groupoids}. 
	 The following Waldhausen stacks on $\Aff_\FF$
	 are important, since they provide examples of $2$-Segal objects
	 of algebro-geometric nature.

	 \item Let 
	 $R$ be a finitely generated associative $\FF$-algebra.
	 We then have the stack of exact categories $\alg{R-\Mod}$ on $\Aff_\FF$.
	 By definition, $\alg{R-\Mod}(U)$ is
	 the category of sheaves
	 of left $\Oc_U\otimes_\FF R$-modules which are locally free of finite rank
	 as $\Oc$-modules. In particular, for $U=\operatorname{Spec}(\FF)$
	 we recover the abelian category $R-\Mod$
	 of finite-dimensional $R$-modules.
	 The Waldhausen stack $\Sc(\alg{R-\Mod})$
	 is thus an algebro-geometric extension of the single simplicial groupoid
	 $\Sc(R-\Mod)$, the Waldhausen space of the category of finite-dimensional
	 $R$-modules.

	 \item Let $V$ be a projective algebraic variety over $\FF$. Then
	 we have the abelian category
	 $\Coh(V)$ of coherent sheaves and the exact category $\Bun(V)$ of vector
	 bundles on $V$. They extend in a standard way to stacks of exact
	 categories $\alg\Coh(V)$ and $\alg\Bun(V)$ on $\Aff_\FF$. For instance,
	 $\alg\Coh(V)(U)$ is formed by quasi-coherent sheaves on $V\times U$,
	 flat with respect to the projection $V\times U\to V$ and whose
	 restriction to each geometric fiber of this projection is coherent. 
	 Therefore we get $2$-Segal simplicial stacks of groupoids $\Sc(\alg\Coh(V))$
	 and $\Sc(\alg\Bun(V))$. 

	 \item Let $G$ be an algebraic group and $E$ be an algebraic variety, both over $\FF$,
	 with $G$ acting on $E$. 
	 Then the stack quotients $\Sc_n(G, E) = [G\bbs E^{n+1}]_{n\geq 0}$ form a simplicial object
	 $\Sc_\bullet(G, E)$
	 in the category of Artin stacks over $\FF$ (which is a subcategory in the category
	 $\underline{\Gr}_{\Aff_\FF}$). This simplicial object, which is
	 the algebro-geometric version of the Hecke-Waldhausen space from
	 \S \ref{subsec:hecke-waldhausen} is $1$-Segal. 
	 The proof is the same as given in that section. 

\end{exaenumerate}
\end{ex}
 
\eject

\subsection{1-Segal and 2-Segal model structures}
\label{subsec:12segalmodel}

In Remark \ref{rem:dsheaves} we expressed the $d$-Segal
condition as a descent condition with respect to $d$-Segal coverings. 
In this section, we use Proposition \ref{prop:dkanmap} to reinterpret these descent
conditions as locality conditions: a simplicial object $X \in \mC_{\Delta}$ is a $d$-Segal object, if and
only if it is $\seg_d$-local in the $\mC$-enriched sense. 
This enables us to apply the general theory of enriched Bousfield localization to
introduce model structures for $1$-Segal and $2$-Segal objects. 

Let $\mC$ be a left proper, tractable, symmetric monoidal model category and $d \in \{ 1,2 \}$.
We consider the category $\sC$ with its injective model
structure and the homotopical $\mC$-enrichment, see
Example \ref{exa:enricheddia}. We further use the notation $\RMap_{\sC}(X,Y)$ to denote the
corresponding $\h \mC$-enriched derived mapping spaces as defined in \eqref{eq:derivedmappingdef}. 
Let $\disc{\s_d}\subset\Mor(\mC_\Delta)$ be the image of $\s_d$ under the discrete object functor
defined in \eqref{eq:discrete-boxtimes-1}.

\begin{prop} \label{prop:12segallocal} A simplicial object $X \in \sC$ is $d$-Segal if and
	only if it is $\disc{\seg_d}$-local in the $\mC$-enriched sense.
\end{prop}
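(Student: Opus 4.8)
The plan is to unwind both sides of the claimed equivalence into statements about the derived Yoneda extension $R\yoneda_*X$ and then match them directly. By Definition~\ref{defi:12segal}, $X$ is $d$-Segal if and only if $R\yoneda_*X$ sends every $d$-Segal covering $\Delta^{\Ic_n}\hra\Delta^n$ (for $d=1$) or $\Delta^\T\hra\Delta^n$ (for $d=2$) to a weak equivalence in $\mC$. On the other side, $X$ is $\disc{\seg_d}$-local in the $\mC$-enriched sense if and only if, for every morphism $f\colon D\hra\Delta^n$ in $\s_d$, the induced map
\[
\RMap_{\sC}(\disc{\Delta^n},X)\lra\RMap_{\sC}(\disc{D},X)
\]
is an isomorphism in $\h\mC$. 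So the entire content of the proposition is that these two families of maps coincide, index by index.

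\textbf{The key computation.} First I would invoke Proposition~\ref{prop:dkanmap}(b), which gives a natural weak equivalence of functors $\P(\Delta)^{\op}\to\mC$,
\[
R\yoneda_*X\;\simeq\;\RMap_{\sC}(\disc{-},X).
\]
This is precisely the bridge between the two formulations: evaluated at a simplicial set $K\in\P(\Delta)=\Sp$, it identifies the derived membrane space $(K,X)_R$ with the $\mC$-enriched derived mapping space $\RMap_{\sC}(\disc{K},X)$. Applying this naturality to a $d$-Segal covering $f\colon D\hra\Delta^n$, the map $R\yoneda_*X(f)$ is identified, up to the canonical weak equivalences, with the map $\RMap_{\sC}(\disc{\Delta^n},X)\to\RMap_{\sC}(\disc{D},X)$ appearing in the definition of $\disc{\seg_d}$-locality. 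Since a morphism is a weak equivalence in $\mC$ precisely when its image in $\h\mC$ is an isomorphism, the two conditions agree morphism-by-morphism.

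\textbf{Assembling the equivalence.} With the identification in hand, the proof is a formal biconditional: $X$ is $d$-Segal iff $R\yoneda_*X$ takes each $f\in\s_d$ to a weak equivalence, iff (by the natural equivalence above) the induced map on $\RMap_{\sC}(\disc{-},X)$ is an isomorphism in $\h\mC$ for each $f\in\s_d$, iff $X$ is $\disc{\seg_d}$-local. I would note that the naturality of the equivalence in Proposition~\ref{prop:dkanmap}(b) is what guarantees the squares relating $R\yoneda_*X(f)$ to the mapping-space map commute in $\h\mC$, so that one map is an isomorphism exactly when the other is.

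\textbf{Main obstacle.} The proposition is essentially a bookkeeping consequence of Proposition~\ref{prop:dkanmap}, so there is little genuine difficulty; the one point demanding care is the cofibrancy/fibrancy bookkeeping in the derived mapping space. Specifically, one must know that $\disc{D}$ is cofibrant in $\sC$ for each $D\in\s_d$ (so that no cofibrant replacement distorts the comparison), which is exactly the observation already used in the proof of Proposition~\ref{prop:dkanmap}(b): $\disc{D}(a)=\coprod_{D_a}\1$ is a coproduct of copies of the cofibrant unit $\1$, hence cofibrant. Thus the subtlety is entirely absorbed by the cited proposition, and the remaining argument is the formal chain of equivalences above.
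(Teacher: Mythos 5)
Your proof is correct and follows exactly the paper's route: the paper proves this proposition by declaring it immediate from Proposition~\ref{prop:dkanmap}, and your argument simply spells out the translation via the natural weak equivalence $R\yoneda_*X \simeq \RMap_{\sC}(\disc{-},X)$ together with the cofibrancy of $\disc{D}$. Nothing to add.
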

\begin{proof} This is immediate from Proposition \ref{prop:dkanmap}.
\end{proof}

\begin{thm}\label{thm:S-d-localization}
There exists a $\mC$-enriched combinatorial model structure $\Seg_d$
on $\sC$ with the following properties:
	\begin{itemize}
	 \item[(W)] The weak equivalences are given by the
	 $\disc{\seg_d}$-equivalences.
	 \item[(C)] The cofibrations are the injective cofibrations.
	 \item[(F)] The fibrant objects are the injectively fibrant $d$-Segal objects.
	\end{itemize}
\end{thm}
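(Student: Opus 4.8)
The plan is to obtain the model structure $\Seg_d$ as an application of the enriched Bousfield localization theorem (Theorem \ref{thm:barwick}) of Barwick, taking the base symmetric monoidal model category to be $\mC$ itself and the enriched model category to be $\mC_\Delta = \sC$, localized at the set $\disc{\seg_d}$. First I would verify that the two hypotheses of Theorem \ref{thm:barwick} are satisfied. The first hypothesis, that $\mC$ is tractable, holds by assumption. The second hypothesis requires that $\sC$ be left proper and tractable: tractability follows from Proposition \ref{prop:tract}, applied with $A = \Delta$, since $\sC = \mC_\Delta$ carries the injective model structure; left properness of the injective model structure on $\mC_\Delta$ follows from the left properness of $\mC$ because injective cofibrations and weak equivalences are both detected pointwise, and pushouts and weak equivalences in a diagram category are computed pointwise. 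I would also record that $\sC$, equipped with the homotopical $\mC$-enrichment of Example \ref{exa:enricheddia}, is a $\mC$-enriched model category, so that the enriched localization machinery genuinely applies.

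Second, I would observe that $\disc{\seg_d}$ is a \emph{set} (not a proper class) of morphisms in $\sC$: this is clear because $\seg_d$ is a set of morphisms of simplicial sets by its definition in \eqref{eq:sets-s-d}, and the discrete-object functor $\disc{(-)} = \const{\1} \itimes (-)$ sends it to a set of morphisms in $\mC_\Delta$. Given this, Theorem \ref{thm:barwick} produces a unique combinatorial model structure on the underlying category of $\sC$, which I will call $\Seg_d$, whose weak equivalences are exactly the $\disc{\seg_d}$-equivalences (giving property (W)), whose cofibrations coincide with the cofibrations of the injective model structure, i.e.\ the injective cofibrations (giving property (C)), and whose fibrant objects are precisely the $\disc{\seg_d}$-local objects that are already injectively fibrant. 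Moreover, the theorem guarantees that this localized model structure remains $\mC$-enriched, which is part of the claim.

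The remaining point is the explicit identification of the fibrant objects in property (F), and this is the step where the real content lies, though the groundwork has already been laid. By the output of Theorem \ref{thm:barwick}, the fibrant objects are the injectively fibrant objects that are $\disc{\seg_d}$-local in the $\mC$-enriched sense. I would then invoke Proposition \ref{prop:12segallocal}, which asserts precisely that a simplicial object $X \in \sC$ is $d$-Segal if and only if it is $\disc{\seg_d}$-local in the $\mC$-enriched sense. Combining these two facts immediately yields that the fibrant objects of $\Seg_d$ are exactly the injectively fibrant $d$-Segal objects, establishing property (F).

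I do not expect a genuine obstacle in this argument, since it is essentially an assembly of earlier results; the only delicate verifications are the hypothesis-checking of the first paragraph, and of these the left properness of the injective model structure on $\mC_\Delta$ is the least immediate, depending as it does on the fact that weak equivalences, cofibrations, and the relevant pushouts in a diagram category are all computed pointwise, so that left properness descends from $\mC$ to $\mC_\Delta$. The uniqueness clause in the statement is handled automatically by the uniqueness assertion in Theorem \ref{thm:barwick}. I would therefore present the proof as: check the hypotheses of Theorem \ref{thm:barwick}, apply it to $\disc{\seg_d}$ to obtain $\Seg_d$ together with (W) and (C) and the characterization of fibrant objects as injectively fibrant $\disc{\seg_d}$-local objects, and finally translate $\disc{\seg_d}$-locality into the $d$-Segal condition via Proposition \ref{prop:12segallocal} to obtain (F).
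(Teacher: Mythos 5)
Your proposal is correct and follows essentially the same route as the paper, whose proof is exactly the one-line assembly of Theorem \ref{thm:barwick}, Proposition \ref{prop:tract}, and Proposition \ref{prop:12segallocal}. The only addition is your explicit (and correct) verification that left properness of the injective model structure on $\mC_\Delta$ descends pointwise from $\mC$, a hypothesis of Theorem \ref{thm:barwick} that the paper leaves implicit.
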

\begin{proof}
	This follows from Theorem \ref{thm:barwick}, Proposition \ref{prop:tract} and Proposition \ref{prop:12segallocal}.
\end{proof}

We call $\Seg_d$ the {\em model structure for $d$-Segal
objects in $\mC$}. We further denote the injective model structure on $\mC_\Delta$ by
$\I$. 

\begin{cor}\label{thm:12segaladjunction}
We have inclusions
\[
\begin{gathered}
\Wen_\I \subset \Wen_{\Seg_2} \subset\Wen_{\Seg_1}, \\
\Fen_\I \supset \Fen_{\Seg_2} \supset\Fen_{\Seg_1}, \\
\Cen_\I = \Cen_{\Seg_2} =\Cen_{\Seg_1}, 
\end{gathered} 
\]
so that the identity functors induce Quillen adjunctions
\[
(\mC_\Delta, \I) \buildrel\Id\over\longleftrightarrow 
(\mC_\Delta, \Seg_{2}) \buildrel\Id\over\longleftrightarrow 
 (\mC_\Delta, \Seg_{1}) \text{.}
\]
 \end{cor}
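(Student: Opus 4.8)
The plan is to deduce Corollary \ref{thm:12segaladjunction} directly from Theorem \ref{thm:S-d-localization}, which has already done all the real work: it asserts that $\Seg_1$ and $\Seg_2$ are Bousfield localizations of the injective model structure $\I$. First I would record the three displayed relations between the model structures, treating the cofibration statement, the weak-equivalence statement, and the fibration statement in turn. The equalities of cofibrations $\Cen_\I = \Cen_{\Seg_2} = \Cen_{\Seg_1}$ are immediate from clause (C) of Theorem \ref{thm:S-d-localization}, since each localization leaves the injective cofibrations unchanged.

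For the inclusions of weak equivalences, the key input is Proposition \ref{prop:1seg2seg}: every $1$-Segal object is $2$-Segal. By Proposition \ref{prop:12segallocal} this says that every $\disc{\seg_1}$-local object is $\disc{\seg_2}$-local. Now I would argue that if every $T$-local object is $S$-local, then every $S$-equivalence is a $T$-equivalence: indeed, an $S$-equivalence $f$ induces an isomorphism on $\RMap(-,Z)$ for every $S$-local $Z$, hence in particular for every $T$-local $Z$ (as these form a subclass), so $f$ is a $T$-equivalence. Applying this with $S = \disc{\seg_2}$ and $T = \disc{\seg_1}$ gives $\Wen_{\Seg_2} \subset \Wen_{\Seg_1}$. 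The inclusion $\Wen_\I \subset \Wen_{\Seg_2}$ is the standing observation, noted just before the definition of $\s$-local objects, that all weak equivalences in the original model structure are $\s$-equivalences for any $\s$. The fibration inclusions $\Fen_\I \supset \Fen_{\Seg_2} \supset \Fen_{\Seg_1}$ are then formal: in any model category the fibrations are determined as $(\Wen \cap \Cen)_\perp$ via the lifting axiom, and since the cofibrations agree while the classes of trivial cofibrations $\Wen \cap \Cen$ grow as we pass from $\I$ to $\Seg_2$ to $\Seg_1$, the right-orthogonal complements (the fibrations) shrink accordingly.

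Finally, to produce the Quillen adjunctions I would invoke the standard criterion that for two model structures on the same underlying category sharing cofibrations, with $\Wen_{\Seg_2} \subset \Wen_{\Seg_1}$, the identity functor $\Id : (\mC_\Delta, \Seg_2) \to (\mC_\Delta, \Seg_1)$ is a left Quillen functor and its right adjoint $\Id$ is right Quillen. Concretely, as a left adjoint the identity preserves cofibrations (they coincide) and trivial cofibrations (a $\Seg_2$-trivial cofibration is a $\Seg_1$-trivial cofibration because $\Cen_{\Seg_2}=\Cen_{\Seg_1}$ and $\Wen_{\Seg_2}\subset\Wen_{\Seg_1}$), which is exactly the condition for a left Quillen functor. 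The same argument applied to the pair $(\I, \Seg_2)$ gives the other adjunction. I do not expect a genuine obstacle here: the corollary is essentially bookkeeping on top of Theorem \ref{thm:S-d-localization} and Proposition \ref{prop:1seg2seg}. The one point deserving care is the elementary lemma that a containment of local objects reverses to a containment of equivalences; I would state and verify it explicitly rather than leave it implicit, since it is the sole place where Proposition \ref{prop:1seg2seg} genuinely enters.
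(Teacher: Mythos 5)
Your proposal is correct and follows essentially the same route as the paper: cofibration equality from clause (C) of Theorem \ref{thm:S-d-localization}, weak-equivalence inclusions from Proposition \ref{prop:1seg2seg} via the reversal between containment of local objects and containment of local equivalences, and the fibration inclusions from the lifting axiom $\Fen = (\Wen\cap\Cen)_\perp$. Your explicit statement of the elementary reversal lemma is a reasonable expansion of what the paper leaves implicit.
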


\begin{proof}
The equality of the $\Cen$-classes 
is clear from Theorem \ref{thm:S-d-localization}. 
 The inclusion of the
$\Wen$-classes follow from the theorem together with the fact
that, by Proposition \ref{prop:1seg2seg}, every $1$-Segal object is $2$-Segal. The opposite inclusion
of the $\Fen$-classes follows from the axiom $\Fen = (\Wen\cap\Cen)_\perp$
of model categories. 
\end{proof}

Note that, as part of the model structure $\Seg_d$, we have a functorial fibrant replacement functor: 
for every $X$ in $\mC_\Delta$, we obtain a $d$-Segal object $\mathfrak S_d(X)$ and a
canonical $d$-Segal weak equivalence
\[
	X \lra {\mathfrak S_d}(X),
\]
functorial in $X$. We refer to this map as the {\em $d$-Segal envelope of $X$}.

Passing to homotopy categories, Corollary 
\ref{thm:12segaladjunction} gives a chain of inclusions of full subcategories
\begin{equation}
	\label{eq:homotopy-d-segal-subcategories}
	\Ho(\mC_\Delta, \I) \supset \Ho(\mC_\Delta, \Seg_2) \supset \Ho(\mC_\Delta, \Seg_1). 
\end{equation}
That is, $\Ho(\mC_\Delta, \Seg_d)$ is identified with the full subcategory in
$\Ho(\mC_\Delta, \I)$ formed by injectively fibrant $d$-Segal objects in $\sC$. 
Further, the $d$-Segal envelope functors induce left adjoint functors to the 
inclusions of homotopy categories:
\[
\xymatrix{
 \Ho(\mC_\Delta, \I)
 \ar@/^1pc/[rr]^{\mathfrak S_1}
  \ar[r]_-{\Sen_2}&\Ho(\mC_\Delta, \Seg_2)
 \ar[r]_{\Sen_{2,1}}&\Ho(\mC_\Delta, \Seg_1), 
 }
\]
 with $\Sen_{2,1}$ being the restriction of $\Sen_1$ to the subcategory
 of $2$-Segal objects.

  \begin{exas}[(Free categories)]\label{ex:1-segal-envelope-discrete}
	 \begin{exaenumerate}
	 \item Let $\mC=\Set$ with trivial model structure. Then the injective model structure on $\Set_\Delta$
    is also trivial, so $\Ho(\Set_\Delta, \I) = \Set_\Delta$ is the category of simplicial sets. 
    This means that the subcategories in
    \eqref{eq:homotopy-d-segal-subcategories} are simply the full subcategories
    formed by $d$-Segal simplicial sets, $d=1,2$:
    \[
    \Set_\Delta \supset \Set_\Delta^{2-\Seg} \supset \Set_\Delta^{1-\Seg}. 
    \] 
      In particular, since
    any $1$-Segal simplicial set is isomorphic to the nerve of a small category, the 
    composite embedding
    $\Ho(\Set_\Delta, \Seg_1)\subset \Ho(\Set_\Delta, \I)$
    is identified with the nerve functor $\N: \Cat\to \Set_\Delta$. 
    Therefore the functor of $1$-Segal envelope $\Sen_1$ is, in this case
      the left adjoint of the $\N$ in the ordinary sense. This is the functor
     \[
     \FC: \Set_\Delta\lra\Cat, \quad D\mapsto \FC(D),
     \]
     where $\FC(D)$ is the {\em free category generated by} $D$. 
     Explicitly, $\Ob({\FC}(D))=D_0$ is the set of
	vertices of the simplicial set $D$, while $\Hom_{{\FC}(D)}(x,y)$
	is the set of oriented edge paths from $x$ to $y$ modulo identifications
	given by the 2-simplices. 

	\item Let $\mC=\Sp$ with the Kan model structure. We then have an embedding
$\Set_\Delta\to\Sp_\Delta$ takind a simplicial set $D$ to the discrete simplicial space
$\disc {D}$. The functors of $d$-Segal envelope in $\Sp_\Delta$, denote them
$\Sen_d^\Sp$, can be compared with the corresponding functors in $\Set_\Delta$
from (a), denote then $\Sen_d^{\Set}$. In fact, they are compactible:
\[
\Sen_d^\Sp(\disc {D}) \simeq \disc{ \Sen_d^{\Set}(D) }. 
\]
To see this, note that 
  model structure $\Seg_d$ on $\sS$ is combinatorial and therefore
	cofibrantly generated. 	
	Thus,
	by the small object argument (e.g., \cite[A.1.2]{lurie.htt}), we may build fibrant replacements by forming iterated (transfinite)
	compositions of pushouts along generating trivial cofibrations. The generating trivial cofibrations consist of two types of
	morphisms. First, the injective model structure itself is generated by a certain set of embeddings of simplicial spaces giving weak equivalences at
  	each level. Second, we have the embeddings from the set 
	$\disc{\s_d}$ from \eqref{eq:sets-s-d}.
	Since the discrete simplicial space $\disc{D}$
	is already injectively fibrant, it suffices to form pushouts only along maps in
 $\disc{\s_d}$. Doing so will produce $\disc{\Sen_d^\Set(D)}$. 
 	\end{exaenumerate}
	
\end{exas}

\eject

\section{The path space criterion for 2-Segal spaces}
\label{sec:pathspace}

The main result of this chapter is Theorem \ref{thm.crit} which
expresses the $2$-Segal condition for a simplicial object $X$ 
in terms of $1$-Segal conditions for simplicial analogs of the path space of $X$, as defined by Illusie.

\subsection{Augmented simplicial objects}
\label{subsec:augmented}

We define the category $\Dp$ to be the category of all finite ordinals, including the empty set. An
{\em augmented simplicial object} of a category $\mC$ is a functor $X: \Dpop \to \mC$. We denote by
$\mC_{\Dp} =  \Fun(\Dpop, \Sp)$ the category of such objects. Explicitly, an augmented simplicial
object is the same as an ordinary simplicial object $X_\bullet\in\mC_\Delta$ together with an object
$X_{-1}=X(\emptyset)$ and an augmentation morphism $\partial: X_0\to X_{-1}$ such that
$\partial\partial_0=\partial\partial_1: X_1\to X_{-1}$. 

Let $\mC$ be a category with finite limits and colimits. The inclusion functor $j: \Delta \to
\Dp$ induces two adjunctions
\[
	\begin{aligned}
		j_!& : \mC_{\Delta} \longleftrightarrow \mC_{\Dp}: j^* \\ 
		j^*& : \mC_{\Dp} \longleftrightarrow \mC_{\Delta}: j_*.
	\end{aligned}
\]
While the pullback functor $j^*$ simply forgets the augmentation,
its left and right adjoints $j_!, j_*$ provide two natural
ways to equip a simplicial object with an augmentation. 
For $X\in\mC_\Delta$ we will use the abbreviations
\[
		X^{\clubsuit} := j_!(X), \quad X^+ := j_*(X).
\]
Explicitly, we have
\begin{equation}\label{eq:X_+-clubsuit}
 X^\clubsuit_{-1} =  \Pi_0(X), \quad 
 X^+_{-1} =  \pt, \quad \text{and} \quad X^\clubsuit_n = X^+_n= X_n, \quad n\geq 0.
\end{equation}
Here $\pt$ denotes the final object of $\mC$ and
\[
\Pi_0(X) := \ind^\mC 
\left\{ 
\xymatrix{ X_1 \ar@<.5ex>[r]^-{\partial_0}\ar@<-.5ex>[r]_-{\partial_1}&X_0}
\right\}
\]
denotes the {\em internal space of connected components}.
 
\begin{rem} An {\em augmented semi-simplicial object} in $\mC$ is a functor $X: \Delta^{+\op}_\inj\to \mC$,
where $\Delta^+_\inj\subset\Delta^+$ is the subcategory formed by injective morphisms of all finite
ordinals. As in the simplicial case, we have the embedding $\bar j:\Delta_\inj\to\Delta^{+\op}_\inj $ 
which gives rise to the pullback functor $\bar j^*$ and its two adjoints
$\bar j_!: X\mapsto X^\clubsuit$, $\bar j_*: X\mapsto X^+$, which are again given by the formulas of \eqref{eq:X_+-clubsuit}.
\end{rem}

\vfill\eject 

\subsection{Path space adjunctions}
\label{subsec:simppath}

We now recall the construction of simplicial path spaces, due originally to Illusie
(\cite{illusie}, Ch. VI) who calls them ``les d\'ecal\'es d'un objet simplicial". 

Given two ordinals $I, I'$, their {\em join} is defined to be the ordinal
$I \star I' := I \cop I'$
where each element of $I$ is declared to be smaller than each element of $I'$. 
Let $\mC$ be a category with small limits and colimits. 
The functors
\[
	\begin{aligned}
		i & : \Dp \lra \Delta,\; I \mapsto [0] \star I \\
		f & : \Dp \lra \Delta,\; I \mapsto I \star [0]
	\end{aligned}
\]
induce adjunctions 
\[
	\begin{aligned}
		i_! & : \mC_{\Dp} \longleftrightarrow \mC_{\Delta}: i^*\\
		f_! & : \mC_{\Dp} \longleftrightarrow \mC_{\Delta}: f^*.
	\end{aligned}
\]
We further consider the inclusion functor $j: \Delta \to \Dp$ and the induced adjunction
\[
	j_! : \mC_{\Delta} \longleftrightarrow \mC_{\Dp}: j^*
\]
from \S \ref{subsec:augmented}. 
We call the functors $j^* \circ i^*$ and $j^* \circ f^*$ the {\em initial} and {\em final path space}
functors, and $i_! \circ j_!$ and $f_! \circ j_!$ the
{\em left} and {\em right cone} functors, respectively. To
emphasize this terminology, we will use the notation
\[
\PI = j^* \circ i^*, \quad \PF = j^* \circ f^*\quad \CI =
i_! \circ j_!, \quad \CF = f_! \circ j_!. 
\]

We give explicit descriptions of the path space and cone functors.
For the path space functors, note that $\PI(X)_n =\PF(X)_n = X_{n+1}$, 
$n\geq 0$, the face morphisms are given by 
\begin{equation}
	\label{eq:faces-path-space}
	\begin{gathered}
	 \bigl\{\partial_i^{n}: \PIp(X)_n\to \PIp(X)_{n-1} \bigr\} =
	 \bigl\{ \partial_{i+1}^{n+1}: X_{n+1}\to X_n\bigr\}, \quad i=0, ..., n; \\
	 \bigl\{\partial_i^{n}: \PFp(X)_n\to \PFp(X)_{n-1} \bigr\} =
	 \bigl\{ \partial_{i}^{n+1}: X_{n+1}\to X_n\bigr\}, \quad i=0, ..., n,
	\end{gathered} 
\end{equation}
and similarly for the degeneracies.

For augmented simplicial objects $X', X''\in \mC_{\Dp}$, we define
the {\em join} $X'\star X'' \in \mC_{\Delta}$ by the formula 
\begin{equation}
	\label{eq:join}
	(X' \star X'') (J) := \coprod_{\substack{I' \cup I'' = J\\I' < I''}} X'(I') \times X''(I'') \text{,}
\end{equation}
for a finite nonempty ordinal $J$. Here the coproduct is
taken over all ordered pairs $(I', I'')$ of possibly empty subsets of $J$ satisfying the conditions
as stated. For instance, $(J,\emptyset)$ and $(\emptyset, J)$ give two different summands. 
This formula is then extended to morphisms by taking preimages of disjoint decompositions.
 
\begin{prop} \label{prop:coneformula}
	Let $\pt$ denote the final object of $\mC_{\Dp}$ so that $\pt$ assigns the
	final object of $\mC$ to every finite ordinal.
	For an augmented simplicial object $X \in \mC_{\Dp}$ we have the formulas
 	\[
	 i_!(X) \cong \pt \star X, \quad
	 f_!(X) \cong X \star \pt.
	\]
\end{prop}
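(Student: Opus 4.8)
The plan is to prove the two formulas $i_!(X) \cong \pt \star X$ and $f_!(X) \cong X \star \pt$ by directly computing the left Kan extensions $i_!$ and $f_!$ and comparing them with the explicit join formula \eqref{eq:join}. Since $i_!$ is left adjoint to the pullback $i^*$ along $i: \Dp \to \Delta$, I would use the pointwise formula for left Kan extensions from \eqref{eq:kan-extensions-pointwise-limits}, namely that for $X \in \mC_{\Dp}$ and a finite nonempty ordinal $J$,
\[
	i_!(X)(J) \cong \ind^\mC_{\{i(I) \to J\} \in i/J} X(I),
\]
where the colimit runs over the comma category $i/J$ whose objects are pairs $(I, \phi)$ consisting of a finite ordinal $I \in \Dp$ and a morphism $\phi: [0]\star I \to J$ in $\Delta$. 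The whole proof reduces to analyzing this comma category and showing its colimit agrees with the coproduct defining $(\pt \star X)(J)$.

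First I would work out the combinatorics of the comma category $i/J$. A morphism $\phi: [0] \star I \to J$ is a monotone map sending the initial element $0$ of $[0]\star I$ somewhere into $J$, and sending $I$ monotonically into $J$ in a way compatible with the join order. The key observation is that $\pt \star X$ is built precisely from decompositions $J = I' \cup I''$ with $I' < I''$, where the summand indexed by $(I', I'')$ is $\pt(I') \times X(I'') = X(I'')$, since $\pt$ takes the value of the final object. I would identify the cofinal (or initial) structure of $i/J$ that collapses the colimit to this coproduct: intuitively, the colimit over all the ways of mapping $[0]\star I$ into $J$ organizes itself according to which initial segment $I'$ of $J$ is ``absorbed'' by the cone point, with the remaining segment $I''$ carrying the value $X(I'')$. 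Concretely, I expect the indexing category to decompose as a disjoint union over the choices of $I'$, with each piece having a terminal object realizing the summand $X(I'')$, so that $\ind^\mC_{i/J} X(I) \cong \coprod_{I' < I''} X(I'')$, matching \eqref{eq:join} with $X' = \pt$.

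The argument for $f_!(X) \cong X \star \pt$ is entirely symmetric, using $f: I \mapsto I \star [0]$ in place of $i$, with the cone point now absorbing a final segment of $J$ rather than an initial one; I would simply indicate that this case follows by reversing the roles of the two join factors. I should also check naturality in $J$, i.e.\ that the identifications are compatible with the face and degeneracy maps, which follows from the functoriality of the pointwise Kan extension formula and the fact that the join \eqref{eq:join} is extended to morphisms by taking preimages of disjoint decompositions.

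The main obstacle will be the careful bookkeeping in identifying the comma category $i/J$ and verifying that its colimit collapses exactly to the coproduct in \eqref{eq:join}. The subtlety is handling the empty pieces: in the join formula the pairs $(J, \emptyset)$ and $(\emptyset, J)$ contribute genuine (distinct) summands, and one must confirm that these degenerate decompositions correspond correctly to objects of $i/J$ where the cone point maps to the top of $J$ or where $I'$ is all of $J$. Making the cofinality/terminality claim precise, rather than hand-waving that ``the colimit is the coproduct,'' is where the real work lies; once the comma category is understood the comparison with \eqref{eq:join} is immediate.
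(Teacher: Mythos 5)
Your overall strategy is the same as the paper's (pointwise formula for the left Kan extension, then a decomposition of the indexing category into pieces each admitting a terminal object), but there is a genuine variance error in your setup of the comma category, and it is fatal as written. The objects of $\mC_{\Dp}$ are \emph{contravariant} functors $\Dpop\to\mC$, so $i_!$ is the left Kan extension along $i^{\op}:\Dpop\to\Dop$, not along $i$. Applying \eqref{eq:kan-extensions-pointwise-limits} correctly, the colimit computing $i_!(X)(J)$ is indexed by pairs $(I,\alpha)$ where $\alpha: J\to [0]\star I$ is a morphism \emph{in $\Delta$ from $J$ into the cone} (with morphisms given by maps $I'\to I$ over $J$), whereas you take maps $\phi:[0]\star I\to J$. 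Your subsequent intuition — ``the cone point absorbs an initial segment $I'$ of $J$'' — is exactly right for the correct comma category, where $I'=\alpha^{-1}(\overline 0)$ is the preimage of the cone point and the complementary segment $I''$ maps to $I$; but it is incompatible with the category you actually wrote down, in which the cone point is a single element mapping \emph{into} $J$ and cannot absorb anything.

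One can see concretely that your indexing category gives the wrong answer: take $\mC=\Set$ and $X=\pt$ the terminal augmented simplicial set, $J=[n]$. The join formula \eqref{eq:join} gives $(\pt\star\pt)(J)$ with $n+2$ elements (one per decomposition $I'<I''$, including $I'=J$, $I''=\emptyset$). The colimit of the constant functor over your category of maps $[0]\star I\to [n]$ is its set of connected components, which is $n+1$: the image of the cone point in $[n]$ is invariant under morphisms and classifies the components. The missing component is precisely the decomposition with $I'=J$, realized on the correct side by the constant map $J\to\{\overline 0\}\subset[0]\star I$, which has no analogue among maps into $J$. Once you replace your comma category by the category of maps $J\to[0]\star I$, the rest of your outline (disjoint decomposition indexed by $(I',I'')$, terminal object in each piece given by the collapse map $J\to[0]\star I''$, symmetry for $f_!$, naturality in $J$) goes through and coincides with the paper's argument.
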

\begin{proof}
	We treat the statement for the functor $i_!$, the argument for $f_!$ is analogous. 
	By the pointwise formula for left Kan extensions, we have
	\[
	 i_! X (J) \cong \ind_{\{J \to [0] \star I\} \in (J \backslash \Dp)^\op} \; X_I.
	\]
	The objects of the category $(J\backslash \Dp)^\op$ are maps $\a: J \to [0] \star I$ in
	$\Delta$ while a morphism from such $\a$ to $\a': J\to [0]\star I'$ is a monotone
	map $I'\to I$ making the triangle commute. Now notice that $(J\backslash \Dp)^\op$
	is the disjoint union of subcategories each having a final object. These subcategories
	are labelled by disjoint decompositions $J=I'\sqcup I''$ as in \eqref{eq:join}.
	The subcategory corresponding to $(I', I'')$ consists of maps $\a$ such that $I'$
	is the preimage of the new minimal element $\overline 0\in [0]*I$, and the final
	object is given by the map $I\to I/I'$. Therefore, we have an isomorphism
	\[
 	 \ind_{\{J \to [0] \star I\} \in 
	 J\backslash \Dp} \; X_I \cong 
	\coprod_{\substack{I' \sqcup I'' = J\\I' < I''}} X(I''), 
	\]
	which implies the claimed formula.
\end{proof}

\begin{rem}\label{rem:simplicial-cone}
	Let $K$ be a simplicial set and set $K_{-1} = \Pi_0(K)$. We deduce from the proposition the
	explicit formula 
	\[
	\CF(K)_n \cong K_n \amalg K_{n-1} \amalg \dots \amalg K_{-1}, \quad n\geq 0,
	\]
	with the summand $K_m$ corresponding to $I' = \{0, 1, ..., m\}$ in \eqref{eq:join}.
	Denoting by $(x,m)_n$ the element of $\CF(X)_n$ corresponding to $x\in X_m$, we find the face
	maps by the formula
	\[
	\partial^n_i (x,m)_n =
	\begin{cases}
	(\partial^m_i(x), m-1)_{n-1}, \quad \text{if}\quad i \leq m,\\
	(x,m)_{n-1}, \quad\quad\quad \text{if $i>m$.}
	\end{cases}
	\]
	and similarly for degeneracies.  

	The right cone $\CF(K)$ is obtained by adding a new vertex
	$\overline v$ for each connected component $v \in \Pi_0(K)$, then an oriented edge from each $\overline v$
	to each vertex of the connected component $v$, then a triangle with one vertex $\overline v$
	for each 1-simplex in $v$ and so on. So the geometric
	realization $|\CF(K)|$ is the disjoint union of the geometric cones
	over the connected components of $|K|$. The left cone can be understood
	similarly but with different orientation of the edges, triangles, etc.
\end{rem}

\begin{ex} \label{ex.cone}  
	Let $\I$ be a collection of subsets of $[n]$
	and $\Delta^\I$ be the corresponding simplicial subset of $\Delta^n$, see \eqref{eq.DI}. We
	assume that $\Delta^\I$ is connected, i.e., $\Pi_0(\Delta^{\I}) \cong \pt$.
	Define the collection $\I^{\triangleleft}$ of subsets of $\{\overline{0},0,\dots,n\}
	\cong [n+1]$ by appending the element $\overline{0}$ to each set in $\I$. Then we have a
	natural isomorphism
	\[
	 \CI(\DI) \cong \Delta^{\I^{\triangleleft}}.
	\]
	We apply this observation to the collection $\I_n = \{ \{0,1\}, \{1,2\},\cdots,\{n-1,n\}\}$.
	The collection $\I_n^{\triangleleft}$, considered as a subset of $2^{[n+1]}$, corresponds 
	to the special triangulation of a convex $(n+2)$-gon in which all triangles have the common vertex $\{0\}$ . Using analogous
	definitions for the right cone, we obtain that $\I_n^{\triangleright}$ corresponds to the
	special triangulation in which all triangles have the common vertex $\{n+1\}$. 
\end{ex}

The following proposition, which is the central result of this section, 
tells us how the path space and cone functors interact with the $\mC$-enriched membrane spaces
defined in \S \ref{subsec:yonedaext}.

\begin{prop}
\label{prop:cofinal} Let $\mC$ be a combinatorial model category. Let $X \in \sC$ and $K
\in \Set_{\Delta}$. 
	\begin{enumerate}[label=(\alph*)]
	 \item We have natural isomorphisms in $\mC$
	 \[
	 (K, \PI(X)) \cong (\CI(K), X),\quad (K, \PF(X)) \cong (\CF(K), X).
	 \]
	 \item Assume further that each connected component of $|K|$ is weakly contractible. 
	 Then we have natural isomorphisms in $\h\mC$
	 \[
	 (K, \PI(X))_R \simeq (\CI(K), X)_R, \quad  
	 (K, \PF(X))_R \simeq (\CF(K), X)_R.
	 \]
	\end{enumerate}
\end{prop}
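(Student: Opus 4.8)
The statement to prove is Proposition \ref{prop:cofinal}, which asserts that the path space functors $\PI, \PF$ and the cone functors $\CI, \CF$ are adjoint to each other when viewed through the lens of membrane spaces. My plan is to derive both parts from the adjunction formalism of \S \ref{subsec:yonedaext}, specifically from Proposition \ref{prop:yonedaadj} and Proposition \ref{prop:derivedyonedaadj}. The key observation is that the path space functors and cone functors arise from adjunctions between the functor categories $\mC_\Delta$ and $\mC_{\Dp}$ induced by functors between the index categories. First I would recall that $\PI = j^* \circ i^*$ and $\CI = i_! \circ j_!$, so that the pair $(\CI, \PI)$ is a composite of adjunctions and hence $\CI$ is left adjoint to $\PI$ at the level of the functor categories. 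The same holds for $(\CF, \PF)$.

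For part (a), the plan is to apply the membrane-space adjunction formula \eqref{prop:yonedaadj}. Recall that for a functor $\phi$ of small index categories, Proposition \ref{prop:yonedaadj} gives $(K, \phi^* X) \cong (\phi_! K, X)$ where the left Kan extension $\phi_!$ is computed on the presheaf level. Here I must be careful that the path space functors are built from composites $j^* \circ i^*$, and the cone functors from $i_! \circ j_!$; the adjunction I need relates the \emph{simplicial-set-level} left Kan extension of $K$ to the pullback of $X$. So I would set $\phi = i \circ j$ (or the relevant composite) and verify that the induced left Kan extension $\phi_!$ on presheaves computes precisely $\CI(K)$ when applied to a simplicial set $K$. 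The explicit cone formulas from Proposition \ref{prop:coneformula} and Remark \ref{rem:simplicial-cone} are what identify $\phi_! K$ with $\CI(K)$; this is essentially the content of Example \ref{ex.cone} in the special case $\I = \I_n$, and the general case follows by the same join-decomposition argument. Chaining the isomorphisms then yields $(K, \PI(X)) \cong (\CI(K), X)$, and symmetrically for $\PF$ and $\CF$.

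For part (b), the plan is to pass to derived membrane spaces using Proposition \ref{prop:derivedyonedaadj}, which upgrades the isomorphism of part (a) to a weak equivalence \emph{provided} the relevant pullback functor preserves injectively fibrant objects. The hypothesis that each connected component of $|K|$ is weakly contractible is exactly what controls the acyclicity needed to derive the adjunction; concretely, I expect to invoke Proposition \ref{prop:acyclic} to show that the diagram indexing the cone decomposition is acyclic, so that forming the derived membrane space commutes with the join construction. The main obstacle will be verifying the hypothesis of Proposition \ref{prop:derivedyonedaadj}: I must check that the pullback functor along the composite index-category functor preserves injective fibrancy, or else argue directly that the cone functor sends the diagram $K$ to an acyclic colimit diagram under the contractibility assumption. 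This is where the condition on connected components of $|K|$ enters essentially—contractibility of components guarantees that the homotopy colimit computing the cone agrees with the strict colimit, so that the derived and underived statements are compatible. I would handle this by expressing $\CI(K)$ via the acyclic colimit presentation coming from Remark \ref{rem:simplicial-cone} and then applying part (b) of Proposition \ref{prop:acyclic} to commute the derived membrane functor past the colimit.
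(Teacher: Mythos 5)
Part (a) of your proposal is fine and is exactly what the paper does: $\CI = i_!\circ j_!$ is left adjoint to $\PI = j^*\circ i^*$, and Proposition \ref{prop:yonedaadj} applied to the composite gives the underived isomorphism.

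For part (b), your primary route has a genuine gap: you cannot apply Proposition \ref{prop:derivedyonedaadj} to the composite index functor, because the composite pullback $\PI = j^*\circ i^*$ does \emph{not} preserve injectively fibrant objects. Equivalently, $\CI = i_!\circ j_!$ does not preserve trivial injective cofibrations: the functor $j_!$ computes $X^{\clubsuit}_{-1}=\Pi_0(X)$ as a strict coequalizer, which is not homotopy invariant. Indeed, if the hypothesis of Proposition \ref{prop:derivedyonedaadj} held for the composite, the derived adjunction would be valid for \emph{every} $K$, and the contractibility hypothesis on the components of $|K|$ would be superfluous --- but the statement fails, e.g., for $K$ a simplicial circle. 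The paper avoids this by splitting the composite adjunction into its two factors (Lemmas \ref{lem:adjn1} and \ref{lem:adjn2}): the functor $i^*$ \emph{does} preserve injective fibrations, because by Proposition \ref{prop:coneformula} its left adjoint $i_!$ is a join with a point, given levelwise by coproducts, and coproducts of trivial cofibrations are trivial cofibrations; the problematic $j$-step is then handled by a homotopy finality argument, showing that $\Delta/K \to \Dp/j_!K$ is homotopy final precisely because the only objects not in its essential image are the maps $h_{\emptyset}\to j_!K$, indexed by $\Pi_0(K)$, whose slice categories are $\Delta/K_c$ with nerve weakly equivalent to the barycentric subdivision of $|K_c|$ --- this is where weak contractibility of the components is used. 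Your fallback via acyclicity of the colimit diagram presenting $\CI(K)$ can be made to work and is morally the same argument, but you would still have to carry out the decisive computation that the index category $B_\sigma$ sitting over the cone vertex of $\CI(K)$ is $\Delta/K_c$; as written, your proposal gestures at this without committing to it, and the primary route you commit to would fail.
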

\begin{proof} Assertion (a) follows immediately from Proposition \ref{prop:yonedaadj}. Part (b) is a consequence of Lemma \ref{lem:adjn1} and Lemma \ref{lem:adjn2}
	below.
\end{proof}

\begin{lem} \label{lem:adjn1} 
	Let $\mC$ be a combinatorial model category. For every $X \in \sC$ and $M
	\in \Set_{\Dp}$ we have natural isomorphisms
	\[
	 (M, i^*X)_R \simeq (i_!M, X)_R, \quad  
	 (M, f^*X)_R \simeq (f_!M, X)_R
	\]
 	in $\Ho(\mC)$.
\end{lem}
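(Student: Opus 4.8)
The plan is to recognize both isomorphisms as instances of Proposition \ref{prop:derivedyonedaadj}. Applying that proposition to the functor $i: \Dp \to \Delta$ (so that $A = \Dp$, $A' = \Delta$, and the functor is $i$) yields, for $X \in \mC_\Delta$ and $M \in \P(\Dp) = \Set_{\Dp}$, a natural weak equivalence $(M, i^*X)_R \simeq (i_! M, X)_R$, which is exactly the first claim; applying it to $f: \Dp \to \Delta$ gives the second. The sole hypothesis of Proposition \ref{prop:derivedyonedaadj} that must be verified is that the pullback functors $i^*, f^*: \mC_\Delta \to \mC_{\Dp}$ preserve injectively fibrant objects, so the entire argument reduces to this preservation statement.

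To establish it I would show that $i^*$ is a \emph{right} Quillen functor for the injective model structures (which exist by Proposition \ref{prop:model-category-diagrams}); the same reasoning applies verbatim to $f^*$. Since $i^*$ is right adjoint to the left Kan extension $i_!$, it suffices to prove that $i_!: \mC_{\Dp} \to \mC_\Delta$ is left Quillen, i.e. preserves injective cofibrations and injective trivial cofibrations. Granting this, $i^*$ is right Quillen; being a right adjoint it preserves the terminal object and fibrations, and therefore carries injectively fibrant objects to injectively fibrant objects, which is precisely what Proposition \ref{prop:derivedyonedaadj} requires.

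The key computation is that $i_!$ is, levelwise, a coproduct functor. By Proposition \ref{prop:coneformula} we have $i_! X \cong \pt \star X$, and the join formula \eqref{eq:join} gives, for each finite ordinal $J$, an identification $(i_! X)(J) \cong \coprod_{I' < I'',\, I' \cup I'' = J} X(I'')$, a coproduct in $\mC$ of the objects $X(I'')$ (compare the explicit level description in Remark \ref{rem:simplicial-cone}). This decomposition is natural in $X$, so a morphism $w: X \to Y$ in $\mC_{\Dp}$ induces at each level the coproduct $\coprod_{I''} w_{I''}$ of its components. If $w$ is an injective cofibration (resp. injective trivial cofibration), then each $w_{I''}$ is a cofibration (resp. trivial cofibration) in $\mC$, and since these two classes are closed under coproducts, $\coprod_{I''} w_{I''}$ lies in the same class. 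Hence $i_! w$ is again an injective cofibration (resp. trivial cofibration), so $i_!$ is left Quillen, and symmetrically for $f_!$.

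The main obstacle is entirely bookkeeping: one must check that the coproduct decomposition of $(i_! X)(J)$ is genuinely natural, so that $i_! w$ splits as $\coprod_{I''} w_{I''}$ with no interaction between summands. This is exactly what the join formula \eqref{eq:join} supplies, the augmented setting guaranteeing that the summand indexed by $I'' = \emptyset$ contributes the prescribed object $X_{-1} = X(\emptyset)$ rather than an internal connected-components object. Once naturality is secured, closure of (trivial) cofibrations under coproducts finishes the preservation argument, and the two appeals to Proposition \ref{prop:derivedyonedaadj} deliver both stated equivalences.
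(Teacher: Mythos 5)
Your proposal is correct and follows essentially the same route as the paper: reduce to Proposition \ref{prop:derivedyonedaadj} by checking that $i^*$ and $f^*$ preserve injective fibrancy, then deduce this from the levelwise coproduct decomposition of $i_!$ and $f_!$ supplied by Proposition \ref{prop:coneformula} together with the pointwise nature of (trivial) injective cofibrations and their stability under coproducts. The paper's proof is just a terser version of the same argument (it only records preservation of trivial cofibrations, which is all that is needed for $i^*$ to preserve fibrations).
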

\begin{proof} 
	We reduce the statement to Proposition \ref{prop:derivedyonedaadj} by showing that $i^*$ and
	$f^*$ preserve injective fibrations. Equivalently, we can show that the left adjoints $i_!$
	and $f_!$ preserve trivial injective cofibrations. This follows from the formulas of
	Proposition \ref{prop:coneformula}, since trivial injective cofibrations are defined
	pointwise, and trivial cofibrations are stable under coproducts.
\end{proof}

\begin{lem} \label{lem:adjn2} 
	Let $K$ be a weakly contractible simplicial set. Then, for every $Y
	\in \C_{\Dp}$, there is a natural weak equivalence
	\[
	(K, j^*Y)_R \simeq (j_! K, Y)_R 
	\]
	in $\Ho(\C)$.
\end{lem}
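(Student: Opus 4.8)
The plan is to realize $j_! K$ as a colimit of \emph{representable} augmented simplicial sets and then to invoke the acyclicity criterion of Proposition \ref{prop:acyclic}. First I would identify, for each $n\ge 0$, the augmented simplicial set $j_!(\Delta^n)$ with the representable functor $h^{\Dp}_{[n]} = \Hom_{\Dp}(-,[n]) \in \P(\Dp)$: by \eqref{eq:X_+-clubsuit} we have $j_!(\Delta^n)_{-1} = \Pi_0(\Delta^n) = \pt = \Hom_{\Dp}(\emptyset,[n])$, while in nonnegative degrees $j_!(\Delta^n)$ agrees with $\Delta^n$, so the two functors coincide. Writing $K$ as the colimit of its simplices as in \eqref{eq:sset-lim-simplices}, and using that $j_!$ is a left adjoint and hence preserves colimits, I obtain a natural presentation
\[
	j_! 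K \cong \ind^{\P(\Dp)}_{\{\sigma\colon \Delta^n \to K\}\, \in\, \Delta/K} h^{\Dp}_{[n]}.
\]

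Next, since the comma category $\Dp/h^{\Dp}_{[n]}$ has the terminal object $\id_{[n]}$, the derived membrane space of a representable is computed exactly as in Example \ref{ex:mem}, giving $(h^{\Dp}_{[n]}, Y)_R \simeq Y_n$ for every $Y \in \C_{\Dp}$. Granting that the diagram $(h^{\Dp}_{[n_\sigma]})_{\sigma \in \Delta/K}$ in $\P(\Dp)$ is acyclic, Proposition \ref{prop:acyclic}(b) applied to the above presentation then yields natural weak equivalences
\[
	(j_! K, Y)_R \simeq \hopro_{\{\sigma\colon \Delta^n \to K\}\,\in\, \Delta/K}\, (h^{\Dp}_{[n]}, Y)_R \simeq \hopro_{\{\sigma\colon \Delta^n \to K\}\,\in\, \Delta/K}\, Y_n.
\]
By the pointwise formula \eqref{eq.dpointwise} the right-hand side is precisely $(K, j^* Y)_R$, as $(j^* Y)_n = Y_n$; this would establish the asserted equivalence.

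The one remaining point is the acyclicity of the diagram, and this is where the contractibility hypothesis enters and where I expect essentially all of the work to lie. By the analogue of Proposition \ref{prop:acyclic-crit} for the index category $\Dp$, I must show that for every $J \in \Dp$ and every $\tau \in (j_! K)(J)$ the category $B_\tau$ (with objects the pairs $(\sigma, \eta)$, $\sigma \in \Delta/K$ and $\eta \in \Hom_{\Dp}(J,[n_\sigma])$ lying over $\tau$) has weakly contractible classifying space. For $J = [m]$ nonempty we have $(j_!K)([m]) = K_m$, and $B_\tau$ is the category of factorizations of the simplex $\tau$ through simplices of $K$; it has an initial object $(\tau, \id_{[m]})$, exactly as in the proof of Proposition \ref{prop:membranes-semi-simplicial}, hence is contractible. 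The decisive case is $J = \emptyset$: here $(j_!K)(\emptyset) = \Pi_0(K)$ is a single point because $K$ is connected, the constraint on $\eta$ is vacuous since $\Hom_{\Dp}(\emptyset,[n]) = \pt$, and one checks directly that $B_\tau$ is isomorphic to the entire category of simplices $\Delta/K$. Its classifying space is weakly equivalent to $|K|$, which is contractible by hypothesis. This is the step that forces the weak contractibility assumption on $K$ — without it the $J = \emptyset$ contribution obstructs the equivalence — and assembling the two cases yields acyclicity and completes the argument.
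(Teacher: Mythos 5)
Your proof is correct and is in substance the same argument as the paper's: both reduce to the observation that the only data in $j_!K$ beyond $K$ itself sits in augmentation degree $\emptyset$, where the relevant indexing category is the full category of simplices $\Delta/K$, whose nerve is weakly equivalent to $|K|$ and hence contractible — this is precisely where weak contractibility enters. The paper phrases this as homotopy finality of the embedding $\Delta/K \to \Dp/j_!K$ induced by $j_!$, while you route it through the acyclicity criterion of Proposition \ref{prop:acyclic} applied to the presentation $j_!K \cong \ind_{\Delta/K} h^{\Dp}_{[n]}$; the one step you assert without justification, namely $|\N(\Delta/K)| \simeq |K|$, is exactly the point the paper spells out via the inclusion $\Delta_{\inj}/K \subset \Delta/K$ (Quillen's Theorem A) and the identification of $|\N(\Delta_{\inj}/K)|$ with the barycentric subdivision of $|K|$.
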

\begin{proof}
	We have the formulas
	\[
		(j_! K, Y)_R = \hopro_{\{\Dp/j_! K\}} Y'
	\]
	and
	\[
		(K, j^*Y)_R = \hopro_{\{\Delta/K\}} Y''
	\]
	where $Y'$ and $Y''$ denote the functors induced by $Y$ on the categories 
	$(\Dp/j_!K)^{\op}$ and $(\Delta/K)^{\op}$, respectively. 
	The functor $j_!$ induces a natural embedding 
	\[
	 k: \Delta/K \lra \Dp/j_! K
	\]
	such that $(k^{\op})^*Y' = Y''$. Thus, it suffices to show that $k$ is homotopy final
	(\cite[19.6]{hirschhorn}).  Since the functor $k$ is fully faithful, we only have to verify
	the contractibility of the undercategories of objects in $\Dp/j_! K$ which are not in the
	essential image of $k$. The only such objects are given by maps of the form
	\[
	 c: h_{\emptyset} \to j_! K,
	\]
	which are in natural bijective correspondence with the set $\Pi_0(K)$ of connected
	components of $|K|$. The slice category $c/k$ is isomorphic to the category $\Delta/K_c$
	where $K_c \subset K$ denotes the connected component classified by $c$. By assumption, each
	connected component $K_c$ is weakly contractible. Therefore, it suffices to show that, for
	any weakly contractible simplicial set $S$, the simplicial set $\N(\Delta/S)$ is weakly
	contractible. Consider the inclusion
	\[
	 i: \N(\Delta_{\inj}/S) \subset \N(\Delta/S), 
	\]
	where $\Delta_{\inj} \subset \Delta$ denotes the subcategory of monomorphisms. Using
	Quillen's Theorem A (\cite{quillen-ktheory}), it is easy to verify that $i$ is a weak
	homotopy equivalence. The geometric realization $|\N(\Delta_{\inj}/S)|$ can be identified
	with the barycentric subdivision of $|S|$. Hence we have a natural homeomorphism
	$|\N(\Delta_{\inj}/S)| \cong |S|$ which concludes our argument since $|S|$ is by assumption
	weakly contractible.
\end{proof}

\begin{rem} 
Assume that $|K|$ is connected. In this case, we have $j_! K \cong j_* K$ 
and therefore the cones $\CI(K)$ and $\CF(K)$ are the ``usual" cones over $K$
obtained by adding a single initial or final vertex, respectively. 
\end{rem}

\vfill\eject
   
\subsection{The path space criterion}

Let $\mC$ be a combinatorial simplicial model category. We equip the category $\mC_\Delta$ with the
injective model structure. Given a map $f: K \to K'$ of simplicial sets, we say that an object $X
\in \mC_{\Delta}$ is {\em $f$-local}, if the map
\[
(R\Upsilon_*X)(f): (K',X)_R \lra (K,X)_R
\]
induced by $f$ is a weak equivalence in $\mC$.

\begin{prop}\label{prop.contractible}
	Let $X \in \mC_\Delta$ be a simplicial object and $f: K \to K'$ a morphism of weakly contractible 
	simplicial sets. Then $\PI X$ (resp. $\PF X$) is
	$f$-local if and only if $X$ is $\CI(f)$-local (resp. 
	$\CF(f)$-local).
\end{prop}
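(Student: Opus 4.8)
The plan is to deduce the statement directly from Proposition \ref{prop:cofinal}(b), which supplies natural isomorphisms
\[
(K, \PI X)_R \simeq (\CI(K), X)_R, \quad (K, \PF X)_R \simeq (\CF(K), X)_R
\]
in $\h\mC$ whenever each connected component of $|K|$ is weakly contractible. The first point I would check is that the hypothesis of that proposition holds in the present situation: a weakly contractible simplicial set is in particular connected, so both $K$ and $K'$ consist of a single weakly contractible component. Hence Proposition \ref{prop:cofinal}(b) applies to each of $K$ and $K'$ separately, and it does so for both path-space/cone pairs.

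Next I would exploit the naturality of these isomorphisms in the variable $K$. Applying it to the morphism $f: K \to K'$ yields a commutative square in $\h\mC$
\[
\xymatrix{
(K', \PI X)_R \ar[r]^{(R\yoneda_* \PI X)(f)} \ar[d]_{\simeq} & (K, \PI X)_R \ar[d]^{\simeq}\\
(\CI(K'), X)_R \ar[r]^{(R\yoneda_* X)(\CI(f))} & (\CI(K), X)_R
}
\]
in which the vertical arrows are the isomorphisms of Proposition \ref{prop:cofinal}(b), the top horizontal arrow is precisely the map whose being a weak equivalence defines $f$-locality of $\PI X$, and the bottom horizontal arrow is the map whose being a weak equivalence defines $\CI(f)$-locality of $X$.

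From this square the conclusion is immediate: since the two vertical maps are isomorphisms in $\h\mC$, the two-out-of-three property of weak equivalences shows that the top arrow is a weak equivalence if and only if the bottom one is. Thus $\PI X$ is $f$-local exactly when $X$ is $\CI(f)$-local. The identical argument, using the second isomorphism of Proposition \ref{prop:cofinal}(b) and the right cone functor $\CF$, settles the case of $\PF X$ and $\CF(f)$, which completes the proof.

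The one step I would flag as requiring care — and the main obstacle — is justifying that the comparison isomorphisms of Proposition \ref{prop:cofinal}(b) are natural in $K$ with respect to maps between weakly contractible simplicial sets, since the derived membrane construction $(-,\PI X)_R$ and the cone functor $\CI$ must be compared as functors on the full subcategory of objects with contractible components. I would handle this by tracing the isomorphism back through its construction: it is built from the adjunction isomorphism of Proposition \ref{prop:derivedyonedaadj} (for $i_!$ and $f_!$) together with the homotopy-cofinality comparison of Lemma \ref{lem:adjn2} (for $j_!$), each of which is natural in the simplicial-set argument, and from the plain functoriality of $\CI$ and $\CF$ on morphisms. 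Since $f$ is a morphism of weakly contractible objects, it lies in the common domain of all these naturality statements, so the square above genuinely commutes.
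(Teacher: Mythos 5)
Your proof is correct and follows exactly the route the paper takes: the paper's entire proof of this proposition is "This follows from Proposition \ref{prop:cofinal}," and your elaboration — applying part (b) to both $K$ and $K'$ (valid since weakly contractible implies connected with contractible component), forming the naturality square, and invoking two-out-of-three — is precisely the intended argument, with the naturality point you flag being the only detail the paper leaves implicit.
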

\begin{proof} This follows from Proposition \ref{prop:cofinal}.
\end{proof}

Note that applying $\CI$ and $\CF$ to the $1$-Segal coverings $\Delta^{\I_n}\to\Delta^n$ we get some
particular $2$-Segal coverings (Example \ref{ex.cone}(b)). This suggests that the path space
constructions mediate between $1$-Segal and $2$-Segal conditions. Indeed, we have the following
result.
	 
\begin{thm}[Path Space Criterion]\label{thm.crit} 
	 Let $X$ be a simplicial object in $\mC$. Then the following conditions are equivalent:
	\begin{enumerate}
	 \item[(i)] $X$ is a $2$-Segal object.
	 \item[(ii)] Both path spaces $\PI X$ and $\PF X$ are $1$-Segal objects.
	\end{enumerate}
\end{thm}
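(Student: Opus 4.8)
The plan is to reduce the $2$-Segal condition for $X$ to the $1$-Segal conditions for $\PI X$ and $\PF X$ by exploiting the correspondence between $1$-Segal coverings and certain "cone" triangulations established in Example \ref{ex.cone}. The key combinatorial input is that the two families of special triangulations $\I_n^{\triangleleft}$ and $\I_n^{\triangleright}$ (the triangulations of an $(n+2)$-gon in which all triangles share the vertex $\{0\}$, resp. $\{n+1\}$) are exactly the images under $\CI$ and $\CF$ of the standard $1$-Segal coverings $\Delta^{\I_n} \hra \Delta^n$. Combined with Proposition \ref{prop.contractible}, which translates $f$-locality of a path space into $\CI(f)$- or $\CF(f)$-locality of $X$, this will let me move freely between statements about $\PI X$, $\PF X$ and statements about $X$ itself.

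First I would make precise the implication (i) $\Rightarrow$ (ii). Assuming $X$ is $2$-Segal, I observe that $\PI X$ is $1$-Segal iff it is $f$-local for every $1$-Segal covering $f: \Delta^{\I_n} \hra \Delta^n$ (via Proposition \ref{prop:dkanmap}/\ref{prop:12segallocal} applied to the path space). By Proposition \ref{prop.contractible}, this is equivalent to $X$ being $\CI(f)$-local; but $\CI(f)$ is the inclusion $\Delta^{\I_n^{\triangleleft}} \hra \Delta^{n+1}$ corresponding to a genuine triangulation of $P_{n+1}$, so $X$ being $2$-Segal gives exactly this locality. The same argument with $\CF$ handles $\PF X$. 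Here I must check the hypotheses of Proposition \ref{prop.contractible}: both $\Delta^{\I_n}$ and $\Delta^n$ are weakly contractible, which is clear since $\Delta^{\I_n} \cong \J^n$ is a subdivided interval and $\Delta^n$ is a simplex.

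For the converse (ii) $\Rightarrow$ (i), I would use the characterization of $2$-Segal objects in Proposition \ref{prop:2-segal-basic}\ref{item:2-segal-basic-4}: it suffices to verify the weak equivalences $X_n \lra X_{\{0,1,\dots,i,n\}} \times^R_{X_{\{i,n\}}} X_{\{i,i+1,\dots,n\}}$ for the diagonals of the form $\{0,j\}$ and $\{i,n\}$ only. The locality of $\PF X$ with respect to $1$-Segal coverings gives, via Proposition \ref{prop.contractible} and Example \ref{ex.cone}, precisely the $2$-Segal maps for the fan triangulations based at the vertex $\{0\}$, while locality of $\PI X$ supplies the fan triangulations based at the last vertex. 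I would then argue inductively, peeling off one vertex at a time: a general triangulation $\T$ of $P_n$ has a diagonal $\{0,j\}$ or $\{i,n\}$ (the edge of the unique triangle on the side $\{0,n\}$), and Proposition \ref{prop.colim} lets me decompose $RX_\T$ across this diagonal into membrane spaces over smaller polygons. The fan-triangulation locality from the path spaces handles the base step of splitting off the triangle adjacent to $\{0,n\}$, and the inductive hypothesis, transported to the smaller polygons, finishes the decomposition.

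The main obstacle I anticipate is the bookkeeping in the inductive converse: I must ensure that after splitting $P_n$ along a diagonal, the two subpolygon membrane conditions are themselves accessible from the path space hypotheses, which requires relating membrane spaces of subpolygons of $X$ to membrane spaces of $\PI X$ and $\PF X$. The clean way to do this is to note that the path space functors shift simplicial degree (so $\PF X_m = X_{m+1}$, and membranes for $\PF X$ over a polygon are membranes for $X$ over the corresponding coned polygon), and then to verify that every fan triangulation needed in the induction arises as $\CI$ or $\CF$ of a $1$-Segal covering. The only genuinely delicate point is checking the weak-contractibility hypothesis of Proposition \ref{prop.contractible} holds for all the intermediate coverings $f$ appearing in the Segal conditions for the path spaces; once that is confirmed, the equivalence follows by assembling these local statements with the $2$-out-of-$3$ property and Proposition \ref{prop.colim}.
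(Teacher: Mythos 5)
Your proposal is correct and follows essentially the same route as the paper: (i)$\Rightarrow$(ii) by recognizing the cones of $1$-Segal coverings as fan triangulations via Example \ref{ex.cone} and Proposition \ref{prop.contractible}, and (ii)$\Rightarrow$(i) by induction on $n$, splitting a triangulation along a diagonal $\{0,j\}$ or $\{i,n\}$, using the path-space locality for the two-piece splitting and Proposition \ref{prop.colim} plus the inductive hypothesis for the subpolygons. Two small points to fix when writing this up: the roles of $\PI X$ and $\PF X$ are swapped in your converse (the \emph{left} cone $\CI$, hence $\PI X$, produces the fans based at the vertex $0$ and thus handles the diagonals $\{0,j\}$, while $\PF X$ handles $\{i,n\}$), and the splitting map $X_n \to X_{\{0,\dots,i\}}\times^R_{X_{\{0,i\}}}X_{\{0,i,\dots,n\}}$ requires the locality of the path space with respect to the two-piece coverings of Proposition \ref{prop:1-segal-basic}, not merely the standard coverings $\Delta^{\I_n}\hra\Delta^n$.
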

\begin{proof}
	The implication (i)$\Rightarrow$(ii) follows by applying Proposition \ref{prop.contractible} to Example
	\ref{ex.cone} (b).
	To obtain (ii)$\Rightarrow$(i), let $X$ be a simplicial object with $\PI X$ and
	$\PF X$ being $1$-Segal objects. 
	By Proposition \ref{prop:1-segal-basic}, a $1$-Segal object is in fact local with respect to
	maps $\DI \to \Delta^n$ with $\I$ being any collection of the form
	\[
	\I = \bigl\{ \{0,1,\dots,i_1\},
	\{i_1,i_1+1,\dots, i_2\}, \dots, \{i_{k},i_{k}+1,\dots, n\} \bigr\}\text{.}
	\]
	We now argue by induction on $n$. 
	Assume that for each $n' < n$ and for each triangulation $\T'$ of the $(n'+1)$-gon, 
	the $2$-Segal map
	$f_{\T'}: X_{n'} \to RX_{\T'}$, is a weak equivalence.
	Let $\T$ be a  triangulation of the $(n+1)$-gon $P_n$.  
	Note, that at least one of the following cases must hold:
	\begin{enumerate}[label=(\arabic*)]
	 \item \label{crit.case.1} The triangulation $\T$ contains an internal edge with vertices 
	 $\{0,i\}$ where $1 < i < n$.  
	 \item \label{crit.case.2} The triangulation $\T$ contains an internal edge with vertices
	 $\{i,n\}$ where $0 < i < n-1$.
	\end{enumerate}
	Assume \ref{crit.case.1} holds. Let $\I'= \{ \{0,1,\dots,i\},\{0,i,i+1,\dots,n\}\}$ and
	note that $\I'$ is obtained as the left cone of the collection $\{
	\{1,\dots,i\},\{i,i+1,\dots,n\}\}$. Since by assumption the initial path space $\PI X$ is a
	$1$-Segal object, we apply Proposition \ref{prop.contractible} to deduce that the map
	\[
	g:\;X_n \lra RX_{\I'} =  X_{\{0,1,\dots,i\}} \times^R_{X_{\{0,i\}}} X_{\{0,i,i+1,\dots,n\}}
	\]
	is a weak equivalence. The edge $\{0,i\}$ decomposes the polygon $P_n$ into two
	subpolygons: the ($i+1$)-gon $P^{(1)}$ with vertices $\{0,1,\dots,i\}$ and the ($n-i+2$)-gon
	$P^{(2)}$ with vertices $\{0,i,i+1,\dots,n\}$. Since $\{0,i\}$ is an internal edge of the
	triangulation $\T$, we obtain induced triangulations $\T_1$ of $P^{(1)}$ and $\T_2$ of
	$P^{(2)}$ . By induction, both $2$-Segal maps $f_{\T_1}$ and $f_{\T_2}$ corresponding to
	these triangulations are weak equivalences. Further, by Proposition \ref{prop.colim}, we
	have a natural weak equivalence
	\[ RX_{\T }
	\stackrel{\simeq}{\lra} RX_{\T_1} \times^R_{ X_{{\{ 0 , i \}}} } RX_{\T_2}.
	\]
	We assemble the constructed maps to form the commutative diagram
	\[
	\xymatrix@C=15ex{
	X_n \ar[d]_{f} \ar[r]^-{g} & X_{\{0,1,\dots,i\}} \times^R_{X_{\{0,i\}}}
	X_{\{0,i,i+1,\dots,n\}}\ar[d]^{(f_{\T_1}, f_{\T_2})}\\
	RX_{\T} \ar[r]^-{\simeq} &R X_{\T_1} \times^R_{ X_{{\{ 0 , i \}}} } RX_{\T_2} 
	}
	\]
	from which we deduce, by the two out of three property, that the $2$-Segal map $f$ is a weak equivalence. 
	In the case \ref{crit.case.2}, we argue similarly using that $\PF X$ is a $1$-Segal object.
\end{proof}

\begin{exa} Let $\Ec$ be a proto-exact category (Definition \ref{def:proto-exact}), with
the classes $\Men, \Een$ of admissible mono- and epi-morphisms, which we consider
as subcategories in $\Ec$. Let $\Sc_\bullet(\Ec)$ be the Waldhausen simplicial groupoid of 
$\Ec$. Lemma \ref{leq:waldhausen-filtration-abelian} identifies both $\PI\Sc_\bullet(\Ec)$ and
$\PF\Sc_\bullet(\Ec)$. More precisely, $\PI\Sc_\bullet(\Ec)$ is equivalent, as a simplicial
groupoid, to the categorified nerve of $\Men$, while $\PF\Sc_\bullet(\Ec)$ is equivalent
to the categorified nerve of $\Een$. As the categorified nerve of any category is
a $1$-Segal simplicial groupoid, invoking Theorem \ref{thm.crit} provides an alternative proof of the fact that
$\Sc_\bullet(\Ec)$ is $2$-Segal. 
\end{exa}

\vfill\eject

\subsection{The path space criterion: semi-simplicial case}
\label{sec:semisimp-paths} 

Since many interesting examples of $2$-Segal spaces live in the
semi-simplicial world, we briefly discuss the corresponding
modification of the path space criterion. 
 
We denote by $\Delta^+_\inj\subset\Delta^+$ the category of
all (possibly empty) finite ordinals and monotone injective maps. 
An augmented semi-simplicial object in a category $\mC$ is a contravariant
functor $X: \Delta^+_\inj\to\mC$. The category of such functors will be
denoted $\mC_{\Delta^+_\inj}$. 
For $n\geq -1$, we have the 
$n$th ``augmented semi-simplex"
$\Delta^{+n}_{\inj}$ which is 
the functor represented by $[n]$ on $\Delta^+_\inj$. 
Note that unlike the simplicial case, $\Delta^{+0}_\inj$ is not the final object
of $\Set_{\Delta^+_\inj}$:
\[
(\Delta^{+0}_\inj)_n = 
\begin{cases}
\pt, \quad n= -1, 0,\\
\emptyset, \quad n>0. 
\end{cases}
\]
The final object is the augmented semi-simplicial set $F$ with $F_n=\pt$ for all $n\geq -1$. 
The join $X\star Y$ of two augmented semi-simplicial sets $X$ and $Y$ is
defined in the same way as in \eqref{eq:join}. 

As before, we have the functors
\begin{align*}
 \bar j & : \Delta_\inj \hookrightarrow \Delta^+_\inj, \; I\mapsto I \quad \text{(embedding)}\\
 \bar i & : \Delta_\inj^+ \hookrightarrow \Delta_\inj, \; I\mapsto [0] * I \\
  \bar f & : \Delta_\inj^+ \hookrightarrow \Delta_\inj, \; I\mapsto I * [0]. 
\end{align*}
The adjoint functors 
\[
\bar j_*: X\mapsto X^+, \quad \bar j_!: X\mapsto X^\clubsuit
\]
to $\bar j$ are given by the same formulas as in
\eqref{eq:X_+-clubsuit}. Similarly, the pullback functors
$\bar i^*$, $\bar f^*$ are given by the same formulas as
\eqref{eq:faces-path-space} and we set
\[
\overline \PI = \bar j^* \circ\bar i^*, \quad\overline\PF =\bar j^* \circ\bar f^*\quad\overline\CI =
\bar i_! \circ\bar j_!, \quad\overline\CF =\bar f_! \circ\bar j_!. 
\]
We have the following modification of Proposition \ref{prop:coneformula}.

\begin{prop} We have
\[
\bar i_!(X) = \Delta^{+ 0}_\inj \star X, \quad \bar f_!(X) = 
 X\star \Delta^{+ 0}_\inj.
\]
\end{prop}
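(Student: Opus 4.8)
The statement to prove is the semi-simplicial analog of Proposition \ref{prop:coneformula}: for an augmented semi-simplicial set $X \in \Set_{\Delta^+_\inj}$, we have $\bar i_!(X) \cong \Delta^{+0}_\inj \star X$ and $\bar f_!(X) \cong X \star \Delta^{+0}_\inj$. My plan is to adapt, almost verbatim, the argument used for the simplicial case in Proposition \ref{prop:coneformula}, keeping careful track of the one structural difference between $\Delta^+$ and $\Delta^+_\inj$, namely that all morphisms must now be injective. I will treat the $\bar i_!$ statement in full and note that $\bar f_!$ is symmetric.

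**Key steps.** First, I would write out the pointwise formula for the left Kan extension along $\bar i : \Delta^+_\inj \to \Delta_\inj$. For a finite nonempty ordinal $J$, this gives
\[
\bar i_! X(J) \cong \varinjlim_{\{J \to [0]\star I\}\, \in\, (J\backslash \bar i)^{\op}}\, X_I,
\]
where now the comma category $J \backslash \bar i$ consists of monotone \emph{injections} $\alpha : J \hookrightarrow [0]\star I$ with morphisms given by injections $I \hookrightarrow I'$ making the evident triangle commute. Second, I would analyze this comma category exactly as in the simplicial proof: it decomposes as a disjoint union of subcategories indexed by the way $J$ splits relative to the new minimal element $\overline 0$. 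Because $\alpha$ is injective, each element of $J$ has at most one preimage, so $\alpha$ is determined by which elements of $J$ map to $\overline 0$ (at most one, by injectivity and monotonicity this is necessarily an initial segment of length $\le 1$) together with an injection of the rest into $I$. Third, I would identify the final object in each such subcomponent and read off that the colimit over each component is the corresponding summand $X(I'')$, where the indexing matches the definition of the join in \eqref{eq:join}. Assembling the components reproduces exactly the formula for $\Delta^{+0}_\inj \star X$, using the explicit description of $\Delta^{+0}_\inj$ recorded just before the statement.

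**The main obstacle.** The delicate point — and the only place the injectivity genuinely changes the bookkeeping — is the correct enumeration of the components of the comma category and the verification that each has a final (or suitably cofinal) object, since in the simplicial case one used surjections $I \to I/I'$ as terminal objects, a move that is unavailable here because quotient maps are not injective. I expect to replace that terminal-object argument with a direct cofinality check: among injections $J \hookrightarrow [0]\star I$ inducing a fixed splitting $J = I' \sqcup I''$ with $I' < I''$ (and $I' $ either empty or the singleton mapping to $\overline 0$), the identity-type inclusion realizing $I'' = I$ is initial in the relevant opposite category, so the colimit collapses to $X(I'')$. Once this indexing is pinned down, matching it against \eqref{eq:join} specialized to $X' = \Delta^{+0}_\inj$ is routine. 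The $\bar f_!$ case is handled identically, placing the new element maximal rather than minimal, and yields $X \star \Delta^{+0}_\inj$.
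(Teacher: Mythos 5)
The paper states this proposition without proof, as the evident semi-simplicial analogue of Proposition \ref{prop:coneformula}, and your adaptation of that proof is correct: injectivity forces $\alpha^{-1}(\overline 0)$ to be either empty or the singleton $\{\min J\}$, the comma category splits into exactly these two components, each component contains the object $\alpha_0\colon J\hookrightarrow [0]\star I''$ (collapsing nothing, identity on $I''$) through which every other object factors uniquely, and the resulting answer $X(J)\sqcup X(J\setminus\{\min J\})$ matches the join of \eqref{eq:join} with $X'=\Delta^{+0}_\inj$ because the summands with $|I'|\ge 2$ contribute $\Delta^{+0}_\inj(I')\times X(I'')=\emptyset$. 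One wording correction: $\alpha_0$ is \emph{initial} in its component of the comma category $J\backslash\bar i$ itself, hence \emph{terminal} in the opposite category over which the colimit is indexed, and it is terminality in the indexing category (cofinality of the one-object inclusion), not initiality, that makes the colimit collapse to $X(I'')$. With that variance fixed, your argument is complete and is surely the proof the authors intended.
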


\begin{ex}
Comparing to Remark \ref{rem:simplicial-cone}, in the semi-simplicial case we have
\[
\overline\CF(X)_n = X_n \sqcup X_{n-1}, \quad n\geq 0
\]
with faces given by the same formula as in in that example, 
but restricted to $m\in\{n, n-1\}$. Similarly for $\overline\CI$. 
\end{ex}

We have the following semi-simplicial variant of the path space criterion.

\begin{thm}[Path Space Criterion]\label{thm.crit:semisimp} 
	 Let $\mC$ be a combinatorial model category and 
	 $X$ a semi-simplicial object in $\mC$. Then the following conditions are equivalent:
	 \begin{enumerate}[label=(\roman{*})]
	 \item $X$ is $2$-Segal  
	 \item Both $\PI X$ and $\PF X$ are $1$-Segal. 
	\end{enumerate}
	
\end{thm}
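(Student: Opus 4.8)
The plan is to reproduce the proof of the simplicial Path Space Criterion (Theorem~\ref{thm.crit}) essentially line by line, substituting for each simplicial construction the semi-simplicial counterpart $\overline\PI,\overline\PF,\overline\CI,\overline\CF$ introduced in this subsection (so that $\PI X$, $\PF X$ in the statement are to be read as $\overline\PI X$, $\overline\PF X$). The whole argument rests on two adjunction inputs, semi-simplicial analogues of Lemma~\ref{lem:adjn1} and Lemma~\ref{lem:adjn2}, which together yield the semi-simplicial form of Proposition~\ref{prop:cofinal}. Since, by the discussion following Proposition~\ref{prop:membranes-semi-simplicial}, the membrane spaces $(D,X)_R$ for $D\subset\Delta^I$ depend only on the face maps of $X$, all the relevant homotopy limits are already indexed by subcategories of $\Delta_\inj$, and the transition is largely formal.

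First I would prove the analogue of Lemma~\ref{lem:adjn1}: for $M\in\Set_{\Delta^+_\inj}$ and $X\in\mC_{\Delta_\inj}$, natural isomorphisms $(M,\bar i^*X)_R\simeq(\bar i_!M,X)_R$ and $(M,\bar f^*X)_R\simeq(\bar f_!M,X)_R$ in $\h\mC$. By Proposition~\ref{prop:derivedyonedaadj} it suffices to check that $\bar i^*,\bar f^*$ preserve injectively fibrant objects, equivalently that $\bar i_!,\bar f_!$ preserve trivial injective cofibrations; this is immediate from the join formula $\bar i_!(X)=\Delta^{+0}_\inj\star X$, $\bar f_!(X)=X\star\Delta^{+0}_\inj$ proved above, since the join is a levelwise coproduct of products and trivial cofibrations are stable under coproducts. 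Next I would prove the analogue of Lemma~\ref{lem:adjn2}: for a weakly contractible semi-simplicial set $K$ and $Y\in\mC_{\Delta^+_\inj}$, $(K,\bar j^*Y)_R\simeq(\bar j_!K,Y)_R$. As in the simplicial case this reduces to the homotopy finality of the fully faithful embedding $k:\Delta_\inj/K\to\Delta^+_\inj/\bar j_!K$; the only objects of the target outside the essential image of $k$ are the maps $h_\emptyset\to\bar j_!K$ indexed by $\Pi_0(K)$, whose undercategories are $\Delta_\inj/K_c$ for the connected components $K_c$. The required weak contractibility of $\N(\Delta_\inj/K_c)$ is exactly the barycentric subdivision argument of Lemma~\ref{lem:adjn2}, which is already phrased over $\Delta_\inj$ and so carries over verbatim via Quillen's Theorem~A (\cite{quillen-ktheory}).

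With these two inputs, together with the identities $\overline\CI=\bar i_!\circ\bar j_!$, $\overline\CF=\bar f_!\circ\bar j_!$ and the join formula above, I would assemble the semi-simplicial version of Proposition~\ref{prop:cofinal}, namely $(K,\overline\PI X)_R\simeq(\overline\CI K,X)_R$ and $(K,\overline\PF X)_R\simeq(\overline\CF K,X)_R$ for $K$ with weakly contractible components, and hence of Proposition~\ref{prop.contractible}: $\overline\PI X$ (resp.\ $\overline\PF X$) is $f$-local precisely when $X$ is $\overline\CI(f)$-local (resp.\ $\overline\CF(f)$-local). For (i)$\Rightarrow$(ii) I would apply this to the semi-simplicial $1$-Segal coverings $\Delta^{\I_n}\hookrightarrow\Delta^n$, whose left and right cones are the two ``fan'' triangulations in which all triangles share the vertex $0$, resp.\ $n+1$ (the semi-simplicial form of Example~\ref{ex.cone}(b)); since $X$ is $2$-Segal it is local with respect to these coverings, so both path spaces are $1$-Segal in the sense of Remark~\ref{rem:semicat}.

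For (ii)$\Rightarrow$(i) the induction on $n$ from Theorem~\ref{thm.crit} runs unchanged. Any triangulation $\T$ of $P_n$ possesses an internal diagonal $\{0,i\}$ or $\{i,n\}$; in the first case the $1$-Segality of $\overline\PI X$, via the left cone of the collection $\{\{1,\dots,i\},\{i,\dots,n\}\}$, identifies $X_n$ with the homotopy fibre product $X_{\{0,\dots,i\}}\times^R_{X_{\{0,i\}}}X_{\{0,i,\dots,n\}}$, the diagonal splits $\T$ into induced triangulations $\T_1,\T_2$ of the two subpolygons, and Proposition~\ref{prop.colim} (valid for semi-simplicial objects) together with the inductive hypothesis and $2$-out-of-$3$ forces $f_\T$ to be a weak equivalence; the diagonal $\{i,n\}$ is handled symmetrically using $\overline\PF X$. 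The main obstacle is purely bookkeeping in the semi-simplicial adjunctions, namely confirming that the absence of degeneracies does not disturb the join and cone formulas and that the finality argument for $k$ still applies; but because the decisive subdivision step of Lemma~\ref{lem:adjn2} is intrinsically a statement about $\Delta_\inj$, no genuinely new difficulty arises.
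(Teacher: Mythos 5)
Your proposal is correct and is exactly the argument the paper intends: its own proof of Theorem \ref{thm.crit:semisimp} simply states that the argument is analogous to Theorem \ref{thm.crit} and is left to the reader, and your write-up supplies precisely that adaptation --- the semi-simplicial versions of Lemmas \ref{lem:adjn1} and \ref{lem:adjn2} via the join formulas for $\bar i_!,\bar f_!$ and the finality of $\Delta_\inj/K\to\Delta^+_\inj/\bar j_!K$, followed by the same cone/induction argument. No gaps; the details you flag (preservation of trivial cofibrations by levelwise coproducts, the identification $\overline\CI(\Delta^{\I_n})\cong\Delta^{\I_n^{\triangleleft}}$ on nondegenerate simplices) are exactly the points that need checking and you check them correctly.
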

\begin{proof} The proof is analogous to that of Theorem \ref{thm.crit} and is left to the reader. \end{proof}

\begin{ex}\label{ex:pentagon-path-criterion}
Let $\mC=\Set$ (with trivial model and simplicial structures) and let $X$ be a
$2$-Segal semi-simplicial object in $\Set$ with $X_0=X_1=\pt$. 
We denote $C = X_2$. By
Corollary \ref{cor:2-segal-pentagon-point}, $X$ corresponds to a 
set-theoretic solution
\[
\alpha: C^2\lra C^2, \quad \alpha (x,y) = (x\bullet y, x*y)
\]
of the pentagon equation. 
The initial path space $\overline\PI(X)$ is, by Theorem 
\ref{thm.crit:semisimp}, a $1$-Segal semi-simplicial set. Since
$\overline\PI(X)_0=X_1=\pt$, we see that $\overline\PI(X)$
must be the nerve of a semigroup. This semigroup is nothing
but $C$ with operation $\bullet$ which is associative by 
\eqref{eq:three-identities-pentagon}. The path space criterion
therefore provides a conceptual explanation of the
suprising fact 
(observed in \cite{kashaev-sergeev, kashaev-reshitikhin})
that the first component of a pentagon solution gives an
associative operation. 

For any semi-simplicial set $Z$ let $Z^\op$ be the 
 semi-simplicial set induced from $Z$
 by the self-equivalence
\[
\Delta_\inj\lra \Delta_\inj, \quad I\mapsto I^\op 
\]
Then 
 the final path space $\overline\PF(X)$ can be identified 
with $(\overline\PI(X^\op))^\op$.
  If $X$ corresponds to a solution $\alpha$ of the pentagon equation, 
  then $X^\op$ corresponds to the new solution
\[
\alpha^* = P_{12}\circ\alpha^{-1}\circ P_{12}, 
\]
 where $P_{12}: C^2\to C^2$ is the permutation. Therefore $\overline\PF(X)$
 is the nerve of the semigroup opposite to that given by the first component of $\alpha^*$. 
\end{ex}

\begin{ex}[(Semi-simplicial suspension)] In the semi-simplicial case
(unlike the simplicial one) the path space functors have right inverses.

For a nonempty finite ordinal $I$ let $I^-\subset I$ be the subset obtained
by removing the maximal element. Note that any monotone {\em injection}
$I\to J$ defines a monotone injection $I^- \to J^-$. Indeed, no element of $I$ other than
$\max(I)$ can possibly map into $\max(J)$. Similarly for $^- I\subset I$, the subset 
obtained by removing the minimal element. 
We have therefore the functors
\[
I \longmapsto {}^-I, I^-, \quad \Delta_\inj \lra\Delta^+_\inj. 
\]
 The induced pullback functors on semi-simplicial objects will be called the
 (augmented) {\em semi-simplicial suspension} functors
 \[
 \begin{gathered}
 \Sigma^\triangleleft_+, \Sigma^\triangleright_+: \mC_{\Delta^+_\inj} \lra \mC_{\Delta_\inj},\\
 \Sigma^\triangleleft_+(X)_I = X_{^- I}, \quad \Sigma^{\triangleright}_+(X) = X_{I^-}.
 \end{gathered}
 \]
 Thus, for instance, 
 \[
 \Sigma^\triangleleft_+ (X)_n = X_{n-1}, \quad n\geq 0,
 \]
  while the face operators are given by
 \[
 \partial^{n, \Sigma^\triangleleft_+ (X)}_i =
 \begin{cases}
 \partial^{n-1, X}_0, \quad i=0; \\
    \partial^{n-1, X}_{i-1}, \quad i\geq 1. 
 \end{cases}
 \]
 Thus the operator $\partial^{n-1, X}_0$ is repeated twice. Similarly for $\Sigma^\triangleright_+$,
 where $\partial^{n-1, X}_{n-1}$ is repeated twice.

 We also define the {\em unaugmented suspensions} of $X$ by applying the above to
 the one-point augmentation $X^+$ of $X$:
 \[
 \Sigma^\triangleleft(X) = \Sigma^\triangleleft_+(X^+), 
 \quad 
 \Sigma^\triangleright(X) = \Sigma^\triangleright_+(X^+). 
 \]
\end{ex}

\begin{prop}
 We have isomorphisms
\[
\PI \Sigma^\triangleleft (X) = X = \PF\Sigma^\triangleright (X). 
\]
\end{prop}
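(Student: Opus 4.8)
The plan is to prove both identities at the level of indexing categories, where they collapse to a one-line cancellation. Write $\overline\PI=\bar j^*\bar i^*$ and $\overline\PF=\bar j^*\bar f^*$ for the semi-simplicial path space functors of this subsection (these are the functors denoted $\PI,\PF$ in the statement, since $\Sigma^\triangleleft(X)$ and $\Sigma^\triangleright(X)$ are semi-simplicial). Because a composite of pullbacks is the pullback along the composite, $\overline\PI=\bar j^*\bar i^*=(\bar i\bar j)^*$ is pullback along $\bar i\circ\bar j\colon\Delta_\inj\to\Delta_\inj$, $I\mapsto [0]*I$, while the augmented suspension $\Sigma^\triangleleft_+=(\sigma^\triangleleft)^*$ is pullback along $\sigma^\triangleleft\colon\Delta_\inj\to\Delta^+_\inj$, $I\mapsto {}^{-}I$. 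Using $\Sigma^\triangleleft=\Sigma^\triangleleft_+\circ\bar j_*=(\sigma^\triangleleft)^*\circ\bar j_*$ (recall $X^+=\bar j_*X$), contravariant functoriality of pullback gives
\[
\overline\PI\circ\Sigma^\triangleleft=(\bar i\bar j)^*\circ(\sigma^\triangleleft)^*\circ\bar j_*=(\sigma^\triangleleft\circ\bar i\bar j)^*\circ\bar j_*.
\]
The whole proof thus reduces to identifying the composite endofunctor $\sigma^\triangleleft\circ\bar i\bar j$ of the index categories.

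First I would record the two elementary identities of ordinals: for every finite ordinal $I$ one has ${}^{-}([0]*I)=I$ and $(I*[0])^{-}=I$. Indeed $[0]*I$ prepends a new minimal element, which is precisely the element deleted by ${}^{-}(-)$, and dually $I*[0]$ appends a new maximal element, deleted by $(-)^{-}$. The only point needing care is naturality: a monotone injection $\phi\colon I\to I'$ induces $\id_{[0]}*\phi$ on joins, and no element other than the two minima can map to a minimum, so the restriction of $\id_{[0]}*\phi$ along ${}^{-}(-)$ is again $\phi$; symmetrically for $(-)^{-}$ and maxima. Hence $\sigma^\triangleleft\circ\bar i\bar j=\bar j$ and $\sigma^\triangleright\circ\bar f\bar j=\bar j$ as functors $\Delta_\inj\to\Delta^+_\inj$, where $\sigma^\triangleright(I)=I^{-}$ and $\bar f\bar j\colon I\mapsto I*[0]$.

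Substituting, $\overline\PI\circ\Sigma^\triangleleft=\bar j^*\circ\bar j_*$, and likewise $\overline\PF\circ\Sigma^\triangleright=\bar j^*\circ\bar j_*$. Since $\bar j\colon\Delta_\inj\hookrightarrow\Delta^+_\inj$ is fully faithful, the counit $\bar j^*\bar j_*\to\id$ is an isomorphism; concretely, $(X^+)_I=X_I$ for every nonempty $I$, so $\bar j^*(X^+)=X$. This yields $\overline\PI(\Sigma^\triangleleft X)=X=\overline\PF(\Sigma^\triangleright X)$, as asserted. There is no serious obstacle: the argument is pure bookkeeping, and the sole subtlety — that the one-point value $(X^+)_\emptyset=\pt$ never intervenes — is exactly the statement that ${}^{-}([0]*I)$ and $(I*[0])^{-}$ stay nonempty for nonempty $I$, which holds since $[0]*I$ and $I*[0]$ then have at least two elements. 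Conceptually, $\Sigma^\triangleleft$ doubles the face operator $\partial_0$ while $\overline\PI$ deletes $\partial_0$ and reindexes, so the two operations are mutually inverse; the displayed functor identity is just the precise form of this cancellation. If one prefers a hands-on check, the same conclusion follows by comparing the level objects $\overline\PI(\Sigma^\triangleleft X)_I=(X^+)_{{}^{-}([0]*I)}=X_I$ together with their face maps via the explicit formulas of this subsection.
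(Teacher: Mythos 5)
Your proof is correct and follows essentially the same route as the paper, which simply cites the canonical identifications ${}^{-}([0]\star I)=I=(I\star[0])^{-}$; you have merely made explicit the naturality check and the counit isomorphism $\bar j^{*}\bar j_{*}\cong\id$ that the paper leaves implicit.
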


\begin{proof} Follows from the canonical identifications
\[
{}^-([0]*I ) = I = (I*[0])^-. 
\]
\end{proof}

Therefore, if $X$ is $1$-Segal, then $\Sigma^\triangleleft (X)$ (as well as 
$\Sigma^\triangleright (X)$)
automatically satisfies
one half of the conditions needed for it to be $2$-Segal.

\begin{defi}
Let $\mC$ be a semi-category (i.e., possibly without unit morphisms).
We say that $\mC$ is {\em left divisible}, if for any objects $x,y,z\in\mC$
and moprhisms $f: y\to z$, $h: x\to z$ there is a unique morphism $g: x\to y$
such that $h=fg$. We say that $\mC$ is {\em right divisible}, if $\mC^\op$
is left divisible.
\end{defi}

Thus a category (with unit morphisms) is left or right divisible, if and only if
it is a groupoid. 

\begin{prop}\label{prop:suspension-$2$-Segal} 
Let $\mC$ be a small semi-category. Then $\Sigma^\triangleleft (\N\mC)$ is
$2$-Segal if and only if $\mC$ is left divisible. 
\end{prop}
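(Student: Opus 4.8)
The plan is to apply the semi-simplicial Path Space Criterion (Theorem \ref{thm.crit:semisimp}) to reduce the $2$-Segal condition for $\Sigma^\triangleleft(\N\mC)$ to a single $1$-Segal condition. By the preceding proposition, $\PI\Sigma^\triangleleft(\N\mC) = \N\mC$, and the nerve of any semi-category is automatically $1$-Segal. Therefore $\Sigma^\triangleleft(\N\mC)$ is $2$-Segal if and only if its \emph{other} path space $\PF\Sigma^\triangleleft(\N\mC)$ is $1$-Segal. So the entire statement comes down to identifying when $\PF\Sigma^\triangleleft(\N\mC)$ satisfies the $1$-Segal condition and showing that this is equivalent to left divisibility of $\mC$.

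First I would compute $\PF\Sigma^\triangleleft(\N\mC)$ explicitly. Using the formula $\Sigma^\triangleleft(X)_n = X^+_{{}^-[n]}$ for the unaugmented suspension (where $X = \N\mC$), together with the explicit description of the face maps in which $\partial_0$ is doubled, and then applying $\PF = \bar j^* \circ \bar f^*$ which shifts by appending $[0]$ at the top and relabels faces via \eqref{eq:faces-path-space}, I would write down the levels and face maps of $Y := \PF\Sigma^\triangleleft(\N\mC)$. The key point is to see concretely that an $n$-simplex of $Y$ is (roughly) a chain of $n$ composable morphisms in $\mC$ together with one extra ``free'' target datum coming from the suspension direction, and that the $1$-Segal map $Y_n \to Y_1 \times_{Y_0} \cdots \times_{Y_0} Y_1$ records the individual morphisms and the intermediate objects. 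Since $\mC$ lives in $\Set$ with the trivial model structure, the $1$-Segal condition is simply bijectivity of this map, and by Proposition \ref{prop:1-segal-basic} it suffices to check the case $n=2$, i.e. bijectivity of $Y_2 \to Y_1 \times_{Y_0} Y_1$.

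The heart of the argument is translating this bijectivity into the divisibility statement. Injectivity should hold automatically from the fact that the doubled face $\partial_0$ and the chain data determine a $2$-simplex. Surjectivity is where the content lies: given a composable pair in $Y_1 \times_{Y_0} Y_1$, which after unwinding the suspension and path-space relabeling amounts to a configuration of objects $x, y, z$ and morphisms $f\colon y\to z$, $h\colon x\to z$ (the composite edge and one leg), lifting it to $Y_2$ requires producing the remaining edge $g\colon x\to y$ with $fg = h$, uniquely. This is precisely the left divisibility condition in Definition \ref{def:derived-end-topological}... rather, in the definition of left divisibility: for any $f\colon y\to z$ and $h\colon x\to z$ there is a unique $g\colon x\to y$ with $h = fg$. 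Thus the $1$-Segal map is bijective for all $n$ if and only if $\mC$ is left divisible.

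The main obstacle I anticipate is purely bookkeeping: correctly tracking which face map gets doubled and how the source/target structure of $\mC$ gets distributed across the two simplicial directions (the suspension direction versus the path-space direction), so that the emergent ``lift the missing edge uniquely'' problem matches the left (rather than right) divisibility convention. The orientation conventions for $\Sigma^\triangleleft$ versus $\Sigma^\triangleright$ and for $\PI$ versus $\PF$ must be chained together without a sign/handedness error; getting the correct handedness is what distinguishes left from right divisibility. Once the explicit description of $Y_2 \to Y_1\times_{Y_0} Y_1$ is pinned down, the equivalence with the divisibility condition is immediate from comparing definitions, and the reduction to $n=2$ is justified by Proposition \ref{prop:1-segal-basic} together with Theorem \ref{thm.crit:semisimp}.
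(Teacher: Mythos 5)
Your overall strategy is exactly the paper's: invoke the semi-simplicial path space criterion, note that $\PI\Sigma^\triangleleft(\N\mC)=\N\mC$ is automatically $1$-Segal because it is the nerve of a semi-category, and so reduce the whole statement to the $1$-Segal condition for $Y=\PF\Sigma^\triangleleft(\N\mC)$, whose face operators repeat $\partial_0$. Your identification of the $n=2$ Segal map with the left-division problem (given $f\colon y\to z$ and $h\colon x\to z$, produce a unique $g\colon x\to y$ with $fg=h$) is also the correct one, and it is what the paper uses for the ``only if'' direction.

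The gap is the step ``by Proposition \ref{prop:1-segal-basic} it suffices to check the case $n=2$.'' That proposition gives equivalent reformulations of the $1$-Segal condition, each of which still quantifies over all $n$ and all splittings; it does not reduce the condition to bijectivity of $f_2$. And no such reduction holds in general: a semi-simplicial set with $Y_0=Y_1=Y_2=\pt$ and $Y_3=\emptyset$ has $f_2$ bijective but $f_3$ not surjective. Consequently your argument establishes the ``only if'' direction (which genuinely needs only $n=2$) but leaves the ``if'' direction unproved for $n\ge 3$. To close it, compute that $f'_n$ sends a commutative $n$-simplex $(x_i,u_{ij})$ of $\N\mC$ to the family $(u_{in}\colon x_i\to x_n)_{0\le i\le n-1}$, and show that left divisibility lets you reconstruct the full simplex uniquely: for $i<j<n$ define $u_{ij}$ as the unique left quotient of $u_{in}$ by $u_{jn}$, and then deduce the remaining cocycle identities $u_{ik}=u_{jk}u_{ij}$ for $k<n$ from the uniqueness of left quotients by $u_{kn}$. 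This successive-division argument is precisely the extra content the paper supplies in its ``if'' direction.
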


It follows that $\Sigma^\triangleright(\N\mC)$ is $2$-Segal if and only if
$\mC$ is right divisible. 

\begin{proof} Let $X=\NC$. We first prove the ``if" part. 
 Suppose that $\mC$
is left divisible. To prove that $\Sigma^\triangleleft (X)$ is $2$-Segal, it suffices,
by the above, to verify that $\PF\Sigma^\triangleleft(X)$ is $1$-Segal. 
By definition, $\PF\Sigma^\triangleleft(X)_I = X_{{}^-(I*[0])}$.
Identifying $^-([n]*[0])$ with $[n]$, we can write that
$\PF\Sigma^\triangleleft(X)$ has the same components
$\PF\Sigma^\triangleleft(X)_n=X_n$ as $X$, 
but equipped with new face operators $\partial'_i: X_n\to X_{n-1}$, $i=0, ..., n$
given by
\[
\partial'_0=\partial_0, \quad \partial'_1 = \partial_0, 
\quad \partial'_2 =\partial_1,\quad ..., \quad
\partial'_n =\partial_{n-1}. 
\]
Let us view $X_n=\N_n\mC$ as the set of commutative $n$-simplices in $\mC$, i.e.,
of systems of objects and morphisms
\[
 (x_i, u_{ij}: x_i\to x_j)_{0\leq i<j\leq n}, \quad u_{ik}=u_{jk}u_{ij}, i<j<k. 
 \]  
 The $n$-fold fiber product $X_1\times_{X_0} \cdots \times_{X_0} X_1$
 defined with respect to the new face operators $\partial'_i$,
 consists of systems of objects and morphisms
 \[
 x_0, ..., x_n, v_{in}: x_i\to x_n, i=0, ..., n-1.
 \]
 The $1$-Segal map (for the new face operators)
 \[
 f'_n: X_n \lra X_1\times_{X_0} \cdots \times_{X_0} X_1
 \]
  sends a system $(x_i, u_{ij})$ as above, to the subset formed by morphisms
 $u_{in}: x_i\to x_n$ for $i=0, ..., n-1$. If $\mC$ is left divisible, then we can uniquely
 complete any given system 
 of morphisms $(u_{in}: x_i\to x_n)$ to a full commutative simplex $(u_{ij})$ by succesive
 left divisions. This proves the ``if" part. 
  The ``only if" part follows from considering the particular case $n=2$: bijectivity
  of $f'_2$ is precisely the left division property. \end{proof}
  
\begin{ex}
When $\mC=G$ a group, the proposition claims that $\Sigma^\triangleleft(\N G)$ is $2$-Segal. This
$2$-Segal semi-simplicial set corresponds to the solution of the pentagon equation from Example
\ref{ex:group-pentagon-solution}. The $1$-Segal semi-simplicial set $\PF\Sigma^\triangleleft(\N G)$
is isomorphic to the nerve of the semigroup  formed by $G$ with the operation $*$ defined by
$g*h=h$.  This operation is associative but has no unit.  Of course, $\Sigma^\triangleright(\N G)$
is $2$-Segal as well. 
\end{ex}

\vfill\eject

\section{2-Segal spaces from higher categories}
\label{sec:2-segal-from-higher}

All simplicial spaces in this section will be combinatorial, i.e., objects of $\sS$.

\subsection{Quasi-categories vs. complete 1-Segal spaces}
\label{subsec:quasicat-1-segal}
Quasi-categories of and complete $1$-Segal spaces provide two equivalent approaches to formalizing
the intuitive concept of $\inftyone$-categories.  In this section we recall a correspondence between
the two models, as given in \cite{joyal-tierney}. 

Let $X \in \sS$ be a $1$-Segal space. For vertices $x,y$ of the simplicial set $X_0$, we have a natural map
\[
\{x\} \times_{X_0} X_1 \times_{X_0} \{y\} \to \{x\} \times_{X_0}^R X_1 \times_{X_0}^R \{y\} =
\map_X(x,y)
\]
Recall that $\pi_0 \map_X(x,y)$ forms the set of morphisms of the homotopy category $\h X$ of $X$.
Suppose $f \in X_1$ with $\partial_1(f) = x$ and $\partial_0(f) = y$. Its image $[f] \in \pi_0
\Map_X(x,y)$ is a morphism in $\h X$. We call $f$ an {\em equivalence} if $[f]$ is an isomorphism in $\h X$. Denote by $\delta: X_0 \to X_1$ the degeneracy map
corresponding to the unique map of ordinals $[1] \to [0]$. For a vertex $x \in X_0$, the
vertex $\delta(x) = \id_x$ is an equivalence.

\begin{defi} Let $X$ be a $1$-Segal space and let $X_1^{\on{equiv}} \subset X_1$ denote the
	simplicial subset spanned by those vertices which are equivalences. We say $X$ is complete if
	the map $\delta: X_0 \to X_1^{\on{equiv}}$ is a weak homotopy equivalence of simplicial sets.
\end{defi}

\begin{ex} Let $\C$ be a small category, and $\C_\bullet$ be the categorified nerve 
of $\C$, which is the simplicial groupoid defined in Example \ref{ex.1segclass}(a). The 
simplicial space 
\[
\N(\C_\bullet) = (\N(\C_n))_{n\geq 0},
\]
obtained by taking the nerve of each $\C_n$,
is a complete $1$-Segal space (see \cite{rezk}).
Note that the discrete nerve $\disc{\N(\C)}$ from 
Example \ref{ex.1segclass}(b) is $1$-Segal but generally not complete. 
\end{ex}

We recall the following result of \cite{rezk}.

\begin{thm}[Rezk]
	There exists a left proper combinatorial simplicial model structure on $\sS$ with the following properties:
	\begin{itemize}
	 \item[(W)] The weak equivalences are the maps $f$ such that $\RMap_{\sS}(f,X)$ is a
	 weak equivalence of simplicial sets for any complete $1$-Segal space $X$.
	 \item[(C)] The cofibrations are the monomorphisms.
	 \item[(F)] The fibrant objects are the Reedy fibrant complete $1$-Segal spaces.
	\end{itemize}
\end{thm}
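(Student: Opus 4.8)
The plan is to realize Rezk's model structure as a left Bousfield localization of the injective (equivalently, Reedy) model structure $\I$ on $\sS = \Sp_\Delta$, using the enriched localization machinery already assembled in Theorem \ref{thm:barwick}. Concretely, I would localize with respect to the set of morphisms
\[
\s = \disc{\s_1} \cup \{ p \},
\]
where $\disc{\s_1}$ is the image under the discrete-object functor of the $1$-Segal coverings $\s_1 = \{\Delta^{\Ic_n}\hookrightarrow\Delta^n\}$, and $p\colon \disc E \to \disc{\Delta^0}$ is the map of discrete simplicial spaces induced by the projection $E \to \Delta^0$, with $E$ the nerve of the contractible groupoid on two objects $\{0\xrightarrow{\sim}1\}$ (the ``walking isomorphism''). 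The piece $\disc{\s_1}$ will impose the $1$-Segal condition and the map $p$ will impose completeness.

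First I would verify the hypotheses of Theorem \ref{thm:barwick} with $\mC = \Sp$ and $\mD = \sS$ carrying the injective model structure. The category $\Sp$ is tractable (combinatorial with all objects cofibrant) and hence left proper; by Proposition \ref{prop:tract} the injective structure on $\sS = \Sp^{\Dop}$ is tractable, while left properness is inherited since cofibrations, weak equivalences and pushouts are computed levelwise. Theorem \ref{thm:barwick} then produces a combinatorial, $\Sp$-enriched (hence simplicial) model structure on $\sS$ whose cofibrations are unchanged — the injective cofibrations, which are exactly the monomorphisms of bisimplicial sets — giving (C); left properness follows from the standard fact that a left Bousfield localization of a left proper model category is again left proper. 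By construction its weak equivalences are the $\s$-equivalences, i.e. those $f$ with $\RMap_{\sS}(f,X)$ a weak equivalence for every $\s$-local $X$, and its fibrant objects are the injectively (Reedy) fibrant $\s$-local objects; so (W) and (F) follow once the $\s$-local objects are identified with the complete $1$-Segal spaces.

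This identification splits along the two pieces of $\s$. Locality with respect to $\disc{\s_1}$ is precisely the $1$-Segal condition by Proposition \ref{prop:12segallocal} (the case $d=1$), so an $\s$-local object is in particular a $1$-Segal space. The remaining task, and the main obstacle, is to show that for a $1$-Segal space $X$, locality with respect to $p$ is equivalent to completeness, namely that $\delta\colon X_0 \to X_1^{\on{equiv}}$ be a weak equivalence. This is the homotopical heart of the argument: using Proposition \ref{prop:dkanmap} one has $\RMap_{\sS}(\disc E, X) \simeq (E,X)_R$, and one must identify this derived membrane space, for $1$-Segal $X$, with the space $X_1^{\on{equiv}}$ of equivalences in such a way that the map induced by $p$ becomes the degeneracy $\delta$. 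The computation of $(E,X)_R$ can be organized with the membrane-space formalism of \S\ref{subsec:membranes} together with Proposition \ref{prop.colim}, decomposing $E$ along its nondegenerate simplices into iterated homotopy fiber products; the key point is that invertibility of an edge is detected by contractibility of the relevant homotopy fibers, exactly as in the description of $\h X$ and of the mapping spaces $\Map_X(x,y)$. This step is genuinely Rezk's completeness theorem, and I would either cite it from \cite{rezk} or reprove it by the indicated membrane-space computation.

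Finally, having matched $\s$-local objects with Reedy fibrant complete $1$-Segal spaces, conditions (W), (C), (F) are read off directly from the output of Theorem \ref{thm:barwick}, and left properness and the simplicial enrichment are part of that output. I would close by remarking that, by transitivity of Bousfield localizations, this structure is equally the further localization of the $1$-Segal model structure $\Seg_1$ of Theorem \ref{thm:S-d-localization} along the single map $p$, which situates it cleanly within the chain of localizations \eqref{eq:chain-of-loc-intro}.
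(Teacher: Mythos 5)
Your proposal is correct and follows essentially the same route as the paper: the paper also obtains the structure as a Bousfield localization via Theorem \ref{thm:barwick}, localizing at the $1$-Segal coverings together with the map $\disc{(\Delta^1)'}\to\disc{\Delta^0}$ — and your $E$, the nerve of the contractible groupoid on two objects, is exactly the fat $1$-simplex $(\Delta^1)'$ of Example \ref{ex:nerve,fat-simplex}(a). The paper likewise delegates the identification of $p$-locality with completeness to Theorem 6.2 of \cite{rezk}, so your acknowledged reliance on that result matches the paper's own proof.
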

\begin{proof} Consider the fat $1$-simplex $(\Delta^1)'$ from Example \ref{ex:nerve,fat-simplex}(a), given by the nerve of the groupoid completion
	of the category $[1]$. By Theorem 6.2 of \cite{rezk}, a Reedy fibrant $1$-Segal space $X$ is complete if and only if it is
	local with respect to the unique map $\disc{(\Delta^1)'} \to \disc{\Delta^0}$. Thus the
	statement follows from the general formalism of simplicial Bousfield localization (e.g., Theorem \ref{thm:barwick} with $\C = \Sp$).
\end{proof}
   
On the other hand, Joyal constructed a model structure $\Joy$ on $\Sp$ whose fibrant objects
are precisely the quasi-categories (\cite{joyal.kan}, also \cite[2.2.5]{lurie.htt}).
In \cite{joyal-tierney}, the authors construct a Quillen equivalence
of model categories
\begin{equation}\label{eq.totalization}
t_! :\; (\sS, \Rc) \longleftrightarrow (\Sp, \Joy) \;: t^!,
\end{equation}
which we call the {\em Joyal-Tierney equivalence}. Here, 
the {\em totalization functor} $t_!:\sS \to \Sp$ is
uniquely described by the formula 
\[
	t_!(\disc{\Delta^n} \times \Delta^m) = \Delta^n \times (\Delta^m)'
\]
and the requirement that it commutes with colimits.
The functor $t^!$ is defined as the right adjoint of $t_!$. Consequently, for a simplicial set $K$, we have  
\[
	(t^!K)_{mn} = \Hom_{\Sp}(\Delta^n \times (\Delta^m)', K)\text{.}
\]
We point out a few aspects of the Joyal-Tierney equivalence which are relevant for
our discussion.
An immediate consequence of \eqref{eq.totalization} is that any complete $1$-Segal space $X$ is weakly
equivalent to a space of the form $t^! \C$ where $\C$ is a quasi-category. 
In the examples below we fix a quasi-category $\C$ and set $X = t^!\C$. Note that, since $\C$ is
Joyal fibrant, $X$ is a Reedy fibrant complete $1$-Segal space.

\begin{exa}[(Homotopy coherent diagrams)]
\label{comp.diag} Consider a simplicial set $D \in \Sp$. A {\em
	 $D$-diagram}
	 in $X$ is defined to be a map of simplicial spaces $p: \disc{D} \to X$. We
	 further define the {\em classifying space of $D$-diagrams} in $X$ to be the simplicial set
	 $\Map_{\sS}(\disc{D}, X)$, which in the terminology of Sections
	 \ref{subsec:membranes} and \ref{subsec:yonedaext}, 
	 is the space $(D,X)$ of membranes in $X$ of type $D$.
	 By a $D$-diagram in $\C$ we will mean a morphism
	 of simplicial sets $D\to\C$.
	 Using the equivalence \eqref{eq.totalization} and \cite[1.20]{joyal-tierney}, we obtain a natural weak equivalence
	 \[
	 \Map_{\sS}(\disc{D}, X) \stackrel{\simeq}{\lra} \Map_{\Sp}(D, \C)_{\Kan}
	 \]
	 where $(-)_{\Kan}$ is the functor from \cite[1.16]{joyal-tierney} which maps a
	 quasi-category $\C$ to its largest Kan subcomplex $\C_{\Kan} \subset \C$. In
	 particular, we obtain, for each $n \ge 0$, a weak equivalence
	 \[
	 X_n \stackrel{\simeq}{\lra} \Map_{\Sp}(\Delta^n, \C)_{\Kan} \text{.}
	 \]
	 If $D=\N(A)$ is the nerve of a small category $A$, then a $D$-diagram in $X$
	 (resp. in $\C$) will
	 also be called a {\em homotopy coherent $A$-diagram} in $X$ (resp.{ in} $\C$). 
\end{exa}

\begin{exa}[(Mapping spaces; limits and colimits)]
\label{comp.map} Fix elements $x,y \in \C_0 (=
	X_{00})$. By Example \ref{comp.diag}, we have a weak
	 equivalence 
	 \[
	 X_1 \stackrel{\simeq}{\lra} 
	 \Map_{\Sp}(\Delta^1, \C)_{\Kan} \text{,}
	 \]
	 which induces a weak equivalence
	 \[
	 \{ x \} \times_{X_0} X_1 \times_{X_0} \{y\} \stackrel{\simeq}{\lra} 
	 \{ x \} \times_{\C} \Map_{\Sp}(\Delta^1, \C) \times_{\C} \{y\} \text{,}
	 \]
	 where the expression $\{x\}$ denotes the simplicial set $\Delta^0$ with vertex
	 labeled by $x$.
	 As explained in Section \ref{subsec:1-segal}, the simplicial set on the left hand side represents the mapping space
	 $\map_X(x,y)$ of the $\inftyone$-category modelled by $X$. 
	 By \cite[4.2.1.8]{lurie.htt}, the simplicial
	 set on the right hand side represents the corresponding mapping space of the
	 $\inftyone$-category modelled by $\C$. Consequently, any concept in the theory of
	 $\inftyone$-categories which can be expressed in terms of mapping spaces will lead
	 to equivalent concepts in both models. In particular:
		
	 \begin{itemize}
	 \item The homotopy categories
	 associated to $\C$ and $X$ are equivalent. 
		
	 \item We have a theory of homotopy limits and
	 colimits of homotopy coherent diagrams in $X$,
	 and the matching theory of quasi-categorical limits and colimits
	 in $\C$. This includes, in particular, the quasi-categorial
	 concepts of initial and final objects, Cartesian and coCartesian squares
	 etc.

	 \end{itemize}
\end{exa}

\eject

\subsection{Exact \texorpdfstring{$\infty$}{infty}-categories}\label{subsec:exact-quasi}
In this and the following sections we generalize the formalism of 
\S \ref{subsec:waldhausen-1} from ordinary categories to quasi-categories. 
We will follow \cite{lurie.htt} and use the term $\infty$-category for a quasi-category.

We recall some basic definitions. 
An {\em equivalence} in an $\infty$-category $\C$ is a morphism 
(1-simplex) in $\C$ which becomes an isomorphism in $\h\C$.
An $\infty$-category $\C$ is called {\em pointed} if it has a zero object $0$, i.e. an
object (0-simplex) which is both initial and final. 
Consider a square
\[
\xymatrix{
x \ar[r]^g \ar[d]
& y \ar[d]^{f} \\
0 \ar[r] & z 
}
\]
in $\C$ where $0 \in \C$ is a zero object. If the square is Cartesian, then we call $g$ a {\em kernel} of $f$.
If it is coCartesion, we call $f$ a {\em cokernel} of $g$.
In the following definition, we use the notion of a subcategory $\C'$ of an $\infty$-category $\C$
as defined in \cite[1.2.11]{lurie.htt}. Namely $\C'$ is part of a Cartesian square
of simplicial sets
\[
\xymatrix{
\C' \ar[r] \ar[d] & \C \ar[d]\\
\N(\h \C') \ar[r] & \N(\h \C),
}
\]
where $\h \C' \subset \h \C$ is a subcategory.

\begin{defi}
\label{def:exact-infinity}
	Let $\C$ be a pointed $\infty$-category. A pair $(\M,\E)$ of subcategories of
	$\C$ is called an {\em exact structure on $\C$}, if the following
	conditions hold:
	\begin{enumerate}[label=(E\arabic{*})]
	 \item \label{e1} The subcategories $\M$ and $\E$ contain all equivalences of $\C$. In
	 particular, $\M$ and $\E$ contain all objects of $\C$.
	 \item \label{e2}
	 \begin{enumerate}[label=(\roman*),labelindent=0cm]
	 \item Morphisms in $\M$ admit pushouts along arbitrary morphisms in $\C$ and $\M$ is stable under
	 pushouts. 	
	 \item Morphisms in $\E$ admit pullbacks along arbitrary morphisms in $\C$ and $\E$ is stable under
	 pullbacks. 	
	 \end{enumerate}
	 \item \label{e3} For any square of the form
	 \[
	 \xymatrix{
	 x \ar[r]^g \ar[d]
	 & y \ar[d]^{f} \\
	 0 \ar[r] & z 
	 }
	 \]
	 in $\C$, we have
	 \begin{enumerate}[label=(\roman*),labelindent=0cm]
	 \item If $g \in \M_1$ and the square is coCartesian, then $f \in \E_1$ and the
	 square is Cartesian.
	 \item If $f \in \E_1$ and the square is Cartesian, then $g \in \M_1$ and the
	 square is coCartesian.
	 \end{enumerate}
	\end{enumerate}
	A triple $(\C,\M,\E)$ satisfying these conditions is called an {\em exact
	$\infty$-category}. We often leave the choice of subcategories implicit, referring to $\C$
	as an exact $\infty$-category.
\end{defi}

\begin{rem} Let $\C$ be an exact $\infty$-category. Then for any object $x$ of $\C$, all morphisms $0
	\to x$ are contained in $\M$, and all morphisms $x \to 0$ are contained in $\E$. This
	follows since we can obtain these morphisms as kernel and cokernel of the identity morphism $x \to
	x$.
\end{rem}

\begin{ex} 
	Let $(\Ec, \Men, \Een)$ be a proto-exact category (Definition \ref{def:proto-exact}).
	Passing to nerves, we obtain an exact $\infty$-category $(\N(\Ec), \N(\Men), \N(\Een))$, as
	in this case the $\infty$-categorical concepts of (co)Cartesian squares reduce to the
	ordinary categorical ones. 
\end{ex}

\begin{ex}\label{ex:stable-infty-exact}
  	Let $\C$ be a stable $\infty$-category (\cite[\S 1.1]{lurie.algebra}). Then $(\C,\C,\C)$ forms an exact $\infty$-category. For
	example, the derived categories of abelian categories (\cite[\S 1.3]{lurie.algebra}) and the
	$\infty$-category of spectra (\cite[1.4.3]{lurie.algebra}) can be considered as exact
	$\infty$-categories in this way.
\end{ex}

\begin{prop}\label{prop:exact} Let $\C$ be an exact $\infty$-category. Consider a square
	\begin{equation}\label{p.origsquare}
	\xymatrix{
	a \ar[r]^g \ar[d]_f
	& b \ar[d]^{f'} \\
	c \ar[r]^{g'} & d.
	}
\end{equation}
Then the following hold:
\begin{enumerate}[label=(\alph*)]
	\item \label{prop:exact1} Assume that $g \in \M_1$, $f \in \E_1$ and the square is coCartesian. Then $g' \in \M_1$,
	 $f' \in \E_1$ and the square is Cartesian.
	\item \label{prop:exact2} Assume that $g' \in \M_1$, $f' \in \E_1$ and the square is Cartesian. Then $g \in \M_1$,
	 $f \in \E_1$ and the square is coCartesian.
\end{enumerate}
\end{prop}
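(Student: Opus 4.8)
The plan is to prove part \ref{prop:exact1} directly and to deduce part \ref{prop:exact2} from it by passing to the opposite $\infty$-category. First I would observe that the triple $(\C^{\op}, \E, \M)$ — with $\E$ now playing the role of the admissible monomorphisms and $\M$ that of the admissible epimorphisms — is again an exact $\infty$-category. Indeed, passing to $\C^{\op}$ interchanges pullbacks with pushouts and initial with final objects (so the zero object is preserved), and the axioms (E1)--(E3) are manifestly invariant under the simultaneous swap $\M \leftrightarrow \E$, pushout $\leftrightarrow$ pullback; in particular (E3)(i) and (E3)(ii) are exchanged. Under this duality the Cartesian square of part \ref{prop:exact2} becomes a coCartesian square of exactly the kind treated in part \ref{prop:exact1}, with hypotheses and conclusions swapped accordingly, so that \ref{prop:exact2} is a formal consequence of \ref{prop:exact1}. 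All the real work is thus confined to part \ref{prop:exact1}.

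For part \ref{prop:exact1}, the membership $g' \in \M_1$ is immediate: the given square is a pushout of $g \in \M_1$ along $f$, and $\M$ is stable under pushout by (E2)(i). To obtain $f' \in \E_1$, I would first form the kernel $j\colon k \to a$ of $f$, which exists since $f \in \E_1$ admits pullbacks by (E2)(ii). By (E3)(ii) the square
\[
\xymatrix{ k \ar[r]^{j} \ar[d] & a \ar[d]^{f} \\ 0 \ar[r] & c }
\]
is biCartesian, so in particular $j \in \M_1$, the square is coCartesian, and $c \cong \on{cok}(j)$. Pasting this pushout square horizontally with the given pushout square along the common column $a \xrightarrow{f} c$ and invoking the pasting lemma for pushout squares (the dual of \cite[4.4.2.1]{lurie.htt}), the outer rectangle
\[
\xymatrix{ k \ar[r]^{g j} \ar[d] & b \ar[d]^{f'} \\ 0 \ar[r] & d }
\]
is again a pushout. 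Since $g j \in \M_1$ as a composite of admissible monomorphisms (using that $\M$ is a subcategory by (E1)), this rectangle exhibits $f'$ as the cokernel projection of an admissible monomorphism, whence $f' \in \E_1$ by (E3)(i).

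It remains to prove that the original square is Cartesian, and here I would carry out the vertical dual of the previous construction. Form the cokernel $q'\colon d \to z$ of $g'$; by (E3)(i) it sits in a biCartesian square, so $q' \in \E_1$, $z \cong \on{cok}(g')$, and that square is in particular Cartesian. Stacking the given pushout square on top of this cokernel square along the common edge $c \xrightarrow{g'} d$ and applying the pushout pasting lemma, the outer rectangle is the pushout of $g$ along $a \to 0$, i.e.\ the cokernel square of $g$; since $g \in \M_1$, (E3)(i) shows that this outer rectangle is biCartesian, hence Cartesian. Now the bottom (cokernel-of-$g'$) square and the outer rectangle are both Cartesian, so the cancellation form of the pasting lemma for pullback squares \cite[4.4.2.1]{lurie.htt} forces the top square — our original square — to be Cartesian, completing part \ref{prop:exact1}.

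The step I expect to require the most care is this last Cartesianness argument, since the pasting lemma may only be run in the cancellation direction that removes the square adjacent to the base of the cospan; one must be sure to arrange the cokernel-of-$g'$ square as the \emph{bottom} square, so that cancellation yields the top square rather than attempting an invalid cancellation of the wrong factor. A secondary technical point, implicit throughout, is the coherent construction of the $2 \times 3$ diagrams in the $\infty$-categorical setting, together with the verification that the outer rectangles produced by pasting genuinely coincide with the cokernel-of-$g$ and kernel-of-$f$ squares needed to invoke (E3); these identifications rest on the pasting lemma of \cite[4.4.2.1]{lurie.htt} and on the stability clauses of (E2).
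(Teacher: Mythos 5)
Your proof is correct and is essentially the paper's argument: the paper proves part (b) directly by the exact dual of your reasoning for part (a) — pullback/pushout stability for one morphism, the kernel/cokernel square plus the pasting lemma of \cite[4.4.2.1]{lurie.htt} and axiom (E3) for the other morphism and for the (bi)Cartesianness — and leaves the remaining part to duality, just as you do. The only cosmetic difference is which half you choose to prove directly.
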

\begin{proof} We provide a proof of \ref{prop:exact2}. Since $f' \in \E_1$, we have $f \in \E_1$
	by \ref{e2}. By \ref{e2}, there exists a Cartesian square of the form
	\begin{equation}\label{p.leftsquare}
	\xymatrix{
	k \ar[r]^i \ar[d]
	& a \ar[d]^{f} \\
	0 \ar[r] & c.
	}
	\end{equation}
	By \ref{e3}, we have $i \in \M_1$ and the square
	\eqref{p.leftsquare} is coCartesian. We obtain a square
	\begin{equation}\label{p.bigsquare}
	 \xymatrix{
	 k \ar[r]^{g \circ i} \ar[d] & b \ar[d]^{f'}\\
	 0 \ar[r] & d.
	 }
	\end{equation}
	which is Cartesian by the dual statement of \cite[Proposition 4.4.2.1]{lurie.htt}.
	Here, $g \circ i$ is a chosen composition of $g$ and $i$
	(which is unique up to homotopy). Therefore, by \ref{e3}, we have $g \circ i \in \M_1$ and the
	square is coCartesian. Since the squares \eqref{p.leftsquare} and \eqref{p.bigsquare} are coCartesian,
	we conclude that the square \eqref{p.origsquare} is coCartesian as well by \cite[Proposition
	4.4.2.1]{lurie.htt}. It remains to show that $g \in \M_1$. This follows from considering the
	square which is obtained by pasting \eqref{p.origsquare} with a coCartesian square
	\begin{equation*}
	 \xymatrix{
	 c \ar[r]^{g'} \ar[d] & d \ar[d]\\
	 0 \ar[r] & e,
	 }
	\end{equation*}
	which exists by \ref{e2}.
\end{proof}

\eject
\subsection{The Waldhausen S-construction of an exact \texorpdfstring{$\infty$}{infty}-category}
\label{subsec:waldhausen-exact-infty}

We recall our notation for the category $T_n = \Fun([1],[n])$
from \S \ref{subsec:waldhausen-1}.
Thus, objects of $T_n$ can be identified with pairs of integers $(i,j)$ satisfying $0 \le i \le j \le n$.  
Given an $\infty$-category $\C$ we will consider homotopy coherent $T_n$-diagrams in $\C$
by which we mean, following Example \ref{comp.diag}, morphisms of simplicial sets $\N(T_n)\to\C$. 

For $0\leq a\leq b\leq n$, we consider the interval $[a,b]\subset [n]$ as 
a poset and therefore as a category. For each $a \in [n]$ we have the
embeddings
\[
	h_a: [a,n]\hookrightarrow T_n, \; j\longmapsto (a,j), \quad v_a: [0,a]\hookrightarrow T_n,\; 
	i\longmapsto (i,a),
\]
which we call the {\em horizontal} and {\em vertical} embeddings corresponding to $a$. 
Given a $T_n$-diagram $F:\N(T_n) \to \C$, the induced $[a,n]$-diagrams $F\circ h_a$
will be called the {\em rows} of $F$, while the $[0,a]$-diagrams $F\circ v_a$
will be called the {\em columns} of $F$. 

\begin{defi} \label{defi:exactinftywald} Let $(\C,\M,\E)$ be an exact $\infty$-category.
	We define 
	\[
	\SW_n \C \subset \Map_\Sp(\N(T_n), \C)_{\Kan}
	\]
 	to be the simplicial subset given by
	those simplices whose vertices are $T_n$-diagrams $F$ satisfying the following conditions:
	\begin{enumerate}[label=(WS\arabic{*})]
	\item \label{exact.wc1} For all $0 \le i \le n$, the object $F(i,i)$ is a zero object in $\C$.
	\item \label{exact.wc2} All rows of $F$ take values in $\M \subset \C$, all 
	columns of $F$ take values in $\E \subset \C$.
	\item \label{exact.wc3} For any $0 \le j \le k \le n$, the square 
	 \[
	 \xymatrix{
	 F(0,j) \ar[r] \ar[d] & F(0,k) \ar[d]\\
	 F(j,j) \ar[r] & F(j,k)
	 }
	 \] 
	in $\C$ is coCartesian.
\end{enumerate}
By construction, $\SW_n \C$ is functorial in $[n]$ and defines a simplicial
space $\SW \C$, which we call the {\em Waldhausen S-construction} or {\em Waldhausen space} of $\C$.
\end{defi}

\begin{rem} By \cite[Proposition 4.4.2.1]{lurie.htt}, condition \ref{exact.wc3} implies that, for
	any commutative square
	 \[
	 \xymatrix{
	 (i,j) \ar[r]\ar[d] & (i,l) \ar[d]\\
	 (k,j) \ar[r] & (k,l)
	 }
	 \]
	 in $T_n$, the corresponding square 
	 \[
	 \xymatrix{
	 F(i,j) \ar[r]\ar[d] & F(i,l) \ar[d]\\
	 F(k,j) \ar[r] & F(k,l)
	 }
	 \]
	 in $\C$ is coCartesian. Further, by \ref{exact.wc2} and Proposition \ref{prop:exact}, this square
	 is in fact biCartesian.
\end{rem}

\begin{thm}\label{thm:waldhausen-infty}
	Let $(\C, \M, \E)$ be an exact $\infty$-category. Then the Waldhausen S-construction 
	$\SW \C$ of $\C$ is a unital $2$-Segal space.
\end{thm}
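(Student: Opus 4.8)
The plan is to prove Theorem \ref{thm:waldhausen-infty} by invoking the Path Space Criterion (Theorem \ref{thm.crit}), which reduces the $2$-Segal property of $\SW\C$ to the $1$-Segal property of both path spaces $\PI(\SW\C)$ and $\PF(\SW\C)$. This is precisely the strategy announced in the introduction: the path space criterion is designed to verify $2$-Segal conditions in the $\infty$-categorical setting, where direct verification of the membrane-space equivalences would require manipulating homotopy limits over triangulation diagrams. The key observation, modeled on Lemma \ref{leq:waldhausen-filtration-abelian} from the ordinary case, is that the path spaces of the Waldhausen construction should be identified with the (categorified) nerves of the subcategories $\M$ and $\E$, which are automatically $1$-Segal.

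First I would analyze the initial path space $\PI(\SW\C)$. By the explicit description in \eqref{eq:faces-path-space}, we have $\PI(\SW\C)_n = \SW_{n+1}\C$ with the face maps shifted. An object of $\SW_{n+1}\C$ is a $T_{n+1}$-diagram $F$; forgetting the vertical direction and using conditions \ref{exact.wc1}--\ref{exact.wc3}, such a diagram is determined up to contractible choice by its top row $F(0,1) \to F(0,2) \to \dots \to F(0,n+1)$, which is a chain of morphisms in $\M$. This is the $\infty$-categorical analog of Lemma \ref{leq:waldhausen-filtration-abelian}(a): the functor extracting the top row should be an equivalence from $\SW_{n+1}\C$ onto the space of composable $n$-chains in $\M$, with quasi-inverse given by iteratively forming cokernels (pushouts along morphisms in $\M$, which exist and are stable by \ref{e2}). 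Under this identification, $\PI(\SW\C)$ is equivalent to the categorified nerve of $\M$, hence $1$-Segal by the $\infty$-categorical version of Example \ref{ex.1segclass}(b). Dually, using Lemma-analog (b) and pullbacks along morphisms in $\E$, the final path space $\PF(\SW\C)$ is equivalent to the categorified nerve of $\E$, hence also $1$-Segal.

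The main obstacle will be making the path-space identifications rigorous in the $\infty$-categorical framework. In the ordinary case (Lemma \ref{leq:waldhausen-filtration-abelian}) one simply builds a quasi-inverse functor by filling in the diagram with explicit (co)Cartesian squares. Here one must instead produce an equivalence of simplicial spaces, which amounts to checking that the restriction map $\SW_{n+1}\C \to \Map_\Sp(\N([0,n+1]^{\M}), \C)_{\Kan}$ (top row, landing in $\M$-chains) is a trivial Kan fibration, or at least a weak equivalence compatible with the simplicial structure. The technical heart is a relative-Kan-extension argument: conditions \ref{exact.wc1} and \ref{exact.wc3} say precisely that $F$ is a left Kan extension of its top row along the inclusion of the top edge of $T_{n+1}$, relative to the zero objects on the diagonal. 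One invokes the $\infty$-categorical theory of Kan extensions (as in \cite{lurie.htt}, via the limits and colimits discussed in Example \ref{comp.map}) to see that the space of such extensions is contractible, giving the desired equivalence. Once the two path spaces are identified as categorified nerves of $\M$ and $\E$, the $1$-Segal property of each is immediate, and Theorem \ref{thm.crit} yields that $\SW\C$ is $2$-Segal.

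Finally, for unitality I would verify the homotopy Cartesian condition of Definition \ref{def:unital-2-segal-top} directly on the squares \eqref{eq:unitalsquare}. A degenerate edge $\partial_{\{i,i+1\}}(F)$ being an equivalence (the zero morphism $F(i,i) \simeq 0 \to F(i,i+1)$ forces, via the biCartesian squares, that the relevant filtration step is trivial) means $F$ lies in the image of the degeneracy $s_i$; tracking this through conditions \ref{exact.wc1}--\ref{exact.wc3} shows the square \eqref{eq:unitalsquare} is homotopy Cartesian. Alternatively, since the path spaces are categorified nerves and hence \emph{complete} $1$-Segal spaces admitting units, unitality should follow formally, paralleling Proposition \ref{prop.1segalunital2segal}; I would use whichever route is cleaner after the path-space identifications are in place.
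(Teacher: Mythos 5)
Your proposal matches the paper's proof: the $2$-Segal property is established via the path space criterion after identifying $\PI(\SW\C)$ and $\PF(\SW\C)$ with the complete $1$-Segal spaces attached to $\M$ and $\E$ through exactly the relative Kan-extension argument you describe (this is Proposition \ref{prop.inftyequiv}), and unitality is then checked directly on the squares \eqref{eq:unitalsquare}. The one caveat is that your ``formal'' fallback for unitality is not available --- Proposition \ref{prop.1segalunital2segal} does not apply since $\SW\C$ itself is not $1$-Segal --- so you must take your first route, and the essential input there (which your sketch leaves implicit) is the \emph{completeness} of the $1$-Segal space attached to $\M$ (resp.\ $\E$), which upgrades the object-level observation ``a trivial filtration step means $F$ is degenerate'' to the required equivalence of homotopy fibers.
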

\begin{proof}
	We first show that $\SW\C$ is a $2$-Segal space.  To this end, we use the Joyal-Tierney
	equivalence $t^!$ to introduce the complete $1$-Segal spaces $X$, $M$ and $E$, corresponding
	to the quasi-categories $\C$, $\M$ and $\E$, respectively.  For $n \ge 1$, consider the
	shifted embeddings
	\[
		 f_h:\; [n-1] \hookrightarrow T_n,\; j \longmapsto (0,j+1), \quad
		 f_v:\; [n-1] \hookrightarrow T_n,\; i \longmapsto (i,n) \text{,} 
	\]
	which, after passing to nerves, induce the pullback maps
	\begin{align*}
		f_h^*:\; & \Fun(\N(T_n), \C)_{\Kan} \lra \Fun(\Delta^{n-1}, \C)_{\Kan} \simeq X_{n-1}, \\ 
		f_v^*:\; & \Fun(\N(T_n), \C)_{\Kan} \lra \Fun(\Delta^{n-1}, \C)_{\Kan} \simeq X_{n-1}\text{,}
	\end{align*}
	where the weak equivalence
	\[
		\Fun(\Delta^{n-1}, \C)_{\Kan} \simeq X_{n-1} 
	\]
	is explained in Example \ref{comp.diag}.
	By \ref{exact.wc2}, the functor $f_h^*$ takes values in $M_{n-1}$, while the functor $f_v^*$
	takes values in $E_{n-1}$. In fact, we obtain maps of simplicial spaces
	\[
		\PI(\SW \C)\buildrel\simeq\over\lra M, \quad \PF(\SW \C)\buildrel\simeq\over\lra E
	\]
	which, by Proposition \ref{prop.inftyequiv} below, are weak equivalences.
	Since both $M$ and $E$ are $1$-Segal spaces, the Waldhausen space $\SW \C$ is a $2$-Segal
	space by the path space criterion (Theorem \ref{thm.crit}).

	It remains to show that $\SW\C$ is unital.
	Given $n \ge 2$ and $0 \le i \le n-1$, we have to show that the square
	\begin{equation}\label{eq:unitalwald}
		\xymatrix{
			\SW_{n-1}\C \ar[r] \ar[d] & \ar[d] \SW_{\{i\}}\C\\
			\SW_{n}\C \ar[r] & \SW_{\{i, i+1\}}\C
		}
	\end{equation}
	is homotopy Cartesian. We assume $i >0$. The restriction map $\rho: \Fun(\N(T_n), \C)_{\on{Kan}} \to \Fun(\N(T_{\{i,i+1\}}),
	\C)_{\on{Kan}}$ induced by $\{i,i+1\} \to [n]$ can be identified with the map 	
	\[
		\rho: \Map^{\sharp}(\N(T_n)^{\flat}, \C^{\natural}) \to \Map^{\sharp}(\N(T_{\{i,i+1\}})^{\flat}, \C^{\natural})
	\]
	of simplicial mapping spaces of marked simplicial sets. Hence, by \cite[3.1.3.6]{lurie.htt},
	the map $\rho$ is a Kan fibration. Since the conditions \ref{exact.wc1}, \ref{exact.wc2} and
	\ref{exact.wc3} are stable under equivalences, it follows that the map $\SW_{n}\C \to
	\SW_{\{i,i+1\}}\C$, which is obtained by restricting $\rho$, is a Kan fibration as well.
	Therefore, the statement that the square \eqref{eq:unitalwald} is homotopy Cartesian is
	equivalent to the assertion that the map
	\begin{equation}\label{eq:cartwald}
		\SW_{n-1}\C \lra \SW_{n}\C \times_{\SW_{\{i, i+1\}}\C} \SW_{\{i\}}\C
	\end{equation}
	is a weak equivalence, where the right-hand side is an ordinary fiber product in the
	category $\sSet$. Analyzing the equivalence 
	\[
		\SW_n \C \simeq \Fun(\Delta^{n-1}, \M)_{\on{Kan}}
	\] 
	of the proof of Proposition \ref{prop.inftyequiv} below, we observe that the subspace 
	\[
		\SW_{n}\C \times_{\SW_{\{i,i+1\}}\C} \SW_{\{i\}}\C \subset \SW_n \C
	\]
	gets identified with the full simplicial subset $K \subset \Fun(\Delta^{n-1},
	\M)_{\on{Kan}}$ spanned by those functors $f$ such that the edge $f(\{i\}) \to f(\{i+1\})$
	in $\M$ is an equivalence. Using Proposition \ref{prop.inftyequiv}, the assertion that the
	map \eqref{eq:cartwald} is a weak equivalence is therefore equivalent to the assertion that
	the $i$th degeneracy map induces a weak equivalence
	\[
		\Fun(\Delta^{n-2},\M)_{\on{Kan}} \overset{\simeq}{\lra} K \subset
		\Fun(\Delta^{n-1},\M)_{\on{Kan}}.
	\]
	Using that $M$ is a $1$-Segal space, we reduce to the statement that
	the degeneracy map
	\[
		\Fun(\Delta^{0},\M)_{\on{Kan}} \lra \Fun(\Delta^1,\M)_{\on{Kan}}
	\]
	induces a weak equivalence onto the full simplicial subset of $\Fun(\Delta^1,
	\M)_{\on{Kan}}$ spanned by the equivalences in $\M$. But this follows from the completeness
	of the $1$-Segal space $M$. The case $i = 0$, follows from a similar argument involving the
	complete $1$-Segal space $E$ instead of $M$.
\end{proof}

\begin{prop}\label{prop.inftyequiv} Let $(\C,\M,\E)$ be an exact $\infty$-category. Let $M = t^!\M$ and
	$E = t^! \E$ denote the complete $1$-Segal spaces corresponding to $\M$ and $\E$. Then
	\begin{enumerate}[label=(\arabic{*})]
	 \item \label{prop.inftyequiv.1} For each $n \ge 1$, the restriction of the functor
	 $f_h^*$ to $\SW_n \C$ induces a weak equivalence
	 \[
	 \SW_n \C \stackrel{\simeq}{\lra} M_{n-1}\text{.}
	 \]
	 \item \label{prop.inftyequiv.2} For each $n \ge 1$, the restriction of the functor
	 $f_v^*$ to $\SW_n \C$ induces a weak equivalence
	 \[
	 \SW_n \C \stackrel{\simeq}{\lra} E_{n-1}\text{.}
	 \]
	\end{enumerate}
\end{prop}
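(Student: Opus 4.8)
The statement to be proved is Proposition \ref{prop.inftyequiv}: that for an exact $\infty$-category $\C$, the horizontal restriction $f_h^*$ induces a weak equivalence $\SW_n\C \simeq M_{n-1}$, and symmetrically the vertical restriction $f_v^*$ gives $\SW_n\C \simeq E_{n-1}$. This is precisely the $\infty$-categorical analog of Lemma \ref{leq:waldhausen-filtration-abelian}, which asserted that for a proto-exact category the functor $\mu_n$ recording the top row $F(0,1)\to\cdots\to F(0,n)$ is an equivalence onto the groupoid of chains of admissible monomorphisms. My plan is to reconstruct the quasi-inverse of $f_h^*$ exactly as in that lemma, using the theory of homotopy left Kan extensions along the horizontal embedding $f_h:[n-1]\hookrightarrow T_n$ in place of the iterated formation of coCartesian (pushout) squares. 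I will prove part \ref{prop.inftyequiv.1}; part \ref{prop.inftyequiv.2} is entirely dual, replacing ``coCartesian/pushout/$\M$'' by ``Cartesian/pullback/$\E$'' and using the left Kan extension version by the vertical embedding.

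First I would identify the image $M_{n-1} = (t^!\M)_{n-1}$ with $\Fun(\Delta^{n-1},\M)_{\Kan}$ via Example \ref{comp.diag}, so that $f_h^*$ becomes the restriction sending a $T_n$-diagram $F$ satisfying \ref{exact.wc1}--\ref{exact.wc3} to the row $F\circ f_h$, a chain $F(0,1)\to\cdots\to F(0,n)$ which by \ref{exact.wc2} lands in $\M$. The key step is to show this restriction is a trivial Kan fibration, or at least a weak equivalence, by recognizing $\SW_n\C$ as a space of suitable Kan extensions. Concretely: a diagram $F$ satisfying the Waldhausen conditions is determined, up to contractible choice, by its top row together with the requirement that $F$ be a left Kan extension of that row along the inclusion of the ``top edge plus diagonal'' subcategory of $T_n$. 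The conditions \ref{exact.wc1} (zero objects on the diagonal) and \ref{exact.wc3} (all the relevant squares coCartesian) are exactly the assertion that $F$ is the left Kan extension of the restriction to $\{(0,j)\}\cup\{(i,i)\}$, with the diagonal sent to $0$. I would therefore invoke Lurie's theory of Kan extensions in $\infty$-categories (\cite[\S 4.3]{lurie.htt}): the full subcategory of $\Fun(\N(T_n),\C)$ spanned by such relative left Kan extensions is a trivial fibration over the $\infty$-category of diagrams on the smaller poset, and restricting to Kan cores and to the locus where edges land in $\M$ preserves this. That $\M$ is stable under the pushouts being formed is guaranteed by axiom \ref{e2}(i), and that the resulting vertical maps land in $\E$ (so the full Waldhausen conditions hold, not merely the $\M$-row data) is Proposition \ref{prop:exact}\ref{prop:exact1}, which propagates ``coCartesian with $\M$-horizontal and $\E$-vertical'' across the grid.

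The main obstacle, which I would treat carefully, is the bookkeeping of the Kan extension: one must verify that the relevant pointwise colimits (the pushouts in condition \ref{exact.wc3}) exist in $\C$ and land in the correct subcategories, and that the comma categories indexing these colimits are as expected so that the Kan-extension criterion of \cite[4.3.2.15]{lurie.htt} applies. This is the place where the axioms of an exact $\infty$-category do the real work: \ref{e2}(i) supplies existence and $\M$-stability of the pushouts, while \ref{e3} and Proposition \ref{prop:exact} upgrade a single coCartesian square with the correct edge-labels to a biCartesian square with both horizontal edge in $\M$ and vertical edge in $\E$, so that extending the top row forces all of \ref{exact.wc1}--\ref{exact.wc3} simultaneously. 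Once the Kan-extension identification is established, the map $f_h^*$ is exhibited as the composite of a trivial fibration (restriction from left-Kan-extended diagrams to their defining data) with an equivalence identifying the defining data with $\Fun(\Delta^{n-1},\M)_{\Kan}$, hence is a weak equivalence. The dual argument for $f_v^*$ uses Proposition \ref{prop:exact}\ref{prop:exact2} and the pullback-stability in \ref{e2}(ii), completing the proof of both parts.
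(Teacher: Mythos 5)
Your proposal is correct and follows essentially the same route as the paper: both identify the Waldhausen conditions \ref{exact.wc1}--\ref{exact.wc3} as relative Kan extension conditions along the inclusion of the top row (plus diagonal) into $T_n$, apply \cite[4.3.2.15]{lurie.htt} to get a trivial fibration onto the diagrams of $\M$-chains, and use Proposition \ref{prop:exact} to recover \ref{exact.wc2}. The only cosmetic difference is that the paper factors $f_h$ through three intermediate subposets $U_1 \subset U_2 \subset U_3 \subset T_n$ so that the "diagonal sent to $0$" condition is itself expressed as a left Kan extension (adding the initial object $(0,0)$) followed by a right Kan extension (adding the final objects $(i,i)$), rather than being imposed by hand as in your single-step formulation.
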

\begin{proof}
	The proof of \ref{prop.inftyequiv.1} is essentially the argument of \cite[Lemma
	1.2.2.4]{lurie.algebra}. 
	We decompose the functor $f_h:\; [n-1] \to T_n$ into 
	\[
	U_1 \stackrel{f_1}{\lra} U_2 \stackrel{f_2}{\lra} U_3 \stackrel{f_3}{\lra} T_n
	\]
	where the functors $f_i$ are inclusions of full subcategories $U_i \subset T_n$ defined as
	follows:
	\begin{itemize}
	 \item $\ob U_1 = \{ (0,j) |\; 1 \le j \le n \}$ 
	 \item $\ob U_2 = \{ (0,j) |\; 0 \le j \le n \}$ 	
	 \item $\ob U_3 = \{ (i,j) |\; 0 \le i \le j \le n \text{ and ($i=0$ or $i=j$)} \}$ 	
	\end{itemize}
	Using the pointwise criterion for $\infty$-categorical
	Kan extensions (see \cite[4.3.2]{lurie.htt}), one easily verifies the following statements.
	\begin{itemize}
	 \item A functor $F: \N(U_2) \to \C$ satisfies the condition that $F(0,0)$ is an initial
	 object (hence zero object) of $\C$ if and only if $F$ is a left Kan
	 extension of $F_{|\N(U_1)}$.
	 \item A functor $F: \N(U_3) \to \C$ satisfies the condition that, for every $1 \le i
	 \le n$, the
	 object $F(i,i)$ is a final object (hence zero object) of $\C$ if and only if $F$ is a right Kan
	 extension of $F_{|\N(U_1)}$.
	 \item A functor $F: \N(T_n) \to \C$ satisfies condition \ref{exact.wc3} if and only if $F$ is a left Kan extension of
	 $F_{|\N(U_2)}$.
	 \item Let $F: \N(T_n) \to \C$ be a functor which satisfies conditions
	 \ref{exact.wc1} and \ref{exact.wc3}. Then $F$ satisfies
	 the conditions \ref{exact.wc2} if and only 
	 if $F_{|\N(U_1)}$ factors through $\M \subset \C$.
	\end{itemize}
	The result now follows from \cite[4.3.2.15]{lurie.htt}.
	The proof of \ref{prop.inftyequiv.2} is obtained by a dual
	argument.
\end{proof}

\begin{rem} In \cite{waldhausen}, Waldhausen defines the S-construction for categories which are
	nowadays called {\em Waldhausen categories}. While part \ref{prop.inftyequiv.1} of Proposition
	\ref{prop.inftyequiv} still holds in this generality, the map of \ref{prop.inftyequiv.2}
	is, in general, {\em not} a weak equivalence. In particular, Theorem \ref{thm:waldhausen-infty}
	does not hold for general Waldhausen categories.
\end{rem}

\vfill\eject

\subsection{Application: Derived Waldhausen stacks}
\label{subsec:der-wald-stacks}

In this section, we use Theorem \ref{thm:waldhausen-infty} to construct $2$-Segal simplicial objects
in model categories of algebro-geometric nature. More precisely, these objects are given by certain
derived moduli spaces of objects in dg categories as constructed by To\"en and Vaqui\'e
\cite{toen-vaquie}. We recall their formalism from a point of view convenient for us. 

Let $\FF$ be a field. We denote by $C(\FF)$, resp. $C^{\leq 0}(\FF)$,  the category of all,
resp. non-positively graded,
 cochain complexes of vector
spaces over $\FF$, equipped with the usual symmetric monoidal structure (tensor product
of complexes).  A morphism $f: M \to N$ of complexes is called a {\em quasi-isomorphism} if, for every
$i \in \ZZ$, the induced map $H^i(M) \to H^i(N)$ is an isomorphism of vector spaces. 
A {\em $\FF$-linear differential graded category}, often abbreviated to dg category, is defined to
be a $C(\FF)$-enriched category. 
Let $\A$ be a dg category. For objects $a,a'$, we denote by 
$\A(a,a')$ the cochain
complex of maps between $a$ and $a'$ given by the enriched $\Hom$-object in $C(\FF)$.
As in any enriched category, the spaces 
\[
\Hom_{\A}(a,a')  = \Hom_{C(\FF)}(\FF, \A(a,a'))  
\]
 define a usual $\FF$-linear category with the same objects as $\A$,
which we call the
  {\em underlying ordinary category} of $\A$  and denote  by   $\underline \A$. 
  Explicitly,  $\Hom_{\A}(a,a') $ consists of 
  0-cocycles in the complex $\A(a,a')$.
  
Further, we define the {\em homotopy category} of $\A$, denoted by $H^0 \A$, to be the $\FF$-linear
category with the same objects as $\A$ and morphisms given by
\[
\Hom_{H^0 \A}(a, a') = H^0 \A(a,a').
\]
Thus we have a functor $\underline \A\to H^0\A$, identical on objects. A morphism  in $\underline\A$
is called a {\em homotopy equivalence}, if it is taken into an isomorphism in $H^0\A$.
We denote by $\Hen=\Hen_\A$ the class of homotopy equivalences in $\A$.

A {\em dg functor} $F: \A \to \B$ between dg categories is defined to be a $C(\FF)$-enriched functor.
We denote by $\dgcat$ the category given by small dg categories with dg functors as morphisms.
A dg functor $F: \A \to \B$ is called a {\em quasi-equivalence} if
\begin{enumerate}
	\item the induced functor of homotopy categories $H^0 \A \to H^0 \B$ is an equivalence of ordinary categories, and
	\item for every pair $a,a'$ of objects in $\A$, the induced map 
	\[
		\A(a,a') \lra \B(F(a),F(a')) 
	\]
	is a quasi-isomorphism of complexes.
\end{enumerate}
 Recall  \cite{tabuada} that 
$\dgcat$   carries a combinatorial model structure in which
weak equivalences are quasi-equivalences and fibrations
 are objectwise surjective dg transformations
of dg functors.  Note that $\dgcat$ contains the category $\dgalg$ of associative
dg algebras over $\FF$, understood as dg categories with one object.

In order to apply Theorem \ref{thm:waldhausen-infty}, it will be useful for us to understand dg categories
from the $\infty$-categorical point of view. To this end, we use the following
construction introduced in \cite[\S 1.3.1]{lurie.algebra}, to which we refer the reader for details.
We associate to the $n$-simplex $\Delta^n$ a dg category $\dg(\Delta^n)$ with objects given by
the set $[n]$. The graded $\FF$-linear category underlying $\dg(\Delta^n)$ is freely generated by morphisms
\[
	f_I \in \dg(\Delta^n)(i_-, i_+)^{-m}
\]
where $I$ runs over the subsets $\{i_- < i_m < i_{m-1} < \dots < i_1 < i_+ \} \subset [n]$,
$m \ge 0$. The differential  is given  on generators by the formula
\[
	d f_I = \sum_{1 \le j \le m} (-1)^j 
	(f_{I \setminus \{i_j\}}  - 
	f_{\{i_j < \dots < i_m < i_+\}} \circ f_{\{i_- < i_1 < \dots < i_j\}})
\]
 and extended by the $\FF$-linear Leibniz rule to compositions of the generators.
 One verifies that $d^2 = 0$ on generators and therefore on all morphisms. Further, it is
straightforward to see that the dg categories $\dg(\Delta^n)$, $n \ge 0$, assemble to form a
cosimplicial object in $\dgcat$.

\begin{defi} \label{defi:dgnerve} We define the dg nerve $\Ndg(\A)$ of a small dg category $\A$ to be
	the simplicial set with $n$-simplices given by
	\[
		\Ndg(\A)_n = \Hom_{\dgcat}(\dg(\Delta^n), \A),
	\]
	and simplicial maps obtained from the cosimplicial structure of $\dg(\Delta^{\bullet})$.
\end{defi}

Thus, vertices of $\Ndg(\A)$ are given by the objects of $\A$ and edges 
by morphisms of $\underline\A$. 
 A triangle in $\Ndg(\A)$ is given by objects $a,a',a''$ of $\A$,  morphisms $f_1: a \to
a'$, $f_2: a' \to a''$, $g: a \to a''$ in $\underline\A$, and a homotopy
$h \in {\A}(a,a'')^{-1}$ such that 
\[
	d h = g - f_2 \circ f_1.
\]
It is shown in \cite[1.3.1.10]{lurie.algebra} that $\Ndg(\A)$ is in fact an $\infty$-category.
Let us list some consequences of this fact. 

\begin{cor}\label{cor:pi-dgnerve} 
\begin{enumerate}[label=(\alph{*})]
	\item We have a natural equivalence of homotopy categories $H^0 \A \simeq \h \Ndg(\A)$.
	
	\item The simplicial subset $\Ndg(\A)_{\Kan}\subset\Ndg(\A)$ is given by those simplices
	  for which all edges lie in $\Hen_\A$.

	\item
	For every object $a$ of $\A$, we have  identifications
	\[
		\pi_1(|\Ndg(\A)_{\Kan}|, a) \cong \Aut_{H^0\A}(a), \quad 
	 \pi_i(|\Ndg(\A)_{\Kan}|,a) \cong H^{1-i}(\A(a,a)), \,\,\, i\geq 2.
	\]
	
\end{enumerate}
\end{cor}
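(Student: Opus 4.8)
The plan is to prove all three statements by reducing everything to the already-established fact that $\Ndg(\A)$ is an $\infty$-category together with the structural description of its simplices. First I would observe that part (a) is essentially definitional: by \cite[1.3.1.10]{lurie.algebra} the simplicial set $\Ndg(\A)$ is an $\infty$-category, so its homotopy category $\h\Ndg(\A)$ is defined, with objects the vertices (= objects of $\A$) and morphisms the homotopy classes of edges. An edge of $\Ndg(\A)$ is a morphism in $\underline\A$, i.e.\ a $0$-cocycle in $\A(a,a')$, and the triangle data recalled just above the statement (objects $a,a',a''$, morphisms $f_1,f_2,g$, and a homotopy $h\in\A(a,a'')^{-1}$ with $dh = g - f_2\circ f_1$) show precisely that two edges are homotopic in the $\infty$-categorical sense iff they differ by a coboundary, i.e.\ iff they agree in $H^0\A(a,a')$. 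Hence the identity-on-objects functor $\underline\A\to H^0\A$ induces an isomorphism of $\Hom$-sets $\h\Ndg(\A)(a,a')\cong H^0\A(a,a')$, and composition matches because in both cases it is induced by composition in $\A$ modulo homotopy. This gives the equivalence $H^0\A\simeq\h\Ndg(\A)$.

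For part (b), the key input is the general principle (\cite{lurie.htt}) that for any $\infty$-category $\C$, the largest Kan subcomplex $\C_{\Kan}$ is spanned by those simplices all of whose edges are equivalences, where an edge is an equivalence iff it becomes an isomorphism in $\h\C$. Applying this to $\C=\Ndg(\A)$ and invoking part (a), an edge is an equivalence iff it is an isomorphism in $H^0\A$, which is exactly the condition that the corresponding morphism of $\underline\A$ lie in $\Hen_\A$. So $\Ndg(\A)_{\Kan}$ consists of those simplices whose edges all lie in $\Hen_\A$, which is the claim.

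For part (c), I would compute the homotopy groups of $|\Ndg(\A)_{\Kan}|$ at a base object $a$. By part (b) this is a Kan complex, so its homotopy type is accessible. The statement $\pi_1\cong\Aut_{H^0\A}(a)$ follows from part (a): $\pi_1$ of the realization of a Kan complex at $a$ is the automorphism group of $a$ in the fundamental groupoid, which is the group of equivalences from $a$ to $a$ modulo homotopy, i.e.\ $\Aut_{H^0\A}(a)$. For the higher groups $\pi_i$ with $i\ge 2$, the natural approach is to relate the loop space of $\Ndg(\A)_{\Kan}$ at $a$ to the mapping-space complex of self-equivalences of $a$, and to identify that mapping space with the Dold–Kan realization of the truncation of the $\FF$-complex $\A(a,a)$; the degree shift $H^{1-i}$ arises precisely from the Dold–Kan correspondence, under which the $i$-th homotopy group of the simplicial abelian group associated to a (cohomologically graded) complex $K$ is $H^{-i}(K)$, combined with the single loop-space shift coming from passing from the mapping space to the based loop space.

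The main obstacle I expect is the last identification in part (c): making precise the comparison between the based loop space $\Omega_a|\Ndg(\A)_{\Kan}|$ and the simplicial abelian group underlying $\tau_{\le -1}\A(a,a)[\,\cdot\,]$, so that the Dold–Kan computation yields exactly $\pi_i\cong H^{1-i}(\A(a,a))$ for $i\ge 2$. This requires unwinding the definition of $\dg(\Delta^n)$ and of the generators $f_I$ to see that an $n$-simplex of $\Ndg(\A)_{\Kan}$ based at $a$ in all vertices is governed by the sequence of homotopies $h\in\A(a,a)^{-1}$, higher homotopies in $\A(a,a)^{-2}$, and so on, matching the normalized chains of the shifted complex; the bookkeeping of signs in the differential $df_I$ and the degree conventions is where the real care is needed, but it is a routine (if delicate) application of Dold–Kan once the dictionary is set up, and it parallels the computations in \cite[\S 1.3.1]{lurie.algebra}.
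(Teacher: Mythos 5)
Your parts (a) and (b) coincide with the paper's proof (which simply declares (a) obvious and derives (b) from the general fact that the maximal Kan subcomplex of an $\infty$-category is spanned by the simplices whose edges become isomorphisms in the homotopy category), and your treatment of $\pi_1$ via (a) also matches. For $\pi_i$ with $i\ge 2$, however, you take a genuinely different route. The paper argues combinatorially: it identifies $\pi_i(|\Ndg(\A)_{\Kan}|,a)$ with elementary $i$-spheres (simplices all of whose faces are degenerations of $a$) modulo elementary homotopy, in the sense of \cite[\S 8.3]{weibel}; unwinding the definition of $\dg(\Delta^i)$, such a sphere is exactly a cocycle $f\in\A(a,a)^{1-i}$ and the elementary homotopy relation is exactly $f-f'=d(h)$ with $h\in\A(a,a)^{-i}$, giving $H^{1-i}(\A(a,a))$ directly. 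You instead pass to the based loop space, identify it with the mapping space of self-equivalences, invoke the equivalence $\Hom^R_{\Ndg(\A)}(a,a)\simeq \on{DK}(\tau^{\le 0}\A(a,a))$ of \cite[1.3.1.12]{lurie.algebra} (which the paper itself only brings in later, in the proof of Proposition \ref{prop:dgnervedklocal}), and read off the shift from Dold--Kan. Both arguments are correct; the paper's is more self-contained and makes the degree bookkeeping visible at the level of the generators $f_I$, while yours outsources that bookkeeping to Lurie's mapping-space computation and makes the shift $H^{1-i}$ conceptually transparent. The one small point you should make explicit in your route is the basepoint: the equivalence with $\on{DK}(\tau^{\le 0}\A(a,a))$ carries $\id_a$ to a point that need not be $0$, but since the target is a simplicial abelian group its homotopy groups are independent of basepoint by translation, so the identification $\pi_{i-1}\cong H^{1-i}$ still holds. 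This is a minor remark, not a gap.
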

\begin{proof} 
	Part (a) is obvious, and (b) follows from the general properties of $\infty$-categories:
	the Kan subcomplex is given by those edges which become isomorphisms in the homotopy
	category. Let us prove (c) for $i\geq 2$ (the case $i=1$ follows from (a)). As with any Kan
	complex, $\pi_i(|\Ndg(\A)_{\Kan}|,a)$ can be identified with the set of of {\em elementary
	$i$-spheres}  ($i$-simplices  with all faces given by degenerations of the $0$-simplex $a$)
	modulo the relation of {\em elementary homotopy} (see, e.g.,\cite[\S 8.3]{weibel} for
	details).  By the very definition of $\Ndg(\A)$, an elementary $i$-sphere corresponds to an
	element $f \in \A(a,a)^{1-i}$ such that $df = 0$. Further, the elementary homotopy
	relation translates into the cohomology relation: $f-f'=d(h)$ where $f,f' \in
	\A(a,a')^{1-i}$ and $h\in \A(a,a')^{-i}$. 
\end{proof}

\begin{rem} The cosimplicial dg category $\dg(\Delta^{\bullet})$ from Definition \ref{defi:dgnerve}
  is a Reedy-cofibrant replacement (with respect to Tabuada's model structure) of the cosimplicial
  dg category formed by the $\kk$-linear envelopes of the categories $[n]$, $n \ge 0$.  This
  parallels the construction of the simplicial nerve of a simplicial category
  (\cite[1.1.5.5]{lurie.htt}) where the cosimplicial simplicial category ${\mathfrak
  C}[\Delta^\bullet]$ is a Reedy-cofibrant replacement of the cosimplicial object given by the
  discrete categories $[n]$, $n \ge 0$.
\end{rem}

For any simplicial set $K$ we define a dg category $\dg(K)$ by amalgamation:
\[
\dg(K) = \ind^{\dgcat}_{\{\sigma: \Delta^p\to K\}} \dg(\Delta^p).
\]
This gives an  adjunction
	\be\label{eq:dg-nerve-adjunction}
		\dg: \sSet \longleftrightarrow \dgcat : \Ndg
	\ee
	which, as is shown in \cite[1.3.1.20]{lurie.algebra}, is in fact a Quillen adjunction
	with respect to Tabuada's model structure on $\dgcat$ and Joyal's model structure on
	$\sSet$. In particular, since any object in $\dgcat$ is fibrant, the functor $\Ndg$ maps
	quasi-equivalences of dg categories to equivalences of $\infty$-categories.

For dg categories $\A, \B$, we define their {\em tensor product} $\A
\otimes \B$ to be the dg category with 
\begin{align*}
	& \Ob(\A\otimes B) =\Ob(\A)\times\Ob(\B),\\ 
	&(\A\otimes\B)\bigl( (a,b), (a',b')\bigr) 
	=\A(a,a') \otimes_\FF \B(b, b'). 
\end{align*}
Let $\A$ be a small dg category. The category $\Mod_\A$ of dg functors $\A^{\op} \to C(\FF)$, which we
also call {\em $\A^{\op}$-modules}, has a natural $C(\FF)$-enrichment and can hence itself be considered as
a dg category. Note that $C(\FF)$ itself is recovered as $\Mod_\FF$ where $\FF$ is considered
as  the final (one object) dg category. 

\begin{exa}
	Let $\A$ be a dg category. Given an object $a$ of $\A$, we define the $\A^{\op}$-module
	\[
		\underline{h}_a: \A^{\op} \to C(\FF),\; a' \mapsto \A(a',a).
	\]
	This construction can be promoted to a dg functor
	\[
		\underline{h}: \A \lra \Mod_{\A}, \; a \mapsto \underline{h}_a,
	\]
	called the {\em $C(\FF)$-enriched Yoneda embedding}. The dg functor $\underline{h}$ is fully faithful in the
	$C(\FF)$-enriched sense. Those $\A^{\op}$-modules which lie in the essential image of the
	induced functor $H^0 \underline{h}: H^0 \A \to H^0 \Mod_{\A}$ are called {\em
	quasi-representable}.
\end{exa}

\begin{exa}
	Let $\A$ be a dg category. We have the {\em diagonal module} ${\A}^\delta$ in $\Mod_{\A \otimes
	\A^{\op}}$ given by
	\[
		\A^\delta: \A^{\op} \otimes \A \lra C(\FF), \; (a,a') \mapsto \A(a,a'), 
	\]
	This module is important in the derived Morita theory of dg categories \cite{toen.morita}
	where it represents the identity functor on $\A$.
\end{exa}

Recall \cite[\S 2.2]{hinich} that $\Mod_\A$ carries the {\em projective} model structure in which
fibrations are pointwise surjective morphisms and weak equivalences are pointwise
quasi-isomorphisms. This model structure is compatible with the projective model structure on $C(\FF)
\cong \Mod_{\FF}$, making $\Mod_{\A}$ a $C(\FF)$-enriched model category (\S \ref{subsec:enriched}).
Further, the model category $\Mod_\A$ is stable and therefore the homotopy category $\Ho(\Mod_\A)$ is
triangulated. 

Let $\Mod_\A^\circ \subset \Mod_\A$ denote the full dg subcategory spanned by objects which are
cofibrant (all objects of $\Mod_\A$ are fibrant). We have a natural equivalence of categories
\[
	H^0(\Mod_{\A}^\circ) \simeq \Ho(\Mod_{\A}).
\]
Let $\Perf_\A$ be the full subcategory in $\Mod_\A$, whose objects are {\em perfect
$\A^{\op}$-modules}, i.e., objects which are {\em homotopically finitely presented} in the model
category $\Mod_\A$, see \cite[\S 2.1]{toen-vaquie}. We denote by $\Perf_{\A}^\circ \subset
\Perf_{\A}$ the full subcategory spanned by those objects which are cofibrant in $\Mod_\A$.
The categories $\Perf_{\A}$ and $\Perf_{\A}^{\circ}$ inherit $C(\k)$-enrichments from $\Mod_\A$ and can hence be
considered as dg categories. 
Note that $H^0(\Perf^\circ_\A) \simeq  \Ho(\Perf_\A)$ is a triangulated subcategory in 
 $\Ho(\Mod_\A)$. 

\begin{exa}  
	 A complex $M$ in $C(\FF)=\Mod_{\FF}$ is
	perfect, if and only if the total cohomology $H^\bullet(M)$ is a finite dimensional
	$\FF$-vector space.
\end{exa}

\begin{rem} Let $\P$ be one of the dg-categories $\Mod^\circ_\A$, $\Perf^\circ_\A$. 
It can be shown by arguments similar to \cite[1.3.2]{lurie.algebra} that the
	dg nerve $\Ndg(\P)$ of $\P$ is a stable $\infty$-category. As explained in
	\cite[1.1.2]{lurie.algebra}, the homotopy category of any stable $\infty$-category carries a
	natural triangulated structure. This gives an alternative construction of the triangulated
	structure on $H^0 \P$ via the identification
	\[
		\h \Ndg(\P) \simeq H^0 \P
	\]
	from Corollary \ref{cor:pi-dgnerve}. Therefore, we can say that the dg nerve $\Ndg(\P)$
	provides an {$\infty$-categorical enhancement} of the triangulated category $H^0 \P$. 
	It seems likely that a similar comparison holds for any
	 pre-triangulated dg category $\P$ in the sense of \cite{bondal-kapranov}. 
	 \end{rem}

\begin{defi} 
	A dg category $\A$ is called {\em smooth}, if the diagonal $\A^{\op} \otimes
	\A$-module $\A^\delta$ is perfect.  $\A$ is called {\em proper} if, for all
	objects $a,a'$, the mapping complex $\A(a,a')$ is perfect in $\Mod_\FF$, and the
	triangulated category $\Ho(\Mod_\A)$ has a compact generator. 
\end{defi}
 
\begin{rem}\label{rem:NC-smooth-proper}
	As emphasized in \cite{KS-A-infty}, smooth and proper dg categories can be seen as
	noncommutative analogs of smooth and proper varieties over $\FF$. In particular, let $V$ be
	a smooth and proper varierty over $\FF$, and let $\A_V$ the dg category formed by
	finite complexes of injective quasi-coherent $\Oc_V$-modules with coherent
	cohomology sheaves. Then $\A_V$ is smooth and proper, and $H^0\Ac_V$
	is equivalent to $D^b(\Coh(V))$,
	 	derived category  of coherent sheaves on $V$. 
	 \end{rem}
 
Let $\Uc$ be the model site of simplicial commutative $\FF$-algebras and $\mC=D^-\Ac
ff_\FF^{\sim,\text{\'et}}$ be the model category of derived stacks over $\FF$, obtained by
localizing the model category of simplicial presheaves on $\Uc$, see Example
\ref{ex:derived-stacks}. For a simplicial commutative algebra $\Lambda\in\Uc$ we denote by
$\NCC(\Lambda)$ the normalized chain complex of $\Lambda$, which is an associative dg algebra with
grading situated in degrees $\leq 0$.
Here is the main result of \cite{toen-vaquie} (stated in a lesser generality, sufficient for our
purposes). 

\begin{thm}\label{thm:toenvaquie}
Let $\A$ be a smooth and proper dg category. For a simplicial commutative $\FF$-algebra $\Lambda$
define 
\[
	\Mc_\A(\Lambda) = \RMap(\A^\op, \Perf_{\NCC(\Lambda)}). 
\]
Here $\RMap$ is the derived mapping space in the model category $\dgcat$. Then
$\Lambda\mapsto\Mc_\A(\Lambda)$, considered as a simplicial presheaf $\Mc_\A$ on $\Uc$, is a derived
stack, i.e., a fibrant object in $\mC$. This derived stack is locally geometric and locally of
finite presentation.
\end{thm}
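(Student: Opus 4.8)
The plan is to deduce the statement from the general representability machinery of derived algebraic geometry, following \cite{toen-vaquie}; once one recognizes $\Mc_\A$ as the derived moduli stack of perfect $\A^\op$-modules, all the necessary input is either recalled in Chapter \ref{sec:model-cat} or supplied by the smoothness and properness hypotheses on $\A$. I would organize the argument into three stages: \'etale descent, deformation theory, and the application of the representability criterion.

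First I would verify that $\Mc_\A$ is a derived stack, i.e.\ a fibrant object of $\mC = D^-\Ac ff_\FF^{\sim,\text{\'et}}$. By the construction of $\mC$ as a Bousfield localization of simplicial presheaves on $\Uc$ with respect to \'etale homotopy hypercoverings (Example \ref{ex:derived-stacks}), it suffices to check \'etale hyperdescent. For a fixed $\A$ the functor $\RMap(\A^\op, -)$ preserves homotopy limits in its second variable, so it is enough that the assignment $\Lambda \mapsto \Perf_{\NCC(\Lambda)}$, viewed with values in $\dgcat$ and its $\infty$-categorical localization, carries \'etale hypercoverings to homotopy limits. This is exactly the descent property of perfect modules from derived Morita theory. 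Consequently $\Mc_\A$ sends homotopy colimits in $\Uc$ to homotopy limits of spaces and is fibrant for the localized model structure.

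Next I would set up the deformation theory of $\Mc_\A$. At a $\Lambda$-point classified by a dg functor $E\colon \A^\op \to \Perf_{\NCC(\Lambda)}$, the tangent space is governed by the derived endomorphism complex of $E$, computed in the dg category $\RMap(\A^\op, \Perf_{\NCC(\Lambda)})$. The crucial computation is that this complex is perfect over $\NCC(\Lambda)$. Properness of $\A$ forces the mapping complexes of $\A$, and hence those of $E$, to have bounded finite-dimensional cohomology, while smoothness---the perfectness of the diagonal bimodule $\A^\delta$---controls the global $\Hom$-complex and yields perfectness of the endomorphisms after base change. From this one extracts a perfect cotangent complex for $\Mc_\A$ and verifies the remaining infinitesimal conditions (nilcompleteness and infinitesimal cohesion) demanded by the representability theorem; these follow formally from the stability of the module categories $\Mod_{\NCC(\Lambda)}$.

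Finally, geometricity and local finite presentation would follow from the representability criterion of derived algebraic geometry (\cite{toen-vezzosi}, in the form used in \cite{toen-vaquie}): a derived stack satisfying \'etale descent, commuting with filtered colimits of base algebras, and admitting a global deformation theory with perfect cotangent complex is automatically locally geometric and locally of finite presentation. I expect the deformation-theoretic stage---specifically the proof that the cotangent complex is perfect, which is precisely where both smoothness and properness of $\A$ are indispensable---to be the main obstacle; the descent and representability stages are then largely formal consequences of the enriched-model-category and homotopy-limit formalism recalled earlier.
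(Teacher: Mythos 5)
You should first note that the paper does not prove this statement at all: it is explicitly quoted as ``the main result of \cite{toen-vaquie}'' and used as a black box, so there is no internal argument to compare your proposal against. What you have written is, in effect, a reconstruction of the proof in the cited reference, and as an outline it is faithful to that proof's architecture: (i) \'etale hyperdescent for $\Lambda\mapsto\Perf_{\NCC(\Lambda)}$ coming from derived Morita theory, (ii) identification of the tangent complex at a point $E$ with the shifted derived endomorphism complex $\RMap(E,E)[1]$ together with its perfectness over the base, and (iii) the Artin--Lurie-style representability criterion of \cite{toen-vezzosi} to conclude local geometricity and local finite presentation. These are indeed the three pillars of the Toën--Vaqui\'e argument.

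Two caveats. First, your division of labor between the hypotheses is slightly off relative to how the actual proof runs: in \cite{toen-vaquie} the moduli functor classifies \emph{pseudo-perfect} modules (dg functors $\A^{\op}\to\Perf_{\NCC(\Lambda)}$, exactly as in the paper's formula for $\Mc_\A$), so perfectness of $\RMap(E,E)$ over the base is built into the moduli problem rather than deduced; properness is what guarantees that the perfect modules one wants to classify are pseudo-perfect, while smoothness guarantees the converse and, more importantly, that $\A$ is of finite type, which is the hypothesis actually driving local finite presentation and the commutation with filtered colimits of base algebras. Second, your argument is an outline rather than a proof: the descent statement for perfect modules, the computation of the cotangent complex, and above all the representability theorem itself are each substantial results that you invoke without proof. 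That is entirely appropriate here --- the paper itself treats the whole theorem as imported --- but you should be explicit that steps (i)--(iii) are being cited, not established.
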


We need a general comparison statement. 

\begin{prop} \label{prop:dgnervedklocal}
	Let $\mM$ be a $C(\FF)$-enriched model category, and let $\mM^{\circ}$ be the subcategory of
	fibrant and cofibrant objects. Let $\Ndg(\mM^{\circ})$ be the dg nerve of Definition
	\ref{defi:dgnerve}. There is a weak homotopy equivalence of simplicial sets 
	\begin{equation}\label{eq:hom-mod-space} 
		\Ndg(\mM^\circ)_{\on{Kan}} \simeq \N(\Wen),
	\end{equation} 
	where $\Wen$ denotes the subcategory in $\mM$ formed by weak equivalences. 
\end{prop}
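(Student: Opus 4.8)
The goal is to produce a weak homotopy equivalence between two models of the ``space of weak equivalences'' attached to a $C(\FF)$-enriched model category $\mM$: the Kan subcomplex of the dg nerve of the fibrant-cofibrant objects, and the nerve of the subcategory $\Wen \subset \mM$ of weak equivalences. Both sides are meant to represent the homotopy type of the classifying space of the homotopy theory $\mM$, so the strategy is to exhibit both as incarnations of the Dwyer--Kan simplicial localization $L_{\Wen}(\mM)$, or more precisely of its maximal Kan subcomplex (the ``moduli space of objects''), and then to invoke the comparison results recalled in Example \ref{ex:dwyer-kan-localization}.

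The plan is to proceed in three steps. First I would analyze the left-hand side. By Corollary \ref{cor:pi-dgnerve}, the simplices of $\Ndg(\mM^\circ)_{\Kan}$ are precisely those whose edges lie in the class $\Hen$ of homotopy equivalences, and the homotopy groups at a basepoint $a$ are computed as $\pi_1 \cong \Aut_{H^0\mM^\circ}(a)$ and $\pi_i \cong H^{1-i}(\mM^\circ(a,a))$ for $i \geq 2$. Since $\mM$ is a $C(\FF)$-enriched model category, these derived endomorphism complexes compute the derived self-mapping spaces of $a$, and a homotopy equivalence in $\mM^\circ$ is exactly a weak equivalence between fibrant-cofibrant objects. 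Thus the connected components of $\Ndg(\mM^\circ)_{\Kan}$ are the isomorphism classes of objects in $\Ho(\mM)$, and each component is a classifying space of the (derived) automorphisms. Second, I would identify the right-hand side $\N(\Wen)$ as the same homotopy type: this is the content of the (enriched analog of the) theorem of Dwyer--Kan identifying $\N(\Wen)$ with the moduli space of objects, i.e. with the maximal Kan subcomplex $L_{\Wen}(\mM)_{\Kan}$ of the simplicial localization, whose components are again isomorphism classes in $\Ho(\mM)$ and whose component at $a$ has the homotopy type of $B\Aut^h(a)$. Both descriptions agree componentwise and on homotopy groups, and by Example \ref{ex:dwyer-kan-localization}(d) the derived mapping spaces depend only on the class $\Wen$.

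Concretely, I would construct the comparison map rather than merely matching invariants. The cleanest route is to restrict to fibrant-cofibrant objects (the inclusion $\Wen^\circ \hookrightarrow \Wen$ of the full subcategory on fibrant-cofibrant objects induces a weak equivalence of nerves, since every object admits a functorial fibrant-cofibrant replacement through a zig-zag of weak equivalences), reducing to comparing $\N(\Wen^\circ)$ with $\Ndg(\mM^\circ)_{\Kan}$. Then I would produce a natural map from $\N(\Wen^\circ)$, whose $n$-simplices are strings of $n$ composable weak equivalences, into $\Ndg(\mM^\circ)_{\Kan}$, whose $n$-simplices are $\dg(\Delta^n)$-shaped homotopy-coherent diagrams of homotopy equivalences, using that a strictly commuting string of weak equivalences is in particular a coherent diagram (all higher coherence homotopies taken to be null). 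Finally I would check this map is a weak equivalence component by component via the homotopy-group computation of Corollary \ref{cor:pi-dgnerve}, comparing $\Aut_{H^0}(a)$ and the higher cohomology $H^{1-i}(\mM^\circ(a,a))$ on the dg side with the corresponding derived automorphism homotopy groups on the localization side.

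The main obstacle I anticipate is the rigidification/coherence comparison in the last step: the dg nerve records genuinely homotopy-coherent diagrams, whereas $\N(\Wen^\circ)$ records strictly commuting ones, so one must argue that passing from strict to coherent diagrams does not change the homotopy type. This is exactly the kind of statement handled by the Dwyer--Kan machinery (and by the Quillen adjunction $\dg \dashv \Ndg$ of \eqref{eq:dg-nerve-adjunction} together with the identification $\h\Ndg(\P) \simeq H^0\P$), so the honest work is to match these two known models of $L_{\Wen}(\mM)_{\Kan}$ rather than to reprove contractibility of spaces of coherences from scratch. Assuming the enriched Dwyer--Kan comparison recalled in Example \ref{ex:dwyer-kan-localization}, the equivalence \eqref{eq:hom-mod-space} then follows by comparing $\pi_0$ (isomorphism classes in $\Ho(\mM)$) and the higher homotopy groups at each basepoint, which agree by the dg-nerve computation of Corollary \ref{cor:pi-dgnerve}.
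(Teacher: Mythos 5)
Your overall strategy---exhibit both sides as models of the Dwyer--Kan localization $L_{\Wen}(\mM)$ and compare---is the same as the paper's, and your outline of what needs to happen is accurate. But the one step you defer to a citation is precisely the technical content of the paper's proof, and the citation does not cover it. You invoke ``the enriched Dwyer--Kan comparison recalled in Example \ref{ex:dwyer-kan-localization}'' to identify the derived mapping spaces of $\mM$ with the complexes $\mM^\circ(a,b)$; but that example only treats \emph{simplicially} enriched model categories, whereas $\mM$ is $C(\FF)$-enriched. Bridging this gap is exactly what the paper does: it produces the explicit cosimplicial resolution $a^{\bullet}\colon [n]\mapsto \NCC(\FF[\Delta^n])\otimes a$, uses Dwyer--Kan's cosimplicial-resolution formula to get $\Map_{L_{\Wen}(\mM)}(a,b)\simeq \Hom_{\A}(a^{\bullet},b)$, and then uses the tensor-hom adjunction to identify this levelwise with $\on{DK}(\tau^{\le 0}\A(a,b))$, which by Lurie's \cite[1.3.1.12]{lurie.algebra} is the mapping space of the dg nerve. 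Without some version of this computation, ``these derived endomorphism complexes compute the derived self-mapping spaces'' is an assertion, not an argument, and it is the crux of the proposition.

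A second, smaller issue concerns your plan to build a direct simplicial map $\N(\Wen^\circ)\to \Ndg(\mM^\circ)_{\Kan}$ (strict strings viewed as coherent diagrams with vanishing higher $f_I$) and then verify it is an equivalence ``via the homotopy-group computation.'' The map itself is well defined, and this is a genuinely different packaging from the paper, which never constructs a direct map: it establishes the equivalence of mapping spaces \eqref{eq:mappingspaces} and then passes to classifying spaces of the equivalence components on both sides using \cite[6.4]{dwyerkan-calculating} and \cite[5.5]{dwyerkan-simplicial}. If you take the direct-map route, matching abstract homotopy groups on each component is not enough; you must check that \emph{your} map induces those identifications, i.e., that it is compatible with the Dwyer--Kan equivalence $\N(\Wen^\circ)\simeq \coprod_{[a]} B\on{haut}(a)$ and with the dg-nerve mapping-space computation. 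This is doable but is an extra verification the paper's mapping-space-level argument sidesteps. Your reduction from $\Wen$ to $\Wen^\circ$ via functorial fibrant-cofibrant replacement is fine.
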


We call the
	homotopy type in \eqref{eq:hom-mod-space} the {\em classifying space of objects} in $\mM$. 
	
\begin{proof} Let $\A$ denote the dg category $\mM^{\circ}$. We will deduce the statement from a
	more general comparison between the mapping spaces of the $\infty$-category $\Ndg(\A)$ with
	the mapping spaces of the Dwyer-Kan simplicial localization $\on{L}_{\Wen}(\M)$ of $\mM$
	along its weak equivalences $\Wen$ (see Example \ref{ex:dwyer-kan-localization}
	and  \cite{dwyerkan-simplicial}). 
	First note that, by \cite[1.3.1.12]{lurie.algebra}, for objects $a,b$ of $\A$, we
	have a weak equivalence
	\begin{equation}\label{eq:luriedg}
		\Hom^R_{\Ndg(\A)}(a,b) \simeq \on{DK}(\tau^{\le 0} \A(a,b))
	\end{equation}
	where the functor $\tau^{\le 0}$ is the cohomological truncation of complexes in
	degrees $\leq 0$ (the    right adjoint to the inclusion $C^{\le 0}(\FF)
	\subset C(\FF)$) and $\on{DK}$ denotes the Dold-Kan correspondence.
	Note that we use  cohomological  grading and so $\on{DK}$ is defined
	on $C^{\le 0}(\FF)$.

	  	On the other hand, using \cite[4.4]{dwyerkan}, we may compute the mapping spaces of the
	simplicial localization in terms of a cosimplicial resolution of the object $a$, i.e., a
	Reedy cofibrant replacement of the constant cosimplicial object $a$ in $\mM^{\Delta}$.  To
	this end, we define
	\[
		a^{\bullet}: \Delta \lra \mM, \; [n] \mapsto \NCC(\FF [\Delta^n]) \otimes a
	\]
	where $\NCC(\FF[\Delta^n])$ denotes the normalized cochain complex of the $\FF$-linear envelope
	of $\Delta^n$, and $\otimes$ denotes the $C(\FF)$-action on $\mM$. The object $a^{\bullet}$ is
	easily verified to define a cosimplicial resolution of $a$, and hence, by
	\cite[4.4]{dwyerkan}, we obtain a weak equivalence of simplicial sets
	\[
		\Map_{\on{L}_{\Wen}(\M)}(a,b) \simeq \Hom_{\A}(a^{\bullet}, b).
	\]
	But by the defining adjunctions of the involved functors, we have, for each $n \ge 0$, an
	isomorphism
	\[
		\Hom_{\A}(\NCC(\FF[\Delta^n]) \otimes a, b) \cong \on{DK}(\tau^{\le 0} \A(a,b)),
	\]
	natural in $[n]$. Therefore, combining with \eqref{eq:luriedg}, we obtain the desired weak
	equivalence
	\begin{equation}\label{eq:mappingspaces}
		\Hom^R_{\Ndg(\mM^{\circ})}(a,b) \simeq \Map_{\on{L}_{\Wen}(\M)}(a,b).
	\end{equation}
	We now deduce \eqref{eq:hom-mod-space} by passing on both sides of \eqref{eq:mappingspaces}
	to the connected components given by equivalences, and applying
	\cite[6.4]{dwyerkan-calculating} and \cite[5.5]{dwyerkan-simplicial}.
\end{proof}

\begin{cor}
	Let $\A$ be any dg category, and $\Hen_\A$ be the class of homotopy equivalences in $\underline\A$.
	Then we have a weak equivalence
	\[
		\Ndg(\A)_{\on{Kan}} \simeq \N(\Hen_\A).
	\]
\end{cor}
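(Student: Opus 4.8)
The goal is to prove that for any dg category $\A$, the Kan complex $\Ndg(\A)_{\on{Kan}}$ is weakly equivalent to the nerve $\N(\Hen_\A)$ of the subcategory of homotopy equivalences in $\underline\A$. The plan is to reduce this statement to the preceding Proposition \ref{prop:dgnervedklocal}, which handles the case where $\A$ is the dg category $\mM^\circ$ of fibrant-cofibrant objects of a $C(\FF)$-enriched model category $\mM$. The key observation is that an arbitrary small dg category $\A$ can be realized, up to quasi-equivalence, as the subcategory of fibrant-cofibrant objects of a suitable model category.

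First I would embed $\A$ into a model category via the $C(\FF)$-enriched Yoneda embedding $\underline h: \A \to \Mod_\A$, which is fully faithful in the enriched sense. The target $\Mod_\A$ carries the projective model structure and is a $C(\FF)$-enriched model category. I would then pass to the full dg subcategory $\Mod_\A^\circ$ of cofibrant objects (all objects being fibrant), so that $\Mod_\A^\circ = (\Mod_\A)^{\circ}$ in the notation of Proposition \ref{prop:dgnervedklocal}. The homotopy equivalences in $\underline\A$ correspond, under $\underline h$, precisely to those morphisms that become isomorphisms in $H^0\A$; since $\underline h$ is enriched-fully-faithful, it induces an equivalence of homotopy categories $H^0 \A \simeq H^0(\underline h(\A))$ onto a full subcategory of $H^0(\Mod_\A^\circ) \simeq \Ho(\Mod_\A)$. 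Applying the functor $\Ndg$ and using Corollary \ref{cor:pi-dgnerve}(b), which identifies the Kan subcomplex as the simplices with all edges in $\Hen$, reduces the comparison for $\A$ to the comparison for its image, together with the case handled by the Proposition.

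The cleanest route, which I would adopt, is to note that the statement to be proved is in fact the special case of Proposition \ref{prop:dgnervedklocal} applied directly, after replacing $\A$ by a model-categorical realization. More precisely, Proposition \ref{prop:dgnervedklocal} gives, for $\mM = \Mod_\A$, a weak equivalence
\[
\Ndg(\Mod_\A^\circ)_{\on{Kan}} \simeq \N(\Wen_{\Mod_\A}),
\]
where $\Wen_{\Mod_\A}$ is the subcategory of weak equivalences. The task is then to transport this along the enriched Yoneda embedding. Since $\underline h$ identifies $\A$ with a full dg subcategory of $\Mod_\A^\circ$ (after cofibrant replacement) on which the weak equivalences are exactly the homotopy equivalences $\Hen_\A$, restricting the equivalence to the appropriate connected components gives $\Ndg(\A)_{\on{Kan}} \simeq \N(\Hen_\A)$. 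Here I would use that passing to the Kan subcomplex and to the subcategory of weak equivalences both respect the operation of restricting to a full subcategory closed under equivalences, which is the content of Corollary \ref{cor:pi-dgnerve}.

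The main obstacle I anticipate is the careful handling of cofibrancy: an arbitrary dg category $\A$ need not consist of cofibrant $\A^{\op}$-modules under Yoneda, so the identification of $\A$ with a full subcategory of $\Mod_\A^\circ$ requires a cofibrant replacement, and one must verify that this replacement induces a quasi-equivalence of dg categories and hence, via the Quillen adjunction \eqref{eq:dg-nerve-adjunction} (with $\Ndg$ sending quasi-equivalences to equivalences of $\infty$-categories), a weak equivalence of the associated Kan complexes. Verifying that the class $\Hen_\A$ is matched correctly with the weak equivalences on the relevant full subcategory of $\Mod_\A$, and that the passage to the Kan subcomplex commutes with these identifications, is where the argument needs the most care; the rest follows formally from Proposition \ref{prop:dgnervedklocal} and Corollary \ref{cor:pi-dgnerve}.
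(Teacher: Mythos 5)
Your proposal is correct and takes essentially the same route as the paper: embed $\A$ into $\Mod_\A$ via the $C(\FF)$-enriched Yoneda embedding and apply Proposition \ref{prop:dgnervedklocal}. The one thing worth correcting is that the ``main obstacle'' you anticipate is not actually there: representable $\A^{\op}$-modules $\underline{h}_a$ are always cofibrant in the projective model structure (they have the left lifting property against pointwise surjective quasi-isomorphisms by a standard cycle-lifting argument), and all objects of $\Mod_\A$ are fibrant, so $\A$ lands directly in $\Mod_\A^{\circ}$ as a full dg subcategory and no cofibrant replacement is needed. Your fallback via cofibrant replacement and quasi-equivalence invariance of $\Ndg$ would also work, but it is an unnecessary detour; the remaining step of restricting the equivalence of the Proposition to the components spanned by the representables, with $\Hen_\A$ matching the weak equivalences there, is handled correctly in your argument.
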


In particular, the homotopy groups of $\N(\Hen_\A)$ are found by Corollary  \ref{cor:pi-dgnerve}(c).
For example, given a $C(\FF)$-model category $\mM$ with underlying dg category $\A = \mM^{\circ}$,
we have $\Wen \cap \A = \Hen_\A$, so Corollary \ref{cor:pi-dgnerve}(c) describes the homotopy groups
of $\N(\Wen)$, thus recovering the formulas of \cite{toen-derived}.

 \begin{proof} The Yoneda embedding $\A \to \Mod_{\A}$ provides a $C(\FF)$-fully faithful embedding
	 of $\A$ into a $C(\FF)$-model category. Since further, the images of objects of $\A$ in
	 $\Mod_{\A}$ are cofibrant and fibrant, we can apply Proposition \ref{prop:dgnervedklocal}
	 to obtain the desired weak equivalence.
 \end{proof}
	 
We now proceed to realize the derived stack $\Mc_\A$ from Theorem \ref{thm:toenvaquie} as the first
level of a Waldhausen-type simplicial object in the category $\mC$. Using Proposition
\ref{prop:dgnervedklocal} and the computation of the derived mapping spaces of dg categories in
\cite{toen.morita}, we obtain a weak equivalence
\begin{equation}\label{eq:M-A-Kan}
	\Mc_\A(\Lambda) \simeq \N(\Wen_{\Perf_{\A\otimes\NCC(\Lambda)^\op}}) 
	\simeq \Ndg(\Perf^\circ_{\A\otimes\NCC(\Lambda)^\op})_{\on{Kan}}. 
\end{equation}
Therefore, $\Mc_\A(\Lambda)$ can be identified with the space of $1$-simplices in the Waldhausen
S-construction of the stable $\infty$-category $\Ndg(\Perf^\circ_{\A\otimes\NCC(\Lambda)^\op})$.
Varying the dg algebra $\Lambda$, we obtain, for each $n\geq 0$, a simplicial presheaf $\Sc_n(\underline\Perf_\A)$
on $\Uc$ by defining
\[
	\Sc_n(\underline\Perf_\A)(\Lambda) =\Sc_n( \Ndg(\Perf^\circ_{\A\otimes\NCC(\Lambda)^\op}) )
\]
to be the $n$th component of the Waldhausen space of the stable $\infty$-category
$\Perf^\circ_{\A\otimes\NCC(\Lambda)^\op}$. The simplicial presheaves $\Sc_n(\underline\Perf_\A)$,
$n \ge 0$, assemble to define a simplicial object $\Sc(\underline\Perf_\A)$ which we call the {\em
derived Waldhausen stack of perfect $\A$-modules}. In particular, when $\A=\A_V$  
for a smooth and proper $\FF$-variety $V$,
see Remark \ref{rem:NC-smooth-proper},  then
$\Sc(\underline\Perf_\A)$ can be seen as the derived Waldhausen stack of objects of $D^b(\Coh(V))$. 

\begin{prop}\label{cor:wald-derived-stacks}
Let $\A$ be a smooth and proper dg category. Then:
\begin{enumerate}[label=(\alph{*})]

\item Each $\Sc_n(\underline\Perf_\A)$ is a derived stack, locally geometric
and locally of finite presentation.

\item $\Sc(\underline\Perf_\A)$ is a $2$-Segal object in the model category
$\mC$ of derived stacks. 

\end{enumerate}
\end{prop}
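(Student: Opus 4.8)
The plan is to prove the two assertions of Proposition \ref{cor:wald-derived-stacks} separately, leveraging the two main inputs developed above: Theorem \ref{thm:toenvaquie} of To\"en--Vaqui\'e for part (a), and Theorem \ref{thm:waldhausen-infty} together with the identification \eqref{eq:M-A-Kan} for part (b).

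For part (a), the key observation is that each $\Sc_n(\underline\Perf_\A)$ is, by construction, the degreewise classifying space of $n$-dimensional Waldhausen diagrams in the stable $\infty$-category $\Ndg(\Perf^\circ_{\A\otimes\NCC(\Lambda)^\op})$. By Lemma \ref{leq:waldhausen-filtration-abelian} (or rather its $\infty$-categorical analog embodied in Proposition \ref{prop.inftyequiv}), the space $\Sc_n(\underline\Perf_\A)(\Lambda)$ is weakly equivalent to the space of chains of $(n-1)$ admissible monomorphisms in $\Perf_{\A\otimes\NCC(\Lambda)^\op}$. The strategy is to realize each such chain space as a derived mapping space of the form appearing in Theorem \ref{thm:toenvaquie}: namely, a chain of admissible monos of length $n-1$ in $\Perf$-modules is governed by the dg category $\A \otimes \dg(\Delta^{n-1})$, so that $\Sc_n(\underline\Perf_\A)$ is (up to the weak equivalence \eqref{eq:M-A-Kan}) the derived moduli stack $\Mc_{\A\otimes\dg(\Delta^{n-1})}$ associated to the dg category $\A\otimes\dg(\Delta^{n-1})$. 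The crucial point to verify is that $\A\otimes\dg(\Delta^{n-1})$ remains smooth and proper whenever $\A$ is: properness is clear since the mapping complexes are tensor products of perfect complexes, and smoothness follows because the diagonal bimodule of a tensor product is the tensor product of the diagonal bimodules, and $\dg(\Delta^{n-1})$ itself is smooth and proper (being a finite dg category with perfect mapping complexes and perfect diagonal). Granting this, Theorem \ref{thm:toenvaquie} directly implies that each $\Sc_n(\underline\Perf_\A)$ is a locally geometric derived stack, locally of finite presentation.

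For part (b), the plan is to apply Theorem \ref{thm:waldhausen-infty} fiberwise and then transport the conclusion through the localization machinery of the model category $\mC$. By \eqref{eq:M-A-Kan}, for each fixed simplicial commutative algebra $\Lambda$, the simplicial space $[n]\mapsto \Sc_n(\underline\Perf_\A)(\Lambda)$ is precisely the Waldhausen S-construction of the stable $\infty$-category $\Ndg(\Perf^\circ_{\A\otimes\NCC(\Lambda)^\op})$, which by Example \ref{ex:stable-infty-exact} is an exact $\infty$-category. Theorem \ref{thm:waldhausen-infty} then shows that this objectwise simplicial space is a (unital) $2$-Segal space for every $\Lambda$. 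The remaining work is to promote this pointwise $2$-Segal property to the $2$-Segal property of $\Sc(\underline\Perf_\A)$ as a simplicial object in $\mC$. Here I would invoke the path space criterion (Theorem \ref{thm.crit}): it suffices to check that the two path spaces $\PI\Sc(\underline\Perf_\A)$ and $\PF\Sc(\underline\Perf_\A)$ are $1$-Segal objects in $\mC$. Exactly as in the proof of Theorem \ref{thm:waldhausen-infty}, these path spaces are identified with the simplicial objects of admissible monomorphisms, respectively epimorphisms, which are categorified nerves and hence $1$-Segal; the identification holds objectwise in $\Lambda$ and is therefore compatible with fibrant replacement in $\mC$.

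The main obstacle will be the passage from objectwise statements to statements in the localized model category $\mC$ of derived stacks. The $d$-Segal condition in $\mC$ is defined (Definition \ref{defi:12segal}) via the derived Yoneda extension and weak equivalences of derived membrane spaces in $\mC$, and weak equivalences in $\mC$ are $\s$-local equivalences rather than objectwise equivalences of simplicial presheaves. The delicate point is that the derived membrane space functors and homotopy limits computing the $2$-Segal maps must be shown to commute, up to weak equivalence, with the fibrant replacement (stackification) functor of $\mC$; equivalently, one must check that the objectwise $2$-Segal weak equivalences, which are weak equivalences in the injective model structure $\I$ on $\mC_\Delta$, remain weak equivalences after localizing to the $\s$-local model structure. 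This follows from the fact (Corollary \ref{thm:12segaladjunction}) that the injective weak equivalences are contained in the $\Seg_d$-weak equivalences together with the left-properness and combinatoriality of $\mC$ (Example \ref{ex:derived-stacks}), but making this precise — ensuring that finite homotopy limits of derived stacks are computed fiberwise and are preserved by localization — is where the essential care is required. I would handle this by noting that the $2$-Segal maps are built from homotopy fiber products (Proposition \ref{prop.colim}), which commute with the evaluation functors $\Mc\mapsto\Mc(\Lambda)$, and that an objectwise weak equivalence of presheaves is in particular an $\s$-equivalence, so the fiberwise $2$-Segal equivalences furnished by Theorem \ref{thm:waldhausen-infty} descend to $2$-Segal equivalences in $\mC$.
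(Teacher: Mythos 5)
Your proposal takes essentially the same route as the paper: part (a) is proved by reducing to the case $n=1$ via the identification $\Sc_n(\underline\Perf_\A)\simeq \Mc_{\A\otimes\dg(\Delta^{n-1})}$ together with the observation that $\A\otimes\dg(\Delta^{n-1})$ is again smooth and proper, and part (b) by applying Theorem \ref{thm:waldhausen-infty} to each fiber $\Ndg(\Perf^\circ_{\A\otimes\NCC(\Lambda)^{\op}})$ and noting that the homotopy limits in the $2$-Segal conditions are computed objectwise (your extra detour through the path space criterion in $\mC$ is redundant but harmless). The only point you gloss over is the proof of the identification itself (the paper's Lemma \ref{lem:trick}), which besides Proposition \ref{prop.inftyequiv} needs cosimplicial resolutions, To\"en's Morita theory, and --- crucially --- the coincidence of pseudo-perfect and perfect modules, which is a second, independent use of the smoothness and properness of $\dg(\Delta^{n-1})$.
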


\begin{proof} (a) The case $n=0$ is obvious and, for $n=1$, the statement follows from 
Theorem \ref{thm:toenvaquie} 
and the identification
\[
	\Sc_1(\underline\Perf_\A)(\Lambda) \simeq \Mc_\A(\Lambda)
\]
of \eqref{eq:M-A-Kan}. The case $n>1$, reduces to $n=1$ by  Lemma
\ref{lem:trick}  below because $\dg(\Delta^{n-1})$
is smooth and proper 
and therefore so is $\A\otimes \dg(\Delta^{n-1})$.  This reduction 
is analogous to the trick used in \cite{toen-derived}, Proof of
Lemma 3.2.

(b) This follows from Theorem \ref{thm:waldhausen-infty} applied to each stable 
$\infty$-category $\Perf^\circ_{\A\otimes\NCC(\Lambda)^\op}$. Indeed, 
homotopy limits in $\mC$, appearing in the $2$-Segal conditions
can be calculated object-wise for each object $\Lambda\in\Uc$. 
\end{proof}

\begin{lem}\label{lem:trick}
We have an equivalence of derived stacks
$\Sc_n(\underline\Perf_\A) \simeq \Sc_1(\underline\Perf_{\A\otimes \dg(\Delta^{n-1})})$.
\end{lem}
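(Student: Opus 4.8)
The plan is to establish the equivalence objectwise over the site $\Uc$ and then convert it, via the two descriptions of Waldhausen levels provided by Proposition \ref{prop.inftyequiv}, into a comparison of spaces of diagrams of perfect modules. First I would observe that both $\Sc_n(\underline{\Perf}_\A)$ and $\Sc_1(\underline{\Perf}_{\A\otimes\dg(\Delta^{n-1})})$ are simplicial presheaves defined objectwise on $\Uc$. A levelwise weak equivalence of simplicial presheaves is in particular a weak equivalence in the localized model category $\mC = D^-\Aff^{\sim,\text{\'et}}_\FF$ of derived stacks, so it suffices to produce a weak equivalence of simplicial sets, natural in the simplicial commutative algebra $\Lambda$, between $\Sc_n(\Ndg(\Perf^\circ_{B}))$ and $\Sc_1(\Ndg(\Perf^\circ_{\dg(\Delta^{n-1})\otimes B}))$, where I abbreviate $B=\A\otimes\NCC(\Lambda)^\op$ and use that the tensor product of dg categories is symmetric to reorder the factors.

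Next, since $\Ndg(\Perf^\circ_B)$ is a stable $\infty$-category, it is exact with $\M=\E=\C$ (Example \ref{ex:stable-infty-exact}), so the complete $1$-Segal spaces $M$ and $E$ of Proposition \ref{prop.inftyequiv} both equal $t^!\Ndg(\Perf^\circ_B)$. Combining that proposition with Example \ref{comp.diag} identifies the left-hand side as the classifying space of homotopy coherent diagrams,
\[
\Sc_n(\Ndg(\Perf^\circ_B))\;\simeq\; M_{n-1}\;\simeq\;\Fun(\Delta^{n-1},\Ndg(\Perf^\circ_B))_{\Kan}.
\]
The case $n=1$ of the same proposition identifies the right-hand side with the space of objects, $\Sc_1(\Ndg(\Perf^\circ_{\dg(\Delta^{n-1})\otimes B}))\simeq\Ndg(\Perf^\circ_{\dg(\Delta^{n-1})\otimes B})_{\Kan}$. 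Thus the lemma is reduced to a natural weak equivalence
\[
\Fun(\Delta^{n-1},\Ndg(\Perf^\circ_B))_{\Kan}\;\simeq\;\Ndg(\Perf^\circ_{\dg(\Delta^{n-1})\otimes B})_{\Kan}.
\]

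The heart of the argument is this last comparison, which is an instance of derived Morita theory (\cite{toen.morita}) and the reduction trick of \cite{toen-derived}. The closed symmetric monoidal structure of $C(\FF)$ gives a strict tensor--hom adjunction for dg categories, hence an isomorphism $\Mod_{\dg(\Delta^{n-1})\otimes B}\cong\Fun_{\dg}(\dg(\Delta^{n-1})^\op,\Mod_B)$ identifying $(\dg(\Delta^{n-1})\otimes B)^\op$-modules with $\dg(\Delta^{n-1})^\op$-shaped strict diagrams in $\Mod_B$; as $(\Delta^{n-1})^\op\cong\Delta^{n-1}$, the orientation is immaterial, and in fact this reversal is what matches the two descriptions $f_h$ and $f_v$ (equivalently $\PI$ and $\PF$) in Proposition \ref{prop.inftyequiv}. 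I would then rectify: using the Quillen adjunction \eqref{eq:dg-nerve-adjunction}, the fact that $\dg(\Delta^{n-1})$ is a cofibrant model of the $\FF$-linear envelope of $[n-1]$, and the resulting Joyal equivalence $\Delta^{n-1}\to\Ndg(\dg(\Delta^{n-1}))$, one sees that strict $\dg(\Delta^{n-1})$-diagrams present, up to the localization along pointwise weak equivalences, exactly the homotopy coherent $\Delta^{n-1}$-diagrams classified by the left-hand side. Finally, Proposition \ref{prop:dgnervedklocal} (in the form $\Ndg(\Perf^\circ_{(-)})_{\Kan}\simeq\N(\Wen)$) lets me pass to maximal Kan subcomplexes on both sides, matching natural equivalences of diagrams with weak equivalences of modules.

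The main obstacle is precisely this rectification together with the matching of the two finiteness conditions: I must show that an object of $\Mod_{\dg(\Delta^{n-1})\otimes B}$ is perfect if and only if the corresponding strict diagram takes values in $\Perf_B$ pointwise. This uses that $\dg(\Delta^{n-1})$ is finite, smooth, and proper, so that perfect modules over $\dg(\Delta^{n-1})\otimes B$ coincide with diagrams of pointwise-perfect $B$-modules; it is exactly this input for which \cite{toen-vaquie} and the proof of Lemma~3.2 in \cite{toen-derived} are invoked, and it is also what guarantees that $\A\otimes\dg(\Delta^{n-1})$ is again smooth and proper, so that Theorem \ref{thm:toenvaquie} applies to its level-one moduli space, as needed in the surrounding proof of Proposition \ref{cor:wald-derived-stacks}.
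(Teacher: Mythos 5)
Your proposal is correct and follows essentially the same route as the paper: reduce to an objectwise statement over $\Uc$, use Proposition \ref{prop.inftyequiv} to identify $\Sc_n$ with a space of $\Delta^{n-1}$-diagrams of perfect modules, rectify these into dg modules over $\A\otimes\dg(\Delta^{n-1})\otimes\NCC(\Lambda)^{\op}$ via derived Morita theory, and match perfect with pointwise-perfect objects using that $\dg(\Delta^{n-1})$ is smooth and proper (\cite[Lemma 2.8]{toen-vaquie}). The one place where the paper is more explicit than your sketch is the rectification step you assert rather than carry out: it is executed by replacing $\C_{\Kan}$ with $p\mapsto\Hom_{\Sp}(\Delta^{n-1}\times(\Delta^p)',\C)$, so that the adjunction \eqref{eq:dg-nerve-adjunction} exhibits the diagram space as the Dwyer--Kan mapping space $\Map_{\dgcat}(\dg(\Delta^{n-1}),\Perf^\circ_{\A\otimes\NCC(\Lambda)^{\op}})$ computed from the cosimplicial resolution $\dg(\Delta^{n-1}\times(\Delta^\bullet)')$, which To\"en's theorem \cite[Th.~1.1]{toen.morita} then identifies with the nerve of weak equivalences of bimodules.
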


\begin{proof}
 Proposition \ref{prop.inftyequiv} and the weak equivalence \eqref{eq:hom-mod-space} 
give a weak equivalence of simplicial sets
\[
\Sc_n(\underline\Perf_\A)(\Lambda)   \simeq (\Fun(\Delta^{n-1},
\Ndg(\Perf^\circ_{\A\otimes\NCC(\Lambda)^{\on{op}}}))_{\on{Kan}}.
\]
Here, $\Fun$ stands for the $\infty$-category of functors between two $\infty$-categories given by
the mapping simplicial set between two simplicial sets. Now, it follows from \cite[Proposition
1.20]{joyal-tierney} that for any $\infty$-category $\C$ the simplicial set $\C_{\Kan}$ is weakly
equivalent to the simplicial set $\C'$ defined by
\[
\C'_p = \Hom_\Sp((\Delta^p)', \C)
\]
where $(\Delta^p)'$ is the fat simplex from Example \ref{ex:nerve,fat-simplex}. 
Using the adjunction \eqref{eq:dg-nerve-adjunction}, we deduce that
$\Sc_n(\underline\Perf_\A)(\Lambda) $ is weakly equivalent to the simplicial set
\begin{equation}
	 \label{eq:hom-fat-simplices}
	 \Hom_{\dgcat} \bigl( \dg(\Delta^{n-1}\times(\Delta^\bullet)'), \Ndg(\Perf^\circ_{\A\otimes\NCC(\Lambda)^{\on{op}}})
	 \bigr). 
\end{equation}
Note that the cosimplicial object $\Delta^{n-1} \times (\Delta^\bullet)'$ is a Reedy-cofibrant
replacement of the constant object $\Delta^{n-1}$ with respect to the Joyal model structure on
$\sSet$. Since \eqref{eq:dg-nerve-adjunction} is a Quillen adjunction, the object $\dg(\Delta^{n-1} \times (\Delta^\bullet)')$
is a Reedy-cofibrant replacement of $\dg(\Delta^{n-1})$, and hence a cosimplicial resolution 
in the sense of \cite[\S 4]{dwyerkan}. Therefore, the simplicial set \eqref{eq:hom-fat-simplices} is
weakly equivalent to the simplicial mapping space
$\Map_{\dgcat} (\dg(\Delta^{n-1}), \Perf^\circ_{\A\otimes\NCC(\Lambda)^{\on{op}}})$
defined via the Dwyer-Kan localization of $\dgcat$ with respect to quasi-equivalences of dg
categories.
Let us first analyze the bigger space 
$\Map_{\dgcat} (\dg(\Delta^{n-1}), \Mod^\circ_{\A\otimes\NCC(\Lambda)^{\on{op}}})$
consisting of maps into the category of all dg modules, perfect or not. 
By \cite[Th. 1.1]{toen.morita} this space is equivalent to the nerve
of the category of weak equivalences in the model category of
dg functors between $\dg(\Delta^{n-1})$ and $\Mod^\circ_{\A\otimes\NCC(\Lambda)^{\on{op}}}$.
Such dg functors can be identified with dg modules over 
$\A\otimes\dg(\Delta^{n-1})\otimes\NCC(\Lambda)^{\on{op}}$.
We finally notice that dg functors taking values in the subcategory of perfect modules correspond
correspond to perfect (compact) objects of the category of dg-modules over
$\A\otimes\dg(\Delta^{n-1})\otimes\NCC(\Lambda)^{\on{op}}$. In the terminology of \cite{toen-vaquie}
this corresponds to the statement that, since the dg category $\dg(\Delta^{n-1})$ is smooth and
proper, the notions of pseudo-perfect and perfect modules coincide (\cite[Lemma 2.8]{toen-vaquie}).
\end{proof}

\vfill\eject

\subsection{The cyclic bar construction of an \texorpdfstring{$\infty$}{infty}-category}
\label{subsec:cycnerve}

In this section we define the cyclic bar construction of an $\infty$-category $\C$ and show that it is a
$2$-Segal space. 

Recall the adjunction 
\begin{equation} \label{eq:adjnerv1}
	\FC: \Sp \longleftrightarrow {\mathcal Cat}: \N, 
\end{equation}
from Example \ref{ex:1-segal-envelope-discrete}(a), given by the nerve $\N$ and its left adjoint
$\FC$, which associates to a simplicial set $D$ the free category $\FC(D)$ generated by $D$.
For $n \ge 0$, we define the simplicial set  
\[
K^n := 
\biggl(\Delta^{\{0,1\}} \coprod_{\{1\}} \Delta^{\{1,2\}}\coprod_{\{2\}} \quad \cdots
\quad \coprod_{\{n-1\}} \Delta^{\{n-1,n\}} \biggr)
\coprod_{\{n\} \amalg \{0\}} \Delta^{\{n,0\}}\text{.}
\]
Let $C^n = \FC(K^n)$. The geometric realization of $K^n$ is a closed chain of
$n+1$ oriented intervals and hence homeomorphic to the unit circle $S^1 = \{|z|=1\} \subset \CC$. 

\begin{rem} Let $\mu_{n+1}\subset S^1$ be the set of $(n+1)$st roots of unity. Then the category
$C^n$ can be identified with the subcategory in the fundamental
groupoid $\Pi_1(S^1, \mu_{n+1})$ with the same set of objects $\mu_{n+1}$ and morphisms being homotopy classes
of counterclockwise oriented paths (cf. \cite[\S 2]{drinfeld}).
\end{rem}

The system $(C^n)_{n\geq 0}$
forms a cosimplicial category. The face maps are
given by composition of morphisms, the degeneracies by filling in identity maps. 
By Example \ref{ex:1-segal-envelope-discrete}(b), 
the unit of the adjunction \eqref{eq:adjnerv1} provides us with a canonical map 
$K^n \to \N(C^n)$
exhibiting $\disc{\N(C^n)}$ as a $1$-Segal replacement of $\disc{K^n}$ in $\sS$. Further,
since the category $C^n$ has no nontrivial isomorphisms, the $1$-Segal space $\disc{\N(C^n)}$
is complete.

\begin{defi}\label{defi:icn} 
	\begin{enumerate}[label=(\alph{*})]
\item Let $X$ be a Reedy fibrant $1$-Segal space. We
	define the \emph{cyclic bar construction of $X$} to be the simplicial space 
	\[
	\NC(X): \Dop \lra \Sp,\quad [n] \mapsto \Map(\disc{\N(C^n)}, X). 
	\]
 \item Let $\C$ be a $\infty$-category. We define the {\em cyclic bar construction of $\C$} as
	the cyclic nerve of the complete $1$-Segal space $t^!\C$ and denote it by $\NCI(\C)$. Explicitly.
	this amounts to the formula
	\[
	\NCI (\C): \Dop \lra \Sp, \quad [n] \mapsto \Fun(\N(C^n), \C)_{\Kan}. 
	\]
	\end{enumerate}
\end{defi}

\begin{ex} 
  	Applying the cyclic bar construction of Definition \ref{defi:icn} to the discrete
	$1$-Segal space $X$ given by the discrete nerve of an ordinary category $\C$, we recover the cyclic nerve
	from Section \ref{subsec:twisted-cyclic-nerve}. More precisely, we have $\NC(X) = \disc{\NC(\C)}$. 
	 
	Note, however, that $X$ is, in general, not complete. Therefore, $X$ is equivalent to $t^!\C$ and consequently
	$\NCI(\C)$ and $\NC(\C)$ are different. In fact, $\NCI(\C)$ is expressed
	in terms of the categorified nerve $\C_\bullet$ from Example 
	\ref{ex.1segclass}, which is the complete $1$-Segal space corresponding to $\C$. 
\end{ex}

\begin{thm} Let $X$ be a Reedy fibrant $1$-Segal space. Then the cyclic bar construction $\NCI(X)$
	is a $2$-Segal space.
\end{thm}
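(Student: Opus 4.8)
The plan is to apply the Path Space Criterion (Theorem \ref{thm.crit}) to the simplicial space $\NCI(X)$, reducing the $2$-Segal condition to the verification that both path spaces $\PI \NCI(X)$ and $\PF \NCI(X)$ are $1$-Segal. The central idea is that passing to a path space of the cyclic bar construction should ``cut open'' the circle $|K^n|$ into an interval, thereby converting the cyclic structure back into an ordinary (linear) one. More precisely, I would first analyze the cosimplicial categories governing the two path spaces. Recall that $\PI = j^* \circ i^*$ is built from the join functor $I \mapsto [0] \star I$, and dually for $\PF$. Applying this to Definition \ref{defi:icn}, one obtains $\PI \NCI(X)_n = \NCI(X)_{n+1} = \Map(\disc{\N(C^{n+1})}, X)$ with the shifted face structure of \eqref{eq:faces-path-space}. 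The key computation is to identify the cosimplicial category that indexes this shifted system: I expect that cutting the closed chain $K^{n+1}$ of $n+2$ intervals at the distinguished vertex adjacent to the augmentation yields precisely the open chain $K^n$-type object, i.e.\ the free category $\FC(\J^{n+1})$ on $n+1$ composable arrows, which is simply the linearly ordered set $[n+1]$.

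Concretely, I would show that there is a natural isomorphism of cosimplicial categories exhibiting $\PI \NCI(X)$ as the shifted system $[n] \mapsto \Map(\disc{\N([n+1])}, X)$, where $[n+1]$ is regarded as a category (a linear order) via its nerve. Since the nerve $\N([n+1]) = \Delta^{n+1}$ is already a $1$-Segal (indeed complete) simplicial set, and since $X$ is a $1$-Segal space by hypothesis, the resulting simplicial space should itself be $1$-Segal. The cleanest way to see this is to observe that $\PI \NCI(X)$ is the membrane space construction applied to the cones of the $K^n$, and then invoke Proposition \ref{prop:cofinal} together with Example \ref{ex.cone}: the left cone of the $1$-Segal covering $\Delta^{\I_n} \hra \Delta^n$ produces exactly the relevant triangulation appearing in the shifted path space. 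The $1$-Segal condition on $X$ then transfers directly. The argument for $\PF \NCI(X)$ is entirely dual, using the right cone and the final path space functor $\PF$.

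The main obstacle I anticipate lies in the bookkeeping needed to identify, canonically and cosimplicially, the path space of the cyclic object $\NCI(X)$ with a linear nerve-type construction. The cyclic bar construction is defined via the free categories $C^n = \FC(K^n)$, and one must carefully track how the join operations $I \mapsto [0]\star I$ and $I \mapsto I \star [0]$ interact with the gluing $\coprod_{\{n\}\amalg\{0\}} \Delta^{\{n,0\}}$ that closes the circle. Specifically, I need to verify that applying $i^*$ (prepending an initial vertex) to the augmented version of $\NCI(X)$ has the effect of severing the closing edge of $K^{n+1}$, so that the cyclic coherence collapses to a directed one. Because $X$ is assumed Reedy fibrant, I may freely pass between the ordinary and derived membrane spaces (using the identifications of \S \ref{subsec:yonedaext}), and because $\disc{\N(C^n)}$ is the $1$-Segal completion of $\disc{K^n}$, the mapping space $\Map(\disc{\N(C^n)}, X)$ agrees with the derived membrane space $(K^n, X)_R$ up to weak equivalence.

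Finally, I would assemble these observations: having established weak equivalences $\PI\NCI(X) \simeq \overline{\Mc}$ and $\PF\NCI(X) \simeq \overline{\Mc}'$ with two $1$-Segal spaces arising from linear nerve constructions evaluated on $X$, both path spaces are $1$-Segal, and Theorem \ref{thm.crit} immediately yields that $\NCI(X)$ is $2$-Segal. I expect no genuine homotopical subtlety beyond the combinatorial identification of the cone-of-a-chain with the appropriate linear object; the homotopy-invariance of derived membrane spaces (Proposition \ref{prop:acyclic-topological} and its model-categorical analog Proposition \ref{prop:acyclic}) handles all the analytic content automatically once the indexing diagrams are matched.
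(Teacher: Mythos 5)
Your overall strategy coincides with the paper's: both apply the path space criterion (Theorem \ref{thm.crit}) and both rest on the idea that passing to a path space ``cuts open'' the circle $K^{n}$. However, your concrete identification of the path space is wrong, and the error is not mere bookkeeping. You claim that cutting $K^{n+1}$ at the distinguished vertex yields the open chain $\J^{n+1}$, so that $\PI\NCI(X)_n \cong \Map(\disc{\N([n+1])},X) = X_{n+1}$. But $K^{n+1}$ is a circle with $n+2$ edges; severing it at a vertex produces a linear chain of $n+2$ edges whose two endpoints must still map to the \emph{same} point of $X_0$ --- the closing edge $\Delta^{\{n+1,0\}}$ is not deleted, it is merely straightened. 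The correct identification (which the paper establishes via the auxiliary simplicial set $\widetilde{K^{n+1}} = \Delta^{n+2}\amalg_{\Delta^{\{0\}}\amalg\Delta^{\{n+2\}}}\Delta^{\{0\}}$) is
\[
	\PI\NCI(X)_n \;\simeq\; X_{n+2}\times_{X_0\times X_0} X_0 ,
\]
not $X_{n+1}$. A sanity check at $n=0$ with $X=\disc{\N(\C)}$: $\NC(\C)_1$ consists of pairs of morphisms $x_0\to x_1\to x_0$, i.e.\ elements of $\N(\C)_2$ with equal endpoints, not single morphisms.

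This error propagates into your final step. The simplicial space you actually obtain is $\varphi^*X \times_{X_0\times X_0} X_0$ with $\varphi([n])=[0]\star[n]\star[0]$, i.e.\ the double path space $\PI\PF X$ restricted over the diagonal of $X_0\times X_0$. Its $1$-Segal maps at level $n$ have the form $X_{n+2}\to X_3\times_{X_2}\cdots\times_{X_2}X_3$, which are \emph{not} instances of the $1$-Segal maps of $X$, so the $1$-Segal hypothesis on $X$ does not ``transfer directly'' as you assert. The paper closes this gap by a double application of the path space criterion: $X$ $1$-Segal $\Rightarrow$ $X$ $2$-Segal (Proposition \ref{prop.1segal2segal}) $\Rightarrow$ $\PF X$ $1$-Segal $\Rightarrow$ $\PF X$ $2$-Segal $\Rightarrow$ $\PI\PF X$ $1$-Segal; Reedy fibrancy is then used to ensure the strict fiber products over $X_0\times X_0$ are homotopy fiber products. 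Your invocation of Proposition \ref{prop:cofinal} and Example \ref{ex.cone} also does not apply as stated, since $\NCI(X)$ is not presented as a membrane-space functor on which the cone adjunction acts in the required way; the identification must be made by hand through the $1$-Segal replacement $K^{n+1}\hra\widetilde{K^{n+1}}\to\N(C^{n+1})$.
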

\begin{proof}
	Using the path space criterion (Theorem \ref{thm.crit}), it suffices to show that both path
	spaces associated to $\NCI(X)$ are $1$-Segal spaces. We provide a proof for the initial path space
	$Y = \PI \NCI(X)$, the argument for the final path space is analogous.

	Recall, that, for every $n \ge 0$, we have $Y_n = \NCI(X)_{n+1}$ and the face maps are given
	by omitting $\partial_0$. Consider the simplicial set
	\[
	\widetilde{K^n} = \Delta^{\{0,\dots,n+1\}} \coprod_{\Delta^{\{0\}} \coprod \Delta^{\{n+1\}}}
	\Delta^{\{0\}} \text{.}
	\]
	Note that $\widetilde{K^n}$ can be obtained from $K^n$ by attaching the simplex
	$\Delta^{\{0,\dots,n+1\}}$. This simplex can be identified with the simplex in $N(C^n)$
	corresonding to the chain of $n+1$ composable morphisms in $C^n$ given by $\Delta^{\{0,1\}},
	\dots, \Delta^{\{n-1,n\}}, \Delta^{\{n,0\}}$. Thus, the canonical map 
	$K^n \to N(C^n)$
	factors over the inclusion $K^n \hra \widetilde{K^n}$ providing a commutative diagram 
	\[
	\xymatrix{
	\widetilde{K^n} \ar[r]^-{g_n} & \N(C^n)\\
	K^n, \ar[u] \ar[ur]
	}
	\]
	in which, by Example \ref{ex:1-segal-envelope-discrete}(b), 
	the map $g_n$ exhibits $\disc{\N(C^n)}$ as a $1$-Segal replacement of $\disc{\widetilde{K^n}}$.
	Therefore, pulling back along $g_{n+1}$, we obtain a weak equivalence of mapping spaces
	\[
	Y_n = \Map(\disc{\N(C^{n+1})}, X) \stackrel{\simeq}{\lra}
	\Map(\disc{\widetilde{K^{n+1}}},
	X) \cong X_{n+2} \times_{X_0 \times X_0} X_0 \text{.}
	\]
	The pullback maps along $\{g_n\;|\; n \ge 0\}$ assemble to provide a weak equivalence of simplicial spaces
	\[
	g^*: Y \to X_{\bullet+2} \times_{X_0 \times X_0} X_0 \text{.}
	\]
	Here, the simplicial structure on the right-hand side is provided by identifying
	$X_{\bullet+2}$ with the pullback of $X$ along the functor
	\[
	\varphi: \Dop \to \Dop, [n] \mapsto [0] \star [n] \star [0] \text{.}
	\]
	Using the terminology of Section \ref{subsec:simppath}, we have $\varphi^*X = \PI \PF X$. 	
	Further, we have a commutative square
	\begin{equation}\label{eq:compsquare}
	 \xymatrix{ Y_n \ar[d]^{f_n} \ar[r]^-{g_n^*}_-{\cong} & X_{n+2} \times_{X_0 \times X_0} X_0 \ar[d]^{\widetilde{f_n} \times \id} \\
	 Y_1 \times_{Y_0} Y_1 \times_{Y_0} \dots \times_{Y_0} Y_1 \ar[r]_-{\cong} & (X_{3} \times_{X_2} X_{3} \times_{X_2} \dots \times_{X_2} X_{3}) \times_{X_0 \times X_0} X_0, 
	}
	\end{equation}
	where both horizontal maps are isomorphisms, and $f_n$ and $\widetilde{f_n}$ denote the
	$n$th $1$-Segal map associated with the simplicial spaces $Y$ and $\varphi^*X$, respectively.
	Using that $X$ is, by assumption, Reedy fibrant, it follows that all fiber products in \eqref{eq:compsquare} are
	in fact homotopy fiber products. Hence, to show that $f_n$ is a weak equivalence, it
	suffices to show that $\widetilde{f_n}$ is a weak equivalence.
	By Proposition \ref{prop.1segal2segal}, the $1$-Segal space $X$ is a $2$-Segal space. Thus,
	by Theorem \ref{thm.crit}, the simplicial space $\PF X$ is $1$-Segal. Reiterating this
	argument once implies that $\varphi^* X$ is $1$-Segal and hence $\widetilde{f_n}$ is a weak
	equivalence.
\end{proof}

\vfill\eject

\section{Hall algebras associated to 2-Segal spaces}
\label{sec:hall-alg}

In this chapter, we explain how to extract associative algebras from $2$-Segal objects by means of
{\em theories with transfer}. This procedure, applied to Waldhausen spaces, recovers various
variants of Hall algebras, such as classical Hall algebras, derived Hall algebras, and motivic Hall
algebras.  Applying a theory with transfer to other $2$-Segal spaces, we obtain classically known
algebras, such as Hecke algebras, but also new algebras, such as the ones associated to the cyclic
nerve of a category.

\subsection{Theories with transfer and associated Hall algebras}  
We introduce an abstraction of basic functoriality properties of a ``cohomology theory",
motivated by \cite{fulton-macpherson, voevodsky}.  Usually, a cohomology theory has contravariant
functoriality with respect to most maps and a covariant (Gysin, or transfer) functoriality with
respect to some other, typically more restricted, class of maps. We axiomatize this situation as
follows. 

\begin{defi}\label{def:transfer-structure}
Let $\mC$ be a model category. A {\em transfer structure} on $\mC$ is a datum of two classes of
morphisms $\Sc, \Pc\subset\Mor(\mC)$, called {\em smooth}
and {\em proper} morphisms, respectively, which satisfy the following conditions: 
\begin{enumerate}[label=(TS\arabic{*})]
  \item \label{ts1} The classes $\Sc, \Pc$ are closed under composition.

\item \label{ts2} Let
	\[
		\xymatrix{X \ar[r]^{s} \ar[d]_{p} &Y\ar[d]^{q}\\
		X' \ar[r]^{s'}&Y'
		}
	\]
	be a homotopy Cartesian square in $\mC$ with $q\in\Pc$ and $s'\in \Sc$. Then
	$p\in\Pc$ and $s\in\Sc$. 
\end{enumerate}
Note that, in view of \ref{ts1}, we can identify the classes $\Sc$ and $\Pc$ with subcategories of $\mC$ containing all
objects.
\end{defi}

Let $(\Vc, \otimes, \1)$ be a monoidal category. For convenience, we will use the term {\em
``associative algebra in $\Vc$"} to signify a semigroup object in $\Vc$, i.e., an object $A$
together with a morphism $\mu: A\otimes A\to A$ satisfying associativity.  A {\em unital associative
algebra} is a monoid object, i.e., $A$ as above together with a morphism $e: \1\to A$ satisfying the
unit axioms with respect to $\mu$. Given an associative algebra $A$ in $\Vc$, there is a natural
notion of (left, right, and bi) {\em $A$-modules} in $\Vc$ . 
By a \emph{lax monoidal functor} $F: (\Wc, \boxtimes, \1_\Wc) \to 
(\Vc, \otimes, \1_\Vc)$ between two monoidal categories
we mean a functor $F: \Wc\to\Vc$, equipped with
\begin{itemize}
	\item a morphism $\1_{\Vc} \to F(\1_{\Wc})$,
	\item for any objects $x,y \in \Wc$, a morphism $F(x) \otimes F(y) \to F(x \boxtimes y)$, natural
	 in $x$ and $y$,
\end{itemize}
satisfying the standard associativity and unitality constraints. Here, the adjective {\em lax}
means that these morphisms are not required to be isomorphisms. 

Note that a lax monoidal functor $F$ transfers algebra structures: if $A$ is an
associative algebra in $\Wc$, then $F(A)$ is an associative algebra in $\Vc$ which will be unital,
if $A$ is unital. 

\begin{defi}\label{def:TT}
Let $\mC$ be a combinatorial model category, and $(\Vc,\otimes, \1)$ a monoidal category.
A $\Vc$-valued {\em theory with transfer} on $\mC$ is a datum $\hen$ consisting of: 

\begin{enumerate}[label=(TT\arabic{*})]
	\item \label{tt1} A transfer structure $(\Sc, \Pc)$ on $\mC$. 

	\item \label{tt2} A covariant functor $\Pc\to\Vc$ and a contravariant functor $\Sc\to\Vc$, coinciding on objects
	and both denoted by $\hen$. The value of $\hen$ on $s: X\to Y$ from $\Sc$ is denoted by $s^*:
	\hen(Y)\to\hen(X)$. The value of $\hen$ on $p: Z\to W$ from $\Pc$ is denoted by $p_*:
	\hen(Z)\to\hen(W)$. Both functors are required to take weak equivalences in $\mC$ to isomorphisms
	in $\Vc$. 

	\item \label{tt3} Multiplicativity data on $\hen$, i.e., morphisms $m_{X,Y}: \hen(X)\otimes \hen(Y)\to
	\hen(X\times Y)$, natural with respect to morphisms in $\Sc$, as well as an isomorphism
	$\hen(\pt) \cong \1$. These morphisms are required to satisfy the usual associativity and unit conditions.
\end{enumerate}
These data are required to satisfy the following base change property: 
\begin{enumerate}[label=(TT\arabic{*}),resume]
	\item \label{tt4} For any homotopy Cartesian square as in \ref{ts2}, we have an equality
		$p_* \circ s^* = (s')^* \circ q_* $ as morphisms from $\hen(Y)$ to $\hen(X')$.  
\end{enumerate}
\end{defi}

\begin{rem}\label{rem-hall-algebra-model} 
	Assume that $\Vc$ is a {\em symmetric} monoidal category and $\hen$ respects the symmetry. 
	If $X\in\mC$ is such that the canonical morphisms $X\to X\times X$, $X\to\pt$ belong to $\Sc$, then
	the object $\hen(X)$ has a structure of a unital commutative algebra in $\Vc$. This structure is
	obtained by applying the contravariant functoriality of $\hen$ to these morphisms. 
\end{rem}
 
\begin{ex}
	The simplest example of a theory with transfer is obtained as follows.  Let $\mC=\Set$ be the
	category of sets with the trivial model structure where weak equivalences are isomorphisms. Let
	$\k$ be a field and $\Vc=\Vect_\k$ the category of $\k$-vector spaces. For a set $S$, let
	$\Fen(S)$ be the space of all functions $\phi: S\to\k$, and $\Fen_0(S)$ the subspace of
	functions with finite support. A map $f:S'\to S$ induces the inverse and direct image maps
		\begin{align*}
		&f^*: \Fen(S)\lra \Fen(S'), & &f_*: \Fen_0(S')\lra \Fen_0(S),\\
		& (f^*\phi)(x')=\phi(f(x')), & &(f_*\psi)(x) =\sum_{x'\in f^{-1}(x)} \psi(x').
		\end{align*}
	We say $f$ is proper if, for any $x\in S$, the fiber $f^{-1}(x)$ is a finite set. Let $\Pc$
	denote the class of all proper maps of sets. For $f\in \Pc$, we have 
	\begin{align*}
		 &f^*: \Fen_0(S)\lra \Fen_0(S') & &f_*:\Fen(S')\lra \Fen(S),
	\end{align*}
	defined as above. The data provided makes $\Fen$ a $\Vect_\k$-valued theory with transfer on
	$\Set$ with respect to the transfer structure $(\Mor(\Set), \Pc)$.  Similarly, $\Fen_0$ is
	theory with transfer with respect to the structure $(\Pc, \Mor(\Set))$. 
\end{ex}

\begin{ex}[(Universal theory with transfer)]\label{ex:universaltransfer}
	A universal example can be obtained in the spirit of Grothendieck's construction of the category
	of motives by using correspondences. 
	Let $\mC$ be a combinatorial model category, and let $(\Sc, \Pc)$ be a transfer structure on
	$\mC$.  Recall (\S \ref{subsec:mult-cat}) the bicategory $\Spanl_\mC$ with the same objects
	as $\mC$, $1$-morphisms being span diagrams 
	\[
		\sigma =\bigl\{ Z\buildrel s\over \lla W\buildrel p \over\lra Z'\bigr\}
	\]
	and composition of $1$-morphisms given by forming the fiber product.  To keep the notation
	straight, we denote by $[Z]$ the object of $\Spanl_\mC$ corresponding to the object
	$Z\in\mC$.  We call $\sigma$ an {\em $(\Sc, \Pc)$-span}, if $s\in\Sc$ and $p\in\Pc$.  Axiom
	\ref{ts2} implies that the class of $(\Sc, \Pc)$-spans is closed under composition and thus
	gives rise to a sub-bicategory $\Spanl_\mC(\Sc, \Pc)$ in $\Spanl_\mC$.  Two $1$-morphisms in
	$\Spanl_\mC(\Sc, \Pc)$ from $[Z]$ to $[Z']$ are equivalent, if there exists a diagram
	 \[
		 \xymatrix{
		 W_1 \ar[rr]\ar[dd]& & Z'\\
		 & \ar[ul]_{\simeq} \ar[dr]^{\simeq} Y & \\
		 Z & & W_2 \ar[ll]\ar[uu]
		 } \text{,} 
	 \]
	 which is commutative in the homotopy category $\on{Ho}(\mC)$ and in which the diagonal maps
	 are weak equivalences.  Let $\h\Spanl_\mC(\Sc, \Pc)$ be the ordinary category with the same
	 objects $[Z]$ as the bicategory $\Spanl_\mC(\Sc, \Pc)$, and morphisms being equivalence
	 classes of 1-morphisms in $\Spanl_\mC(\Sc, \Pc)$. The Cartesian product on $\mC$ makes
	 $\h\Spanl_{\mC}(\Sc, \Pc)$ a monoidal category, by defining $[X] \otimes [Y]
	 := [X\times Y]$. By construction, we have a contravariant functor
	 \[
	   \hen_{\on{un}}: \Sc \lra \Spanl(\Sc, \Pc), 
	   \left\{ \begin{array}{l} X \mapsto [X], \\
	 	X\buildrel s\over\to Y \; \mapsto \; 
		s^* = \bigl\{ Y\buildrel s\over\lla X\buildrel \Id\over\lra X\bigr\}
	   	\end{array} \right.
	 \]
	 Further, if $p \in \Pc$, we have the morphism
	 \[
		 p_* = \bigl\{ X\buildrel\Id\over\lla X\buildrel p\over\lra Y\bigr\}: [X]\to [Y]. 
	 \]
	 The association $X \mapsto \hen_{\on{un}}(X)$, equipped with the contravariant and covariant functoriality 
	 specified above, defines a $\h\Spanl_{\mC}(\Sc, \Pc)$-valued theory with transfer on $\mC$
	 with respect to the transfer structure $(\Sc, \Pc)$. 
\end{ex}
        
\begin{prop} \label{prop:universal} Let $\mC$ be a combinatorial model category with transfer
  	structure $(\Sc,\Pc)$, and let $\Vc$ be a monoidal category. Then
      $\Vc$-valued theories with transfer on $\mC$ with respect to $(\Sc, \Pc)$
      are in bijective correspondence with lax monoidal functors $\h\Spanl_{\mC}(\Sc, \Pc) \to \Vc$. 
\end{prop}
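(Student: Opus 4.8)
The plan is to exhibit the bijection through the \emph{universal} theory with transfer $\hen_{\on{un}}$ of Example \ref{ex:universaltransfer}, which takes values in the monoidal category $\Wc := \h\Spanl_{\mC}(\Sc, \Pc)$. Given a lax monoidal functor $\Phi\colon \Wc \to \Vc$, one produces a $\Vc$-valued theory $\hen_\Phi := \Phi \circ \hen_{\on{un}}$; conversely, from a theory $\hen$ one produces a lax monoidal functor $\Phi_\hen\colon \Wc \to \Vc$. The bulk of the work is to check that both assignments are well defined and that they are mutually inverse.

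For the first assignment, $\hen_\Phi$ inherits its contravariant functoriality on $\Sc$ and covariant functoriality on $\Pc$ by composing those of $\hen_{\on{un}}$ with $\Phi$, and its multiplicativity data \ref{tt3} come from the lax monoidal structure maps of $\Phi$ together with the identity $[X]\otimes[Y] = [X\times Y]$ holding in $\Wc$. Weak equivalences are carried to isomorphisms because a span both of whose legs are weak equivalences is already invertible in $\Wc$. The base-change axiom \ref{tt4} holds essentially by definition: for a homotopy Cartesian square, the two composites $p_*\circ s^*$ and $(s')^*\circ q_*$ agree \emph{already in $\Wc$}, since both equal the class of the span through the homotopy fiber product (here one uses $X \simeq X' \times_{Y'} Y$, which is exactly homotopy Cartesianness). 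Thus all of \ref{tt1}--\ref{tt4} are automatic.

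For the second assignment, put $\Phi_\hen([Z]) := \hen(Z)$ and, on a span $\sigma = \{Z \buildrel s\over\lla W \buildrel p\over\lra Z'\}$ with $s\in\Sc$, $p\in\Pc$, set $\Phi_\hen(\sigma) := p_*\circ s^*\colon \hen(Z)\to\hen(Z')$. Because $\hen$ sends weak equivalences to isomorphisms \ref{tt2}, a diagram witnessing the equivalence of two spans forces the two composites to coincide, so $\Phi_\hen$ is well defined on equivalence classes. Functoriality is precisely the base-change axiom: for composable $\sigma$, $\sigma'$ the composite passes through the fiber product $W\times_{Z'}W'$, and \ref{ts2} places the two projections in $\Pc$ and $\Sc$, so \ref{tt4} gives $\pr_{W'*}\circ\pr_W^* = (s')^*\circ p_*$ and hence $\Phi_\hen(\sigma'\circ\sigma) = p'_*(s')^*p_*s^* = \Phi_\hen(\sigma')\circ\Phi_\hen(\sigma)$.

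Finally, one equips $\Phi_\hen$ with a lax monoidal structure using the morphisms $m_{X,Y}$, with unit constraint the isomorphism $\hen(\pt)\cong\1$. The smooth-naturality of $m$ in \ref{tt3} handles the $s^*$-parts of the coherence squares, and I expect the main obstacle to be the compatibility of $m$ with the proper pushforwards $p_*$: this is a projection-formula statement, not asserted directly, which should be extracted from \ref{tt4} applied to the Cartesian squares expressing products (such as $W_1\times W_2 \to Z'_1\times W_2$ over $W_1\to Z'_1$). Once lax monoidality is established, the two assignments are visibly inverse: $\Phi_\hen\circ\hen_{\on{un}} = \hen$ follows by evaluating $\Phi_\hen$ on the generating spans $s^* = \{Z \buildrel s\over\lla W \buildrel\Id\over\lra W\}$ and $p_* = \{W \buildrel\Id\over\lla W \buildrel p\over\lra Z'\}$, while $\Phi_{\hen_\Phi} = \Phi$ follows since every morphism of $\Wc$ factors as $\sigma = p_*\circ s^*$ and both functors agree on these generators.
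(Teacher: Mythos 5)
The paper states this proposition without proof, so your blind reconstruction can only be measured against the definitions; the skeleton you give --- the mutually inverse assignments $\Phi\mapsto\Phi\circ\hen_{\on{un}}$ and $\hen\mapsto\Phi_\hen$, with functoriality of $\Phi_\hen$ coming from (TT4) applied to the square underlying a span composition, and the inverse checks performed on the generating spans $s^*$ and $p_*$ --- is certainly the intended argument and is correct at the level of rigor of the surrounding text.

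There is, however, a genuine gap at exactly the point you flag and then defer: the compatibility of the multiplicativity maps $m_{X,Y}$ with proper pushforward. For $\Phi_\hen$ to be lax monoidal, its structure maps must be natural with respect to \emph{all} morphisms of $\h\Spanl_{\mC}(\Sc,\Pc)$, hence with respect to $p_*$ for $p\in\Pc$ as well as $s^*$ for $s\in\Sc$, whereas (TT3) only asserts $\Sc$-naturality. Your proposed remedy --- (TT4) applied to the homotopy Cartesian square $W_1\times W_2\to Z_1'\times W_2$ over $p_1\colon W_1\to Z_1'$ --- does not close this: even granting that the projection $Z_1'\times W_2\to Z_1'$ lies in $\Sc$ (itself not an axiom), (TT4) only yields $(p_1\times\Id)_*\circ \on{pr}^*=\on{pr}^*\circ (p_1)_*$, an identity relating pushforwards to pullbacks along projections. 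Since $m_{X,Y}$ is independent data, not built out of such pullbacks, no combination of (TT1)--(TT4) forces $m\circ((p_1)_*\otimes\Id)=(p_1\times\Id)_*\circ m$. As literally stated, the correspondence therefore fails in the direction $\hen\mapsto\Phi_\hen$; the fix is to strengthen (TT3) to require naturality with respect to $\Pc$ as well --- a compatibility that is in any case used silently later when proving associativity of $\Hall(X,\hen)$.

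A secondary point you pass over too quickly is the well-definedness of $\Phi_\hen$ on equivalence classes: the witnessing maps $Y\to W_i$ are weak equivalences of $\mC$ that are not assumed to lie in $\Sc$ or $\Pc$, so $\hen$ cannot a priori be applied to them at all. This one is repairable and worth making explicit: applying (TS2) to the strictly commutative, homotopy Cartesian square with identity right-hand column shows that every weak equivalence automatically lies in $\Sc\cap\Pc$, and (TT4) applied to that same square gives $\phi_*\circ\phi^*=\Id$, hence $\phi_*=(\phi^*)^{-1}$ by (TT2). One is still left with the wrinkle that the equivalence diagram is only required to commute in $\Ho(\mC)$ while $\hen$ is functorial on $\Sc$ and $\Pc$ only on the nose; this deserves at least a remark.
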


In light of Proposition \ref{prop:universal}, we call the theory $\hen_{\on{un}}$ from Example \ref{ex:universaltransfer} the {\em universal
theory with transfer} associated to the transfer structure $(\Sc, \Pc)$ on the model category $\mC$.

Let $\mC$ be a combinatorial model category with a transfer structure $(\Sc, \Pc)$, and let
$X\in\mC_\Delta$ be a $2$-Segal object. Consider the spans
\begin{align*}
	\mult &=\bigl\{ X_1\times X_1 \buildrel (\partial_2, \partial_0)\over\lla X_2\buildrel
	\partial_1\over\lra X_1\bigr\}, \\
	\epsilon &=\bigl\{\pt\lla X_0\buildrel s_0\over\lra X_1\bigr\},
\end{align*}
where $\partial_i$ and $s_0$ denote face and degeneration maps of $X$.		
We say that $X$ is $(\Sc,\Pc)$-{\em admissible} if $\mult$ is an $(\Sc, \Pc)$-span. We
say that $X$ is $(\Sc,\Pc)$-{\em unital} if $X$ is further a unital $2$-Segal object (Definition
\ref{def:unital-2-segal-top}, Remark \ref{rems:2-segal-top-model-identical}) and
$\epsilon$ is an $(\Sc, \Pc)$-span.

\begin{prop} Let $X$ be an $(\Sc, \Pc)$-admissible $2$-Segal object, and let
	$\Hall(X)=[X_1]$ be the object of $\Spanl(\Sc, \Pc)$ represented by $X_1$.  The morphism $m:
	\Hall(X) \otimes \Hall(X) \to \Hall(X)$, represented by the span $\mult$, makes $\Hall(X)$
	an associative algebra in $\Spanl(\Sc, \Pc)$. If $X$ is $(\Sc,
	\Pc)$-unital, then the morphism $e: \1\to X$, represented by the span $\epsilon$, is a unit
	for $\Hall(X)$. 
\end{prop}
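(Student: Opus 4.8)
The plan is to carry the discrete argument of Theorem~\ref{thm:2-segal-mu-cat} over to the homotopy category $\h\Spanl_\mC(\Sc,\Pc)$, replacing strict fiber products by homotopy fiber products and bijections by weak equivalences. First I would check that the morphisms in question actually live in this category: admissibility of $X$ says precisely that $\mult$ is an $(\Sc,\Pc)$-span, hence represents a morphism $m\colon\Hall(X)\otimes\Hall(X)\to\Hall(X)$, and, when $X$ is $(\Sc,\Pc)$-unital, that $\epsilon$ is an $(\Sc,\Pc)$-span representing $e\colon\1\to\Hall(X)$. Since composition of $1$-morphisms is computed by homotopy fiber products, axiom~\ref{ts2} guarantees that every iterated composite we form below is again an $(\Sc,\Pc)$-span, so all manipulations stay inside $\Spanl_\mC(\Sc,\Pc)$.

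For associativity I would unwind the two composite spans $m\circ(m\otimes\id)$ and $m\circ(\id\otimes m)$, both going from $[X_1\times X_1\times X_1]$ to $[X_1]$. A direct computation identifies the apex of the first with the homotopy fiber product $(X_2\times X_1)\times^R_{X_1\times X_1}X_2$ formed along $\partial_1\times\id$ and $(\partial_2,\partial_0)$; projecting away the redundant factor this is $X_2\times^R_{X_1}X_2$ along $\partial_1$ and $\partial_2$, which by Proposition~\ref{prop.colim} (and the example following it) is exactly $RX_\T$ for the triangulation $\T=\{\{0,1,2\},\{0,2,3\}\}$ of the square $P_3$. Symmetrically, the apex of the second composite is $RX_{\T'}$ for the opposite triangulation $\T'=\{\{0,1,3\},\{1,2,3\}\}$. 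Because $X$ is $2$-Segal, the two $2$-Segal maps $f_\T\colon X_3\to RX_\T$ and $f_{\T'}\colon X_3\to RX_{\T'}$ are weak equivalences, and a check of face maps shows that they carry the outer legs of the span $\nu=\{X_1^{3}\buildrel(\partial_{\{0,1\}},\partial_{\{1,2\}},\partial_{\{2,3\}})\over\lla X_3\buildrel\partial_{\{0,3\}}\over\lra X_1\}$ onto the outer legs of the two composites. Thus $X_3$ together with $f_\T$ and $f_{\T'}$ is precisely a witnessing diagram for the equivalence relation defining $\h\Spanl_\mC(\Sc,\Pc)$, so the two composites represent the same morphism; this is the homotopical incarnation of the associator~\eqref{eq:span-associator}.

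For the unit, assuming $X$ is $(\Sc,\Pc)$-unital, I would verify $m\circ(e\otimes\id)\simeq\id_{\Hall(X)}$ and $m\circ(\id\otimes e)\simeq\id_{\Hall(X)}$. The apex of $m\circ(e\otimes\id)$ is the homotopy fiber product $(X_0\times X_1)\times^R_{X_1\times X_1}X_2$ formed along $s_0\times\id$ and $(\partial_2,\partial_0)$; the relevant instance of the unitality square~\eqref{eq:unitalsquare} (with $n=2$) being homotopy Cartesian identifies this apex with $X_1$ via a weak equivalence compatible with both legs, so the composite span is equivalent to the identity span $\{X_1\buildrel\id\over\lla X_1\buildrel\id\over\lra X_1\}$. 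This is exactly the homotopical analog of the construction of $\lambda$ and $\rho$ in the proof of Theorem~\ref{thm:2-segal-mu-cat}(b); the second unit axiom is obtained from the companion instance ($n=2$, with the other value of $i$) of~\eqref{eq:unitalsquare}.

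The genuinely routine but delicate part — and the main thing to get right — is the face-map bookkeeping: one must check that under the identifications of the various homotopy fiber products with $RX_\T$, $RX_{\T'}$, $X_3$, and $X_1$, the outer legs of the spans correspond to the asserted compositions of face and degeneracy maps, so that the witnessing diagrams commute in $\Ho(\mC)$. The only conceptual point beyond the discrete case is to keep every construction homotopy invariant, by consistently using homotopy fiber products for span composition and the homotopy Cartesian unitality squares; this is exactly what ensures that the weak equivalences $f_\T$, $f_{\T'}$ and the unit identifications descend to honest equalities of morphisms in $\h\Spanl_\mC(\Sc,\Pc)$.
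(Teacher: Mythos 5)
Your proposal is correct and takes essentially the same route as the paper: the paper's own proof consists of the single remark that one runs the argument of the discrete case (the $\mu$-category theorem) with homotopy Cartesian squares in place of ordinary ones, leaving the details to the reader, and your write-up supplies exactly those details — identifying the apexes of the two composites with $RX_{\T}$ and $RX_{\T'}$ for the two triangulations of the square, using $X_3$ with the $2$-Segal equivalences as the witnessing diagram for the equivalence relation on spans, and invoking the $n=2$ instances of the unitality square for the unit axioms.
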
 
\begin{proof} Similar argument to that in Theorem \ref{thm:2-segal-mu-cat}, using homotopy Cartesian
	squares instead of ordinary Cartesian squares. We leave the details to the reader.
\end{proof}
 
We call $\Hall(X)$ the {\em universal Hall algebra of $X$} with respect to the transfer structure $(\Sc, \Pc)$.

\begin{defi}\label{def:hall-algebra-admissible}
Let $\mC$ be a model category with a transfer structure $(\Sc, \Pc)$, and let $\hen$
be a $\Vc$-valued theory with transfer on $\mC$. Let $F_\hen: \Spanl(\Sc, \Pc) \to \Vc$ be the
lax monoidal functor from Proposition \ref{prop:universal} that represents $\hen$. For any $(\Sc, \Pc)$-admissible $2$-Segal object
$X \in \mC$, the {\em Hall algebra} of $X$ with coefficients in $\hen$ is defined as
\[
	 \Hall(X, \hen) := F_\hen(\Hall(X)) = \hen(X_1)\in\Vc
\]
with the associative algebra structure transferred from the universal Hall algebra $\Hall(X)$ along
$F_\hen$. 
\end{defi}
 
Note that the Hall algebra $\Hall(X, \hen)$ is unital if the $2$-Segal object $X$ is $(\Sc, \Pc)$-unital. 
Explicitly, the multiplication on $\Hall(X, \hen)$ is obtained as the composite
\[
	  \hen(X_1)\otimes\hen(X_1)\buildrel m_{X_1, X_1}\over \lra \hen(X_1\times X_1)
	  \buildrel (\partial_0, \partial_2)^*\over\lra \hen (X_2)
	  \buildrel (\partial_1)_*\over\lra \hen (X_1).
\]

\begin{rem} When the object $X_0$ is not a final object in $\mC$, then we can refine the
	construction of the universal Hall algebra to give a monad in a certain $(3,2)$-category of
	bispans. We will not make this statement precise here as it will reappear in the context of
	$\inftytwo$-categories in \S \ref{section:higher-bicat}.
\end{rem}

An alternative construction which takes into account $X_0$ is given as follows. Suppose we are in
the situation of Remark \ref{rem-hall-algebra-model}, so that $\hen(X_0)$ is a commutative algebra
in $\Vc$. Suppose also that the boundary morphisms
\[
X_0 \buildrel \partial_0 \over\lla X_1\buildrel \partial_1\over\lra X_0
\]
belong to $\Sc$. In this case they
endow $\Hall (X,\hen)=\hen(X_1)$ with two (commuting) 
structures of an $\hen(X_0)$-module,
i.e., make it into an $(\hen(X_0), \hen(X_0))$-bimodule. 
Thus the left $\hen(X_0)$-action is induced by $\partial_0$,
while the right action is induced by $\partial_1$. 

\begin{prop}
Under the above assumptions,
the multiplication $m$ on $\Hall(X, \hen)$ is $\hen(X_0)$-bilinear, i.e., 
we have a commutative diagram in $\Vc$:
\[
\xymatrix{
\hen(X_1)\otimes\hen(X_0)\otimes\hen(X_1)\ar[r]^{\hskip 7mm \Id\otimes\lambda}
\ar[d]^{\rho\otimes\Id}&
\hen(X_1)\otimes\hen(X_1)\ar[d]^m 
\cr
\hen(X_1)\otimes\hen(X_1)\ar[r]^m &\hen(X_1)
}
\]
Here $\lambda$ and $\rho$ are the left and right action maps of $\hen(X_0)$ on
$\hen(X_1)$.
\end{prop}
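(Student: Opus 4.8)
The plan is to reduce the commutativity of the square to a single simplicial identity among the face maps of $X_2$, namely $\partial_1\partial_0 = \partial_0\partial_2 \colon X_2 \to X_0$, which expresses that the ``middle'' vertex of a $2$-simplex is computed in the same way from either of its two adjacent edges. The two module actions are, by construction, the composites $\lambda = (\partial_0,\id)^*\circ m_{X_0,X_1}$ and $\rho = (\id,\partial_1)^*\circ m_{X_1,X_0}$, where $(\partial_0,\id)\colon X_1\to X_0\times X_1$ and $(\id,\partial_1)\colon X_1\to X_1\times X_0$ are the graph morphisms; these lie in $\Sc$ because they arise as base changes of the diagonal $X_0\to X_0\times X_0$, which is smooth by the standing hypotheses of Remark~\ref{rem-hall-algebra-model}. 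I take the existence of the $\hen(X_0)$-bimodule structure itself as given by the discussion preceding the statement.

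First I would expand both legs of the square using only functoriality of the pullback (\ref{tt2}), naturality of the multiplicativity maps with respect to smooth morphisms, and associativity of the multiplicativity data (\ref{tt3}). For the lower-left leg $m\circ(\rho\otimes\Id)$, naturality of $m_{X_1,X_1}$ along the smooth morphism $(\id,\partial_1)\times\id$ pulls the restriction defining $\rho$ out past $m_{X_1,X_1}$, while associativity identifies $m_{X_1\times X_0,X_1}\circ(m_{X_1,X_0}\otimes\id)$ with the iterated product $m_{X_1,X_0,X_1}$. Composing with the smooth leg of the multiplication span $\mult$, which lies in $\Sc$ by $(\Sc,\Pc)$-admissibility and which pairs the first argument of $m$ with $\partial_0$ and the second with $\partial_2$ (Definition~\ref{def:hall-algebra-admissible}), I obtain
\begin{equation*}
m\circ(\rho\otimes\Id) \;=\; (\partial_1)_*\circ \psi_L^*\circ m_{X_1,X_0,X_1},
\end{equation*}
where $\psi_L\colon X_2\to X_1\times X_0\times X_1$ is the smooth morphism $c\mapsto(\partial_0 c,\partial_1\partial_0 c,\partial_2 c)$. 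An identical computation for the upper-right leg $m\circ(\Id\otimes\lambda)$, now using naturality along $\id\times(\partial_0,\id)$, yields $(\partial_1)_*\circ\psi_R^*\circ m_{X_1,X_0,X_1}$ with $\psi_R\colon c\mapsto(\partial_0 c,\partial_0\partial_2 c,\partial_2 c)$.

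At this point the two expressions differ only in their middle component, and I would finish by invoking $\partial_1\partial_0 = \partial_0\partial_2$ to conclude $\psi_L=\psi_R$; functoriality of the pullback (\ref{tt2}) then gives $\psi_L^*=\psi_R^*$, and hence the equality of the two composites. It is worth emphasizing that this argument uses neither the full $2$-Segal condition nor the base-change axiom \ref{tt4}: because the single proper pushforward $(\partial_1)_*$ occurs outermost in both legs, only pullbacks and external products need to be compared. This is precisely why bilinearity, in contrast to the associativity of $m$, is essentially formal.

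The main obstacle is bookkeeping rather than conceptual content: I must keep the two tensor slots of $m$ matched against the source/target conventions ($\partial_0$ is the target, $\partial_1$ the source), and I must verify that every intermediate morphism into a product along which I pull back is genuinely smooth, so that each $\psi^*$ is defined. A cleaner alternative that sidesteps much of this is to reduce first to the universal theory $\hen_{\on{un}}$ via Proposition~\ref{prop:universal}: the lax monoidal functor $F_\hen$ preserves the bimodule structure and the bilinearity square, so it suffices to prove the statement at the level of spans, where both legs compose to the span $X_1\times X_0\times X_1 \xleftarrow{\psi} X_2 \xrightarrow{\partial_1} X_1$ and the identity $\psi_L=\psi_R$ is literally an equality of apex morphisms.
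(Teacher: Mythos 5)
Your proof is correct, and since the paper's own proof is literally ``Straightforward, left to the reader,'' what you have written is precisely the straightforward argument the authors intend: both legs of the square reduce, via functoriality of pullback and naturality/associativity of the multiplicativity data, to $(\partial_1)_*\circ\psi^*\circ m_{X_1,X_0,X_1}$ for the single morphism $\psi\colon X_2\to X_1\times X_0\times X_1$, and the two descriptions of its middle component agree by the simplicial identity $\partial_1\partial_0=\partial_0\partial_2$ (both compute the middle vertex of a $2$-simplex). The one imprecision is your justification that the graph morphisms $(\partial_0,\id)$ and $(\id,\partial_1)$ lie in $\Sc$: axiom (TS2) as stated only transfers smoothness across a homotopy Cartesian square whose complementary vertical morphism is \emph{proper}, which is not guaranteed for $\id\times\partial_0$; since you explicitly take the bimodule structure (and hence the smoothness of whatever morphisms define $\lambda$ and $\rho$) as part of the standing assumptions, this does not affect the argument, but the parenthetical derivation should be dropped or supplemented by the implicit hypothesis --- likewise the tacit assumption that $s\times\id\in\Sc$ for $s\in\Sc$, which is already needed for the naturality clause of (TT3) to make sense.
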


\begin{proof} Straightforward, left to the reader. \end{proof}

 \vfill\eject
 
\subsection{Groupoids: Classical Hall and Hecke algebras}\label{subsec:groupoids-classical-hall}

Let $\mC=\Grp$ be the category of small groupoids with the Bousfield model structure from Example
\ref{exa:groupoidsmodel}.  Recall that, for a groupoid $\Gc\in\Grp$, the set of isomorphism classes
of objects in $\Gc$ is denoted $\pi_0(\Gc)$. The concept of a theory with transfer on various
subcategories of $\Grp$ is closely related with that of a global Mackey functor, cf.
\cite{webb}. We start with some examples.
  
Fix a field $\k$, and let $\Vc=\Vect_\k$ be the category of $\k$-vector spaces. For a groupoid $\Gc
\in\Grp$, we denote by $\Fen(\Gc)$ the space of $\k$-valued functions on $\pi_0(\Gc)$. In other
words, $\Fen(\Gc)$ consists of functions $\phi: \Ob(\Gc)\to\k$ such that $\phi(x)=\phi(y)$ whenever
$x$ is isomorphic to $y$. 
A functor $f: \Gc'\to \Gc$ of groupoids defines the pullback map $f^*: \Fen(\Gc)\to\Fen(\Gc')$.
This contravariant functoriality, together with the obvious multiplicativity maps
$\Fen(\Gc)\otimes\Fen(\Gc')\to\Fen(\Gc\times\Gc')$, extends, in various ways, to the structure of a theory with transfer
on $\Fen$, which we now describe. 
 
We say that a groupoid $\Gc$ is {\em locally finite} (resp. {\em discrete}) if, for any $x \in \Gc$, the group
$\Aut_\Gc(x)$ is finite (resp. trivial). A groupoid $\Gc$ is called {\em finite}, if $\Gc$ is
locally finite and $\pi_0(\Gc)$ is a finite set. If $\Gc$ is finite and
$\ch(\k)=0$, then we have the {\em orbifold integral map}
\begin{equation}\label{eq:orbifold-integral}
	\int_\Gc: \Fen(\Gc)\lra \k, \quad 
	\int_\Gc \phi =\sum_{[x]\in\pi_0(\Gc)} \frac{\phi(x)}{|\Aut_\Gc(x)|}\in \k.
\end{equation}
Here $x$ is any object in the isomorphism class $[x]$. If $\Gc$ is finite and discrete,
then $\int_\Gc$ is defined without any assumptions on $\k$.
   
For a functor $f: \Gc' \to \Gc$ of groupoids, we recall the definiton of the $2$-fiber of $f$ over an object 
$x \in \Gc$ (Definition \ref{def:2lim}), given by
\[
	Rf^{-1}(x) =\twopro \bigl\{
	\{x\}\lra \Gc\buildrel f\over\lla \Gc'
	\bigr\}, \quad x\in\Ob(\Gc).
\]
We introduce several classes of functors.
 
\begin{defi} A functor $f: \Gc'\to \Gc$ of groupoids is called
\begin{itemize}
\item {\em weakly proper} if the map
$\pi_0(\Gc')\to\pi_0(\Gc)$ is finite-to-one, 

\item $\pi_0$-{\em proper} if each $2$-fiber of $f$ has finitely many isomorphism classes,

\item {\em proper} if each $2$-fiber of $f$ is finite,

\item {\em absolutely proper} if each $2$-fiber of $f$ is finite and discrete. 
\end{itemize}
\end{defi}
The last three classes, being defined in terms of $2$-fibers, are stable under arbitrary
$2$-pullbacks and therefore each of them forms, together with $\Mor(\Grp)$, a transfer structure.

\begin{prop}\label{prop:Z-proper} A functor $f: \Gc'\to \Gc$ of groupoids is
\begin{enumerate}
	\item $\pi_0$-proper, if and only if $f$ is
		weakly proper and, for every $x' \in \Gc'$, the homomorphism of groups
		\[
			f_{x'}: \Aut_{\Gc'}(x')\lra\Aut_\Gc(f(x')), \quad x'\in\Ob(\Gc'),
		\]
		has finite cokernel.
  
	\item proper, if and only if $f$ is $\pi_0$-proper and, for every $x' \in
		\Gc'$, the homomorphism $f_{x'}$ has finite kernel.
   
	\item absolutely proper, if and only if $f$ is $\pi_0$-proper and,
		for every $x' \in \Gc'$, the homomorphism $f_{x'}$ is injective. 
\end{enumerate}
\end{prop}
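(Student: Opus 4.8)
The plan is to reduce all four properties to a single explicit description of the $2$-fiber $Rf^{-1}(x)$, since weak properness is phrased directly in terms of $\pi_0(f)$ while $\pi_0$-, ordinary and absolute properness are phrased purely in terms of these $2$-fibers. Unwinding Definition \ref{def:2lim} for the diagram $\{x\}\to\Gc\xleftarrow{f}\Gc'$, an object of $Rf^{-1}(x)$ is a pair $(x',\phi)$ consisting of an object $x'\in\Ob(\Gc')$ together with an isomorphism $\phi\colon f(x')\to x$ in $\Gc$, and a morphism $(x',\phi)\to(x'',\psi)$ is a morphism $g\colon x'\to x''$ in $\Gc'$ with $\psi\circ f(g)=\phi$. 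First I would record the two elementary invariants of this category: the automorphism group of an object and the set of isomorphism classes.

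For the automorphism group, an endomorphism of $(x',\phi)$ is an automorphism $g$ of $x'$ with $\phi\circ f(g)=\phi$, that is $f(g)=\id_{f(x')}$; hence $\Aut_{Rf^{-1}(x)}(x',\phi)=\ker(f_{x'})$. For the isomorphism classes, I would organize $\Ob(Rf^{-1}(x))$ according to the class $[x']\in\pi_0(\Gc')$, which must lie in the fiber $\pi_0(f)^{-1}([x])$. Fixing $x'$, the isomorphisms $\phi\colon f(x')\to x$ form a right torsor under $\Aut_\Gc(f(x'))$, and two pairs $(x',\phi),(x',\phi')$ are isomorphic in $Rf^{-1}(x)$ exactly when $\phi=\phi'\circ f(g)$ for some $g\in\Aut_{\Gc'}(x')$. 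Thus the classes over a fixed $[x']$ are the orbits of the right $\Im(f_{x'})$-action on this torsor, a set in bijection with the coset space $\Aut_\Gc(f(x'))/\Im(f_{x'})$. Consequently
\[
\pi_0\bigl(Rf^{-1}(x)\bigr) \;\cong\; \coprod_{[x']\in\pi_0(f)^{-1}([x])} \Aut_\Gc(f(x'))/\Im(f_{x'}),
\]
and each summand is nonempty.

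With these two computations in hand, the three equivalences follow by matching conditions. The $2$-fiber $Rf^{-1}(x)$ has finitely many isomorphism classes for every $x$ if and only if both the index set $\pi_0(f)^{-1}([x])$ and each coset space $\Aut_\Gc(f(x'))/\Im(f_{x'})$ are finite; the former condition, over all $x$, is precisely weak properness, and the latter is the finiteness of the cokernel of $f_{x'}$, which gives (1). For (2), $Rf^{-1}(x)$ is moreover finite (locally finite with finitely many classes) exactly when, in addition, every automorphism group $\ker(f_{x'})$ is finite. For (3), $Rf^{-1}(x)$ is finite and discrete exactly when it is $\pi_0$-proper and every $\ker(f_{x'})$ is trivial, i.e.\ every $f_{x'}$ is injective. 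The only point requiring genuine care, and the main obstacle, is the count of isomorphism classes lying over a fixed $[x']$: I must get the torsor structure and the side on which $\Im(f_{x'})$ acts correct, and interpret ``finite cokernel'' as finiteness of the coset space $\Aut_\Gc(f(x'))/\Im(f_{x'})$ rather than of a quotient group, since $\Im(f_{x'})$ need not be normal in $\Aut_\Gc(f(x'))$.
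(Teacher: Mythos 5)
Your proof is correct and follows essentially the same route as the paper: both arguments boil down to identifying the automorphism groups of the $2$-fiber $Rf^{-1}(x)$ with $\ker(f_{x'})$ and its set of isomorphism classes with coset spaces $\Aut_\Gc(f(x'))/\Im(f_{x'})$ indexed over $\pi_0(f)^{-1}([x])$. The only cosmetic difference is that the paper first reduces to the one-object case and phrases the $2$-fiber as an action groupoid $G'\backslash\hskip -1mm\backslash G$, whereas you compute objects and morphisms of the $2$-fiber directly in general, which handles the weak-properness bookkeeping slightly more transparently.
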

\begin{proof}  
The statements reduce to the case when both $\Gc'$ and $\Gc$ have one object, which we denote
$\bullet'$ and $\bullet$, respectively. Then $f$ reduces to a homomorphism of groups $f: G'\to G$. In this
situation $G'$ acts on $G$ on the left via $(g',g)\mapsto f(g')g$, and we find that the $2$-fiber of $f$
\[
	  Rf^{-1}(\bullet) = G'\bbs G
\]
is the corresponding action groupoid. Isomorphism classes of objects of this groupoid correspond to 
right cosets of $G$ by $\on{Im}(f)$, and the automorphism group of any object is $\Ker(f)$. The
statements follow directly from these observations. 
\end{proof}
  
Given an absolutely proper functor $f: \Gc'\to \Gc$ of small groupoids, we define the
{\em orbifold direct image map}
\[
	f_*: \Fen(\Gc')\lra\Fen(\Gc), \quad (f_*\phi)(x) =\int_{Rf^{-1}(x)} \phi_{|Rf^{-1}(x)}. 
\]
If $\ch(\k)=0$, then $f_*$ is defined for any proper functor. 
  
\begin{ex}\label{ex:orbifold-image-groups}
Suppose $\Gc'$ and $\Gc$ have one object each, so $f$ reduces to a homomorphism of groups 
$f: G'\to G$. By the above, $f$ being proper means that $\Ker(f)$ and $\Coker(f)$ are finite. In
this case, denoting $1_{\Gc'}$ the element of $\Fen(\Gc')= \k$ corresponding to $1\in \k$, and
similarly for $\Gc$, we have
\[
	f_*(1_{\Gc'})=\frac{|\Coker(f)|}{|\Ker(f)|} \cdot 1_{\Gc}. 
\]
\end{ex}
  
\begin{prop}\label{prop:F-theory-transfer} 
	\begin{enumerate}[label=(\alph{*})]
		\item Let $\k$ be any field. Then the orbifold direct image makes $\Fen$ into a
			theory with transfer on $\Grp$, contravariant with respect to all functors
			and covariant with respect to absolutely proper functors. 

		\item If $\k$ is a field of characteristic $0$, then $\Fen$ becomes a theory with
			transfer covariant with respect to all proper functors. 
	\end{enumerate}
\end{prop}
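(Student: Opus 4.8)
The plan is to verify the axioms (TT1)--(TT4) of Definition \ref{def:TT} for the datum $\Fen$, most of which reduces to elementary properties of the orbifold integral \eqref{eq:orbifold-integral}. Axiom (TT1) is immediate: the excerpt already records that each of the classes of $\pi_0$-proper, proper and absolutely proper functors, paired with $\Sc=\Mor(\Grp)$, forms a transfer structure, i.e.\ satisfies (TS1)--(TS2); this is precisely where the stability of the relevant properness classes under arbitrary $2$-pullbacks (hence the base-change stability demanded by (TS2)) is used. The contravariant part of (TT2) is clear: for any functor $f:\Gc'\to\Gc$ the pullback $f^*:\Fen(\Gc)\to\Fen(\Gc')$ is strictly functorial and carries an equivalence to an isomorphism, since an equivalence of groupoids induces a bijection on $\pi_0$.

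The bulk of the work is the covariant part of (TT2), which I would treat in three steps. For well-definedness, note that for $f$ in the relevant proper class and $\phi\in\Fen(\Gc')$ the $2$-fiber $Rf^{-1}(x)=\twopro\{\{x\}\to\Gc\leftarrow\Gc'\}$ is finite (and discrete, in the absolutely proper case) by the very definition of the class, so that $\int_{Rf^{-1}(x)}$ is defined---over an arbitrary field in the discrete case and, when $\ch(\k)=0$, also in the merely finite case, since then the orders $|\Aut|$ occurring in \eqref{eq:orbifold-integral} are invertible. That $f_*\phi$ is again a class function follows from Proposition \ref{prop:invariance-2lim}: isomorphic objects $x\cong y$ of $\Gc$ yield equivalent $2$-fibers compatibly with the restriction of $\phi$, and the orbifold integral is invariant under equivalence of groupoids. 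The heart of the matter is functoriality, $(g\circ f)_*=g_*\circ f_*$ for composable proper functors $\Gc''\xrightarrow{f}\Gc'\xrightarrow{g}\Gc$.

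I would establish this as a Fubini-type identity: the $2$-fiber $R(gf)^{-1}(x)$ admits a natural functor to $Rg^{-1}(x)$ whose own $2$-fibers are the $2$-fibers of $f$, and one checks, using Proposition \ref{prop:Z-proper} to control the orders of kernels and cokernels of the induced maps on automorphism groups, that
\[
\int_{R(gf)^{-1}(x)} \phi|_{R(gf)^{-1}(x)} = \int_{Rg^{-1}(x)} \Big( y\mapsto \int_{Rf^{-1}(y)} \phi|_{Rf^{-1}(y)} \Big),
\]
which is exactly $(g_*f_*\phi)(x)$. The same bookkeeping shows $f_*$ sends equivalences to isomorphisms, an equivalence having contractible $2$-fibers on which $\int$ is evaluation (cf.\ Example \ref{ex:orbifold-image-groups}). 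I expect this Fubini identity, together with the compatibility underlying (TT4), to be the main obstacle: it is where the automorphism-group combinatorics of iterated $2$-fibers must be controlled, the finiteness conditions of the properness classes being precisely what legitimizes the reorganization of the sum, and in the proper non-discrete case the hypothesis $\ch(\k)=0$ being what permits the division by automorphism orders.

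Finally, (TT3) and (TT4) are comparatively routine. For (TT3) the multiplicativity morphism is the exterior product $m_{\Gc,\Hc}(\phi\otimes\psi)(x,y)=\phi(x)\psi(y)$; naturality with respect to $\Sc$-morphisms is the multiplicativity of pullback, the isomorphism $\Fen(\pt)\cong\k=\1$ is tautological, and associativity and unitality follow from $\pi_0(\Gc\times\Hc)=\pi_0(\Gc)\times\pi_0(\Hc)$. For (TT4) I would evaluate $p_*s^*$ and $(s')^*q_*$ at an object of the target and reduce, just as in the well-definedness step, to the fact that a homotopy (i.e.\ $2$-)Cartesian square identifies the $2$-fibers of $p$ with those of $q$ (Proposition \ref{prop:invariance-2lim}), so that the two orbifold integrals agree term by term. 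Part (b) then differs from part (a) only in dropping the discreteness of $2$-fibers in favor of the hypothesis $\ch(\k)=0$, which enters solely to keep $\int$ defined; all functoriality and base-change arguments carry over verbatim.
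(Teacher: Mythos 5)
Your proposal is correct, and its overall shape --- verify (TT1)--(TT4), with the only substantive work residing in the covariant functoriality of $f_*$ and in base change via identification of $2$-fibers --- agrees with the paper's. The one place where you take a genuinely different route is the composition identity $(g\circ f)_* = g_*\circ f_*$. The paper reduces this to functors between groupoids with a single object, i.e.\ to group homomorphisms $f:G'\to G$, where $f_*$ acts on the one-dimensional space $\Fen(\Gc')\cong\k$ by the scalar $|\Coker(f)|/|\Ker(f)|$ (Example \ref{ex:orbifold-image-groups}), so that compatibility with composition becomes the elementary multiplicativity of this kernel--cokernel ratio. You instead prove the general Fubini identity over the natural functor $R(gf)^{-1}(x)\to Rg^{-1}(x)$, whose own $2$-fibers you correctly identify with the $2$-fibers of $f$. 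Both arguments are valid and carry comparable content: the paper's route trades the groupoid combinatorics for an explicit group-theoretic computation, at the cost of the (suppressed) bookkeeping needed to reduce a general functor of groupoids to skeleta and to carry a general finitely supported class function through that reduction; your route keeps everything intrinsic and isolates a single Fubini lemma as the precise point where the finiteness hypotheses of Proposition \ref{prop:Z-proper} and, in case (b), the invertibility of automorphism orders in $\k$ enter. Your treatment of well-definedness, of (TT3), and of (TT4) via the identification of $2$-fibers in a $2$-Cartesian square coincides with the paper's.
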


\begin{proof} 
The fact that orbifold direct image is compatible with composition, i.e., $(f\circ
g)_*=f_*\circ g_*$ for (absolutely) proper $f$ and $g$, reduces to the case of functors
between groupoids with one object, in which case it follows from Example
\ref{ex:orbifold-image-groups}. The base change for a $2$-Cartesian square of groupoids
follows, in a standard way, from the identification of $2$-fibers. 
\end{proof}

We say that a functor $f: \Gc'\to\Gc$ is {\em locally proper} (resp. {\em locally absolutely
proper}), if the restriction of $f$ to any isomorphism class in $\Gc'$ is proper (resp. absolutely
proper). Such functors are characterized by the condition that, for every $x' \in \Gc'$, the
homomorphism $f_{x'}$ from Proposition \ref{prop:Z-proper} has finite kernel and cokernel (resp. trivial kernel and finite
cokernel). 
A groupoid $\Gc$ is called an {\em orbifold}, if the constant functor $\Gc \to \pt$ is a locally
proper functor, i.e., for every $x \in \Gc$, the automorphism group $\Aut_\Gc(x)$ is finite. Thus a
functor of groupoids is locally proper if and only if all its $2$-fibers are orbifolds. In
particular, any functor of orbifolds is locally proper. 

Let $\k$ be a field of characteristic $0$. For a groupoid $\Gc$, let $\Fen_0(\Gc)\subset \Fen(\Gc)$
be the subspace formed by functions $\pi_0(\Gc)\to \k$ with finite support. Note that formula
\eqref{eq:orbifold-integral} defines the map $\int_\Gc: \Fen_0(\Gc)\to \k$ for any orbifold $\Gc$,
and thus we can define
\begin{equation}\label{eq:f-0-covariant}
	f_*: \Fen_0(\Gc')\lra \Fen_0(\Gc)
\end{equation}
for any locally proper functor $f: \Gc'\to \Gc$. Note, that we have the contravariant functoriality
  \[
	  f^*: \Fen_0(\Gc)\lra \Fen_0(\Gc')
  \]
for weakly proper functors of orbifolds. 
  
\begin{lem} 
	The classes of weakly proper and locally proper functors form a transfer structure on
	$\Grp$. The same is true for the classes of weakly proper and locally absolutely proper
	functors. 
\end{lem}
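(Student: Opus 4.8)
The plan is to verify the two axioms (TS1) and (TS2) of a transfer structure for the pair $(\Sc,\Pc)$, where $\Sc$ is the class of weakly proper functors and $\Pc$ is the class of locally proper (resp. locally absolutely proper) functors, using throughout the identification of homotopy Cartesian squares in $\Grp$ with strict $2$-Cartesian squares, that is, $2$-fibre products of groupoids. This identification is legitimate because homotopy limits in $\Grp$ agree with the $2$-limits of Definition \ref{def:2lim} (see Proposition \ref{prop:2lim-holim}). I would also record at the outset the pointwise characterisations: weak properness is the condition that $\pi_0(\Gc')\to\pi_0(\Gc)$ be finite-to-one, local properness is the condition that each homomorphism $f_{x'}\colon \Aut_{\Gc'}(x')\to\Aut_\Gc(f(x'))$ of Proposition \ref{prop:Z-proper} have finite kernel and cokernel, and local absolute properness replaces ``finite kernel'' by ``trivial kernel''.

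For (TS1) I would treat the three classes separately. Closure of the weakly proper class is immediate, since a composite of finite-to-one maps on $\pi_0$ is finite-to-one. For the other two classes I would use that for composable $g\colon\Gc''\to\Gc'$ and $f\colon\Gc'\to\Gc$ one has $(f\circ g)_{x''}=f_{g(x'')}\circ g_{x''}$, and then invoke the elementary fact that a composite $A\xrightarrow{\alpha}B\xrightarrow{\beta}C$ of group homomorphisms has finite kernel and cokernel whenever $\alpha$ and $\beta$ do: indeed $\Ker(\beta\alpha)$ is an extension of a subgroup of $\Ker\beta$ by $\Ker\alpha$, while the coset space $\Coker(\beta\alpha)=\Im(\beta\alpha)\backslash C$ satisfies $[C:\Im(\beta\alpha)]\le[C:\Im\beta]\,[B:\Im\alpha]$. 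Running the same computation with $\Ker\alpha=\Ker\beta=1$ gives closure of the locally absolutely proper class.

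For (TS2) I would write the $2$-fibre product $X=X'\times^{(2)}_{Y'}Y$ explicitly: its objects are triples $(a,b,\phi)$ with $a\in X'$, $b\in Y$ and $\phi\colon s'(a)\xrightarrow{\sim}q(b)$ an isomorphism in $Y'$, and $p$, $s$ are the two projections. To see $p\in\Pc$, I would compute, for $x=(a,b,\phi)$, that $p_x\colon\Aut_X(x)\to\Aut_{X'}(a)$ has kernel isomorphic to $\Ker q_b$ and image $(s'_a)^{-1}(\Im q_b)$ (identifying $\Aut(s'(a))\cong\Aut(q(b))$ via $\phi$), whence $\lvert\Coker p_x\rvert\le\lvert\Coker q_b\rvert$; thus finiteness (resp. triviality) of $\Ker q_b$ and finiteness of $\Coker q_b$ pass to $p_x$, so $p$ is locally (absolutely) proper. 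This step needs only $q\in\Pc$, and it may equivalently be phrased through the invariance $Rp^{-1}(a)\simeq Rq^{-1}(s'(a))$ of $2$-fibres under $2$-base change. To see $s\in\Sc$, I would fix $b\in Y$ and count the isomorphism classes of $X$ lying over $[b]$: such a class is represented by a triple $(a,b,\phi)$, where the admissible $[a]$ map to $[q(b)]\in\pi_0(Y')$ and are hence finite in number by weak properness of $s'$, while for each such $a$ the gluing datum $\phi$ ranges, up to isomorphism in $X$, over the double coset set $s'_a(\Aut_{X'} a)\backslash\Aut(q(b))/q_b(\Aut_Y b)$, which is finite because $[\Aut(q(b)):\Im q_b]=\lvert\Coker q_b\rvert<\infty$ by local properness of $q$.

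The hard part will be this last step, the weak properness of $s$: it is the only place where both hypotheses ($s'$ weakly proper and $q$ locally proper) are genuinely used together, and it requires the double-coset bookkeeping for the fibre of $\pi_0(X)\to\pi_0(Y)$ together with the finiteness estimate coming from finite cokernels. The verification for the locally absolutely proper variant is then identical, since passing from ``finite kernel'' to ``trivial kernel'' only strengthens the kernel computations and leaves the cokernel estimates, and hence the weak-properness count, untouched.
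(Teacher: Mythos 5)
Your proposal is correct and follows essentially the same route as the paper's proof: local properness of the pulled-back functor $p$ comes from the invariance of $2$-fibers under $2$-pullback (your explicit kernel/cokernel computation for $p_x$ is just an unpacking of this), and weak properness of $s$ is established by the same double-coset count of $\pi_0$ of the $2$-fiber product, using weak properness of $s'$ to bound the number of admissible classes $[a]$ and finiteness of $\Coker q_b$ to bound each double-coset space. You additionally verify closure under composition (TS1), which the paper treats as immediate; that part of your argument is also correct.
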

    
\begin{proof} Let 
 \[
\xymatrix{\Gc_2 \ar[r]^{f} \ar[d]_{u_2} &\Gc_1\ar[d]^{u_1}\cr
\Gc'_2 \ar[r]^{f'}&\Gc'_1
}
\]
be a 2-Cartesian square of groupoids such that the functor $u_1$ is weakly proper, and $f'$ is
locally (absolutely) proper.  We need to prove that $u_2$ is again weakly proper and $f$ is locally
(absolutely) proper.  The statement about $f$ follows from identification of 2-fibers in a
2-pullback. Let us prove that $u_2$ is weakly proper. 
As the 2-fiber product is additive w.r.t. disjoint union of groupoids in each argument, the
statement about $u_2$ reduces to the case when $\Gc_1, \Gc'_1, \Gc_2$ each have one object, i.e.,
the corresponding part of the above square comes from a diagram of groups and homomorphisms
\[
	G_2\buildrel f'\over\lra G'_1 \buildrel u_1\over\lla G_1
\]
with $u_1$ having finite kernel and cokernel (resp. being injective
with finite cokernel). The groupoid $\Gc_2$ is then equivalent to the action groupoid
\[
	\Gc_2 \simeq (G'_2\times G_1^\op)\bbs G'_1,
\]
where $G'_2\times G_1^\op$ acts on the set $G'_1$ by
\[
	(g'_2, g_1)\cdot g'_1 = f'(g'_2) g'_1 u_1(g_1). 
\]
Since $u_1$ has finite cokernel, the action of $G'_1$ alone already has finitely
many orbits. This implies that the action groupoid above has finite $\pi_0$
and so $u_2$ is weakly proper. 
\end{proof}

\begin{prop}
Let $\k$ be a field of characteristic $0$ (resp. of arbitrary characteristic).  The correspondence
$\Gc\mapsto \Fen_0(\Gc)$ gives rise to a $\Vect_\k$-valued theory with transfer on $\Grp$,
contravariant with respect to weakly proper functors and covariant with respect to locally proper
(resp. locally absolutely proper) functors. 
\end{prop}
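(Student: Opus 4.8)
The plan is to check the four axioms \ref{tt1}--\ref{tt4} of Definition \ref{def:TT} for the assignment $\Gc\mapsto\Fen_0(\Gc)$, with transfer structure $(\Sc,\Pc)=(\text{weakly proper},\text{locally proper})$ when $\ch(\k)=0$ and $(\Sc,\Pc)=(\text{weakly proper},\text{locally absolutely proper})$ in arbitrary characteristic. Axiom \ref{tt1} is already settled: the preceding Lemma verifies \ref{ts1} and \ref{ts2} for both of these pairs of classes. What remains is to produce the two functors of \ref{tt2}, the multiplicativity data of \ref{tt3}, and the base change identity \ref{tt4}. The whole verification is a support-conscious reprise of the argument of Proposition \ref{prop:F-theory-transfer}, and I would emphasize only the points where the finite-support condition enters.

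For the contravariant functor on $\Sc$, given a weakly proper $f:\Gc'\to\Gc$ the pullback $f^*\phi=\phi\circ f$ has $\on{supp}(f^*\phi)\subseteq\pi_0(f)^{-1}(\on{supp}\phi)$, which is finite precisely because $f$ is finite-to-one on $\pi_0$; hence $f^*$ restricts to $\Fen_0(\Gc)\to\Fen_0(\Gc')$, and it is visibly contravariantly functorial. An equivalence of groupoids is weakly proper and bijective on $\pi_0$, so $f^*$ is then an isomorphism, which handles the weak-equivalence clause of \ref{tt2} in this variance.

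The crux is the covariant functor $f_*$ of \eqref{eq:f-0-covariant}. The only new point relative to Proposition \ref{prop:F-theory-transfer} is that the orbifold integral \eqref{eq:orbifold-integral} defining $(f_*\phi)(x)=\int_{Rf^{-1}(x)}\phi|_{Rf^{-1}(x)}$ is now taken over a $2$-fibre that need not be finite. I would first note $\on{supp}(f_*\phi)\subseteq\pi_0(f)(\on{supp}\phi)$, so $f_*\phi$ automatically has finite support. For well-definedness I would invoke the description of $Rf^{-1}(x)$ from the proof of Proposition \ref{prop:Z-proper}: the fibres of $\pi_0(Rf^{-1}(x))\to\pi_0(\Gc')$ are coset spaces of the cokernels $\Coker(f_{x'})$, while each object has automorphism group $\Ker(f_{x'})$. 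Local properness (resp. local absolute properness) makes every $\Coker(f_{x'})$ finite and every $\Ker(f_{x'})$ finite (resp. trivial), so over the finite set $\on{supp}\phi$ only finitely many isomorphism classes contribute and each carries a well-defined weight; the sum converges, with no hypothesis on $\ch(\k)$ in the absolutely proper case. Functoriality $(f\circ g)_*=f_*\circ g_*$ and the inversion of equivalences by $f_*$ are then purely local: evaluating at a point and using invariance of the orbifold integral under equivalences of orbifolds, one reduces exactly as in Proposition \ref{prop:F-theory-transfer} to one-object groupoids, where $f_*$ is multiplication by $|\Coker(f)|/|\Ker(f)|$ (Example \ref{ex:orbifold-image-groups}) and the composition law is the evident multiplicativity for a composite of group homomorphisms.

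The multiplicativity data \ref{tt3} is routine: since $\pi_0(\Gc\times\Gc')\cong\pi_0(\Gc)\times\pi_0(\Gc')$, the exterior product $(\phi\otimes\psi)(x,y)=\phi(x)\psi(y)$ gives a natural, support-preserving map $\Fen_0(\Gc)\otimes\Fen_0(\Gc')\to\Fen_0(\Gc\times\Gc')$ with $\Fen_0(\pt)=\k=\1$, and naturality with respect to $f^*$ for weakly proper $f$, together with associativity and unitality, is immediate. Finally \ref{tt4} follows, as in Proposition \ref{prop:F-theory-transfer}, from the identification of $2$-fibres in a $2$-Cartesian square: evaluating $p_*s^*\phi$ and $(s')^*q_*\phi$ at an object $x'$ of $X'$ writes both as the orbifold integral of $\phi$ over the equivalent fibres $Rp^{-1}(x')\simeq Rq^{-1}(s'(x'))$, and equivalence-invariance of the integral gives equality, the finite-support hypotheses being preserved because $s$ is weakly proper. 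I expect the single genuine obstacle to be the well-definedness of $f_*$ on finite-support functions over the possibly infinite $2$-fibres of a merely locally (absolutely) proper functor; once the cokernel/kernel bookkeeping of Proposition \ref{prop:Z-proper} is in place, everything else is a formal support-tracking repetition of the $\Fen$ case.
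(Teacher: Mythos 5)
Your proof is correct and takes essentially the same route as the paper, whose own proof is a one-line reduction to the argument of Proposition \ref{prop:F-theory-transfer} ``once the required functorialities are in place''. The content you supply --- preservation of finite support under $f^*$ for weakly proper $f$, and the well-definedness of the orbifold integral over the possibly infinite $2$-fibres via the kernel/cokernel analysis of Proposition \ref{prop:Z-proper} --- is precisely the elided part, and your bookkeeping of it is accurate, including the observation that trivial kernels remove the characteristic-zero hypothesis in the locally absolutely proper case.
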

   
\begin{proof} 
	Once the required functorialities are in place, the argument is similar to that of
	Proposition \ref{prop:F-theory-transfer}. 
\end{proof}
   
\begin{exa}[(Classical Hall algebras)]\label{ex:classical-hall}
Let $\Ec$ be an exact category in the sense of Quillen, and let $\Sc(\Ec)$ be its Waldhausen space,
considered as a $2$-Segal object in $\Grp$ as in \S \ref{subsec:waldhausen-1}.  We say that
$\Ec$ is {\em finitary}, if 
\begin{enumerate}
	\item the category $\Ec$ is essentially small, 
	\item for all objects $A,B\in\Ec$ and every $i \ge 0$, the groups $\on{Ext}^i_\Ec(A,B)$ are finite and 
	\item for $i\gg 0$, $\on{Ext}^i_\Ec(A,B) \cong 0$. 
\end{enumerate} 
Here the $\on{Ext}$-groups are calculated in the abelian envelope of $\Ec$. An example of a finitary
exact category is provided by the category $\Coh(X/\FF_q)$ of coherent sheaves on a smooth
projective variety $X$ over a finite field. 
  
If $\Ec$ is finitary, then each groupoid $\Sc_n(\Ec)$ is an orbifold and, moreover, the functor
$(\partial_2, \partial_0)$ in the diagram
\begin{equation}\label{eq:hall-waldhausen-main-diagram}
	\Sc_1(\Ec)\times \Sc_1(\Ec) \buildrel (\partial_2, \partial_0)\over\lla 
	\Sc_2(\Ec)\buildrel \partial_1\over \lra \Sc_1(\Ec), 
\end{equation}
is proper. Indeed, $\Sc_2(\Ec)$ is the groupoid formed by admissible short exact sequences
\[
  0\to A'\lra A\lra A''\to 0
\]
in $\Ec$ and their isomorphisms. The functor $(\partial_2, \partial_0)$ associates to such a
sequence its two extreme terms, so it is finite-to-one on $\pi_0$ because of finiteness of
$\Ext^1$. Note that $(\partial_2, \partial_0)$ is in general not absolutely proper. 
The functor $\partial_1$ is always locally absolutely proper. Indeed, it is injective on
morphisms since an automorphism of a short exact sequence is determined by its action on the
middle term.
  
Therefore, we can form the associative $\k$-algebra $\Hall(\SW(\Ec), \Fen_0)$. This is nothing but
the classical {\em Hall algebra} $\on{Hall}(\Ec)$ of $\Ec$ defined as follows
(cf. \cite{schiffmann:hall}). It has a $\k$-basis $\{e_A\}$, where $A$ runs over all isomorphism classes
of objects of $\Ac$. The multiplication, denoted $*$, is given by the formula
\[
e_A*e_B =  \sum_C g_{AB}^C e_C, 
\]
where $g_{AB}^C\in \ZZ_+$ is the number of subobjects $A'\subset C$ such that
$A'\simeq A$ and $C/A'\simeq B$. This number is finite because of
the finiteness of the $\Hom$ and $\Ext^1$-groups in $\Ec$. The identification 
$\on{Hall}(\Ec) \cong \Hall(\SW(\Ec), \Fen_0)$ is obtained by mapping $e_A$ to $\1_A\in\Fen_0(\Sc_1(\Ec))$,
the characteristic function of the isomorphism class of $A$. 
  
We say that $\Ec$ is {\em cofinitary}, if any object has only finitely many subobjects. An example
is provided by the category of $\FF_q$-representations of a finite quiver. 
If $\Ec$ is both finitary and cofinitary, the algebra structure extends to $\widehat{\on{Hall}}(\Ec)$, 
the completion of the vector space $\on{Hall}(\Ec)$ formed by all infinite
formal linear combinations of the $e_A$.
On the other case, in this case the functor $\partial_1$ in \eqref{eq:hall-waldhausen-main-diagram}
is absolutely proper (as its action on $\pi_0$ will be finite-to-one). Therefore,  the algebra
$\Hall(\SW(\Ec), \Fen)$ is defined for any field $\k$. This algebra is isomorphic to 
$\widehat{\on{Hall}}(\Ec)$. 

We have a similar interpretation of the Hall algebras of set-theoretic representations of quivers
and semigroups considered by Szczesny \cite{szczesny:quivers, szczesny:semigroups}. They can be
obtained from the Waldhausen spaces of the (nonlinear) proto-exact categories formed by such representations,
see Example \ref{ex:pointed-sets}. 
\end{exa}
 
\begin{exa}[(Classical Hecke algebras)]\label{exa:heckealg}
Let $G$ be a group, $K \subset G$ a subgroup, and let $\Sc(G, G/K)$ be their Hecke-Waldhausen
simplicial groupoid from \S \ref{subsec:hecke-waldhausen}. It is a $1$-Segal (hence 
$2$-Segal) object in $\Grp$. We say that $K$ is {\em almost normal} if the following condition holds:
\begin{enumerate}[label=(AN)]
	\item \label{l:an} For any $g\in G$ the subgroup $gKg^{-1}$ is commensurate with
	$K$, i.e., the intersection $K\cap (gKg^{-1})$ has finite index in each of them.
\end{enumerate}
For instance, any subgroup of a finite group is almost normal. 
If $K$ is almost normal then, by Proposition \ref{prop:main-diag-hecke-wald} below, we can apply the theory with transfer $\Fen_0$
on $\Grp$, contravariant along weakly proper maps and covariant along locally proper maps, to form
the Hall algebra
\[
	\Hall(\Sc(G, G/K), \Fen_0) = \Fen_0(\Sc_1(G, G/K)) = \Fen_0(K\backslash G/K).
\]
This nothing but the classical {\em Hecke algebra} $\Heck(G, K)$ of the pair $(G,K)$
(see, e.g., \cite[\S 3.1]{shimura}). 
\end{exa}

\begin{prop}\label{prop:main-diag-hecke-wald} 
	Given a group $G$ and a subgroup $K \subset G$, consider the diagram
	\begin{equation}\label{eq:main-diag-hecke-wald}
		 \Sc_1(G, G/K) \times \Sc_1(G, G/K)\buildrel (\partial_2, \partial_0)\over\lla
		  \Sc_2(G, G/K)\buildrel \partial_1\over\lra \Sc_1(G, G/K).
	\end{equation}
	If $K$ is almost normal, then the functor $(\partial_2, \partial_0)$ is weakly proper, and
	$\partial_1$ is locally absolutely proper. If $G$ is finite, then $\partial_1$ is absolutely
	proper. 
\end{prop}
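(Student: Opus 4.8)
The plan is to translate every condition into explicit group theory, using that the stabilizer of a point $gK\in G/K$ is $\on{Stab}_G(gK)=gKg^{-1}$ and that almost normality \ref{l:an} forces all conjugates of $K$ to be pairwise commensurate (commensurability being conjugation invariant and transitive). The only quantitative input I would need is the elementary observation that if $A\le B$ has finite index, then $A\cap C$ has finite index in $C$ for every subgroup $C\le B$; this follows from the injection of coset spaces $C/(A\cap C)\hookrightarrow B/A$, and it is precisely what turns commensurability into the finiteness statements below.

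First I would treat the claim that $\partial_1$ is locally absolutely proper. By the characterization of such functors given just after Proposition \ref{prop:Z-proper}, it suffices to check that for every object $x'=(x_0,x_1,x_2)$ of $\Sc_2(G,G/K)$ the homomorphism $(\partial_1)_{x'}$ on automorphism groups has trivial kernel and finite cokernel. Writing $H_i=\on{Stab}_G(x_i)$, a conjugate of $K$, one has $\Aut(x')=H_0\cap H_1\cap H_2$ and $\Aut(\partial_1 x')=H_0\cap H_2$, and $(\partial_1)_{x'}$ is simply the inclusion of the former into the latter. Injectivity is immediate, and finiteness of the cokernel amounts to finiteness of the index $[\,H_0\cap H_2:H_0\cap H_1\cap H_2\,]$, which follows from the lemma applied with $B=H_0$, $A=H_0\cap H_1$ (of finite index in $H_0$ since $H_0$ and $H_1$ are commensurate) and $C=H_0\cap H_2$.

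Next I would establish weak properness of $(\partial_2,\partial_0)$ by computing the fibres of the induced map on $\pi_0$. A triple in the fibre over a pair of orbits $(O_1,O_2)$ has a middle vertex $y_1\in G/K$; normalizing via the $G$-action so that $y_1=eK$, whose stabilizer is $K$, a slice argument identifies the $G$-orbits in the fibre with the $K$-orbits of pairs $(y_0,y_2)$, where $y_0$ ranges over a single $K$-orbit $Kx_0\cong K/(K\cap\on{Stab}_G(x_0))$ and $y_2$ over $Kx_2'\cong K/(K\cap\on{Stab}_G(x_2'))$. This exhibits the fibre as a double-coset set $A\backslash K/B$ with $A,B$ of finite index in $K$ by almost normality, hence finite. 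The hard part will be exactly this $\pi_0$-computation: one must verify that every $G$-orbit over the given pair meets the slice $\{y_1=eK\}$ in a single $K$-orbit, and then recognize the resulting count as a double-coset space. Finally, when $G$ is finite each groupoid $\Sc_n(G,G/K)$ is finite, so the map on $\pi_0$ is automatically finite-to-one and all the cokernels above are finite; together with the injectivity of $(\partial_1)_{x'}$ established in the second step, Proposition \ref{prop:Z-proper}(3) then yields that $\partial_1$ is absolutely proper.
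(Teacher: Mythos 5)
Your proposal is correct and follows essentially the same route as the paper: both arguments reduce everything to explicit stabilizer computations, using that almost normality makes all conjugates of $K$ pairwise commensurate so that the relevant intersections of stabilizers have finite index in one another. The only cosmetic difference is in the weak-properness step, where you slice at the middle vertex $x_1$ and count a double coset space $A\backslash K/B$, while the paper fixes the pair $(x_0,x_1)$ and counts orbits of the finite-index subgroup $\on{Stab}(x_0)\cap\on{Stab}(x_1)$ on a single $\on{Stab}(x_1)$-orbit; these are the same argument up to the choice of normalization.
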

\begin{proof} 
The conjugates of $K$ are precisely the stabilizers of various points of $G/K$. The condition \ref{l:an} 
implies that the intersection of any finite number of such stabilizers has finite index in each of
them. 
For any object of $\Sc_2(G, G/K)$, i.e., an ordered pair of points $(x,y)\in (G/K)^2$, we denote by
$d(x,y)\in G\backslash(G/K)^2= K\backslash G/K$ the corresponding $G$-orbit, i.e., the class of
$(x,y)$ in $\pi_0\Sc_2(G, G/K)$. 

To prove that $(\partial_2, \partial_0)$ is weakly proper means to prove that for any $\alpha,
\beta\in K\backslash G/K$ the set of triples $(x,y,z)\in(G/K)^3$ such that $d(x,y)=\alpha$ and
$d(y,z)=\beta$, splits into finitely many $G$-orbits.  For this, it suffices to fix $x$ and $y$ such
that $d(x,y)=\alpha$, look at all $z$ such that $d(y,z)=\beta$ and prove that the set $Z$ of such
$z$ splits into finitely many orbits of $\on{Stab}(x)\cap\on{Stab}(y)$. But $Z$ is one orbit of
$\on{Stab}(y)$, and \ref{l:an} implies that $\on{Stab}(x)\cap\on{Stab}(y)$ is a finite index subgroup
there, whence the statement. 

The statement that $\partial_1$ is locally absolutely proper, means that for any $(x,y,z)\in(G/K)^3$
the homomorphism $\on{Stab}(x,y,z)\to \on{Stab}(x,z)$ is an embedding of a subgroup of finite index.
But $\on{Stab}(x,z)$ is the intersection of $\on{Stab}(x)$ and $\on{Stab}(z)$, and
$\on{Stab}(x,y,z)$ is the triple intersection. So the ``embedding" part is obvious, and the ``finite
index" part follows from \ref{l:an}. 
\end{proof}

\vfill\eject

\subsection{Groupoids: Generalized Hall and Hecke algebras}
We now survey some other theories with transfer on the category of groupoids. 
Each such theory gives rise to a generalization of classical Hall and Hecke algebras. 

\paragraph{A. Groupoid cohomology.} 

Let $\k$ be a field and consider the functor $\Fen: \Grp \to \Vect_\k$ from \S
\ref{subsec:groupoids-classical-hall}. Note that, for a groupoid $\Gc$, the vector space $\Fen(\Gc)$
can be identified with the $0$th cohomology group $H^0(B\Gc, \k)$, where $B\Gc$ denotes the
classifying space of $\Gc$. In this paragraph, we show that the transfer theories of \S
\ref{subsec:groupoids-classical-hall} can be extended to full cohomology functors. To this end,
we will use an explicit model for the cohomology of $B\Gc$ given by {\em groupoid cohomology}.

We consider the functor 
\[
	\pro: \Fun(\Gc, \Vectk) \lra \Vectk,\; F \mapsto \pro F
\]
mapping a $\Gc$-indexed diagram in $\Vectk$ to its projective limit. For convenience, we will simply
write $\lim$ for this functor. Given a diagram $F \in  \Fun(\Gc,
\Vectk)$, we can explicitly describe $\lim F$ as the subspace of $\prod_{x \in \Gc} F(x)$ given
by those sequences $(v_x)_{x \in \Gc}$ such that, for every morphism $f: x \to y$ in
$\Gc$, we have $v_y = F(f)(v_x)$.

\begin{exa} Let $G$ be a group considered as a groupoid $\Gc$. Then a $\Gc$-diagram in $\Vectk$
  	corresponds to a representation of the group $G$ and the functor $\lim$ takes a representation $V$ to the the space $V^G$ of
	$G$-invariants.
\end{exa}

\begin{exa} Let $\Gc$ be a groupoid and consider the constant diagram $\k$. Then $\lim \k$ can be
	identified with the space $\Fen(\Gc)$ of $\k$-valued functions on $\pi_0(\Gc)$.
\end{exa}

As a right adjoint, the functor $\lim$ is left exact. For $i \ge 0$, the right derived functor
\[
	R^i \lim: \Fun(\Gc, \Vectk) \lra \Vectk
\]
is called the {\em $i$th groupoid cohomology functor} associated to $\Gc$. Given a $\Gc$-diagram
$F$, we will also write $H^i(\Gc, F)$ for $R^i \lim(F)$. 

\begin{exa} Let $G$ be a group considered as a groupoid $\Gc$. Then groupoid cohomology coincides
	with group cohomology. The groupoid cohomology of a general groupoid $\Gc$ can always be identified with a
	direct sum of group cohomology groups associated to the various automorphism groups of
	objects in $\Gc$. 
\end{exa}

Let $\varphi: \Hc \to \Gc$ be a functor of groupoids. Note that, for formal reasons, we have a
canonical natural transformation
\begin{equation}\label{eq:formalreason}
  \lim_{\Gc}{} \lra \lim_{\Hc}{} \circ \varphi^*.
\end{equation}
Assume now, that $\varphi$ is absolutely proper, so that the $2$-fibers of $\varphi$ are finite and
discrete. Then, we have a {\em transfer map}
\begin{equation}
  \tau_{\varphi}: \lim_{\Hc}{} \circ \varphi^* \lra \lim_{\Gc}{},
\end{equation}
which, given a diagram $F \in \Fun(\Gc, \Vectk)$, is defined as follows. As explained above, we may
identify $\lim \varphi^* F$ and $\lim F$ with subspaces of $\prod_{y \in \Hc} F(\varphi(y))$
and $\prod_{x \in \Gc} F(x)$, respectively. The map $\tau_{\varphi}$ is then
obtained by sending a sequence $(w_y)_{y \in \Hc}$ to the sequence $(v_x)_{x \in \Gc}$ given by the
formula
\[
	v_x = \sum_{[(y, f: \varphi(y) \to x)] \in \pi_0(R\varphi^{-1}(x))} F(f)(w_y) \in F(x).
\]
Here the sum is taken over isomorphism classes of objects of the $2$-fiber of $\varphi$ over $x$, and
one easily verifies that the summand $F(f)(w_y)$ does not depend on the choice of a representative
of the class $[(y, f: \varphi(y) \to x)] \in \pi_0(R\varphi^{-1}(x))$. Note that, due to the
assumption that $\varphi$ is absolutely proper, the sum on the right-hand side is actually finite.

\begin{exa} Let $H \subset G$ be a subgroup of finite index. Then the functor of corresponding
	groupoids $\varphi: \Hc \to \Gc$ is absolutely proper. Given a representation $V$ of $G$, 
	the (well-known) transfer map $\tau_{\varphi}(V)$ corresponds to the map between invariant subspaces given by
	\[
		V^H \lra V^G,\; v \mapsto \sum_{gH \in [G:H]} g v.
	\]
\end{exa}

\begin{exa} Let $\varphi: \Hc \to \Gc$ be an absolutely proper map of groupoids. Let $\k$ be the
	constant $\Gc$-diagram. Then the transfer map $\tau_{\varphi}(\k)$ corresponds to a map
	$\Fen(\Hc) \to \Fen(\Gc)$ which coincides with the orbifold direct image of \S
	\ref{subsec:groupoids-classical-hall}. 
\end{exa}

\begin{rem} We give a more conceptual perspective on the existence of the transfer
	map. Let $\varphi: \Hc \to \Gc$ be an absolutely proper functor of groupoids. Then the pullback
	functor $\varphi^*: \Fun(\Gc, \Vectk) \to \Fun(\Hc, \Vectk)$ admits left and right adjoints
	$\varphi_!$
	and $\varphi_*$, given by left and right Kan extensions, respectively. Remarkably, under our
	assumptions on $\varphi$, the functors $\varphi_!$ and $\varphi_*$ are isomorphic: The pointwise formula for Kan
	extensions, together with the assumption that the $2$-fibers of $\varphi$ are finite and
	discrete, reduces our claim to the statement that, in any abelian category, finite
	coproducts and finite products coincide. 
	Thus, there exists a {\em trace map}
	\[
		\varphi_* \circ \varphi^* \to \id,
	\]
	exhibiting $\varphi_*$ as the {\em left} adjoint of $\varphi^*$. Composing the trace map with the
	pushforward along the constant functor $\Gc \to \pt$, we recover the transfer map.
\end{rem}

By Grothendieck's characterization of derived functors as universal $\delta$-functors (see e.g.
\cite[\S2]{weibel}), the transfer map $\tau_{\varphi}$ induces a unique map of graded vector spaces
\[
	\tau_{\varphi}^{\bullet}(F): H^{\bullet}(\Hc, \varphi^* F) \lra  H^{\bullet}(\Gc, F). 
\]
Let $\varphi: \Hc \to \Gc$ be an absolutely proper functor of groupoids. We denote by
$\k$ the trivial $\Gc$-diagram with value $\k$. Then we obtain a map
\[
	  \tau_{\varphi}^{\bullet}(\k): H^{\bullet}(\Hc, \k) \lra  H^{\bullet}(\Gc, \k),
\]
which we will denote by $\varphi_{\circledast}$. Note that we further have a pullback map
\[
	  \varphi^{\circledast}: H^{\bullet}(\Hc, \k) \lra  H^{\bullet}(\Gc, \k),
\]
obtained by deriving \eqref{eq:formalreason}. 
Let $\Vect_\k^\ZZ$ be the monoidal category of $\ZZ$-graded $\k$-vector
spaces, with the usual graded tensor product.

\begin{prop}	The association 
		\[
		  H^{\bullet}: \Gr \lra \Vectk^{\ZZ},\; \Gc \mapsto H^{\bullet}(\Gc, \k)
		\]
		gives rise to a $\Vect_k^\ZZ$-valued theory with transfer on $\Grp$ contravariant,
		via $\varphi \mapsto \varphi^{\circledast}$, along arbitrary functors and covariant,
		via $\varphi \mapsto \varphi_{\circledast}$, along absolutely proper functors.
\end{prop}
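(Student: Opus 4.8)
The plan is to verify the axioms (TS1)--(TS2) of a transfer structure and (TT1)--(TT4) of a theory with transfer for the datum $H^\bullet$, building on the functorialities $\varphi^\circledast$ and $\varphi_\circledast$ that were already constructed. The transfer structure in question is $(\Sc,\Pc) = (\Mor(\Grp), \text{absolutely proper functors})$, so first I would note that (TS1) for $\Pc$ --- closure of absolutely proper functors under composition --- follows directly from the characterization in Proposition \ref{prop:Z-proper}(3): if $f_{x'}$ and $g_{y'}$ are injective with finite cokernel and both functors are $\pi_0$-proper, then the composite homomorphisms are injective with finite cokernel and the composite is weakly proper, hence $\pi_0$-proper. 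Closure of $\Sc = \Mor(\Grp)$ is trivial. For (TS2), I would use that the classes defined via $2$-fibers are stable under $2$-pullbacks, a fact already invoked after the definition of absolutely proper functors; concretely, the $2$-fiber of the pullback functor $p$ over an object is equivalent to a $2$-fiber of $q$, so finiteness and discreteness are inherited.

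Next I would establish the functoriality axiom (TT2). The contravariance of $H^\bullet$ along arbitrary functors via $\varphi^\circledast$ is immediate since it is obtained by deriving the canonical natural transformation \eqref{eq:formalreason}, and functoriality in $\varphi$ follows from the functoriality of $\delta$-functors. The covariance along absolutely proper functors via $\varphi_\circledast = \tau_\varphi^\bullet(\k)$ requires checking that $(\psi \circ \varphi)_\circledast = \psi_\circledast \circ \varphi_\circledast$. At the level of the underived transfer maps $\tau_\varphi$, this compatibility reduces, exactly as in the proof of Proposition \ref{prop:F-theory-transfer}, to the case of groupoids with one object, where it is a standard computation of sums over cosets; the graded statement then follows from the universality of $H^\bullet$ as a $\delta$-functor, which forces the unique extension to respect composition. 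The requirement that both functors send weak equivalences (equivalences of groupoids) to isomorphisms holds because equivalent groupoids have isomorphic classifying spaces and hence isomorphic cohomology.

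For the multiplicativity data (TT3), I would use the K\"unneth formula: for groupoids $\Gc, \Gc'$ one has a natural map $H^\bullet(\Gc,\k) \otimes H^\bullet(\Gc',\k) \to H^\bullet(\Gc \times \Gc', \k)$ in $\Vect_\k^\ZZ$, and $H^\bullet(\pt,\k) \cong \k$ concentrated in degree $0$. These maps are natural with respect to pullback along arbitrary functors and satisfy associativity and unitality by the corresponding properties of the cross product in cohomology. The genuinely substantive axiom is the base change property (TT4): for a $2$-Cartesian square with $q$ absolutely proper and $s'$ arbitrary, one needs $p_* \circ s^* = (s')^* \circ q_*$, i.e. $p_\circledast \circ s^\circledast = (s')^\circledast \circ q_\circledast$. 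I would first prove this for the underived maps $\tau$, where it reduces via Proposition \ref{prop:Z-proper} to the identification of $2$-fibers in a $2$-pullback together with an explicit orbit-counting argument in the one-object case; since the $2$-fibers of $p$ over $x$ are canonically equivalent to the $2$-fibers of $q$ over $s'(x)$, the defining sums for the two composites match term by term. I expect this base-change identity to be the main obstacle, not because any single step is deep but because transporting the equality from the underived transfer maps $\tau_\varphi$ up to the full graded maps $\tau_\varphi^\bullet$ requires a naturality argument for the induced morphisms of $\delta$-functors: one must check that both composites arise as morphisms of universal $\delta$-functors agreeing in degree $0$, so that Grothendieck's uniqueness forces their equality in all degrees. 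Once (TT4) is in place at the level of $\delta$-functors, the remaining verifications are formal and I would leave them to the reader.
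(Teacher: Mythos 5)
Your proposal is correct and follows essentially the same route as the paper: the composition identity and the base-change identity are verified at the level of the underived transfer maps $\tau_\varphi(F)$ (with base change reduced to the case of a $2$-fiber square, where it follows from the identification of $2$-fibers), and the graded statements are then deduced from Grothendieck's uniqueness for morphisms of universal $\delta$-functors. The paper simply checks these identities directly from the definition of $\tau$ for arbitrary diagrams $F$ and leaves the transfer-structure axioms and the multiplicativity data implicit, so your extra verifications of (TS1)--(TS2) and the K\"unneth maps are harmless additions rather than a different argument.
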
  
\begin{proof} 
	The functoriality of the association $\varphi \mapsto \varphi_{\circledast}$ follows from the
	following statement: Given absolutely proper functors $\varphi: \Hc \to \Gc$ and $\psi: \Kc \to \Hc$ of
	groupoids, and let $F$ be a $\Gc$-diagram in $\Vectk$, we have an equality 
	\[
		\tau_{\varphi}(F) \circ \tau_{\psi}(\varphi^* F) = \tau_{\varphi \circ
		\psi}(F)
	\]
	of maps $\lim_{\Kc} \psi^* \varphi^* F \to \lim_{\Gc} F$. This statement follows directly 
	from the definition of the transfer map.
	It remains to verify property \ref{ts2} of Definition \ref{def:transfer-structure}.
	To this end, we claim that, given a $2$-Cartesian square
	\[
		\xymatrix{\Hc \ar[r]^{\varphi} \ar[d]_{p} &\Gc\ar[d]^{q}\\
		\Hc' \ar[r]^{\psi}&\Gc'
		}
	\]
	and a $\Gc'$-diagram $F$, the two natural maps $\lim_{\Hc'} \psi^* F \to \lim_{\Gc} q^* F$
	given by the composites of
	\[
		\lim_{\Hc'} \psi^* F \overset{\tau_{\psi}}{\lra} \lim_{\Gc'} F
		\lra \lim_{\Gc} q^* F
	\]
	and
	\[
		\lim_{\Hc'} \psi^* F \lra \lim_{\Hc} p^* \psi^* F
		\overset{\cong}{\lra} \lim_{\Hc} \varphi^* q^* F
		 \overset{\tau_{\varphi}}{\lra} \lim_{\Gc} q^* F,
	\]
	respectively. This claim can easily be reduced to the case when the $2$-Cartesian square is
	a $2$-fiber square. In this case, the statement follows directly from the definition of
	$\tau$.
\end{proof}

\begin{rem} We can vary the construction of the theory with transfer $H^{\bullet}$ to provide
	a {\em homological} theory with transfer $H_{\bullet}$ which is covariant along arbitrary functors and
	contravariant along absolutely proper functors. This theory admits an explicit description
	in terms of {\em groupoid homology} which is obtained by deriving the inductive limit
	functor. Further, we can define a theory with transfer
	$H^{\bullet}_c$ of {\em compactly supported cohomology} which is contravariant along weakly proper
	functors and covariant along locally absolutely proper functors. We leave the details of
	these constructions to the reader.
\end{rem}

\begin{ex}[(Group-cohomological Hall algebras)] 
	Let $\Ec$ be a finitary exact category. Then the functor $(\partial_2, \partial_0)$ in
	\eqref{eq:hall-waldhausen-main-diagram} is weakly proper, and $\partial_1$ is absolutely
	proper, so we can form the Hall algebra with coefficients in $H^\bullet_c$ which is the
	graded vector space 
	\begin{equation}\label{eq:coh-classical-hall}
		\Hall(\SW(\Ec), H^\bullet_c) = H^\bullet_c(B\Sc_1(\Ec),\k) \cong
		\bigoplus_{[A] \in \pi_0(\Ec)} H^\bullet(\Aut(A), \k)
	\end{equation}
	with multiplication given by the map $\partial_{1*} \circ (\partial_2,\partial_0)^*$ obtained from
	\eqref{eq:hall-waldhausen-main-diagram}.  Note that we can not use the theory with transfer
	$H_\bullet$, since $(\partial_2, \partial_0)$ is proper but not absolutely proper.  If $\Ec$ is also
	cofinitary, then we can apply the theory $H^\bullet$ which will give the direct product instead of
	the direct sum in \eqref{eq:coh-classical-hall}. 

	The groups $\Aut(A)$ are all finite, so for $\ch(\k)=0$ their higher cohomology vanishes and the
	above algebra reduces to the completion of the classical Hall algebra from Example
	\ref{ex:classical-hall}. On the other hand, if $\k$ has finite characteristic, then this algebra is
	quite large and potentially very interesting. 
	The simplest example is obtained by taking $\Ec=\Vect^{\on{fd}}_{\FF_q}$ to be the category of
	finite-dimensional vector spaces over a finite field. In this case, we obtain the algebra 
	\[
		\Hall(\SW(\Ec), H^\bullet_c) = \bigoplus_{n\geq 0} H^\bullet(GL_n(\FF_q), \k),
	\]
	with multiplication of the $m$th and $n$th factors
	coming from the diagram of groups
	\[
		GL_m\times GL_n\buildrel \pi_{m,n}\over \lla 
		\begin{pmatrix}
			GL_m&*\\
			0&GL_n
		\end{pmatrix} 
		\buildrel i_{m,n}\over\lra GL_{m+n}
	\]
	obtained by pull back along $\pi_{m,n}$, and transfer along $i_{m,n}$. The algebra
	$\Hall(\SW(\Ec),
	H^\bullet_c)$ resembles an algebra studied by Quillen \cite{quillen} which is given by
	\[
		H_Q =\bigoplus_{n\geq 0} H_\bullet(GL_n(\FF_q), \k)
	\]
	with multiplication induced by the embedding $GL_{m}\times GL_n\to GL_{m+n}$. 
\end{ex}

\begin{ex}[(Group-cohomological Hecke algebras)] 
	Let $G$ be a group, and $K \subset G$ an almost normal subgroup. By Proposition
	\ref{prop:main-diag-hecke-wald}, the functor $(\partial_2,\partial_0)$ in the diagram
	\eqref{eq:main-diag-hecke-wald} is weakly proper, and the functor $\partial_1$ is locally
	absolutely proper.  Therefore we can apply the theory with transfer $H^\bullet_c$, to obtain the algebra
	\[
		 \cong\Heck_H(G, K) = H^\bullet_c(B\Sc_1(G, G/K), \k) \cong 
		\bigoplus_{(KgK)\in K\backslash G/K} H^\bullet( K\cap (gKg^{-1}), \k).
	\]
	We call $\Heck_H(G, K)$ the {\em group-cohomological Hecke algebra} of $G$ with respect to $K$. 

	Restricting to degree 0 cohomology, we recover the classical Hecke algebra $\on{Heck}(G,K)$.
	As in the previous example, if $K$ is finite and $\on{char}(\k)=0$, then $\Heck_H(G, K)=
	\on{Heck}(G,K)$.  A potentially interesting class of examples is provided by pairs of
	arithmetic groups $(G,K)$ where $\on{Heck}(G,K)$ is well known by a version of the Satake
	isomorphism \cite{gross}, for example
	\[
		G= GL_n(\ZZ[1/p]), K= GL_n(\ZZ), \quad \on{Heck}(G,K) \simeq \k[t_1^{\pm 1}, ..., t_n^{\pm 1}]^{S_n}.
	\]  
\end{ex}

\paragraph{B. Generalized cohomology.} 
More generally, let $h^\bullet$ be any multiplicative generalized cohomology theory on the category
of CW-complexes, such as K-theory, cobordism, etc. Then $h^\bullet$ is contravariant with respect to
arbitrary maps and admits transfer with respect to finite unramified coverings, see
\cite{kahn-priddy}, or, for more general transfers, \cite{becker-gottlieb}. We define the functor
$h^\bullet_c$ for disconnected CW-complexes by taking the direct sum.  Then $h^\bullet_c$, like
$H^\bullet_c$, is a theory with transfer covariant with respect to locally absolutely proper
functors and contravariant with respect to weakly proper functors.  This theory takes values in the
monoidal category of $\ZZ$-graded abelian groups. 
 
In particular, for any finitary exact category $\Ec$, we have the Hall algebra with coefficients in
$h^\bullet_c$
\[
	\Hall(\SW(\Ec), h^\bullet_c),=h^\bullet_c(B\Sc_1(\Ec))=
	\bigoplus_{[A]\in\pi_0(\Ec)} h^\bullet(B\Aut(A)).
\]
Similarly, for an almost normal subgroup $K$ in a group $G$ we have the Hecke algebra with
coefficients in $h^\bullet_c$
\[
	\Heck_h(G, K) = h^\bullet_c(B\Sc_1(G, G/K), \k) =
	\bigoplus_{(KgK)\in K\backslash G/K} h^\bullet\bigl( B(K\cap (gKg^{-1})) \bigr).
\]
In several classical examples, applying $h^\bullet$ to the classifying space of a finite groupoid
$\Gc$ gives in fact the completion of a more direct algebraic construction, applicable to $\Gc$
itself. Below we consider two such cases. 
 
\paragraph {C. Representation rings.} 
Let $\Vect_\CC^{\on{fd}}$ be the category of finite-dimensional complex vector spaces.  By a {\em
representation} of a groupoid $\Gc$ we mean a covariant functor $\rho: \Gc\to\Vect_\CC^{\on{fd}}$.
Topologically, a representation is the same as a local system (locally constant sheaf of
finite-dimensional $\CC$-vector spaces) on $B\Gc$.  Representations form an abelian category
$\Rep(\Gc)$, and we denote by $\Re (\Gc)$ the Grothendieck group of this category.  For a finite
group $G$ the topological K-theory of $BG$ is, by Atiyah's theorem \cite{atiyah}, identified with
the completion of $\Re(G)$ by powers of the kernel ideal of the rank homomorphism $\Re(G) \lra \ZZ$. 
 
We denote by $\Re_0(\Gc)$ the Grothendieck group of {\em finitely supported representations}, i.e.,
functors $\rho$ which are zero on all but finitely many isomorphism classes of $\Gc$.  A functor $f:
\Gc'\to\Gc$ gives rise to the pullback functor
\[
f^*: \Rep(\Gc)\lra \Rep(\Gc')
\]
which is exact and therefore gives rise to a pullback functor $[f^*]: \Re(\Gc)\to\Re(\Gc')$.  If $f$
is weakly proper, then we also obtain a functor $[f^*]: \Re_0(\Gc)\to\Re_0(\Gc')$.
   
If $f$ is a $\pi_0$-proper functor, then $f^*$ has a left adjoint $f_*$ and a right
adjoint $f_!$ which can be defined as Kan extensions along $f$ (\S \ref{subsec:kan}). In
particular, for an object $\rho'\in\Rep(\Gc')$, we have the formulas
\begin{equation}\label{eq:kan-ext-reps}
(f_*\rho)(x) =\varprojlim_{\{f(x')\to x\}} \rho(x'), \quad
(f_!\rho)(x) =\varinjlim_{\{x\to f(x')\}} \rho(x'), \quad x\in\Ob(\Gc).
\end{equation}
Note that since $\Gc$ is a groupoid, both comma categories are identified with $Rf^{-1}(x)$. Since
$f$ is $\pi_0$-proper, each $Rf^{-1}(x)$ is equivalent to a groupoid with finitely many objects, so
the limits above (taken in the category of all $\CC$-vector spaces) result in finite-dimensional
vector spaces.
  
\begin{ex} 
	If $f: G'\to G$ is an embedding of a subgroup of finite index, then $f_*$ is the
	functor of taking the induced representation. If $f: G'\to \{1\}$, then $f_*$ is the
	functor of taking invariants. Similarly for $f_!$, we obtain coinduced representation and
	coinvariants. 
\end{ex}

Assume that $f$ is proper, so that each $Rf^{-1}(x)$ is equivalent to a finite groupoid.  Since
higher (co)homology of a finite group with coefficients in a complex representation vanishes, for a
proper $f$ the limits above and hence $f_*$ and $f_!$ are exact functors and therefore induce maps of
Grothendieck groups.  Since for a representation of a finite group the space of
coinvariants can be identified with the space of invariants, the two functors induce the same map $[f_*]: \Re(\Gc')\lra \Re(\Gc)$.
If $f$ is only assumed to be locally proper, then $f_*$ still gives rise to a map $[f_*]:
\Re_0(\Gc')\to\Re_0(\Gc)$. In this context, we have the following general base change property for
Kan extensions.

\begin{prop}\label{prop:base-change-general}
Let $\Cc$ be a category with small inductive and projective limits. Then, 
for any $2$-Cartesian square of small groupoids
\[
	\xymatrix{\Hc \ar[r]^{\varphi} \ar[d]_{p} &\Gc\ar[d]^{q}\\
	\Hc' \ar[r]^{\psi}&\Gc',
	}
\]
we have natural isomorphisms of functors
\[
\begin{split}
	\varphi_! \circ p^* & \simeq q^* \circ \psi_!\\
	\varphi_* \circ p^* & \simeq q^* \circ \psi_*.
\end{split}
\]
\end{prop}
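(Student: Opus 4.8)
The plan is to reduce both isomorphisms to the pointwise formula for Kan extensions \eqref{eq:kan-extensions-pointwise-limits}, combined with the identification of the relevant comma categories with $2$-fibers. I would treat the left-adjoint statement $\varphi_! \circ p^* \simeq q^* \circ \psi_!$ in detail; the statement for $\varphi_*, \psi_*$ is dual, and in fact, since all four categories are groupoids, it follows by the very same argument with $\ind$ replaced by $\pro$. First I would fix an object $x \in \Gc$ and a diagram $F: \Hc' \to \Cc$. Applying \eqref{eq:kan-extensions-pointwise-limits} to $\varphi$ gives
\[
(\varphi_! \, p^* F)(x) \;\cong\; \ind^{\Cc}_{\{(h,\, \varphi(h)\to x)\} \in \varphi/x} F(p(h)),
\]
while applying it to $\psi$ and evaluating at $q(x)$ gives
\[
(q^* \psi_! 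F)(x) \;=\; (\psi_! F)(q(x)) \;\cong\; \ind^{\Cc}_{\{(h',\, \psi(h')\to q(x))\} \in \psi/q(x)} F(h').
\]
This reduces the claim to producing an equivalence of indexing groupoids $\varphi/x \simeq \psi/q(x)$ under which the diagram $h \mapsto F(p(h))$ corresponds to $h' \mapsto F(h')$, naturally in $x$.

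Next I would invoke the $2$-Cartesian hypothesis. Because all categories in sight are groupoids, the comma category $\varphi/x$ is equivalent to the $2$-fiber $R\varphi^{-1}(x)$, and likewise $\psi/q(x) \simeq R\psi^{-1}(q(x))$; this is exactly the remark following \eqref{eq:kan-ext-reps} that the two comma categories of a functor of groupoids both coincide with the $2$-fiber. Now the universal property of the $2$-fiber product $\Hc \simeq \Hc' \times^{(2)}_{\Gc'} \Gc$ (Definition \ref{def:2lim}) furnishes, for each $x$, a canonical equivalence of groupoids
\[
R\varphi^{-1}(x) \;\simeq\; R\psi^{-1}(q(x)),
\]
sending a datum $(h,\, \varphi(h)\buildrel\sim\over\lra x)$ to the pair consisting of $p(h) \in \Hc'$ together with the induced isomorphism $\psi(p(h)) \simeq q(\varphi(h)) \simeq q(x)$. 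Under this equivalence the projection $p$, restricted to the $2$-fiber, becomes precisely the canonical functor $R\psi^{-1}(q(x)) \to \Hc'$, so the two diagrams of the previous paragraph are identified. Since $\ind^{\Cc}$ is invariant under equivalences of indexing categories, the two colimits agree, yielding the isomorphism of objects $(\varphi_! \, p^* F)(x) \cong (q^* \psi_! F)(x)$.

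The final, and genuinely delicate, step is naturality: the isomorphism must be natural both in $F$ and in $x \in \Gc$. Naturality in $F$ is immediate, as the whole construction is functorial in the diagram. For naturality in $x$, I would check that the equivalences $R\varphi^{-1}(x) \simeq R\psi^{-1}(q(x))$ are compatible with the transition functors induced by morphisms $x \to x'$ in $\Gc$, i.e. that they assemble into an equivalence, over $\Gc$, of the two Grothendieck fibrations whose fibers are these $2$-fibers. This is a purely $2$-categorical coherence statement about $2$-fiber products of groupoids; I expect this to be the main obstacle, but the difficulty is one of bookkeeping rather than of substance, since once the equivalence is described on objects and morphisms as above, all the required compatibilities are forced by the universal property in Definition \ref{def:2lim}. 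The dual isomorphism $\varphi_* \circ p^* \simeq q^* \circ \psi_*$ then follows by the identical argument, replacing the pointwise colimit formula for $\phi_!$ in \eqref{eq:kan-extensions-pointwise-limits} by the pointwise limit formula for $\phi_*$ and using that the same equivalence of $2$-fibers identifies the two limit diagrams.
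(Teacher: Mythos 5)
Your proposal is correct and follows essentially the same route as the paper's (much terser) proof: reduce via the universal property of the $2$-fiber product to the case of a genuine $2$-fiber diagram, then apply the pointwise formula for Kan extensions, using that for functors of groupoids both comma categories coincide with the $2$-fibers. Your write-up simply makes explicit the identification of indexing groupoids and the naturality bookkeeping that the paper leaves to the reader.
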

\begin{proof} 
	The statement is easily reduced to the case when the diagram is a $2$-fiber diagram. In this
	case, it follows from the pointwise formula for Kan extensions.
\end{proof}

Therefore, condition \ref{tt3} of Definition \ref{def:TT} is satisfied and we have the following statement.

\begin{prop} 
\begin{enumerate}[label=(\alph{*})]
	\item The functor $\Re: \Grp\to\Ac b$ defines a theory with transfer, contravariant with
		respect to all functors and covariant with respect to proper functors.

	\item The functor $\Re_0: \Grp\to\Ac b$ defines a theory with transfer, contravariant with respect to
		weakly proper functors and covariant with respect to locally proper
		functors.
\end{enumerate}
\end{prop}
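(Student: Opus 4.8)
The plan is to verify, for each of the functors $\Re$ and $\Re_0$, the axioms (TT1)--(TT4) of Definition \ref{def:TT} with target the symmetric monoidal category $(\Ac b, \otimes_\ZZ)$. Axiom (TT1) requires no new work: for $\Re$ the relevant transfer structure is $(\Mor(\Grp), \text{proper})$, and for $\Re_0$ it is $(\text{weakly proper}, \text{locally proper})$, both of which have already been shown to be transfer structures (the proper, $\pi_0$-proper, and absolutely proper classes being stable under $2$-pullback because they are defined through $2$-fibers, and the weakly proper / locally proper pair by the lemma preceding Proposition \ref{prop:main-diag-hecke-wald}). Thus the real content is to construct the two variances, establish multiplicativity, and prove base change.

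For (TT2) I would proceed as follows. The pullback $f^*:\Rep(\Gc)\to\Rep(\Gc')$ is exact, being precomposition with $f$, hence descends to $[f^*]$ on Grothendieck groups; for weakly proper $f$ it preserves finite support, supplying the contravariant part of $\Re_0$. For the covariant part I would use the Kan extensions $f_*$ (right) and $f_!$ (left) of \eqref{eq:kan-ext-reps}. When $f$ is proper each $2$-fiber $Rf^{-1}(x)$ is equivalent to a finite groupoid, so the pointwise limits and colimits are finite-dimensional; since higher (co)homology of a finite group with complex coefficients vanishes, both $f_*$ and $f_!$ are exact, and because invariants and coinvariants of such a representation coincide they induce the \emph{same} map $[f_*]=[f_!]$ on $\Re$. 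Functoriality $(g\circ f)_*\simeq g_*\circ f_*$ is the composition law for right Kan extensions. Finally, weak equivalences in $\Grp$ are equivalences of groupoids, which induce equivalences of representation categories and are in particular absolutely proper, so both variances carry them to isomorphisms.

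Axiom (TT3) is handled by the external tensor product: $\rho\boxtimes\rho'\colon(x,x')\mapsto\rho(x)\otimes_\CC\rho'(x')$ is exact and biadditive, giving $m_{\Gc,\Gc'}\colon\Re(\Gc)\otimes\Re(\Gc')\to\Re(\Gc\times\Gc')$, with $\Re(\pt)\cong\ZZ$ since $\Rep(\pt)\simeq\Vect^{\mathrm{fd}}_\CC$; naturality along pullbacks is the identity $(f\times g)^*(\rho\boxtimes\rho')\cong f^*\rho\boxtimes g^*\rho'$, and the same holds for $\Re_0$. For (TT4), given a homotopy $(=2\text{-})$Cartesian square as in \ref{ts2} with $q$ proper and $s'$ smooth, I would read the square with its proper legs $p,q$ placed horizontally and apply Proposition \ref{prop:base-change-general}, which in that orientation yields a natural isomorphism of functors $p_*\circ s^*\simeq (s')^*\circ q_*$ on representation categories. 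Passing to Grothendieck groups, which is legitimate since all four functors are exact, converts this isomorphism of functors into the required equality $p_*\circ s^*=(s')^*\circ q_*$ of maps $\Re(\Gc_1)\to\Re(\Gc'_2)$.

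The subtle point, and the one I expect to require the most care, is the covariance of $\Re_0$ along functors that are only locally proper: such an $f$ need not be $\pi_0$-proper, so the limits in \eqref{eq:kan-ext-reps} computing $f_*$ can be infinite-dimensional and $f_*$ is a priori undefined on all of $\Rep(\Gc')$. The way around this is to exploit finite support: a class in $\Re_0(\Gc')$ is represented by a representation supported on finitely many isomorphism classes, over which a locally proper functor restricts to a proper one, so the relevant $2$-fibers are finite, the Kan-extension formulas produce finite-dimensional values supported on finitely many classes of $\Gc$, and the exactness together with $[f_*]=[f_!]$ then go through exactly as in the proper case. I would take special care to confirm that this truncation to finitely supported objects is strictly compatible with the base-change square of (TT4) --- that is, that Proposition \ref{prop:base-change-general} still applies after restriction to finite support --- since it is precisely here that the interplay between finiteness of support, local properness, and $2$-pullback must be controlled.
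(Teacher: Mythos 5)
Your proposal is correct and follows essentially the same route as the paper, which states this proposition as a summary of the immediately preceding discussion: exactness of $f^*$, the Kan-extension formulas \eqref{eq:kan-ext-reps} together with vanishing of higher cohomology of finite groups in characteristic zero to get $[f_*]=[f_!]$, and Proposition \ref{prop:base-change-general} for base change. You additionally spell out two points the paper leaves implicit --- multiplicativity via the external tensor product and the reduction of a locally proper functor to a proper one over a finite support --- and both are handled correctly.
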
 

\begin{ex}[(Representation ring version of Hall algebras)] 
	For a finitary exact category $\Ec$, we can define the {\em representation ring Hall
	algebra}
	\[
		\Hall(\SW(\Ec), \Re_0) =\bigoplus_{[A]\in\pi_0(\Ec)} \Re(\Aut(A)). 
	\]
	Here each $\Aut(A)$ is a finite group, so $\Re(\Aut(A))\otimes\QQ$ is identified with the
	ring of $\QQ$-valued class functions on $\Aut(A)$. 
	The simplest example is obtained by taking $\Ec=\Vect^{\on{fd}}_{\FF_q}$. In this case, the
	algebra
	\[
		\Hall(\SW(\Vect^{\on{fd}}_{\FF_q}), \Re_0)\otimes \QQ = \bigoplus_{n\geq 0} \Re(GL_n(\FF_q))\otimes\QQ.
	\]
	was studied by Green \cite{green} and, later, in spirit closer to our approach, by
	Zelevinsky \cite{zelevinsky}. Both authors show that this algebra is commutative and isomorphic to
	a polynomial algebra on infinitely many generators, which correspond to the cuspidal
	representations of all groups $GL_n(\FF_q)$. 
\end{ex}
  
\begin{ex}[(Representation ring version of Hecke algebras)]
Let $K \subset G$ be an almost normal subgroup. We obtain the ring
\[
\Hall(\SW(G,K), \Re) \cong \bigoplus_{(KgK)\in K\backslash G/K} \Re(K\cap (gKg^{-1})).
\]
Note that here the groups $K \cap (gKg^{-1})$ may be infinite. The multiplication involves
induction with respect to finite index embeddings of possibly infinite subgroups. 
\end{ex}

\paragraph {D. Burnside rings.} 
Let $\Fset$ be the category of finite sets. For a groupoid $\Gc$, let $\Act(\Gc)=\Fun(\Gc, \Fset)$
be the category of set-theoretic representations of $\Gc$. This category has objectwise operations
$\sqcup, \times$ of disjoint union and Cartesian product. The set of isomorphism classes of objects
of $\Act(\Gc)$ is a commutative monoid under $\sqcup$, and taking the group completion, we get a
group (in fact a commutative ring under $\times$) called the {\em Burnside ring} of $\Gc$ and
denoted $\Ben(\Gc)$. See \cite{dress, dress-siebeneicher} for more background on Burnside rings of
(pro)finite groups. 

As above, each functor $f: \Gc'\to\Gc$ of groupoids gives rise to the pullback functor
$f^*: \Act(\Gc)\to\Act(\Gc')$, which commutes with disjoint unions and hence
induces a homomorphism $f^*: \Ben(\Gc)\to\Ben(\Gc')$. 
As in the representation-theoretic setting above, for a $\pi_0$-proper $f$ the functor
$f^*$ has left and right adjoints $f_!$ and $f_*$ defined by the same formulas
as in \eqref{eq:kan-ext-reps} but with limits taken in $\Set$. Note that these functors 
commute with disjoint unions, so they induce two homomorphisms
\[
f_*, f_!: \Ben(\Gc')\lra\Ben(\Gc). 
\]
These homomorphisms can be quite different, since for a group $G$ acting on a finite set $E$ the set
of invariants $E^G$ and coinvariants (orbits) $G\backslash E$ are, in general, different.
If, however,  the functor $f$ is absolutely proper, then we have $f_*=f_!$, since in this case the only
procedures involved in forming the Kan extensions are induction and coinduction with respect to
embedding of finite index subgroups, and these procedures coincide. 
As before, Proposition \ref{prop:base-change-general} implies that $(f^*, f_*)$
and $(f^*, f_!)$ satisfy condition \ref{tt3}, hence leading to the following statement.
\begin{prop} 
\begin{enumerate}[label=(\alph{*})]
	\item The data $(f^*, f_*)$ and $(f^*, f_!)$ both extend $\Ben$ to a theory with transfer on
		$\Grp$, contravariant with respect to all functors and covariant with respect to
		$\pi_0$-proper functors. 

	\item Similarly, we can extend $\Ben_0$ to a theory with transfer on $\Grp$,
		contravariant with respect to weakly proper functors and covariant with respect to
		locally $\pi_0$-proper functors.
\end{enumerate}
\end{prop}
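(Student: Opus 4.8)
The plan is to verify that each of the data $(f^*,f_*)$ and $(f^*,f_!)$ satisfies the axioms (TT1)--(TT4) of Definition \ref{def:TT} (and likewise for $\Ben_0$ in part (b)); since the constructions of the pullback and of the Kan extensions $f_!,f_*$ have already been set up, the proof is largely a matter of assembling the pieces and checking that the indicated finiteness and additivity properties hold. I would treat the contravariant and covariant functorialities, multiplicativity, and base change in turn, modelling the argument on the already-established theories $\Re$ and $\Re_0$.

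First I would record the transfer structures, i.e. axiom (TT1). For part (a) the relevant structure is $(\Mor(\Grp),\pi_0\text{-proper})$; this satisfies (TS1)--(TS2) of Definition \ref{def:transfer-structure} because the class of $\pi_0$-proper functors is defined through $2$-fibers and is therefore closed under composition and stable under arbitrary $2$-pullback, as already noted. For part (b) the relevant structure is $(\text{weakly proper},\text{locally }\pi_0\text{-proper})$, which I would verify by the same reduction-to-groups argument used for the weakly-proper/locally-proper transfer structure in \S\ref{subsec:groupoids-classical-hall}, invoking the characterization of Proposition \ref{prop:Z-proper} in terms of the kernels and cokernels of the maps $f_{x'}$. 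Next comes (TT2). The contravariant assignment $f\mapsto f^*$ is the strict pullback of set-valued representations; it preserves disjoint unions and Cartesian products and hence descends to maps $\Ben(\Gc)\to\Ben(\Gc')$, and for $\Ben_0$ it preserves finite supports precisely because $f$ is weakly proper. The covariant assignment is built from the Kan extensions of \eqref{eq:kan-ext-reps} with colimits (resp. limits) taken in $\Set$; the essential point is that for $\pi_0$-proper $f$ these land in $\Act$, which follows from the pointwise formula, since the value at $x$ is a colimit (resp. limit) of a finite-set valued functor over $Rf^{-1}(x)$ and $\pi_0$-properness forces this groupoid to have finitely many isomorphism classes, so orbit sets and fixed-point sets are finite. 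Functoriality $(fg)_!\cong f_!g_!$ and $(fg)_*\cong f_*g_*$ then follows from the pasting of adjoints, and both functors carry equivalences of groupoids to isomorphisms.

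For (TT3) I would use the external product $\Act(\Gc)\times\Act(\Gc')\to\Act(\Gc\times\Gc')$, $(\rho,\rho')\mapsto\rho\boxtimes\rho'$, which is biadditive and hence induces $m_{\Gc,\Gc'}\colon\Ben(\Gc)\otimes\Ben(\Gc')\to\Ben(\Gc\times\Gc')$, together with the identification $\Ben(\pt)\cong\ZZ$; naturality under pullback and the associativity and unit constraints are immediate. Finally, (TT4) is supplied directly by Proposition \ref{prop:base-change-general}, whose natural isomorphisms $\varphi_!\circ p^*\simeq q^*\circ\psi_!$ and $\varphi_*\circ p^*\simeq q^*\circ\psi_*$ give, after passing to Grothendieck groups, the required base-change equalities of morphisms in $\Ac b$.

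The main obstacle is the additivity of the covariant maps, which is exactly what is needed for $f_!,f_*$ to descend to honest group homomorphisms $\Ben(\Gc')\to\Ben(\Gc)$. For $f_!$ this is automatic, a left adjoint preserving all colimits and in particular disjoint unions. For $f_*$ it is the delicate point, and I would establish it through the pointwise formula, reducing to the behaviour of the fixed-point functor $S\mapsto S^{G}$ on the automorphism groups $G$ occurring in the fibers $Rf^{-1}(x)$; the careful analysis of the $2$-fibers is precisely where the argument must be made, and in part (b) this is compounded by the finite-support bookkeeping forced by local $\pi_0$-properness together with the weak properness controlling $f^*$. Once additivity and the finiteness of the fiber constructions are in hand, the verification of (TT1)--(TT4) assembles in the same way as for $\Re$ and $\Re_0$, and the proposition follows.
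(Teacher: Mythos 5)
Your overall plan — verify (TT1)--(TT4) directly, get (TT4) from Proposition \ref{prop:base-change-general}, get multiplicativity from the external product, and get the finiteness of the covariant constructions from the pointwise Kan-extension formula over the $2$-fibers — is exactly the route the paper takes (the paper in fact gives no formal proof and relies on the preceding discussion, which asserts precisely these points). The pieces you assemble for (TT1), (TT2) on the contravariant side, (TT3) and (TT4) are all correct.

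The problem is the one step you defer, and your proposed reduction does not close it. You correctly identify additivity of the covariant maps as the crux, and the colimit-type transfer is indeed unproblematic (orbits commute with disjoint unions). But for the limit-type transfer the pointwise formula over a $\pi_0$-proper $f$ gives
\[
	(f_*\rho)(x) \;\cong\; \prod_{[c]\,\in\,\pi_0(Rf^{-1}(x))} \rho(c)^{\Aut(c)},
\]
a \emph{finite product} of fixed-point sets indexed by the components of the $2$-fiber. Your reduction ``to the behaviour of the fixed-point functor $S\mapsto S^{G}$'' only handles a single connected component: each factor $(-)^{\Aut(c)}$ does preserve disjoint unions, but the product over $\pi_0(Rf^{-1}(x))$ does not, since finite products of finite sets do not distribute over disjoint unions. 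Concretely, for a subgroup $H\subset G$ of index $\geq 2$ the $2$-fiber of $BH\to BG$ over the point is the discrete groupoid on $H\backslash G$, the limit-type transfer is coinduction $S\mapsto \operatorname{Map}_H(G,S)$, and $\operatorname{Map}_H(G,S\sqcup T)\not\cong \operatorname{Map}_H(G,S)\sqcup\operatorname{Map}_H(G,T)$: an $H$-equivariant map may send different cosets into different summands. So the limit-type covariant functor is multiplicative rather than additive (it is the norm map of Tambara-functor theory) and does not descend to a homomorphism of Burnside groups by the recipe you describe; the argument as proposed fails at exactly this point whenever some $2$-fiber is disconnected. (The paper's own assertion that ``these functors commute with disjoint unions'' has the same issue, so you should either restrict to functors with connected $2$-fibers, or prove the statement only for the colimit-type transfer.) A minor additional remark: your labelling of which of $f_!$, $f_*$ is the left adjoint follows the standard convention, which clashes with the paper's (itself inconsistent) usage; be explicit about which formula you mean in each case.
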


\vfill\eject

\subsection{\texorpdfstring{$\infty$}{infty}-groupoids: Derived Hall algebras}
\label{subsec:derivedhall}

Let $\mC=\Top$ be the category of compactly generated Hausdorff topological spaces equipped with the
Quillen model structure, and let $\k$ be a field of characteristic $0$. For $Y \in \Top$, we denote
by $\Fenh(Y)$ the space of functions $\pi_0(Y)\to \k$, which we may identify with locally constant
functions on $Y$.  It is clear that the correspondence $Y \mapsto \Fenh(Y)$ provides a contravariant
functor $\Top \to \Vect_k$. In this section, we extend this functor to a $\Vect_k$-valued theory and
study Hall algebras with coefficients in $\Fenh$. The material of this section is an interpretation
of the results of \cite[\S 2, \S 3]{toen-derived}, using the terminology of theories with transfer.
 
We call a space $Y$ {\em locally homotopy finite} if 
\begin{enumerate}
  \item for every $y \in Y$ and $i \ge 1$, the homotopy group $\pi_i(Y,y)$ is finite, and
  \item $\pi_i(Y,y) = 0$ for $i \gg 0$.
\end{enumerate}
If, in addition, the space $Y$ has finitely many connected components, then we say that $Y$ is {\em
homotopy finite}.
We denote by $\Top^{<\infty}$ the full subcategory in $\Top$ formed by homotopy finite spaces. 
For a homotopy finite space $Y$, we define its {\em homotopy cardinality} to be the
rational number
\begin{equation}\label{eq:chi-BD}
	  |Y|_h = \sum_{[y]\in\pi_0(Y)} \prod_{i\geq 1} |\pi_i(Y,y)|^{(-1)^i} \in \QQ\subset \k.
\end{equation}
As far as we know, formula \eqref{eq:chi-BD}, as well as Proposition
\ref{prop:-chi-BD-properties} below, first appeared in the literature in work of J. Baez and
J. Dolan \cite{baez-dolan}. Similar ideas were earlier proposed (orally) by J.-L. Loday, who was
motivated by constructions of homotopy finite spaces in \cite{loday}. 

\begin{prop}\label{prop:-chi-BD-properties} 
	\begin{enumerate}[label=(\alph{*})]
	\item The category $\Top^{<\infty}$ is closed under disjoint unions and Cartesian products,
		and we have
		\[
			|Y \sqcup Z|_h =|Y|_h + |Z|_h, \quad |Y\times Z|_h
			 =|Y|_h\cdot |Z|_h.
		\]

	\item Let 
		\[
		\xymatrix{
			F \ar[d] \ar[r] & E \ar[d]\\
			\pt \ar[r] & B
		}
		\]
		be a homotopy Cartesian square with $B$ connected.
		If any two of the three spaces $F,E,B$ are homotopy finite, then so is the third. 
		If all three are homotopy finite, then we have $|E|_h=|F|_h\cdot
		|B|_h$.
\end{enumerate}
\end{prop}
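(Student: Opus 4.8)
The plan is to prove the two parts separately, with part (a) serving as a warm-up that isolates the two basic multiplicative behaviors of homotopy cardinality, and part (b) being the essential content. For part (a), I would argue on connected components. A homotopy finite space $Y$ is by definition a finite disjoint union of connected homotopy finite spaces, and the formula \eqref{eq:chi-BD} is visibly additive over $\pi_0$; this immediately gives the statement for $Y \sqcup Z$ and shows that $\Top^{<\infty}$ is closed under disjoint union. For the product, since $\pi_0(Y \times Z) = \pi_0(Y) \times \pi_0(Z)$ and, for a choice of basepoints, $\pi_i(Y \times Z, (y,z)) \cong \pi_i(Y,y) \times \pi_i(Z,z)$, the multiplicativity of cardinalities of finite groups combined with the distributive law $\sum_{[y],[z]} (\cdots)(\cdots) = (\sum_{[y]} \cdots)(\sum_{[z]} \cdots)$ yields $|Y \times Z|_h = |Y|_h \cdot |Z|_h$ and closure under products. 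First I would dispose of part (a) in this way.

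The heart of the matter is part (b), the multiplicativity of homotopy cardinality in a fibration sequence $F \to E \to B$ with $B$ connected. Since $B$ is connected, $\pi_0(E) \cong \pi_0(F)$ via the fiber inclusion up to the action of $\pi_1(B)$, so I would first reduce to understanding a single connected component of $E$ at a time; by part (a) it suffices to treat the case where $E$ (hence $F$) is also connected. In that situation the statement is a consequence of the long exact sequence of homotopy groups
\[
\cdots \to \pi_{i+1}(B) \to \pi_i(F) \to \pi_i(E) \to \pi_i(B) \to \cdots,
\]
which I would truncate at the point where all three spaces have vanishing homotopy. Because each group in the sequence is finite (this is where all three spaces being homotopy finite enters, and also where the "any two implies the third" claim comes from, by reading off finiteness of the missing terms from exactness), the alternating product of orders of the terms in a finite exact sequence of finite groups equals $1$. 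Grouping the orders according to which space they come from, the alternating product identity rearranges precisely into $|E|_h = |F|_h \cdot |B|_h$; the signs $(-1)^i$ in \eqref{eq:chi-BD} are exactly what is needed so that the $\pi_i(E)$ contribute with the reciprocal sign to the $\pi_i(F)$ and $\pi_i(B)$, matching the position shifts in the exact sequence.

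The step I expect to be the main obstacle is the bookkeeping at the bottom of the long exact sequence, where the sequence is one of pointed sets and groups rather than abelian groups: the tail
\[
\pi_1(B) \to \pi_0(F) \to \pi_0(E) \to \pi_0(B)
\]
is only an exact sequence of pointed sets, and $\pi_1$ need not be abelian. Since I have already reduced to the connected case, $\pi_0(E) = \pi_0(B) = \pt$, so the relevant fragment records how $\pi_1(B)$ acts on $\pi_0(F)$ with quotient a single component; the multiplicativity of orders in an exact sequence of finite groups still holds at the level of the group-theoretic statement $|{\rm coker}| \cdot |{\rm im}| = |{\rm group}|$ applied termwise, but one must verify that the non-abelian and pointed-set segments contribute the correct cardinalities. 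I would handle this by using the exactness to express each $|\pi_i(E)|$ as $|{\rm im}(\pi_i(E)\to\pi_i(B))| \cdot |{\rm ker}(\pi_i(E)\to\pi_i(B))|$ and telescoping, being careful that at indices $0$ and $1$ the relevant maps are of pointed sets acted on by groups, where $|{\rm orbit}| \cdot |{\rm stabilizer}| = |{\rm group}|$ replaces the first isomorphism theorem. Once the connected case is established, part (a) assembles the general case over $\pi_0(B)$, completing the argument.
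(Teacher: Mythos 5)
Your proof is correct and follows essentially the same route as the paper, whose entire proof of this statement reads ``The first part is obvious and (b) follows from the long exact sequence of homotopy groups''; your write-up simply supplies the bookkeeping, including the genuinely delicate point at the bottom of the sequence where one only has pointed sets and a $\pi_1(B)$-action on $\pi_0(F)$. The one slip is the parenthetical claim that $E$ connected forces $F$ connected --- it does not (e.g.\ $F=G \to \pt \to BG$) --- but your later treatment of the tail $\pi_1(B)\to\pi_0(F)\to\pi_0(E)\to\pi_0(B)$ already handles a disconnected $F$, so the argument goes through.
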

\begin{proof} 
	The first part is obvious and (b) follows from the long exact sequence of homotopy groups. 
\end{proof}

We give some examples which illustrate the meaning of homotopy cardinality in various contexts.

\begin{exa}
	Let $X$ be a finite set, interpreted as a discrete topological space. Then $X$ is
		homotopy finite and the homotopy cardinality of $X$ is simply the cardinality of $X$. 
\end{exa}

\begin{exa} \label{ex:groupoidcardinality} Let $\Gc$ be a finite groupoid as introduced in \S
		\ref{subsec:groupoids-classical-hall}. Then the classifying space $B\Gc$
		is homotopy finite and we have the formula
		\[
			|B\Gc|_h = \sum_{[C] \in \pi_0(\Gc)} \frac{1}{|\Aut(C)|}
		\]
		for the homotopy cardinality of $B\Gc$. Hence, we obtain the relation
		\[
			|B\Gc|_h = \int_{\Gc} 1,
		\]
		expressing the homotopy cardinality in terms of the orbifold integral from
		\S\ref{subsec:groupoids-classical-hall}.

		The following simplest example in this context suggests a mysterious relation between the concepts of
		homotopy cardinality and Euler characteristic.  Consider a finite group $G$ of order
		$g$. Then its classifying space $BG$ is homotopy finite and has
		homotopy cardinality $1/g$. On the other hand, the simplicial set $\N G$ has exactly
		$(g-1)^n$ non-degenerate simplices in dimension $n$. So, naively writing the formula
		for the Euler characteristic as the alternating sum of the numbers of non-degenerate
		simplices of all dimensions, we get, by {\em formally} summing the geometric series: 
		\[
			|BG|_h = \sum_{n=0}^\infty (-1)^n (g-1)^n \quad ``=" \quad {1\over 1-(1-g)} = {1\over g}. 
		\]
		This example shows that it is natural to view the homotopy cardinality as a
		``regularization'' of the Euler characteristic. See \cite{berger-leinster} for
		further examples of this kind. 
\end{exa}

\begin{exa} \label{ex:inftycardinality} Let $\C$ be an $\infty$-category. We call $\C$ {\em locally finite} if, for every pair
		of objects $x,y$ of $\C$, and every $i \ge 1$, the topological mapping space
		$|\Map_{\C}(x,y)|$ is homotopy finite. If, in addition, the homotopy category
		$\h\C$ of $\C$ has only finitely many isomorphism classes of objects, then we call
		$\C$ {\em finite}. As above, we denote by $K = \C_{\on{Kan}}$ the largest Kan complex
		contained in $\C$ which, in the language of $\infty$-categories, is the
		$\infty$-groupoid of equivalences in $\C$. We call the topological space $X = |K|$ the {\em
		classifying space of objects in $\C$}. With this notation, we have 
		\begin{itemize}
			\item $\pi_0(X) \cong \pi_0(\h\C)$,

			\item for every object $x$ of $\C$, we have the formula
			\[
				\pi_1(X, x) \cong \Aut_{\h\C}(x) \subset \pi_{0}(\Map_{\C}(x,x)),
			\]

			\item for every object $x$ of $\C$ and $i \ge 2$, we have the formula
			\[
				\pi_i(X, x) \cong \pi_{i-1}(\Map_{\C}(x,x), \id_x).
			\]
		\end{itemize}
		These statements can, for example, be obtained as follows: On the one hand,
		\cite[4.2.1.8]{lurie.htt} implies that the Kan complex $\{x\} \times_K K^{\Delta^1}
		\times_K \{x\}$ is a model for the mapping space $\Map_K(x,x)$. On the other hand,
		the homotopy Cartesian square
		\[
			\xymatrix{
				\{x\} \times_K K^{\Delta^1} \times_K \{x\} \ar[r]\ar[d] &
				K^{\Delta^1} \ar[d]\\
				\{x\} \times \{x\} \ar[r] & K \times K,
			}
		\]
		exhibits $\{x\} \times_K K^{\Delta^1} \times_K \{x\}$ as a simplicial model for the
		space of loops in $X$ based at $x$. This implies the above formulas for the homotopy
		groups of the space $X$.
		In particular, if $\C$ is finite, then its classifying space of objects is homotopy
		finite and we obtain an explicit formula for its homotopy cardinality.  Assume that
		$\C$ is a stable $\infty$-category.  For objects $x,y$ of $\C$, we have an weak
		equivalence 
		\[
			\Map_{\C}(\Sigma x, y) \simeq \Omega \Map_{\C}(x, y),
		\]
		where $\Sigma$ denotes the suspension functor and $\Omega$ signifies the loop space
		based at the zero map (\cite[1.1]{lurie.algebra}). Therefore, for $i \ge  1$, we have 
		\[
			\pi_{i}(\Map_{\C}(x,y),0) \cong \Ext^{-i}_{\h\C}(x,y) :=
			\Hom_{\h\C}(\Sigma^i x,y).
		\]
		Further, since the suspension functor is invertible, the mapping space
		$\Map_{\C}(x,y)$ is an infinite loop space and hence its homotopy groups are
		independent of the choice of basepoint.  Thus, we have the formula
		\[
		  |X|_h = \sum_{[x] \in \pi_0(\h\C)} \frac{\prod_{i\geq 2}
		  |\Ext^{1-i}_{\h\C}(x,x)|^{(-1)^{i}},
			}{|\Aut_{\h\C}(x)|}
		\]
		expressing the homotopy cardinality of $X$ completely in terms of the triangulated structure on
		the homotopy category $\h\C$.
\end{exa}

\begin{rem} In light of Examples \ref{ex:groupoidcardinality} and \ref{ex:inftycardinality}, the theory developed in this section
	can be regarded as a generalization of the transfer theory for ordinary groupoids developed
	in \S \ref{subsec:groupoids-classical-hall} to $\infty$-groupoids, modelled by topological
	spaces. 
\end{rem}
 
\begin{rem} Example \ref{ex:groupoidcardinality} exhibits a connection between homotopy
	cardinality and ordinary Euler characteristic.
	Thus we have two Euler characteristic-type invariants defined on two different subcategories
	of $\Top$:
	\begin{enumerate}
		\item The usual Euler characteristic $\chi$, defined on the category $\Top_{<\infty}$ of
			spaces weakly equivalent to a finite CW-complex and taking values in $\ZZ$. 
		\item The homotopy cardinality, defined on the category $\Top^{<\infty}$ of homotopy finite
			spaces and taking values in $\QQ$.
	\end{enumerate}
	This raises a natural question (posed by J. Baez) of whether one can obtain both invariants
	as restrictions of a single invariant defined on a category containing both $\Top_{<\infty}$
	and $\Top^{<\infty}$ and satisfying both additivity and multiplicativity properties. We are
	not aware of any result in this direction. 
\end{rem}
   
If $Y$ is homotopy finite, and $\phi\in\Fenh(Y)$, we define the {\em homotopy integral of $\phi$} to be
\[
  \int^{\h}_Y \phi = \sum_{[y] \in \pi_0(Y)} \phi([y]) \cdot |C_y|_h \in \k,
\]
where $C_y$ denotes the connected component of $Y$ containing $y \in Y$.
Compare with \cite{viro} which treats similar ``integrals'' over the usual Euler characteristic.

\begin{defi}\label{defi:weaklyproper} 
Let $f: Y'\to Y$ be a morphism in $\Top$. We say that $f$ is:
\begin{itemize}
	\item {\em weakly proper}, if the induced map $\pi_0(Y')\to\pi_0(Y)$ is finite-to-one,

	\item {\em homotopy proper}, if each homotopy fiber $Rf^{-1}(y)$,
		$y\in Y_0$, is a homotopy finite space,

	\item {\em locally homotopy proper}, if the restriction of $f$ to each connected component of $Y'$ is
		homotopy proper. 
\end{itemize}
\end{defi}

It is clear that the class of homotopy proper maps is closed under homotopy pullbacks. Therefore, 
together with the class of all maps, the homotopy proper maps form a transfer structure on $\Top$.
Further, the pair formed by weakly proper and locally homotopy proper maps gives another transfer
structure. 
If $f$ is homotopy proper and $\phi\in\Fenh(Y')$, we define the locally constant function $f_*\phi
\in \Fenh(Y)$ by the formula
\[
  (f_*\phi)(y) = \int^{\h}_{Rf^{-1}(y)} \phi_{|Rf^{-1}(y)}.
\]

\begin{rem} 
	In \cite{fulton-macpherson}, a similar construction based on the usual Euler characteristic
	is applied to constructible functions on complex algebraic varieties. 
\end{rem}

Let $\Fenh_0(Y)\subset\Fenh(Y)$ be the subspace of functions 
supported on finitely many connected components of $Y$. Such functions can be pulled back along weakly proper maps $f: Y'\to Y$,
giving $f^*: \Fenh_0(Y)\to\Fenh_0(Y')$. In a similar way, to form the pushforward
$f_*: \Fenh_0(Y')\to\Fenh_0(Y)$, it suffices that $f$ is locally homotopy proper. 

\begin{prop}\label{prop:homotopytransfer}
\begin{enumerate}[label=(\alph{*})]
	\item The assignment $Y\mapsto\Fenh(Y)$, equipped with the above functorialities, defines a
		$\Vect_k$-valued theory with transfer on $\Top$, contravariant with respect to all
		maps and covariant with respect to homotopy proper maps.

	\item Similarly, the association $Y\mapsto\Fenh_0(Y)$ extends to a theory with transfer,
		contravariant with respect to weakly proper maps and covariant with respect to
		locally homotopy proper maps.
\end{enumerate}
\end{prop}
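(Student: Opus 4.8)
The plan is to verify conditions \ref{tt1}--\ref{tt4} of Definition \ref{def:TT} for the two transfer structures, treating part (a) in full and indicating the routine modifications for part (b). Condition \ref{tt1} is the transfer-structure axiom of Definition \ref{def:transfer-structure}, which is essentially recorded in the discussion preceding the statement: the class of homotopy proper maps is stable under homotopy pullback, so \ref{ts2} holds with $\Sc=\Mor(\Top)$ (since the homotopy fibres of $p$ coincide with those of $q$), and it is closed under composition because, for composable $g\colon Y''\to Y'$ and $f\colon Y'\to Y$, each homotopy fibre $R(fg)^{-1}(y)$ sits in a homotopy fibre sequence with base $Rf^{-1}(y)$ and fibres of the form $Rg^{-1}(y')$, so homotopy finiteness of the total space follows from Proposition \ref{prop:-chi-BD-properties}(b); this gives \ref{ts1}. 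Applying the same fibrewise reasoning componentwise yields the transfer structure (weakly proper, locally homotopy proper) of Definition \ref{defi:weaklyproper} needed for part (b).

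Next I would establish the functoriality \ref{tt2}. Contravariant functoriality of $\Fenh$ along $f^*$ is immediate from the functoriality of $\pi_0$, and $f^*$ sends weak equivalences to isomorphisms since $\pi_0$ is a homotopy invariant. For the covariant part I must check three things: that $f_*\phi$ is indeed locally constant (clear, as the homotopy integral over $Rf^{-1}(y)$ depends only on the connected component of $y$); that $f_*$ carries weak equivalences to isomorphisms (again because the homotopy cardinality entering the homotopy integral is a homotopy invariant); and, crucially, the composition law $(fg)_*=f_*\circ g_*$. The last identity is a Fubini theorem for the homotopy integral: evaluating both sides at $y$ reduces, via the homotopy fibre sequence $Rg^{-1}(y')\to R(fg)^{-1}(y)\to Rf^{-1}(y)$, to summing the homotopy cardinalities of the components of $R(fg)^{-1}(y)$ in two stages, and the multiplicativity of homotopy cardinality in fibrations, Proposition \ref{prop:-chi-BD-properties}(b), guarantees that the two orders of summation agree.

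The multiplicativity data \ref{tt3} are straightforward: since $\pi_0(Y\times Z)\cong\pi_0(Y)\times\pi_0(Z)$ and homotopy cardinality is multiplicative by Proposition \ref{prop:-chi-BD-properties}(a), the external product $\Fenh(Y)\otimes\Fenh(Z)\to\Fenh(Y\times Z)$ is well defined, natural in the contravariant variables, associative and unital, with $\Fenh(\pt)\cong\k$. The heart of the matter is the base-change identity \ref{tt4}. Given a homotopy Cartesian square with vertical maps $p,q$ and horizontal maps $s,s'$, I would evaluate $p_*\circ s^*$ and $(s')^*\circ q_*$ at a point $x'$ of $X'$: the first is the homotopy integral of $s^*\phi$ over $Rp^{-1}(x')$, the second the homotopy integral of $\phi$ over $Rq^{-1}(s'(x'))$. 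The homotopy Cartesian property provides a weak equivalence $Rp^{-1}(x')\simeq Rq^{-1}(s'(x'))$ under which the restriction of $s^*\phi$ corresponds to the restriction of $\phi$; since the homotopy integral is invariant under weak equivalence, the two values coincide.

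The main obstacle will be the precise bookkeeping in the composition law and the base-change identity, both of which hinge on identifying iterated and pulled-back homotopy fibres and on the multiplicativity of homotopy cardinality. The delicate point for part (b) is that local homotopy properness only guarantees homotopy finiteness of the fibres after restriction to connected components, so one must restrict to components before integrating and use the finite-support condition defining $\Fenh_0$ to ensure that all the relevant sums are finite; once the homotopy invariance and multiplicativity of the homotopy integral are in place, these verifications are formal.
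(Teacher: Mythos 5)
Your verification is correct, and in substance it coincides with what the paper does: the paper's entire proof is a citation of Corollary 2.4 and Lemma 2.6 of To\"en's paper on derived Hall algebras, and those two statements are precisely the composition law $(fg)_*=f_*\circ g_*$ (Fubini for the homotopy integral) and the base-change identity that you identify as the heart of the matter. So you are supplying the details that the paper outsources rather than taking a different route.

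One step in your outline is glibber than it should be. Proposition \ref{prop:-chi-BD-properties}(b) gives the \emph{unweighted} multiplicativity $|E|_h=|F|_h\cdot|B|_h$ for a fibration over a \emph{connected} base, whereas the Fubini identity you need is the weighted statement $\int^{\h}_E\phi=\sum_{[C]\subset\pi_0(B)}|C|_h\int^{\h}_{F_C}\phi|_{F_C}$ for a locally constant $\phi$ on the total space. Deducing the latter from the former requires decomposing $E$ over each base component $C$, observing that $\pi_0(E_C)$ is the set of $\pi_1(C)$-orbits on $\pi_0(F_C)$ (long exact sequence), that $\phi|_{F_C}$ is constant on these orbits because it extends to $E_C$, and then applying part (b) orbit by orbit. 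It is routine but it is a genuine extra step, and it is exactly the content of To\"en's Lemma 2.6; the phrase ``the two orders of summation agree'' papers over it. With that supplied, the rest of your argument --- closure of homotopy proper maps under composition via the fibre sequence $Rg^{-1}(y')\to R(fg)^{-1}(y)\to Rf^{-1}(y)$, stability under homotopy pullback, the identification of homotopy fibres for (TT4), and the finite-support bookkeeping in part (b) --- is sound.
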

\begin{proof} Corollary 2.4 and Lemma 2.6 in \cite{toen-derived}.
\end{proof}

\begin{ex} Note that the theories with transfer $\Grp \to \Vect_k$ from \S \ref{subsec:groupoids-classical-hall} are
	recovered from Proposition \ref{prop:homotopytransfer} by precomposing with
	the lax monoidal functor 
	\[
	B: \Grp \to \Top,\; \Gc \mapsto B\Gc
	\]
	given by the classifying space construction. Moreover, for an admissible $2$-Segal groupoid
	$\Gc_{\bullet}$, we have a natural isomorphism
	\[
	  \Hall(\Gc_{\bullet}, \Fen) \cong \Hall(B\Gc_{\bullet}, \Fenh),
	\]
	and similarly for the theory $\Fen_0$.  Thus all Hall algebras with coefficients defined in
	\S \ref{subsec:groupoids-classical-hall} can alternatively be obtained as Hall algebras with coefficients
	in the theories $\Fenh$ and $\Fenh_0$. 
\end{ex}

\begin{prop} \label{prop:finitestable} Let $\C$ be a locally finite stable $\infty$-category, and
	let $\SW(\C)$ denote its Waldhausen S-construction.
	\begin{enumerate}
		\item For every $n \ge 0$, the topological space $|\SW_n(\C)|$ is locally homotopy finite.
		\item The topological $2$-Segal space given by $[n] \mapsto |\SW_n(\C)|$ is admissible for the transfer theory given by
			the pair (weakly proper maps, locally proper maps) on $\Top$.
	\end{enumerate}
\end{prop}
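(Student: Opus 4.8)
The plan is to reduce both parts to statements about evaluation and cofiber maps of functor $\infty$-categories via Proposition \ref{prop.inftyequiv}, and then to feed the result into the finiteness bookkeeping already assembled in Example \ref{ex:inftycardinality} and Proposition \ref{prop:-chi-BD-properties}. Throughout I would use that, since $\C$ is stable, $\M=\E=\C$ (Example \ref{ex:stable-infty-exact}), so the conditions of Definition \ref{defi:exactinftywald} reduce to requiring that $F(i,i)$ be a zero object and that the relevant squares be biCartesian. For part (1) the case $n=0$ is trivial, as $\SW_0(\C)$ is the contractible space of zero objects; for $n\ge 1$, Proposition \ref{prop.inftyequiv} gives a weak equivalence $\SW_n(\C)\simeq \Fun(\Delta^{n-1},\C)_{\Kan}$, so $|\SW_n(\C)|$ is the classifying space of objects of the $\infty$-category $\Fun(\Delta^{n-1},\C)$. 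First I would note that this functor category is again stable, limits and colimits being computed pointwise in the stable target $\C$ (\cite{lurie.algebra}). It then remains to verify that $\Fun(\Delta^{n-1},\C)$ is locally finite in the sense of Example \ref{ex:inftycardinality}: granting this, that example identifies $\pi_1$ of the classifying space at a basepoint $F$ with the automorphism group of $F$ in the homotopy category and $\pi_i$ ($i\ge 2$) with $\pi_{i-1}\Map(F,F)$, all of which are finite and bounded once $\Fun(\Delta^{n-1},\C)$ is locally finite, so the classifying space is locally homotopy finite exactly as claimed.

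The local finiteness of $\Fun(\Delta^{n-1},\C)$ is the first technical point. A functor $\Delta^{n-1}\to\C$ is a chain $x_0\to\cdots\to x_{n-1}$, and the mapping spectrum $\map_{\Fun(\Delta^{n-1},\C)}(F,G)$ is a finite homotopy limit (a limit over the finite twisted-arrow poset of $[n-1]$) of the mapping spectra $\map_{\C}(F(i),G(j))$, each of which, by local finiteness of $\C$, has finite homotopy groups concentrated in a bounded range of degrees. The key sublemma is that spectra with finite and bounded homotopy are stable under finite homotopy limits: the finitely many fiber sequences building up the limit keep both the finiteness and the degree bound, via their long exact sequences. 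Applying $\Omega^{\infty}$ and Proposition \ref{prop:-chi-BD-properties} then shows that each mapping space of $\Fun(\Delta^{n-1},\C)$ is homotopy finite.

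For part (2) I would use the equivalence at level two, $\SW_2(\C)\simeq \Fun(\Delta^1,\C)_{\Kan}$, under which a point is an arrow $a\colon x_1\to x_2$ with cofiber $x_3=\on{cofib}(a)$, and the three face maps become
\[
	\partial_2 = \on{ev}_0,\qquad \partial_1 = \on{ev}_1,\qquad \partial_0 = \on{cofib}.
\]
To see that $(\partial_2,\partial_0)=(\on{ev}_0,\on{cofib})$ is weakly proper (Definition \ref{defi:weaklyproper}), I would compute the fiber over a pair of equivalence classes $([x_1],[x_3])$ of the induced map on $\pi_0$: a cofiber sequence with these ends is determined by a boundary map $x_3\to\Sigma x_1$ up to the action of the automorphism groups, so the fiber is a quotient of $\Ext^1_{\h\C}(x_3,x_1)=\pi_0\Map_{\C}(x_3,\Sigma x_1)$, which is finite by local finiteness; hence the map is finite-to-one on $\pi_0$. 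For $\partial_1=\on{ev}_1$, the homotopy fiber over $x_2$ is the classifying space of the maximal $\infty$-groupoid of the slice $\C_{/x_2}$, whose mapping spaces are homotopy fibers of maps between the homotopy finite spaces $\Map_{\C}(x_1,x_1')$ and $\Map_{\C}(x_1,x_2)$, hence homotopy finite. Restricting to a single connected component of $\SW_2(\C)$ fixes the equivalence class of the source $x_1$, and the resulting full subgroupoid has finite $\pi_0$ (a quotient of the finite set $\Hom_{\h\C}(x_1,x_2)$) with homotopy finite components, hence is homotopy finite. This is exactly the assertion that $\partial_1$ is locally homotopy proper, so the $2$-Segal space $[n]\mapsto|\SW_n(\C)|$ is admissible for the pair (weakly proper, locally homotopy proper), and Proposition \ref{prop:homotopytransfer}(b) then supplies the Hall algebra with coefficients in $\Fenh_0$.

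I expect the main obstacle to be this last homotopy-fiber analysis of $\partial_1$: one must correctly identify the homotopy fibers of the evaluation with slice classifying spaces, check that the slices $\C_{/x_2}$ are locally finite, and recognize that only the \emph{local} homotopy properness can hold — the full homotopy fiber of $\partial_1$ over $x_2$ ranges over all iso types of subobjects of $x_2$ and is typically not homotopy finite. This is precisely why the weaker transfer structure (weakly proper, locally homotopy proper), rather than (arbitrary maps, homotopy proper), is the correct one, mirroring the classical groupoid case of Example \ref{ex:classical-hall} where $\partial_1$ was only \emph{locally} absolutely proper.
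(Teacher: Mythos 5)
Your proof is correct, but it takes a genuinely different route from the paper's in two places. For part (1) the paper never passes to the stable functor category: it argues by induction on $n$, using the inner anodyne inclusion $\Delta^{n-1}\amalg_{\{n-1\}}\Delta^{\{n-1,n\}}\subset\Delta^n$ to exhibit the marked mapping space modelling $\SW_n(\C)$ as a homotopy fiber product of lower-dimensional pieces over $\C_{\Kan}$, closes the induction with Proposition \ref{prop:-chi-BD-properties}, and reduces the base case to the identification of $\{x\}\times_{\C}\C^{\Delta^1}\times_{\C}\{y\}$ with $\Map_{\C}(x,y)$. Your route --- proving that $\Fun(\Delta^{n-1},\C)$ is itself a locally finite stable $\infty$-category via the finite twisted-arrow limit of mapping spectra and then invoking the homotopy-group formulas of Example \ref{ex:inftycardinality} --- is a valid alternative; it yields a reusable statement (local finiteness is preserved by finite diagram categories) at the price of a spectral sublemma the paper avoids. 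One small imprecision there: the mapping spectra of a locally finite stable $\infty$-category are \emph{not} concentrated in a bounded range of degrees, since their negative homotopy groups are $\pi_0\Map_{\C}(x,\Sigma^i y)$, which are finite but need not vanish for $i\gg 0$; your long exact sequence argument only uses degreewise finiteness together with vanishing in high degrees, so nothing breaks, but the hypothesis of your sublemma should be stated that way. For part (2), your treatment of $(\partial_2,\partial_0)$ via the surjection from $\pi_0\Map_{\C}(x_3,\Sigma x_1)$ onto the set of components of the relevant subspace is exactly the paper's argument. For $\partial_1$, the paper is slicker: once part (1) is known, \emph{any} map $\SW_2(\C)\to\SW_1(\C)$ is locally homotopy proper, because restricting to a connected component $C$ of the source gives a fiber sequence whose total space $C$ and whose base (the component of $\SW_1(\C)$ containing the image) are both homotopy finite, so Proposition \ref{prop:-chi-BD-properties}(b) forces the fiber to be homotopy finite. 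Your explicit identification of the fibers of $\on{ev}_1$ with classifying spaces of slice $\infty$-groupoids reaches the same conclusion but does more work than necessary; your closing remark about why only \emph{local} homotopy properness can hold correctly matches the paper's choice of transfer structure.
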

\begin{proof} We first prove (1). We say that a Kan complex is (locally) homotopy finite if its
	geometric realization is (locally) homotopy finite. Similarly, we use the terminology of
	Definition \ref{defi:weaklyproper} for Kan complexes in virtue of the geometric realization
	functor. The Kan complex $\SW_0(\C)$ is contractible, hence homotopy finite. Further, by
	assumption, the Kan complex $\SW_1(\C)$ is locally homotopy finite.  We utilize the marked
	model structure on $\sSet$ of \cite[\S 3.1]{lurie.htt}, and freely use the musical notation
	introduced there. For example, we denote by $\C^{\natural}$ the marked simplicial set
	obtained by marking all edges which are equivalences in the $\infty$-category $\C$.  Recall
	from Proposition \ref{prop.inftyequiv} that, for each $n \ge 0$, we have a weak equivalence
	of Kan complexes
	\[
		\SW_n(\C) \overset{\simeq}{\lra} \Fun(\Delta^n, \C)_{\on{Kan}}.
	\]
	Directly from the definition, we obtain an isomorphism
	\[
		\Fun(\Delta^n, \C)_{\on{Kan}} \cong \Map^{\sharp}( (\Delta^n)^{\flat},
		\C^{\natural}),
	\]
	providing a description in terms of simplicial mapping spaces with respect to the marked
	model structure on $\sSet$. Therefore, it suffices to show that $\Map^{\sharp}(
	(\Delta^n)^{\flat},\C^{\natural})$ is locally homotopy finite. Since the inclusion of
	simplicial sets
	\[
		i: \Delta^{n-1} \coprod_{ \{n-1\} } \Delta^{\{n-1,n\}} \subset \Delta^n
	\]
	is inner anodyne, the corresponding marked map $i^{\flat}$ is marked anodyne, and we obtain
	a weak equivalence
	\[
		\Map^{\sharp}( (\Delta^n)^{\flat}, \C^{\natural}) \overset{\simeq}{\lra}
		\Map^{\sharp}( (\Delta^{n-1})^{\flat}, \C^{\natural}) 
		\times_{\Map^{\sharp}( \{n-1\}^{\flat}, \C^{\natural})}
		\Map^{\sharp}( (\Delta^{\{n-1,n\}})^{\flat}, \C^{\natural}),
	\]
	where the right-hand side fiber product is a homotopy fiber product. Therefore, by an induction
	using Proposition \ref{prop:-chi-BD-properties}, we
	can reduce to showing that the Kan complex $\Map^{\sharp}( (\Delta^1)^{\flat},
	\C^{\natural})$ is locally homotopy finite. To this end, note that, for objects $x,y$ of
	$\C$, we a pullback square
	of simplicial sets
	\begin{equation}\label{eq:mappingsquare}
		\xymatrix{
			\{x\} \times_{\C} (\C^{\Delta^1})_{\on{Kan}} \times_{\C} \{y\} \ar[d] \ar[r] &
			(\C^{\Delta^1})_{\on{Kan}}\ar[d]^{s \times t}\\
			\{x\} \times \{y\} \ar[r] & \C_{\on{Kan}} \times \C_{\on{Kan}}.
		}
	\end{equation}
	This square is in fact homotopy Cartesian, since the projection $s \times t$ is a Kan
	fibration. This can, for example, be deduced by interpreting $s \times t$ as the map of
	simplicial mapping spaces, induced by the inclusion $\{0\}^{\flat}\times\{1\}^{\flat}
	\subset (\Delta^1)^{\flat}$ of marked simplicial sets.
	Since, by \cite[4.2.1.8]{lurie.htt}, the Kan complex $\{x\} \times_{\C}
	(\C^{\Delta^1})_{\on{Kan}} \times_{\C} \{y\} \cong \{x\} \times_{\C} \C^{\Delta^1}
	\times_{\C} \{y\}$ is a model for the mapping space $\Map_{\C}(x,y)$ of $\C$, we can again use Proposition
	\ref{prop:-chi-BD-properties} to reduce to the statement that $\C_{\on{Kan}}$ is locally homotopy
	finite. 

	To show (2), we have to verify that, in the diagram
	\[
	\Sc_1(\C)\times \Sc_1(\C) \buildrel (\partial_2, \partial_0)\over\lla 
	\Sc_2(\C)\buildrel \partial_1\over \lra \Sc_1(\C), 
	\]
	the map $(\partial_2, \partial_0)$ is weakly proper, and $\partial_1$ is locally proper.
	The fact that every connected component of $\SW_1(\C)$ and $\SW_2(\C)$ is a homotopy finite space, implies that
	any morphism $\SW_2(\C) \to \SW_1(\C)$, in particular $\partial_1$, is locally proper. 
	To show that the map $(\partial_2, \partial_0)$ is weakly proper, we have to verify that,
	for objects $a,a'$ of $\C$, the subspace $Y$ of $\SW_2(\C)$, lying above the connected component of $\SW_1(\C)
	\times \SW_1(\C)$ represented by the pair $(a,a')$, has finitely many connected components.
	Using the homotopy Cartesian square \eqref{eq:mappingsquare}, with $x = a'$ and $y=\Sigma
	a$, it is easy to see that we have a surjection $\pi_0(\Map_{\C}(a',\Sigma a)) \to
	\pi_0(Y)$. Hence, the statement follows from the local finiteness of the $\infty$-category $\C$.
\end{proof}

\begin{ex} 
	Let $\C$ be a locally finite stable $\infty$-category. By Proposition
	\ref{prop:finitestable}, we can form the Hall algebra $\Hall(\SW(\C), \Fenh_0)$ with
	coefficients in $\Fenh_0$. This recovers the derived Hall algebra defined in
	\cite{bergner-hall}.
\end{ex}

\begin{ex}
	Let $\FF$ be a finite field, and let $\A$ be a dg category over $\FF$,
	see \S \ref{subsec:der-wald-stacks} for conventions.
	 	 We say that $\A$ is {\em locally finite} if, for every pair
	of objects $a,b$ of $T$, the total cohomology space of the mapping complex $\A(a,b)$ is a
	finite dimensional $\FF$-vector space. Let $\C = \Ndg(\Perf^{\circ}_\A)$ be the
	dg nerve of  the dg category of cofibrant, perfect $\A$-modules.
	  Using \cite[1.3]{lurie.algebra}, one shows that the $\infty$-category
	$\C$ is locally finite and stable.  The Hall algebra of $\SW(\C)$ with coefficients in
	$\Fenh_0$ recovers To\"en's derived Hall algebra associated to $\A$ constructed in
	\cite{toen-derived}.
\end{ex}

\vfill\eject

\subsection{Stacks: Motivic Hall algebras}

Motivic Hall algebras were introduced by Joyce \cite{joyce-II} and Kontsevich-Soibelman
\cite{KS-motivic}, see also \cite{bridgeland} for a transparent introduction using the work of
To\"en \cite{toen-stacks} on Grothendieck groups of Artin stacks. From our point of view, the
existence of these algebras is a reflection of the $2$-Segal property of the Waldhausen stacks from
Example \ref{ex:waldhausen-stacks}. 

More precisely, we consider the situation of Example \ref {ex:stacks-of-groupoids}. That is, let
$\FF$ be a field, and $\Uc=\FF-\Aff$ be the category of affine $\FF$-schemes of at most countable
type, made into a Grothendieck site via the \'etale topology. Let $\mC=\underline{\Grp}_\Uc$ be the
category of stacks of groupoids on $\Uc$, with the Joyal-Street model structure.  Objects of $\mC$
will be simply referred to as {\em stacks}.  Inside $\mC$ we have the subcategory $\Ac rt$ of Artin
stacks. Recall that by a {\em geometric point} of a stack $\Gc$ one means an object of the groupoid
$\Gc(\KK):= \Sc(\on{Spec} \KK)$, where $\KK$ is an algebraically closed field containing $\FF$ (here
assumed at most countably generated over $\FF$ so that $\on{Spec}(\KK)\in\Uc$).  
Following \cite{toen-stacks} (also cf. \cite{bridgeland}), we give the following definition. 
 
\begin{defi}
	A stack $\Sc $ is called {\em special}, if it is an Artin stack of finite type over
		$\FF$, and if the stabilizer of any geometric point is an affine algebraic group.
	A morphism of stacks $\phi: \Gc'\to\Gc$ in $\underline{\Grp}_\Uc$ is called {\em
		special}, if for any morphism of stacks $\psi: \Sc\to\Gc$ with $\Sc$ a special
		stack, the $2$-fiber product $\Sc\times^{(2)}_{\Gc} \Gc'$ is a special stack. We
		denote by $\Sc p$ the class of special morphisms of stacks. 
	A morphism $\phi: \Gc'\to\Gc$ is called a {\em geometric bijection}, if for any
		algebraically closed field $\KK\supset\FF$ as above, the induced functor of
		groupoids $\Gc'(\KK)\to\Gc(\KK)$ is an equivalence.  
\end{defi}
 
For example any morphism of special stacks is special. We then obtain easily: 

\begin{prop}
The pair $(\Sc p, \Mor(\underline{\Grp}_\Uc))$ forms a transfer structure on the model category
$\underline{\Grp}_\Uc$. 
\end{prop}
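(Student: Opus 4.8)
The plan is to unwind Definition \ref{def:transfer-structure} in the present situation. Since the proper class is taken to be all of $\Mor(\underline{\Grp}_\Uc)$, the conditions on the proper class in both (TS1) and (TS2) are automatic: $\Mor(\underline{\Grp}_\Uc)$ is trivially closed under composition, and in any homotopy Cartesian square the vertical map $p$ lies in $\Mor(\underline{\Grp}_\Uc)$ for free. Thus the entire content of the statement reduces to two assertions about the class $\Sc p$ of special morphisms, which here plays the role of the smooth class in Definition \ref{def:transfer-structure}: first, that $\Sc p$ is closed under composition, and second, that $\Sc p$ is stable under homotopy base change (in the square of (TS2) the map $s$ is precisely the homotopy pullback of $s'$ along $q$). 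Both are instances of the standard formalism for ``relatively representable'' classes of morphisms, and the key technical input is the pasting lemma for homotopy Cartesian squares, valid in any model category, together with the identification recalled in \S \ref{subsec:holim-model} of homotopy limits in $\underline{\Grp}_\Uc$ with $2$-limits, so that the $2$-fiber products $\times^{(2)}$ appearing in the definition of ``special'' indeed compute homotopy fiber products.

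For base-change stability I would take a special morphism $\phi: \Gc' \to \Gc$ and an arbitrary morphism $g: \Hc \to \Gc$, forming the homotopy pullback $\phi': \Hc \times^{(2)}_\Gc \Gc' \to \Hc$. To verify that $\phi'$ is special, one tests against an arbitrary morphism $\psi: \Sc \to \Hc$ with $\Sc$ a special stack. Pasting the defining square of $\Hc \times^{(2)}_\Gc \Gc'$ with the square obtained by pulling back $\psi$ yields a natural equivalence
\[
  \Sc \times^{(2)}_\Hc \bigl(\Hc \times^{(2)}_\Gc \Gc'\bigr) \simeq \Sc \times^{(2)}_\Gc \Gc',
\]
where on the right $\Sc$ maps to $\Gc$ via $g \circ \psi$. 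Since $\phi$ is special and $\Sc$ is special, the right-hand side is a special stack; hence so is the left-hand side, proving that $\phi' \in \Sc p$.

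For closure under composition I would take special morphisms $\phi: \Gc'' \to \Gc'$ and $\phi': \Gc' \to \Gc$ and again test the composite $\phi' \circ \phi$ against a morphism $\psi: \Sc \to \Gc$ from a special stack. The pasting lemma now gives
\[
  \Sc \times^{(2)}_\Gc \Gc'' \simeq \bigl(\Sc \times^{(2)}_\Gc \Gc'\bigr) \times^{(2)}_{\Gc'} \Gc''.
\]
The inner factor $\Sc \times^{(2)}_\Gc \Gc'$ is special because $\phi'$ is special and $\Sc$ is special; this special stack is then base-changed along $\phi$, which is special, so the outer $2$-fiber product is special as well. As this holds for every special test object, $\phi' \circ \phi \in \Sc p$.

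The argument is essentially formal, so I do not expect a serious obstacle; the only point requiring genuine care is the justification that the relevant homotopy fiber products are computed by the $2$-fiber products of stacks and that the pasting lemma is available in $\underline{\Grp}_\Uc$. This is guaranteed by the general theory of homotopy limits in simplicial combinatorial model categories (\S \ref{subsec:holim-model}) together with the fact, noted there, that homotopy limits of diagrams of stacks of groupoids coincide with their $2$-limits. One should also record the observation that the definition of a special morphism tests only against special stacks, which is exactly what makes both pasting arguments close up.
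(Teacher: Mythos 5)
Your proof is correct, and since the paper states this proposition with no proof at all (``We then obtain easily''), your argument supplies exactly the standard formal verification the authors had in mind: the proper class $\Mor(\underline{\Grp}_\Uc)$ makes half the axioms vacuous, and the two pasting-lemma computations establish closure of $\Sc p$ under composition and under homotopy base change. Your closing remark correctly identifies the only point of substance, namely that the $2$-fiber products of stacks compute the homotopy fiber products (so that speciality is invariant under replacing a homotopy Cartesian square by the actual $2$-pullback), which is covered by the identification of homotopy limits in $\underline{\Grp}_\Uc$ with $2$-limits recalled in the paper.
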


The following is an adaptation of \cite[Def. 3.10]{bridgeland}.

\begin{defi}\label{def:motivic-fun-stacks}
Let $\Gc$ be a stack. The group $\fen(\Gc)$ of {\em motivic functions} on $\Gc$ is the abelian
group generated by the symbols $[\Sc\buildrel s\over\to\Gc]$ for all special stacks $\Sc$ over
$\Gc$, subject to the following relations:
\begin{enumerate}
	\item Additivity in disjoint unions:
		\[
			[\Sc_1\sqcup \Sc_2 \buildrel s_1\sqcup s_2\over\lra \Gc] =  [\Sc_1\buildrel s_1\over\to\Gc] + 
			[\Sc_2\buildrel s_2\over\to\Gc].
		\]

	\item If $\phi: \Sc_1\to\Sc_2$ is a geometric bijection of special stacks, and $s_i:
		\Sc_i\to\Gc$ are such that $s_1=s_2\circ\phi$, then
		\[
			[\Sc_1\buildrel s_1\over\to\Gc] = [\Sc_2\buildrel s_2\over\to\Gc]. 
		\]

	\item Let $\Sc_i\buildrel s_i\over \to \Sc$, $i=1,2$, be two morphisms of special stacks
		with the same target.  Assume that for any scheme $S$ of finite type over $\FF$ and
		any morphism $p: S\to\Sc$, the pullbacks $\Sc_i\times_\Sc^{(2)}S$ are schemes and
		the projections to $S$ are locally trivial Zariski fibrations with equivalent fiber. Then,
		for any morphism $s:\Sc\to\Gc$, we impose the relation
		\[
			[\Sc_1\buildrel s\circ s_1\over\lra \Gc] = [\Sc_2\buildrel s\circ s_2\over\lra \Gc]. 
		\]
\end{enumerate} 
\end{defi}

\begin{ex} 
	The group $\fen(\FF):= \fen(\on{Spec}(\FF))$ is a ring, known as the {\em Grothendieck ring
	of special $\FF$-stacks}, with multipliciation induced by the Cartesian product. The reason
	for restricting to special stacks in Definition \ref{def:motivic-fun-stacks} is that it
	allows $\fen(\FF)$ to be identified with an explicit localization of a  similar but more
	``elementary" Grothendieck ring $\Lambda$ formed by $\FF$-schemes (not stacks) of finite
	type. More precisely, 
	\[
		\fen(\FF) =  \Lambda\bigl[ \LL^{-1}, \LL^n -1, n\geq 1\bigr],
	\]
	where $\LL$ is the class of the affine line over $\FF$, see Lemma 3.9 of \cite{bridgeland}.
\end{ex} 
 
Let $\phi: \Gc'\to\Gc$ be a morphism of stacks. Then we have the pushforward functor
\[
	\phi_*: \fen(\Gc')\lra\fen(\Gc), \quad [\Sc'\buildrel s'\over\to\Gc'] \mapsto [\Sc'\buildrel 
	\phi\circ s'\over\lra \Gc].
\]
If $\phi$ is a special morphism of stacks, we also have the pullback functor
\[
	\phi^*: \fen(\Gc) \lra\fen(\Gc'), \quad [\Sc\buildrel s\over\to\Gc'] \mapsto 
	[\Sc \times^{(2)}_{\Gc'} \Gc \to \Gc]. 
\]
 
\begin{prop}
The above functorialities make $\fen$ a theory with transfer on the model category $\underline{\Gr}_\Uc$ with respect
to the transfer structure $(\Sc p, \Mor(\underline{\Gr}_\Uc))$. 
\end{prop}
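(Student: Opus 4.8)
The plan is to verify the four axioms (TT1)--(TT4) of Definition \ref{def:TT}, working in the monoidal category $\Vc = \fen(\FF)\text{-Mod}$ of modules over the Grothendieck ring $\fen(\FF) = \fen(\on{Spec}(\FF))$, with monoidal product $\otimes_{\fen(\FF)}$ and unit $\1 = \fen(\FF)$; with this choice the required isomorphism $\fen(\pt)\cong\1$ of \ref{tt3} holds by definition. Axiom \ref{tt1} is exactly the content of the preceding proposition, which asserts that $(\Sc p, \Mor(\underline{\Gr}_\Uc))$ is a transfer structure on $\underline{\Gr}_\Uc$.

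For \ref{tt2} I would first check that the assignments $\phi\mapsto\phi_*$ and $\phi\mapsto\phi^*$ descend to the group $\fen$, i.e. respect the three relations of Definition \ref{def:motivic-fun-stacks}. The covariant $\phi_*$ (defined for all morphisms) merely post-composes the structure map with $\phi$: additivity in disjoint unions is immediate, a geometric bijection over $\Gc'$ remains one over $\Gc$, and the fibration condition of relation (3) is internal to the source maps $\Sc_i\to\Sc$ and hence untouched; functoriality $(\psi\phi)_* = \psi_*\phi_*$ is then trivial. For the contravariant $\phi^*$ (defined only for special morphisms) well-definedness is exactly where specialness enters: the $2$-fibre product $\Sc\times^{(2)}_\Gc\Gc'$ is again special because $\phi\in\Sc p$, disjoint unions and geometric bijections commute with $2$-fibre products, and relation (3) is preserved by writing $\Sc_i\times^{(2)}_\Gc\Gc'\simeq\Sc_i\times^{(2)}_{\Sc}\Sc'$ with $\Sc'=\Sc\times^{(2)}_\Gc\Gc'$ and invoking the original relation (3) for $\Sc_1,\Sc_2$ over $\Sc$, pulled back along schemes $S'\to\Sc'\to\Sc$. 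Contravariant functoriality $(\psi\phi)^*=\phi^*\psi^*$ follows from the pasting (transitivity) of homotopy $2$-fibre products, the comparison map being a geometric bijection and hence an equality of classes by relation (2). Finally, an equivalence of stacks is special and induces equivalences on all $2$-fibres, so both $\phi_*$ and $\phi^*$ send weak equivalences to isomorphisms, with inverse provided by the inverse equivalence.

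For \ref{tt3} the multiplicativity datum is induced by the external product $([\Sc\to\Gc],[\Sc'\to\Gc'])\mapsto[\Sc\times\Sc'\to\Gc\times\Gc']$. This is well defined because products of special stacks are special and the product is additive in each argument, and it factors through the balanced tensor product over $\fen(\FF)$. Associativity, unitality, and naturality with respect to special morphisms then follow from the corresponding formal properties of the Cartesian product of stacks together with the compatibility of products with $2$-fibre products.

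The axiom \ref{tt4} is the heart of the argument. Given a homotopy Cartesian square as in \ref{ts2} with $q\in\Mor$ and $s'\in\Sc p$, I would evaluate both composites on a generator $[\Sc\to Y]$: the left-hand side $p_*s^*$ yields $[\Sc\times^{(2)}_Y X\to X']$, while the right-hand side $(s')^*q_*$ yields $[\Sc\times^{(2)}_{Y'}X'\to X']$. Since the square is homotopy Cartesian, $X\simeq Y\times^{(2)}_{Y'}X'$, and transitivity of $2$-fibre products produces a canonical equivalence $\Sc\times^{(2)}_Y X\simeq\Sc\times^{(2)}_{Y'}X'$ over $X'$; being a geometric bijection, it gives equality of classes by relation (2). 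The main obstacle, and the step demanding the most care, is precisely this bookkeeping of iterated homotopy $2$-fibre products --- both the pasting identity underlying base change and the stability of relation (3) under pullback --- because $2$-fibre products exist only up to equivalence, so relation (2) must be invoked repeatedly to upgrade canonical equivalences to genuine equalities of classes in $\fen$.
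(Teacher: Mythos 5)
Your proof is correct and follows essentially the same route as the paper's (two-sentence) argument: multiplicativity is the external product of stacks, and base change reduces to the tautological compatibility of $\phi_*$ (post-composition) and $\phi^*$ ($2$-fibre product) via transitivity of homotopy $2$-fibre products, with relation (2) upgrading the resulting geometric bijections to equalities of classes. The only material you add beyond what the paper leaves implicit is the careful check that the functorialities respect the three defining relations of $\fen$ and the sensible choice of target category $\Vc=\fen(\FF)\text{-Mod}$ so that $\fen(\pt)\cong\1$, a point the paper glosses over.
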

\begin{proof} The multiplicativity $\fen(\Gc)\otimes\fen(\Gc')\to\fen(\Gc\times\Gc')$ is given by
	Cartesian product of stacks. The base change for a 2-Cartesian square as in Definition
	\ref{def:transfer-structure} with $s_1, s_2$ special, is tautological, by definition of the
	functorialities of $\fen$. 
\end{proof}
 
Therefore (Definition \ref{def:hall-algebra-admissible}), each $2$-Segal simplicial object $X$ in
$\underline{\Gr}_\Uc$ which is admissible with respect to $(\Sc p, \Mor(\underline{\Gr}_\Uc))$,
gives rise to the Hall algebra $\Hall(X, \fen) = \fen(X_1)$ which can be called the {\em motivic
Hall algebra} of $X$. This includes the following examples. 
 
\begin{exas} 
\begin{exaenumerate}
	\item Let $R$ be a finitely generated associative $\FF$-algebra, and $X$
		be the Waldhausen stack of finite-dimensional left $R$-modules, see Example
		\ref{ex:waldhausen-stacks}(b). Then each $X_n$ is an Artin stack (locally of finite
		type), and which is, moreover, locally special. Indeed, for any field extension
		$\KK\supset\FF$, the stabilizer (automorphism group) of any finite dimensional
		$R\otimes_\FF \KK$-module is clearly an affine algebraic group over $\KK$.  It
		follows that the morphism $(\partial_2, \partial_0): X_2\to X_1$ is special, so $X$
		is an $(\Sc p, \Mor(\underline{\Gr}_\Uc))$-admissible $2$-Segal simplicial object. 

	\item Let $V$ be a projective algebraic variety over $\FF$.  Then $\Sc(\alg\Coh(V))$ and
		$\Sc(\alg\Bun(V))$ are $2$-Segal simplicial objects in $\underline{\Gr}_\Uc$. As
		before, we see that they are $(\Sc p, \Mor(\underline{\Gr}_\Uc))$-admissible. 
\end{exaenumerate}
\end{exas} 

\begin{rem} 
	In \cite[\S 4.1]{bridgeland}, Bridgeland emphasizes that the reason for associativity of the Hall
	algebra lies in a ``certain duality" between the stacks parametrizing flags of subobjects
	(monomorphisms) and quotient objects (epimorphisms). From our point of view, this corresponds to
	Lemma \ref{leq:waldhausen-filtration-abelian} and Proposition \ref{prop.inftyequiv}: the $n$th
	component of the Waldhausen space is weak equivalent to both types of flag spaces.  This is indeed
	the key element in the proof of the $2$-Segal property for general $\infty$-categorical Waldhausen
	spaces (Theorem \ref{thm:waldhausen-infty}), via the path space criterion (Theorem \ref{thm.crit}).  
\end{rem}

Finally, let us point out that the formalism of this section admits an extension to the model
category $\underline\Sp_\FF$ of $\infty$-stacks on $\Aff_\FF$, see Example \ref{ex:simplicial
sheaves}. This generalization proceeds by generalizing to $\infty$-stacks all the relevant concepts
used to construct the theory $\fen$ (special $\infty$-stacks, geometric equivalences and Zariski
fibrations of $\infty$-stacks). See \cite{toen-stacks} for these generalizations. This leads to a
theory with transfer $\fb$ on $\underline\Sp_\FF$ defined similarly to Definition
\ref{def:motivic-fun-stacks}. 

An important example to which the theory $\fb$ can be applied, is the Waldhausen $\infty$-stack
$\tau_{\leq 0} \Sc(\underline\Perf_\A)$ for a smooth and proper dg-category $\A$. It is defined as
the classical truncation (restriction from simplicial commutative algebras to ordinary commutative
algebras) of the derived Waldhausen stack $\Sc(\underline\Perf_\A)$ of perfect $\A$-modules, see
Proposition \ref{cor:wald-derived-stacks}.  As $\Sc(\underline\Perf_\A)$ is $2$-Segal, $\tau_{\leq 0}
\Sc(\underline\Perf_\A)$ is in turn a $2$-Segal simplicial object in $\underline\Sp_\FF$.  Applying
$\fb$ to $\tau_{\leq 0} \Sc_1(\underline\Perf_\A) = \Mc_\A$ gives then the {\em derived motivic Hall
algebra} of perfect $\A$-modules. Algebras of these type were first considered by Kontsevich and
Soibelman \cite{KS-motivic} by directly introducing the motivic analogs of the Baez-Dolan homotopy
cardinality into the multiplication rules. Their construction applies, in particular, to $\A =
 \A_V$, the dg enhancement of the bounded derived category of a smooth projective variety
$V$, see Remark \ref{rem:NC-smooth-proper}.
 A generalization to the non-smooth projective case was proposed by P. Lowrey \cite{lowrey}.
  
\vfill\eject

\section{Hall \texorpdfstring{$\inftytwo$}{(infty,2)}-categories}
\label{sec:higherhall}

In this chapter we lift the Hall algebra construction to the $\infty$-categorical
level, generalizing the elementary considerations of \S \ref{subsec:2-segal-monoidal}. Our
approach is based on associating to a space $B\in\Sp$ the model category as well as the $\infty$-category
of  all spaces over $B$. Such categories   play the role of the space of functions on a set (or groupoid)  $B$
in the classical approach to Hall algebras.

\subsection{Hall monoidal structures}
\label{subsec:homotopy.monoidal}

Let $X \in \Top_{\Delta}$ be a unital $2$-Segal topological space with weakly contratible space of
$0$-simplices. Replacing, if necessary,
 $X$ by  a weakly equivalent simplicial space, we can and will assume that $X$ is Reedy
fibrant and satisfies $X_0 = \pt$.  

The category $\Top{/X_1}$ of topological spaces over $X_1$ carries a unique model structure such
that the forgetful functor preserves weak equivalences, fibrations and cofibrations. As a first
step, we will construct a monoidal structure on the homotopy category of $\Top{/X_1}$. We denote
the resulting monoidal category by $\hHX$. As the notation suggests, the monoidal category $\hHX$ is in
fact the homotopy category of a monoidal $\infty$-category $\HX$ which will be constructed in \S
\ref{subsec:higherhall}.

We set $\mC = \Top{/X_1}$ and denote an object $A \to X_1$ of $\mC$ by its total space $A$. For each
pair of objects $A,B$ in $\mC$, we choose a pullback square
\begin{equation} \label{eq.deftensor}
	\begin{gathered}
		\xymatrix{
		A \otimes B \ar[r]\ar[d] & X_{\{0,1,2\}} \ar[d]\\
		A \times B \ar[r] & X_{\{0,1\}} \times X_{\{1,2\}},
		}
	\end{gathered}
\end{equation}
and interpret the composition 
\[
\xymatrix{
A \otimes B \ar[r] & X_{\{0,1,2\}} \ar[r] & X_{\{0,2\}}
}
\]
as an object of $\mC$. Note that, since $X$ is Reedy fibrant, the above square is in fact homotopy Cartesian. 
These choices extend to define a functor
\[
\otimes: \Ho(\mC) \times \Ho(\mC) \to \Ho(\mC),\; (A,B) \mapsto A \otimes B.
\]
We define the unit object $\one$ of $\Ho(\mC)$ to be given by the degeneracy map $X_0 \to X_1$.
For $B = \one$, the square \eqref{eq.deftensor} can be refined to a diagram
\begin{equation} 
	\begin{gathered}
		\xymatrix{
		A \otimes \one \ar[r]\ar[d] & X_{\{0,1\}} \ar[r]\ar[d] & X_{\{0,1,2\}} \ar[d]\\
		A \ar[r] & X_{\{0,1\}} \ar[r] & {X_{\{0,1\}}} \times X_{\{1,2\}},
		}
	\end{gathered}
\end{equation}
where the right square is homotopy Cartesian by the unitality of $X$ (Definition
\ref{def:unital-2-segal-top}). This implies that the vertical
map $A \otimes \one \to A$ is a weak equivalence and hence induces a functorial isomorphism
\[
\alpha_A: A \otimes \one \to A
\]
in $\Ho(\mC)$. Similarly, one obtains a functorial isomorphism
\[
\beta_A: \one \otimes A \to A.
\]
For each tripel of objects $A,B,C \in \mC$, we choose a pullback square
\begin{equation}
	\label{eq.tripletensor}
	\begin{gathered}
		\xymatrix{
		A \otimes B \otimes C \ar[r]\ar[d] & X_{\{0,1,2,3\}} \ar[d] \\
		A \times B \times C \ar[r] & X_{\{0,1\}} \times X_{\{1,2\}} \times X_{\{2,3\}},
		}
	\end{gathered}
\end{equation}
and interpret the composite
\[
\xymatrix{
A \otimes B \otimes C \ar[r] & X_{\{0,1,2,3\}} \ar[r] & X_{\{0,3\}}
}
\]
as an object of $\mC$. We claim that these choices uniquely determine a functorial isomorphism
\[
\eta_{A,B,C}: (A \otimes B) \otimes C \to A \otimes (B \otimes C)
\]
in $\Ho(\mC)$. Indeed, from the defining Cartesian squares \eqref{eq.deftensor} of $\otimes$ we obtain a
canonical Cartesian square
\[
\xymatrix{
(A \otimes B) \otimes C \ar[r]\ar[d] & X_{\{0,1,2\}} \times_{X_{\{0,2\}}} X_{\{0,2,3\}}\ar[d]\\
A \times B \times C \ar[r] & X_{\{0,1\}} \times X_{\{1,2\}} \times X_{\{2,3\}}
}
\]
which, using \eqref{eq.tripletensor}, can be extended to the diagram
\[
\xymatrix{
A \otimes B \otimes C \ar[rr] \ar@{-->}[dr] \ar@/_1pc/[ddr] & & X_{\{0,1,2,3\}} \ar[d]^{\simeq}\\
&(A \otimes B) \otimes C \ar[r]\ar[d] & X_{\{0,1,2\}} \times_{X_{\{0,2\}}} X_{\{0,2,3\}}\ar[d]\\
&A \times B \times C \ar[r] & X_{\{0,1\}} \times X_{\{1,2\}} \times X_{\{2,3\}}
}
\]
where the dashed arrow is canonical and further a weak equivalence. By an analogous statement for
$A \otimes (B \otimes C)$, we obtain a canonical diagram 
\[
\xymatrix{
(A \otimes B) \otimes C & \ar[l]_{\simeq} A \otimes B \otimes C \ar[r]^{\simeq} & A \otimes (B \otimes C)
}
\]
which induces the desired isomorphism $\eta_{A,B,C}$ in $\Ho(\mC)$. It is easy to verify that
$\eta_{A,B,C}$ is functorial in its arguments. 

\begin{thm} \label{thm:monoidalhall} The data $(\Ho(\mC), \otimes, \alpha, \beta, \eta)$ forms a monoidal category.
\end{thm}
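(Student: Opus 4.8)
The plan is to verify the two coherence axioms of a monoidal category — the Mac Lane pentagon for $\eta$ and the triangle identity relating $\eta$ to $\alpha$ and $\beta$ — since the remaining data (bifunctoriality of $\otimes$ and naturality of $\alpha,\beta,\eta$) were already recorded during the construction. The guiding idea is to reproduce, in the homotopical setting of $\mC=\Top/X_1$, the combinatorial argument used for discrete $2$-Segal sets in the proof of Proposition \ref{prop:associator-pentagon-2-segal-set} and Theorem \ref{thm:2-segal-mu-cat}: every iterated tensor product is a membrane object indexed by a polygonal subdivision of $P_n$, and all coherences are shadows of the invariance of membrane spaces under change of subdivision. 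The one structural point that makes this translation legitimate is that $X$ is Reedy fibrant, so each defining pullback square \eqref{eq.deftensor} and \eqref{eq.tripletensor} is a homotopy pullback; consequently a weak equivalence of bases induces a weak equivalence of total spaces in $\mC$, and every $2$-Segal map $X_n\to RX_{\Pc}$ becomes an isomorphism in $\Ho(\mC)$ after forming the appropriate fibre products.

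First I would set up a membrane device. For a polygonal subdivision $\Pc$ of the polygon $P_n$ and objects $A_1,\dots,A_n$ of $\mC$, attached to the boundary edges $\{0,1\},\dots,\{n-1,n\}$, form the homotopy pullback of $A_1\times\cdots\times A_n$ along $RX_{\Pc}\to X_{\{0,1\}}\times\cdots\times X_{\{n-1,n\}}$ and compose with $\partial_{\{0,n\}}$ to obtain an object $M_{\Pc}(A_1,\dots,A_n)\in\Ho(\mC)$. By construction $M_{\Pc}$ is functorial in the $A_i$, and any refinement $\Pc\le\Pc'$ induces a canonical comparison map; Proposition \ref{prop.colim}, together with the $2$-Segal property and Reedy fibrancy, shows that each such comparison map is a weak equivalence, hence an isomorphism in $\Ho(\mC)$. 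The two triangulations $\T',\T''$ of $P_3$ give $M_{\T'}=(A\otimes B)\otimes C$ and $M_{\T''}=A\otimes(B\otimes C)$, while the trivial subdivision $\{P_3\}$ gives the fourfold membrane of \eqref{eq.tripletensor}; unwinding the definitions identifies $\eta_{A,B,C}$ with the isomorphism induced by the span $M_{\T'}\xleftarrow{\simeq}M_{\{P_3\}}\xrightarrow{\simeq}M_{\T''}$, exactly as in \eqref{eq:span-associator}.

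For the pentagon I would run the argument of diagram \eqref{eq:huge-pentagon} essentially verbatim, now reading each node as a membrane object over $X_1$ rather than a set. Let $\Pi$ be the poset of polyhedral subdivisions of $P_4$ ordered by refinement, whose five maximal elements are the triangulations $\T_0,\dots,\T_4$ and whose nerve is the barycentric subdivision of the Mac Lane pentagon. The assignment $\Pc\mapsto M_{\Pc}(A,B,C,D)$ is a functor on $\Pi$ landing in isomorphisms of $\Ho(\mC)$, since every refinement map is a weak equivalence by the previous step. Each edge of the Mac Lane pentagon is obtained, after inverting an isomorphism, from a span $M_{\T_i}\leftarrow M_{\Pc_{ij}}\rightarrow M_{\T_j}$ inside this diagram, so the pentagon commutes because it is a subdiagram of a commuting diagram of isomorphisms all receiving compatible maps from the apex $M_{\{P_4\}}(A,B,C,D)=A\otimes B\otimes C\otimes D$. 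The triangle identity is treated exactly as in the construction of $\alpha$ and $\beta$: the relevant membrane carries a degenerate edge, and the homotopy Cartesian square of Definition \ref{def:unital-2-segal-top} forces the two composites to agree.

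The main obstacle is not any single axiom but the bookkeeping needed to guarantee that the comparison maps are defined functorially and that the pentagon genuinely sits inside a strictly commuting diagram of isomorphisms in $\Ho(\mC)$. Concretely one must check that the homotopy pullbacks defining $M_{\Pc}$ can be chosen compatibly across the refinement poset $\Pi$, so that the maps $M_{\{P_n\}}\to M_{\Pc}$ assemble into a cone, and that passing to $\Ho(\mC)$ preserves this compatibility. Here Reedy fibrancy together with Propositions \ref{prop.decomp} and \ref{prop.colim} supply the homotopy-pullback decompositions $RX_{\Pc'\cup\Pc''}\simeq RX_{\Pc'}\times^R_{RX_{\Pc'\cap\Pc''}}RX_{\Pc''}$ out of which the cone is built; once the cone is in place, any diagram of isomorphisms under a common object automatically commutes, and both coherence axioms follow. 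This is exactly the point at which the full strength of the $2$-Segal condition — invariance under all triangulations, not merely one — is invoked, mirroring its role in Theorem \ref{thm:2-segal-mu-cat}.
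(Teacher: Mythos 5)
Your proposal is correct and follows essentially the same route as the paper: the paper's proof of the pentagon is exactly the diagram you describe (its Figure of the Mac Lane pentagon is the homotopical incarnation of \eqref{eq:huge-pentagon}), with the unbracketed product $A\otimes B\otimes C\otimes D$, defined via $X_{\{0,1,2,3,4\}}$, serving as the apex mapping by canonical weak equivalences to every bracketing, so that all induced isomorphisms in $\Ho(\mC)$ commute. The only cosmetic difference is that the paper works with explicitly chosen strict pullback squares and their universal properties rather than your membrane functor $M_{\Pc}$ on the subdivision poset, and it leaves the unit/triangle axioms to the reader, which you treat via the unitality square as the paper does implicitly in constructing $\alpha$ and $\beta$.
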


\begin{proof} We verify MacLane's pentagon relation leaving the remaining compatibilities to the
	reader (they will also follow from \S \ref{subsec:higherhall}). For each quadrupel $A,B,C,D$
	of objects in $\mC$ we choose a pullback square
	\begin{equation}
		\label{eq.quadtensor}
		\begin{gathered}
			\xymatrix{
			A \otimes B \otimes C \otimes D \ar[r]\ar[d] & X_{\{0,1,2,3,4\}} \ar[d] \\
			A \times B \times C \times D \ar[r] & X_{\{0,1\}} \times X_{\{1,2\}} \times X_{\{2,3\}} \times X_{\{3,4\}},
			}
		\end{gathered}
	\end{equation}
	Using the universal properties of the chosen squares \eqref{eq.deftensor},
	\eqref{eq.tripletensor}, \eqref{eq.quadtensor}, we can construct a canonical diagram 
	\begin{equation}
		\begin{gathered}
		\label{eq.onetriangle}
		\xymatrix{
		A \otimes B \otimes C \otimes D \ar[r]\ar[d]^f \ar@/_5pc/[dd]_{g \circ f} &
		X_{\{0,1,2,3,4\}} \ar[d]_{\simeq}^{f'} \\
		(A \otimes B \otimes C) \otimes D \ar[r] \ar[d]^g & X_{\{0,1,2,3\}}
		\times_{X_{\{1,3\}}} X_{\{1,3,4\}} \ar[d]_{\simeq}^{g'}\\
		( (A \otimes B) \otimes C) \otimes D \ar[r] \ar[d] & X_{\{0,1,2\}} \times_{ X_{\{1,2\}}}
		X_{\{0,2,3\}} \times_{X_{\{3,4\}}} X_{\{0,3,4\}} \ar[d]\\
		A \times B \times C \times D \ar[r] & X_{\{0,1\}} \times X_{\{1,2\}} \times
		X_{\{2,3\}} \times X_{\{3,4\}}.
		}
	\end{gathered}
	\end{equation}
	The maps $f'$ and $g'$ are $2$-Segal maps and hence weak equivalences. By Definition, all
	squares in the diagram are Cartesian. 
	Therefore, the maps $f$, $g$ and $g \circ f$ are weak equivalences which are uniquely
	determined by universal properties. The analogous diagrams for all possible bracketings of the expressions $A \otimes B \otimes C
	\otimes D$ assemble to form the diagram depicted in Figure \ref{fig:maclanepentagon} in which all triangles commute (the
	labeled triangle corresponds to \eqref{eq.onetriangle}).
	\begin{figure}[h]
		\[ 
		\begin{gathered}
		\begin{tikzpicture}[scale=1.4,font=\scriptsize]

			\node (origin) at (0,0) (origin) {$A \otimes B \otimes C \otimes D$}; 
			\node (P0) at (0:3.5cm)  { $( (A \otimes B) \otimes C) \otimes D$}; 
			\node (P2) at (1*72:3.5cm) { $( A \otimes (B \otimes C)) \otimes D$}; 
			\node (P4) at (2*72:3.5cm) { $ A \otimes ((B \otimes C) \otimes D)$}; 
			\node (P1) at (3*72:3.5cm) { $ A \otimes (B \otimes (C \otimes D))$}; 
			\node (P3) at (4*72:3.5cm) { $ (A \otimes B) \otimes (C \otimes D)$}; 
			\path (P0) -- (P2)
			node [midway] (P02) {$(A \otimes B \otimes C) \otimes D$}; 
			\path (P2) -- (P4) node [midway] (P24) {$A \otimes (B \otimes C) \otimes D$};
			\path (P4) -- (P1) node [midway] (P14) {$A \otimes (B \otimes C \otimes D)$};
			\path (P1) -- (P3) node [midway] (P13) {$A \otimes B \otimes (C \otimes D)$};
			\path (P0) -- (P3) node [midway] (P03) {$(A \otimes B) \otimes C \otimes D$}; 
			\draw [-latex,  thick] (P02) -- (P2);
			\draw  [-latex, thick] (origin) -- (P2);
			\draw  [-latex,  thick] (origin) -- (P1);
			\draw  [-latex,  thick] (origin) -- (P3);
			\draw  [-latex,  thick] (origin) -- (P4);
			\tikzset{auto}
			\draw  [-latex,  thick] (origin) to node [swap] {$f$} (P02);
			\draw  [-latex,  thick] (origin) to node {$g \circ f$} (P0);
			\draw  [-latex,  thick] (origin) -- (P24);
			\draw  [-latex,  thick] (origin) -- (P14);
			\draw  [-latex,  thick] (origin) -- (P13);
			\draw  [-latex,  thick] (origin) -- (P03);
			\draw   [-latex,  thick] (P02) to node [swap]{$g$} (P0);
			\draw [-latex,  thick] (P24) -- (P2); 
			\draw [-latex,  thick] (P24) -- (P4); 
			\draw [-latex,  thick] (P14) -- (P4); 
			\draw [-latex,  thick] (P14) -- (P1); 
			\draw [-latex,  thick] (P13) -- (P3); 
			\draw [-latex,  thick] (P13) -- (P1); 
			\draw [-latex,  thick] (P03) -- (P0); 
			\draw [-latex,  thick] (P03) -- (P3); 

		\end{tikzpicture}
		\end{gathered}
		\]
		\caption{MacLane's pentagon for $\hHX$}
		\label{fig:maclanepentagon}
	\end{figure}
	Passing to the homotopy category $\Ho(\mC)$ we deduce the commutativity of MacLane's pentagon.
\end{proof}

\begin{rem}
	Note that the construction of the monoidal category $\hHX$ only involves the $4$-skeleton of the
	$2$-Segal space $X$. In \S \ref{subsec:higherhall}, we refine Theorem \ref{thm:monoidalhall} by
	constructing a monoidal $\infty$-category $\HX$ whose homotopy category is given by the monoidal category
	$\hHX$. The construction of $\HX$ will utilize the full simplicial structure of $X$.
\end{rem}

Finally, we describe the relation of the monoidal structure on $\h\HX$ to the derived Hall algebras
of \S \ref{subsec:derivedhall} (whose construction only involves the $3$-skeleton of $X$). 
The following Proposition is immediately verified.

\begin{prop}\label{prop:hf} Let $X$ be a Reedy fibrant, unital $2$-Segal topological space which is
	admissible with respect to the transfer structure (weakly proper maps, locally proper maps).
	Assume further that $X$ satisfies $X_0 = \pt$. Consider the full subcategory $\hHX_{\on{hf}} \subset \hHX$ spanned by those maps $Y
	\to X_1$ such that $Y$ is homotopy finite. Then the monoidal structure on
	$\hHX$ restricts to a monoidal structure on the category $\hHX_{\on{hf}}$.
\end{prop}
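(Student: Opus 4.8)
The plan is to verify the two defining closure properties of a monoidal full subcategory: that the unit object $\one$ lies in $\hHX_{\on{hf}}$, and that $A \otimes B \in \hHX_{\on{hf}}$ whenever $A, B \in \hHX_{\on{hf}}$. Since $\hHX_{\on{hf}}$ is \emph{full}, the associativity and unit isomorphisms $\eta$, $\alpha$, $\beta$ of Theorem~\ref{thm:monoidalhall} then restrict automatically to it, and MacLane's pentagon and the triangle identities persist because they already hold in $\hHX$. The unit poses no difficulty: $\one$ is represented by the degeneracy $X_0 \to X_1$, whose total space $X_0 = \pt$ is trivially homotopy finite. So the whole content lies in the closure under $\otimes$.

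For this, I would first note that $A \times B$ is homotopy finite by the multiplicativity part of Proposition~\ref{prop:-chi-BD-properties}(a). Since $X$ is Reedy fibrant, the defining square \eqref{eq.deftensor} exhibits $A \otimes B$ as the homotopy pullback
\[
A \otimes B \;\simeq\; (A \times B) \times^R_{X_{\{0,1\}} \times X_{\{1,2\}}} X_{\{0,1,2\}},
\]
that is, as the base change of $(\partial_2, \partial_0)\colon X_2 \to X_1 \times X_1$ along the structure map $g\colon A \times B \to X_1 \times X_1$. The verification is then organized by connected components. As $A \times B$ is homotopy finite, it has finitely many components $C_i$, each connected and homotopy finite, and each mapping up to homotopy into a single component $B_i$ of $X_1 \times X_1$. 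This writes $A \otimes B$ as the finite disjoint union $\coprod_i C_i \times^R_{B_i} X_2^{(B_i)}$, where $X_2^{(B_i)}$ is the preimage of $B_i$ in $X_2$. Here the weak properness of $(\partial_2, \partial_0)$ — the contravariant half of the admissibility hypothesis — enters: it forces $\pi_0(X_2) \to \pi_0(X_1 \times X_1)$ to be finite-to-one, so each $X_2^{(B_i)}$ has only finitely many connected components $W_{ij}$. Hence $A \otimes B$ is a finite disjoint union of homotopy pullbacks $C_i \times^R_{B_i} W_{ij}$ of connected spaces over the connected base $B_i$.

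Each of these pullbacks fibers over the homotopy finite connected base $C_i$, with homotopy fiber equal to the homotopy fiber of $(\partial_2, \partial_0)|_{W_{ij}}$ over a point of $B_i$. Granting that this fiber is homotopy finite, Proposition~\ref{prop:-chi-BD-properties}(b) then gives that $C_i \times^R_{B_i} W_{ij}$ is homotopy finite, and summing over the finitely many indices $(i,j)$ yields that $A \otimes B$ is homotopy finite, completing the argument.

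The hard part is exactly this fiber-finiteness, i.e. extracting genuine homotopy finiteness of the fibers of $(\partial_2, \partial_0)$ from the admissibility data: weak properness (Definition~\ref{defi:weaklyproper}) controls only $\pi_0$ of these fibers, not their higher homotopy. To pin down the higher homotopy I would bring in the local homotopy properness of $\partial_1$ together with the unitality and $2$-Segal structure of $X$ (Definition~\ref{def:unital-2-segal-top}), which govern the spaces of $2$-simplices with prescribed boundary edges; this is the place where the full transfer-structure axioms, rather than the mere component bookkeeping above, must be used. Once the relevant fibers are known to be homotopy finite, the decomposition into finitely many components and Proposition~\ref{prop:-chi-BD-properties} make the rest of the proof routine.
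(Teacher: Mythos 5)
Your reduction is the right one, and it is essentially all the paper itself offers (the text simply declares the Proposition ``immediately verified''): fullness disposes of the coherence data, the unit is $X_0=\pt$, the space $A\times B$ is homotopy finite by Proposition \ref{prop:-chi-BD-properties}(a), and the entire content is that the base change of $(\partial_2,\partial_0)\colon X_2\to X_1\times X_1$ along $A\times B\to X_1\times X_1$ is homotopy finite; your component bookkeeping via weak properness is also correct. The problem is the step you explicitly defer: the homotopy finiteness of the homotopy fibers of $(\partial_2,\partial_0)$. You propose to extract this from the local homotopy properness of $\partial_1$ together with unitality and the $2$-Segal property, but this is not the right mechanism and I do not see how to make it work. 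Local homotopy properness of $\partial_1$ controls the homotopy fibers of a \emph{different} map, unitality only identifies the fibers of $(\partial_2,\partial_0)$ over points of the form $(1,b)$ or $(a,1)$, and the $2$-Segal squares relate $X_3$ to $X_2$ rather than constraining the fibers of $(\partial_2,\partial_0)$ over a general $(a,b)$. Concretely, in the Waldhausen case the homotopy fiber of $(\partial_2,\partial_0)$ over $(A',A'')$ has $\pi_0\cong\Ext^1(A'',A')$ and $\pi_1\cong\Hom(A'',A')$, and the finiteness of these is a genuinely new condition, invisible to the fibers of $\partial_1$ (which see only orbit counts of the form $\Aut(A)/\Aut_{A'}(A)$).

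What actually makes the fibers of $(\partial_2,\partial_0)$ homotopy finite in the paper's motivating examples is the \emph{local homotopy finiteness of $X_1$ and $X_2$}, which is proved separately in Proposition \ref{prop:finitestable}(1) and is not part of ``admissible.'' Granting it, the long exact sequence of the fibration $W\to B$, for $W$ a component of $X_2$ lying over a component $B$ of $X_1\times X_1$, gives finite and eventually vanishing $\pi_i$ of the fiber for $i\ge 1$ and finite $\pi_0$ of the fiber from finiteness of $\pi_1(B)$; weak properness then bounds the number of such $W$, exactly as in your decomposition, and Proposition \ref{prop:-chi-BD-properties}(b) finishes. So the fix is to add (or to read into the statement) the hypothesis that $X_1$ and $X_2$ are locally homotopy finite; note that this is also needed for the subsequent homomorphism $\QQ[M]\to\Hall(X,\Fenh_0)$ to be well defined, since a map $Y\to X_1$ with $Y$ homotopy finite is locally homotopy proper only when $X_1$ is locally homotopy finite. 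As written, your argument has an unclosed gap at its central step, and the route you sketch for closing it from the stated hypotheses alone would not succeed.
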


In the situation of Proposition \ref{prop:hf}, denote by $M$ the monoid of isomorphism classes of
objects in $\hHX_{\on{hf}}$. We form the semigroup algebra $\QQ[M]$. For $x \in X_1$, let $C_x
\subset X_1$ denote the connected component represented by $x$. The isomorphism class of $C_x
\subset X_1$, considered as an object of $\hHX_{\on{hf}}(X)$ provides an element of $M$ which we denote by $[x]$.  
Then there exists a natural surjective homomorphism of $\QQ$-algebras
\[
	\pi: \QQ[M] \lra \Hall(X, \Fenh_0), (Y \overset{f}{\to} X_1) \mapsto \sum_{[x] \in \pi_0(X_1)}
	|Rf^{-1}(x)|_h [x],
\]
where $\Hall(X, \Fenh_0)$ denotes the Hall algebra from \S \ref{subsec:derivedhall}. This shows that
the derived Hall algebra $\Hall(X, \Fenh_0)$ can be recovered from the monoidal structure on
$\hHX$.

\subsection{Segal fibrations and \texorpdfstring{$\inftytwo$}{(infty,2)}-categories}

In analogy to the situation for $\inftyone$-categories, there are various models for the notion of an
$\inftytwo$-category. To describe the bicategorical structures appearing in this work, we will use
{\em Segal fibrations}. In fact, we will also use the dual notion of a {\em co}Segal fibration.  These
and other models for $\inftytwo$-categories, as well as their relations, are studied in detail in the
comprehensive treatment \cite{lurie.2cat}.

\begin{defi} \label{defi:dinftybi} A map $p: Y \to \ND$ of simplicial sets is called a {\em
	Segal fibration} if it satisfies the following conditions:
	\begin{enumerate}[label=(S\arabic{*})]
		\item \label{dinftybi.1} The map $p$ is a Cartesian fibration.
		\item \label{dinftybi.2} For every $n \ge 2$, the diagram
			\[
				\xymatrix@R=0.5ex{  & &Y_{[n]} \ar[ddd] \ar[dddrrrr]\ar[dddll]&  &  \\ 
				& & &  &  \\ 
				& & &  &  \\ 
				 Y_{\{0,1\}}\ar[dr] & &\ar[dl]\ar[dr] Y_{\{1,2\}} && \dots&
				&\ar[dl] Y_{\{n-1,n\}}   \\
				& Y_{\{1\}} &&Y_{\{2\}}& \dots &Y_{\{n-1\}}& ,
				}	
			\]
			induced by the functors associated with the Cartesian fibration $p$, 
			is a limit diagram in the $\infty$-category $\Cat_{\infty}$.
		\item \label{dinftybi.3} The $\infty$-category $Y_{[0]}$ is a Kan complex.
	\end{enumerate}
\end{defi}

A Segal fibration $p: Y \to \ND$ models an $\inftytwo$-category $\B$ with
set of objects given by the vertices of $Y_{[0]}$. Given objects $x,y$ of $\B$, the
$\infty$-category $\Map_{\B}(x,y)$ of $1$-morphisms between $x$ and $y$ is defined as the limit of
the diagram of $\infty$-categories
\[
	\xymatrix@R=0.5ex{ 
		\{x\} \ar[rd] & &\ar[dl] Y_{\{0,1\}} \ar[dr] & &\ar[dl] \{y\} \\
		& Y_{\{0\}} & & Y_{\{1\}} & 
	}
\]
involving the functors associated with the Cartesian fibration $p$.  Further, by using similar
arguments as for $1$-Segal spaces, we can use property \ref{dinftybi.2} to obtain coherently
associative composition functors
\[
	\Map_{\B}(x_1,x_2) \times \Map_{\B}(x_2,x_3) \times \dots \times \Map_{\B}(x_{n-1},x_n) \lra
	\Map_{\B}(x_1,x_n)
\]
in $\h \Cat_{\infty}$.

\begin{ex} \label{ex:tau} 
	Let $p: Y \to \ND$ be a Segal fibration. We define $K \subset Y$ to be the simplicial
	subset consisting of those simplices with all edges $p$-Cartesian. Then the restriction
	$p_{|K} : K \to \ND$ is a right fibration (\cite[2.4.2.5]{lurie.htt}) which corresponds
	under the Grothendieck construction \cite[2.2.1.2]{lurie.htt} to a simplicial space
	$\tau^{\le 1}(Y): \Dop \to \sSet$. It is easy to verify that $\tau^{\le 1}(Y)$ is a $1$-Segal space. The
	$\inftyone$-category corresponding to $\tau^{\le 1}(Y)$ is obtained from the $\inftytwo$-category
	modelled by the Segal fibration $p$ by discarding non-invertible $2$-morphisms. Note, however, that the
	$1$-Segal space $\tau^{\le 1}(Y)$ is not necessarily complete. 
\end{ex}

\begin{defi}\label{defi:bicomplete} 
	Let $p: Y \to \ND$ be Segal fibration. We say $p$ is {\em complete} if the $1$-Segal space
	$\tau^{\le 1}(Y)$ from Example \ref{ex:tau} is complete. 
\end{defi}

\begin{rem}
	Under the Grothendieck construction \cite[3.2.0.1]{lurie.htt}, the completeness condition of
	Definition \ref{defi:bicomplete} corresponds to the respective condition for Segal objects
	in $\Cat_{\infty}$ introduced in \cite[\S 1.2]{lurie.2cat}. As shown in loc. cit., complete
	Segal fibrations and complete Segal objects in $\Cat_{\infty}$ provide equivalent models for
	$\inftytwo$-categories. 
\end{rem}

\begin{ex} \label{ex:monoidalcomplete} 
	Let $p: Y \to \ND$ be a Segal fibration and assume that the Kan complex $Y_{[0]}$ is weakly
	contractible. In this case, we say $p$ {\em exhibits a monoidal structure on the
	$\infty$-category $\C = Y_{[1]}$}. As explained in detail in \cite[\S
	1]{lurie.noncommutative}, the Cartesian fibration $p$ equips the homotopy category $\h \C$
	with a monoidal structure. Beyond that, the functors 
	\[
		\C^n \lra \C, \; (y_1, y_2, \dots, y_n) \mapsto y_1 \otimes y_2 \otimes \dots
		\otimes y_n
	\]
	associated to the fibration $p$, where $n \ge 2$, encode a coherently associative system of
	functors of $\infty$-categories. 

	The monoidal structure on $\h \C$ equips the set $\pi_0( \h \C)$ of isomorphism classes of
	objects in $\C$ with the structure of a monoid.  Let $P \subset \C$ be the largest
	simplicial subset such that
	\begin{enumerate}
		\item for every vertex $x$ of $P$, the corresponding class $[x]$ in the monoid
			$\pi_0(\h \C)$ is invertible,
		\item every edge in $P$ is an equivalence in $\C$.
	\end{enumerate}
	The simplicial set $P$ is a Kan complex which models the classifying space of objects in
	$\C$ which are invertible with respect to $\otimes$. The Segal fibration $p$ is complete if
	and only if $P$ is weakly contractible.
\end{ex}

We introduce a notion dual to Definition \ref{defi:dinftybi}.

\begin{defi} \label{defi:inftybi} 
	A map $p: Y \to \NDop$ of simplicial sets is called a {\em coSegal fibration} if the
	opposite map $p^{\op}: Y^{\op} \to \ND$ is a Segal fibration. Explicitly, this amounts to
	the following conditions:
	\begin{enumerate}[label=(CS\arabic{*}$)$]
		\item \label{inftybi.1} The map $p$ is a coCartesian fibration.
		\item \label{inftybi.2} For every $n \ge 2$, the diagram
			\[
				\xymatrix@R=0.5ex{  & &Y_{[n]} \ar[ddd] \ar[dddrrrr]\ar[dddll]&  &  \\ 
				& & &  &  \\ 
				& & &  &  \\ 
				 Y_{\{0,1\}}\ar[dr] & &\ar[dl]\ar[dr] Y_{\{1,2\}} && \dots&
				&\ar[dl] Y_{\{n-1,n\}}   \\
				& Y_{\{1\}} &&Y_{\{2\}}& \dots &Y_{\{n-1\}}& ,
				}	
			\]
			induced by the functors associated with the coCartesian fibration $p$, 
			is a limit diagram in the $\infty$-category $\Cat_{\infty}$.
		\item \label{inftybi.3} The $\infty$-category $Y_{[0]}$ is a Kan complex.
	\end{enumerate}
	We call a coSegal fibration {\em complete} if its opposite map is a complete Segal fibration.
\end{defi}

\begin{rem}\label{rem:segop}
	As for Segal fibrations, given a coSegal fibration $p: Y \to \NDop$, we obtain an
	$\inftytwo$-category $\B$ with set of objects given by the vertices of $Y_{[0]}$. For objects
	$x,y$, we can define the $\infty$-category of $1$-morphisms between $x$ and $y$ as the limit
	of the diagram of $\infty$-categories
	\[
		\xymatrix@R=0.5ex{ 
			\{x\} \ar[rd] & &\ar[dl] Y_{\{0,1\}} \ar[dr] & &\ar[dl] \{y\} \\
			& Y_{\{0\}} & & Y_{\{1\}} & 
		}
	\]
	where the functors are now obtained from the {\em co}Cartesian fibration $p$. Note, however,
	that the $\inftytwo$-category $\B_p$ corresponding to $p$ is not generally equivalent to the
	$\inftytwo$-category $\B_{p^{\op}}$ modelled by the opposite Segal fibration $p^{\op}$.
	Rather, $\B_{p^{\op}}$ is the $\inftytwo$-category obtained by passing to opposites at all
	levels of morphisms.
	
	Passing from a given Segal fibration, modelling a certain $\inftytwo$-category $\B$, to the
	coSegal fibration which models {\em the same} $\inftytwo$-category is a rather tedious and
	inexplicit process: For example, one can use the functor $\C \mapsto \C^{\op}$ on $\Cat_{\infty}$
	defined in \cite[1.2.16]{lurie.noncommutative} in combination with the Grothendieck
	construction \cite[3.2.0.1]{lurie.htt}.
\end{rem}

While Segal fibrations and coSegal fibrations furnish equivalent models for $\inftytwo$-categories,
the models show differences with regard to different notions of {\em lax} functors which will be defined
now.
A morphism $f: [m] \to [n]$ of ordinals is called {\em convex} if $f$ is injective and the image
$\{f(0),\dots,f(m)\} \subset [n]$ is a convex subset.

\begin{defi} \label{defi:leftlax} Let $p: Y \to \ND$ and $q: Y' \to \ND$ be Segal fibrations. A {\em left lax}
	functor between $p$ and $q$ is a map of simplicial sets $F: Y \to Y'$ such that the diagram
	\[
		\xymatrix@C=1ex{ Y \ar[dr]_p \ar[rr]^F & & Y'\ar[dl]^q \\
			& \ND &
		}
	\]
	commutes and, for every $p$-Cartesian edge $e$ of $Y$ such that $p(e)$ is convex, the edge
	$F(e)$ is $q$-Cartesian. A map between coSegal fibrations is called {\em right lax} if its
	opposite is a left lax functor of Segal fibrations.
\end{defi}

\begin{ex} Let $p: Y \to \ND$ and $q: Y' \to \ND$ be Segal fibrations with contractible $[0]$-fiber. As
	explained in Example \ref{ex:monoidalcomplete} this means that $p$ and $q$ model monoidal $\infty$-categories. 
	Informally, a left lax functor $F$ between $Y$ and $Y'$ corresponds to a functor 
	\[
		f: Y_{[1]} \to Y'_{[1]}
	\]
	of underlying $\infty$-categories together with a coherent system of maps
	\[
		f(y_1 \otimes y_2 \otimes \dots \otimes y_n) \lra f(y_1) \otimes f(y_2) \otimes
		\dots \otimes f(y_n)
	\]
	which are not required to be equivalences. In contrast, assuming the monoidal $\infty$-categories to be
	modelled by coSegal fibrations, a right lax functor corresponds to a coherent system of maps
	\[
		f(y_1) \otimes f(y_2) \otimes \dots \otimes f(y_n) \lra f(y_1 \otimes y_2 \otimes \dots \otimes y_n)
	\]
\end{ex}

\begin{rem} Since both complete Segal and coSegal fibrations are models for the notion of
	an $\inftytwo$-category, Definition \ref{defi:leftlax} provides the collection of
	$\inftytwo$-categories with two different kinds of morphisms: left lax and right lax functors.

  	In view of Remark \ref{defi:inftybi}, it is not clear how to effectively describe right
	lax functors using Segal fibrations, and, vice versa, left lax functors using coSegal
	fibrations. 
	
	We expect the association $X \mapsto H(X)$ to be functorial with respect to left lax
	functors. Since we constructed $H(X)$ as a coSegal fibration, it is unclear how to express
	this functoriality in the current context. This problem will be resolved in \S
	\ref{section:higher-bicat} where we use structures which are self-dual, so that they can be
	described both in terms of Segal and coSegal fibrations.
\end{rem}

\subsection{The Hall \texorpdfstring{$\inftytwo$}{(infty,2)}-category of a $2$-Segal space}
\label{subsec:higherhall}

In this section, we associate to a $2$-Segal space $X$ a coSegal fibration $\HX \to \NDop$,
in the sense of Definition \ref{defi:inftybi}. We call the corresponding $\inftytwo$-category the {\em Hall $\inftytwo$-category of $X$}. 
To construct $\HX$, we will first associate to any simplicial space $X$ a coCartesian fibration $\pSX \to \NDop$.
We will then show that, if $X$ is a $2$-Segal space, we can restrict to a coSegal fibration 
\[
\xymatrix{
\SX \ar@{^{(}->}[r] \ar[dr] & \ar[d] \pSX\\
& \NDop
}.
\]

The starting point of our construction is analogous to the construction of the Cartesian monoidal
structure associated to an $\infty$-category with finite limits (see \cite[\S 1.2]{lurie.noncommutative}). 
We define a category $\Delta^{\times}$ as follows:
\begin{itemize}
	\item The objects $\Delta^{\times}$ are given by pairs $([n], \{i,\dots,j\} )$ where $[n]$ is
		an object of $\Delta$ and $\{i,\dots,j\}$ is an interval in $\{0,\dots,n\}$. 
	\item A morphism $f: ([n], \{i,\dots,j\}) \to ([m],\{i,\dots,j\})$ is given by a morphism $f : [n] \to [m]$ in $\Delta$
		satisfying $f(\{i,\dots,j\}) \subset \{i',\dots,j'\}$.
\end{itemize}
The forgetful functor $(\Delta^{\times})^{\op} \to \Dop$ is a Grothendieck fibration which
implies that the induced functor $\N(\Delta^{\times})^{\op} \to \NDop$ is a Cartesian fibration
of $\infty$-categories. 

Consider the $\infty$-category $\inftyS$ of spaces, defined as the simplicial nerve of the full simplicial
subcategory in $\sSet$ spanned by the Kan complexes.
We define a map of simplicial sets 
\[
	q: \pS^{\times} \to \NDop
\] 
via the following universal property: For all maps $K \to \NDop$, we have a natural bijection
\begin{equation} \label{eq.cocartesianfib}
	\Hom_{\sSet{/\NDop}}(K, \pS^{\times}) \cong \Hom_{\sSet}(K \times_{\NDop}
	\N(\Delta^{\times})^{op}, \inftyS).
\end{equation}
For $n \ge 0$, the fiber $\pS^{\times}_{[n]}$ of $q$ over $[n]$ can be identified with the $\infty$-category of 
functors
\[
	Y: \N(I_{[n]})^{\op} \to \inftyS,
\]
where $I_{[n]}$ denotes the poset of nonempty intervals of $[n]$. For example, the objects of
$\pS^{\times}_{[2]}$ correspond to homotopy coherent diagrams in $\inftyS$ of the form
\[
	\xymatrix@C=1ex@R=1ex{
	& &\ar[dl]\ar[dr]  Y_{\{ 0,1,2 \}} & & \\
	& \ar[dl]\ar[dr] Y_{\{ 0,1 \}} & &\ar[dl]\ar[dr]  Y_{\{ 1,2 \}} &\\
	Y_{\{ 0 \}} & & Y_{\{ 1 \}} &  &  Y_{\{ 2 \}}.
	}
\]

\begin{prop} The projection map $q: \pS^{\times} \to \NDop$ is a coCartesian fibration.  The
	$q$-coCartesian edges of $\pS^{\times}$ covering $f: [n] \to [m]$ are those edges $Y \to Y'$
	such that, for all $0 \le i \le j \le n$, the induced map $Y_{\{f(i), \dots, f(j)\}} \to
	{Y'}_{\{i,\dots,j\}}$ is an equivalence of spaces. 
\end{prop}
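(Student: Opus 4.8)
The plan is to recognize $q:\pS^{\times}\to\NDop$ as the fibration obtained by pushing the $\infty$-category $\inftyS$ forward along the Cartesian fibration $\pi:\N(\Delta^{\times})^{\op}\to\NDop$, exactly in the spirit of Lurie's construction of the Cartesian monoidal structure \cite[\S 1.2]{lurie.noncommutative}, and to deduce both assertions from the general theory of (co)Cartesian fibrations. First I would record the structural input supplied by $\pi$. Its fiber over $[n]$ is the nerve of the poset $I_{[n]}^{\op}$ of intervals of $[n]$, so the defining universal property \eqref{eq.cocartesianfib} identifies the fiber of $q$ over $[n]$ with $\Fun(\N(I_{[n]})^{\op},\inftyS)$, as recorded in the text. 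The edge of $\NDop$ determined by a morphism $f:[n]\to[m]$ of $\Delta$ runs from $[m]$ to $[n]$, and its Cartesian lift in $(\Delta^{\times})^{\op}$ is obtained by sending an interval $A=\{i,\dots,j\}$ of $[n]$ to the interval hull $\{f(i),\dots,f(j)\}$ of $f(A)$ in $[m]$; thus the Cartesian pullback functor between fibers is the interval-hull functor $f_{*}:I_{[n]}\to I_{[m]}$, $\{i,\dots,j\}\mapsto\{f(i),\dots,f(j)\}$.

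With this in hand I would exhibit the coCartesian pushforward explicitly: over the edge determined by $f$, it is the functor $\Fun(\N(I_{[m]})^{\op},\inftyS)\to\Fun(\N(I_{[n]})^{\op},\inftyS)$ given by precomposition with $f_{*}^{\op}$. Concretely, for $Y$ in the fiber over $[m]$ one sets $Y'=Y\circ f_{*}^{\op}$, so that $Y'_{\{i,\dots,j\}}=Y_{\{f(i),\dots,f(j)\}}$, and one realizes the pushforward edge $Y\to Y'$ through the canonical comparison furnished by the Cartesian structure of $\pi$. The verifications that $q$ is an inner fibration and that such an edge is $q$-coCartesian are then carried out by transporting the relevant lifting problems across the adjunction \eqref{eq.cocartesianfib}: a horn-filling problem for $q$ relative to $\NDop$ becomes the problem of extending a map to $\inftyS$ along the inclusion obtained by pulling the horn inclusion back along $\pi$ to $\N(\Delta^{\times})^{\op}$. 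Since $\inftyS$ is an $\infty$-category, these extension problems are solvable once one knows that pulling inner anodyne (resp. marked anodyne) maps back along the inner (resp. Cartesian) fibration $\pi$ again yields maps of the same anodyne type; this is the standard stability property underlying the relative functor-category construction \cite[\S 3.2.2]{lurie.htt}.

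The characterization of coCartesian edges then drops out: an edge $Y\to Y'$ of $\pS^{\times}$ lying over the edge determined by $f:[n]\to[m]$ is $q$-coCartesian exactly when it exhibits $Y'$ as $Y\circ f_{*}^{\op}$, equivalently when each structure map $Y_{\{f(i),\dots,f(j)\}}\to Y'_{\{i,\dots,j\}}$ is an equivalence in $\inftyS$, which is precisely the stated condition. The main obstacle I anticipate is the technical step just described — that the universal property \eqref{eq.cocartesianfib} converts the existence and essential uniqueness of coCartesian lifts into solvable extension problems against $\inftyS$ — which rests entirely on the stability of the anodyne classes under pullback along $\pi$ together with the use of the Cartesian-fibration structure of $\pi$ to pin down the transition functors $f_{*}^{\op}$. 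By contrast, the remaining bookkeeping (functoriality of the interval-hull assignment $f\mapsto f_{*}$ under composition, and the identification of fibers) is routine and I would treat it only briefly.
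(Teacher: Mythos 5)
Your proposal is correct and is essentially the paper's argument: the paper's proof consists of a single citation to \cite[3.2.2.12]{lurie.htt}, which is precisely the general statement you reconstruct — that pushing a constant family of $\infty$-categories forward along the Cartesian fibration $\N(\Delta^{\times})^{\op}\to\NDop$ yields a coCartesian fibration whose coCartesian edges are detected pointwise by the interval-hull transition functors. Your identification of the fibers, of the pullback functor $f_*$, and of the characterization of coCartesian edges all match what that result delivers.
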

\begin{proof}\cite[3.2.2.12]{lurie.htt}
\end{proof}

Let $X$ be a Reedy fibrant simplicial space. The functor of categories
\[
	(\Delta^{\times})^\op \to (\sSet)^{\circ}, \; ([n], \{i,\dots,j\}) \mapsto X_{\{i,\dots,j\}}
\]
induces a functor $\N(\Delta^{\times})^{\op} \to \inftyS$
of $\infty$-categories. Evaluating the adjunction \eqref{eq.cocartesianfib} for $K = \NDop$, we obtain
a section
\[
  \xymatrix@C=1ex{
    \NDop \ar[rr]^-{s_X} \ar[dr]_-{\id} & & \ar[dl]^-{q} \pS^{\times}\\
    	&	\NDop &
	}
\]
of the map $q$. 
We define the simplicial set $\pSX$ to be the overcategory $(\pS^{\times})^{/s_X}$ relative to
$\NDop$ (see \cite[4.2.2]{lurie.htt}). 

\begin{exa} The objects of $\pSX_{[2]}$ can be
identified with edges in $\Fun(\N(I_{[2]})^{\op}, \inftyS)$
\[
	\xymatrix@C=1ex@R=1ex{
	& &\ar[dl]\ar[dr]  Y_{\{ 0,1,2 \}} & & \\
	& \ar[dl]\ar[dr] Y_{\{ 0,1 \}} & &\ar[dl]\ar[dr]  Y_{\{ 1,2 \}} &\\
	Y_{\{ 0 \}} & & Y_{\{ 1 \}} &  &  Y_{\{ 2 \}}
	} 
	\xymatrix@C=1ex@R=1ex{\\
	\quad\lra\quad\\
	}
	\xymatrix@C=1ex@R=1ex{
	& &\ar[dl]\ar[dr]  X_{\{ 0,1,2 \}} & & \\
	& \ar[dl]\ar[dr] X_{\{ 0,1 \}} & &\ar[dl]\ar[dr]  X_{\{ 1,2 \}} &\\
	X_{\{ 0 \}} & & X_{\{ 1 \}} &  &  X_{\{ 2 \}}.
	}
\]
Note, that $X_{\{i,\dots,j\}} = X_{j-i}$ while, for example, the spaces $Y_{\{i\}}$ and $Y_{\{j\}}$ are
generally unrelated for $i \ne j$.
\end{exa}

By the dual statement of \cite[4.2.2.4]{lurie.htt}, the natural map $q_X: \pSX \to
\NDop$ is a coCartesian fibration and an edge of $\pSX$ is $q_X$-coCartesian if and only
if its image in $\pS^{\times}$ is $q$-coCartesian. 

\begin{defi}\label{defi.SX}
We define $\HX \subset \pSX$ to be the full simplicial subset spanned by those vertices $Y \in
\pSX_{[n]}$, $n \ge 0$, which satisfy the following conditions: 
\begin{enumerate}[label=(H\arabic*)]
	\item \label{m1} For all $0 \le i \le n$, the space $Y_{\{i\}}$ is contractible. 
	\item \label{m2} If $n > 0$ then, for all $0 \le i \le j \le n$, the square 
		\[
		\xymatrix{
		Y_{\{i, \dots, j\}} \ar[r] \ar[d]& \ar[d]Y_{\{i, i+1\}} \times_{Y_{\{i+1\}}} \dots
		\times_{Y_{\{j-1\}}} Y_{\{j-1,j\}} \\ 
		X_{\{i, \dots, j\}} \ar[r] & X_{\{i, i+1\}} \times_{X_{\{i+1\}}} \dots
		\times_{X_{\{j-1\}}} X_{\{j-1,j\}}  
		}
		\]
	      is Cartesian. Here, the fiber products are to be understood as $\infty$-categorical
	      limits.
\end{enumerate}
\end{defi}

\begin{rem} In the context of Definition \ref{defi.SX}, if $Y \in \pSX_{[n]}$
	satisfies condition \ref{m1}, then requiring the square in \ref{m2} to be
	Cartesian is equivalent to requiring the square
	\[
		\xymatrix{
		Y_{\{i, \dots, j\}} \ar[r] \ar[d]& \ar[d]Y_{\{i, i+1\}} \times \dots
		\times Y_{\{j-1,j\}} \\ 
		X_{\{i, \dots, j\}} \ar[r] & X_{\{i, i+1\}} \times_{X_{\{i+1\}}} \dots
		\times_{X_{\{j-1\}}} X_{\{j-1,j\}}  
		}
	\]
	to be Cartesian.
\end{rem}

\begin{thm} \label{thm:2segalcosegal} Let $X \in \sS$ be a Reedy fibrant unital $2$-Segal space. Then the map $p: \HX \to \NDop$, 
	obtained by restricting $q_X$, is a coSegal fibration. 
\end{thm}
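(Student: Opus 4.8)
The plan is to verify the three conditions (CS1)--(CS3) of Definition \ref{defi:inftybi} for the map $p: \HX \to \NDop$, building on the fact that $q_X: \pSX \to \NDop$ is already known to be a coCartesian fibration. Condition (CS3) is the easiest: the fiber $\HX_{[0]}$ consists of those $Y \in \pSX_{[0]}$ with $Y_{\{0\}}$ contractible (by \ref{m1}), and since $X_0 = \pt$ is contractible, this fiber is equivalent to the terminal $\infty$-category $\pt$, which is in particular a Kan complex. For condition (CS1), I would first check that $\HX$ is stable under $q_X$-coCartesian edges, so that $p$ inherits the coCartesian fibration structure. The key point is that a $q_X$-coCartesian edge over $f: [n] \to [m]$ induces equivalences $Y_{\{f(i),\dots,f(j)\}} \to Y'_{\{i,\dots,j\}}$, and both conditions \ref{m1} (contractibility of vertex spaces) and \ref{m2} (the Cartesian square condition) are manifestly stable under such levelwise equivalences, using that the analogous maps on the $X$-side are compatible. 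This shows $p$ is a coCartesian fibration whose coCartesian edges are exactly those of $q_X$ lying in $\HX$.

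The heart of the proof is condition (CS2): for each $n \ge 2$, the diagram of $\infty$-categories
\[
	\HX_{[n]} \lra \HX_{\{0,1\}} \times_{\HX_{\{1\}}} \HX_{\{1,2\}} \times_{\HX_{\{2\}}}
	\cdots \times_{\HX_{\{n-1\}}} \HX_{\{n-1,n\}}
\]
induced by the coCartesian fibration is a limit diagram. First I would unwind the definition of $\HX_{[n]}$: an object is a diagram $Y: \N(I_{[n]})^{\op} \to \inftyS$ equipped with a map to $s_X$, subject to \ref{m1} and \ref{m2}. Using \ref{m1} to contract all vertex spaces $Y_{\{i\}}$, and using \ref{m2} to express every $Y_{\{i,\dots,j\}}$ as a homotopy fiber product of the edge spaces $Y_{\{k,k+1\}}$ over the corresponding membrane space $RX_{\I}$, the entire object $Y$ is determined up to coherent equivalence by the data of the edge spaces $Y_{\{k,k+1\}}$ together with the structure maps to $X_{\{k,k+1\}} = X_1$. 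This is precisely the data recorded by the right-hand side of the Segal diagram, so I would construct an explicit inverse equivalence and verify it is homotopy inverse to the Segal map, invoking the unital $2$-Segal hypothesis on $X$ to control the degenerate intervals and the unit constraints exactly as in the construction of $\otimes$, $\alpha$, $\beta$ in \S\ref{subsec:homotopy.monoidal}.

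The main obstacle, as I see it, is the rigorous identification of the limit: condition \ref{m2} asserts that each square is Cartesian relative to the \emph{membrane spaces} of $X$, and the $2$-Segal property of $X$ enters through the fact that these membrane spaces $RX_{\T}$ are independent of the triangulation up to weak equivalence (Proposition \ref{prop.colim}). Concretely, I would reduce the claim to showing that the functor sending $Y$ to its tuple of edge spaces $(Y_{\{k,k+1\}})$ is fully faithful and essentially surjective onto the fiber product, where essential surjectivity requires reconstructing the higher interval spaces $Y_{\{i,\dots,j\}}$ by pullback along the $2$-Segal weak equivalences of $X$, and this reconstruction is coherent precisely because $X$ is $2$-Segal. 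The completeness and unitality of $X$ are what guarantee the reconstructed $Y$ again satisfies \ref{m1} and \ref{m2}, closing the argument. I expect the delicate bookkeeping to lie in tracking the $\infty$-categorical (rather than strict) limits throughout, for which I would lean on the pointwise criterion for coCartesian fibrations from \cite[3.2.2.12]{lurie.htt} and the stability of Cartesian squares under composition and pasting.
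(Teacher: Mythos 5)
There is a genuine gap, and it concerns where the unital $2$-Segal hypothesis actually does its work. Your verification of (CS1) asserts that conditions \ref{m1} and \ref{m2} are ``manifestly stable'' under pushforward along $q_X$-coCartesian edges. This is false as stated for \ref{m2}, and the failure is exactly the crux of the theorem. A coCartesian edge over $f:[n]\to[m]$ identifies $Y'_{\{i,\dots,j\}}$ with $Y_{\{f(i),\dots,f(j)\}}$, but condition \ref{m2} for $Y$ is formulated relative to the decomposition of $\{f(i),\dots,f(j)\}$ into \emph{unit} intervals of $[m]$ (with base $X_{\{f(i),f(i)+1\}}\times_{X}\cdots$), whereas the condition you need for $Y'$ is relative to the coarser decomposition into the intervals $\{f(k),\dots,f(k+1)\}$ (with base the fiber product of the $X_{\{k,k+1\}}=X_1$ over $X_0$). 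Passing from one to the other requires knowing that the square
\[
	\xymatrix{
	X_m \ar[r] \ar[d] & X_{\{f(i),\dots,f(i+1)\}} \times_{X_{\{f(i+1)\}}} \cdots \times X_{\{f(j-1),\dots,f(j)\}} \ar[d]\\
	X_n \ar[r] & X_{\{i,i+1\}} \times_{X_{\{i+1\}}} \cdots \times_{X_{\{n-1\}}} X_{\{n-1,n\}}
	}
\]
is itself a pullback, and this holds \emph{precisely} when $X$ is a unital $2$-Segal space (this is Proposition \ref{prop.key} in the paper, proved by decomposing $f$ into an injection and a surjection and invoking the polygonal-subdivision form of the $2$-Segal condition together with the unitality squares \eqref{eq:unitalsquare}). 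Without this input your proof of (CS1) does not close, and with it the claim is no longer ``manifest.''

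Relatedly, you have located the use of the $2$-Segal hypothesis in the wrong place. Your sketch of (CS2) invokes triangulation-independence of membrane spaces and an explicit inverse equivalence, but in fact (CS2) needs no $2$-Segal hypothesis at all: condition \ref{m2} says exactly that an object of $\HX_{[n]}$ is a relative right Kan extension of its restriction to the sub-poset of intervals of length $\le 1$, so the restriction to the iterated fiber product is a trivial fibration by \cite[4.3.2.15]{lurie.htt}. This is both cleaner and avoids the coherence bookkeeping you flag as the main obstacle. Finally, a small point on (CS3): the theorem does not assume $X_0=\pt$; the fiber $\HX_{[0]}$ is equivalent to the Kan complex $X_0$ itself, and (CS3) only requires it to be a Kan complex, not contractible.
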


\begin{proof} We have to verify the conditions of Definition \ref{defi:inftybi}. First, note that
	\ref{inftybi.3} immediately follows from condition \ref{m1}: the $\infty$-category
	$\HX_{[0]}$ is equivalent to the $\infty$-groupoid represented by the Kan complex $X_0$. 
	
	Since $\HX \subset \pSX$ is a full simplicial subset, it is obvious that $p$ is an inner
	fibration. To demonstrate condition \ref{inftybi.1}, we thus have to prove that every edge $f$
	of $\NDop$, corresponding to a map $f:[n] \to [m]$ in $\Delta$, can be lifted to a $p$-coCartesian edge in $\HX$ with prescribed initial vertex
	$Y \in \HX_{[m]}$. Since the projection $q_X: \pSX \to \NDop$ is a coCartesian fibration, there
	exists a $q_X$-coCartesian edge $e: Y \to Y'$ in $\pSX$ which covers $f$. It suffices to
	verify that $Y'$ lies in $\HX_{[n]} \subset \pSX_{[n]}$. As above, 
	we use the notation
	\[
	  {Y'}_{\{i,\dots,j\}} := Y'([n],(i,j))
	\]
	and the analogous notation for $Y$. 
	Since $e$ is $q_X$-coCartesian, for all $0 \le i \le j \le n$, the associated maps
	\[
	Y_{\{f(i),\dots,f(j)\}} \to Y'_{\{i,\dots,j\}}
	\]
	are equivalences of spaces. In particular, choosing $i = j$, we deduce that, for each $i$, the space
	$Y'_{\{i\}}$ is contractible. It remains to show that $Y'$ satisfies condition \ref{m2} of Definition
	\ref{defi.SX}. For each interval $\{i,\dots,j\}$ in $[n]$, we have 
	a square
	\[
	\xymatrix{
	Y_{\{f(i),\dots,f(j)\}} \ar[r]^{\simeq}\ar[d] & Y'_{\{i,\dots,j\}} \ar[d]\\
	X_{\{f(i),\dots,f(j)\}} \ar[r] & X_{\{i,\dots,j\}}
	}
	\]
	associated to $e$. Therefore, to show that $Y'$ satisfies condition \ref{m2}, it suffices to show
	that, for each interval $\{i,\dots,j\}$, the square
	\begin{equation}\label{eq:supersquare}
	    \xymatrix{
	      Y_{\{f(i),\dots,f(j)\}} \ar[r]\ar[d] & Y_{\{f(i),\dots,f(i+1)\}} \times \dots \times
	      Y_{\{f(j-1),\dots,f(j)\}} \ar[d]\\
	      X_{\{i, \dots, j\}} \ar[r] & X_{\{i, i+1\}} \times_{X_{\{i+1\}}} \dots
	      \times_{X_{\{j-1\}}} X_{\{j-1,j\}}  
	    }
	\end{equation}
	is Cartesian. This square can be obtained as the vertical rectangle in the diagram
	\[
		\xymatrix@!C=10pc{
		Y_{\{f(i),\dots,f(j)\}} \ar[r]\ar[d] & 
		Z_1 \ar[d]\ar[r] & \ar[d] 
		Y_{\{f(i),f(i)+1\}}\times \dots \times Y_{\{f(j)-1,f(j)\}}\\
		\ar[d] X_{\{f(i), \dots, f(j)\}} \ar[r] & 
		Z_2 \ar[d]\ar[r] & 
		X_{\{f(i),f(i)+1\}} \times_{X_{\{f(i)+1\}}} \dots
		\times_{X_{\{f(j)-1\}}} X_{\{f(j)-1,f(j)\}}\\
		X_{\{i, \dots, j\}} \ar[r] & 
		X_{\{i, i+1\}} \times_{X_{\{i+1\}}} \dots
		\times_{X_{\{j-1\}}} X_{\{j-1,j\}} &
		}
	\]
	with 
	\[
		Z_1 = Y_{\{f(i),\dots,f(i+1)\}}\times \dots \times Y_{\{f(j-1),\dots,f(j)\}}
	\]
	and
	\[
		Z_2 = X_{\{f(i),\dots, f(i+1)\}} \times_{X_{\{f(i+1)\}}} \dots
		\times_{X_{\{f(j-1)\}}} X_{\{f(j-1),\dots,f(j)\}}.
	\]
	By condition \ref{m2} for $Y$, the top right square and the horizontal rectangle are Cartesian. Using
	\cite[4.4.2.1]{lurie.htt}, we deduce that the top left square is Cartesian. Since, by
	assumption, the simplicial space $X$ is a unital $2$-Segal space, Proposition \ref{prop.key} below implies that
	the lower square and thus the vertical rectangle is Cartesian.

	It remains to verify condition \ref{inftybi.2} of Definition \ref{defi:inftybi}. From
	\cite[2.4.7.12]{lurie.htt}, we conclude that, for each $i$, the functors
	$\HX_{\{i,i+1\}} \to \HX_{\{i\}}$ and $\HX_{\{i,i+1\}} \to \HX_{\{i+1\}}$ are
	coCartesian fibrations of $\infty$-categories. In particular, combining \cite[2.4.1.5,2.4.6.5]{lurie.htt}, these maps are
	categorical fibrations, i.e., fibrations with respect to the Joyal model structure on $\sSet$.
	Since this model structure models the $\infty$-category $\Cat_{\infty}$ of
	$\infty$-categories, we can utilize it to calculate limits in $\Cat_{\infty}$. This implies
	that the ordinary fiber product of simplicial sets
	\[
		\C = \HX_{\{i,i+1\}}
		\times_{ \HX_{\{i+1\}}} \dots
		\times_{ \HX_{\{n-1\}}} \HX_{\{n-1,n\}}
	\]
	is in fact a homotopy fiber product. As above, we use the notation $I_{[n]} = [n] \times_{\Delta}
	\Delta^{\times}$ such that the $\infty$-category $\pSX_{[n]}$ is by definition the
	$\infty$-category $\Fun(\N(I_{[n]})^\op, \inftyS)^{/F}$, where we set $F = s_X([n])$.
	Let $I^{\le 1}_{[n]} \subset I_{[n]}$ denote the full subcategory spanned by the intervals of length
	$\le 1$. Then the $\infty$-category
	$\C$ can be identified with the $\infty$-category $\Fun(\N(I^{\le 1}_{[n]})^{\op},
	\inftyS)^{/F^0}$, where $F^0$ is defined to be the restriction of $F$ to $\N(I^{\le
	1}_{[n]})^{\op}$. Now let $Y \in \Fun(\N(I_{[n]})^\op, \inftyS)^{/F}$, satisfying
	condition \ref{m1}, and define $Y^0$ to be the restriction of $Y$ to $\N(I^{\le
	1}_{[n]})^{\op}$.
	These functors can be assembled into the diagram of $\infty$-categories
	\[
	\xymatrix{
	\N(I^{\le 1}_{[n]})^{\op} \ar[r]^{Y^0} \ar@{^{(}->}[d] & \ar[d]^{t} \inftyS^{\Delta^1} \\
	\N(I_{[n]})^{\op} \ar[r]_{F} \ar[ur]^{Y} & \inftyS, 
	}
	\]
	where $t$ denotes pullback along the inclusion $\Delta^{\{1\}} \to \Delta^1$. Note that, by
	the argument already used above, the map $t$ is a coCartesian fibration and thus a
	categorical fibration. Now we observe
	that $Y$ satisfies condition \ref{m2} if and only if $Y$ is a $t$-right Kan extension of
	$Y^0$ in the sense of \cite[4.3.2.2]{lurie.htt}. Indeed, for an object $c = \{i,\dots,j\}$ of
	$\N(I_{[n]})$, the square 
	\[
	\xymatrix{
	\N(I^{\le 1}_{[n]})^{\op}_{c/} \ar[r]^{Y^0_c} \ar@{^{(}->}[d] & \ar[d]^{t} \inftyS^{\Delta^1} \\
	(\N(I^{\le 1}_{[n]})^{\op}_{c/})^{\triangleleft} \ar[r] \ar[ur] & \inftyS, 
	}
	\]
	exhibits $Y(c)$ as a $t$-limit of $Y^0_c$ if and only if the corresponding square in
	condition \ref{m2} is Cartesian. Note, that we use the assumption that $Y$ satisfies condition
	\ref{m1}. Now we can apply \cite[4.3.2.13]{lurie.htt} and \cite[4.3.2.15]{lurie.htt} to deduce that
	the restriction functor
	\[
	\Fun(\N(I_{[n]})^\op, \inftyS)^{/F} \to  \Fun(\N(I^{\le 1}_{[n]})^{\op}, \inftyS)^{/F^0}
	\]
	induces an equivalence of $\infty$-categories
	$\HX_{[n]} \stackrel{\simeq}{\to} \C$.
\end{proof}

\begin{prop} \label{prop.key} Let $X \in \sS$ be a Reedy fibrant simplicial space. 
	For every morphism $f: [n] \to [m]$ in the ordinal category $\Delta$, satisfying $f(0) = 0$
	and $f(n) = m$, we obtain a commutative square
	\begin{equation}\label{eq.charsquare}
		\xymatrix{
		X_m \ar[r] \ar[d] & X_{\{f(0),\dots,f(1)\}} \times_{X_{f(1)}} X_{\{f(1),\dots,f(2)\}}
		\times \dots \times X_{\{f(n-1),\dots,f(n)\}} \ar[d]\\
		X_n \ar[r] & X_{\{0,1\}} \times_{X_{\{1\}}} X_{\{1,2\}} \times \dots
		\times_{X_{\{n-1\}}} X_{\{n-1,n\}}
		}
	\end{equation}
	where the maps are induced by pullback along the respective maps in the commutative diagram
	\[
		\xymatrix{
			[m] & \ar[l] \{f(i),\dots,f(i+1)\}\\
			[n] \ar[u]^p & \ar[l] \ar[u] \{i,i+1\}.
		}
	\]
	in $\Delta$.
	Then the following hold.
	\begin{enumerate}[label=(\arabic*)]
		\item The simplicial space $X$ is a $2$-Segal space if and only if, for every
			injective morphism $p$ as above, the square \eqref{eq.charsquare} is a pullback square.
		\item The simplicial space $X$ is a unital $2$-Segal space if and only if, for every 
			morphisms $p$ as above, the square \eqref{eq.charsquare} is a pullback square.
	\end{enumerate}
\end{prop}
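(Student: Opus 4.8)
The plan is to read the right-hand column of \eqref{eq.charsquare} as a derived membrane space and thereby convert ``Cartesian'' into a weak-equivalence of a generalized Segal map. Since $X$ is Reedy fibrant, every ordinary fiber product in \eqref{eq.charsquare} agrees with the corresponding homotopy fiber product, so the bottom-right object is $RX_{\I_n}$ for $\I_n=\{\{0,1\},\dots,\{n-1,n\}\}$, the top-right object is $RX_{\I_f}$ for the collection of ``ears'' $\I_f=\{\{f(k),\dots,f(k+1)\}:0\le k\le n-1\}$, the bottom arrow is the $1$-Segal map $f_n$, and ``Cartesian'' means ``homotopy Cartesian''; thus \eqref{eq.charsquare} is Cartesian iff the comparison map $X_m\to X_n\times^R_{RX_{\I_n}}RX_{\I_f}$ is a weak equivalence. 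First I would settle the injective case, which gives (1). For injective wide $f$ the set $\{f(0),\dots,f(n)\}$ has $n+1$ elements, so $X_{\{f(0),\dots,f(n)\}}\cong X_n$; let $\Pc_f$ be the polygonal subdivision of $P_m$ given by the inner polygon $\{f(0),\dots,f(n)\}$ together with the ears $\I_f$. The pairwise intersections of the inner polygon with the ears are exactly the chords $\{f(k),f(k+1)\}$, whose union is the segment on $f(0),\dots,f(n)$ with membrane space $RX_{\I_n}$; so Proposition \ref{prop.colim} identifies $X_n\times^R_{RX_{\I_n}}RX_{\I_f}$ with $RX_{\Pc_f}$ and the comparison map with $f_{\Pc_f}$. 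Hence \eqref{eq.charsquare} is Cartesian iff $f_{\Pc_f}$ is a weak equivalence: this holds whenever $X$ is $2$-Segal by Proposition \ref{prop:2-segal-basic}(2), and conversely, taking $f$ with image $\{0,1,\dots,i,n\}$ (resp. $\{0,j,j+1,\dots,n\}$) makes $\Pc_f$ the two-piece subdivision $\{\{0,\dots,i,n\},\{i,\dots,n\}\}$ (resp. $\{\{0,j,\dots,n\},\{0,\dots,j\}\}$), so Cartesianity of all such squares forces precisely the conditions of Proposition \ref{prop:2-segal-basic}(4). This proves (1).

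The degenerate case rests on analysing the square of a single degeneracy $\sigma_i\colon[n]\to[n-1]$, which is wide. Here $\I_{\sigma_i}$ consists of edges together with one one-point ``ear'' at position $i$, so $RX_{\I_{\sigma_i}}\simeq RX_{\I_{n-1}}$ and \eqref{eq.charsquare} becomes the square whose left column is $s_i\colon X_{n-1}\to X_n$, whose horizontal maps are the $1$-Segal maps $f_{n-1}$ and $f_n$, and whose right column $\iota_i\colon RX_{\I_{n-1}}\to RX_{\I_n}$ inserts the degenerate edge $s_0$ at position $i$. I would paste this square on the right with the square obtained by projecting onto the $i$-th edge, whose top and bottom maps are restriction to the vertex $\{i\}$ and to the edge $\{i,i+1\}$ and whose vertical maps are $\iota_i$ and $s_0\colon X_0\to X_1$. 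Using Reedy fibrancy, this projection square is homotopy Cartesian, because deleting (resp. inserting) a degenerate edge in a composable chain identifies $RX_{\I_{n-1}}$ with $RX_{\I_n}\times^R_{X_1}X_0$. The pasted outer rectangle is exactly the unital square \eqref{eq:unitalsquare}, so by the pasting lemma the $\sigma_i$-square is homotopy Cartesian iff $X$ is unital at $(n,i)$. Crucially, this equivalence uses only Reedy fibrancy, not the $2$-Segal property.

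Part (2) then follows. For the implication ``$\Leftarrow$'', Cartesianity for injective wide $f$ gives the $2$-Segal property by (1), while Cartesianity for the $\sigma_i$ gives all unital conditions by the preceding paragraph, so $X$ is a unital $2$-Segal space. For ``$\Rightarrow$'' I would induct on the number of degenerate steps of $f$, factoring off one degeneracy $f=f'\circ\sigma_i$ with $f'$ wide and having one fewer degenerate step. A map of cospans built from $s_i$, $\iota_i$, and the equivalence $RX_{\I_{f'}}\xrightarrow{\simeq}RX_{\I_f}$ exhibits the $f$-comparison map as the composite of the $f'$-comparison map (a weak equivalence by induction, the base case being (1)) with the induced map $\Phi$ of homotopy pullbacks; since the $RX_{\I_{f'}}$-leg of this cospan map is an equivalence, the base-change property of homotopy pullbacks makes $\Phi$ a weak equivalence precisely when the $\sigma_i$-square is homotopy Cartesian, which holds by unitality. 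Two-out-of-three then shows the $f$-comparison map is a weak equivalence, closing the induction.

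The routine-but-delicate heart of the argument, and the step I expect to cost the most care, is the bookkeeping in the degenerate case: checking that deleting/inserting a degenerate edge really yields a homotopy Cartesian projection square (so the pasting lemma applies), that the pasted rectangle is literally \eqref{eq:unitalsquare}, and that the base-change statement for homotopy pullbacks used in the induction holds as stated. All three hinge on invoking Reedy fibrancy to replace ordinary limits by homotopy limits uniformly throughout.
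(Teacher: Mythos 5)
Your proof is correct and follows essentially the same route as the paper's: both identify the square for an injective wide $f$ with the $2$-Segal map of the polygonal subdivision consisting of the inner polygon $\{f(0),\dots,f(n)\}$ together with its ``ears'' $\{f(k),\dots,f(k+1)\}$, and both reduce the degenerate part to iterated use of the unitality square \eqref{eq:unitalsquare}. You supply details the paper leaves implicit --- the converse directions via Proposition \ref{prop:2-segal-basic}, and the pasting/base-change bookkeeping relating the single-degeneracy square to \eqref{eq:unitalsquare} --- differing only in that you peel off degeneracies one at a time rather than splitting $f$ into its surjective and injective parts in a single step (note that in your base-change step only the ``if'' direction is needed and valid, so the ``precisely when'' is a harmless overstatement).
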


\begin{proof} 
	We express the morphism $f: [n] \to [m]$ as a composition
	\[
		[n] \overset{g}{\lra} [l] \overset{h}{\lra} [m]
	\]
	with $g$ surjective and $h$ injective. In the situation of part (1), the map $p$ is already injective, and so $g = \id$. 
	We obtain a corresponding diagram
	\begin{equation}\label{eq:decomp2seg1}
		\xymatrix{ 
			X_m \ar[r] \ar[d] & X_{\{h(0),\dots,h(1)\}} \times_{X_{\{h(1)\}}} X_{\{h(1),\dots, h(2)\}}
			\times \dots \times X_{ \{ h(l-1),\dots,h(l)\} } \ar[d]\\
			X_l \ar[d]\ar[r] & X_{\{0,1\}} \times_{X_{\{1\}}} X_{\{1, 2\}} \times_{X_{\{2\}}} \dots
			\times_{X_{ \{l-1\} }} X_{ \{ l-1,l\} } \ar[d]  \\
			X_n \ar[r] & X_{\{0,1\}} \times_{X_{\{1\}}} X_{\{1, 2\}} \times_{X_{\{2\}}} \dots \times_{X_{ \{n-1\} }} X_{ \{ n-1,n\} } 
		}
	\end{equation}
	where the outer rectangle is isomorphic to the sqare \ref{eq.charsquare}. The $2$-Segal
	condition for the polygonal subdivision of the convex ($m+1$)-gon described by the collection
	of subsets of $[m]$
	\[
		\T = \lbrace \{h(0),
		\dots, h(1)\}, \{h(1), \dots, h(2)\}, \dots, \{h(l-1), \dots, h(l)\},
		\{h(0),h(1),\dots,h(l)\} \rbrace 
	\]
	implies that the upper square in \eqref{eq:decomp2seg1} is a homotopy pullback square if $X$
	is a $2$-Segal space. This implies the ``only if'' direction of (1).
	To show the ``only if'' direction of (2), we express the surjective map $g$ as a composition of degeneracy maps. If $X$ is
	a unital $2$-Segal space, we conclude that the lower square in \eqref{eq:decomp2seg1} is
	a homotopy pullback square by iteratively using the homotopy pullback square 
	\eqref{eq:unitalsquare} from Definition \ref{def:unital-2-segal-top}. The ``if'' directions
	are easily obtained by making suitable choices for the morphism $f$. 
\end{proof}

\begin{cor} Let $X$ be a Reedy fibrant unital $2$-Segal space. Assume the space $X_0$ is
	contractible. Then the map $p: \HX \to \NDop$ is a coSegal fibration which models a 
	monoidal $\infty$-category.
\end{cor}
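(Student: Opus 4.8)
The first assertion is precisely the content of Theorem~\ref{thm:2segalcosegal}, so the only new point is that, under the additional hypothesis that $X_0$ is contractible, the coSegal fibration $p \colon \HX \to \NDop$ models a monoidal $\infty$-category. Recall from Example~\ref{ex:monoidalcomplete} that a Segal fibration exhibits a monoidal structure on its $[1]$-fiber exactly when its $[0]$-fiber is a weakly contractible Kan complex; by Definition~\ref{defi:inftybi} the corresponding statement for coSegal fibrations is obtained by passing to the opposite. The plan is therefore to identify the fiber $\HX_{[0]}$ and to establish that it is weakly contractible.

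To this end I would unwind the construction of $\HX$ at the object $[0] \in \Delta$. For $n = 0$ the poset $I_{[0]}$ of nonempty intervals of $[0]$ consists of the single element $\{0\}$, so $\pS^{\times}_{[0]} \simeq \inftyS$, and, after forming the overcategory relative to the section $s_X$, the fiber $\pSX_{[0]}$ is identified with the slice $\infty$-category $\inftyS_{/X_0}$ of spaces over $X_0$. Passing to $\HX_{[0]} \subset \pSX_{[0]}$, condition~\ref{m1} of Definition~\ref{defi.SX} selects exactly those objects $Y \to X_0$ whose total space $Y = Y_{\{0\}}$ is contractible (condition~\ref{m2} is vacuous for $n=0$). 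The $\infty$-category of contractible spaces over $X_0$ is equivalent to the $\infty$-groupoid presented by $X_0$; this is the identification $\HX_{[0]} \simeq X_0$ already recorded in the proof of Theorem~\ref{thm:2segalcosegal}. Since $\HX_{[0]}$ is in particular a Kan complex by axiom~\ref{inftybi.3}, the hypothesis that $X_0$ is contractible immediately yields that $\HX_{[0]}$ is weakly contractible.

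With the fiber computed, the monoidal structure follows formally. Example~\ref{ex:monoidalcomplete}, though stated for Segal fibrations, dualizes: applying it to the opposite Segal fibration $p^{\op} \colon \HX^{\op} \to \ND$, whose $[0]$-fiber is $(\HX_{[0]})^{\op}$, shows that a coSegal fibration with weakly contractible $[0]$-fiber models a monoidal $\infty$-category, with underlying $\infty$-category $\HX_{[1]}$. The only point demanding care, and the nearest thing to an obstacle here, is the bookkeeping across the passage $p \mapsto p^{\op}$: one must check that weak contractibility of the $[0]$-fiber is preserved, which is immediate since the opposite of an $\infty$-groupoid is equivalent to itself, so $(\HX_{[0]})^{\op} \simeq \HX_{[0]}$. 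No appeal to the inexplicit comparison discussed in the remark following Definition~\ref{defi:leftlax} is required, precisely because the relevant hypothesis is symmetric under the formation of opposites. For consistency I would finally remark that this monoidal $\infty$-category refines, on homotopy categories, the monoidal category $\hHX$ of Theorem~\ref{thm:monoidalhall}, as anticipated in the remark following that theorem.
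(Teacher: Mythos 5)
Your proof is correct and follows exactly the route the paper intends: the paper leaves this corollary without an explicit argument, since the coSegal fibration statement is Theorem \ref{thm:2segalcosegal}, the identification $\HX_{[0]}\simeq X_0$ is already recorded in that theorem's proof, and the monoidal interpretation is the (dualized) convention of Example \ref{ex:monoidalcomplete}. Your careful unwinding of the $[0]$-fiber and the remark that contractibility is preserved under passage to opposites merely makes explicit what the paper treats as immediate.
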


\begin{exa} Let $X$ be the Waldhausen S-construction of an exact $\infty$-category $\C$. Then
	$X$ is a unital $2$-Segal space and the space $X_0$, being the classifying space of zero
	objects in $\C$, is contractible. Therefore, in this case, we obtain a monoidal $\infty$-category $\HX$
	which we call the {\em Hall monoidal $\infty$-category associated to $\C$}. It is easy to verify that
	the homotopy category of $\HX$ can be identified with the monoidal category from
	Theorem \ref{thm:monoidalhall}.
\end{exa}

\begin{rem} Theorem \ref{thm:2segalcosegal} allows us to introduce a completeness condition for
	$2$-Segal spaces. A Reedy fibrant unital $2$-Segal space $X$ is called {\em complete} if the
	associated coSegal fibration $\HX \to \NDop$ is complete. 
\end{rem}

\begin{ex} Every complete $1$-Segal space is complete as a $2$-Segal space. The Waldhausen
	S-construction of a stable $\infty$-category is complete. 
\end{ex}

\begin{rem} The construction of this section has a drawback: It is not
	clear to how promote the association $X \mapsto \HX$ to a functor. We expect the functoriality to be given by
	left lax functors, which are most naturally described in terms of Segal fibrations. In \S
	\ref{sec:spans} below, we will provide a functorial construction in the context of an
	$\inftytwo$-categorical theory of spans.
\end{rem}

\vfill\eject

\section{An \texorpdfstring{$\inftytwo$}{(infty,2)}-categorical theory of spans}
\label{sec:spans}

Let $\C$ be an $\infty$-category with pullbacks. In this chapter, we will associate to $\C$ a new
$\infty$-category $\Spanl(\C)$, called the {\em $\infty$-category of spans in $\C$}, which is an
$\inftyone$-categorical variant of the span category introduced in \S \ref{subsec:mult-cat}.
\begin{itemize}
	\item The vertices of $\Spanl(\C)$ are given by vertices of $\C$. 
	\item An edge in $\Spanl(\C)$ between vertices
		$x_{\{0\}}$ and $x_{\{1\}}$ corresponds to a diagram in $\C$ of the form
		\[
		      \xymatrix@R=1pc{
			      x_{\{0\}} &\ar[l] x_{\{0,1\}} \ar[r] & x_{\{1\}}
		      }
		\]
		where $x_{\{0,1\}}$ is a vertex of $\C$. 
	\item A $2$-simplex of $\Spanl(\C)$ corresponds to a diagram in $\C$
		\[
			\xymatrix@=1pc{ 
			& & x_{\{1\}} & & \\ 
			&x_{\{0,1\}} \ar[dl] \ar[ur] & & \ar[dr] \ar[ul] x_{\{1,2\}}& \\
			x_{\{0\}} & & \ar[ll] x_{\{0,2\}} \ar[uu] \ar[ul] \ar[ur] \ar[rr] & & x_{\{2\}}
			}
		\]
	      which is a limit diagram with limit vertex
	      $x_{\{0,2\}}$.
	\item In general, we define the {\em asymmetric subdivision} of the standard $n$-simplex
		$\Delta^n$ as
		\[
			\asd(\Delta^n) = \N(I_{[n]})^{\op}
		\]
		where $I_{[n]}$ denotes the poset of nonempty intervals in $[n]$. An $n$-simplex in
		$\Spanl(\C)$ is then given by a diagram $\asd(\Delta^n) \to \C$ such that the
		induced diagram for every $2$-subsimplex $\Delta^2 \subset \Delta^n$ is a limit diagram.
\end{itemize}

Further, we will generalize the above span construction to a relative framework where we study families of
$\infty$-categories varying in a Cartesian fibration. The resulting theory will allow us to enhance the above
span construction in various ways:
\begin{enumerate}
	\item Given a monoidal $\infty$-category $\C$ with pullbacks, the $\infty$-category
		$\Spanl(\C)$ can be equipped with a natural ``pointwise'' monoidal structure.
	\item Given an $\infty$-category $\C$ with pullbacks, we can construct an {\em
		$\inftytwo$-category of bispans in $\C$}, modelled by a complete Segal fibration
		$\bSpan(\C) \to \ND$. This $\inftytwo$-category has {\em horizontal} spans as
		$1$-morphisms and {\em vertical} spans as $2$-morphisms. The construction of
		$\bSpan(\C)$, which is the main result of this section, will proceed in a two-step
		process which introduces horizontal spans (\S \ref{subsec:horizontal}) and vertical
		spans (\S \ref{subsec:vertical}) separately.
\end{enumerate}

The theory developed in this chapter will be used in \S \ref{section:higher-bicat} to describe
various higher bicategorical structures associated to $2$-Segal spaces.

\vfill\eject

\subsection{Spans in Kan complexes}
\label{subsec:span-kan}

We first study the span construction in the context of Kan complexes. The results will later be
applied to Kan complexes given as mapping spaces in $\infty$-categories.

Consider the functor
\begin{equation}\label{eq:P}
	P^{\bullet}: \Delta \lra \sSet, \; [n] \mapsto P^n = \N(I_{[n]})^{\op}
\end{equation}
where $I_{[n]}$ denotes the partially ordered set of nonempty intervals $\{i,j\}$ where $0 \le i \le j
\le n$. By forming a left Kan extension along the Yoneda embedding $\Delta \subset \sSet$,
we can extend $P^{\bullet}$ to a functor 
\[
\asd: \sSet \lra \sSet,
\]
which is the unique extension of $P^{\bullet}$ that commutes with
colimits. We will refer to the functor $\asd$ as {\em asymmetric
subdivision}, suggestive of its geometric significance. The functor $\asd$ admits a right adjoint which
we denote by $\Spanl$. We further define the standard simplex 
\[
\Delta^{\bullet}: \Delta \lra \sSet, \; [n] \mapsto \Delta^n.
\] 
The left Kan extension of $\Delta^{\bullet}$ to $\sSet$ is given by the identity functor on $\sSet$. We have a natural
transformation $P^{\bullet} \to \Delta^{\bullet}$ induced by the functors 
\[
(I_{[n]})^{\op} \lra [n], \; \{i,j\} \mapsto i.
\]
Via the functoriality of Kan extension, we obtain a natural transformation 
\begin{equation}\label{eq:eta} 
	\eta: \asd \lra \id 
\end{equation}
of functors on $\sSet$.

\begin{prop}\label{prop:asd-id} For every simplicial set $K$, the map $\eta(K): \asd(K) \to K$ is a weak
	homotopy equivalence. 
\end{prop}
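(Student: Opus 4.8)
The plan is to prove that $\eta(K)\colon \asd(K)\to K$ is a weak homotopy equivalence by first establishing it for representables $K=\Delta^n$ and then bootstrapping to arbitrary simplicial sets via the fact that both $\asd$ and $\id$ commute with colimits. Since $\asd$ is defined as a left Kan extension of $P^\bullet$ along the Yoneda embedding, it commutes with all colimits by construction, and $\eta$ is a natural transformation between colimit-preserving functors. Every simplicial set is a colimit of its simplices (see \eqref{eq:sset-lim-simplices}), so if I can show $\eta(\Delta^n)\colon \asd(\Delta^n)=P^n\to\Delta^n$ is a weak equivalence for each $n$, and that the construction behaves well enough with respect to the gluing, the general case should follow. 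The cleanest route to the general statement is to observe that weak homotopy equivalences of simplicial sets can be detected after geometric realization, and realization commutes with colimits; combined with a cofibrancy/gluing argument this reduces everything to the representable case.

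First I would analyze the representable case directly. Here $P^n=\N(I_{[n]})^{\op}$, the nerve of the opposite of the poset of nonempty intervals $\{i,j\}$ with $0\le i\le j\le n$, and the map $\eta(\Delta^n)$ is induced by the poset map $\{i,j\}\mapsto i$ sending an interval to its left endpoint, viewed as a functor $(I_{[n]})^{\op}\to[n]$. To show the induced map on nerves is a weak homotopy equivalence, I would invoke Quillen's Theorem A (\cite{quillen-ktheory}), exactly as is done in the proof of Lemma \ref{lem:adjn2} earlier in the excerpt. It suffices to check that for each object $i\in[n]$ the relevant comma category (fiber) has weakly contractible nerve. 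The fiber over $i$ consists of intervals $\{i',j'\}$ with $i'$ mapping appropriately to $i$ together with the poset structure; one checks that this comma category has a terminal (or initial) object, hence a contractible nerve. Concretely, the intervals with left endpoint exactly $i$ form a subposet with a least element $\{i,i\}$, and the comma-category condition organizes these into a category with an initial object, which is therefore contractible.

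Next I would promote the representable case to the general statement. Because $\asd$ preserves colimits and $\eta$ is natural, I would write an arbitrary $K$ as the colimit of its category of simplices and argue that $\eta(K)$ is a weak equivalence by a standard induction on skeleta: the inclusion of each skeleton $\mathrm{sk}_{n-1}K\hookrightarrow \mathrm{sk}_n K$ is built by attaching nondegenerate $n$-simplices along a pushout, and $\asd$ carries this pushout to a corresponding pushout. The main technical point is that these are \emph{homotopy} pushouts on both sides: on the target side the attaching maps $\partial\Delta^n\hookrightarrow\Delta^n$ are cofibrations, and on the source side I must check that $\asd$ applied to boundary inclusions again yields cofibrations (monomorphisms of simplicial sets). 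Granting this, the gluing lemma for homotopy pushouts, together with the representable case applied to $\Delta^n$, $\partial\Delta^n$ (itself a colimit of lower simplices handled by the inductive hypothesis), yields that $\eta$ is a weak equivalence on each skeleton, and passing to the colimit over skeleta finishes the argument since weak equivalences are closed under the relevant transfinite composition.

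The main obstacle I anticipate is twofold and lives in the last paragraph. First, I must verify that $\asd$ sends monomorphisms to monomorphisms (so that the skeletal filtration on $\asd(K)$ is again a filtration by cofibrations and the pushouts are genuine homotopy pushouts); this requires a careful look at how $\asd$ acts on the inclusions $\partial\Delta^n\hookrightarrow\Delta^n$ via the interval-poset description, and is not entirely formal. Second, I need the representable computation for $\partial\Delta^n$ and the various boundary pieces to cohere with the gluing, which is really the statement that $\asd$ preserves the relevant homotopy colimits; this is where one must be slightly careful, since $\asd$ preserves strict colimits by construction but one needs it to preserve \emph{homotopy} colimits, which follows once the cofibrancy statement is in hand. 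I expect the Theorem A computation of the fibers to be the conceptual heart, while the skeletal/gluing bookkeeping is the step most likely to hide a subtlety.
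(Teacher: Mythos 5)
Your proposal is correct and follows essentially the same route as the paper, which reduces to the case $K=\Delta^n$ by exactly the skeletal/colimit induction you describe (the paper outsources this to the inductive argument of \cite[2.2.2.7]{lurie.htt}, where the cofibration-preservation point you flag is the relevant hypothesis). The only difference is in the representable case: your Quillen's Theorem A computation is unnecessary, since $\asd(\Delta^n)=\N(I_{[n]})^{\op}$ has an initial object $\{0,n\}$ and is therefore weakly contractible, as is $\Delta^n$, so any map between them is a weak equivalence.
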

\begin{proof} By the inductive argument of \cite[2.2.2.7]{lurie.htt} it suffices to verify this for
	$K = \Delta^n$, $n \ge 0$, in which case both $\asd(\Delta^n)$ and $\Delta^n$ are weakly
	contractible.
\end{proof}

\begin{prop}\label{prop:asd-quillen}
	The adjunction 
	\[
	\asd: \sSet \longleftrightarrow \sSet: \Spanl
	\]
	defines a Quillen self equivalence of $\sSet$ equipped with the Kan model structure.
\end{prop}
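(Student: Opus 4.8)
The plan is to prove that the adjunction $(\asd, \Spanl)$ is a Quillen self-equivalence by first establishing that it is a Quillen adjunction and then upgrading it to a Quillen equivalence using Proposition \ref{prop:asd-id}. For the Quillen adjunction, since $\asd$ is the left adjoint, I would verify that $\asd$ preserves cofibrations and trivial cofibrations. In the Kan model structure on $\sSet$, cofibrations are exactly the monomorphisms, so I would first show that $\asd$ preserves monomorphisms. Since $\asd$ commutes with colimits (being a left Kan extension), it suffices to analyze its behavior on the generating monomorphisms $\partial \Delta^n \hookrightarrow \Delta^n$; the key combinatorial fact is that $\asd$ sends a simplicial subset inclusion to a simplicial subset inclusion, which follows from an explicit description of $\asd(\Delta^n) = \N(I_{[n]})^{\op}$ and the compatibility of the poset-of-intervals construction with the face structure.

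Next I would show that $\asd$ preserves trivial cofibrations, equivalently that it preserves weak equivalences among all maps (combined with preservation of cofibrations, this gives preservation of trivial cofibrations). The main tool here is Proposition \ref{prop:asd-id}, which provides a natural transformation $\eta: \asd \to \id$ that is a levelwise weak homotopy equivalence. Given a weak equivalence $f: K \to K'$, the naturality square for $\eta$
\[
\xymatrix{
\asd(K) \ar[r]^-{\asd(f)} \ar[d]_{\eta(K)} & \asd(K') \ar[d]^{\eta(K')}\\
K \ar[r]^-{f} & K'
}
\]
has both vertical maps weak equivalences by Proposition \ref{prop:asd-id}, and the bottom map a weak equivalence by hypothesis, so the two-out-of-three property forces $\asd(f)$ to be a weak equivalence. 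This simultaneously handles preservation of weak equivalences and, in particular, establishes that $\asd$ is a left Quillen functor.

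Finally, to promote this Quillen adjunction to a Quillen equivalence, I would invoke the criterion that a Quillen adjunction is a Quillen equivalence if and only if the left derived functor $L\asd$ is an equivalence on homotopy categories. But Proposition \ref{prop:asd-id} shows that the natural transformation $\eta: \asd \to \id$ becomes a natural isomorphism after passing to the homotopy category $\Ho(\sSet)$, since every component $\eta(K)$ is a weak equivalence. Because all objects in the Kan model structure are cofibrant, the derived functor $L\asd$ agrees with $\asd$ on the homotopy category, and $\eta$ exhibits $L\asd$ as naturally isomorphic to the identity functor on $\Ho(\sSet)$. An equivalence of homotopy categories that is naturally isomorphic to the identity is certainly an equivalence, so the Quillen adjunction is a Quillen equivalence, which is moreover a self-equivalence since source and target model categories coincide.

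The main obstacle I anticipate is the verification that $\asd$ preserves monomorphisms; this is the one genuinely combinatorial step and requires care in understanding how the interval-poset construction $[n] \mapsto I_{[n]}$ interacts with the simplicial structure, in particular checking that the colimit-preservation of $\asd$ does not introduce identifications that would break injectivity when applied to the boundary inclusions $\partial \Delta^n \hookrightarrow \Delta^n$. Once preservation of cofibrations is secured, the remaining steps are essentially formal consequences of the levelwise weak equivalence $\eta$ together with the two-out-of-three property and the standard characterization of Quillen equivalences via derived functors.
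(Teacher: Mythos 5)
Your argument is correct and is essentially the proof the paper gives: the paper simply cites the analogous argument from Lurie's \emph{Higher Topos Theory}, which runs exactly as you describe --- $\eta:\asd\to\id$ is a natural levelwise weak equivalence, so $\asd$ preserves weak equivalences by two-out-of-three and its left derived functor is naturally isomorphic to the identity on $\Ho(\sSet)$, forcing the Quillen adjunction to be an equivalence. The one step you only sketch, that $\asd$ preserves monomorphisms, is indeed the sole combinatorial input, and your reduction to the boundary inclusions $\partial\Delta^n\hookrightarrow\Delta^n$ via colimit-preservation (together with the fact that monomorphisms in $\sSet$ are the saturation of these) is the standard way to dispose of it.
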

\begin{proof}
	This follows from Proposition \ref{prop:asd-id} by the argument of \cite[2.2.2.9]{lurie.htt}.
\end{proof}

\begin{cor}\label{cor:asd-quillen} 
	Given a Kan complex $K$, the simplicial set $\Spanl(K)$ is a Kan complex.
	Further, the functor $\Spanl$ preserves weak equivalences between Kan complexes.
\end{cor}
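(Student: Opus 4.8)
The plan is to deduce Corollary \ref{cor:asd-quillen} formally from the Quillen equivalence established in Proposition \ref{prop:asd-quillen}, using only standard properties of right Quillen functors together with the fact that every object of $\Sp$ is cofibrant in the Kan model structure.

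First I would recall that Proposition \ref{prop:asd-quillen} exhibits $(\asd, \Spanl)$ as a Quillen adjunction, so in particular $\Spanl$ is a right Quillen functor. A right Quillen functor, by definition, preserves fibrations and trivial fibrations; moreover, since the Kan model structure on $\Sp$ has the property that every object is fibrant, a Kan complex is precisely a fibrant object. The terminal map $K \to \Delta^0$ is then a fibration whenever $K$ is a Kan complex, and since $\Spanl$ preserves fibrations and sends $\Delta^0$ to a Kan complex (indeed $\Spanl(\Delta^0)$ is fibrant because $\asd$ preserves the generating trivial cofibrations, as guaranteed by the Quillen adjunction), the object $\Spanl(K)$ is fibrant, i.e.\ a Kan complex. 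This proves the first assertion.

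For the second assertion, the key input is Ken Brown's lemma: a right Quillen functor preserves weak equivalences between fibrant objects. Since the fibrant objects here are exactly the Kan complexes, this immediately yields that $\Spanl$ takes weak equivalences between Kan complexes to weak equivalences. Concretely, given a weak equivalence $f\colon K \to K'$ of Kan complexes, one factors $f$ using the lifting axioms into a trivial cofibration followed by a trivial fibration; $\Spanl$ preserves the trivial fibration directly (being right Quillen) and preserves the image of the trivial cofibration by the dual factorization argument applied to a section, and then one invokes the two-out-of-three property of weak equivalences. Alternatively, one may simply cite Ken Brown's lemma (\cite{hovey}) verbatim.

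I expect no serious obstacle here, as both statements are formal consequences of the Quillen adjunction and the structural features of the Kan model structure; the only point requiring a small amount of care is confirming that $\Spanl$ of a fibrant object is fibrant, which reduces to checking that $\Spanl$ preserves the terminal object up to the appropriate fibrancy, or equivalently that $\asd$ sends the generating trivial cofibrations into trivial cofibrations. This last verification is already subsumed in the statement that $(\asd, \Spanl)$ is a Quillen adjunction, so in practice the corollary is a direct unwinding of Proposition \ref{prop:asd-quillen} together with Ken Brown's lemma.
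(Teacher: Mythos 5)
Your proposal is correct and matches the paper's (implicit) argument: the corollary is stated there as a direct consequence of Proposition \ref{prop:asd-quillen}, exactly via preservation of fibrant objects by the right Quillen functor $\Spanl$ together with Ken Brown's lemma. Two small touch-ups: in the Kan model structure every object is \emph{cofibrant} (not fibrant), while the fibrant objects are precisely the Kan complexes; and the cleanest reason that $\Spanl(\Delta^0)$ is a Kan complex is simply that $\Spanl$, being a right adjoint, preserves the terminal object, so $\Spanl(\Delta^0)\cong\Delta^0$ and hence $\Spanl(K)\to\Spanl(\Delta^0)$ is a fibration with fibrant target.
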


Given a small category $\J$, we define a category $\Mo(\J)$ as follows. The objects of $\Mo(\J)$ are
given by morphisms $x \to y$ in $\J$. A morphism from $x \to y$ to $x' \to y'$ is given by a
commutative diagram
\[
\xymatrix{ x \ar[d] \ar[r] & y\\
x' \ar[r] & y' \ar[u]}
\]
and composition is provided by concatenation of diagrams.

\begin{exa} Considering the ordinal $[n]$ as a category, the category $\Mo([n])$ can be identified with the
	opposite of the category $I_{[n]}$ corresponding to the partially ordered set of nonempty
	intervals in $[n]$. In other words, we have $P^n \cong \N(\Mo([n]))$. Proposition
	\ref{prop:asd-mo} below generalizes this observation.
\end{exa}

\begin{prop}\label{prop:asd-mo} There exists a $2$-commutative square of functors
	\[
	\xymatrix{
	\Cat \ar[d]^{\Mo} \ar[r]^{\N} &  \sSet \ar[d]^{\asd}\\
	\Cat \ar[r]^{\N} & \sSet,
	}
	\]
	where $\Cat$ denotes the category of small categories.
\end{prop}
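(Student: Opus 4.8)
The plan is to produce a natural isomorphism $\asd\circ\N\cong\N\circ\Mo$ of functors $\Cat\to\sSet$, and the whole argument rests on identifying $\Mo$ with a variant of the twisted arrow category, so that $\N\Mo$ becomes the edgewise subdivision of the nerve. First I would record the colimit-preservation built into the definitions: $\asd$ is by construction the colimit-preserving extension of $P^{\bullet}$, so $\asd\Delta^n=P^n$, and as observed just before the statement $P^n\cong\N(\Mo([n]))$. Since every nerve is the colimit of its simplices, $\N\J\cong\varinjlim_{([n]\to\J)\in\Delta/\N\J}\Delta^n$, and applying $\asd$ gives
\[
\asd(\N\J)\;\cong\;\varinjlim_{[n]\to\J}\asd(\Delta^n)\;=\;\varinjlim_{[n]\to\J}\N(\Mo([n])).
\]
Thus it suffices to show that the canonical maps $\N(\Mo([n]))\to\N(\Mo(\J))$, induced by functoriality of $\Mo$ along $[n]\to\J$, exhibit $\N(\Mo(\J))$ as this colimit.

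The crux is the following bijection, natural in $[m]\in\Delta$ and $\J\in\Cat$. Unwinding the definition of $\Mo$, a functor $F\colon[m]\to\Mo(\J)$ consists of objects $f_k\colon x_k\to y_k$ together with, for each $k\le l$, a commutative square encoded by $a_{kl}\colon x_k\to x_l$ and $b_{lk}\colon y_l\to y_k$ satisfying $f_k=b_{lk}\circ f_l\circ a_{kl}$. Functoriality makes $k\mapsto x_k$ a functor $[m]\to\J$ and $k\mapsto y_k$ a functor $[m]^{\op}\to\J$, and the relation $f_k=b_{mk}\circ f_m\circ a_{km}$ shows that all the $f_k$ are determined by the single morphism $f_m$. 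Hence $F$ is exactly the data of the chain
\[
x_0\to\cdots\to x_m\;\overset{f_m}{\to}\;y_m\to\cdots\to y_0,
\]
i.e. a functor $[2m+1]=[m]^{\op}\star[m]\to\J$, giving $\N_m(\Mo(\J))\cong\N_{2m+1}(\J)$. I would then check that this bijection is compatible with the simplicial operators: for $\phi\colon[m']\to[m]$, precomposition by $\phi$ corresponds, under the concatenation, to precomposition by $\epsilon(\phi)=\phi^{\op}\star\phi$, where $\epsilon\colon\Delta\to\Delta$ sends $[m]$ to $[m]^{\op}\star[m]$; the only point to verify is that the crossing edge of $[2m'+1]$ maps to the composite $x_{\phi(m')}\to x_m\to y_m\to y_{\phi(m')}$, which is $f_{\phi(m')}$ by the relation above. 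This identifies $\N\Mo(\J)\cong\epsilon^*\N(\J)$ naturally, where $\epsilon^*$ is restriction along $\epsilon$.

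To conclude, I would observe that $\epsilon^*$ preserves colimits (restriction is computed levelwise, $(\epsilon^*X)_m=X_{2m+1}$), and that $\asd$ preserves colimits by definition; moreover the two agree on representables, since $\asd\Delta^n=P^n=\N(\Mo([n]))\cong\epsilon^*\Delta^n$ naturally in $[n]$ by the case $\J=[n]$ of the crux. Two colimit-preserving endofunctors of $\sSet$ agreeing naturally on the representables $\Delta^{\bullet}$ are naturally isomorphic, so $\asd\cong\epsilon^*$; composing with $\N$ and invoking $\epsilon^*\N\cong\N\Mo$ yields the desired natural isomorphism $\asd\N\cong\N\Mo$, which also closes the colimit comparison of the first paragraph and is precisely the asserted $2$-commutative square. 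The main obstacle is the naturality bookkeeping in the crux bijection: one must verify compatibility not only with face maps but with all simplicial operators, in particular the degeneracies (non-injective $\phi$), while tracking the op-convention in $\Mo$ so that the transition maps of the two colimits genuinely coincide; once this is in place the comparison is formal.
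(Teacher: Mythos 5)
Your argument is correct, but it takes a genuinely different route from the paper's. The paper proves the proposition by a direct computation on simplices: a $k$-simplex of $\asd(\N(\J))$ is represented by an $n$-simplex $\Delta^n\to\N(\J)$ together with a descending chain of $k+1$ intervals in $[n]$, and the paper simply writes down the corresponding ladder-shaped $k$-simplex of $\N(\Mo(\J))$ and asserts that this assignment is a well-defined natural isomorphism. You instead interpose the observation that $\N\circ\Mo$ is Segal's edgewise subdivision, i.e. restriction $\epsilon^*$ along $[m]\mapsto [m]^{\op}\star[m]$, via the standard identification of $\Mo(\J)$ with a twisted arrow category, and then conclude $\asd\cong\epsilon^*$ from the uniqueness of colimit-preserving extensions of $P^{\bullet}$ off the representables. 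Both proofs rest on the same combinatorial fact (a chain of nested intervals in $[n]$ is a map $[2m+1]\to[n]$), but your factorization buys more: it exhibits $\asd$ as a restriction functor, hence as having both adjoints and preserving limits as well as colimits, and it connects the construction to the classical edgewise subdivision literature; the paper's direct approach is shorter and avoids the naturality bookkeeping you flag. Two small points to tidy: with the paper's convention that a morphism in $\Mo(\J)$ from $x\to y$ to $x'\to y'$ goes forward on sources and backward on targets, your chain $x_0\to\cdots\to x_m\to y_m\to\cdots\to y_0$ identifies $\N_m(\Mo(\J))$ with $\Hom([m]\star[m]^{\op},\J)$ rather than $\Hom([m]^{\op}\star[m],\J)$ as written (a harmless relabelling, but it should be stated consistently so that the transition maps of the two colimits match); and once $\asd\cong\epsilon^*$ is in hand, the colimit comparison of your first paragraph is indeed redundant, as you note.
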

\begin{proof} We will provide, for each category $\J$, an isomorphism $\asd(\N(\J)) \to
	\N(\Mo(\J))$, natural in $\J$. A $k$-simplex $\sigma$ of $\asd(\N(\J))$ can be represented by an
	$n$-simplex $f: \Delta^n \to \N(\J)$ together with a chain $\{i_0,j_0\} \supset \{i_1,j_1\}
	\supset \dots \supset \{i_k, j_k\}$ of intervals in $[n]$. We associate to $\sigma$ the $k$-simplex in
	$\N(\Mo(\J))$ given by 
	\[
	\xymatrix{
	f(i_0) \ar[d] \ar[r] & f(j_0)\\
	f(i_1) \ar[d] \ar[r] & \ar[u] f(j_1)\\
	 \vdots\ar[d] & \vdots \ar[u]\\
	f(i_k) \ar[r] & f(j_k)\ar[u].
	}
	\]
	It is straightforward to verify that this association descends to a well defined map
	$\asd(\N(\J)) \to \N(\Mo(\J))$ which is an isomorphism of simplicial sets, functorial in
	$\J$.
\end{proof}

\begin{cor}\label{cor:asd-product}
	Let $K,L$ be simplicial sets. The natural map $\gamma_{K,L}: \asd(K \times L) \lra \asd(K) \times \asd(L)$ is an
	isomorphism of simplicial sets. This provides the functor $\asd$ with the structure of a
	monoidal functor with respect to the Cartesian monoidal structure on $\sSet$.
\end{cor}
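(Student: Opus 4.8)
The plan is to reduce the claim to the case of representables and then invoke Proposition~\ref{prop:asd-mo}. First I would record that both sides of $\gamma_{K,L}$ preserve colimits separately in each of the two variables. Indeed, $\asd$ preserves all colimits by construction, being the left Kan extension of $P^\bullet$ along the Yoneda embedding $\Delta \subset \sSet$; and since $\sSet$ is cartesian closed, the functor $-\times L$ preserves colimits for each fixed $L$. Hence $K \mapsto \asd(K \times L)$ and $K \mapsto \asd(K)\times\asd(L)$ both preserve colimits, and symmetrically in $L$. The comparison map $\gamma_{K,L}$ is the map induced by the two projections $K\times L \to K$, $K\times L \to L$ through the universal property of the product, and is natural in both arguments. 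Consequently, a standard double density argument shows that it suffices to check that $\gamma_{\Delta^m,\Delta^n}$ is an isomorphism for all $m,n\geq 0$: once $\gamma_{\Delta^m,\Delta^n}$ is an isomorphism, fixing $L=\Delta^n$ and writing $K$ as a colimit of representables gives that $\gamma_{K,\Delta^n}$ is an isomorphism for all $K$, and then fixing $K$ arbitrary and writing $L$ as a colimit of representables gives the general case.

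Next I would treat the representable case via Proposition~\ref{prop:asd-mo}. Recall that $\Delta^m = \N([m])$ and that the nerve functor preserves products, so $\Delta^m \times \Delta^n \cong \N([m]\times[n])$. Applying the natural isomorphism $\asd(\N\J) \cong \N(\Mo(\J))$ of Proposition~\ref{prop:asd-mo} yields
\[
\asd(\Delta^m \times \Delta^n) \cong \N(\Mo([m]\times[n])), \qquad \asd(\Delta^m)\times\asd(\Delta^n) \cong \N\bigl(\Mo([m])\times\Mo([n])\bigr).
\]
It therefore remains to show that $\Mo$ carries products of small categories to products. This is immediate from the definition: an object of $\Mo(\J\times\K)$ is a morphism $(x,x')\to(y,y')$ in $\J\times\K$, which is the same as a pair consisting of a morphism $x\to y$ in $\J$ and a morphism $x'\to y'$ in $\K$; likewise a morphism in $\Mo(\J\times\K)$ is precisely a pair of the defining commutative squares in $\J$ and in $\K$. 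Hence $\Mo(\J\times\K)\cong\Mo(\J)\times\Mo(\K)$ naturally in $\J$ and $\K$, and composing with the product-preservation of $\N$ identifies the two displayed objects.

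The main obstacle, such as it is, is bookkeeping: verifying that the chain of isomorphisms just produced really coincides with the canonical comparison $\gamma_{\Delta^m,\Delta^n}$ rather than merely being an abstract isomorphism. This is where I would use that the isomorphism of Proposition~\ref{prop:asd-mo} is natural in $\J$, so that $\asd$ applied to the projections $[m]\times[n]\to[m]$ and $[m]\times[n]\to[n]$ corresponds to $\N$ of the projections $\Mo([m]\times[n])\to\Mo([m])$ and $\to\Mo([n])$; since these projections exhibit $\Mo([m]\times[n])$ as the product $\Mo([m])\times\Mo([n])$, the induced map to the product is exactly the isomorphism above. Thus $\gamma_{\Delta^m,\Delta^n}$ is an isomorphism, completing the reduction.

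Finally I would deduce the monoidal functor structure. The functor $\asd$ preserves the terminal object, since $\asd(\Delta^0) = P^0 = \N(I_{[0]})^{\op}$ and $I_{[0]}$ consists of the single interval $\{0,0\}$, so $P^0 \cong \Delta^0$. Together with the isomorphisms $\gamma_{K,L}$ just established, this shows that $\asd$ preserves all finite products. A functor between cartesian monoidal categories that preserves finite products is canonically a (strong) monoidal functor, the structure morphisms being the comparison maps $\gamma_{K,L}$ and the unit isomorphism $\asd(\Delta^0)\cong\Delta^0$; the associativity and unit coherence diagrams then commute automatically by the universal property of products. This endows $\asd$ with the asserted monoidal structure.
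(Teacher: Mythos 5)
Your proof is correct and follows essentially the same route as the paper: reduce to representables using that both sides of $\gamma_{K,L}$ preserve colimits in each variable, then conclude via Proposition~\ref{prop:asd-mo} together with the isomorphism $\Mo([m]\times[n])\cong\Mo([m])\times\Mo([n])$. The paper's proof is just a terser version of the same argument, omitting the naturality bookkeeping and the terminal-object check that you spell out.
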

\begin{proof} Observe that source and target of the map $\gamma_{K,L}$ commute with colimits in both
	variables $K$ and $L$. It therefore suffices to verify the statement in the case $K =
	\Delta^m$, 
	$L = \Delta^n$ for $m,n \ge 0$. We conclude the argument by Proposition \ref{prop:asd-mo},
	noting that we have a natural isomorphism of categories
	\[
	\Mo([m] \times [n]) \lra \Mo([m]) \times \Mo([n]).
	\]
\end{proof}

\vfill\eject
	
\subsection{Vertical Spans}
\label{subsec:vertical}

In this section, we provide a relative span construction which applies to a family of $\infty$-categories parametrized by a Cartesian fibration. 
The main application we have in mind is the following. Suppose $\B$ is an
$\inftytwo$-category modelled by a complete Segal fibration $Y \to \ND$. 
Under suitable assumptions on $\B$, we will define an $\inftytwo$-category $\Spanl(\B)$ of {\em vertical
spans} modelled by a complete Segal fibration $\Spanl(Y) \to \ND$. The $\inftytwo$-category $\Spanl(\B)$ 
can be described informally as follows.
\begin{itemize}
	\item The objects of $\Spanl(\B)$ are given by objects of the $\inftytwo$-category $\B$.
	\item A $1$-morphism between objects $x,y$ of $\Spanl(\B)$ is given by a $1$-morphism
		between $x$ and $y$ in $\B$.
	\item A $2$-morphism between $1$-morphisms $f: x \to y$ and $g: x \to y$ is given by a
		$2$-span diagram of the form
		\[
		\xymatrix@=5pc{
		x \ruppertwocell<10>^f{^} \rlowertwocell<-10>_g \ar[r]^(.35)h & y
		}
		\]
		in $\B$.
	\item The higher morphisms are given by spans, in which both edges are equivalences, of
	  spans of spans of \dots in $\C$.
\end{itemize}

Let $T$ be a simplicial set. Given a map $K \to T$ of simplicial sets, we define $\asd(K) \to T$ to
be the composite of the map $\asd(K) \to \asd(T)$ and the map $\eta(T)$ defined in \eqref{eq:eta}.
This association extends to an adjunction 
\begin{equation}\label{eq:adj-S}
	\asd_T: (\sSet)_{/T} \lra (\sSet)_{/T}: \Span_T.
\end{equation}
Note that, for a map $Y \to T$ of simplicial sets, we have a pullback square
\[
\xymatrix{
\Span_T(Y) \ar[r]\ar[d] & \Span(Y) \ar[d]\\
T \ar[r] & \Span(T),
}
\]
where the inclusion $T \to \Span(T)$ is adjoint to the map $\eta(T): \asd(T) \to T$. 

\begin{defi} \label{defi:segalcone} 
	Let $n \ge 2$. We introduce the notation
	\[
		\J^n = \Delta^{\{0,1\}} \coprod_{\{1\}} \Delta^{\{1,2\}} \coprod_{\{2\}} \dots \coprod_{\{n-1\}}
		\Delta^{\{n-1,n\}} \subset \Delta^n,
	\]
	where, as usual, we will occasionally use the notation $\J^I \subset \Delta^I$ for a finite
	nonempty ordinal $I$ whenever explicit reference to the vertices of $\Delta^I$ is needed.
	We call simplicial set
	\[
		S(\Delta^n) :=  \{0,n\} \star \asd(\J^n) \subset \asd(\Delta^n)
	\]
	the {\em Segal cone in $\Delta^n$}. 
\end{defi}

Let $Y \to T$ be a map of simplicial sets. A simplex $\Delta^n \to \Span_T(Y)$ corresponds by
definition to a map $\asd(\Delta^n) \to Y$ which we may restrict to obtain a Segal cone diagram
$S(\Delta^n) \to Y$. A simplex $\Delta^n \to \Span_T(Y)$ is called {\em Segal simplex} if, for every
subsimplex $\Delta^k \subset \Delta^n$ with $k \ge 2$, the corresponding Segal cone diagram
$S(\Delta^k) \to Y$ is a $p$-limit diagram (see \cite[4.3.1]{lurie.htt}). It is easy to
verify that the collection of all Segal simplices in $Y$ assembles to a simplicial subset 
\[
	\Spanl_T(Y) \subset \Span_T(Y).
\]

%

\begin{exa}\label{exa:spankan} Let $Y$ be a Kan complex. Then every simplex of $\Span_{\pt}(Y) =
	\Span(Y)$ is a Segal simplex. This follows since, by \cite[4.4.4.10]{lurie.htt}, any diagram 
	$K^{\triangleleft} \to Y$ with $K$ weakly contractible is a limit diagram.
\end{exa}

\begin{prop}\label{prop:pullsegal} Let $p: Y \to T$ be a Cartesian fibration which admits relative
	pullbacks, i.e. $K$-indexed $p$-limits where $K = \Delta^1 \coprod_{\{1\}} \Delta^1$. Then
	any diagram $\asd(\J^n) \to Y$ admits a $p$-limit.
\end{prop}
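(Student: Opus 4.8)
The plan is to induct on $n$, producing the desired $p$-limit as an iterated relative pullback. First I would record the explicit shape of the index diagram. Since $\asd$ preserves colimits and $\J^n = \Delta^{\{0,1\}} \cop_{\{1\}} \cdots \cop_{\{n-1\}} \Delta^{\{n-1,n\}}$, we get $\asd(\J^n) = \asd(\Delta^{\{0,1\}}) \cop_{\{1\}} \cdots \cop_{\{n-1\}} \asd(\Delta^{\{n-1,n\}})$, and by Proposition \ref{prop:asd-mo} each factor $\asd(\Delta^{\{i,i+1\}}) = \N(\Mo(\{i,i+1\}))$ is the cospan $\{i\} \leftarrow \{i,i+1\} \to \{i+1\}$. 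Thus $\asd(\J^n)$ is the nerve of the ``fence'' poset, i.e.\ the zigzag
\[
\{0\} \leftarrow \{0,1\} \to \{1\} \leftarrow \{1,2\} \to \cdots \leftarrow \{n-1,n\} \to \{n\},
\]
whose $p$-limit should be the iterated relative pullback $x_{\{0,1\}} \times_{x_{\{1\}}} x_{\{1,2\}} \times_{x_{\{2\}}} \cdots \times_{x_{\{n-1\}}} x_{\{n-1,n\}}$, to be assembled from the shape $K = \Delta^1 \cop_{\{1\}} \Delta^1$ one factor at a time.

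Next come the base cases and a cofinality reduction. The vertices $\{0\}$ and $\{n\}$ are each the target of a single arrow and the source of none; hence the inclusion of the ``inner fence'' obtained by deleting them is initial in the sense of \cite[\S 4.1]{lurie.htt}, and since relative limits are invariant under initial functors, I may replace $\asd(\J^n)$ by this inner fence. For $n=1$ the inner fence is the single object $\{0,1\}$, which is initial in $\asd(\J^1)$, so the $p$-limit exists and equals $D(\{0,1\})$ (the inclusion of an initial object is initial, and a relative limit over a point is evaluation). For $n=2$ the inner fence is exactly $K$, so the $p$-limit exists by hypothesis.

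For the inductive step I would write the inner fence $Z_k$ of width $k$ (with sources $t_i = \{i-1,i\}$ and inner sinks $a_i = \{i\}$) as the pushout $Z_{k-1} \cop_{\{t_{k-1}\}} (t_{k-1} \to a_{k-1} \leftarrow t_k)$, glued along the source vertex $t_{k-1}$. Assuming $D|_{Z_{k-1}}$ has a $p$-limit $L_{k-1}$ carrying its structural leg $L_{k-1} \to D(t_{k-1}) \to D(a_{k-1})$, I would form the relative pullback $L_k = L_{k-1} \times_{D(a_{k-1})} D(t_k)$, which exists because $p$ admits $K$-indexed $p$-limits. It then remains to verify that $L_k$ is a $p$-limit of $D|_{Z_k}$. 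This gluing statement---that attaching one cospan corner refines the $p$-limit by a single relative pullback---is the technical heart. I would deduce it from the theory of relative right Kan extensions: realizing $Z_k^{\triangleleft}$ by the evident stagewise filtration and applying the pointwise criterion for relative Kan extensions together with the pasting law for pullback squares (\cite[\S 4.3, 4.4.2.1]{lurie.htt}), so that the cone point of $Z_k^{\triangleleft}$ is computed as the pullback of the cone point of $Z_{k-1}^{\triangleleft}$ against $D(t_k)$ over $D(a_{k-1})$.

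The substantive difficulty is precisely this gluing/decomposition compatibility: everything else is either the trivial base case, the cofinality reduction, or a direct appeal to the hypothesis on relative pullbacks. The care required lies in the bookkeeping of cones and legs, and in checking that the relevant comma categories---those controlling cofinality of the inner-fence inclusion and of the stagewise filtration---are weakly contractible, so that the staged tower of relative pullbacks genuinely computes the total $p$-limit rather than some a priori larger diagram.
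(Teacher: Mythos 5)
Your proposal is correct in outline but organizes the argument quite differently from the paper, and it leaves its self-identified ``technical heart'' as a plan rather than a proof. The paper's proof is two lines: using \cite[4.3.1.10, 4.3.1.11]{lurie.htt} one reduces to the case $T = \pt$ --- i.e., existence of $p$-limits for a fixed index shape is equivalent to each fiber of $p$ admitting limits of that shape together with the associated functors preserving them --- and then the statement for the zigzag $\asd(\J^n)$ follows from the dual of \cite[4.4.2.2]{lurie.htt}, the decomposition lemma expressing the limit over a pushout $K' \amalg_A K''$ of index diagrams as a fiber product of the limits over the pieces. Your combinatorial analysis of $\asd(\J^n)$ as the fence poset, the cofinality reduction to the inner fence, and the base cases are all fine (and the cofinality step, while harmless, is not actually needed once one has the decomposition lemma, since the limit over the span $\asd(\Delta^{\{i,i+1\}})$ is just the value at its initial vertex $\{i,i+1\}$). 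What you gain nothing on, and where the risk lies, is insisting on working with relative limits throughout: your gluing step is precisely a \emph{relative} version of the dual of \cite[4.4.2.2]{lurie.htt}, which is not available off the shelf, and your sketch of how to prove it (pointwise criterion for relative Kan extensions plus pasting) is plausible but would require genuine work with the comma categories and cone extensions you mention. The efficient way to discharge exactly that step is the paper's move: pass to fibers via \cite[4.3.1.10, 4.3.1.11]{lurie.htt}, where the gluing becomes the already-proved absolute statement, rather than re-deriving it in the relative setting. So either complete your relative gluing lemma in detail, or --- better --- replace it by the fiberwise reduction, after which your induction collapses to a citation.
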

\begin{proof}
	Using \cite[4.3.1.10]{lurie.htt} and \cite[4.3.1.11]{lurie.htt} one reduces to the case $T =
	\pt$. Now the statement follows immediately from the dual statement of
	\cite[4.4.2.2]{lurie.htt}.
\end{proof}

In what follows, we will make use of the Cartesian model structure on the category $(\sSet^{+})_{/T}$ of marked
simplicial sets over $T$ for which we refer the reader to \cite[3.1.3]{lurie.htt}. We will freely
use the notation introduced in loc. cit. In particular, given a simplicial set $K$, we denote by
$K^{\flat}$ the marked simplicial set where only the degenerate edges are marked, and by
$K^{\sharp}$ the marked simplicial set where all edges are marked. 
Given a simplicial subset $K \subset \Delta^n$, we define the marked
simplicial set $K^{\lastedge} = (K, \E)$ where $\E$ denotes the set of all degenerate edges together
with the edge $\{n-1,n\}$ if $\{n-1,n\} \subset K$.
Further, given a Cartesian
fibration $p: Y \to T$, we obtain an object $Y^{\natural} \to T^{\sharp}$ of $(\sSet^{+})_{/T}$ where
$Y^{\sharp}$ is the marked simplicial set obtained by marking all $p$-Cartesian edges. By \cite[3.1.4.1]{lurie.htt}, the objects of
$(\sSet^{+})_{/T}$ arising via this construction are exactly the fibrant objects. 

We promote the adjunction \eqref{eq:adj-S} to an adjunction of marked simplicial sets
\begin{equation}\label{eq:markedadj}
	\asd_T: (\sSet^+)_{/T} \lra (\sSet^+)_{/T}: \Span_T
\end{equation}
by declaring an edge $\Delta^1 \to \Span_T(Y)$ to be marked if both edges of $Y$ determined by the
adjoint map $\asd(\Delta^1) \to Y$ are marked. We do not distinguish this marked adjunction
notationally. Further, we obtain an induced marked structure on $\Spanl_T(Y)$ by declaring an edge
to be marked if its image in $\Span_T(Y)$ is marked.
Given objects $K,Y$ in $(\mSet)_{/T}$, we define
\[
\Hom_T(\asd(K), Y)^l \subset \Hom_T(\asd(K), Y)
\]
to be the subset consisting of those maps whose adjoint map $K \to \Span_T(Y)$ factors through
$\Spanl_T(Y) \subset \Span_T(Y)$. Further, we define 
\[
\Map_T^{\sharp}(\asd(K), Y)^l \subset \Map^{\sharp}_T(\asd(K), Y)
\]
to be the full simplicial subset spanned by those vertices lying in $\Hom_T(\asd(K), Y)^l$. 

The following result will be a corollary of a more general statement for Cartesian fibrations
proven in Theorem \ref{thm:spancartesian}. Nevertheless, we present a proof since it already illustrates some of the
ideas of the technically more involved argument in the relative situation. 

\begin{thm} \label{thm:spancat} Let $\C$ be an $\infty$-category which admits pullbacks. Then the
	simplicial set $\Spanl_{\pt}(\C)$ is an $\infty$-category.
\end{thm}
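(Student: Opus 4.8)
The goal is to prove that $\Spanl_{\pt}(\C)$ is an $\infty$-category, i.e.\ a quasi-category, meaning every inner horn $\Lambda^n_k \to \Spanl_{\pt}(\C)$ with $0 < k < n$ admits a filler. By the adjunction $\asd \dashv \Spanl$ from \eqref{eq:adj-S}, specialized to $T = \pt$, a map $\Lambda^n_k \to \Spanl_{\pt}(\C)$ corresponds to a map $\asd(\Lambda^n_k) \to \C$ satisfying the Segal condition on every $2$-subsimplex, and filling the horn amounts to extending this along $\asd(\Lambda^n_k) \hookrightarrow \asd(\Delta^n)$ to a map $\asd(\Delta^n) \to \C$ which is again Segal on all $2$-faces. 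The plan is therefore to reduce the horn-filling problem to a statement about extending diagrams indexed by the posets $P^n = \N(I_{[n]})^{\op}$, and to exploit the fact that $\C$ admits pullbacks.

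First I would set up the combinatorics of the asymmetric subdivision. The key observation is that $\asd(\Lambda^n_k) \subset \asd(\Delta^n) = P^n$ is a full subcategory (a downward-closed subposet of $I_{[n]}^{\op}$, equivalently an upward-closed set of intervals in the missing-face sense), and the inclusion fails to contain exactly those intervals $\{i,j\}$ whose corresponding face of $\Delta^n$ is not present in $\Lambda^n_k$. The natural strategy is to build the extension one interval at a time, ordering the missing intervals by inclusion (from smallest to largest, or by the opposite order, depending on which produces the right Kan-extension shape). At each stage, extending the diagram over one new interval $\{i,j\}$ amounts to filling a diagram of shape $(P^n_{< \{i,j\}})^{\triangleleft}$, and the Segal condition forces the new value to be a specified limit. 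Concretely, for $j - i \geq 2$ the value $x_{\{i,j\}}$ must be the limit of the Segal cone $S(\Delta^{\{i,\dots,j\}})$, which by Proposition \ref{prop:pullsegal} (with $T = \pt$) exists because $\C$ admits pullbacks; for $j - i \leq 1$ the interval is already present in any inner horn since vertices and edges of $\Delta^n$ survive in $\Lambda^n_k$ for $0 < k < n$.

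The technical heart is to phrase this as an $\infty$-categorical right Kan extension argument using \cite[4.3.2]{lurie.htt}. I would show that a Segal simplex $\asd(\Delta^n) \to \C$ is precisely a functor which is a right Kan extension of its restriction to the full subposet $P^{n,\leq 1}$ spanned by intervals of length $\leq 1$ (this is the exact analogue of the Kan-extension characterization used in the proof of Theorem \ref{thm:2segalcosegal}). Granting this, the inclusion $\asd(\Lambda^n_k) \hookrightarrow \asd(\Delta^n)$ restricts to an isomorphism on the length-$\leq 1$ part (again using $0 < k < n$), so the given horn diagram already determines, via right Kan extension, the unique Segal extension to $\asd(\Delta^n)$; the existence of the relevant pointwise $p$-limits is guaranteed by Proposition \ref{prop:pullsegal}, and the relevant overcategories are the Segal cones $S(\Delta^k)$, whose index categories $\asd(\J^k)$ are cofinal in the appropriate slice. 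I expect the main obstacle to be verifying this cofinality/Kan-extension identification cleanly: one must check that for each missing interval $\{i,j\}$ the slice category governing the Kan extension is correctly modelled by the Segal cone $S(\Delta^{\{i,\dots,j\}})$, and that the resulting limit diagrams assemble coherently rather than merely pointwise. This is exactly the kind of bookkeeping that becomes transparent once one invokes \cite[4.3.2.15]{lurie.htt}, as was done for $\HX$, and I would structure the proof to defer as much as possible to that machinery, treating the present statement as the absolute ($T = \pt$) case of the forthcoming Theorem \ref{thm:spancartesian}.
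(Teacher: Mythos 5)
Your strategy — direct inner-horn filling via the adjunction $\asd \dashv \Spanl$, reducing to a pointwise right Kan extension built up ``one interval at a time'' — is genuinely different from the paper's, which never fills a horn: the paper assembles the Kan complexes $\Map^{\sharp}(\asd(\Delta^n)^{\flat}, \C^{\natural})^l$ into a Reedy fibrant $1$-Segal space (using Proposition \ref{prop:restmap} and \cite[4.3.2.15]{lurie.htt} applied to the \emph{full} subposet inclusion $\asd(\J^n) \subseteq \asd(\Delta^n)$), identifies $\Spanl_{\pt}(\C)$ with its zeroth row, and quotes Corollary 3.6 of \cite{joyal-tierney}. Unfortunately your route, as written, rests on a false combinatorial premise.

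The gap is your claim that $\asd(\Lambda^n_k) \subseteq \asd(\Delta^n)$ is a full (downward-closed) subposet, so that the extension amounts to adjoining missing intervals as limits. Take $n=3$, $k=1$. Every interval of $[3]$ already occurs as a vertex of $\asd(\Lambda^3_1)$ — e.g.\ $\{0,3\}$ via the face $\Delta^{\{0,1,3\}}$ and $\{1,2\}$ via $\Delta^{\{0,1,2\}}$ — but the edge of $\asd(\Delta^3)$ given by the chain $\{0,3\} \supseteq \{1,2\}$ lies in no $\asd(\partial_j\Delta^3)$ with $j \ne 1$, since that would force a three-element face containing all of $0,1,2,3$. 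In fact, for every inner horn with $n \ge 3$ \emph{all} vertices of $\asd(\Delta^n)$ are already present and only higher simplices are missing, so the induction ``add missing intervals ordered by inclusion'' describes the wrong extension problem: one must produce new morphisms and coherences between objects one already has, not new objects. Relatedly, the values your horn prescribes at long intervals are constrained only by the Segal cones of the faces actually contained in $\Lambda^n_k$ — e.g.\ at $\{0,3\}$ only the coarse condition $x_{\{0,3\}} \simeq x_{\{0,1\}} \times_{x_{\{1\}}} x_{\{1,3\}}$ coming from $\Delta^{\{0,1,3\}}$ — and identifying these with the values of the right Kan extension from the length-$\le 1$ part requires the pasting law for pullbacks and transitivity of Kan extensions. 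That comparison is real mathematical content, not bookkeeping; it is exactly what Lemma \ref{lem:joyal-tierney} and the cancellation arguments in part (1) of the proof of Theorem \ref{thm:spancartesian} supply. Your proof can be repaired along those lines, but the argument as structured does not go through.
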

\begin{proof}
	From Proposition \ref{prop:restmap} below, it follows that the simplicial sets
	\[
		Y_n = \Map^{\sharp}(\asd(\Delta^n)^{\flat}, \C^{\natural})^l,
	\]
	where $n \ge 0$, organize into a Reedy fibrant simplicial space $Y$. Note that the marked
	edges of $\C^{\natural}$ are exactly the equivalences in $\C$, hence the simplicial set
	$\Map^{\sharp}(\asd(\Delta^n)^{\flat}, \C^{\natural})$ coincides with the largest Kan
	complex contained in the $\infty$-category $\Fun(\asd(\Delta^n), \C)$.  By
	\cite[4.3.2.15]{lurie.htt}, the inclusion $\asd(\J^n) \to \asd(\Delta^n)$ induces a trivial
	fibration
	\begin{equation}\label{eq:mapseg1}
			Y_n \overset{\simeq}{\lra} \Map^{\sharp}(\asd(\J^n)^{\flat}, \C^{\natural}).
	\end{equation}
	Further, we have a canonical indentification
	\begin{equation}\label{eq:mapseg2}
			\Map^{\sharp}(\asd(\J^n)^{\flat}, \C^{\natural}) \cong Y_{\{0,1\}} \times_{Y_{\{1\}}}
			Y_{\{1,2\}} \times \dots \times Y_{\{n-1,n\}}.
	\end{equation}
	The composite of the maps in \eqref{eq:mapseg1} and \eqref{eq:mapseg2} coincides with the
	natural map 
	\[
		Y_n \lra Y_{\{0,1\}} \times_{Y_{\{1\}}} Y_{\{1,2\}} \times \dots \times
		Y_{\{n-1,n\}}.
	\]
	Therefore, we deduce that $Y$ is a Reedy fibrant $1$-Segal space. Note that, by the equality 
	\[
		\Spanl_{\pt}(\C)_n = \Hom(\asd(\Delta^n), \C)^l = (Y_n)_0, 
	\]
	the simplicial set $\Spanl_{\pt}(\C)$ coincides with the $0$th row of $Y$. 
	Using Corollary 3.6 in \cite{joyal-tierney}, we conclude that $\Spanl_{\pt}(\C)$ is an
	$\infty$-category.
\end{proof}

\begin{prop} \label{prop:restmap} Let $T$ be a simplicial set, and let $Y$ be a fibrant object of $(\mSet)_{/T}$. Then the following assertions hold:
	\begin{enumerate} 
		\item For any object $K \in (\mSet)_{/T}$, the simplicial set
			$\Map_T^{\sharp}(\asd(K), Y)^l$ is a Kan complex which is a union of
			connected components of $\Map_T^{\sharp}(\asd(K), Y)$.
		\item \label{l:restmap2} For any cofibration $K \to L$ in $(\mSet)_{/T}$, the induced restriction map
			\[
			\Map_T^{\sharp}(\asd(L), Y)^l  \lra \Map_T^{\sharp}(\asd(K), Y)^l 
			\]
			is a Kan fibration.
		\item Consider a pushout diagram in $(\mSet)_{/T}$
			\[
			\xymatrix{
			K \ar[r]^g\ar[d]_f & \ar[d] L\\
			K' \ar[r] & L'
			}
			\]
			where $f$ and $g$ are cofibrations and assume that the restriction map 
			\[
			\Map_T^{\sharp}(\asd(L), Y)^l  \lra \Map_T^{\sharp}(\asd(K), Y)^l 
			\]
			is a weak equivalence. Then the restriction map
			\[
			\Map_T^{\sharp}(\asd(L'), Y)^l  \lra \Map_T^{\sharp}(\asd(K'), Y)^l
			\]
			is a weak equivalence.
	\end{enumerate}
\end{prop}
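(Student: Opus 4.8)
The plan is to deduce all three assertions from a single basic observation: the condition defining the superscript $l$ --- that the Segal cone diagrams $S(\Delta^k)\to Y$ attached to the subsimplices $\Delta^k\subset K$ with $k\ge 2$ be $p$-limit diagrams --- is invariant under the homotopy relation encoded by the Kan complexes $\Map^{\sharp}_T(\asd(K),Y)$. To set up, I would first record that, since $Y$ is a fibrant object of the simplicial model category $(\mSet)_{/T}$ and every object is cofibrant, the simplicial set $\Map^{\sharp}_T(\asd(K),Y)$ is a Kan complex for each $K$; and that $\asd$ preserves colimits and cofibrations (see \ref{prop:asd-id}, \ref{prop:asd-quillen}, \ref{cor:asd-product}), so that $\asd(K)\to\asd(L)$ is a cofibration whenever $K\to L$ is one.

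For assertion (1), since $\Map^{\sharp}_T(\asd(K),Y)^l$ is by definition the full simplicial subset on the $l$-vertices, it suffices to show that the set of $l$-vertices is closed under the edge relation: the full subcomplex of a Kan complex on a set of vertices closed under the edge relation is a union of connected components, hence itself a Kan complex. An edge of $\Map^{\sharp}_T(\asd(K),Y)$ is adjoint to a map $(\Delta^1)^{\sharp}\times\asd(K)\to Y$ of marked simplicial sets over $T$. For each subsimplex $\sigma\colon\Delta^k\to K$ with $k\ge 2$, restriction along $\sigma$ and the Segal cone $S(\Delta^k)\subset\asd(\Delta^k)$ produces a homotopy between the two associated cone diagrams whose one-dimensional components, being marked and lying over degenerate edges of $T$, are $p$-Cartesian over degeneracies and therefore fibrewise equivalences. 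Invoking the stability of $p$-limit diagrams under such equivalences (cf. \cite[\S 4.3.1]{lurie.htt}) then shows that one endpoint of the edge is an $l$-vertex if and only if the other is, which is exactly the required closure property.

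For assertion (2), I would first note that restriction along a cofibration $K\to L$ preserves the $l$-condition: every subsimplex of $K$ is a subsimplex of $L$, so if all Segal cones of a map $\asd(L)\to Y$ are $p$-limit diagrams then so are those of its restriction to $\asd(K)$. Hence the Kan fibration $\Map^{\sharp}_T(\asd(L),Y)\to\Map^{\sharp}_T(\asd(K),Y)$ --- a fibration because $\asd(K)\to\asd(L)$ is a cofibration and $Y$ is fibrant --- carries $l$-vertices to $l$-vertices and restricts to a map $\pi^l$ on the $l$-parts. Since, by assertion (1), both source and target of $\pi^l$ are unions of connected components of the ambient Kan complexes, any lifting problem for $\pi^l$ against a horn inclusion $\Lambda^n_i\hookrightarrow\Delta^n$ can be solved using the fibration on the ambient complexes; the solution automatically lands in the $l$-part, since the horn already does and $\Delta^n$ is connected. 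Thus $\pi^l$ is a Kan fibration.

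For assertion (3), applying the colimit- and cofibration-preserving functor $\asd$ to the given pushout yields a pushout of cofibrations, so that $\Map^{\sharp}_T(-,Y)$ converts it into a pullback square of Kan complexes. Because the pushout $L'=K'\cup_K L$ is formed along cofibrations, every simplex of $L'$ comes from $K'$ or from $L$, whence a map $\asd(L')\to Y$ is an $l$-map precisely when both of its restrictions to $\asd(L)$ and $\asd(K')$ are; as the $l$-parts are full subcomplexes, this identifies $\Map^{\sharp}_T(\asd(L'),Y)^l$ with the pullback of the $l$-parts. By hypothesis the map $\Map^{\sharp}_T(\asd(L),Y)^l\to\Map^{\sharp}_T(\asd(K),Y)^l$ is a weak equivalence, and by assertion (2) it is a Kan fibration, hence a trivial fibration; since trivial fibrations are stable under base change, the pulled-back map $\Map^{\sharp}_T(\asd(L'),Y)^l\to\Map^{\sharp}_T(\asd(K'),Y)^l$ is a trivial fibration and in particular a weak equivalence. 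The main obstacle is the homotopy-invariance step of assertion (1): one must carefully match an edge of the marked mapping space over $T$ with a fibrewise equivalence of Segal cone diagrams and then correctly invoke the equivalence-stability of $p$-limits, after which assertions (2) and (3) follow by the formal arguments sketched above.
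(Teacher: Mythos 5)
Your proof is correct and follows essentially the same route as the paper's: part (1) rests on the stability of relative limit cones under equivalences (the marked edges over degenerate edges of $T$ being fibrewise equivalences), part (2) is deduced from (1) together with the ambient Kan fibration, and part (3) identifies the $l$-parts with a pullback square. The only cosmetic difference is in the last step of (3), where you combine the hypothesis with assertion (2) to get a trivial fibration and pull it back, while the paper instead pulls the weak equivalence back along the other (fibration) leg and invokes right properness of the Kan model structure; both are valid.
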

\begin{proof} By \cite[3.1.4.4]{lurie.htt}, the mapping space $\Map_T^{\sharp}(\asd(K), Y)$ is a Kan
	complex. Assertion (1) now follows immediately from the fact that relative limit cones are
	stable under equivalences (cf. \cite[4.3.1.5(3)]{lurie.htt}).

	By \cite[3.1.4.4]{lurie.htt}, the map
	\[
		\Map_T^{\sharp}(\asd(L), Y) \to \Map_T^{\sharp}(\asd(K), Y)
	\]
	is a Kan fibration. The assertion (2) now follows from (1).

	To show (3) note that, under the given assumptions, the square
		\[
		\xymatrix{
			\Map_T^{\sharp}(\asd(L'), Y)  \ar[r]\ar[d] &  \Map_T^{\sharp}(\asd(K'), Y)
			\ar[d]^{f^*}\\
			\Map_T^{\sharp}(\asd(L), Y)  \ar[r] &  \Map_T^{\sharp}(\asd(K), Y) 
		}
		\]
	is a pullback square. From this we deduce further that the square 
	\[
		\xymatrix{
			\Map_T^{\sharp}(\asd(L'), Y)^l  \ar[r]\ar[d] &  \Map_T^{\sharp}(\asd(K'),Y)^l
			\ar[d]^{f^*}\\
			\Map_T^{\sharp}(\asd(L), Y)^l  \ar[r] &  \Map_T^{\sharp}(\asd(K), Y)^l 
		}
	\]
	is a pullback square. Since the map $f^*$ is a Kan fibration, the statement now follows
	from the fact that the category of simplicial sets equipped with the Kan model structure
	is right proper: pullback along fibrations preserves weak equivalences.
\end{proof}

In the remainder of this section, we will generalize Theorem \ref{thm:spancat} to the following
result. 

\begin{thm}\label{thm:spancartesian} Let $p: Y \to T$ be a Cartesian fibration of
	$\infty$-categories which admits relative pullbacks, and let $Y^{\natural}$ be the
	corresponding fibrant object of $(\mSet)_{/T}$.  Then the marked simplicial set
	$\Spanl_T(Y^{\natural})$ is a fibrant object of $(\mSet)_{/T}$. Equivalently, the underlying
	map $q: \Spanl_T(Y) \to T$ is a Cartesian fibration and the $q$-Cartesian edges of
	$\Spanl_T(Y)$ are precisely the marked edges.
\end{thm}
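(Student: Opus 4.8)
The plan is to verify that the map $q:\Spanl_T(Y)\to T$ is a Cartesian fibration whose Cartesian edges are exactly the marked ones, which by \cite[3.1.4.1]{lurie.htt} is equivalent to showing $\Spanl_T(Y^\natural)$ is a fibrant object of $(\mSet)_{/T}$. Fibrancy amounts to a right lifting property against the marked anodyne maps, so I would first reduce to a generating set of marked anodyne morphisms as listed in \cite[3.1.1.1]{lurie.htt}. The strategy is to interpret each required lift of a marked anodyne map $A\to B$ against $\Spanl_T(Y)\to T$ in terms of the adjunction \eqref{eq:markedadj}: a lift corresponds to extending a map $\asd(A)\to Y$ to a map $\asd(B)\to Y$ subject to the $p$-limit conditions defining $\Spanl_T$, together with the tracking of which edges are $q$-Cartesian versus $p$-Cartesian. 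Thus the whole problem is transported from $\Spanl_T(Y)$ to lifting problems for $Y^\natural$ itself, where $Y\to T$ is already known to be Cartesian.

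The key technical input is an analysis of how the functor $\asd$ interacts with the structure maps defining marked anodyne morphisms, paralleling Proposition \ref{prop:restmap} but now in the fibered setting. First I would establish the fibered analogue of the three assertions of Proposition \ref{prop:restmap}: namely, that for fibrant $Y$ in $(\mSet)_{/T}$ and a cofibration $K\to L$, the restriction map $\Map_T^\sharp(\asd(L),Y)^l\to\Map_T^\sharp(\asd(K),Y)^l$ is a fibration in the Cartesian model structure, and that the subspaces cut out by the Segal $p$-limit conditions are stable under equivalences. Here Proposition \ref{prop:pullsegal} is essential, guaranteeing that the relevant $p$-limits of diagrams $\asd(\J^n)\to Y$ exist, and \cite[4.3.1.5]{lurie.htt} and \cite[4.3.2.15]{lurie.htt} provide the stability and extension properties for relative limit cones. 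The Segal cone $S(\Delta^n)=\{0,n\}\star\asd(\J^n)$ from Definition \ref{defi:segalcone} is the precise gadget encoding the $p$-limit condition, and I would use it to rewrite the Segal-simplex condition as a relative right Kan extension condition along the inclusion $\asd(\J^n)\hookrightarrow\asd(\Delta^n)$.

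The main obstacle will be controlling the marking data through the $\asd$ functor, that is, verifying that the lifts produced are actually $q$-Cartesian and that marked anodyne maps are sent by $\asd$ to maps which behave well with respect to $p$-Cartesian edges. The subtlety is that an edge of $\Spanl_T(Y)$ is marked when \emph{both} legs of the corresponding span $\asd(\Delta^1)\to Y$ are $p$-Cartesian, so the interaction between the two edges of $\asd(\Delta^1)$ over a single edge of $T$ must be handled carefully; the $\lastedge$-marking convention $K^{\lastedge}$ is designed exactly to isolate the relevant last edge, and I would track how $\asd$ transforms these last-edge markings. Concretely, the hardest lifting problems are those against the inclusions of type $(\Lambda_i^n)^\flat\subset(\Delta^n)^\flat$ for $0<i<n$ and the marked-edge generators, and for these I would combine the $p$-limit existence from Proposition \ref{prop:pullsegal} with a careful bookkeeping argument showing that the extension can be chosen with the prescribed edges $p$-Cartesian.

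Once these lifting properties are in place for the generating marked anodyne maps, the conclusion follows formally: $\Spanl_T(Y^\natural)\to T^\sharp$ has the right lifting property against all marked anodyne morphisms, hence is fibrant, and the characterization of fibrant objects in \cite[3.1.4.1]{lurie.htt} identifies $q$ as a Cartesian fibration with Cartesian edges given by the marked edges. I would present the absolute case (Theorem \ref{thm:spancat}) as the template, since there every simplex of $\Span(\C)$ is already controlled by Example \ref{exa:spankan}, and then emphasize that the relative version only requires replacing ordinary limits by $p$-limits throughout, with Proposition \ref{prop:pullsegal} supplying their existence and \cite[4.3.1]{lurie.htt} supplying their stability.
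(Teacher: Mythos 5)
Your overall route is the paper's route: verify the right lifting property of $\Spanl_T(Y^\natural)\to T^\sharp$ against the generating marked anodyne maps, transport each lifting problem through the adjunction \eqref{eq:markedadj} to an extension problem for diagrams $\asd(A)\to Y$, and settle the inner-horn cases by recasting the Segal condition as a relative right Kan extension along $\asd(\J^n)\hookrightarrow\asd(\Delta^n)$ via Proposition \ref{prop:pullsegal} and \cite[4.3.2.15]{lurie.htt}. (One small correction: Proposition \ref{prop:restmap} is already stated for fibrant objects of $(\mSet)_{/T}$, so there is no ``fibered analogue'' left to establish; what is actually needed on top of it is a right-cancellation argument, as in Lemma \ref{lem:joyal-tierney}, reducing the horn inclusions $\Lambda^n_i\subset\Delta^n$ to the spine inclusions $\J^n\subset\Delta^n$, since the Kan-extension argument only applies directly to spines.)

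There are, however, two genuine gaps. First, the generating marked anodyne maps include the class $K^\flat\to K^\sharp$ for $K$ a Kan complex, and your proposal never addresses it. Solving this lifting problem amounts to showing that every equivalence in $\Spanl_T(Y)$ is already marked, i.e., that if a span $x\leftarrow z\to y$ is invertible in the $\infty$-category $\Spanl(Y_s)$ then both legs are equivalences in $Y_s$. This is not a consequence of the $p$-limit machinery or of any anodyne-map bookkeeping; the paper proves it by choosing a homotopy inverse, assembling the two witnessing $2$-simplices into a $3$-simplex via an inner anodyne extension, and chasing the resulting pullback squares in the $\asd(\Delta^3)$-diagram to force all legs to be equivalences. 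Without this step the characterization of the $q$-Cartesian edges as the marked edges fails. Second, for the outer-horn and marked-edge generators ($M_2$, $M_3$) you correctly identify that the difficulty is relating the Segal ($p$-limit) condition to the $p$-Cartesianness of specific legs, but you do not supply the device that makes this tractable: a homotopy (Lemma \ref{lem:comparelimit}) that slides a relative diagram $\asd(\Delta^n)\to Y$ along $p$-Cartesian edges into a single fiber $Y_{s_0}$, where the $p$-limit condition becomes an ordinary limit condition and ``$p$-Cartesian over a degenerate edge'' becomes ``equivalence.'' Both the claim that a $2$-simplex with one leg an equivalence is Segal if and only if the remaining leg is $p$-Cartesian, and the verification for $M_3$ that both legs of a limit cone over a marked $2$-simplex are $p$-Cartesian, rest on this reduction; ``careful bookkeeping'' alone will not produce it.
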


For the proof of Theorem \ref{thm:spancat}, we need some preparatory results. The first result
concerns a slight elaboration on Lemma 3.5 in \cite{joyal-tierney}, where Lemma
\ref{lem:joyal-tierney}(1) is proven in the case $T = \pt$.  
Let $A$ be a class of morphisms in a category $\C$.  We say $A$ has the {\em right cancellation
property} if, for any pair $f,g$ of morphisms in $\C$, we have: If $g \in A$ and $f \circ g \in A$
then $f \in A$.  The following Lemma is a slight elaboration on Lemma 3.5 in \cite{joyal-tierney},
where assertion (1) is proven in the case $T = \pt$. 

\begin{lem}\label{lem:joyal-tierney} Let $T$ be a simplicial set and let $F:
	(\mSet)_{/T} \to \mSet$ the forgetful functor. Suppose $A$ be a class of cofibrations in
	$(\mSet)_{/T}$ which is closed under composition, pushouts along cofibrations, and satisfies the right
	cancellation property. Consider the following sets of cofibrations in $\mSet$: 
	\begin{align*}
		B_1 & = \{ (\J^n)^{\flat} \subset (\Delta^n)^{\flat} \; | \; \text{where } n \ge 2 \}\\
		M_1 & = \{ (\Lambda^n_i)^{\flat} \subset (\Delta^n)^{\flat} \; | \; \text{where $n \ge 2$, $0 < i < n$} \}\\
		B_2 & = \{ (\Lambda^n_n)^{\lastedge} \subset (\Delta^n)^{\lastedge} \; | \;
		\text{where $1 \le n \le 2$} \}\\
		M_2 & = \{ (\Lambda^n_n)^{\lastedge} \subset (\Delta^n)^{\lastedge} \; | \;
		\text{where $n \ge 1$} \}
	\end{align*}
	\begin{enumerate}
		\item Assume $A$ contains $F^{-1}(B_1)$. Then $A$ contains $F^{-1}(M_1)$. 
		\item Assume $A$ contains $F^{-1}(B_1)$ and $F^{-1}(B_2)$. Then $A$ contains
			$F^{-1}(M_1)$ and $F^{-1}(M_2)$.
	\end{enumerate}
\end{lem}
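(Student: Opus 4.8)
The plan is to reduce the relative statement over $T$ to an absolute combinatorial statement about monomorphisms of (marked) simplicial sets, and then to prove the latter by induction on dimension using the right cancellation property. The reduction is formal: the forgetful functor $F$ preserves colimits and cofibrations, and every pushout, composition, or cancellation diagram that we build among underlying marked simplicial sets lifts canonically to $(\mSet)_{/T}$ by equipping each object with the structure map to $T$ obtained by restriction. Consequently it suffices to show that a class of cofibrations of marked simplicial sets closed under composition, cobase change along cofibrations, and right cancellation contains $M_1$ once it contains $B_1$, and contains $M_1$ and $M_2$ once it contains $B_1$ and $B_2$; the $F^{-1}$ bookkeeping then follows automatically, since each generator in $M_1, M_2$ lies over a simplex of $T$ and the building blocks lie over its faces.

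For assertion (1) I would argue by induction on $n$, proving that the inner horn inclusion $(\Lambda^n_i)^\flat \subset (\Delta^n)^\flat$ lies in $A$ for every $0 < i < n$. The base case $n=2$ is immediate, since $\Lambda^2_1 = \J^2$ and so the inner horn inclusion is literally the spine inclusion in $B_1$. For the inductive step, the key device is the factorization $\J^n \hookrightarrow \Lambda^n_i \hookrightarrow \Delta^n$ whose composite is the spine inclusion and hence lies in $A$; by right cancellation it therefore suffices to show that $g \colon (\J^n)^\flat \hookrightarrow (\Lambda^n_i)^\flat$ lies in $A$. I would exhibit $g$ as a finite composite of pushouts by filtering $\Lambda^n_i$ through its nondegenerate simplices in increasing order of dimension, with a suitable secondary order within each dimension, starting from the spine $\J^n$. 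At each stage the attaching map of the next simplex is either the spine inclusion of that simplex (a pushout of an element of $B_1$) or an inner horn inclusion of dimension strictly less than $n$, which lies in $A$ by the inductive hypothesis.

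Assertion (2) follows the same template with the marking $\lastedge$ carried along. Having $M_1$ from part (1), I would prove $(\Lambda^n_n)^\lastedge \subset (\Delta^n)^\lastedge \in A$ for all $n \ge 1$ by induction on $n$, the cases $n \le 2$ being exactly the generators in $B_2$. For $n \ge 3$ one again factors through a marked spine-type subcomplex and applies right cancellation, building up the marked outer horn from lower-dimensional generators: the interior faces are attached along inner horns from $M_1$, carrying no extra marking, while the faces meeting the marked edge $\{n-1,n\}$ are attached along marked outer horns of lower dimension already shown to lie in $A$.

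The main obstacle in both parts is the explicit dimension-ordered filtration: one must choose an ordering of the nondegenerate simplices of the horn and verify, simplex by simplex, that the corresponding attaching map is precisely a spine inclusion or an inner (resp. marked outer) horn inclusion of controlled dimension, so that each attachment is a genuine pushout of an element of the relevant generating set. In part (2) this is compounded by the need to propagate the marking of the edge $\{n-1,n\}$ correctly through every intermediate complex and through the right cancellation step, ensuring that the marked edge is neither lost nor spuriously created. This combinatorial core is essentially the content of Lemma 3.5 of \cite{joyal-tierney}, which I would adapt; the relative and marked enhancements add bookkeeping but, once the reduction in the first paragraph is in place, no new conceptual difficulty.
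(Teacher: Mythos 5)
Your overall architecture---reduce to the absolute case over a fixed simplex of $T$, factor the spine inclusion as $\J^n \hookrightarrow \Lambda^n_i \hookrightarrow \Delta^n$, apply right cancellation, and induct on $n$---is the same as the paper's, and both the base case $\Lambda^2_1=\J^2$ and the reduction to underlying marked simplicial sets are fine. The gap is in the combinatorial core of the inductive step: it is not true that $\Lambda^n_i$ can be built from $\J^n$ by attaching nondegenerate simplices so that each attaching map is a spine inclusion or an \emph{ordinary} inner horn inclusion of lower dimension. Already for $\Lambda^4_2$ this fails: its $1$-skeleton is the complete graph on five vertices, so six edges beyond the spine must be created, and each triangle attached along a spine of $\Delta^2$ creates exactly one new edge and one new triangle; hence at most six of the ten triangles of $\Lambda^4_2$ can be attached this way, and the remaining ones have fully present boundaries (attaching map $\partial\Delta^2\subset\Delta^2$, covered by neither $B_1$ nor your inductive hypothesis). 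More fundamentally, when the codimension-one faces $\partial_j\Delta^n$, $j\ne i$, are added one at a time, the attaching map of each is its intersection with the faces already present, i.e.\ a \emph{generalized} horn $\Lambda^{[n]\setminus\{j\}}_{T}=\bigcup_{k\notin T}\partial_k\Delta^{[n]\setminus\{j\}}$ with $|T|$ as large as $n-2$; for $n\ge 4$ these are not ordinary inner horns, so the induction as you have set it up does not close.

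The repair is exactly the ``slight elaboration'' on Joyal--Tierney that the paper carries out: strengthen the inductive statement to all generalized horn inclusions $(\Lambda^n_T)^{\flat}\subset(\Delta^n)^{\flat}$ with $\emptyset\ne T\subset\{1,\dots,n-1\}$. One first shows that $\J^n\hookrightarrow\Lambda^n_{\{1,\dots,n-1\}}=\partial_0\Delta^n\cup\partial_n\Delta^n$ lies in $A$ via two pushouts of spine inclusions and one further right cancellation, and then descends from $\Lambda^n_{\{1,\dots,n-1\}}$ to $\Lambda^n_T$ by removing one element of $\{1,\dots,n-1\}\setminus T$ at a time, each step being a pushout of a lower-dimensional \emph{generalized} horn inclusion---which is where the strengthened hypothesis is needed. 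Part (2) requires the analogous strengthened family ($\Lambda^n_T$ with $\{1,n\}\subset T\subset\{1,\dots,n-2,n\}$), together with the observation that a pushout upgrades $K^{\flat}\subset(\Delta^n)^{\flat}$ to $K^{\lastedge}\subset(\Delta^n)^{\lastedge}$ whenever $K$ contains the edge $\{n-1,n\}$, and a final pushout of a lower-dimensional marked horn; your sketch of ``interior faces along inner horns, marked faces along marked outer horns'' runs into the same obstruction. So the plan is salvageable, but the inductive statement itself---not just its bookkeeping---has to be changed.
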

\begin{proof}
	(1) We first introduce some notation. For a subset $T \subset [n]$, we define
	\[
	\Lambda^n_T := \bigcup_{k \notin T} \partial_k \Delta^n \subset \Delta^n
	\]
	We define another set of cofibrations in $\mSet$
	\[
	\widetilde{M_1} = \{ (\Lambda^n_T)^{\flat} \subset (\Delta^n)^{\flat} \; | \; \text{where $n \ge 2$, $\emptyset \ne T
	\subset \{1,\dots,n-1\}$ } \}
	\]
	such that we have $M_1 \subset \widetilde{M_1}$.
	The set $\widetilde{M_1}$ is better adapted to our inductive argument which will show the stronger statement 
	that if $A$ contains $F^{-1}(B_1)$, then it contains $F^{-1}(\widetilde{M_1})$. We start with some
	preparing comments. We will 
	denote cofibrations in $(\mSet)_{/T}$ by their image in $\mSet$ under the forgetful functor
	$F$. 
	This can be justified by the observation that all constructions appearing in the proof will
	only involve maps in $(\mSet)_{/T}$ whose images in $\mSet$ are inclusions between
	simplicial subsets of $F(\Delta^n \to T)$ for a fixed object $\Delta^n \to T$ of $(\mSet)_{/T}$. Therefore, the compatibility of these
	constructions with the structure map to $T$ is automatically guaranteed.
	Further, to keep the notation light, we will denote marked simplicial sets of the form
	$K^{\flat}$ by their underlying simplicial set $K$.

	We will now prove assertion (1) by induction on $n$. For $n = 2$, the only map in $\widetilde{M_1}$ is
	$\Lambda^2_{\{0,2\}} \subset \Delta^2$ which coincides with the map $J^2 \to \Delta^2$.
	Let $n > 2$ and assume $A$ contains all maps $\Lambda^m_T \to \Delta^m$ in $F^{-1}(\widetilde{M_1})$ with $m <
	n$. Let $f: \Lambda^n_T \to \Delta^n$ a cofibration contained in $F^{-1}(\widetilde{M_1})$. To show that
	$f \in A$, note that the composite 
	\[
	\J^n \overset{h}{\lra} \Lambda^n_{ \{1,\dots,n-1\} } \overset{g}{\lra} \Lambda^n_T \overset{f}{\lra} \Delta^n
	\]
	is contained in $A$, so it suffices to show that both $g$ and $h$ are contained in $A$.

	To show that $h \in A$, note that we have $h = h_1 \circ h_2$ such that $h_1$ and $h_2$ are
	part of the pushout diagrams
	\begin{equation} \label{eq:push1}
		\xymatrix{ \J^{\{1,\dots,n\}} \ar[r]^-{h_2'} \ar[d] & \Delta^{\{1,\dots,n\}} \ar[d]  \\
		\J^n \ar[r]^-{h_2} & \partial_0 \Delta^n \cup \J^n  }
	\end{equation}
	and
	\[
	\xymatrix{ \Delta^{\{1,\dots,n-1\}} \cup \J^{\{0,\dots,n-1\}} \ar[r]^-{h_1'}\ar[d] & \Delta^{\{0,\dots,n-1\}} \ar[d] \\
	\partial_0 \Delta^n \cup \J^n \ar[r]^-{h_1} & \partial_0 \Delta^n \cup \partial_n \Delta^n}.
	\] 
	Since $h_2' \in A$, we immediately deduce $h_2 \in A$. To show that $h_1'$ is contained
	in $A$, we apply the cancellation property to the composition
	\[
	\J^{\{0,\dots,n-1\}} \lra \Delta^{\{1,\dots,n-1\}} \cup \J^{\{0,\dots,n-1\}}
	\overset{h_1'}{\lra} \Delta^{\{0,\dots,n-1\}}
	\]
	and then consider the pushout diagram \eqref{eq:push1} with $\{1,\dots,n\}$ replaced by
	$\{0,\dots,n-1\}$. This proves $h_1 \in A$ and hence also $h \in A$.

	To show that $g \in A$, we choose a descending chain $T_0 \supset T_1 \supset \dots \supset T_k$ of
	subsets of $[n]$ such that $T_0 =\{1,\dots,n-1\}$, $T_k = T$ and, for every $0 \le j \le k-1$, we
	have $T_{j+1} = T_j \setminus \{l_j\}$ where $1 \le l_j \le n-1$. This chain of subsets induces a sequence of inclusions 
	\[
	\Lambda^n_{T_0} \overset{g_0}{\lra} \Lambda^n_{T_1} \overset{g_1}{\lra} \Lambda^n_{T_2}
	\overset{g_2}{\lra} \dots \overset{g_{k-1}}{\lra} \Lambda^n_{T_k}
	\]
	which composes to $g$. Now we observe that, for every $0 \le j \le k-1$, we have a pushout
	square
	\[
	\xymatrix{
	\Lambda^{[n] \setminus \{l_j\}}_{T_j \setminus \{l_j\}} \ar[r]^{g_j'} \ar[d] & \Delta^{[n]
	\setminus \{l_j\}} \ar[d]\\
	\Lambda^n_{T_j} \ar[r]^{g_j}  & \Lambda^n_{T_j \setminus \{l_j\}}.
	}
	\]
	Since, by induction hypothesis, $g_j'$ is contained in $A$, this shows that $g_j$ and
	henceforth $g$ is contained in $A$.\\

	(2) First, we only assume that $A$ contains $F^{-1}(B_1)$.
	We define the set
	\[
	M_2' = \{ (\Lambda^n_T)^{\flat} \subset (\Delta^n)^{\flat} \; | \; \text{where $n \ge 2$, $\{1,n\}
	\subset T \subset \{1,\dots,n-2,n\}$ } \}
	\]
	of cofibrations in $\mSet$.
	An inductive argument, similar to the one given for the proof of assertion (1), applied to
	compositions of the form
	\[
	\J^n \lra \Lambda^n_{ \{1,\dots,n-2,n\} } \lra
	\Lambda^n_T \lra \Delta^n,
	\]
	shows that $A$ contains $F^{-1}(M_2')$. As a preparatory observation, note that if
	 a cofibration $K^{\flat} \subset (\Delta^n)^{\flat}$ is
	contained in $A$ and $K$ contains the final edge $\{n-1,n\}$, then the pushout
	square
	\[
	\xymatrix{ 
	K^{\flat} \ar[r]\ar[d]& (\Delta^n)^{\flat} \ar[d] \\
	K^{\lastedge} \ar[r] & (\Delta^n)^{\lastedge} }
	\] 
	shows that $A$ contains the induced map $K^{\lastedge} \to (\Delta^n)^{\lastedge}$. We will
	use this observation implicitly below.
	We now assume in addition that $A$ contains
	$F^{-1}(B_2)$ and show by induction on $n$ that $A$ contains $F^{-1}(M_2)$. For $n=2$, there
	is nothing to show, since $(\Lambda^2_2)^{\lastedge} \to
	(\Delta^2)^{\lastedge}$ is already contained in $B_2$. Let $n > 2$ and assume that $A$
	contains all maps $(\Lambda^m_m)^{\lastedge} \to (\Delta^m)^{\lastedge}$ in $F^{-1}(M_2)$
	with $m < n$. Let $f: (\Lambda^n_n)^{\lastedge} \to (\Delta^n)^{\lastedge}$ in
	$F^{-1}(M_2)$. We have to show that $f$ is contained in $A$. 
	Because $F^{-1}(M_2')$ is contained in $A$, the composition
	\[
	(\Lambda^n_{\{1,n\}})^{\lastedge} \overset{g}{\lra} (\Lambda^n_n)^{\lastedge} \overset{f}{\lra} (\Delta^n)^{\lastedge}
	\]
	is contained in $A$, and it hence suffices to prove $g \in A$. To this end, consider the
	pushout square 
	\[
	\xymatrix{
	(\Lambda^{\{0,2,\dots,n\}}_{\{n\}})^{\lastedge} \ar[r]^{g'}\ar[d] & (\Delta^{\{0,2,\dots,n\}})^{\lastedge} \ar[d] \\
	(\Lambda^n_{\{1,n\}})^{\lastedge} \ar[r]^{g} & (\Lambda^n_n)^{\lastedge}
	}
	\]
	and note that $g'$ is contained in $A$ by induction hypothesis.
\end{proof}

We will frequently use the following lemma, which provides a means to compare (relative) Segal
simplices in $\Span_T(Y)$ with (absolute) Segal simplices in the fibers $\Span_{s}(Y_s)$, $s \in T$.

\begin{lem}\label{lem:comparelimit}
	Let $p: Y \to T$ be a Cartesian fibration of $\infty$-categories. Consider an $n$-simplex
	$\sigma: \Delta^n \to \Span_T(Y)$ and let $f: \asd(\Delta^n) \to Y$ be the adjoint map. We
	set $L = \asd(\Delta^n)$ and $s_0 := p \circ f(\{0\}) \in T$.
	Then there exists a homotopy 
	\[
	h: (\Delta^1)^{\sharp} \times L^{\flat} \lra Y^{\natural}
	\]
	in $(\mSet)_{/T}$ with the following properties:
	\begin{enumerate}
		\item We have $h|\{1\} \times L = f$ and the diagram $f' := h|\{0\} \times L$ lies
			completely in the fiber $Y_{s_0}$.
		\item The edge $h|\Delta^1 \times L_{s_0}$ in
			$\Fun(L_{s_0}, Y_{s_0})$ is degenerate, in particular, we have $f'|L_{s_0} =
			f|L_{s_0}$.
		\item The following assertions are equivalent:
			\begin{enumerate}
				\item \label{l:pr1} The simplex $\sigma$ is a Segal simplex in $\Span_T(Y)$.
				\item \label{l:pr2} The simplex $\sigma'$ given by the adjoint of
					$f'$ is a Segal simplex in $\Span_T(Y)$. In particular, the
					simplex $\sigma'$ is a Segal simplex in the fiber $\Span_{\{s_0\}}(Y_{s_0})$.
			\end{enumerate}
		\item Let $e$ be an edge in $L$. If $f(e)$ is $p$-Cartesian then $f'(e)$ is
			$p$-Cartesian, and hence an equivalence in the $\infty$-category $Y_{s_0}$.
	\end{enumerate}
\end{lem}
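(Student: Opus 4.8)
The plan is to produce $h$ as the solution of a single marked lifting problem against the Cartesian fibration $p$, read off properties (1), (2) and (4) directly from that construction, and treat the Segal comparison (3) separately as an invariance statement. The first step is to fix the underlying simplex $\overline{\sigma}:\Delta^n\to T$ of $\sigma$, so that the structure map $L=\asd(\Delta^n)\to T$ sends an interval $\{i,j\}$ to the value $\overline{\sigma}(i)$ at its left endpoint; in particular $s_0=\overline{\sigma}(0)$ and $L_{s_0}$ is exactly the locus of intervals with $\overline{\sigma}(i)=s_0$. Writing $\lambda:L\to\Delta^n$ for the left-endpoint map and $c_0:L\to\Delta^n$ for the constant map at $0$, the pointwise inequality $0\le i$ gives a natural transformation $c_0\Rightarrow\lambda$, i.e. a map $\Delta^1\times L\to\Delta^n$; composing with $\overline{\sigma}$ yields
\[
G:\Delta^1\times L\lra T
\]
with $G|\{1\}\times L=p\circ f$, with $G|\{0\}\times L$ constant at $s_0$, and with $G|\Delta^1\times\{\{i,j\}\}$ the edge $s_0=\overline{\sigma}(0)\to\overline{\sigma}(i)$. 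This $G$ is the structure map that will make $h$ a morphism in $(\mSet)_{/T}$, and its $\{0\}$-slice forces any lift into the fiber $Y_{s_0}$.

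Next I would assemble the partial homotopy: on $\{1\}\times L$ take $f$, and on $\Delta^1\times L_{s_0}$ take the constant homotopy on $f|L_{s_0}$ (which lands in $Y_{s_0}$). These agree on $\{1\}\times L_{s_0}$ and are compatible with both the marking and $G$. The inclusion
\[
\big(\{1\}\times L^\flat\big)\cup_{\{1\}\times L_{s_0}^\flat}\big((\Delta^1)^\sharp\times L_{s_0}^\flat\big)\hookrightarrow(\Delta^1)^\sharp\times L^\flat
\]
is the smash product of the generating marked anodyne map $\{1\}\hookrightarrow(\Delta^1)^\sharp$ (the case $n=1$ of the second class of generators in \cite[3.1.2.3]{lurie.htt}) with the cofibration $L_{s_0}^\flat\hookrightarrow L^\flat$, hence is marked anodyne \cite[\S 3.1]{lurie.htt}. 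Since $p$ is a Cartesian fibration, $Y^\natural\to T^\sharp$ has the right lifting property against marked anodyne maps, so a lift $h$ over $G$ exists. Property (1) is then immediate, since $G|\{0\}\times L\equiv s_0$ forces $f'=h|\{0\}\times L$ into $Y_{s_0}$ while $h|\{1\}\times L=f$; property (2) holds because $h|\Delta^1\times L_{s_0}$ is the prescribed constant homotopy.

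For property (4) I would use the cancellation calculus for $p$-Cartesian edges. Given an edge $e:a\to b$ of $L$ with $f(e)$ $p$-Cartesian, the square $h|\Delta^1\times e$ has $p$-Cartesian vertical tracks $f'(a)\to f(a)$ and $f'(b)\to f(b)$; filling it with a diagonal $d:f'(a)\to f(b)$, the identification $d\simeq f(e)\circ(\text{track at }a)$ exhibits $d$ as a composite of Cartesian edges, hence Cartesian, and cancelling the Cartesian track at $b$ in $d\simeq(\text{track at }b)\circ f'(e)$ shows that $f'(e)$ is $p$-Cartesian by \cite[2.4.1.7]{lurie.htt}. As $f'(e)$ lies over the degenerate edge $G(0,e)$, it is an equivalence of $Y_{s_0}$.

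Property (3) is the crux. Restricting $h$ to $\Delta^1\times S(\Delta^k)$ for each face $\Delta^k\subseteq\Delta^n$ produces a pointwise $p$-Cartesian natural transformation between the Segal cones $f|S(\Delta^k)$ and $f'|S(\Delta^k)$, so (3) reduces to the assertion that a pointwise $p$-Cartesian natural transformation between cones $K^{\triangleleft}\to Y$ preserves and reflects the property of being a $p$-limit diagram. The hard part will be this invariance, which I would establish via the relative-slice characterization of $p$-limit diagrams in \cite[4.3.1]{lurie.htt}: the Cartesian tracks of $h$ should induce equivalences between the comparison maps $Y_{/\overline{f}}\to Y_{/f}\times_{T_{/pf}}T_{/p\overline{f}}$ attached to the two cones, so that one is a trivial fibration precisely when the other is. Verifying that the tracks do induce such equivalences of the relevant slice $\infty$-categories is the real technical obstacle. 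The final ``in particular'' clause then follows formally, by restricting the $p$-limit universal property enjoyed by the fiberwise cone $\sigma'$ along the inclusion $Y_{s_0}\subseteq Y$.
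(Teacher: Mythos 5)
Your construction of $h$ is essentially the paper's: one sets up a marked lifting problem against $Y^{\natural}\to T^{\sharp}$ whose left vertical map is marked anodyne (a pushout--product of $\{1\}\hookrightarrow(\Delta^1)^{\sharp}$ with a cofibration), reads off (1) and (2) from the prescribed partial data, and obtains (4) by composing with the $p$-Cartesian ``tracks'' $h|\Delta^1\times\{v\}$ and cancelling via \cite[2.4.1.7]{lurie.htt}, exactly as you do. One caution about your version of the partial data: the constant homotopy on all of $\Delta^1\times L_{s_0}$ is \emph{not} compatible with your structure map $G$ in general. For an interval $I=\{j,k\}$ with $j>0$ but $\overline{\sigma}(j)=s_0$, the edge $G|\Delta^1\times\{I\}$ is $\overline{\sigma}(0\to j)$, an endo-edge of $s_0$ in $T$ that need not be degenerate, whereas the constant homotopy lies over the degenerate edge. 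The lifting problem is only well-posed if you prescribe the constant homotopy over the subcomplex of intervals with left endpoint exactly $0$ (where the natural transformation $c_0\Rightarrow\lambda$ is the identity); this weaker version of (2) is what the argument actually delivers and is all that is used downstream.

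The substantive gap is property (3). You correctly reduce it to the invariance of the $p$-limit property under a pointwise $p$-Cartesian natural transformation of cone diagrams, but you then declare this invariance ``the real technical obstacle'' and leave it unproven, proposing to attack it through the slice characterization of $p$-limits. There is no obstacle: this invariance is verbatim \cite[Proposition 4.3.1.9]{lurie.htt}, whose hypotheses --- that every edge $h|\Delta^1\times\{x\}$, including the one over the cone point, is $p$-Cartesian --- hold here precisely because $h$ is a map of marked simplicial sets $(\Delta^1)^{\sharp}\times L^{\flat}\to Y^{\natural}$ and the marked edges of $Y^{\natural}$ are the $p$-Cartesian ones. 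The paper's proof consists of the lift plus this single citation applied to each restriction $h|\Delta^1\times S(\Delta^k)$. So your proposal is correct in outline but incomplete exactly at its crux; replacing the proposed slice-category computation by the citation closes it. The final ``in particular'' clause of (3)(b) then follows, as you say, from the compatibility of $p$-limits with passage to fibers \cite[4.3.1.10]{lurie.htt}.
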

\begin{proof}
	Consider the diagram
	\[
	\xymatrix{
	\displaystyle
	\{1\}^{\sharp} \times L^{\flat} \coprod_{\{1\}^{\sharp} \times (L_{s_0})^{\flat}}
	(\Delta^1)^{\sharp} \times (L_{s_0})^{\flat} \ar@{^{(}->}[d]_{g} \ar[r]^-{f \circ \pi \circ
	g} & Y^{\natural}\ar[d]\\
	(\Delta^1)^{\sharp} \times L^{\flat} \ar@{-->}[ur]^h \ar[r]^{p \circ f \circ \pi} & T^{\sharp}
	}
	\]
	where $\pi$ denotes the nerve of the map of partially ordered sets
	\[
	[1] \times \Mo([n]) \lra \Mo([n]),\; (i, \{j,k\}) \mapsto 
	\left\{ \begin{matrix} \{0,n\} \quad \text{if $i=0$,}\\ \{j,k\} \quad \text{if $i=1$.}
	\end{matrix} \right.
	\]
	Since the map $g$ is marked anodyne and $Y \to T$ is a Cartesian fibration, we can
	 find a map $h$ solving the specified lifting
	problem. Assertions (1) and (2) follow immediately from the construction of $h$.
	The map $h$ satisfies the hypothesis of \cite[4.3.1.9]{lurie.htt}, hence we deduce that $f = h|\{1\} \times L$ is a
	$p$-limit diagram if and only if the map $f':= h|\{0\} \times L$ is a $p$-limit diagram.
	More generally, we given a subsimplex $\Delta^k \to \Delta^n$, we can restrict $h$ to deduce
	the analogous statement for $L$ replaced by $\asd(\Delta^k)$. Therefore, we obtain the
	equivalence of the assertions \ref{l:pr1} and \ref{l:pr2} by
	the definition of a Segal simplex. To prove (4), let $e = I_1 \to I_2$ denote
	the edge in $L$ under consideration and consider the diagram in $Y$
	\[
	\xymatrix{ f'(I_1) \ar[r]^{f'(e)} \ar[d]_{\natural} & f'(I_2) \ar[d]^{\natural}\\
	f(I_1) \ar[r]^{\natural}_{f(e)} & f(I_2) }
	\]
	induced by $h$. Here, we indicated those edges which are $p$-Cartesian by the construction of
	$h$. Applying \cite[2.4.1.7]{lurie.htt} twice implies the statement.
\end{proof}

\begin{proof}[Proof of Theorem \ref{thm:spancartesian}]
	Using \cite[3.1.1.6]{lurie.htt}, we will show that the map $\Spanl_T(Y^{\natural}) \to
	T^{\sharp}$ has the right lifting property with respect to all marked anodyne maps. 	
	For this, it suffices to verify the right lifting property with respect to the following sets of
	cofibrations in $\mSet$:
	\begin{align}
		\tag{1}	M_1 & = \{ (\Lambda^n_i)^{\flat} \subset (\Delta^n)^{\flat} \; | \; \text{where $n
			\ge 2$ and $0 < i < n$} \}\\
		\tag{2} M_2 & = \{ (\Lambda^n_n)^{\lastedge} \subset (\Delta^n)^{\lastedge} \; | \; \text{where $n
			\ge 1$} \}\\
		\tag{3} M_3 & = \{ \textstyle (\Lambda^2_1)^{\sharp} \coprod_{(\Lambda^2_1)^{\flat}} (\Delta^2)^{\flat}
			\subset (\Delta^2)^{\sharp} \}\\
		\tag{4} M_4 & = \{ K^{\flat} \to K^{\sharp} \; | \; \text{where $K$ is a Kan complex} \}
	\end{align}

	\noindent
	(1) For every $0 < i < n$ and every diagram
	\[
	\xymatrix{
	(\Lambda^n_i)^{\flat} \ar[r]\ar[d] & \Spanl_T(Y^{\natural})\ar[d]\\
	(\Delta^n)^{\flat} \ar[r] \ar@{-->}[ur] & T^{\sharp}}
	\]
	in $\mSet$, we have to provide the dashed arrow, rendering the diagram commutative. Passing to adjoints,
	this lifting problem is equivalent to the surjectivity of the restriction map
	\[
	\Hom_T(\asd(\Delta^n)^{\flat}, Y^{\natural})^l \to \Hom_T(\asd(\Lambda^n_i)^{\flat},
	Y^{\natural})^l.
	\]
	We will prove the more general statement that the restriction map of simplicial sets
	\[
		\rho: \Map_T^{\sharp}(\asd(\Delta^n)^{\flat}, Y^{\natural})^l \to
		\Map_T^{\sharp}(\asd(\Lambda^n_i)^{\flat}, Y^{\natural})^l
	\]
	is a trivial Kan fibration. By \ref{prop:restmap}, it suffices to show that $\rho$ is a weak homotopy equivalence. 
	Consider the class $A$ of cofibrations $K \subset L$ in $(\mSet)_{/T}$ such that the
	induced map 
	\[
		\Map_T^{\sharp}(\asd(L), Y^{\natural})^l \to \Map_T^{\sharp}(\asd(K), Y^{\natural})^l
	\]
	is a weak equivalence. The class $A$ is stable under compositions, pushouts along
	cofibrations (Proposition \ref{prop:restmap}), and has the right
	cancellation property. Hence, by Lemma \ref{lem:joyal-tierney}(1), it suffices to show that, for every
	$n \ge 2$, any map 
	\[
	(\J^n)^{\flat} \subset (\Delta^n)^{\flat} \to T^{\sharp}
	\]
	in $(\mSet)_{/T}$ is contained in $A$, where we recall the notation $\J^n = \Delta^{\{0,1\}} \coprod_{\{1\}}
	\dots \coprod_{\{n-1\}} \Delta^{\{n-1,n\}}$.
	Thus, we have to verify that, for $\C^0 = \asd(\J^n)$ and $\C = \asd(\Delta^k)$, the map
	\begin{equation}\label{eq:kanext}
		\Map_T^{\sharp}(\C^{\flat}, Y^{\natural})^l \to \Map_T^{\sharp}((\C^0)^{\flat}, Y^{\natural})^l
	\end{equation}
	is a weak equivalence. Consider the diagram
	\[
	\xymatrix{
	\C^0 \ar@{^{(}->}[r] & \C \ar[r] & T & \ar[l]^p Y.
	}
	\]
	Notice that both $\C^0$ and $\C$ are $\infty$-categories and $\C^0 \subset \C$ is a full
	subcategory. Further, every vertex of
	$\Map_T^{\sharp}(\C^{\flat}, Y^{\natural})^l$ corresponds to a Segal simplex, and hence, to a functor $F: \C \to Y$ which
	is a $p$-right Kan extension of $F|\C^0$. Thus, using \cite[3.1.3.1]{lurie.htt}, 
	\cite[4.3.2.15]{lurie.htt}, and Proposition \ref{prop:pullsegal}, we conclude that the restriction map \eqref{eq:kanext} is a
	trivial Kan fibration.\\

	\noindent
	(2) As in the argument for (1), we will prove the more general statement $M_2 \subset A$.
	To this end, it suffices by Lemma \ref{lem:joyal-tierney}(2) to
	verify the following assertions:
	\begin{enumerate}[label=(\roman*)] 
		\item Any map
		\begin{equation}\label{eq:f1}
			\{1\}^{\sharp} \subset (\Delta^1)^{\sharp} \to T^{\sharp}
		\end{equation}
		in $(\mSet)_{/T}$ belongs to $A$.

		\item Any map
		\begin{equation}\label{eq:f2}
			(\Lambda^2_2)^{\lastedge} \subset (\Delta^2)^{\lastedge} \to T^{\sharp}
		\end{equation}
		in $(\mSet)_{/T}$ belongs to $A$.
	\end{enumerate}
	We say that a cofibration $K \to L$ in $(\mSet)_{/T}$ is {\em $Y$-local} if
	the pullback map
	\[
	\Map_T^{\sharp}(L, Y^{\natural}) \lra \Map_T^{\sharp}(K, Y^{\natural})
	\]
	is a weak equivalence, and hence a trivial Kan fibration. Note that, by
	\cite[3.1.3.4]{lurie.htt}, any marked anodyne map
	is $Y$-local.
	To show (i), it suffices to show that for any map as in \eqref{eq:f1}, the map in
	$(\mSet)_{/T}$
	\[
		\{1\}^{\sharp} \subset \asd((\Delta^1)^{\sharp}) \to T^{\sharp},
	\]
	obtained by applying the functor $\asd_T$, is $Y$-local.
	To show this, we first observe that the inclusion $f_1: \{1\} \subset
	(\Delta^{\{\{0,1\},\{1\}\}})^{\sharp}$ is marked anodyne, hence $Y$-local. Therefore, it suffices to show that restriction along $f_2:
	(\Delta^{\{\{0,1\},\{1\}\}})^{\sharp} \subset \asd(\Delta^1)^{\sharp}$ is $Y$-local. 
	Further, since $f_2$ is a pushout of $f_3: \{0,1\} \to (\Delta^{\{\{0,1\},\{0\}\}})^{\sharp}$
	along a cofibration, it suffices to prove that restriction along $f_3$ is $Y$-local. Note,
	that the image of the edge $\{0,1\}$ in $T$ is degenerate, so we can
	restrict attention to one fiber of $Y \to T$. We reduce to
	the statement that, for every Kan complex $K$, the restriction map
	\[
	\Map(\Delta^1, K) \lra \Map(\{0\}, K)
	\]
	is a weak homotopy equivalence.

	We prove (ii). Let
	\[
	E := \Delta^{\{\{0,2\},\{0,1\}\}} \subset \asd(\Delta^2).
	\]
	We first show that restriction along the cofibration
	\[
	\textstyle
	f: \asd( (\Delta^2)^{\lastedge}) \lra \asd( (\Delta^2)^{\lastedge}) \coprod_{E^{\flat}} E^{\sharp}
	\]
	induces an isomorphism
	\begin{equation}\label{eq:auxequ}
		\Map_T^{\sharp}(\asd( (\Delta^2)^{\lastedge}) \coprod_{E^{\flat}} E^{\sharp}, Y^{\natural}) 
		\lra \Map_T^{\sharp}(\asd( (\Delta^2)^{\lastedge}), Y^{\natural})^l.
	\end{equation}
	This assertion is equivalent to the following statement: A
	$2$-simplex in $\Span_T(Y)$, corresponding to a diagram in $Y$ of the form
	\begin{equation}\label{eq:unm}
		\xymatrix@=1pc{ 
		& & x_{\{1\}} & & \\ 
		&x_{\{0,1\}} \ar[dl] \ar[ur] & &\ar[dr]^{\natural}\ar[ul]_{\simeq} x_{\{1,2\}}& \\
		x_{\{0\}} & & \ar[ll] x_{\{0,2\}} \ar[uu] \ar[ul]_e \ar[ur] \ar[rr] & & x_{\{2\}},
		}
	\end{equation}
	is a Segal simplex if and only if the edge $e: x_{\{0,2\}} \to x_{\{0,1\}}$ is
	$p$-Cartesian. Here, we indicate all $p$-Cartesian edges by $\{\natural, \simeq\}$, distinguishing those $p$-Cartesian
	edges which are equivalences (since they lie over degenerate edges in $T$) by $\{\simeq\}$. 
	Note, that since the edge $e$ lies over a degenerate edge in $T$, this
	statement is in turn equivalent to saying that $e$ is an equivalence in the
	$\infty$-category $Y^0$ given by the fiber of $Y$ over $p(x_{\{0\}})$.
	To show this, we apply Lemma \ref{lem:comparelimit} to the diagram \eqref{eq:unm}, obtaining a
	corresponding diagram 
	\begin{equation}\label{eq:tunm}
		\xymatrix@=1pc{ 
		& & x'_{\{1\}} & & \\ 
		&x_{\{0,1\}} \ar[dl] \ar[ur] & & \ar[dr]^{\simeq}\ar[ul]_{\simeq} x'_{\{1,2\}}& \\
		x_{\{0\}} & & \ar[ll] x_{\{0,2\}} \ar[uu] \ar[ul]_e \ar[ur] \ar[rr] & & x'_{\{2\}},
		}
	\end{equation}
	contained in the fiber of $Y^0$. 
	The edge $e$ is an equivalence in $Y^0$ if and only if 
	the subdiagram of \eqref{eq:tunm} 
	\[
	\xymatrix{ 
		x_{\{0,2\}}\ar[d] \ar[r]^e & x_{\{0,1\}}\ar[d] \\
		x_{\{1,2\}}' \ar[r]^{\simeq} & x_{\{1\}}'
	}
	\]
	is a pullback diagram in $Y^0$. But, since every functor of $\infty$-categories preserves
	equivalences, we deduce from \cite[4.3.1.10]{lurie.htt} that the latter condition is in turn
	equivalent to the statement that the diagram \eqref{eq:tunm} (and hence, by Lemma
	\ref{lem:comparelimit}, the diagram \eqref{eq:unm}) represents a Segal simplex in
	$\Spanl_T(Y)$.
	
	Since the map \eqref{eq:auxequ} is a weak equivalence, statement (ii) is equivalent to the
	assertion that the map
	\[
	\textstyle
		g: \asd((\Lambda^2_2)^{\lastedge}) \subset \asd((\Delta^2)^{\lastedge}) \coprod_{E^{\flat}} E^{\sharp}
	\] 
	is $Y$-local.
	We can express $g$ as a composite 
	\[
	\textstyle
	\asd((\Lambda^2_2)^{\lastedge}) \overset{g_1}{\lra} \N(\Mo([2]) \setminus \{0,1\})^{\lastedge}
		\overset{g_2}{\lra} \asd((\Delta^2)^{\lastedge}) \coprod_{E^{\flat}} E^{\sharp}
	\]
	where $g_1$ is easily seen to be marked anodyne. The cofibration $g_2$ is a pushout of a map
	$g_2'$ whose {\em opposite} is marked anodyne. This implies that the map $g_2$ is $Y$-local, 
	since its only nondegenerate marked edge is $\{0,2\} \to \{0,1\}$ which maps to
	the equivalence, hence {\em $p$-coCartesian} edge, $e$ in $Y$.\\

	\noindent
	(3) We have to solve the lifting problem 
	\[
	\xymatrix{
	\displaystyle
	(\Lambda^2_1)^{\sharp} \coprod_{(\Lambda^2_1)^{\flat}} (\Delta^2)^{\flat} \ar[r] \ar@{^{(}->}[d] & \Spanl_T(Y^{\natural})\ar[d] \\
	(\Delta^2)^{\sharp} \ar[r]\ar@{-->}[ur] & T^{\sharp}.
	}
	\]
	which, passing to adjoints, translates into
	\[
	\xymatrix{
	\displaystyle
	\asd( (\Lambda^2_1)^{\sharp}) \coprod_{\asd(\Lambda^2_1)^{\flat}} \asd(\Delta^2)^{\flat}
	\ar[r]^-t \ar@{^{(}->}[d] & Y^{\natural}\ar[d] \\
	\asd((\Delta^2)^{\sharp}) \ar[r]\ar@{-->}[ur]^{\overline{t}} & T^{\sharp}.
	}
	\]
	The map $t$ corresponds to a diagram in $Y$ of the form
	\begin{equation}\label{eq:original-unmarked}
		\xymatrix@=1pc{ 
		& & x_{\{1\}} & & \\ 
		&x_{\{0,1\}} \ar[dl]_{\simeq} \ar[ur]^{\natural} & &\ar[dr]^{\natural}\ar[ul]_{\simeq} x_{\{1,2\}}& \\
		x_{\{0\}} & & \ar[ll]_{l} x_{\{0,2\}} \ar[uu] \ar[ul] \ar[ur] \ar[rr]^{r} & & x_{\{2\}},
		}
	\end{equation}
	where as above, we mark $p$-Cartesian edges and equivalences.
	Further, we know that \eqref{eq:original-unmarked} is a $p$-limit diagram, since it
	represents a Segal simplex of $\Span_T(Y)$. 
	We have to show that a lift $\overline{t}$ exists, which is equivalent to the assertion that $l$ and $r$ are
	$p$-Cartesian edges.
	We apply Lemma \ref{lem:comparelimit} to the diagram \eqref{eq:original-unmarked}, obtaining
	a corresponding limit diagram
	\begin{equation}\label{eq:transfer-unmarked}
		\xymatrix@=1pc{ 
		& & x'_{\{1\}} & & \\ 
		&x_{\{0,1\}} \ar[dl]_{\simeq} \ar[ur]^{\simeq} &
		&\ar[dr]^{\simeq}\ar[ul]_{\simeq} x'_{\{1,2\}}& \\
		x_{\{0\}} & & \ar[ll]_{l} x_{\{0,2\}} \ar[uu] \ar[ul] \ar[ur] \ar[rr] & & x'_{\{2\}}.
		}
	\end{equation}
	Note that the lower left triangle of \eqref{eq:transfer-unmarked} coincides by construction of $h$ with the
	corresponding triangle of \eqref{eq:original-unmarked}. As above, we conclude that the edge 
	$x_{\{0,2\}} \to x_{\{0,1\}}$ of \eqref{eq:transfer-unmarked} (and hence of \eqref{eq:original-unmarked}) is an equivalence.
	By multiple applications of \cite[2.4.1.7]{lurie.htt}, we deduce that every edge in
	\eqref{eq:original-unmarked} is $p$-Cartesian, in particular $l$ and $r$.\\

	\noindent
	(4) We have to solve any lifting problem of the form
	\begin{equation}\label{eq:lastlift}
		\xymatrix{
		\displaystyle
		K^{\flat} \ar[r] \ar@{^{(}->}[d] & \Spanl_T(Y^{\natural})\ar[d] \\
		K^{\sharp} \ar@{-->}[ur] \ar[r] & T^{\sharp},
		}
	\end{equation}
	where $K$ is a Kan complex. From the proof of \cite[3.1.1.6]{lurie.htt} it follows that
	it suffices to solve the problem for a constant map $K \to T$, hence we may assume
	that $T = \pt$. In this case, we deduce from (1) that $\Spanl(Y)$ is an $\infty$-category,
	and an edge of $\Spanl(Y)$ is $q$-Cartesian if and only if it is
	an equivalence. Therefore, it suffices to show the following statement: 
	If an edge $x \to y$ of $\Spanl(Y)$ is an equivalence, then the two edges $x \leftarrow z
	\rightarrow y$ comprising the adjoint map $\asd(\Delta^1) \to Y$ are equivalences in $Y$.
	An edge in an $\infty$-category is an equivalence if and only if it admits an inverse in the
	associated homotopy category. Assume an edge $f: x \to y$ of $\Spanl(Y)$ is an equivalence
	and let $f': y \to x$ be a representative of the homotopy inverse of $f$.
	There exist triangles $l: \Delta^{\{0,1,2\}} \to \Spanl(Y)$ with $l(\{0,1\}) = f'$, $l(\{1,2\}) = f$
	and $l(\{0,2\}) = \id_x$, as well as $r: \Delta^{\{1,2,3\}} \to \Spanl(Y)$ with $r(\{1,2\}) = f$,
	$r(\{2,3\}) = f'$ and $r(\{1,3\}) = \id_y$. The inclusion
	\[
	\Delta^{\{0,1,2\}} \coprod_{\Delta^{\{1,2\}}} \Delta^{\{1,2,3\}} \subset \Delta^{\{0,1,2,3\}}
	\]
	is inner anodyne, which allows us to extend the pair $(l,r)$ of $2$-simplices to a
	$3$-simplex
	\[
	m: \Delta^{\{0,1,2,3\}} \to \Spanl(Y).
	\]
	The adjoint map $\asd(\Delta^{\{0,1,2,3\}}) \to Y$ corresponds to a diagram 
	\begin{equation}\label{eq:3simplexsegal}
		\xymatrix@=1pc{ 
		&& & w \ar@/_1.5pc/[dldl]_{l_1} \ar@/^1.5pc/[drdr]^{r_1} \ar[dr] \ar[dl] & & &\\ 
		& & y \ar[dr] \ar[dl]^{l_2} & & x \ar[dr]_{r_2} \ar[dl]& & \\
		& z' \ar[dr] \ar[dl]& & z \ar[dr]^{r_3} \ar[dl]_{l_3} & & z' \ar[dr] \ar[dl]& \\
		y &  & x & & y &  &x. \\
		}
	\end{equation}
	in $Y$. Since $m$ is a Segal simplex, all rectangles in \eqref{eq:3simplexsegal} are
	pullback squares. Now we argue as follows: $r_1$ is a pullback of $\id: y \to y$, hence an
	equivalence. But this implies that the class of $r_2$ in the homotopy category has a left
	and right inverse, making $r_2$ itself an equivalence. Analogously, $l_1$ and $l_2$ are
	equivalences. An easy argument now implies that in fact all maps in \eqref{eq:3simplexsegal}
	are equivalences, in particular the maps $l_3$ and $r_3$.
\end{proof}

\begin{rem} Note that, in the case $T = \pt$, Theorem \ref{thm:spancartesian} implies Theorem
	\ref{thm:spancat} and, further, identifies those edges of $\Spanl(\C)$ which are
	equivalences.
\end{rem}

The following Proposition studies how the span construction interacts with marked mapping
spaces. This will be essential for the proof of Theorem \ref{thm:span-bi-mon}.

\begin{prop} \label{prop:mapadj} Let $Y \to T$ be a Cartesian fibration of $\infty$-categories which
	admits relative pullbacks.
	Then the following assertions hold:
	\begin{enumerate}
		\item For every object $L$ of $(\mSet)_{/T}$, we have an isomorphism of Kan
			complexes
			\[
			\Map^{\sharp}_T(L, \Spanl_T(Y^{\natural})) \cong
			\Spanl(\Map^{\sharp}_T(\asd(L), Y^{\natural})^l)
			\] 
			which is functorial in $L$.
		\item For every object $L$ of $(\sSet)_{/T}$, we have an isomorphism of
			$\infty$-categories
			\[
			\Map^{\flat}_T(L^{\sharp}, \Spanl_T(Y^{\natural})) \cong
			\Spanl(\Map^{\flat}_T(\asd(L)^{\sharp}, Y^{\natural}))
			\]
			which is functorial in $L$.
	\end{enumerate}
\end{prop}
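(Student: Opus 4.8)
The two assertions are of the same nature, so the plan is to prove (1) in detail and then indicate the modifications needed for (2). The basic strategy is to produce the desired isomorphism on the level of simplices and check functoriality by inspection, exploiting that every construction in sight is defined by a universal property (Kan extension or adjunction) and hence is natural. For (1), the key point is to unwind both sides into concrete combinatorial data and observe that they describe \emph{the same} set of maps, compatibly with the simplicial structure.

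First I would compute the $k$-simplices of the left-hand side. By definition of the marked mapping space $\Map^{\sharp}_T$, a $k$-simplex of $\Map^{\sharp}_T(L, \Spanl_T(Y^{\natural}))$ is a marked map $(\Delta^k)^{\sharp} \times L \to \Spanl_T(Y^{\natural})$ in $(\mSet)_{/T}$. Passing to the defining adjunction \eqref{eq:adj-S} promoted to the marked setting \eqref{eq:markedadj}, such a map corresponds to a map $\asd_T((\Delta^k)^{\sharp} \times L) \to Y^{\natural}$ whose adjoint factors through the Segal simplices, i.e. lands in the superscript-$l$ subspace. Here I would invoke Corollary \ref{cor:asd-product}: the monoidal structure on $\asd$ gives a natural isomorphism $\asd((\Delta^k)^{\sharp} \times L) \cong \asd((\Delta^k)^{\sharp}) \times \asd(L)$ over $T$. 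The remaining task is then to match the Segal (limit) condition on $\asd((\Delta^k)^{\sharp}) \times \asd(L) \to Y$ with the Segal condition defining $\Spanl$ applied to the Kan complex $\Map^{\sharp}_T(\asd(L), Y^{\natural})^l$. This is where Proposition \ref{prop:restmap} enters: it guarantees that $\Map^{\sharp}_T(\asd(L), Y^{\natural})^l$ is a Kan complex, so that $\Spanl$ of it is defined (Corollary \ref{cor:asd-quillen}), and the $k$-simplices of $\Spanl(K)$ for a Kan complex $K$ are by definition the maps $\asd(\Delta^k) \to K$ that are Segal (which, by Example \ref{exa:spankan}, is automatic once $K$ is a Kan complex). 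Threading these identifications together should produce the claimed bijection on $k$-simplices, and I would verify that it commutes with the faces and degeneracies coming from $[k]$, which it does because every step was induced by the cosimplicial structure of $\asd(\Delta^{\bullet})$ and the adjunction.

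The main obstacle I anticipate is not any single hard lemma but the bookkeeping of the \emph{two independent} simplicial directions: the ``external'' direction indexed by $[k]$ (coming from $\Map$ and from the outer $\Spanl$) and the ``internal'' direction carried by $L$ and by the relative span construction $\Spanl_T$. The subtlety is that the superscript-$l$ condition (being a relative Segal simplex, i.e. a $p$-limit diagram) must be shown to decouple correctly across the product $\asd((\Delta^k)^{\sharp}) \times \asd(L)$: a map out of this product is a relative Segal simplex for the outer structure if and only if, after fixing the $\asd(\Delta^k)$-coordinate, it defines a point of $\Map^{\sharp}_T(\asd(L), Y^{\natural})^l$, and the induced map in the $\asd(\Delta^k)$-direction is itself Segal. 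The honest content here is Lemma \ref{lem:comparelimit}, which lets one compare relative $p$-limits with absolute limits in fibers, together with the stability of limit cones under equivalence (\cite[4.3.1.5]{lurie.htt}); I would use these to see that the two limit conditions impose independent constraints and hence factor through the product as needed.

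Finally, for assertion (2) I would run the identical argument with the marked structures reversed: replace $L^{\flat}$-type inputs by $L^{\sharp}$ and correspondingly use $\Map^{\flat}_T(-^{\sharp}, -)$, which by \cite[3.1.3.1]{lurie.htt} computes the $\infty$-category of sections rather than its maximal Kan subcomplex. Since Theorem \ref{thm:spancartesian} already establishes that $\Spanl_T(Y^{\natural})$ is a fibrant object of $(\mSet)_{/T}$ (equivalently, that $\Spanl_T(Y) \to T$ is a Cartesian fibration with Cartesian edges the marked ones), the mapping $\infty$-category $\Map^{\flat}_T(L^{\sharp}, \Spanl_T(Y^{\natural}))$ is well defined, and the same adjunction-plus-$\asd$-monoidality computation identifies it with $\Spanl$ applied to the $\infty$-category $\Map^{\flat}_T(\asd(L)^{\sharp}, Y^{\natural})$. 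The functoriality in $L$ in both cases is immediate because every identification used is natural in its input; I would simply remark that naturality is inherited from the adjunction \eqref{eq:markedadj} and from Corollary \ref{cor:asd-product}, rather than reprove it.
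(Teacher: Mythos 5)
Your overall strategy coincides with the paper's: unwind the $k$-simplices of the left-hand side through the adjunction $\asd_T \dashv \Span_T$, split $\asd$ across the product as in Corollary \ref{cor:asd-product}, and then show that the superscript-$l$ condition decouples so that the $\asd(\Delta^k)$-direction can be reassembled into an outer $\Spanl$ (which, by Proposition \ref{prop:restmap} and Example \ref{exa:spankan}, carries no further condition since the inner mapping space is a Kan complex). The paper organizes exactly this as a chain of natural bijections of $\Hom$-sets, so functoriality is indeed automatic, as you claim. There are, however, two places where your plan as written does not yet close.

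First, for any decoupling argument you need every edge of the $\asd(\Delta^k)$-factor to be marked, i.e.\ to land on a $p$-Cartesian edge of $Y$. The adjunction only hands you $\asd((\Delta^k)^{\sharp})$, in which merely the two ``leg'' edges $\{i,j\}\to\{i\}$ and $\{i,j\}\to\{j\}$ over each edge of $\Delta^k$ are marked --- not, say, $\{0,k\}\to\{i,j\}$. The paper inserts here the assertion that $\asd(K^{\sharp})\to\asd(K)^{\sharp}$ is marked anodyne (checked on $\Delta^n$ for $n\le 3$), which upgrades the marking for free against the fibrant object $Y^{\natural}$; without this step the hypothesis of the decoupling statement is simply not available. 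Second, the decoupling itself does not follow from Lemma \ref{lem:comparelimit}, which compares a single relative Segal cone with its pushforward into one fiber; what is actually needed is the assertion that a map $\asd(L)\times\asd(K)^{\sharp}\to Y^{\natural}$ lies in the $l$-subset as soon as each slice over a vertex of $\asd(K)$ does (the paper's Lemma \ref{lem:limcom}). Its proof does use the same homotopy-plus-\cite[4.3.1.9]{lurie.htt} technique as Lemma \ref{lem:comparelimit}, but it requires constructing a new interpolating map $\asd(\Delta^1)\times\asd(\Delta^n)\to\asd(\Delta^n)\times\asd(\Delta^n)$ between the diagonal Segal cone and the slice cone, and this is the genuine content of the proposition; it has to be supplied rather than cited. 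For part (2) the analogous decoupling additionally rests on a pointwise characterization of limits in the $\infty$-category of Cartesian sections (Proposition \ref{prop:pointwise}), which your sketch does not address.
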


The proof of the proposition needs some technical preparation. Let $p: Y \to T$ be a Cartesian fibration of $\infty$-categories which is classified by a diagram $f: T^{\op} \to
\Cat_{\infty}$. Recall from \cite[3.3.3]{lurie.htt} that the limit of $f$ can be identified
with the $\infty$-category $\Map^{\flat}_T(T^{\sharp}, Y^{\natural})$ of Cartesian sections of $p$. The
following proposition gives a pointwise characterization of limits in $\Map^{\flat}_T(T^{\sharp},Y^{\natural})$.

\begin{prop} \label{prop:pointwise} Let $p: Y \to T$ be a Cartesian fibration of $\infty$-categories and $K$ be a simplicial
	set. Assume that $Y$ admits all
	$K$-indexed $p$-limits. Then
	\begin{enumerate}[label=(\roman{*})] 
		\item The $\infty$-category $\Map^{\flat}_T(T^{\sharp}, Y^{\natural})$ admits all
			$K$-indexed limits.
		\item A diagram $f: K^{\triangleleft} \to \Map^{\flat}_T(T^{\sharp}, Y^{\natural})$ is a
			limit diagram if and only if, for every vertex $s$ of $T$, the corresponding map $\{s\} \times
			K^{\triangleleft} \to Y_s$ is a limit diagram.
	\end{enumerate}
\end{prop}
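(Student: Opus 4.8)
The plan is to reduce the statement about the $\infty$-category of Cartesian sections to the pointwise theory of relative limits ($p$-limits) developed in \cite{lurie.htt}, which is already the engine behind almost all of the fibrational arguments in this chapter. The key observation is that the $\infty$-category $\Map^{\flat}_T(T^{\sharp}, Y^{\natural})$ of Cartesian sections is itself the $\infty$-category of sections of an auxiliary fibration, so that $K$-indexed limits in it should be computable ``objectwise over $K$,'' and then the pointwise condition over $T$ will follow from the analogous statement for the relative limits of $p$.

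First I would set up the diagonal/cotensor machinery. Since $Y$ admits all $K$-indexed $p$-limits, I would invoke \cite[4.3.1.10]{lurie.htt} (stability of relative limits under the relevant constructions) together with \cite[4.3.1.9]{lurie.htt}, which is the workhorse lemma comparing $p$-limit diagrams after a homotopy in the marked setting (exactly as it was used in Lemma \ref{lem:comparelimit}). Concretely, a diagram $f : K^{\triangleleft} \to \Map^{\flat}_T(T^{\sharp}, Y^{\natural})$ is the same datum as a map $T^{\sharp} \to (Y^{K^{\triangleleft}})^{\natural}$ over $T$ lying in the appropriate Cartesian-section locus, where $Y^{K^{\triangleleft}} \to T$ is the relative cotensor. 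The existence of $K$-indexed $p$-limits guarantees, by \cite[4.3.1.10]{lurie.htt}, that the restriction map $Y^{K^{\triangleleft}} \to Y^K$ over $T$ admits a section-of-a-section extension; assembling these gives assertion (i) that $\Map^{\flat}_T(T^{\sharp}, Y^{\natural})$ admits $K$-indexed limits.

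Next, for the characterization (ii), I would use that evaluation at a vertex $s$ of $T$ is realized by pullback along $\{s\} \hookrightarrow T$, which sends Cartesian sections to objects of the fiber $Y_s$ and commutes with the formation of the cotensor. The pointwise criterion for relative limits, \cite[4.3.1.9]{lurie.htt}, states precisely that a diagram is a $p$-limit diagram if and only if it becomes a limit diagram after passing to each fiber. Since $\Map^{\flat}_T(T^{\sharp}, Y^{\natural})$ is a full subcategory of the $\infty$-category of all sections singled out by a fiberwise (Cartesianness) condition, a cone $f$ is a limit cone there exactly when its image under every $\{s\} \times K^{\triangleleft} \to Y_s$ is a limit cone; the ``only if'' direction uses that evaluation preserves limits (being a right adjoint, or by \cite[4.3.1.9]{lurie.htt}), and the ``if'' direction is the reflecting statement, again \cite[4.3.1.9]{lurie.htt}. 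This is where I would be most careful: one must check that a cone which is fiberwise a limit is genuinely a limit cone in the section $\infty$-category and not merely a $p$-limit of the underlying diagram, i.e. that the Cartesianness of the edges is automatically respected.

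The main obstacle I anticipate is precisely this compatibility between the \emph{fiberwise} limit condition and the \emph{Cartesian-section} constraint: one needs to know that the limit of a diagram of Cartesian sections, computed fiberwise, is again a Cartesian section, and that no extra non-Cartesian edges sneak in. I expect to resolve this by the same homotopy-transfer argument as Lemma \ref{lem:comparelimit}: construct a marked homotopy $h : (\Delta^1)^{\sharp} \times (-) \to Y^{\natural}$ straightening the cone into each fiber, apply \cite[2.4.1.7]{lurie.htt} to propagate Cartesianness of edges, and then conclude via \cite[4.3.1.9]{lurie.htt} that the fiberwise limit diagrams glue to a $p$-limit diagram whose structure edges remain $p$-Cartesian. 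Once this is in place, assertions (i) and (ii) follow formally.
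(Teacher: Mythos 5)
Your overall strategy --- reduce everything to the fiberwise theory of relative limits --- is a reasonable instinct, and your worry about compatibility between the fiberwise limit condition and Cartesianness of sections is well placed, but the proposal has a genuine gap at its final step. The statement concerns limits in the $\infty$-category $\Map^{\flat}_T(T^{\sharp}, Y^{\natural})$ of Cartesian sections, and this universal property is not the same as (nor a formal consequence of) the assertion that the adjoint cone $T \times K^{\triangleleft} \to Y$ is fiberwise a limit cone, or even a $p$-limit cone all of whose structure edges are $p$-Cartesian. To verify that a candidate cone is a limit cone in the section category you must solve extension problems of the form $T \times \partial\Delta^n \to Y$ over $T \times \Delta^n \to T$, with the last vertex given by the candidate limit section, and nothing in your outline produces these lifts: the homotopy-transfer argument of Lemma \ref{lem:comparelimit} together with \cite[2.4.1.7]{lurie.htt} and \cite[4.3.1.9]{lurie.htt} only controls whether individual cones in $Y$ are $p$-limits and whether individual edges are $p$-Cartesian. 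This missing bridge is exactly what the paper supplies: it rewrites ``limit cone'' as ``final object of the Cartesian-section category of the relative slice $Y^{/q_T} \to T$'' and then proves Lemma \ref{lem:pointwise}, which shows, by a simplex-by-simplex lifting argument, that when the fibers of a Cartesian fibration admit final objects preserved by the transition functors, the fiberwise-final Cartesian sections are precisely the final objects of the Cartesian-section category. Without an analogue of that lemma, ``assertions (i) and (ii) follow formally'' does not hold.

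Two smaller but real inaccuracies feed into this. First, \cite[4.3.1.9]{lurie.htt} is not ``the pointwise criterion for relative limits''; it transports the $p$-limit property along a natural transformation whose components are $p$-Cartesian. The fiberwise criterion you want also requires that the functors associated to $p$ preserve the $K$-indexed limits in the fibers --- this does hold here (it is what the paper extracts from \cite[4.3.1.10]{lurie.htt}), but your ``if'' direction never invokes it, and without that hypothesis the reflection statement is false. Second, evaluation at a vertex $s$ on the category of \emph{Cartesian} sections is not obviously a right adjoint: the relative left Kan extension that would furnish a left adjoint to evaluation on \emph{all} sections need not produce a Cartesian section. So the ``only if'' direction also requires the finality argument above rather than an appeal to adjunction.
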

\begin{proof} Let $K \to \Map^{\flat}_T(T^{\sharp}, Y^{\natural})$ and consider the adjoint map $q_T: T
	\times K \to Y$. By \cite[5.2.5.4]{lurie.htt}, the map $p': Y^{/q_T} \to T$ is a Cartesian
	fibration. Since the Cartesian fibration $p: Y \to T$ admits $K$-indexed $p$-limits, we
	deduce by \cite[4.3.1.10]{lurie.htt} that, for every vertex $s$ of $T$, the fiber $Y_s$ admits $K$-indexed
	limits and the functors associated to $p$ preserve $K$-indexed limits in the fibers of $p$.
	This translates into the statement that the fibers of $p'$ admit final objects and the
	functors associated to $p'$ preserve final objects in the fibers of $p'$.
	The claimed assertions now follow immediately from Lemma \ref{lem:pointwise} below.
\end{proof}

\begin{lem}\label{lem:pointwise} Let $p:Y \to T$ be a Cartesian fibration of $\infty$-categories. Assume that for each
	vertex $s$ of $T$, the $\infty$-category $Y_s$ admits a final object. Further, assume that
	the functors associated to $p$ preserve final objects in the fibers of $p$. 
	\begin{enumerate} 
		\item Let $Y' \subset Y$ denote the largest simplicial subset of $Y$ such that each
			vertex $y$ of $Y'$ is a final object in $Y_{p(y)}$ and each edge of $Y'$ is
			$p$-Cartesian. Then $p|Y'$ is a trivial fibration of simplicial sets.
		\item Let $\C = \Map_T(T^{\sharp}, Y^{\natural})$ be the $\infty$-category of
			Cartesian sections of $p$. A Cartesian section $f: T \to Y$ is a final
			object of $\C$ if and only if it factors through $Y'$.
	\end{enumerate}
\end{lem}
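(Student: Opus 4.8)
The plan is to establish part (1) first and then derive part (2) from it, the common engine being the fact that a right fibration with contractible fibres is a trivial Kan fibration (see \cite[2.1.3.4]{lurie.htt}). Write $q = p|Y'$. First I would check that $q$ is an inner fibration: given an inner horn $\Lambda^n_i \to Y'$ with $0 < i < n$ and a compatible $\Delta^n \to T$, the inner fibration $p$ supplies a filler $\sigma\colon \Delta^n \to Y$, and this filler automatically lands in $Y'$, since all vertices of $\Delta^n$ already occur in $\Lambda^n_i$ (hence map to objects final in their fibres) and every edge is either present in the horn or, in the case $n=2$, arises as a composite of two $p$-Cartesian edges and is therefore $p$-Cartesian by \cite[2.4.1.7]{lurie.htt}. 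Next I would verify that every edge of $Y'$ is $q$-Cartesian: such an edge is $p$-Cartesian by definition, and solving the defining lifting problems against $\Lambda^n_n \to \Delta^n$ inside $Y$ yields simplices whose only new face is $\partial_n$, whose vertices lie in the horn, and whose edges lie in the horn for $n \ge 3$, while for $n=2$ the single new edge is again $p$-Cartesian by \cite[2.4.1.7]{lurie.htt}; thus the filler stays in $Y'$.

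By \cite[2.4.2.4]{lurie.htt}, an inner fibration all of whose edges are Cartesian is a right fibration, so $q$ is a right fibration. Finally, the fibre $q^{-1}(s)$ is the maximal Kan subcomplex of the full subcategory of $Y_s$ spanned by final objects (an edge lying in a single fibre is $p$-Cartesian precisely when it is an equivalence, by \cite[2.4.1.5]{lurie.htt}); this subcategory is nonempty by hypothesis, hence a contractible Kan complex by \cite[1.2.12.9]{lurie.htt}. A right fibration with contractible fibres is a trivial Kan fibration, which proves (1).

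For part (2), I would first record that, since $q\colon Y' \to T$ is a trivial fibration, the section space $\Map_T(T, Y')$ is contractible and each of its points is a Cartesian section factoring through $Y'$; in particular such sections exist and are pairwise equivalent in $\C$. For the implication ``factors through $Y'$ $\Rightarrow$ final'', fix such an $f$ and an arbitrary $g \in \C$. The plan is to identify the mapping space $\Map_{\C}(g,f)$, computed in the full subcategory $\C \subset \Map_T(T,Y)$ of Cartesian sections, with the section space of a right fibration $E_{g,f} \to T$ whose fibre over a vertex $s$ is $\Map_{Y_s}(g(s),f(s))$. Since $f(s)$ is final in $Y_s$, each fibre is contractible, so $E_{g,f} \to T$ is itself a trivial fibration and hence has a contractible section space; thus $\Map_{\C}(g,f)$ is contractible and $f$ is final. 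For the converse, finality is characterised up to equivalence, so any final $f$ is equivalent in $\C$ to one of the sections through $Y'$ produced above; an equivalence of Cartesian sections is a pointwise equivalence $f(s) \simeq f_0(s)$ in $Y_s$, whence each $f(s)$ is final (finality being invariant under equivalence), and as $f$ is Cartesian its edges are $p$-Cartesian, so $f$ factors through $Y'$.

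The main obstacle is the pointwise identification of $\Map_\C(g,f)$ as the section space of a right fibration $E_{g,f} \to T$ with the prescribed fibres. Here I would construct $E_{g,f}$ from the arrow fibration $Y^{\Delta^1} \to Y \times Y$ by pulling back along $(g,f)\colon T \to Y \times Y$, using the standard description of mapping spaces in $\infty$-categories of sections (\cite[\S 3.3.3]{lurie.htt}), and then check both that Cartesian transport makes the fibres the expected mapping spaces and that the projection to $T$ is a right fibration. Everything else reduces either to part (1) or to the cited stability properties of $p$-Cartesian edges and of final objects.
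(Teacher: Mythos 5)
Your part (1) is correct but takes a genuinely different route from the paper's. You exhibit $q = p|Y'$ as a right fibration (an inner fibration all of whose edges are $q$-Cartesian, via \cite[2.4.2.4]{lurie.htt}) whose fiber over $s$ is the full subcategory of final objects of $Y_s$, a contractible Kan complex by \cite[1.2.12.9]{lurie.htt}, and conclude by the right-fibration analogue of \cite[2.1.3.4]{lurie.htt}; your horn-filling checks, including the use of \cite[2.4.1.7]{lurie.htt} for the missing edge when $n=2$, are sound. The paper instead reduces assertion (1) to a single extension problem $\partial\Delta^n \subset \Delta^n$, solved by replacing $T$ with $\Delta^n$ and invoking the theory of $p$-final objects (\cite[4.3.1.10]{lurie.htt}, \cite[4.3.1.5]{lurie.htt}). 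Your argument is more structural and identifies the fibers of $q$ explicitly; the paper's is shorter and, importantly, produces exactly the extension statement that it then reuses, simplex by simplex over $T$, to prove part (2) directly.

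In part (2) there is a genuine gap in the implication ``factors through $Y'$ $\Rightarrow$ final''. The object $E_{g,f} := Y^{\Delta^1}\times_{Y\times Y,\,(g,f)} T$ you propose does not have the prescribed fibers: its fiber over $s$ is a model for $\Map_Y(g(s),f(s))$ rather than $\Map_{Y_s}(g(s),f(s))$, and its sections over $T$ parametrize arbitrary natural transformations $g\Rightarrow f$ in $\Fun(T,Y)$ rather than the vertical ones (those lying over the identity of $\id_T$) that are the morphisms of $\C$. To repair this you must first form the fiberwise arrow object $Y^{\Delta^1}\times_{T^{\Delta^1}} T$, pulling back along the degenerate-edge section $T \to T^{\Delta^1}$, and only then pull back along $(g,f)\colon T \to Y\times_T Y$; even after that correction you still owe the two claims you defer — that the result is a right fibration over $T$ and that its section space computes $\Map_\C(g,f)$ — which together constitute the real content of this direction (a mapping-space analogue of the pointwise limit formula of Proposition \ref{prop:pointwise}). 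The paper sidesteps all of this: having established the extension property in part (1), it verifies finality of a section $f$ through $Y'$ by solving the lifting problems $T\times\partial\Delta^n \subset T\times\Delta^n$ over $T$ with $j|T\times\{n\} = f$, working simplex by simplex over $T$ and applying the part-(1) extension at each stage. Your converse direction (a final section is equivalent to one through $Y'$, hence pointwise final, and has $p$-Cartesian edges by membership in $\C$) matches the paper's and is fine.
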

\begin{proof} The argument is an adaption of the proof of \cite[2.4.4.9]{lurie.htt}.
	To prove the first assertion, it suffices to show that, for every $n \ge 0$, every lifting problem
	\begin{equation}\label{eq:smalllift}
	\xymatrix{
		\partial \Delta^n \ar[r]^{g_0} \ar[d] & Y \ar[d]^p \\
		\Delta^n \ar[r]^h \ar@{-->}[ur]^{g} & T,
	}
	\end{equation}
	such that $g_0(\{n\})$ is a final object in the $\infty$-category $Y_{h(\{n\})}$, admits a
	solution. To solve this problem, we may replace $T$ by $\Delta^n$. By \cite[4.3.1.10]{lurie.htt}, the vertex $g_0(\{n\})$ is a $p$-final
	object in $Y$. Since $p(g_0(\{n\})) = \{n\}$ is a final object of $T=\Delta^n$, the vertex
	$g_0(\{n\})$ is a final object of $Y$ by \cite[4.3.1.5]{lurie.htt}. This immediately implies
	the existence of a solution $g$ of the above lifting problem. 

	From (1), we deduce the existence of a section $f: T \to Y'$ of $p$. 
	By the uniqueness of final objects in $\infty$-categories, to show (2) it suffices to
	prove that $f$ is a final object in $\C$. To this end, we have to show that any lifting
	problem
	\[
	\xymatrix{
		T \times \partial \Delta^n \ar[r]^{j} \ar[d] & Y\ar[d]^p\\
		T \times \Delta^n \ar[r]^h \ar@{-->}[ur] &  T,
	}
	\]
	with $j|T \times \{n\} = f$ admits a solution. This solution can be found simplex by simplex
	using that \eqref{eq:smalllift} admits a solution.
\end{proof}

The following lemma isolates the most technical part of the proof of Proposition \ref{prop:mapadj}.

\begin{lem} \label{lem:limcom} Let $Y \to T$ be a Cartesian fibration of $\infty$-categories which
	admits relative pullbacks.
\begin{enumerate} 
	\item \label{lem:limcom1} Let $L$ be an object of $(\mSet)_{/T}$ and $K$ a simplicial set. 
		Let $f: \asd(L) \times \asd(K)^{\sharp} \to Y^{\natural}$ be a morphism in
		$(\mSet)_{/T}$. Assume that for each vertex $\{y\}$ of $\asd(K)$, the induced map $f_y: \asd(L) \times \{y\}^{\sharp} \to Y^{\natural}$
		is contained in $\Hom_T(\asd(L) \times \{y\}^{\sharp}, Y^{\natural})^l$.
		Then the map $f$ itself is contained in $\Hom_T(\asd(L) \times \asd(K^{\sharp}),
		Y^{\natural})^l$.
	\item \label{lem:limcom2} Let $L$ be an object of $(\sSet)_{/T}$ and $K$ a simplicial set. 
		Let $f: \asd(L^{\sharp}) \times \asd(K)^{\flat} \to Y^{\natural}$ be a morphism in
		$(\mSet)_{/T}$. Assume that for each vertex $\{y\}$ of $\asd(L)$, the induced map
		$f_y: \{y\} \times \asd(K)^{\flat} \to Y^{\natural}$
		is contained in $\Hom_T(\{y\} \times \asd(K)^{\flat}, Y^{\natural})^l$.
		Then the map $f$ itself is contained in $\Hom_T(\asd(L^{\sharp}) \times
		\asd(K)^{\flat}, Y^{\natural})^l$.
\end{enumerate}
\end{lem}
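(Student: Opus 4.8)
The plan is to reduce, via the monoidal identification $\asd(L)\times\asd(K)\cong\asd(L\times K)$ of Corollary \ref{cor:asd-product}, to a statement about single simplices, and then to transport a limit condition along $p$-Cartesian edges using the comparison technique of Lemma \ref{lem:comparelimit}. Recall that a morphism $g\colon\asd(M)\to Y$ lies in $\Hom_T(\asd(M),Y^{\natural})^l$ precisely when, for every simplex $\Delta^n\to M$ with $n\ge 2$, the associated Segal cone diagram $S(\Delta^n)\to Y$ (Definition \ref{defi:segalcone}) is a $p$-limit diagram; since the faces of a simplex are again simplices of $M$, it suffices to verify this for the top Segal cone of each individual simplex. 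For part \ref{lem:limcom1} I would therefore fix a simplex $\sigma=(\alpha,\beta)\colon\Delta^n\to L\times K$, whose asymmetric subdivision $(\asd\alpha,\asd\beta)\colon\asd(\Delta^n)\to\asd(L)\times\asd(K)$ followed by $f$ is the diagram whose Segal cone I must show is a $p$-limit.

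The key construction is a homotopy collapsing the $K$-coordinate onto the cone point, exactly mirroring the map $\pi$ of Lemma \ref{lem:comparelimit}. Writing $\pi\colon\Delta^1\times\asd(\Delta^n)\to\asd(\Delta^n)$ for the nerve of the collapse $(i,\{j,k\})\mapsto\{0,\dots,n\}$ for $i=0$ and $\mapsto\{j,k\}$ for $i=1$, and $p_2$ for the projection, I would let $H$ be the composite of $(p_2,\pi)$ with $\asd\alpha\times\asd\beta$ and then $f$, regarded as a map $(\Delta^1)^{\sharp}\times\asd(\Delta^n)^{\flat}\to Y^{\natural}$. The three points to check are routine but delicate: that $H$ is a well-defined map of marked simplicial sets (each pure $\Delta^1$-direction edge pairs a degenerate edge of $\asd(L)$ with some edge of $\asd(K)$, which is marked in $\asd(K)^{\sharp}$, hence lands among the $p$-Cartesian edges of $Y^{\natural}$); that $H$ restricts to a degenerate edge on $\Delta^1\times\{v^*\}$, where $v^*=\{0,\dots,n\}$ is the cone point, since $\pi$ and $\asd\beta$ are both constant there; and that $H|_{\{0\}}=f_{y^*}\circ\asd\alpha$ with $y^*=\asd\beta(v^*)$ a vertex of $\asd(K)$. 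These are precisely the hypotheses under which \cite[4.3.1.9]{lurie.htt} applies to the restriction $H|_{\Delta^1\times S(\Delta^n)}$, yielding that the original Segal cone $H|_{\{1\}}|_{S(\Delta^n)}$ is a $p$-limit if and only if $H|_{\{0\}}|_{S(\Delta^n)}$ is.

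Finally, $H|_{\{0\}}|_{S(\Delta^n)}$ is the Segal cone of the simplex $\alpha\colon\Delta^n\to L$ evaluated through $f_{y^*}$, and it is a $p$-limit because $f_{y^*}$ lies in $\Hom_T(\asd(L),Y^{\natural})^l$ by hypothesis. Hence the original Segal cone is a $p$-limit; ranging over all $\sigma$ gives $f\in\Hom_T(\asd(L)\times\asd(K^{\sharp}),Y^{\natural})^l$. Part \ref{lem:limcom2} is then proved by the symmetric argument: one collapses the $L$-coordinate instead, applying $\pi$ in the first factor and $p_2$ in the second, so that the marked $\Delta^1$-direction edges now come from $\asd(L^{\sharp})$; the slice at the cone point $y^*=\asd\alpha(v^*)$ lands in $\{y^*\}\times\asd(K)$ and is a $p$-limit by the part \ref{lem:limcom2} hypothesis. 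I expect the main obstacle to be the bookkeeping in the middle step, namely verifying simultaneously that $H$ respects the marked structures, that every $\Delta^1$-direction edge over the base $\asd(\J^n)$ of the Segal cone is $p$-Cartesian, and that $H$ is degenerate over the cone point, so that the invariance statement \cite[4.3.1.9]{lurie.htt} genuinely applies; the compatibility over $T$ is automatic, since $\pi$ and $p_2$ cover the identity of $\asd(\Delta^n)$ and $f$ is a map over $T$.
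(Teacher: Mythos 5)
Your argument for part \ref{lem:limcom1} is correct and is in fact a streamlined variant of the paper's proof. The paper also reduces to a single simplex $\sigma=(\sigma_L,\sigma_K)$ and transports the Segal cone along a homotopy using \cite[4.3.1.9]{lurie.htt}, but it collapses the $K$-coordinate to the \emph{vertex} $\{0\}$ of $\asd(\Delta^n)$; since $\{i,j\}\mapsto\{0\}$ does not receive a natural transformation from the identity (one would need $\{i',j'\}\subseteq\{0\}$), the paper must concatenate two homotopies parametrized by $\asd(\Delta^1)$, passing through the intermediate interval $\{0,j\}$. Your choice of collapsing to the cone point $\{0,n\}$, which is initial in $\Mo([n])$, makes the collapse functorial in a single $\Delta^1$-step, and the hypothesis of the lemma — stated for arbitrary vertices of $\asd(K)$, hence in particular for $\asd\beta(\{0,n\})$ — still applies. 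The verifications you list (markedness of the connecting edges, degeneracy over the cone point, compatibility over $T$) all go through as you describe.

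The gap is in part \ref{lem:limcom2}, where the situation is \emph{not} symmetric. The connecting edges of your homotopy at a vertex $w$ of $\asd(\J^n)$ are now $f$ applied to $\bigl(\asd\alpha(\{0,n\}\to w),\ \text{degenerate}\bigr)$, and \cite[4.3.1.9]{lurie.htt} requires these to be $p$-Cartesian. Your justification — that these edges "come from $\asd(L^{\sharp})$" — does not hold: the marked edges of $\asd(L^{\sharp})$ are only those generated, for each edge $e$ of $L$, by the two nondegenerate edges of $\asd(\Delta^1)\xrightarrow{\asd(e)}\asd(L)$ (this is precisely why $\asd(L^{\sharp})\to\asd(L)^{\sharp}$ is a nontrivial marked anodyne map rather than an isomorphism, as shown in the proof of Proposition \ref{prop:mapadj}). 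For $n\ge 2$, an edge such as $\{0,n\}\to\{i,i+1\}$ of $\asd(\Delta^n)$ is not of this form, so its image under $\asd\alpha$ need not be marked in $\asd(L^{\sharp})$, and the markedness of $f$ gives you nothing. The repair is short but necessary: the inclusion $\asd(L^{\sharp})\times\asd(K)^{\flat}\to\asd(L)^{\sharp}\times\asd(K)^{\flat}$ is marked anodyne and is the identity on underlying simplicial sets, so fibrancy of $Y^{\natural}$ in $(\mSet)_{/T}$ forces $f$ to carry \emph{every} edge of $\asd(L)$ paired with a degenerate edge of $\asd(K)$ to a $p$-Cartesian edge; with that observation your symmetric homotopy does apply. (The paper dismisses part \ref{lem:limcom2} with "a similar argument", but this extra step is genuinely needed there as well.)
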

\begin{proof}
	(1) We have to show that, under the stated assumption, the adjoint 
	$g: L \times K^{\sharp} \to \Span_T(Y^{\natural})$ of the map $f: \asd(L) \times \asd(K^{\sharp}) \to
	Y^{\natural}$ factors over $\Spanl_T(Y) \subset \Span_T(Y)$. In other words, we
	have to show that, for every $n$-simplex $\sigma = (\sigma_L, \sigma_K)$ of $L \times K$,
	the image $g(\sigma)$ is a Segal simplex of $\Span_T(Y)$. It suffices to show that the induced
	composite
	\[
	s_d: T(\Delta^n) \hra \asd(\Delta^n) \overset{\sigma}{\lra} \asd(L) \times \asd(K) \overset{f}{\lra} Y
	\]
	is a $p$-limit diagram. Indeed, the $p$-limit condition on a subsimplex $\sigma' \subset
	\sigma$ will be obtained by repeating the same argument with $\sigma$ replaced by $\sigma'$.
	We consider the composite map 
	\[
	s: \asd(\Delta^n) \times \asd(\Delta^n) \overset{(\sigma_L,\sigma_K)}{\lra} \asd(L)
	\times \asd(K) \overset{f}{\lra} Y
	\]
	on the underlying simplicial sets.
	Note that $s_d$ is obtained from $s$ by restricting along the diagonal embedding
	\[
	S(\Delta^n) \to \Delta^n \times \Delta^n.
	\]
	Further, we obtain another Segal cone $s_0$ by restricting $s$ along the embedding
	\[
	S(\Delta^n) \to \Delta^n \times \{0\}.
	\]
	Note that, by our assumption, the Segal cone $s_0$ is a $p$-limit cone. We define a map 
	\[
	\widetilde{h}: \asd(\Delta^1) \times \asd(\Delta^n) \hra \asd(\Delta^n) \times \asd(\Delta^n)
	\]
	as the nerve of the functor
	\[
	\Mo([1]) \times \Mo([n]) \lra \Mo([n]),\; (I, \{i,j\}) \mapsto 
	\left\{ \begin{matrix}
		(\{i,j\},\{0\}) \quad \text{if $I = \{0\}$,}\\
		(\{i,j\},\{0,j\}) \quad \text{if $I = \{0,1\}$,}\\
		(\{i,j\},\{i,j\}) \quad \text{if $I = \{1\}$.}
	\end{matrix} \right.
	\]
	We let
	\begin{align*}
	E & := \Delta^{\{\{0,1\},\{0\}\}} \subset \asd(\Delta^1) &  F & := \Delta^{\{\{0,1\},\{1\}\}} \subset \asd(\Delta^1) 
	\end{align*}
	denote the two nondegenerate edges of $\asd(\Delta^1)$.
	The map $h = s \circ \widetilde{h}|\asd(\Delta^1) \times S(\Delta^n)$ is a concatenation of
	two homotopies $h_1 = h | E \times S(\Delta^n)$ and $h_2 = h | F \times S(\Delta^n)$ with the following
	properties:
	\begin{enumerate}
		\item The Segal cone $h_1 | \{0\} \times S(\Delta^n)$ factors as a composition
			\[
			S(\Delta^n) \lra \asd(L) \times \{y\} \overset{f_y}{\lra} Y
			\]
			and is therefore, by our assumption, a $p$-limit diagram.
		\item By construction, the Segal cones $h_1 | \{0,1\} \times S(\Delta^n)$ and $h_2|
			\{0,1\} \times S(\Delta^n)$ coincide.
		\item The Segal cone $h_2 | \{1\} \times S(\Delta^n)$ coincides with $s_d$.
		\item For every vertex $\{v\}$ of $S(\Delta^n)$, the edges $h_1 | E \times \{v\}$
			and $h_2 | F \times \{v\}$ are $p$-Cartesian. This follows since in the
			definition of the map $f$ every edge of $\asd(K)$ is marked.
		\item The edges $h_1 | E \times \{0,n\}$ and $h_2 | F \times \{0,n\}$ in $Y$ map to
			degenerate edges in $T$.
	\end{enumerate}
	Hence, we conclude the argument by \cite[4.3.1.9]{lurie.htt} which implies that $s_d$ is a
	$p$-limit diagram.\\

	(2) This follows from an argument similar to the one provided in (1).
\end{proof}

\begin{proof}[Proof of Proposition \ref{prop:mapadj}]
	(1) For every simplicial set $K$, we have a chain of natural isomorphisms
	\begin{align}
		\notag \Hom(K, \Map^{\sharp}_T(L, \Spanl_T(Y^{\natural}))) 
			 & \cong \Hom_T(L \times K^{\sharp}, \Spanl_T(Y^{\natural}))\\
			 \tag{I} \label{a.I} & \cong \Hom_T(\asd(L) \times \asd(K^{\sharp}), Y^{\natural})^l\\
			 \tag{II} \label{a.II} & \cong \Hom_T(\asd(L) \times \asd(K)^{\sharp}, Y^{\natural})^l\\
			 \tag{III} \label{a.III} & \cong \Hom(\asd(K), \Map^{\sharp}_T(\asd(L) , Y^{\natural})^l)\\
		\notag & \cong \Hom(K, \Spanl(\Map^{\sharp}_T(\asd(L) , Y^{\natural})^l))
	\end{align}
	The only nontrivial identifications are \eqref{a.I} $\cong$ \eqref{a.II} and \eqref{a.II} $\cong$ \eqref{a.III}.

	To obtain the identification \eqref{a.I} $\cong$ \eqref{a.II}, we will show that the map $\asd(L) \times \asd(K^{\sharp}) \to
	\asd(L) \times \asd(K)^{\sharp}$ is marked anodyne. By \cite[3.1.2.3]{lurie.htt}, it
	suffices to prove that $\asd(K^{\sharp}) \to \asd(K)^{\sharp}$ is marked anodyne.
	Arguing simplex by simplex, it suffices to show this in the case $K = \Delta^n$, $n \ge 0$. For $n > 3$, every edge of $\asd(\Delta^n)$ is
	contained in $\asd(\Delta^{3}) \subset \asd(\Delta^n)$ for some subsimplex $\Delta^{3} \subset
	\Delta^n$. Therefore, it suffices to prove the statement for $1 \le n \le 3$. For $n=1$, the
	assertion is trivial, while the cofibrations $\asd( (\Delta^2)^{\sharp}) \to
	\asd(\Delta^2)^{\sharp}$ and $\asd( (\Delta^3)^{\sharp}) \to \asd(\Delta^3)^{\sharp}$ are
	easily seen to be iterated pushouts of the marked anodyne morphisms
	\[
	(\Lambda^2_2)^{\sharp} \coprod_{(\Lambda^2_2)^{\flat}} (\Delta^2)^{\flat} \hra (\Delta^2)^{\sharp}
	\]
	and 
	\[
	(\Lambda^2_1)^{\sharp} \coprod_{(\Lambda^2_1)^{\flat}} (\Delta^2)^{\flat} \hra
	(\Delta^2)^{\sharp}.
	\]

	To show the identification \eqref{a.II} $\cong$ \eqref{a.III}, first note that, by adjunction, we have a natural isomorphism
	\[
	  \Hom_T(\asd(L) \times \asd(K)^{\sharp}, Y^{\natural}) \cong \Hom(\asd(K), \Map^{\sharp}_T(\asd(L) , Y^{\natural})).
	\]
	The claim that this identification descends to \eqref{a.II} $\cong$ \eqref{a.III} follows
	immediately from \ref{lem:limcom1} in Lemma \ref{lem:limcom}.\\

	\noindent
	(2) For every simplicial set $K$, we have a chain of natural isomorphisms
	\begin{align}
		\notag \Hom(K, \Map^{\flat}_T(L^{\sharp}, \Spanl_T(Y^{\natural}))) 
		& \cong \Hom_T(L^{\sharp} \times K^{\flat}, \Spanl_T(Y^{\natural}))\\
		\tag{I} \label{b.I} & \cong \Hom_T(\asd(L^{\sharp}) \times \asd(K^{\flat}), Y^{\natural})^l\\
		\tag{II} \label{b.II} & \cong \Hom_T(\asd(L)^{\sharp} \times \asd(K^{\flat}), Y^{\natural})^l\\
		\tag{III} \label{b.III} & \cong \Hom(\asd(K), \Map^{\flat}_T(\asd(L)^{\sharp},Y^{\natural}))^l\\
		\notag & \cong \Hom(K, \Spanl(\Map^{\flat}_T(\asd(L)^{\sharp} , Y^{\natural})))
	\end{align}

	The identification \eqref{b.I} $\cong$ \eqref{b.II} follows as in Part (1) from the fact
	that the map $\asd(L^{\sharp}) \to \asd(L)^{\sharp}$ is marked anodyne. The isomorphism
	\eqref{b.II} $\cong$ \eqref{b.III} follows from \ref{lem:limcom2} in Lemma \ref{lem:limcom}
	and Proposition \ref{prop:pointwise}. 
\end{proof}

\begin{cor} \label{cor:preserve} 
	Let $Y \overset{q}{\lra} Z \overset{p}{\lra} T$ be maps of $\infty$-categories. Assume that
	$p$ and $p \circ q$ are Cartesian fibrations which admit relative pullbacks. Further assume
	that $q$ is a Cartesian equivalence. Then the induced map $\Span_T(Y) \to \Span_T(Z)$
	descends to a Cartesian equivalence $\Spanl_T(q): \Spanl_T(Y) \to \Spanl_T(Z)$.
\end{cor}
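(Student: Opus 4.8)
The plan is to realize $\Spanl_T(q)$ as a weak equivalence between \emph{fibrant} objects of the Cartesian model category $(\mSet)_{/T}$ and then to detect it fiberwise. By Theorem \ref{thm:spancartesian}, both $\Spanl_T(Y^{\natural})$ and $\Spanl_T(Z^{\natural})$ are fibrant, so $\Spanl_T(q)$ is a morphism of Cartesian fibrations over $T$. A morphism of Cartesian fibrations over $T$ is a Cartesian equivalence if and only if it restricts to a categorical equivalence on each fiber (see \cite{lurie.htt}, \S 3.3.1), so it suffices to prove that for every vertex $t$ of $T$ the induced functor $\Spanl_T(Y)_t \to \Spanl_T(Z)_t$ is a categorical equivalence of $\infty$-categories.

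First I would identify the fibers. A simplex of $\Spanl_T(Y)$ lying over the constant simplex at $t$ is a Segal diagram $\asd(\Delta^n)\to Y$ landing in the fiber $Y_t=(p\circ q)^{-1}(t)$; since a $p$-limit diagram lying entirely over a single vertex is the same as an honest limit diagram in the fiber (\cite[4.3.1.5]{lurie.htt}), the fiber $\Spanl_T(Y)_t$ is canonically isomorphic, as a marked simplicial set, to the absolute span $\infty$-category $\Spanl(Y_t)$. Here $Y_t$ admits pullbacks because $p\circ q$ admits relative pullbacks (\cite[4.3.1.10]{lurie.htt}), and likewise for $Z_t$. Since $q$ is a Cartesian equivalence, each $q_t\colon Y_t\to Z_t$ is a categorical equivalence, hence preserves and reflects pullbacks. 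Thus the corollary reduces to the absolute statement: if $g\colon\C\to\D$ is a categorical equivalence of $\infty$-categories admitting pullbacks, then $\Spanl(g)\colon\Spanl(\C)\to\Spanl(\D)$ is a categorical equivalence.

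To prove this I would test against all simplicial sets $K$, using the characterization that a functor is a categorical equivalence if and only if it induces a weak homotopy equivalence on cores of functor categories, i.e. on the Kan complexes $\Map^{\sharp}(K^{\flat},(-)^{\natural})\cong\Fun(K,-)^{\simeq}$. Applying Proposition \ref{prop:mapadj}(1) with $T=\pt$ gives a natural isomorphism $\Map^{\sharp}(K^{\flat},\Spanl(\C)^{\natural})\cong\Spanl\bigl(\Map^{\sharp}(\asd(K^{\flat}),\C^{\natural})^{l}\bigr)$, and similarly for $\D$. Because $\Spanl$ sends weak homotopy equivalences of Kan complexes to weak homotopy equivalences (Corollary \ref{cor:asd-quillen}), it is enough to show that $g$ induces a weak equivalence $\Map^{\sharp}(\asd(K^{\flat}),\C^{\natural})^{l}\to\Map^{\sharp}(\asd(K^{\flat}),\D^{\natural})^{l}$ on the subspaces of Segal diagrams.

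The heart of the argument, and the main obstacle, is the passage to the $(-)^{l}$ subspaces. Since $g$ is a categorical equivalence, $\Fun(\asd(K),g)$ is a categorical equivalence and hence induces a weak homotopy equivalence $\phi$ on cores, from $A:=\Map^{\sharp}(\asd(K^{\flat}),\C^{\natural})$ to $B:=\Map^{\sharp}(\asd(K^{\flat}),\D^{\natural})$. By Proposition \ref{prop:restmap}(1) the Segal subspaces $A^{l}\subset A$ and $B^{l}\subset B$ are unions of connected components. Because $g$ preserves limits we have $\phi(A^{l})\subseteq B^{l}$, and because $g$ reflects limits we in fact have $\phi^{-1}(B^{l})=A^{l}$ on the nose. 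A weak equivalence of Kan complexes carrying one union of components onto another with matching preimage restricts to a weak equivalence of those subspaces, so $\phi|_{A^{l}}\colon A^{l}\to B^{l}$ is a weak equivalence, which completes the reduction. The only delicate point is thus the bookkeeping that an equivalence of $\infty$-categories preserves and reflects the limit (Segal) condition cutting out $\Spanl$ inside $\Span$; this is exactly what makes the component-wise comparison valid. (Alternatively, one could avoid the fiberwise reduction altogether and run this same comparison over $T$ directly, using the relative form of Proposition \ref{prop:mapadj}(1) together with the fact that a Cartesian equivalence preserves and reflects relative limits.)
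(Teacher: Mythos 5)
Your proposal is correct, and you have in fact written down both proofs: your main line of argument is genuinely different from the paper's, while your closing parenthetical ("run this same comparison over $T$ directly, using the relative form of Proposition \ref{prop:mapadj}(1)") is essentially the paper's proof verbatim. The paper does not pass to fibers at all: it first notes that $q$ is a categorical equivalence (by \cite[3.1.5.3]{lurie.htt}, since $Y^{\natural}$ and $Z^{\natural}$ are fibrant) and hence preserves relative limits by \cite[4.3.1.6]{lurie.htt}, so that $\Spanl_T(q)$ is well defined; it then tests against every object $L$ of $(\mSet)_{/T}$, using Proposition \ref{prop:mapadj}(1) to rewrite $\Map^{\sharp}_T(L,\Spanl_T(Y^{\natural}))$ as $\Spanl\bigl(\Map^{\sharp}_T(\asd(L),Y^{\natural})^{l}\bigr)$ and Corollary \ref{cor:asd-quillen} to conclude. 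Your route instead reduces to fibers via the criterion that a map of Cartesian fibrations preserving Cartesian edges is a Cartesian equivalence iff it is a fiberwise categorical equivalence, identifies $\Spanl_T(Y)_t$ with the absolute $\Spanl(Y_t)$ (which needs the fiberwise characterization of relative limits, \cite[4.3.1.10--11]{lurie.htt}, exactly as in Lemma \ref{lem:comparelimit}), and then runs the absolute comparison. Both arguments turn on the same two pivots — the mapping-space adjunction of Proposition \ref{prop:mapadj}(1) and the homotopy invariance of $\Spanl$ on Kan complexes — and both require the observation you correctly isolate as the delicate point, namely that an equivalence preserves and reflects the (relative) limit condition cutting out the Segal components, so that the $(-)^{l}$ subspaces match up component by component. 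Your detour buys the explicit and reusable identification of the fibers of $\Spanl_T(Y)\to T$ as absolute span categories; the paper's version is shorter because it never leaves the relative setting.
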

\begin{proof}
	Since the objects $Y^{\natural}$ and $Z^{\natural}$ are fibrant objects of $(\mSet)_{/T}$
	equipped with the Cartesian model structure, the map $q$ is a categorical equivalence by
	\cite[3.1.5.3]{lurie.htt}. Hence, by \cite[4.3.1.6]{lurie.htt}, the map $q$ preserves
	relative limits and we obtain a well-defined induced map $\Spanl_T(q): \Spanl_T(Y) \to
	\Spanl_T(Z)$. To show that $\Spanl_T(q)$ is a Cartesian equivalence, it suffices to show
	that, for every object $L \in (\mSet)_{/T}$, the induced map of mapping spaces
	$\Map^{\sharp}_T(L, \Spanl_T(Z^{\natural})) \to \Map^{\sharp}_T(L, \Spanl_T(Y^{\natural}))$
	is a weak equivalence of Kan complexes. Since $\Map^{\sharp}_T(\asd(L), Z^{\natural}) \to
	\Map^{\sharp}_T(\asd(L), Y^{\natural})$ is a weak equivalence, this follows from
	\cite[4.3.1.6]{lurie.htt}, Proposition \ref{prop:mapadj}(1), and Corollary
	\ref{cor:asd-quillen}.
\end{proof}

\begin{thm}\label{thm:span-bi-mon} Let $p: Y \to \N(\Delta)$ be a Segal fibration admitting
	relative pullbacks. Then the
	following assertions hold:
	\begin{enumerate}
		\item The map $\Spanl_{\N(\Delta)}(Y) \to \N(\Delta)$ is a Segal fibration.
		\item Assume $Y \to \ND$ is complete. Then the Segal fibration $\Spanl_{\N(\Delta)}(Y) \to \N(\Delta)$
			is complete.
		\item Assume $Y \to \N(\Delta)$ exhibits a monoidal structure on the
			$\infty$-category $\C = Y_{[0]}$. Then $\Spanl_{\N(\Delta)}(Y) \to \ND$
			exhibits a monoidal structure on the $\infty$-category $\Span(\C)$.
	\end{enumerate}
\end{thm}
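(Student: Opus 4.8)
The plan is to verify the three conditions of Definition \ref{defi:dinftybi} for the map $q := \Spanl_{\ND}(p) \colon \Spanl_{\ND}(Y) \to \ND$, building directly on Theorem \ref{thm:spancartesian} and Proposition \ref{prop:mapadj}. First I would establish condition \ref{dinftybi.1}, that $q$ is a Cartesian fibration: since $p \colon Y \to \ND$ is a Segal fibration it is in particular a Cartesian fibration, and the hypothesis that $p$ admits relative pullbacks is exactly what is needed to apply Theorem \ref{thm:spancartesian} with $T = \ND$. That theorem gives that $\Spanl_{\ND}(Y^\natural)$ is a fibrant object of $(\mSet)_{/\ND}$, which is equivalent to saying that $q$ is a Cartesian fibration whose $q$-Cartesian edges are precisely the marked edges. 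This is the structural backbone of the whole argument.

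Next I would verify the Segal condition \ref{dinftybi.2}. The key is to relate the fibers and the transition functors of $q$ to those of $p$ through the span construction applied levelwise. For this I would use Proposition \ref{prop:mapadj}: for a fixed $[n]$, the fiber $\Spanl_{\ND}(Y)_{[n]}$ is identified, via the functor associated to the Cartesian fibration $q$, with $\Spanl$ applied to the fiber $Y_{[n]}$, and more generally the diagram of $\infty$-categories
\[
\xymatrix@R=0.5ex{
& & \Spanl_{\ND}(Y)_{[n]} \ar[ddd] \ar[dddrr] \ar[dddll] & & \\
& & & & \\
& & & & \\
\Spanl(Y_{\{0,1\}}) \ar[dr] & & \Spanl(Y_{\{1,2\}}) \ar[dl]\ar[dr] & \cdots & \ar[dl] \Spanl(Y_{\{n-1,n\}}) \\
& \Spanl(Y_{\{1\}}) & & \cdots &
}
\]
is obtained from the corresponding Segal diagram for $p$ by applying $\Spanl$. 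Since $p$ is a Segal fibration, the original diagram for $Y$ is a limit diagram in $\Cat_\infty$, and I would invoke Corollary \ref{cor:preserve} together with the fact that $\Spanl$ preserves the relevant homotopy limits (which reduces via $\asd$ and Corollary \ref{cor:asd-product} to the compatibility of $\asd$ with products and the Quillen self-equivalence of Proposition \ref{prop:asd-quillen}) to conclude that the $\Spanl$-diagram is again a limit diagram. Condition \ref{dinftybi.3}, that $\Spanl_{\ND}(Y)_{[0]}$ is a Kan complex, follows because $Y_{[0]}$ is a Kan complex by hypothesis and $\Spanl$ sends Kan complexes to Kan complexes by Corollary \ref{cor:asd-quillen}. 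This establishes part (1).

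For part (2), completeness, I would trace through the definition: $\Spanl_{\ND}(Y) \to \ND$ is complete precisely when the $1$-Segal space $\tau^{\le 1}(\Spanl_{\ND}(Y))$ of Example \ref{ex:tau} is complete. Here the crucial point, supplied by the last assertion of Theorem \ref{thm:spancartesian} in the absolute case (and the Remark following it), is the identification of the $q$-Cartesian edges of $\Spanl_{\ND}(Y)$ with the marked edges, and the identification of the equivalences in each fiber $\Spanl(Y_{\{i\}})$ with those spans both of whose legs are equivalences in $Y_{\{i\}}$. I would show that the underlying $1$-Segal space $\tau^{\le 1}(\Spanl_{\ND}(Y))$ is obtained from $\tau^{\le 1}(Y)$ by applying $\Spanl$ levelwise, and then use that $\Spanl$ preserves the completeness condition because, on classifying spaces of invertible objects, the map $\eta(K)\colon \asd(K) \to K$ is a weak homotopy equivalence (Proposition \ref{prop:asd-id}). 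Part (3) is then immediate: a monoidal structure on $\C = Y_{[0]}$ means exactly that $Y_{[0]}$ is weakly contractible; since $\Spanl$ preserves weak equivalences between Kan complexes and sends the contractible space to a contractible space, the fiber $\Spanl_{\ND}(Y)_{[0]}$ is weakly contractible, so by Example \ref{ex:monoidalcomplete} the Segal fibration $\Spanl_{\ND}(Y) \to \ND$ exhibits a monoidal structure on $\Spanl_{\ND}(Y)_{[1]} \simeq \Spanl(Y_{[1]}) = \Spanl(\C)$.

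The main obstacle I anticipate is part (2), the completeness statement, and more specifically making precise the identification of equivalences in $\Spanl_{\ND}(Y)$ and checking that the completeness isomorphism $\delta$ is preserved. The subtlety is that a span in $\Spanl(Y_{\{i\}})$ represents an equivalence not when the span ``is'' an equivalence in a naive sense but when both legs are equivalences, and one must verify — using the detailed analysis of Segal simplices and $p$-Cartesian edges in the proof of Theorem \ref{thm:spancartesian}, in particular the case analysis in its part (4) — that the full Kan complex of invertible objects in the span category matches the span construction applied to the invertibles of $Y$. Once this identification is in place, the weak contractibility statement transports along $\eta$ by Proposition \ref{prop:asd-id}, but assembling these fiberwise identifications coherently over $\ND$ via $\tau^{\le 1}$ is where the careful bookkeeping lies.
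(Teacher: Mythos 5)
Your overall architecture matches the paper's: condition (S1) via Theorem \ref{thm:spancartesian}, condition (S3) via Proposition \ref{prop:asd-quillen} together with the fact that every simplex of the span construction of a Kan complex is a Segal simplex, part (2) by transporting the completeness equivalence $Y_{[0]}\to (Y_{[1]})^{\on{equiv}}_{\Kan}$ through $\Spanl$ using Corollary \ref{cor:asd-quillen} and the identification of marked edges from Theorem \ref{thm:spancartesian}, and part (3) from contractibility of $\Span(Y_{[0]})$. These steps are essentially what the paper does, and your anticipated difficulty in (2) is resolved exactly by the identification $\Spanl((Y_{[1]})^{\on{equiv}}_{\Kan})\cong(\Spanl(Y_{[1]}))^{\on{equiv}}_{\Kan}$ coming from step (4) of the proof of Theorem \ref{thm:spancartesian}.

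The gap is in your verification of the Segal condition (S2). You reduce it to the assertion that ``$\Spanl$ preserves the relevant homotopy limits,'' i.e.\ that applying $\C\mapsto\Spanl(\C)$ to the limit diagram $Y_{[n]}\simeq Y_{\{0,1\}}\times_{Y_{\{1\}}}\cdots\times_{Y_{\{n-1\}}}Y_{\{n-1,n\}}$ in $\Cat_\infty$ again yields a limit diagram, and you propose to deduce this from Corollary \ref{cor:asd-product} and Proposition \ref{prop:asd-quillen}. Those results concern the \emph{Kan} model structure: they say that $\asd\dashv\Span$ is a Quillen self-equivalence for weak homotopy equivalences and that $\asd$ commutes with products of simplicial sets. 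They say nothing about the functor $\C\mapsto\Spanl(\C)$ on $\infty$-categories preserving limits in $\Cat_\infty$ (which are computed in the Joyal model structure), and $\Spanl$ is far from being an equivalence there; moreover the Segal-simplex condition cutting $\Spanl$ out of $\Span$ must be tracked through the fiber product, which your citations do not address. This is precisely the point the paper's proof is engineered to avoid: by \cite[3.3.3.1]{lurie.htt} the limit condition is rewritten as the statement that restriction along the Segal cone inclusion $L\subset L^{\triangleright}$ induces an equivalence $\Map^{\flat}_{\ND}((L^{\triangleright})^{\sharp},\Spanl_{\ND}(Y^{\natural}))\to\Map^{\flat}_{\ND}(L^{\sharp},\Spanl_{\ND}(Y^{\natural}))$ of $\infty$-categories of Cartesian sections; Proposition \ref{prop:mapadj}(2) then identifies these with $\Spanl$ applied to the corresponding section categories of $Y$ over $\asd(L^{\triangleright})$ and $\asd(L)$, Lemma \ref{lem:asdsharp} replaces $\asd(L)$ by $L$, and Corollary \ref{cor:preserve} is invoked only to know that $\Spanl$ carries an \emph{equivalence} of $\infty$-categories with pullbacks to an equivalence --- a much weaker statement than preservation of limits, and one that is actually proved. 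You should route your argument for (S2) through Proposition \ref{prop:mapadj}(2) in this way rather than through a fiberwise limit-preservation claim.
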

\begin{proof} To show part (1), we have to verify the conditions of Definition \ref{defi:dinftybi}.
	Condition \ref{dinftybi.1} follows immediately from Theorem \ref{thm:spancartesian}. 
	To verify condition \ref{dinftybi.2}, let $n \ge 2$ and denote by $L^{\triangleright}$ the opposite Segal cone $S(\Delta^n)^{\op}$.
	By \cite[3.3.3.1]{lurie.htt} it suffices to show that, for every $n \ge 2$, the map 
	\[
	\Map^{\flat}_{\ND}( (L^{\triangleright})^{\sharp}, \Spanl_{\ND}(Y^{\natural})) \lra \Map^{\flat}_{\ND}(
	L^{\sharp}, \Spanl_{\ND}(Y^{\natural}))
	\]
	is an equivalence of $\infty$-categories. Using Corollary \ref{cor:preserve} and Proposition
	\ref{prop:mapadj}(2), we reduce to the statement that the map 
	\[
	\Map^{\flat}_{\ND}( \asd(L^{\triangleright})^{\sharp}, Y^{\natural}) \lra \Map^{\flat}_{\ND}( \asd(L)^{\sharp}, Y^{\natural}) 
	\]
	is an equivalence of $\infty$-categories. 
	Using Lemma \ref{lem:asdsharp} below, we reduce further to the statement that the map
	\[
	\Map^{\flat}_{\ND}( (L^{\triangleright})^{\sharp}, Y^{\natural}) \lra \Map^{\flat}_{\ND}( L^{\sharp}, Y^{\natural}) 
	\]
	is an equivalence of $\infty$-categories which, again by \cite[3.3.3.1]{lurie.htt}, is
	equivalent to condition \ref{dinftybi.2} for the Segal fibration $Y \to \ND$.
	Condition \ref{dinftybi.3} follows immediately from Proposition \ref{prop:asd-quillen} and Example \ref{exa:spankan}. 

	We show assertion (2). Consider the functor 
	\[
		f: Y_{[0]} \to Y_{[1]}
	\]
	associated to the unique edge $[1] \to [0]$ of $\ND$ via the Cartesian fibration $Y \to \ND$.
	The statement that $Y \to \ND$ is complete, means, by definition, that $f$ induces a weak
	equivalence of Kan complexes
	\[
		Y_{[0]} \lra (Y_{[1]})_{\Kan}^{\on{equiv}}
	\]
	where we use the terminology of \S \ref{subsec:quasicat-1-segal}. Using Corollary
	\ref{cor:asd-quillen}, we obtain a weak equivalence of Kan complexes
	\[
		\Spanl(f): \Spanl(Y_{[0]}) \lra \Spanl((Y_{[1]})_{\Kan}^{\on{equiv}}).
	\]
	Using Theorem \ref{thm:spancartesian}, we can naturally identify the Kan complex
	$\Spanl((Y_{[1]})_{\Kan}^{\on{equiv}})$ with $\Spanl(Y_{[1]})_{\Kan}^{\on{equiv}}$ so that
	$\Spanl(f)$ is the functor associated to the edge $[1] \to [0]$ via the Cartesian fibration
	$\Spanl_{\ND}(Y) \to \ND$. Hence the Segal fibration $\Spanl_{\ND}(Y) \to \ND$ is complete.

	It remains to prove assertion (3). Note that, for a Kan complex $K$, we have weak homotopy equivalences
	\[
	\xymatrix{
	\Spanl(K) & \ar[l]_-{f} \asd(\Spanl(K)) \ar[r]^-{g} & K
	}
	\]
	where $f$ is the weak equivalence from Proposition \ref{prop:asd-id}
	and $g$ is the counit map corresponding to the Quillen equivalence of Proposition
	\ref{prop:asd-quillen}. Thus, if $Y_{[0]}$ is contractible then $\Spanl_T(Y)_{[0]}
	\cong \Span(Y_{[0]})$ is contractible as well.
\end{proof}

\begin{lem}\label{lem:asdsharp} Let $L$ be an object of $(\sSet)_{/T}$ and $Y \to T$ a Cartesian
	fibration of $\infty$-categories. Then the natural map $\asd(L) \to L$ induces an equivalence of
	$\infty$-categories
	\[
	\Map^{\flat}_T(L^{\sharp}, Y^{\natural}) \lra
	\Map^{\flat}_T( \asd(L)^{\sharp}, Y^{\natural}).
	\]
\end{lem}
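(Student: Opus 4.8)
The plan is to deduce the statement from the Cartesian model structure on $(\mSet)_{/T}$ (see \cite[\S 3.1]{lurie.htt}). Since $Y\to T$ is a Cartesian fibration, $Y^{\natural}$ is a fibrant object of $(\mSet)_{/T}$, and the functor $\Map^{\flat}_T(-,Y^{\natural})$ belongs to the $\mSet$-enriched structure: it carries cofibrations to fibrations and trivial cofibrations to trivial fibrations. As every object of $(\mSet)_{/T}$ is cofibrant (the cofibrations are exactly the monomorphisms), Ken Brown's lemma shows that $\Map^{\flat}_T(-,Y^{\natural})$ sends Cartesian equivalences to categorical equivalences of $\infty$-categories. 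Thus the whole problem reduces to the single claim that the natural map $\eta_L^{\sharp}\colon \asd(L)^{\sharp}\to L^{\sharp}$, induced by the transformation $\eta$ of \eqref{eq:eta}, is a Cartesian equivalence in $(\mSet)_{/T}$.

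To prove this claim I would first reduce to the representable case. Both functors $L\mapsto \asd(L)^{\sharp}$ and $L\mapsto L^{\sharp}$ preserve colimits and monomorphisms, and the Cartesian model structure is left proper (being combinatorial with all objects cofibrant). Filtering $L$ by its skeleta and attaching nondegenerate simplices by pushouts along the cofibrations $(\partial\Delta^n)^{\sharp}\hookrightarrow(\Delta^n)^{\sharp}$ and $\asd(\partial\Delta^n)^{\sharp}\hookrightarrow \asd(\Delta^n)^{\sharp}$, the gluing lemma together with an induction on dimension reduces the claim to showing that $\eta_{\Delta^n}^{\sharp}\colon\asd(\Delta^n)^{\sharp}\to(\Delta^n)^{\sharp}$ is a Cartesian equivalence for every $n$ and every structure map $\Delta^n\to T$. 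This is exactly the inductive scheme already used for Proposition \ref{prop:asd-id} and Proposition \ref{prop:asd-quillen}.

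For the representable case I would exhibit $\eta_{\Delta^n}$ as a marked homotopy equivalence over $T$. Using Proposition \ref{prop:asd-mo}, identify $\asd(\Delta^n)$ with $\N(\Mo([n]))$, so that $\eta_{\Delta^n}$ is the nerve of the source functor $\phi\colon\Mo([n])\to[n]$, $(i,j)\mapsto i$. The functor $\psi\colon[n]\to\Mo([n])$, $i\mapsto(i,n)$ satisfies $\phi\circ\psi=\id$ and is left adjoint to $\phi$; the counit provides a natural transformation $\psi\phi\Rightarrow\id$ whose component at $(i,j)$ is the canonical morphism $(i,n)\to(i,j)$ in $\Mo([n])$. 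Passing to nerves, $\psi$ gives a section $s=\N(\psi)$ of $\eta_{\Delta^n}$, and the counit gives a homotopy $H\colon\asd(\Delta^n)\times\Delta^1\to\asd(\Delta^n)$ from $s\circ\eta_{\Delta^n}$ to the identity. The key point is that $\phi$ carries every component of the counit to an identity of $[n]$, so $\eta_{\Delta^n}\circ H$ factors through the projection; hence $H$ is a homotopy over $\Delta^n$, and a fortiori over $T$, for every structure map $\Delta^n\to T$. Because all edges are marked, $H$ and $s$ automatically respect the sharp markings, so $\eta_{\Delta^n}^{\sharp}$ and $s^{\sharp}$ are mutually inverse marked homotopy equivalences over $T$ (the interval $(\Delta^1)^{\sharp}$ being Cartesian-equivalent to a point), and are therefore Cartesian equivalences.

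The main obstacle I anticipate is precisely this representable case: one must produce a section and a homotopy that are simultaneously compatible with the sharp markings and lie over the arbitrary base $T$. The resolution hinges on the observation that the homotopy coming from the adjunction counit is fiberwise over $\Delta^n$ — so that it descends over any $\Delta^n\to T$ — which is what makes the purely combinatorial adjunction $\psi\dashv\phi$ strong enough to yield a Cartesian, and not merely a weak homotopy, equivalence. Beyond this, the only points requiring care are the left properness of the Cartesian model structure and the verification that $(\Delta^1)^{\sharp}$ functions as a cylinder for detecting Cartesian equivalences, both of which are standard consequences of \cite[\S 3.1]{lurie.htt}.
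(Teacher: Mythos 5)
Your proof is correct, and it shares the paper's overall skeleton --- reduction to the representable case $L=\Delta^n$ by a cell-by-cell induction through the enriched Cartesian model structure on $(\mSet)_{/T}$, followed by an argument built on the section $[n]\to\Mo([n])$, $k\mapsto (k,n)$ --- but your treatment of the representable case is genuinely different. The paper stays on the mapping-space side: it observes that every vertex of $\Map^{\flat}_T(\asd(\Delta^n)^{\sharp},Y^{\natural})$ is a $p$-left Kan extension of its restriction along the section $s$ (the point being that the counit edges $(i,n)\to(i,j)$ lie over degenerate edges of $T$ and are sent to Cartesian edges, hence to equivalences), and then invokes \cite[4.3.2.15]{lurie.htt} to conclude that $s^{*}$ is a trivial fibration, finishing by two-out-of-three. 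You instead work on the source side and prove the equivalent statement that $\eta_L^{\sharp}\colon\asd(L)^{\sharp}\to L^{\sharp}$ is a Cartesian equivalence, producing in the representable case an explicit marked homotopy inverse from the adjunction $\psi\dashv\phi$; the crucial point --- that $\phi$ carries the counit to identities, so the homotopy is fiberwise over $\Delta^n$ and hence over any $T$, and that the sharp markings make it automatically a marked homotopy --- is exactly the combinatorial fact the paper exploits, just packaged without the relative Kan extension machinery. Your route is more elementary and self-contained (it needs only that $(\Delta^1)^{\sharp}\to(\Delta^0)^{\sharp}$ is marked anodyne, so $(\Delta^1)^{\sharp}$ is a legitimate cylinder, and that every object of $(\mSet)_{/T}$ is cofibrant), at the cost of spelling out the skeletal induction with left properness that the paper compresses into ``simplex by simplex as in \cite[2.2.2.7]{lurie.htt}.'' Both arguments are sound, and your identification of the fiberwise nature of the counit homotopy as the crux is exactly right.
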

\begin{proof} We argue simplex by simplex as in \cite[2.2.2.7]{lurie.htt} using Remark
	\cite[3.1.4.5]{lurie.htt}. For a simplex $\Delta^n \to T$, we argue as follows. The map
	$\asd(\Delta^n) \to \Delta^n$ admits a section given by the nerve of the functor
	\[
	s: [n] \lra \Mo([n]),\; \{k\} \mapsto \{k,n\}.
	\]
	Note that $\N(s)$ identifies $\Delta^n$ with a full subcategory of $\asd(\Delta^n)$.
	Further, it is easy to see that every vertex of 
	\[
	\Map^{\flat}_T(\asd(\Delta^n)^{\sharp}, Y^{\natural})	
	\]
	is a $p$-left Kan extension of its restriction to $\Delta^n$. Thus, we can apply
	\cite[4.3.2.15]{lurie.htt} to deduce that the restriction map
	\[
	\Map^{\flat}_T(\asd(\Delta^n)^{\sharp}, Y^{\natural}) \overset{s^*}{\lra} \Map^{\flat}_T(
	(\Delta^n)^{\sharp}, Y^{\natural})
	\]
	is a trivial fibration of simplicial sets, in particular, an equivalence of
	$\infty$-categories. The final statement now follows from the $2$-out-of-$3$ property of weak
	equivalences for the Joyal model structure on $\sSet$.
\end{proof}

\vfill\eject

\subsection{Horizontal Spans}
\label{subsec:horizontal}

Let $\C$ be an $\infty$-category which admits pullbacks. In this section, we associate to $\C$ a
complete Segal fibration $\HSpan(\C) \to \N(\Delta)$ which models an $\inftytwo$-category $\B$, referred
to as the {\em $\inftytwo$-category of horizontal spans in $\C$}. Informally, we can describe $\B$ as follows:
\begin{itemize}
	\item The objects of $\B$ are given by objects of $\C$.
	\item A $1$-morphisms between objects $x$ and $y$ of $\B$ is given by a span diagram $x
		\leftarrow z \to y$ in $\C$. Composition of $1$-morphisms is given by forming
		pullbacks.
	\item A $2$-morphism between $1$-morphisms $x \leftarrow z \to y$ and $x \leftarrow z' \to
		y$ is given by a diagram
		\[
		\xymatrix@=1pc{
		&\ar[dl]\ar[dr] z\ar[dd] & \\
		x &  & y \\
		&\ar[ul]\ar[ur] z' & }
		\]
		in $\C$.
	\item The higher morphisms are given by spans, in which both edges are equivalences, of
	  spans of spans of \dots in $\C$.
\end{itemize}

\begin{defi} We define a category $\Deltac$ as follows. 
\begin{itemize}
	\item The objects of $\Deltac$ are given by pairs $([n], \{i,j\})$, where $[n]$ is a finite
		nonempty ordinal and $0 \le i \le j \le n$.
	\item A morphism between objects $([n], \{i,j\})$ and $([m], \{i',j'\})$ is given by a
		morphism $f: [n] \to [m]$ of underlying ordinals such that $f(i) \le i' \le j' \le
		f(j)$.
\end{itemize}		
\end{defi}
The forgetful functor $\Deltac \to \Delta$ is a Grothendieck opfibration which
implies that the induced functor $\pi: \N(\Deltac) \to \N(\Delta)$ is a coCartesian fibration
of $\infty$-categories. 

\begin{rem} The functor 
	\[
		P^{\bullet}: \Delta \lra \Cat, \;[n] \mapsto I_{[n]}^{\op}
	\]
	defined in \eqref{eq:P} induces a functor $\N(P^{\bullet}): \N(\Delta) \to
	\Cat_{\infty}$. This functor classifies the coCartesian fibration $\pi$ in the sense of
	\cite[3.3.2]{lurie.htt}. In other words, the functor $\pi$ is obtained from $P^{\bullet}$
	via a Grothendieck construction. In comparison, the coCartesian fibration
	$\N(\Delta^{\times}) \to \ND$ from \S \ref{subsec:higherhall} corresponds, via the
	Grothendieck construction, to the functor
	\[
		\Delta \lra \Cat, \;[n] \mapsto I_{[n]}.
	\]
\end{rem}

\begin{rem} The nomenclature for $\Deltac$ is chosen to be compatible with 
	\cite[1.2.8]{lurie.noncommutative} where the Cartesian monoidal structure on an
	$\infty$-category $\C$ with products is constructed. Given an $\infty$-category $\C$ with {\em
	coproducts}, we can construct the {\em coCartesian} monoidal structure along the lines of loc.
	cit, by using the Cartesian fibration $\N(\Delta^{\! \displaystyle \times})^{\op} \to \N(\Delta)^{\op}$
	instead of the Cartesian fibration $\pi^{\op}: \N(\Deltac)^{\op} \to \N(\Delta)^{\op}$.
	This will result in a coSegal fibration $\C^{\!\, \displaystyle \amalg} \to \N(\Delta)^{\op}$ exhibiting
	the coCartesian monoidal structure on $\C$. 
\end{rem}

Let $Y \to \N(\Deltac)$ be a map of simplicial sets. We define a map $\pi_*Y \to \N(\Delta)$
characterized by the universal property
\[
\Hom_{\N(\Delta)}(K, \pi_*Y) \cong \Hom_{\N(\Deltac)}(K \times_{\N(\Delta)} \N(\Deltac), Y).
\]
For an $\infty$-category $\C$, we introduce the notation $\HSpanp(\C) := \pi_*(\N(\Deltac) \times
\C)$.

\begin{defi}\label{defi:hspan}
Let $\C$ be an $\infty$-category and consider the map $p: \HSpanp(\C) \to \N(\Delta)$. 
By the characterizing property of $p$, for every ordinal $[n]$, the fiber $\HSpanp(\C)_{[n]}$ can be identified 
with the $\infty$-category of functors $\Fun(\asd(\Delta^n),\C)$. 
\begin{enumerate}[label=(\Alph*)]
	\item \label{l.vertex} We call a vertex of $\HSpanp(\C)$
	{\em admissible} if the corresponding functor $F: \asd(\Delta^n) \to \C$ 
	satisfies the following condition: 
	\begin{itemize}
		\item For every subsimplex $\Delta^k \to \Delta^n$, with $k \ge 2$, the
			corresponding Segal cone (Definition \ref{defi:segalcone}) given by the
			composite $S(\Delta^k) \to \asd(\Delta^n) \overset{F}{\to} \C$ is a limit diagram in $\C$.
	\end{itemize}
	\item \label{l.edge} An edge $e: F \to G$ of $\HSpanp(\C)$, which lies over an edge $f: [n] \to [m]$, is called {\em admissible} if 
	it satisfies the following condition:
	\begin{itemize} 
		\item For every $0 \le i \le n$, the edge $F(\{i\}) \to G(\{f(i)\})$ in $\C$
			induced by $e$ is an equivalence.
	\end{itemize}
\end{enumerate}
Using this terminology, we define $\HSpan(\C) \subset \HSpanp(\C)$ to be the largest simplicial
subset such that every vertex and every edge is admissible.
\end{defi}

\begin{thm}\label{thm:horispan} Let $\C$ be an $\infty$-category. Then the following hold:
	\begin{enumerate} 
		\item The map $\HSpanp(\C) \to \N(\Delta)$ is a Cartesian fibration.
		\item Assume that $\C$ admits pullbacks. Then the map $\HSpan(\C) \to \N(\Delta)$ is
			a complete Segal fibration which admits relative pullbacks.
	\end{enumerate}
\end{thm}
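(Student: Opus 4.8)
The plan is to verify the two assertions separately, following the pattern already established for the vertical span construction in Theorem \ref{thm:spancartesian} and for Segal fibrations in Theorem \ref{thm:span-bi-mon}. For assertion (1), that $\HSpanp(\C) \to \ND$ is a Cartesian fibration, the key observation is that $\pi: \N(\Deltac) \to \ND$ is a coCartesian fibration, so $p: \HSpanp(\C) = \pi_*(\N(\Deltac) \times \C) \to \ND$ is a relative right adjoint construction; I would invoke the dual of \cite[3.2.2.12]{lurie.htt} (which was used in \S \ref{subsec:higherhall} for the analogous $\N(\Delta^{\times})$ construction) to identify $p$ as a Cartesian fibration and to characterize the $p$-Cartesian edges as those whose associated maps $F(\{f(i),\dots,f(j)\}) \to G(\{i,\dots,j\})$ are equivalences for all intervals. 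This step is essentially formal.

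For assertion (2), the plan is to show $\HSpan(\C) \to \ND$ is a complete Segal fibration admitting relative pullbacks by checking the three conditions of Definition \ref{defi:dinftybi}. First I would verify \ref{dinftybi.1}, that $\HSpan(\C) \to \ND$ is a Cartesian fibration: since $\HSpan(\C) \subset \HSpanp(\C)$ is a full simplicial subset, $p$ is automatically an inner fibration, so the work is to lift an edge $f: [n] \to [m]$ of $\ND$ with prescribed terminal vertex $G \in \HSpan(\C)_{[m]}$ to an admissible $p$-Cartesian edge. I would take the $p$-Cartesian lift $e: F \to G$ in $\HSpanp(\C)$ furnished by part (1) and verify that $F$ is admissible, i.e.\ that the Segal cones of $F$ are limit diagrams. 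This uses the compatibility between the Segal cones of $F$ and those of $G$ along $e$ together with the assumption that $\C$ admits pullbacks, exactly as in the argument for \eqref{eq:supersquare} in Theorem \ref{thm:2segalcosegal}; the admissibility of edges is stable in the required sense because equivalences compose and cancel. Condition \ref{dinftybi.3}, that $\HSpan(\C)_{[0]} = \Fun(\asd(\Delta^0),\C)_{\on{adm}}$ is a Kan complex, follows since $\asd(\Delta^0)$ is a point, so $\HSpan(\C)_{[0]}$ is the maximal Kan complex in $\C$.

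The main obstacle will be condition \ref{dinftybi.2}, the Segal limit condition expressing $\HSpan(\C)_{[n]}$ as the iterated fiber product $\HSpan(\C)_{\{0,1\}} \times_{\HSpan(\C)_{\{1\}}} \cdots \times_{\HSpan(\C)_{\{n-1\}}} \HSpan(\C)_{\{n-1,n\}}$ in $\Cat_{\infty}$. Here I would use the characterizing property of $p$ to identify $\HSpan(\C)_{[n]}$ with the $\infty$-category of admissible functors $\asd(\Delta^n) \to \C$, and then argue, as in the proof of Theorem \ref{thm:2segalcosegal} via \cite[4.3.2.13]{lurie.htt} and \cite[4.3.2.15]{lurie.htt}, that an admissible functor on $\asd(\Delta^n)$ is a right Kan extension of its restriction to the length-$\le 1$ intervals, so that the restriction functor induces the desired equivalence onto the iterated fiber product. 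The fact that these fiber products are genuine homotopy limits in $\Cat_{\infty}$ follows because the face functors $\HSpan(\C)_{\{i,i+1\}} \to \HSpan(\C)_{\{i\}}$ are coCartesian fibrations, hence categorical fibrations, exactly as in the final paragraph of the proof of Theorem \ref{thm:2segalcosegal}. Finally, for completeness of the Segal fibration I would show the $1$-Segal space $\tau^{\le 1}(\HSpan(\C))$ of Example \ref{ex:tau} is complete: its associated $\inftyone$-category discards noninvertible $2$-morphisms, and I expect the completeness to reduce to the statement that an admissible edge over the degeneracy $[1] \to [0]$ which is $p$-Cartesian and an equivalence corresponds precisely to a degenerate span, which follows from the explicit description of $p$-Cartesian edges obtained in part (1). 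The existence of relative pullbacks is then verified directly, since pullbacks in $\HSpan(\C)$ over a fixed object of $\ND$ are computed fiberwise in $\Fun(\asd(\Delta^n),\C)$, where they exist because $\C$ has pullbacks and admissibility is preserved.
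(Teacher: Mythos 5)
Your proposal follows essentially the same route as the paper's proof: assertion (1) is the formal dual of the HTT statement on functor categories over a (co)Cartesian fibration, and for (2) the paper likewise checks that admissibility is preserved under the associated functors, establishes the Segal condition by exhibiting admissible vertices as right Kan extensions along $\asd(\J^n)\subset\asd(\Delta^n)$ via \cite[4.3.2.15]{lurie.htt} with the face maps being categorical fibrations, computes relative pullbacks fiberwise, and verifies completeness. The only cosmetic difference is the completeness step, where the paper argues more directly that the comparison map is pullback along the constant map $\asd(\Delta^1)\to\Delta^0$ and invokes weak contractibility of $\asd(\Delta^1)$, rather than analyzing invertible spans.
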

\begin{proof}
	Assertion (1) follows from the dual statement of \cite[3.2.2.13]{lurie.htt}. We show (2). 
	First note that $\HSpan(\C) \to \N(\Delta)$ is a Cartesian fibration:
	Condition \ref{l.vertex} is preserved under the functors associated to the Cartesian
	fibration $\HSpanp(\C) \to \N(\Delta)$ and condition \ref{l.edge} is immediately checked to
	be compatible with the respective lifting problems. We let $Y = \HSpan(\C)$.
	To verify condition \ref{dinftybi.2} of Definition \ref{defi:dinftybi}, we have to show that,
	for every $n \ge 2$, the Segal cone diagram in $\Cat_\infty$
	\begin{equation}\label{eq:segalcone1}
		\xymatrix{ &&&Y_{[n]} \ar[dlll] \ar[dll] \ar[dl] \ar[d] \ar[drr] \ar[drrr]&  &&\\
		Y_{\{0\}} & \ar[l] Y_{\{0,1\}} \ar[r] & Y_{\{1\}} & \ar[l] Y_{\{1,2\}} & \dots & Y_{\{n-1,n\}} \ar[r]& Y_{\{n\}} 
		}
	\end{equation}
	classifying the Cartesian fibration $Y \times_{\N(\Delta)} S(\Delta^n)^{\op}
	\to S(\Delta^n)^{\op}$ is a limit diagram in $\Cat_{\infty}$. Recall the notation
	\[
	\J^n = \Delta^{\{0,1\}} \coprod_{\{1\}} \dots \coprod_{\{n-1\}} \Delta^{\{n-1,n\}} \subset
	\Delta^n.
	\]
	Consider the inclusion
	$j: \asd(\J^n) \subset \asd(\Delta^n)$ and the corresponding restriction functor
	\[
	j^*: \Fun(\asd(\Delta^n), \C) \lra \Fun(\asd(\J^n), \C).
	\]
	Let $\D \subset \Fun(\asd(\Delta^n), \C)$ denote the full subcategory spanned by the vertices
	satisfying condition \ref{l.vertex}. A vertex $F$ of $\Fun(\asd(\Delta^n),\C)$ lies in
	$\D$ if and
	only if it is a right Kan extension of its restriction $F|\asd(\J^n)$. On the other hand,
	since $\C$ admits pullbacks, we deduce from Proposition \ref{prop:pullsegal} that every vertex of $\Fun(\asd(\J^n), \C)$ admits a right Kan
	extension along $j$. By \cite[4.3.2.15]{lurie.htt}, the induced map
	\[
		\D \lra \Fun(\asd(\J^n), \C)
	\]
	is a trivial fibration of simplicial sets. Further, we have a pullback diagram of
	simplicial sets
	\[
	\xymatrix{
		Y_{[n]} \ar[r]\ar[d] & \ar[d] \D\\
	{ Y_{\{0,1\}} \times_{Y_{\{1\}}} Y_{\{1,2\}} \times \dots \times_{Y_{\{n-1\}}} Y_{\{n-1,n\}}
	}\ar[r] & \Fun(\asd(\J^n), \C)
	}
	\]
	which shows that the restriction functor $j^*$ induces a trivial fibration 
	\[
	f: Y_{[n]} \lra Y_{\{0,1\}} \times_{Y_{\{1\}}}
	Y_{\{1,2\}} \times \dots \times_{Y_{\{n-1\}}} Y_{\{n-1,n\}}.
	\]
	The equivalence $f$ of $\infty$-categories induces an equivalence between the Segal
	cone \eqref{eq:segalcone1} and the Segal cone
	\begin{equation}\label{eq:segalcone2}
		\xymatrix@C=1pc{ &&& Y_{\{0,1\}} \times_{Y_{\{1\}}} Y_{\{1,2\}} \times \dots \times_{Y_{\{n-1\}}} Y_{\{n-1,n\}}\ar[dlll] \ar[dll] \ar[dl] \ar[d] \ar[drr] \ar[drrr]&  &&\\
		Y_{\{0\}} & \ar[l] Y_{\{0,1\}} \ar[r] & Y_{\{1\}}  &\ar[l] Y_{\{1,2\}} & \dots &
		Y_{\{n-1,n\}} \ar[r]& Y_{\{n\}}. 
		}
	\end{equation}
	Hence it suffices to show that \eqref{eq:segalcone2} is a limit cone. This is equivalent
	to the statement that the ordinary fiber product of simplicial sets 
	\[
	Y_{\{0,1\}} \times_{Y_{\{1\}}} Y_{\{1,2\}} \times \dots \times_{Y_{\{n-1\}}} Y_{\{n-1,n\}}
	\]
	is a homotopy fiber product with respect to the Joyal model structure on $\sSet$ (cf.
	\cite[4.2.4.1]{lurie.htt}). To prove
	this, it suffices to show that, for every $0 \le i \le n-1$, the functors $Y_{\{i,i+1\}}
	\to Y_{\{i\}}$ and $Y_{\{i,i+1\}} \to Y_{\{i+1\}}$ are categorical fibrations which follows
	immediately from \cite[2.4.7.12]{lurie.htt}.
	Finally, it is clear that condition \ref{dinftybi.3} of Definition \ref{defi:dinftybi} is
	satisfied in virtue of condition \ref{l.edge}.

	To show that $q: \HSpan(\C) \to \ND$ admits relative pullbacks,
	consider the simplicial set
	\[
	K = \Delta^1 \coprod_{\{1\}} \Delta^1
	\]
	so that $K$-indexed limit diagrams are pullback diagrams. We will apply
	\cite[4.3.1.11]{lurie.htt} to show that $q$ admits relative
	pullbacks, i.e. $K$-indexed $q$-limits. We first show that, for every $n \ge 0$, the
	$\infty$-category $Y_{[n]}$ admits $K$-indexed limits. As above, let $\D \subset \Fun(\asd(\Delta^n),
	\C)$ denote the full subcategory spanned by those vertices satisfying condition
	\ref{l.vertex} from Definition \ref{defi:hspan}. As above, we consider the 
	adjunction of $\infty$-categories
	\[
	j^*: \Fun(\asd(\Delta^n),\C) \longleftrightarrow \Fun(\asd(\J^n), \C): j_*
	\]
	where the right Kan extension functor $j_*$ has essential image $\D$. By
	\cite[5.1.2.3]{lurie.htt}, we conclude that the $\infty$-category $\Fun(\asd(\J^n), \C)$ and
	hence the equivalent $\infty$-category $\D$ admits $K$-indexed limits. Further, $j_*$ is a
	right adjoint which, by \cite[5.2.3.5]{lurie.htt}, preserves limits. Thus, using
	\cite[5.1.2.3]{lurie.htt}(2), we deduce that
	\begin{enumerate}
		\item The $\infty$-category $\D$ admits $K$-indexed limits.
		\item A diagram $K^{\triangleleft} \to \D \subset \Fun(\asd(\Delta^n), \C)$ is a
			limit diagram if and only if, for every vertex of $\asd(\Delta^n)$, the induced
			diagram $K^{\triangleleft} \to \C$ is a limit diagram.
	\end{enumerate}
	Next, we show that the $\infty$-category $Y_{[n]}$ admits $K$-indexed limits and, further,
	the inclusion $i: Y_{[n]} \subset \D$ preserves $K$-indexed limits. Consider $f: K \to Y_{[n]}$ and
	let $K^{\triangleleft} \to \D$ be a limit diagram extending $i \circ f: K \to
	\D$. Then it is
	easy to verify that the limit diagram $K^{\triangleleft} \to \D$ factors through
	$Y_{[n]}$ and is a limit diagram in $Y_{[n]}$. This shows that the $\infty$-category
	$Y_{[n]}$ admits $K$-indexed limits. Further, these limits can be calculated pointwise in $\Fun(\asd(\Delta^n), \C)$.
	To apply \cite[4.3.1.11]{lurie.htt} it remains to verify that the functors associated to the
	Cartesian fibration $q: Y \to \ND$ preserve $K$-indexed limits in the fibers of $q$. But this follows directly from
	the fact established above that, for every $n \ge 0$, $K$-indexed limits in $Y_{[n]}$ can be
	computed pointwise.

	It remains to show that the Segal fibration $Y \to \ND$ is complete. To this end, we have to
	verify that the functor of Kan complexes
	\[
		Y_{[0]} \lra (Y_{[1]})^{\on{equiv}}_{\Kan}
	\] 
	associated to the edge $[1] \to [0]$ of $\ND$ via the Cartesian fibration $Y \to \ND$, is a
	weak equivalence. This map can be explicitly identified with the functor
	\[
		\Fun(\Delta^0, \C_{\Kan}) \lra \Fun(\asd(\Delta^1), \C_{\Kan})
	\]
	obtained by pullback along the constant map $\asd(\Delta^1) \to \Delta^0$. Since
	$\asd(\Delta^1)$ is weakly contractible, this latter map is a weak homotopy equivalence,
	implying our claim.
\end{proof}

Let $\C$ be an $\infty$-category with finite limits. The complete Segal fibration $\HSpan(\C) \to
\ND$ models an $\inftytwo$-category $\B$ which we call the {\em $\inftytwo$-category of horizontal
spans in $\C$}.  Let $\pt$ denote a final object of $\C$, then the $\inftyone$-category
$\Map_{\B}(\pt, \pt)$ carries a natural monoidal structure given by composition of $1$-morphisms. In
fact, the $\inftyone$-category $\Map_{\B}(\pt, \pt)$ is equivalent to $\C$ itself, and the monoidal
structure is the Cartesian monoidal structure on $\C$. This can be seen in the language of Segal
fibrations as follows. Consider the full simplicial subset $\C^{\times} \subset \HSpan(\C)$ spanned
by those vertices such that the corresponding functor $F: \asd(\Delta^n) \to \C$ satisfies the
following condition:
\begin{itemize}
	\item For $0 \le i \le n$, the vertex $F(\{i\})$ of $\C$ is a final object. 
\end{itemize}
With this notation, we have the following statement.

\begin{prop}\label{prop:Cartesianmonoidal} Let $\C$ be an $\infty$-category with finite limits. 
	The map $\C^{\times} \to \ND$, obtained by restricting the fibration $\HSpan(\C) \to \ND$,
	is a complete Segal fibration with contractible $[0]$-fiber. It exhibits the Cartesian
	monoidal structure on the $\infty$-category $\C$.
\end{prop}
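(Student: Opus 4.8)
The plan is to verify the three assertions of Proposition \ref{prop:Cartesianmonoidal} by reducing them to the corresponding statements already established for $\HSpan(\C)$ in Theorem \ref{thm:horispan}, together with an identification of $\C^{\times}$ with the Cartesian monoidal structure as constructed in \cite[1.2.8]{lurie.noncommutative}. First I would observe that $\C^{\times} \subset \HSpan(\C)$ is a full simplicial subset, so $\C^{\times} \to \ND$ is automatically an inner fibration. To see that it is a Cartesian fibration, I would check that the defining condition on $\C^{\times}$ (all vertices $F(\{i\})$ are final objects of $\C$) is preserved by the functors associated to the Cartesian fibration $\HSpan(\C) \to \ND$: given an admissible $q$-Cartesian edge $F \to G$ lying over $f:[n]\to[m]$, condition \ref{l.edge} forces each $F(\{i\}) \to G(\{f(i)\})$ to be an equivalence, and since $\C^{\times}$-membership of $G$ means each $G(\{j\})$ is final, finality is transported back to each $F(\{i\})$ because equivalences preserve final objects. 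Hence $q$-Cartesian edges between objects of $\C^{\times}$ stay in $\C^{\times}$, giving the lifting property.

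Next I would establish that the Segal condition \ref{dinftybi.2} restricts to $\C^{\times}$. The key point is that the trivial fibration
\[
	Y_{[n]} \lra Y_{\{0,1\}} \times_{Y_{\{1\}}} Y_{\{1,2\}} \times \dots \times_{Y_{\{n-1\}}} Y_{\{n-1,n\}}
\]
constructed in the proof of Theorem \ref{thm:horispan} via right Kan extension along $\asd(\J^n) \subset \asd(\Delta^n)$ carries the full subcategories cut out by the finality condition to one another: a functor $F: \asd(\Delta^n) \to \C$ which is a right Kan extension of $F|\asd(\J^n)$ has all its vertices $F(\{i\})$ final precisely when the vertices appearing in the restriction are final, since the $0$-dimensional intervals $\{i\}$ all lie in $\asd(\J^n)$. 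Therefore the restriction functor induces an equivalence between $\C^{\times}_{[n]}$ and the corresponding iterated fiber product of the $\C^{\times}_{\{i,i+1\}}$, and the same homotopy-fiber-product argument (using \cite[2.4.7.12]{lurie.htt} to see the face maps are categorical fibrations) shows that the Segal cone diagram for $\C^{\times}$ is a limit diagram in $\Cat_{\infty}$. Condition \ref{dinftybi.3} is inherited from $\HSpan(\C)$, and the $[0]$-fiber $\C^{\times}_{[0]}$ is the $\infty$-groupoid of final objects of $\C$, which is contractible since final objects form a contractible Kan complex. Completeness then follows either by the same weak-contractibility-of-$\asd(\Delta^1)$ argument as in Theorem \ref{thm:horispan}, or by noting that a complete Segal fibration with contractible $[0]$-fiber is automatically complete in the sense relevant to Example \ref{ex:monoidalcomplete}.

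The main obstacle, and the step requiring the most care, is the identification of the resulting monoidal structure on $\C^{\times}_{[1]} \simeq \C$ with the \emph{Cartesian} monoidal structure. Here I would argue as follows: the fiber $\C^{\times}_{[1]}$ consists of functors $F: \asd(\Delta^1) \to \C$ of the form $x \leftarrow z \to \pt$ with $\pt$ final, and since the map to a final object carries no information, $\C^{\times}_{[1]}$ is equivalent to $\C$ via $F \mapsto F(\{0,1\})$. The tensor product functor $\C^{\times}_{[1]} \times \C^{\times}_{[1]} \to \C^{\times}_{[1]}$ associated to the active map $[1]\to[2]$ sends a pair $(x,y)$ to the limit vertex $x_{\{0,2\}}$ of the Segal cone over $\asd(\Delta^2)$, all of whose outer vertices are final; by condition \ref{l.vertex} this limit vertex is the pullback $x \times_{\pt} y \simeq x \times y$, which is exactly the Cartesian product. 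To make this precise at the level of the full simplicial structure rather than just on homotopy categories, I would compare $\C^{\times}$ directly with the coCartesian-fibration model $\N(\Deltac) \times_{\ND}(\text{--})$ underlying the construction in \cite[1.2.8]{lurie.noncommutative}, using the remark in the excerpt that $\pi: \N(\Deltac) \to \ND$ is the Grothendieck construction of $[n] \mapsto I_{[n]}^{\op}$; the finality condition precisely enforces that the relevant diagrams compute products, matching Lurie's Cartesian monoidal structure. I expect the bookkeeping to consist of checking that the right Kan extension description of admissible vertices coincides with Lurie's ``relative product'' condition, which is a matter of unwinding definitions rather than new ideas.
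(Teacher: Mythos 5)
The paper states Proposition \ref{prop:Cartesianmonoidal} without proof, treating it as a direct consequence of Theorem \ref{thm:horispan}; your argument supplies exactly the details the paper intends, and it is correct. The key observations — that the finality condition on singletons is preserved by the Cartesian transport functors and is detected on the restriction to $\asd(\J^n)$ (so the trivial fibration onto the iterated fiber product restricts), that the $[0]$-fiber is the contractible Kan complex of final objects, and that condition \ref{l.vertex} together with finality of the $F(\{i\})$ turns the Segal-cone limits into iterated products, matching Lurie's construction — are all the right ones. One caveat: your second route to completeness, ``a complete Segal fibration with contractible $[0]$-fiber is automatically complete,'' is circular as written, and the statement you presumably intended (dropping the first ``complete'') is false in general — by Example \ref{ex:monoidalcomplete}, completeness of a monoidal Segal fibration is equivalent to contractibility of the space of $\otimes$-invertible objects, not of the $[0]$-fiber. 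Your first route is fine, and in any case for the Cartesian structure the invertible objects are precisely the final objects (if $x\times y$ is final then $\Map(z,x)$ is contractible for every $z$), so the space in question is contractible and completeness does hold.
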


\vfill\eject

\subsection{Bispans}
\label{subsec:bispans}

Let $\C$ be an $\infty$-category admitting pullbacks. We introduce the simplicial set
\[
	\bSpan(\C) := \Spanl_{\N(\Delta)}(\HSpan(\C))
\]
which, by Theorem \ref{thm:span-bi-mon} and Theorem \ref{thm:horispan}, comes equipped with a
complete Segal fibration
\[
	q: \bSpan(\C) \lra \ND.
\]
We refer to the $\inftytwo$-category $\B$ modelled by $q$ as the {$\inftytwo$-category of bispans in
$\C$.} We give an informal description of $\B$ allowing for direct comparison with the descriptions
of vertical and horizontal spans. 
\begin{itemize}
	\item The objects of $\bSpan(\C)$ are given by objects of $\C$.
	\item A $1$-morphisms between objects $x$ and $y$ of $\bSpan(\C)$ is given by a span diagram $x
		\leftarrow z \to y$ in $\C$. Composition of $1$-morphisms is given by forming
		pullbacks (hence we require the existence of limits).
	\item A $2$-morphism between $1$-morphisms $x \leftarrow z \to y$ and $x \leftarrow z' \to
		y$ is given by a diagram
		\[
		\xymatrix{
		&\ar[dl]\ar[dr] z & \\
		x & \ar[l] z'' \ar[r] \ar[u]\ar[d] & y \\
		&\ar[ul]\ar[ur] z' & }
		\]
		in $\C$.
	\item The higher morphisms are given by spans, in which both edges are equivalences, of
	  spans of spans of \dots in $\C$.
\end{itemize}

Assume $\C$ admits finite limits and consider the Segal fibration $\C^{\times} \to \ND$ from Proposition
\ref{prop:Cartesianmonoidal}. By Theorem \ref{thm:span-bi-mon}, the Segal fibration $\Spanl_{\ND}(\C^{\times}) \to \ND$
exhibits a monoidal structure on the $\infty$-category $\Span(\C)$ which we call the {\em pointwise
Cartesian monoidal structure on $\Span(\C)$.}

\vfill\eject

\section{2-Segal spaces as monads in bispans}
\label{section:higher-bicat}

We show how $2$-Segal spaces can be naturally interpreted in the context of the
$\inftytwo$-categorical theory of spans developed in \S \ref{sec:spans}. More precisely, we will
functorially associate to a unital $2$-Segal space $X$ a monad in the $\inftytwo$-category of
bispans in spaces, called {\em higher Hall monad of $X$}.

\subsection{The Higher Hall monad}

In this section, we construct a functor which assigns to a unital $2$-Segal space $X$ a {\em monad} in the $\inftytwo$-category of
bispans in the $\infty$-category $\inftyS$ of spaces. When considering $2$-Segal spaces with
contractible space of $0$-simplices, this construction can be simplified to obtain an {\em algebra
object} in the $\infty$-category $\Spanl(\inftyS)$ equipped with the pointwise Cartesian monoidal
structure.
In the context of Segal fibrations, monads and algebra objects can be defined as follows (cf. \cite{lurie.noncommutative}).

\begin{defi} Let $p: Y \to \NDop$ be a coSegal fibration, and let $\NDop \to \NDop$ be the coSegal fibration
	given by the identity map. A {\em monad in $Y$} is defined to be a right lax
	functor $s: \NDop \to Y$, i.e., a section
	\[
	\xymatrix{
		 Y \ar[r]_p & \ar@/_1pc/[l]_s \NDop
	}
	\]
	which maps convex edges in $\NDop$ to $p$-coCartesian edges in $Y$. We also say that $s$ defines
	a {\em monad in the $\inftytwo$-category modeled by $p$.}
	Let $Y \to \NDop$ be a Segal fibration with contractible $[0]$-fiber which, hence, exhibits a monoidal structure on the
	$\infty$-category $\C = Y_{[1]}$. In this situation, a monad in $Y$ is called an {\em
	algebra object in $\C$}.
	Dually, given a Segal fibration $Z \to \ND$, a left lax functor $\ND \to Z$ is called a {\em
	comonad in $Z$} or, if $Z_{[0]}$ is contractible, a {\em coalgebra object in $Z_{[1]}$}.
\end{defi}

Informally, a monad in an $\inftytwo$-category $\B$, modelled by a coSegal fibration $p: Y \to
\NDop$ corresponds, corresponds to the following data:
\begin{itemize}
	\item an object $x$ of $\B$, 
	\item a $1$-morphism $F: x \to x$, 
	\item a coherently associative collection of $2$-morphisms
		\[
			F^n = F \circ F \circ \dots \circ F \lra F
		\]
		where $n \ge 0$.
\end{itemize}
In terms of these data, we can describe the higher Hall monad in the
$\inftytwo$-category $\B$ of bispans in spaces, corresponding to a unital $2$-Segal space $X$, as follows:
\begin{itemize}
	\item the object of $\B$ is the space $X_0$,
	\item the $1$-morphism $F$ is given by the span 
		\[
		      \xymatrix@R=1pc{
			      X_0 &\ar[l]_{\partial_1} X_1 \ar[r]^{\partial_0} & X_0,
		      }
		\]
	\item for every $n \ge 2$, we consider the natural $2$-morphism in $\B$ given by the span
		\[
		      \xymatrix@R=1pc{
			      F^n \simeq X_1 \times_{X_0} X_1 \times_{X_0} \dots \times_{X_0} X_1 &\ar[l] X_n \ar[r] & X_1.
		      }
		\]
\end{itemize}
The $2$-Segal conditions satisfied by $X$ are responsible for the fact that this data is coherently
associative. This statement is made precise in Theorem \ref{thm:ax}. 

\begin{rem} The notion of a monad defined above is a lax variant of the classical concept of a monad which is
	typically studied in the context of the strict $2$-category $\Cat$ of categories: A
	classical monad in $\Cat$ corresponds to the data of
	\begin{itemize}
		\item a category $\C$,
		\item an endofunctor $F: \C \to \C$, 
		\item natural transformations $\mu: F \circ F \to F$ and $\eta: \id_{\C} \to F$,
	\end{itemize}
	such that the diagrams of natural transformations
	\[
		\xymatrix{
			F \circ F \circ F \ar[r]^-{F\mu}\ar[d]_-{\mu F} & F \circ F
			\ar[d]^-{\mu} \\
			F \circ F \ar[r]^-{\mu} & F 
		} \quad \quad \quad
		\xymatrix{
			F \ar[r]^-{F\eta}\ar[dr]_-{\id_F} & F \circ F\ar[d]^-{\mu} & F \ar[dl]^-{\id_F}
			\ar[l]_-{\eta F}\\
			& F & 
		}
	\]
	commute (cf. \cite{street-monads}). 
\end{rem}

\begin{rem} The structure of a multivalued category defined in \S \ref{subsec:mult-cat} can be regarded as a
$(3,2)$-categorical variant of the notion of a monad considered here.
\end{rem}

The following construction lies at the heart of what follows: 
\begin{defi}
We define a functor
\[
\Past: \Mo(\Delta) \times_\Delta \Deltac \to \sSet^{\op}
\]
by associating to an object $([m] \overset{f}{\to} [n],
([m], \{i,j\}))$ the simplicial set
\[
\Delta^{\{f(i),\dots,f(i+1)\}} \coprod_{\{f(i+1)\}} \Delta^{\{f(i+1),\dots,f(i+2)\}}
\coprod_{\{f(i+2)\}} \dots \coprod_{\{f(j-1)\}} \Delta^{\{f(j-1),\dots,f(j)\}} \subset
\Delta^n.
\]
\end{defi}


\begin{rem} Note that, using Proposition \ref{prop:asd-mo}, the nerve of the functor $\Past$ provides a
	map
	\[
	\N(\Past): \asd(\N(\Delta)) \times_{\N(\Delta)} \N(\Deltac) \to \N(\sSet)^{\op}.
	\]
\end{rem}

Let $\mC$ be a simplicial combinatorial model category $\mC$ in which every object is cofibrant. 
For a small category $I$, we equip the functor category $\Fun(I, \mC)$ with the injective
model structure. We denote by $\Fun(I,\mC)^{\circ} \subset \Fun(I,\mC)$ the full simplicial
subcategory of injectively fibrant objects. Recall the Yoneda extension functor
\[
\Upsilon_*: \Fun(\Dop, \mC) \lra \Fun( \sSet^{\op}, \mC) 
\]
from \S \ref{subsec:yonedaext}, defined as the right adjoint of the pullback functor along the
Yoneda embedding $\Dop \to \sSet^{\op}$. The functor $\Upsilon_*$ is a right Quillen functor with
respect to the injective model structures on both functor categories, in particular it preserves injectively
fibrant objects. We obtain a functor of simplicial categories by forming the composite
\[
\Past_{\bullet}: \Mo(\Delta) \times_\Delta \Deltac \times \Fun(\Dop, \mC)^{\circ}
\overset{(\Past,\Upsilon_*)}{\lra} \sSet^{\op} \times
\Fun(\sSet^{\op}, \mC)^{\circ} \overset{\on{ev}}{\lra} \mC^{\circ}.
\]
In particular, for every injectively fibrant object $X$ of $\Fun(\Dop,\mC)$, we obtain, after passing to
simplicial nerves, a functor
\[
\N(\Past_{X}): \asd(\ND) \times_{\ND} \N(\Deltac) \to \C
\]
where $\C = \N(\mC^{\circ})$ denotes the $\infty$-category given as the simplicial nerve of $\mC^{\circ}$.
Via the defining adjunctions of horizontal and vertical spans from \S \ref{sec:spans}, 
the functor $\N(\Past_{X})$ corresponds to a section 
\[
\xymatrix@=1pc{
\ND \ar[dr]^-{\id} \ar[rr]^-{A_X} && \Span_{\ND}(\HSpanp(\C)) \ar[dl]\\
& \ND. &
}
\]

\begin{thm} \label{thm:ax} Let $X$ be an injectively fibrant object of $\Fun(\Dop, \mC)$.
	Then the following are equivalent.
	\begin{enumerate}
		\item The object $X$ is a unital $2$-Segal object.
		\item The section $A_X$ factors through $\bSpan(\C) \subset \Span_{\ND}(\HSpanp(\C))$.
	\end{enumerate}
\end{thm}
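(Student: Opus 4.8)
The strategy is to unwind the definition of $A_X$ through the defining adjunctions of horizontal and vertical spans (\S\ref{subsec:horizontal}, \S\ref{subsec:vertical}) and to identify, \emph{vertex by vertex and edge by edge}, the admissibility conditions (Definition \ref{defi:hspan}\ref{l.vertex}, \ref{l.edge}) and the Segal-simplex condition (the content of $\Spanl$) with the unital $2$-Segal conditions on $X$ as reformulated in Proposition \ref{prop.key}. Concretely, $A_X$ is the section corresponding to the functor $\N(\Past_X)\colon \asd(\ND)\times_{\ND}\N(\Deltac)\to\C$, and by construction the value of $\N(\Past_X)$ at a pair $([m]\overset{f}{\to}[n],\,([m],\{i,j\}))$ is $(K,X)_R$, the derived membrane space of the simplicial subset $K=\Past(f,([m],\{i,j\}))\subset\Delta^n$. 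Thus $A_X$ lands in $\bSpan(\C)=\Spanl_{\ND}(\HSpan(\C))$ if and only if (i) each vertex of the image, a functor $\asd(\Delta^n)\to\C$ of the form coming from $\Past_X$, satisfies the horizontal-span vertex condition \ref{l.vertex}, (ii) each edge satisfies the edge condition \ref{l.edge}, and (iii) the $\asd(\Delta^n)$-direction produces a Segal simplex of $\Spanl$. The whole theorem therefore reduces to matching these three geometric conditions against Proposition \ref{prop.key}.

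\textbf{Key steps, in order.}
First I would make explicit the membrane-space values of $\N(\Past_X)$ using \eqref{eq.dpointwise} and Proposition \ref{prop:membranes-semi-simplicial}, so that the Segal cone diagrams $S(\Delta^k)\to\C$ appearing in condition \ref{l.vertex} become, after applying $(-,X)_R$, precisely the commutative squares \eqref{eq.charsquare} of Proposition \ref{prop.key}: the ``outer'' vertex of the cone computes $X_m\simeq RX_{\{f(i),\dots,f(j)\}}$, while the ``base'' of the cone computes the iterated homotopy fibre product $X_{\{f(i),\dots,f(i+1)\}}\times^R_{X_{\{f(i+1)\}}}\cdots$. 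Second, I would observe that the horizontal Segal-cone condition \ref{l.vertex} being a \emph{limit} diagram in $\C$ is exactly the statement that the relevant square \eqref{eq.charsquare} is a homotopy pullback; by Proposition \ref{prop.key}(1) this holds for all \emph{injective} $f$ iff $X$ is $2$-Segal, and by Proposition \ref{prop.key}(2) for \emph{all} $f$ (including degeneracies) iff $X$ is \emph{unital} $2$-Segal. The combination with the vertical $\Spanl$-direction is what forces the unital (rather than merely $2$-Segal) hypothesis, since the fibre category $\Mo(\Delta)\times_\Delta\Deltac$ contains the surjective pieces of the maps $f$. Third, I would verify the edge condition \ref{l.edge} and the $\Spanl$-Segal condition: the edges of $\asd(\Delta^n)$ and the interval-shrinking morphisms in $\Deltac$ all map, under $\Past$, to inclusions between subcomplexes of $\Delta^n$ whose induced maps on membrane spaces are either equivalences (when the underlying objects $X_{\{f(i)\}}$ collapse to $X_0$, giving \ref{l.edge}) or are governed again by \eqref{eq.charsquare}. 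For the $\Spanl$-direction I would invoke Theorem \ref{thm:span-bi-mon}(1) together with Theorem \ref{thm:horispan}(2), which guarantee that once the pointwise pullback conditions hold, the section automatically factors through the full subcomplex cut out by Segal simplices.

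\textbf{The main obstacle.}
The delicate point is \emph{bookkeeping the two simplicial directions simultaneously}. The domain $\asd(\ND)\times_{\ND}\N(\Deltac)$ carries both the ``$\Mo$-direction'' (controlling the vertical spans, i.e.\ the nested intervals $\{i,j\}\subset[m]$) and the ``$\Deltac$-direction'' (controlling the horizontal spans and the maps $f\colon[m]\to[n]$), and the functor $\Past$ entangles them through the composite $\Delta^{\{f(i),\dots,f(i+1)\}}\coprod\cdots$. I expect the real work to be in showing that the \emph{bispan} condition -- a Segal simplex in $\Spanl$ whose vertices are admissible horizontal spans -- decomposes exactly into the family of squares \eqref{eq.charsquare} ranging over all $f$, with no further conditions and no redundancy. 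This is where I would lean hardest on Proposition \ref{prop.key}, on the compatibility of $\asd$ with products (Corollary \ref{cor:asd-product}) and with $\Mo$ (Proposition \ref{prop:asd-mo}), and on Lemma \ref{lem:comparelimit} to pass between relative limit cones in $\HSpan(\C)\to\ND$ and absolute limit cones in the fibres $\C$. Once the dictionary between the combinatorics of $\Past$ and the squares \eqref{eq.charsquare} is set up cleanly, both implications (1)$\Leftrightarrow$(2) should follow by quantifying Proposition \ref{prop.key} appropriately over injective versus all morphisms $f$, with the unitality hypothesis being precisely what upgrades ``injective $f$'' to ``all $f$''.
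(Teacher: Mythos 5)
Your overall architecture is the right one — unwind the adjunctions, reduce to pointwise limit conditions in $\C$ via Lemma \ref{lem:comparelimit}, and match them against Proposition \ref{prop.key}, with unitality accounted for by the surjective maps $f$ — and this is essentially how the paper argues. But there is a concrete misattribution that would derail the proof as you have planned it: you locate the $2$-Segal content in the horizontal admissibility condition \ref{l.vertex} of Definition \ref{defi:hspan}, claiming that the Segal cones $S(\Delta^k)\to\C$ appearing there become the squares \eqref{eq.charsquare}. They do not. The vertices of $\HSpanp(\C)$ produced by $\Past_X$ are the functors $\{i,j\}\mapsto (\Past(f,\{i,j\}),X)_R$, where $\Past(f,\{i,j\})=\Delta^{\{f(i),\dots,f(i+1)\}}\cup\dots\cup\Delta^{\{f(j-1),\dots,f(j)\}}$ is a union of simplices glued along single vertices. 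The apex of the Segal cone in condition \ref{l.vertex} is therefore the derived membrane space of such a wedge — an iterated homotopy fibre product over copies of $X_0$ — and \emph{not} $X_{f(j)-f(i)}$. Since the gluing diagram is acyclic, Proposition \ref{prop:acyclic} makes condition \ref{l.vertex} hold \emph{unconditionally} for any injectively fibrant $X$; no $2$-Segal hypothesis is used or detected there. (The full simplices $X_n$ enter the construction only as values at intervals $\{i,j\}$ with $j=i+1$, i.e.\ through the \emph{vertical} direction of $\asd(\ND)$.)

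Consequently, your third step — invoking Theorems \ref{thm:span-bi-mon} and \ref{thm:horispan} to conclude that the factorization through the Segal simplices of $\Spanl_{\ND}(\HSpan(\C))$ is ``automatic once the pointwise pullback conditions hold'' — has the logic inverted. Those theorems only establish that $\bSpan(\C)\to\ND$ is a complete Segal fibration; they say nothing about whether a given section lands in it. The factorization of $A_X$ through $\Span_{\ND}(\HSpan(\C))$ is what is automatic, and it is precisely the \emph{vertical} Segal-simplex condition — that for each $k$-simplex $[n_0]\to\cdots\to[n_k]$ of $\ND$ the cone $S(\Delta^k)\to\HSpan(\C)$ is a $p$-limit diagram — that carries the $2$-Segal and unitality content. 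To extract it one must push the relative limit condition into the fibre $\HSpan(\C)_{[n_0]}$ via Lemma \ref{lem:comparelimit}, use that vertices of $\HSpan(\C)_{[n_0]}$ are right Kan extensions from $\asd(\J^{n_0})$ to reduce to evaluation at the intervals $\{i,i+1\}$, and only \emph{then} does one land on the squares \eqref{eq.charsquare} (in the guise of the diagrams of Lemma \ref{lem:2segdiag}), to which Proposition \ref{prop.key} applies. As written, your proof would verify an unconditional statement in the horizontal layer and leave the layer that actually distinguishes $2$-Segal objects unexamined, so the equivalence (1)$\Leftrightarrow$(2) would not follow.
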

\begin{proof}
	Assume $X$ is a unital $2$-Segal object.
	We have to verify that $A_X$ maps every $k$-simplex of $\ND$ to a Segal simplex of
	$\Span_{\ND}(\HSpan(\C))$. Let $p: \HSpan(\C) \to \ND$ be the $\inftytwo$-category of
	horizontal spans in $\C$. We first show that, for every $k$-simplex $\sigma: \Delta^k \to
	\ND$, the corresponding composite
	\[
	f_\sigma: \asd(\Delta^k) \overset{\asd(\sigma)}{\lra} \asd(\ND) \overset{A_X}{\lra} \HSpanp(\C)
	\]
	factors through $\HSpan(\C) \subset \HSpanp(\C)$. To this end we have to show that, for
	$k=0$ and $k=1$, the conditions \ref{l.vertex} and \ref{l.edge} of Definition
	\ref{defi:hspan} are satisfied. The vertex $\{[n]\}$ of $\ND$ gets associated by $f_\sigma$
	to the diagram in $\HSpanp(\C)_{[n]} \subset \Fun(\asd(\Delta^n), \C)$ which is given by the nerve of the functor
	\[
	\Mo([n]) \lra \mC,\; \{i,j\} \mapsto \Upsilon_*X(\J^{\{i,\dots,j\}})
	\]
	Using \cite[4.2.4.1]{lurie.htt}, the limit condition of \ref{l.vertex} can now easily be seen to correspond to the fact
	that, since $X$ is injectively fibrant, the evaluated right Yoneda extension
	$\Upsilon_*X(\J^{\{i,\dots,j\}})$ can be expressed as a homotopy limit indexed by the
	category of simplices of the simplicial set $\J^{\{i,\dots,j\}}$ (add reference).

	For every edge $[n] \to [m]$ of $\N(\Delta)$, the corresponding span diagram in $\HSpanp(\C)$
	evaluated at $\{i\} \subset [n]$ corresponds to a diagram of the form $X_0 \overset{\id}{\leftarrow} X_0
	\overset{\id}{\rightarrow} X_0$. Since both edges in this diagram are trivially equivalences
	in $\C$, we deduce that condition \ref{l.edge} is satisfied. We conclude, that,
	irrespectively of the $2$-Segal condition, the section $A_X$ factors through
	$\Span_{\ND}(\HSpan(\C))$.

	We show next that, for every $k$-simplex $\sigma: \Delta^k \to \ND$, the corresponding
	Segal cone
	\[
	g_\sigma: S(\Delta^k) \lra \asd(\Delta^k) \overset{\asd(\sigma)}{\lra} \asd(\ND)
	\overset{A_X}{\lra} \HSpan(\C)
	\]
	is a $p$-limit diagram. This will imply that $A_X$ factors through $\bSpan(\C)$. First note
	that the simplex $\sigma$ corresponds to a composable chain of maps
	\begin{equation}\label{eq:chainofm}
		[n_0] \overset{f_1}{\lra} [n_1] \overset{f_2}{\lra} \dots \overset{f_k}{\lra} [n_k].
	\end{equation}
	We apply Lemma \ref{lem:comparelimit} to the map $f_\sigma$ to obtain a homotopy $h: \Delta^1 \times \asd(\Delta^k)
	\to \HSpan(\C)$ such that $h|\{1\} \times \asd(\Delta^k) = f_\sigma$ and the diagram $f'_{\sigma} := h|\{0\} \times
	\asd(\Delta^k)$ lies in the fiber $\HSpan(\C)_{[n_0]}$. By Lemma \ref{lem:comparelimit}, the
	diagram $g_\sigma$ is a $p$-limit diagram if and only if the composite
	\[
	g'_\sigma : S(\Delta^k) \lra \asd(\Delta^k) \overset{f'_\sigma}{\lra} \HSpan(\C)
	\]
	is a $p$-limit diagram. In the proof of Theorem \ref{thm:horispan}, we have seen that 
	all functors associated with the Cartesian fibration $p: \HSpan(\C) \to \ND$ preserve
	$S(\Delta^k)$-indexed limit diagrams in the fibers of $p$. By \cite[4.3.1.11]{lurie.htt} it
	hence suffices to show that the diagram $g'_\sigma$ induces a limit diagram in the fiber $\HSpan(\C)_{[n_0]}$.
	
	In the proof of Theorem \ref{thm:horispan}, we have further seen that a
	diagram $S(\Delta^k) \to \HSpan(\C)_{[n_0]}$ is a limit diagram if and only if the
	composite diagram 
	\[
	g''_\sigma: S(\Delta^k) \to \HSpan(\C)_{[n_0]} \subset \Fun(\asd(\Delta^{n_0}), \C)
	\]
	is a limit diagram. By \cite[5.1.2.3]{lurie.htt}, a diagram in $\Fun(\asd(\Delta^{n_0}),
	\C)$ is a limit diagram if and only if, for every vertex $\{i,j\}$ of $\asd(\Delta^{n_0})$,
	the corresponding diagram in $\C$ is a limit diagram. Further, every vertex in
	$\HSpan(\C)_{[n_0]} \subset \Fun(\asd(\Delta^{n_0}), \C)$ is a right Kan extension of its
	restriction along $j: \asd(\J^{n_0}) \to \asd(\Delta^{n_0})$. The right Kan extension
	functor $j_*$ is a right adjoint which, by \cite[5.2.3.5]{lurie.htt}, preserves limits.
	Hence it suffices to check that the evaluation of the diagram $g''_\sigma$ at every vertex of
	$\asd(\J^{n_0}) \subset \asd(\Delta^{n_0})$ is a limit diagram in $\C$. This is easily
	verified for the vertices $\{i\}$ of $\J^{n_0}$ where $0 \le i \le n_0$. It remains to verify the condition for
	vertices of the form $\{i,i+1\}$ where $0 \le i < n_0$. To this end, we introduce the chain of morphisms
	\begin{equation}\label{eq:modchainofm}
		[n'_0] \overset{f'_1}{\lra} [n'_1] \overset{f'_2}{\lra} \dots \overset{f'_k}{\lra} [n'_k]
	\end{equation}
	which is obtained by restricting \eqref{eq:chainofm} where $[n'_0] \cong \{i,i+1\}$ and
	$[n'_j] \cong \{ f_j \circ f_{j-1} \circ \dots \circ f_1 (i), \dots, f_j \circ f_{j-1} \circ
	\dots \circ f_1 (i+1) \}$. Note that, for every $1 \le j \le k$, we have $f'_j(0) = 0$ and
	$f'_j(n'_{j-1}) = n'_j$.
	Unraveling the definitions of the functor $\Past$ and the homotopy $h$, it follows that  
	the evaluation of the diagram $g''_\sigma$ at the vertex $\{i,i+1\}$ is equivalent to 
	the simplicial nerve of the diagram \eqref{eq:2segdiag} in Lemma \ref{lem:2segdiag} below.
	By Lemma \ref{lem:2segdiag} this diagram is a homotopy limit diagram which, using
	\cite[4.2.4.1]{lurie.htt}, concludes our argument for the implication (1) $\Rightarrow$ (2).

	Assume $A_X$ factors through $\bSpan(\C)$. To show that $X$ is a $2$-Segal object it
	suffices to show that, for every $n \ge 2$ and every polygonal subdivision 
	\[
	\T = \{ \{i,\dots,j\},\{0,\dots,i,j,\dots,n\} \}
	\]
	the diagram
	\begin{equation}\label{eq:2seghompull}
		\xymatrix{ X_n \ar[r]\ar[d] & X_{\{i,\dots,j\}} \ar[d] \\
		X_{\{0,\dots,i,j,\dots,n\}} \ar[r] & X_{\{i,j\}}}
	\end{equation}
	is a homotopy pullback square. 
	Consider the $2$-simplex $\sigma$ in $\ND$ given by the chain 
	\[
		\{0,n\} \overset{f_1}{\lra} \{0,\dots,i,j,\dots,n\} \overset{f_2}{\lra} [n].
	\]
	The simplex $A_X(\sigma)$ lies by assumption in $\bSpan(X)$ and is hence a Segal simplex.
	By the argumentation in the proof of the implication (1) $\Rightarrow$ (2) above, the evaluation of the
	corresponding diagram $g''_\sigma$ at the interval $\{0,1\}$ of $[1] \cong \{0,n\}$ is
	equivalent to the simplicial nerve of the diagram
	\begin{equation}\label{eq:transportdiag}
		\xymatrix@!C=5pc{ & & X_n \ar[dr] \ar[dl] & & \\  
		& X_{\{0,\dots,i,j,\dots,n\}}\ar[dr]\ar[dl] & &\ar[dr]\ar[dl] X_{\J^{\{0,\dots,i\}}} \times_{X_{\{i\}}} X_{\{i,\dots,j\}} \times_{X_{\{j\}}} X_{\J^{\{j,\dots,n\}}} &  \\
		X_{\{0,n\}} & & X_{\J^{\{0,\dots,i\}}} \times_{X_{\{i\}}}
		X_{\{i,j\}}\times_{X_{\{j\}}} X_{\J^{\{j,\dots,n\}}} & & X_{\J^{\{0,\dots,n\}}}.}	
	\end{equation}
	Hence, by \cite[4.2.4.1]{lurie.htt}, the diagram \eqref{eq:transportdiag} is a homotopy
	limit diagram which is easily seen to be equivalent to the assertion that the square
	\eqref{eq:2seghompull} is a homotopy pullback square.

	It remains to show that $X$ is unital. Consider the $2$-simplex $\sigma$ in $\ND$ given by the chain 
	\[
	\{0,n\} \lra [n] \overset{\delta_i}{\lra} [n-1]
	\]
	where $n \ge 0$ and $\delta_i$ denotes the $i$th degeneracy map. 
	By an analogous argumentation we conclude that $A_X(\sigma)$ is a Segal simplex if and only
	if the square
	\[
	\xymatrix{ X_{n-1} \ar[r]\ar[d] & X_{\{i\}} \ar[d] \\
		X_{n} \ar[r] & X_{\{i,i+1\}}}
	\]
	is a homotopy pullback square, showing that $X$ is unital.
\end{proof}

\begin{lem}\label{lem:2segdiag} Let $\mC$ be a combinatorial simplicial model category and $X$ an
	injectively fibrant $2$-Segal object in $\Fun(\Dop, \mC)$. 
	Consider a $k$-simplex $\sigma$ in $\ND$ which corresponds to a chain of morphisms
	\[
		[1] \overset{f_1}{\lra} [n_1] \overset{f_2}{\lra} \dots \overset{f_k}{\lra} [n_k].
	\]
	Assume that, for every $1 \le i \le k$, we have $f_{i}(0) = 0$ and $f_{i}(n_{i-1}) = n_i$.
	Consider the collections of subsets 
	\begin{align*}
		\E_i & = \{\{0,1\}, \{1,2\}, \dots, \{n_{i}-1,n_{i}\} \} \subset 2^{[n_i]}\\
		\P_i & = \{\{f_i(0),\dots,f_i(1)\}, \{f_i(1),\dots,f_i(2)\}, \dots,
		\{f_i(n_{i-1}-1),\dots,f_i(n_{i-1})\} \} \subset 2^{[n_i]}
	\end{align*}
	where $1 \le i \le k$ and further $\E_0 = \{\{0,1\}\}$.
	Then the diagram in $\mC$
	\begin{equation}\label{eq:2segdiag}
		\xymatrix{ & & &X_{n_k} \ar[d] \ar[drrrr]\ar[dll]&  & & \\  
		& X_{\P_1}\ar[dr]\ar[dl] & &\ar[dl]\ar[dr] X_{\P_2} && \dots& &\ar[dr]\ar[dl] X_{\P_k} &  \\
		X_{\E_0} & & X_{\E_1} &&X_{\E_2}& \dots &X_{\E_{k-1}}& & X_{\E_k}
		}	
	\end{equation}
	is a homotopy limit diagram with limit vertex $X_{n_k}$.
\end{lem}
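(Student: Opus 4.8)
The plan is to prove Lemma \ref{lem:2segdiag} by induction on $k$, reducing the general statement to the two-term case which is essentially the $2$-Segal condition itself. The key observation is that the diagram \eqref{eq:2segdiag} is built out of ``local'' Segal-type spans, one for each map $f_i$ in the chain, and that homotopy limits compose: a homotopy limit over the whole shape can be computed as an iterated homotopy limit, peeling off one factor $X_{\P_i}$ at a time. So first I would set up the induction by observing that the shape of \eqref{eq:2segdiag} is a ``zig-zag'' in which the apex $X_{n_k}$ sits over the last span $X_{\E_{k-1}} \leftarrow X_{\P_k} \rightarrow X_{\E_k}$, while the remaining triangles assemble into the analogous diagram for the truncated chain $[1] \to [n_1] \to \dots \to [n_{k-1}]$.

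The base case $k=1$ is the heart of the matter. Here I must show that
\[
X_{n_1} \lra X_{\E_0} \times^R_{?} X_{\P_1}
\]
exhibits $X_{n_1}$ as the homotopy limit of the span $X_{\E_0} \leftarrow X_{\P_1} \rightarrow X_{\E_1}$ (with the appropriate middle term). The crucial point is that $\P_1$ is precisely the collection of subsets defining the polygonal subdivision of $P_{n_1}$ associated to the map $f_1$, with $f_1(0)=0$ and $f_1(n_0)=n_1$ forced so that the subdivision is ``spanning''. By Proposition \ref{prop.colim} together with Proposition \ref{prop:2-segal-basic}, the $2$-Segal condition gives that $X_{n_1} = RX_{\P_1}$ is the homotopy limit over the subdivision $\P_1$, while $X_{\E_1}$ is the homotopy limit over the finest subdivision $\E_1$ into unit edges. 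Refining $\P_1$ to $\E_1$ realizes $X_{\P_1}$ itself as an iterated homotopy fiber product of the $X_{\E_1}$-factors. Here I would invoke Proposition \ref{prop:acyclic} (the model-category version of the acyclicity results of \S\ref{subsec:membranes}) to justify that the relevant indexing diagram of simplicial subsets is acyclic, so that the membrane-space decomposition passes to homotopy limits. This reduces the base case to a direct application of Proposition \ref{prop.colim}.

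For the inductive step I would use the composition law for homotopy limits: the full diagram \eqref{eq:2segdiag} fibers over the sub-diagram indexed by the truncated chain, with fiber the local span attached to $f_k$. Concretely, the apex $X_{n_k}$ is the homotopy limit of the span $X_{\P_k} \to X_{\E_{k-1}}$ glued with the inductively-understood homotopy limit over the chain up to $[n_{k-1}]$; one checks that $X_{\E_{k-1}}$ appears as the common ``matching'' object where the two pieces are amalgamated, exactly as in the analysis of $\DI \cup \DI[']$ via Proposition \ref{prop.colim}. Since a homotopy limit over a category can be computed as a homotopy limit of homotopy limits along any cofinal slicing, combining the inductive hypothesis (that the truncated diagram has homotopy limit $X_{n_{k-1}}$) with the base-case computation applied to $f_k$ (that $X_{n_k} \to X_{n_{k-1}} \times^R_{X_{\E_{k-1}}} X_{\P_k}$ is a weak equivalence) yields the claim.

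The main obstacle I anticipate is the careful bookkeeping of which \emph{face maps} are used to form each homotopy fiber product, and verifying that the amalgamation of the two pieces really occurs along $X_{\E_{k-1}}$ with matching structure maps. The conditions $f_i(0)=0$ and $f_i(n_{i-1})=n_i$ are exactly what guarantee that the subdivision $\P_i$ spans the polygon $P_{n_i}$ and that its ``boundary'' edges align with the $\E_{i-1}$-indexing of the previous stage; keeping these compatibilities straight is the delicate part. The underlying homotopical content, however, is entirely supplied by the $2$-Segal property of $X$ via Propositions \ref{prop:2-segal-basic}, \ref{prop.colim}, and \ref{prop:acyclic}, so once the combinatorial alignment is pinned down, the proof is a routine assembly of homotopy-limit decompositions.
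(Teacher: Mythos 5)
Your overall strategy---induct on $k$ and reduce everything to the single composition step asserting that $X_{n_k} \to X_{n_{k-1}} \times^R_{X_{\E_{k-1}}} X_{\P_k}$ is a weak equivalence---is the same as the paper's, which reduces at once to $k=2$ and identifies the remaining assertion with the statement that the square \eqref{eq:total2seg} is a homotopy pullback, i.e., with Proposition \ref{prop.key}. The problem is how you discharge that key step. You call it ``the base-case computation applied to $f_k$'', but your base case $k=1$ is vacuous: since $f_1(0)=0$ and $f_1(1)=n_1$, the collection $\P_1$ consists of the single subset $[n_1]$, so $X_{\P_1}=X_{n_1}$ and there is nothing to prove there. (Relatedly, your claim that ``refining $\P_1$ to $\E_1$ realizes $X_{\P_1}$ as an iterated homotopy fiber product of the $X_{\E_1}$-factors'' is the $1$-Segal condition for $X_{n_1}$, which fails for a general $2$-Segal space.) So the step that carries the actual content of the lemma is asserted but never established.

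More seriously, an essential ingredient for that step is absent from your argument: unitality. The maps $f_i$ are arbitrary monotone maps with $f_i(0)=0$ and $f_i(n_{i-1})=n_i$; they need not be injective. When $f_k$ is injective, the sets $\{f_k(j),\dots,f_k(j+1)\}$ together with the image of $f_k$ do form a polygonal subdivision of $P_{n_k}$, and Propositions \ref{prop:2-segal-basic} and \ref{prop.colim} give exactly what you want. But when $f_k$ collapses an edge, the required square is a \emph{unitality} statement, not a $2$-Segal one: for $f\colon [2]\to[1]$ with $f(0)=f(1)=0$, $f(2)=1$, the square in question is precisely the instance $n=2$, $i=0$ of the square \eqref{eq:unitalsquare}, asserting that $X_1 \to X_2 \times^R_{X_{\{0,1\}}} X_{\{0\}}$ is an equivalence, and this fails for non-unital $2$-Segal spaces. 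The paper handles this by factoring each $f_i$ as a surjection followed by an injection and invoking Proposition \ref{prop.key}, whose surjective half rests on iterated use of the unital squares \eqref{eq:unitalsquare}. (The printed statement of the lemma omits the word ``unital'', but the lemma is false without it and is only ever applied to unital $2$-Segal objects; any correct proof must use unitality somewhere, and yours does not.)
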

\begin{proof}
	Using induction on $k$, it is clear that it suffices to prove the stament for $k = 2$. We
	set $[m] = [n_1]$, $[n] = [n_2]$ and $f = f_2$. If $f_1$ is constant then we have $[m] = [n] = [0]$ and the statement is trivial. Thus we
	may assume that $f_1$ is injective. In this case, the statement is easily seen to be
	equivalent to the assertion that the square
	\begin{equation}\label{eq:total2seg}
		\xymatrix{ 
			X_n \ar[r] \ar[d] & X_{\{f(0),\dots,f(1)\}} \times_{X_{\{f(1)\}}} X_{\{f(1),\dots, f(2)\}}
			\times \dots \times X_{ \{ f(m-1),\dots,f(m)\} } \ar[d]\\
			X_m \ar[r] & X_{\{0,1\}} \times_{X_{\{1\}}} X_{\{1, 2\}} \times_{X_{\{2\}}} \dots \times_{X_{ \{m-1\} }} X_{ \{ m-1,m\} } 
		}
	\end{equation}
	is a homotopy pullback square. We conclude the argument as in the proof of Proposition
	\ref{prop.key}.
\end{proof}

\begin{cor} Let $X$ be an injectively fibrant $2$-Segal object in $\Fun(\ND, \mC)$. Then the section
	$A_X$ defines a comonad in the Segal fibration $\bSpan(\C)$.
\end{cor}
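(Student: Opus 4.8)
The plan is to deduce this corollary directly from Theorem \ref{thm:ax}, which has just been established for the coSegal fibration $\HSpanp(\C)$ and the right-span construction $\Span_{\ND}(\HSpanp(\C)) = \bSpan(\C)$ read in one orientation. The statement of the corollary concerns a $2$-Segal object $X$ in $\Fun(\ND, \mC)$ rather than in $\Fun(\Dop, \mC)$, so the first and most essential observation is that passing from $\Dop$ to $\ND$ is precisely the passage that interchanges the two orientations of the simplicial direction, i.e. it replaces the coSegal fibration $\bSpan(\C) \to \NDop$ by its opposite Segal fibration $\bSpan(\C)^{\op} \to \ND$, and correspondingly interchanges the notions of monad and comonad (Definition of comonad as a left lax functor $\ND \to Z$). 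Thus the natural strategy is to observe that a $2$-Segal object $X$ in $\Fun(\ND, \mC)$ is the same datum as a $2$-Segal object $X^{\op}$ in $\Fun(\Dop, \mC^{\op})$, run the construction of Theorem \ref{thm:ax} in the opposite category, and transport the conclusion back.

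Concretely, first I would note that the $2$-Segal and unitality conditions of Definition \ref{def:unital-2-segal-top} are self-dual under the involution $I \mapsto I^{\op}$ of $\Delta$ (the triangulations of a polygon are carried to triangulations of the reversed polygon, and the homotopy-Cartesian squares of \eqref{eq:unitalsquare} are preserved), so $X$ being a unital $2$-Segal object in $\Fun(\ND,\mC)$ is literally the same condition after precomposing with this involution. Second, I would apply the section-building construction preceding Theorem \ref{thm:ax} verbatim but with $\ND$ in place of $\Dop$; the functor $\Past$, the Yoneda extension $\Upsilon_*$, and hence the associated section are all defined in a way that makes sense for either variance, and the resulting section now has the form of a left lax functor $\ND \to \bSpan(\C)$, since the convex edges which were sent to $p$-coCartesian edges in the monad picture are now, after reversing, sent to the Cartesian edges required by the definition of a left lax functor. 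Third, I would invoke Theorem \ref{thm:ax} in this opposite setting: the equivalence ``$X$ is a unital $2$-Segal object $\Leftrightarrow$ $A_X$ factors through $\bSpan(\C)$'' holds identically, so the factorization gives exactly the section defining a comonad.

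The step I expect to require the most care is making the duality between $\Fun(\Dop,\mC)$ and $\Fun(\ND,\mC)$ interact correctly with the asymmetry already built into $\asd$, $\Past$, and $\HSpan(\C)$ versus $\Spanl(\C)$: the construction of Theorem \ref{thm:ax} treats horizontal and vertical spans on a genuinely different footing, with $\HSpan(\C)$ carrying the composition-of-spans (pullback) structure and the outer $\Spanl_{\ND}$ carrying the span-of-spans structure. One must check that reversing the simplicial variable reverses the roles cleanly, so that the output is honestly a comonad (left lax into a Segal fibration) and not merely a reindexed monad. In practice this amounts to verifying that $\N(\Past)$ and the homotopy $h$ of Lemma \ref{lem:comparelimit} behave compatibly under $[n] \mapsto [n]^{\op}$, and that the Segal-cone $p$-limit conditions checked in the proof of Theorem \ref{thm:ax} (via Lemma \ref{lem:2segdiag} and Proposition \ref{prop.key}) are unaffected, since those lemmas are symmetric in the two endpoints of each interval. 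Once this bookkeeping is in place, the corollary follows formally, and I would present it as a short deduction rather than a reproof, explicitly flagging that the only content beyond Theorem \ref{thm:ax} is the change of variance $\Dop \rightsquigarrow \ND$ together with the attendant monad $\rightsquigarrow$ comonad dictionary.
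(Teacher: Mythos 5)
Your proposal goes off-track at the very first step. The appearance of $\Fun(\ND,\mC)$ in the statement is only a notational slip for the category of simplicial objects of Theorem \ref{thm:ax} (the corollary is about the \emph{same} section $A_X$ constructed there from an injectively fibrant unital $2$-Segal object); it is not a signal that the variance has changed, and no passage to $\mC^{\op}$, no self-duality of the $2$-Segal conditions, and no re-running of the $\Past$/Yoneda-extension construction in an opposite category is needed or intended. Building the argument around a ``change of variance $\Dop\rightsquigarrow\ND$'' therefore answers a question the corollary is not asking, and the object your construction would produce is not the $A_X$ the statement refers to.

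The actual content of the corollary, beyond Theorem \ref{thm:ax}, is a single verification that your write-up only gestures at: a comonad in the Segal fibration $\bSpan(\C)\to\ND$ is by definition a section which carries convex edges of $\ND$ to Cartesian edges of $\bSpan(\C)$, so one must check this Cartesianity for $A_X$. By Theorem \ref{thm:spancartesian} the Cartesian edges of $\Spanl_{\ND}(\HSpan(\C))$ are the marked ones, i.e.\ those vertical spans both of whose legs are Cartesian in $\HSpan(\C)$; unwinding $\Past$ on a convex injection $f:[m]\to[n]$ (so that $f(i+1)=f(i)+1$ for all $i$) shows that the relevant comparison maps of membrane spaces $(\J^{\{f(i),\dots,f(j)\}},X)\to(\J^{\{i,\dots,j\}},X)$ are isomorphisms, which gives exactly this condition. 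Your sentence about convex edges being ``sent to the Cartesian edges required by the definition of a left lax functor'' asserts this but derives it from a nonexistent ``monad picture'': in the paper the comonad comes first, directly from the section of the Segal fibration, and the monad $(A_X)^{\op}$ is obtained only afterwards via the self-duality $\Spanl(\C)\simeq\Spanl(\C)^{\op}$ of the total space (Remark \ref{rem:comonad-monad}), not by reindexing the simplicial variable as you propose. I would discard the dualization scaffolding entirely and replace it with the one-line check of the convex-edge condition.
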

\begin{proof}
	According to Theorem \ref{thm:ax}, it remains to show that $A_X$ maps convex edges in $\ND$
	to Cartesian edges in $\bSpan(\C)$. This becomes apparent after unwinding the definition of
	$\Past$.
\end{proof}
	
Further, the comonad $A_X$ associated to a $2$-Segal space $X$ depends functorially on $X$. More
precisely, the simplicial nerve $\N(\Past_{\bullet})$ corresponds via adjunction to a functor
\begin{equation}\label{eq:A}
	A: \N(\Fun(\Dop, \sSet)^{\circ}_{2-\on{Seg}}) \lra \Fun^{\on{lax}}_{\N(\Delta)}(\N(\Delta),
	\bSpan(\inftyS))
\end{equation}
of $\infty$-categories. The left-hand side is by definition the $\infty$-category of $2$-Segal
spaces.

\begin{rem} \label{rem:comonad-monad} Note that, given an $\infty$-category $\C$ with limits, the
	$\infty$-category $\Spanl(\C)$ can be identified with its opposite category. 
	This implies, that we can equivalently describe the $\inftytwo$-category of bispans as a {\em
	coCartesian} fibration over $\N(\Delta)^{\op}$ by passing to the opposite of the
	functor $\bSpan(\C) \to \N(\Delta)$. The comonad $A_X$ in $\bSpan(\C)$ defines a section
	\[
	\xymatrix@=1pc{ \N(\Delta)^{\op} \ar[rr]^{(A_X)^{\op}} \ar[dr]_{\id} & & \ar[dl]
	\bSpan(\inftyS)^{\op}\\
	& \N(\Delta)^{\op} & }
	\]
	which corresponds to a {\em right} lax functor of $\inftytwo$-categories. Such a functor
	corresponds to a {\em monad} in the $\inftytwo$-category of bispans in $\inftyS$.
\end{rem}

\begin{rem} \label{rem:functoriality} By Remark \ref{rem:comonad-monad} we can associate to a
	$2$-Segal space $X$ both a monad and a comonad in the $\inftytwo$-category of bispans in
	$\inftyS$. However, note that the functorial dependence on $X$ given by the functor $A$ defined
	in \eqref{eq:A} changes when passing from $A_X$ to $A_X^{\op}$. 
\end{rem}

\begin{defi} Given an injectively fibrant $2$-Segal space $X$, we call $A_X$ the
	{\em higher Hall comonad associated to $X$}. Dually, we call $(A_X)^{\op}$ the {\em higher Hall monad
	associated to $X$}.
\end{defi}

Let $X$ be a $2$-Segal space and assume that $X_0 \simeq \pt$. In this case, the
above construction simplifies as follows. 

\begin{thm} Let $X$ be a $2$-Segal space with contractible space of $0$-simplices. Then the functor $A_X$ factors
	through $\Spanl_{\N(\Delta)}(\C^{\times})$ defining a coalgebra object in the
	$\infty$-category $\Spanl(\inftyS)$ equipped with the pointwise Cartesian monoidal
	structure.
\end{thm}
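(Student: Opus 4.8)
The plan is to combine the main result of the chapter, Theorem \ref{thm:ax}, with the functorial construction of the Cartesian monoidal structure on horizontal spans from Proposition \ref{prop:Cartesianmonoidal}, together with the compatibility of $\Spanl$ with monoidal structures established in Theorem \ref{thm:span-bi-mon}. First I would recall that, by Theorem \ref{thm:ax}, a unital $2$-Segal space $X$ (with $X_0$ contractible it is automatically unital in the relevant sense) gives rise to a section $A_X$ which factors through $\bSpan(\C) = \Spanl_{\N(\Delta)}(\HSpan(\C))$. The point of the present refinement is to show that, under the hypothesis $X_0 \simeq \pt$, this section in fact lands in the full subfibration $\Spanl_{\N(\Delta)}(\C^{\times}) \subset \Spanl_{\N(\Delta)}(\HSpan(\C))$, where $\C^{\times} \subset \HSpan(\C)$ is the subcomplex spanned by those vertices $F: \asd(\Delta^n) \to \C$ for which each $F(\{i\})$ is a final object.

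The key computation is to examine the vertices $\N(\Past_X)(\{i\})$ for $0 \le i \le n$. Unraveling the definition of $\Past_X$ and of the derived Yoneda extension $\Upsilon_* X$, the value of the diagram at the singleton interval $\{i\}$ is $\Upsilon_* X(\J^{\{i\}}) \simeq X_0$. Since by hypothesis $X_0 \simeq \pt$ is contractible, the vertex $F(\{i\})$ is a final object of the $\infty$-category $\C = \inftyS$ of spaces for every $i$. This is precisely condition defining $\C^{\times} \subset \HSpan(\C)$, so every vertex of $\HSpan(\C)$ in the image of $A_X$ lands in $\C^{\times}$; because $\C^{\times}$ is a full simplicial subset, the whole section $A_X$ factors through $\Spanl_{\N(\Delta)}(\C^{\times})$. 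By Proposition \ref{prop:Cartesianmonoidal}, the fibration $\C^{\times} \to \ND$ is a complete Segal fibration with contractible $[0]$-fiber exhibiting the Cartesian monoidal structure on $\C$. Applying Theorem \ref{thm:span-bi-mon}(3) to this Segal fibration shows that $\Spanl_{\N(\Delta)}(\C^{\times}) \to \ND$ is itself a Segal fibration with contractible $[0]$-fiber, exhibiting the pointwise Cartesian monoidal structure on $\Span(\C)$. Consequently, the section $A_X$, which maps convex edges to coCartesian edges (by the comonad statement, the corollary preceding the theorem), becomes a comonad in a Segal fibration with contractible $[0]$-fiber, which is exactly a coalgebra object in $\Span(\inftyS)$ with respect to the pointwise Cartesian monoidal structure.

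The main obstacle I anticipate is the bookkeeping needed to verify that the contractibility of $X_0$ genuinely forces every relevant vertex into $\C^{\times}$, rather than merely up to equivalence; here one must be careful that the defining condition of $\C^{\times}$ (being a final object on the nose) is stable under the injectively fibrant replacement implicit in $\Upsilon_* X$, and that the Segal cone limit conditions of Definition \ref{defi:hspan} remain compatible with the restriction to $\C^{\times}$. This is resolved by noting that in $\inftyS$ any space weakly equivalent to $\pt$ is a final object in the $\infty$-categorical sense, so the full-subcategory argument applies without modification, and the limit conditions are inherited from those already verified for $\HSpan(\C)$ in the proof of Theorem \ref{thm:ax}. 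The remaining verification, that $A_X$ still sends convex edges to $\Spanl_{\N(\Delta)}(\C^{\times})$-coCartesian edges, follows since the $\C^{\times}$-coCartesian edges are precisely the restrictions of the $\HSpan(\C)$-coCartesian edges, which $A_X$ already preserves.
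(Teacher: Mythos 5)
The paper states this theorem without proof, so there is nothing to compare against literally; your argument is clearly the intended one, and its core is sound. The decisive computation is exactly the one you identify: the vertex of the diagram $\N(\Past_X)\colon \asd(\ND)\times_{\ND}\N(\Deltac)\to\C$ sitting over a singleton interval $\{i\}$ is $\yoneda_*X(\Delta^{\{i\}})\cong X_0$, which by hypothesis is contractible and hence a final object of $\inftyS$; since $\C^{\times}\subset\HSpan(\C)$ is the \emph{full} simplicial subset spanned by vertices with this property, the adjoint diagrams of $A_X$ factor through $\C^{\times}$, and fullness also guarantees that the $p$-limit (Segal cone) conditions and the Cartesian-edge conditions are inherited by the restriction, so $A_X$ lands in $\Spanl_{\ND}(\C^{\times})\subset\bSpan(\C)$. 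Combining with Proposition \ref{prop:Cartesianmonoidal} and Theorem \ref{thm:span-bi-mon}(3) then yields the coalgebra object. (One terminological slip: the corollary you invoke shows $A_X$ carries convex edges to \emph{Cartesian} edges of the Segal fibration $\bSpan(\C)\to\ND$, not coCartesian ones; the coCartesian phrasing belongs to the opposite, monad-side picture of Remark \ref{rem:comonad-monad}.)

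The one genuine gap is your parenthetical claim that contractibility of $X_0$ makes $X$ ``automatically unital in the relevant sense.'' This is not justified and, as far as the paper's own framework goes, should not be expected: the paper explicitly warns (in the section introducing unital $2$-Segal spaces) that the mere existence of a simplicial structure does not supply units, and the unitality squares \eqref{eq:unitalsquare} with $X_0\simeq\pt$ amount to the nontrivial condition that $X_{n-1}$ be the homotopy fiber of $X_n\to X_{\{i,i+1\}}$ over the degenerate edge. Since Theorem \ref{thm:ax} is an equivalence, a non-unital $2$-Segal space would \emph{fail} to factor through $\bSpan(\C)$, so unitality is genuinely needed as a hypothesis here (the paper's own statement is equally loose on this point, as everything in this chapter tacitly assumes unitality). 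The fix is simply to carry ``unital'' as a hypothesis rather than to derive it from $X_0\simeq\pt$.
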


\begin{rem} As in \ref{rem:comonad-monad}, the functor $(A_X)^{\op}$ defines an {\em algebra} object
	in the $\infty$-category $\Spanl(\inftyS)$.
\end{rem}

\vfill\eject

\vfill\eject

\appendix
\numberwithin{equation}{section}

\section{Bicategories}
\label{app.bicategories}
The goal of this appendix is to recall the classical comcepts of bicategories and 2-categories
 and to explain their connection to the concept known as quasi-categories or $\infty$-categories
 \cite{lurie.htt}. 
 
\begin{ex}[(Classical $(2,1)$-categories)] \label{ex:classical-(2,1)}
A {\em(strict) 2-category} $\C$ can be defined as a category enriched in $\Cat$, so for any
$a,b\in\Ob(\Cc)$ we have a small category $\Homc_\Cc(a,b)$ and the composition functors
\[
\otimes: \Homc_\Cc(b,c) \times \Homc_\Cc (a,b)\lra \Homc_\Cc (a,c),
\]
which are strictly associative.  Further, for any $a\in\Ob(\Cc)$ there is an object
$\1_a\in\Homc_\Cc(a,a)$ which is a unit with respect to $\otimes$.  Objects of $\Homc_\Cc(a,b)$ are
called {\em 1-morphisms} in $\Cc$ from $a$ to $b$, and we write $E: a\to b$.  A morphism $u$ in
$\Hom_\Cc(x,y)$ from $E$ to $F$ is called a 2-morphism in $\Cc$, and we write $u: E\Rightarrow F$.
For more details, including those on the geometric composition (pasting) of 2-morphisms, see
\cite{kelly-street, maclane}.

More generally, the concept of a {\em bicategory} (or a {\em weak 2-category}) $\Cc$ is obtained by
relaxing the condition of strict associativity of $\otimes$ and of the unit property of the $\1_a$
by replacing them with canonical associativity 2-isomorphisms
\[
\alpha_{E,F,G}: (E\otimes F)\otimes G\Rightarrow E\otimes (F\otimes G), \quad
a\buildrel G\over\to b\buildrel F\over\to c\buildrel E\over\to d,
\]
and the unit 2-isomophisms
\[
\lambda_E: u\otimes\1_a\Rightarrow E, \rho_E: \1_b\otimes E\Rightarrow E, \quad
u: x\to y,
\]
satisfying the coherence conditions, which include the Mac Lane pentagon for the $\alpha_{E,F,G}$,
see \cite{benabou}. 

Even more generally, we will use the term {\em semi-bicategory} for a  structure similar to a
bicategory but where no unit 1-morphisms are assumed to exist. 
  
To any small bicategory $\Cc$ one can associate its {\em nerve} $N\C$, see \cite{street, bullejos}
for the strict case and \cite{duskin} for the general (weak) case.  This is a simplicial set with
$N_n\C$ being he set of the data consisting of:
\begin{itemize}
\item[(0)] Objects $a_0, ..., a_n$; 

\item[(1)] 1-morphisms $E_{ij}: a_i\to a_j$, $i\leq j$;

\item[(2)] 2-morphisms $u_{ijk}: E_{ik}\Rightarrow E_{jk}\otimes E_{ij}$, $i\leq j\leq k$, satisfying the
compatibility conditions:

\item[(3)] For each $0\leq i_0\leq i_1\leq i_2\leq i_3\leq n$ the tetrahedron formed by 
the $a_{i_\nu}$, $E_{i_\nu, i_{\nu'}}$ and $u_{i_{\nu}, i_{\nu'}, i_{\nu''}}$,
is {\em 2-commutative}. This means that after we paste the 
two halves of its boundary, we get two 2-morphisms
  \[
  E_{i_0i_3}\Rightarrow
 (E_{i_2 i_3}\otimes E_{i_1 i_2})\otimes E_{i_0i_1} , \quad 
 E_{i_0i_3}
\Rightarrow 
  E_{i_2 i_3}\otimes (E_{i_1 i_2}\otimes E_{i_0i_1})
   \]
   of which the second one is the composition of the first one with the associativity
   isomorphism $\alpha_{E_{i_2, i_3}, E_{i_1, i_2}, E_{i_0, i_1}}$.
\end{itemize}
  
If $\Cc$ is a semi-bicategory, then the above construction defines a semi-simplicial set $N\Cc$,
still called the nerve of $\Cc$.

\begin{defi}
A {\em weak} (resp. {\em strict}) {\em (2,1)-category} is a bicategory (resp, a strict 2-category)
$\Cc$ such that each category $\Homc_\CC(x,y)$ is a groupoid, i.e., all the 2-morphisms in $\Cc$
are invertible. 
\end{defi}
 
The following is then a straightforward application of the formalism of pasting in bicategories.

\begin{prop}\label{prop:(2,1)-segal}
If $\Cc$ is a weak (2,1)-category, then $\N\C$ is a quasi-category. \qed
\end{prop}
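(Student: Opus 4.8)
The goal is to show that the nerve $\N\C$ of a weak $(2,1)$-category $\C$ is a quasi-category, i.e.\ satisfies the inner horn-filling condition: every map $\Lambda^n_i\to\N\C$ with $0<i<n$ extends to a map $\Delta^n\to\N\C$. The plan is to unwind what a horn $\Lambda^n_i\to\N\C$ consists of in terms of the data (0)--(3) describing simplices of $\N\C$, and then produce the missing face together with the missing interior $n$-simplex by using the fact that every $2$-morphism in $\C$ is invertible.

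First I would record the Duskin-type description of low-dimensional simplices and horns. A map $\Lambda^n_i\to\N\C$ specifies objects $a_0,\dots,a_n$, $1$-morphisms $E_{jk}$ and $2$-morphisms $u_{jkl}$ for all faces {\em except} the $i$-th, subject to the $2$-commutativity (tetrahedron) conditions on those $3$-dimensional subfaces that are present in the horn. Filling the horn amounts to (a) constructing the one missing $2$-morphism datum associated with the $i$-th face, and (b) checking that with this choice {\em all} the tetrahedral $2$-commutativity conditions of $\Delta^n$ hold, which then produces the unique interior simplex. The key structural observation is that since $0<i<n$, the vertices $0$ and $n$ both survive in $\Lambda^n_i$, so the edges $E_{0j}$, $E_{jn}$ and all the $u_{0jk}$, $u_{jkn}$ that bracket things ``from the ends'' are available; this is exactly the configuration in which one can solve for the missing data.

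The main step is the case $n=2$ (inner horns only exist for $i=1$ when $n=2$, giving $\Lambda^2_1$) and then an inductive/coherence bootstrap for $n=3$ and higher. For $n=2$, a $\Lambda^2_1\to\N\C$ gives $1$-morphisms $E_{01}\colon a_0\to a_1$ and $E_{12}\colon a_1\to a_2$; I would fill it by setting $E_{02}:=E_{12}\otimes E_{01}$ and taking $u_{012}$ to be the identity $2$-morphism, which trivially satisfies the requirements, so every $\Lambda^2_1$ is fillable (this uses nothing beyond the bicategory axioms). For $n=3$, a $\Lambda^3_i$ ($i\in\{1,2\}$) supplies three of the four tetrahedral faces together with all edges; the single missing face-datum $u$ is then forced, and here is where invertibility enters: the required $u$ is obtained by composing the three given $2$-morphisms with suitable instances of the associator $\alpha$, and one must {\em invert} one of the given $2$-isomorphisms to solve the equation. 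Because $\C$ is a $(2,1)$-category all these $2$-morphisms are isomorphisms, so the equation has a (unique) solution, and the Mac Lane pentagon guarantees the result is independent of the order in which one pastes. For $n\ge 4$, a map $\Lambda^n_i\to\N\C$ is determined by its $2$- and $3$-dimensional data (the simplices of $\N\C$ are $3$-coskeletal for a bicategory nerve), so the unique candidate filler is already specified by the horn, and filling reduces to verifying the remaining tetrahedral conditions, each of which is an instance of a coherence identity among associators that holds automatically.

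The hard part will be the $n=3$ step: carefully writing the single $2$-commutativity equation imposed by the missing tetrahedral face, identifying precisely which given $2$-isomorphism must be inverted, and checking via the pentagon coherence that the resulting $u_{jkl}$ is well defined and satisfies all constraints. Everything in higher dimensions is then a coskeletality-plus-coherence formality, and the invertibility of $2$-morphisms is used in an essential way only to guarantee solvability of that one equation. I would therefore structure the proof as: (1) recall the explicit simplex/horn data and the $3$-coskeletality of $\N\C$; (2) dispose of $n=2$ by an explicit trivial filler; (3) treat $n=3$ by solving the forced equation using invertibility and pentagon; (4) conclude $n\ge 4$ by coskeletality. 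I expect step (3) to carry essentially all the content, with the remaining steps being bookkeeping.
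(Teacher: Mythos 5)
Your proposal is correct and is precisely the argument the paper leaves implicit when it dismisses the statement as ``a straightforward application of the formalism of pasting in bicategories'': the real content is the inner-horn-filling verification you outline, with invertibility of the $2$-morphisms entering to solve the forced tetrahedron equation for $\Lambda^3_1$ and $\Lambda^3_2$. One small correction to your step (4): invertibility of the $2$-cells $u_{jkl}$ is needed again at $n=4$, since deriving the missing tetrahedron condition from the four given ones together with the pentagon requires cancelling $2$-isomorphisms such as $\id\otimes u_{023}$, so that verification is not a pure associator coherence identity holding ``automatically,'' though it goes through for exactly the same reason as the $n=3$ case.
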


\begin{warn}
In using the term ``$\infty$-categories" for quasi-categories it is important to keep in mind that
a 2-category in the classical sense (even a strict one) does not, 
  in general, 
  give rise
to a $\infty$-category (unless its 2-morphisms are invertible). 
\end{warn}
 
Combined with the Joyal-Tierney equivalence \eqref{eq.totalization}, the proposition
implies that $X=\tau^!N\C$ is a $1$-Segal space whenever $\C$ is a (2,1)-category. This $1$-Segal
space can be more directly described as follows:  $X_n=B(\C_n)$, where $\C_n$ is the category
(groupoid) whose objects are chains of composable 1-morphisms $x_0\buildrel u_1\over\lra \cdots
\buildrel u_n\over\lra x_n$, and morphisms are 2-commutative ladders, i.e., systems of 1- and
2-morphisms as depicted:
\[
\xymatrix{
\ar @{} [rd] |{\Swarrow}
x_0\ar[r]^{u_1}\ar[d] & 
\ar @{} [rd] |{\Swarrow}
x_1\ar[r]^{u_2}\ar[d]&
\cdots\ar[r]^{u_{n-1}}&
\ar @{} [rd] |{\Swarrow}
x_{n-1}\ar[r]^{u_n}\ar[d]&
x_n\ar[d]
\cr
y_0\ar[r]^{v_1}&y_1\ar[r]^{v_2}&\cdots\ar[r]^{v_{n-1}}&y_{n-1}\ar[r]^{v_n}&y_n
}
\]
This reduces to Example \ref {ex.1segclass}(b) when $\C$ is a usual category considered as a
2-category with all 2-morphisms being identities.
\end{ex}

\begin{ex}[(Monoidal categories)] \label{ex:1-segal-monoidal}
A bicategory $\C$ with one object $\pt$ is the same as a monoidal category with unit object $(\Ac,
\otimes, \1)$: objects of $\Ac$ correspond to 1-morphisms in $\C$ (from $\pt$ to $\pt$), the
monoidal structure $\otimes$ in $\Ac$ gives the composition of 1-morphisms, and morphisms in $\Ac$
give 2-morphisms in $\C$.  A semi-bicategory with one object is the same as a monoidal category
$(\Ac, \otimes)$, but possibly without a unit object. 

Thus   a weak (2,1) category $\Cc$  with one object is the same as a monoidal category $(\Ac,
\otimes, \1)$  which is a groupoid. 

If $\Cc$ is a strict (2,1)-category (i.e., the monoidal structure in $\Ac$ is strictly associative),
the $1$-Segal space $X=\tau^! N\C$.  can be desrcribed in terms of the monoidal structure, similarly
to the construction of the classifying space of a group.

More precisely, for $n\geq 0$ let $\Bc ar_n(\Ac)$ be the category whose objects are sequences $(A_1,
..., A_n)$ of objects of $\Ac$ and morphisms are sequences of isomorphisms. For $n=0$ we put $\Bc
ar_0(\Ac)=pt$ to be the punctual category. For $i=0,..., n$ we define the face functors
\[
	\partial_i(A_1, ..., A_n) = 
	\begin{cases} 
		(A_2, ..., A_n), \quad\text{if}\quad i=0;\\
		(A_1, ..., A_i\otimes A_{i+1}, ..., A_n), \quad\text{if}\quad i=1, ..., n-1;\\
		(A_1, ..., A_{n-1}), \quad\text{if}\quad i=n.
	\end{cases}
\]
and define the degeneration functors in the standard way by inserting the unit object $\1$. This
makes $\Bc ar_\bullet(\Ac)$ into a simplicial groupoid. The simplicial space $X$ formed by the
realizations $X_n = B \Bc ar_n(\Ac)$ is the $1$-Segal space corresponding to $(\Ac, \otimes, \1)$ as
above. 
\end{ex}

\vfill\eject

\bibliographystyle{halpha} 
\bibliography{refs}

\end{document}